\def\R{\mathbb{R}}
\def\d{|\nabla|}
\def\p{\partial}
\def\vo{\vspace{1\baselineskip}}
\def\h{\frac{1}{2}}
\def\be{\begin{equation}}
\def\ee{\end{equation}}
\newtheorem{theorem}{Theorem}[section]
\newtheorem{lemma}{Lemma}[section]
\newtheorem{proposition}{Proposition}[section]
\theoremstyle{definition}
\newtheorem{definition}{Definition}[section]
\theoremstyle{remark}
\newtheorem{remark}{Remark}[section]
\numberwithin{equation}{section}
\begin{document}
 \title[3D Relativistic Vlasov-Maxwell]{Propagation of  regularity and long time behavior of   the $3D$ massive relativistic transport equation II: Vlasov-Maxwell system}
\author{Xuecheng Wang}
\address{YMSC, Tsinghua  University, Beijing, China,  10084
}
\email{xuecheng@tsinghua.edu.cn,\quad xuecheng.wang.work@gmail.com 
}

\thanks{}
 
\maketitle

\begin{abstract}
Given any smooth, suitably small initial data, which decays polynomially at infinity,   we prove global regularity for the $3D$ relativistic massive Vlasov-Maxwell system. In particular, the compact support assumption, which is widely used in the literature, is not  imposed on the initial data. Our proofs are based on a combination of the Klainerman vector fields method and the Fourier method, which allows us to exploit a crucial hidden null structure in the relativistic Vlasov-Maxwell system.

\end{abstract}
 \tableofcontents
\section{Introduction}\label{introduction}

 In plasma physics,  the evolution of  a sufficiently diluted ionized gas or solar wind under the effect of the electromagnetic forces created by  particles themselves can be  described by the relativistic  Vlasov-Maxwell system. In contrast to the non-relativistic case,  the speed of light in the  relativistic case is assumed to be finite.

  After normalizing the mass of particles and the speed of light to be one, the $3D$ relativistic  Vlasov-Maxwell system with given initial data $(f_0(x,v), E_0(x), B_0(x))$ reads as follows,
 \be\label{mainequation}
(\textup{RVM})\qquad \left\{\begin{array}{c}
\p_t f + \hat{v} \cdot \nabla_x f + (E+ \hat{v}\times B)\cdot \nabla_v f =0,\\

\nabla \cdot E = 4 \pi \displaystyle{\int_{\R^3}  f(t, x, v) d v}, \qquad \nabla \cdot B =0, \\

\p_t E = \nabla \times B - 4\pi \displaystyle{\int_{\R^3} f(t, x, v) \hat{v} d v }, \quad \p_t B  =- \nabla\times E,\\

f(0,x,v)=f_0(x,v), \quad E(0,x)=E_0(x), \quad B(0,x)=B_0(x),
\end{array}\right.
\ee
where  $f(t,x,v)$ denotes the density distribution function of particles, $(E, B)$ stands for the classic electromagnetic field, and $\hat{v}:=\displaystyle{v/\sqrt{1+|v|^2}}$ denotes the relativistic speed of particles, which is strictly less that the speed of light.   We mainly restrict ourself to the three dimensions case and refer readers to \cite{glasseys1,luk2,luk} for the corresponding results in other dimensions.

The is a large literature in the study of the Cauchy problem for  the Vlasov-Maxwell system. 
  A remarkable result   obtained by the Glassey-Strauss \cite{glassey3} says that the  classical solution can be globally  extended  as long as the particle density has compact support in $v$ for all the time. A new proof of this result based on Fourier analysis was   given by Klainerman-Staffilani \cite{Klainerman3}, which adds a new perspective to the study of $3D$ RVM system, see also \cite{alfonso,glasseys2, pallard2}.  An interesting line of research is the continuation criterion for the global existence of the Vlasov-Maxwell system. In \cite{glassey4}, Glassey-Strauss showed that the lifespan of the solution of the relativistic Vlasov-Maxwell system can be continued if the initial data decay at rate $|v|^{-7}$ as $|v|\rightarrow \infty$ and   $\| (1+|v|) f(t,x,v)\|_{L^\infty_x L^1_v}$ remains bounded for all time.
 An improvement of this result and a new continuation criterion was given by Luk-Strain \cite{luk}, which says that a regular solution can be extended as long as $\|(1+|v|^2)^{\theta/2} f(t,x,v)\|_{L^q_x L_v^1}$ remains bounded for $\theta > 2/q, 2< q \leq +\infty$, see also Kunze\cite{kunze}, Pallard\cite{pallard3}, and Patel\cite{patel} for the recent improvements  on the continuation criterion. 

 Although the assumptions in above mentioned results don't depend on the size of energy, the assumptions are indeed  imposed  all the time, which are strong.  One can also ask whether it is possible to obtain global solution by only imposing assumptions on the initial data. The first positive result was given by Glassey-Strauss \cite{glassey2}. It, roughly speaking,  says that  if   the initial particle density $f(0,x,v)$ has a compact supports in both ``$x$'' and ``$v$'' and also the electromagnetic field $(E(0), B(0))$ has compact support in ``$x$'', and moreover the initial data is suitably small, then there exists a unique classical solution. Later, an interesting improvement was given by Schaffer \cite{schaeffer1}, which shows   that a similar result as in \cite{glassey2} also holds without assuming the compact support assumption for the initial particle density in ``$v$'' but with the compact support assumption in ``$x$''   for both the initial particle density and the electromagnetic field.

 An   interesting   question  one can ask for the $3D$ RVM is that whether the regularity of solution can be unconditionally propagated for all the time for unrestricted data. Recently, a very interesting result by Bigorgne\cite{bigorgne} shows that this question can be answered  in dimension $n\geq 4$ for small initial data.

The main goal of this paper is devoted to answer the above question in $3D$ for small data. More precisely, we show global regularity and scattering properties of the $3D$ relativistic Vlasov-Maxwell system for suitably small initial data {without any compact support assumption}. We also refer readers to our first paper \cite{wang} for more detailed introduction, which also includes related discussion on other  Vlasov-wave type coupled systems.

\subsection{A review of the framework proposed in the first paper} 

Since  the Vlasov-Maxwell system is also a Vlasov-Wave type coupled system, we can   use the  framework proposed in \cite{wang} to set up the study of relativistic Vlasov-Maxwell system in this paper. We briefly explain some main ideas here.

In our first paper \cite{wang},  we introduced a framework to study the Vlasov-wave type coupled system and  proved small data global regularity without compact support assumption for  the  $3D$ massive relativistic  Vlasov-Nordstr\"om system, which  reads as follows if we normalize the mass of the particle to be one,

\be\label{vlasovnordstrom}
(\textup{RVN})\quad \left\{\begin{array}{l}
(\p^2_t - \Delta) \phi = \displaystyle{\int_{\R^3} \frac{f}{\sqrt{1+|v|^2}} d v } \\
\\
\displaystyle{\p_t f + \hat{v}\cdot \nabla_x f - \big((\p_t + \hat{v}\cdot \nabla_x)\phi(t,x)\big)(4 f + v\cdot \nabla_v f )- \frac{1}{\sqrt{1+|v|^2}}\nabla_x \phi \cdot \nabla_v f =0}.\\
\end{array}\right.
\ee

 We mention that the main difficulty of the small data global regularity problem for both RVM and RVN is caused by the bulk   derivative ``$\nabla_v f $''. Because $\nabla_v$ doesn't commute with the linear operator $\p_t +\hat{v}\cdot\nabla_x$, it is not \textit{a-priori} clear how the energy of ``$\nabla_v f$'' grows over time. Also the bulk term $\nabla_v f$ appears in the source term, which complicates the problem further.

To get around of this issue, one of the main ideas proposed in \cite{wang} is that, instead of studying $\nabla_v$ directly, we study the following vector field,
\be\label{march6eqn1}
\widetilde{K}_{v }:=\nabla_v + (t-  \sqrt{1+|v|^2}\omega(x-\hat{v}t,v)) \nabla_{v }\hat{v}\cdot \nabla_x,\quad  \Longrightarrow \nabla_v = \widetilde{K}_{v }- (t-  \sqrt{1+|v|^2}\omega(x-\hat{v}t,v)) \nabla_{v }\hat{v}\cdot \nabla_x, 	
\ee
where ``$\omega(x,v)$'' (see (\ref{inhomogeneousmodulation})) depends on the study of the distance to the light cone in $(x,v)$-space, which will be discussed in details in section \ref{vectorfieldssetup}. Moreover, the coefficient ``$ (t-  \sqrt{1+|v|^2}\omega(x-\hat{v}t,v))$'' almost vanishes on the light cone ``$|t|-|x|=0$''. More precisely, we defined an \textit{inhomogeneous modulation} with respect to the light cone $|t|^2-|x+\hat{v}t|^2=0$ as follows, 
\be\label{march6eqn2}
\tilde{d}(t,x,v)= \frac{t}{1+|v|^2} - \frac{\omega(x,v)}{\sqrt{1+|v|^2}}, \quad \Longrightarrow |\tilde{d}(t,x,v)| \lesssim 1+||t|-|x+\hat{v}t||.
\ee

Based on the above main idea, we constructed  a new set of vector fields in \cite{wang}, which not only commute with the linear operator ``$\p_t + \hat{v}\cdot \nabla_x$'' of the relativistic Vlasov equation but also help to understand the bulk derivative $\nabla_v f $ inside the nonlinearity of the Vlasov equation.  We will explain with more details about the new set of vector fields in section  \ref{vectorfieldssetup}.

 Note that $\tilde{K}_v$ commutes with the linear operator ``$\p_t + \hat{v}\cdot\nabla_x$''.  It is more promising to control the energy of $\tilde{K}_v f $ than $\nabla_v  f$ over time.     Since it is ``$\nabla_v f $'' that appears in the nonlinearity of the Vlasov equation, it remains to  control the difference between $\tilde{K}_v f $ and $\nabla_v f $. Recall (\ref{march6eqn1}). The question is reduced to control the additional quantity $(t-  \sqrt{1+|v|^2}\omega(x-\hat{v}t,v)) \nabla_{v }\hat{v}$ in the energy estimate.   We notice that ``$\hat{v}$'' decreases much faster in the radial direction. More precisely, 
 \be\label{march6eqn3}
\frac{v}{|v|}\cdot \nabla_v \hat{v} = \frac{1}{ (1+|v|^2)^{3/2}} \frac{v}{|v|} , \quad  (  e_i\times\frac{v}{|v|})\cdot\nabla_v\hat{v} = \frac{1}{ (1+|v|^2)^{1/2}} (  e_i\times\frac{v}{|v|}), \quad i\in\{1,2,3\},
 \ee
 where $e_i$, $i\in\{1,2,3\}$, denote the standard unit  vectors of the Caretisian coordinates system in $\R^3$, see (\ref{2020feb18eqn1}). From  (\ref{march6eqn2}) and (\ref{march6eqn3}), we have
 \be
(t-  \sqrt{1+|v|^2}\omega(x-\hat{v}t,v)) \frac{v}{|v|}\cdot \nabla_{v }\hat{v}=\frac{\tilde{d}(t,x-\hat{v}t, v)}{\sqrt{1+|v|^2}}\frac{v}{|v|}, \ee
\be\label{march6eqn6}
 (t-  \sqrt{1+|v|^2}\omega(x-\hat{v}t,v)) (  e_i\times\frac{v}{|v|}) \cdot \nabla_{v }\hat{v}={\sqrt{1+|v|^2}} {\tilde{d}(t,x-\hat{v}t, v)}  (  e_i\times\frac{v}{|v|}).
 \ee

 To control the inhomogeneous modulation, recall (\ref{march6eqn2}), 
   we used a Fourier based method to prove that the scalar field decay at rate $1/\big((1+|t|)(1+||t|-|x||) \big)$ over time.  Recall that the decay rate of the scalar field suggested by the Klainerman-Sobolev embedding is $1/\big((1+|t|)(1+||t|-|x||)^{1/2} \big)$. To prove a  stronger decay estimate for the scalar field, we   carefully study the scalar field at the low frequencies. Instead  of working in the $L^2$ type space, we worked in  a $|\xi|^{-1} L^\infty_{\xi}$ type space. See \cite{wang} for more detailed discussion.

Thanks to the good coefficient ``$1/\sqrt{1+|v|^2}$'' in the relativistic Vlasov-Nordstr\"om system (\ref{vlasovnordstrom}), the loss of   weight of size ``$1+|v|$'' in the rotational direction (\ref{march6eqn6}) is not an issue.

\subsection{The losing weight of size ``$ |v|$'' issue}\label{losingvissue}

   Unfortunately, unlike the RVN system (\ref{vlasovnordstrom}),  the benefit of the good coefficient is not available in the relativistic Vlasov-Maxwell system, see (\ref{mainequation}).

 Recall the   decomposition used in (\ref{march6eqn1}) and the equality (\ref{march6eqn6}). We restate this decomposition in the rotational in ``$v$'' direction as follows, 
\be\label{march6eqn20}
(  e_i\times\frac{v}{|v|})\cdot \nabla_v f = (  e_i\times\frac{v}{|v|})\cdot \tilde{K}_v f + \sqrt{1+|v|^2} \tilde{d}(t, x-\hat{v}t, v)  (  e_i\times\frac{v}{|v|})\cdot \nabla_x f.
\ee
From the above equality  (\ref{march6eqn20}) and the first equality in (\ref{march6eqn3}), we know that { the issue of losing a weight of size ``$|v|$ '', which is very problematic when $|v|$ is extremely large, only appears for the rotational in $v$ directional derivative}.

 One might argue that  the issue looks artificial because it is caused by using the above decomposition  (\ref{march6eqn20}). Alternatively, instead of using the vector field $\tilde{K}_v$, we can    use the rotational vector field $\tilde{\Omega}_i:=(e_i\times x )\cdot \nabla_x + (e_i\times v )\cdot \nabla_v$, which also commutes with the linear operator of the relativistic Vlasov equation ``$\p_t +\hat{v}\cdot \nabla_x$''. More precisely,
\be\label{march6eqn21}
(  e_i\times\frac{v}{|v|})\cdot \nabla_v f= \frac{1}{|v| }\tilde{\Omega}_i f - (  e_i\times\frac{x}{|v|})\cdot \nabla_x f.
\ee
Note that the coefficients are small when $|v|$ is extremely large.

Unfortunately, there exists a regime that it doesn't make the essential difference by choosing either the decomposition (\ref{march6eqn20}) or the decomposition (\ref{march6eqn21}) to control the rotational in $v$ directional derivative of $f(t,x,v)$. For example,    the decomposition (\ref{march6eqn20}) and the decomposition (\ref{march6eqn21})  doesn't make the essential difference  in the case $|x|=|t|$, $|\tilde{d}(t,x-\hat{v}t, v)|\sim 1$, and $|v|\sim \sqrt{1+|t|}$. Because the  coefficients in   the decomposition (\ref{march6eqn20}) and the decomposition (\ref{march6eqn21}) are all of size $\sqrt{1+|t|}$. 

\subsection{The hidden null structure}\label{hiddennullstructure}

To get around the aforementioned issue of the losing weight of size ``$|v|$'', we reveal a hidden null structure inside the Vlasov equation of the RVM system (\ref{mainequation}).    
 
 The hidden  \textit{null structure} in the RVM is very subtle and also different from the classic sense. For the nonlinear wave equation, generally speaking, one can check the existence of  {null structure} by checking whether the corresponding symbol vanishes if the frequencies of two inputs are parallel to each other.  Usually, this structure is inherent with the nonlinearity and is  independent of how one takes derivatives for the solution.  

 We will show that there exists a \textit{null structure} inside the equation satisfied by the rotational in $v$ directional derivatives of the distribution function, i.e., the equation satisfied by $(e_i\times v/|v|)\cdot \nabla_x f(t,x,v)$. We understand this \textit{null structure} in the following sense: the symbol of the rotational derivative   $(e_i\times v/|v|)\cdot \nabla_x$ is small near the time resonance set of the nonlinearity of the Vlasov part, which allows us to take the advantage of oscillation with respect to time. 

 A more detailed discussion about this observation will be carried out in subsection \ref{reductionoflemmabulk}. For intuitive purpose, we give an example here. Before that, to better see the  oscillation of the electromagnetic field and to correct the linear effect, we study the profiles of the RVM system, which are defined as follows,
 \[
g(t,x,v):=f(t,x+\hat{v}t, v), \quad h_1(t):=e^{it\d}\d^{-1}\big(\p_t -i \d  \big)E(t), \quad  h_2(t):=e^{it\d}\d^{-1}\big(\p_t -i \d  \big)B(t). 
 \] 

 Intuitively speaking, we have the following bulk term when we study the evolution of $(e_i\times v/|v|)\cdot \nabla_x \nabla_x^{\alpha}f(t,x,v)$, where $|\alpha|$ is very large, 
 \[
\p_t (e_i\times v/|v|)\cdot \nabla_x \nabla_x^{\alpha}g(t,x,v) =a_i(x+\hat{v}t, v) e^{-it\d}  (e_i\times v/|v|)\cdot \nabla_x h_1(t,x+\hat{v}t) ,
 \]
\be\label{march7eqn1}
\times (e_i\times v/|v|)\cdot \nabla_x \nabla_x^{\alpha}g(t,x,v) + \textup{other terms},
\ee
where the coefficient $a_i(x+\hat{v}t, v)$ comes from the decomposition (\ref{march6eqn21}) of the rotation in ``$v$'' directional derivative of $f$. Moreover, the following estimate holds, 
\be\label{march7eqn28}
|a_i(x+\hat{v}t, v)| \lesssim {|x+\hat{v}t|}{|v|^{-1}}. 
\ee
We restrict ourself to the case when $|v|\approx \sqrt{1+|t|}$, where $|t|\gg 1. $ Define 
\be\label{2020feb14eqn1}
g^\alpha(t,x,v):= (e_i\times v/|v|)\cdot \nabla_x \nabla_x^{\alpha}g(t,x,v), \quad \tilde{g}^\alpha(t,x,v):= |v| a_i(x+\hat{v}t, v) g^\alpha(t,x,v).
\ee
Hence, we can rewrite the bulk term in (\ref{march7eqn1}) as follows, 
\be\label{march7eqn10}
\textup{bulk term}:= e^{-it\d}  (e_i\times v/|v|^2)\cdot \nabla_x h_1(t,x+\hat{v}t)\tilde{g}^\alpha(t,x,v)
\ee
A key observation for the above bulk term is that there exists an \textit{oscillation phase}, which only depends on   the profiles of the electromagnetic field. More precisely, after rewriting the bulk term in (\ref{march7eqn10}) on the Fourier side, we have
\be\label{march7eqn20}
\mathcal{F}(\textup{bulk term})(\xi):=\int_{\R^3} e^{-it |\eta| + i t \hat{v}\cdot\eta} i \big(\frac{e_i\times v}{|v|^2}\cdot \eta \big)\widehat{h_1}(t, \eta) \widehat{\tilde{g}^\alpha}(t, \xi-\eta, v) d \eta. 
\ee
Note that the oscillation phase ``$|\eta|-\hat{v}\cdot \eta$'' in  (\ref{march7eqn20})
satisfies the following estimate, 
\be\label{march20eqn23}
|\eta|-\hat{v}\cdot \eta\gtrsim \sum_{i=1,2,3} |\eta|\big(\frac{1}{1+|v|^2} + \big(\frac{e_i\times v}{|v|}\cdot \frac{\eta}{|\eta|}\big)^2 \big).
\ee
From the above estimate, we know that the symbol in (\ref{march7eqn20}) is very small near the time resonance set.  In other words, the rotation in $v$ directional derivative  $(e_i\times v/|v|)\cdot \nabla_x  $ plays the role of  {null structure}. Moreover, from (\ref{march20eqn23}), the following estimate holds for any fixed $v\in\R^3$, 
\[
\big(\frac{e_i\times v}{|v|^2}\cdot \eta \big) \lesssim \frac{1}{1+|t|}, \quad \textit{if}\,\,\,\, \eta\in \{\eta:\quad |\eta|-\hat{v}\cdot \eta \leq 1/(1+|t|)\}. 
\]
Therefore,  from the 	estimate (\ref{march7eqn28}),   we know that  the symbol of the bulk term in (\ref{march7eqn10}) exactly covers the loss caused by the coefficient ``$|v| a_i(x+\hat{v}t, v)$'' of $\tilde{g}^\alpha(t,x,v)$ (see (\ref{2020feb14eqn1})) near the time resonance set.

\subsection{The main result of this paper}

 Before stating the main theorem, we define 
the $X_n$-normed space    as follows, 
 \be\label{march6eqn30}
\|h\|_{X_n}:=  \sup_{k\in \mathbb{Z}}2^{(n+1)k }\| \nabla_\xi^n \widehat{h }(t, \xi)\psi_k(\xi)\|_{L^\infty_\xi},\quad  n\in\{0,1,2,3\}.
\ee
Moreover, we define the following classic set of vector fields,
\be
 \mathcal{P}_1:=\{S,  {\Omega}_i,  {L}_i, \p_{x_i}, i\in \{1,2,3\} \},
\ee
where $S:=t\p_t +x\cdot\nabla_x$ denotes the scaling vector field, $\Omega_i:=(e_i\times x)\cdot \nabla_x$ denotes the rotational vector field, and $L_i:=t\p_{x_i}+x_i\p_t$ denotes the Lorentz vector field, $i\in\{1,2,3\}$.

\begin{theorem}\label{precisetheorem}
Let $N_0=200, $ $\delta\in (0, 10^{-9}]$. Suppose that the given initial data $(f_0(x,v),  E_0(x),  B_0(x))$ of the $3D$ relativistic Vlasov-Maxwell system \textup{(\ref{mainequation})} satisfies the following smallness assumption,
\[
\sum_{|\alpha_1|+|\alpha_2|\leq N_0}  \| (1 +|x |^2 + (x\cdot v)^2 +|v|^{20} )^{30N_0} \nabla_v^{\alpha_1} \nabla_x^{\alpha_2 } f_0(x,v)\|_{L^2_x L^2_v} 
+\sum_{ |\alpha|\leq N_0}\sum_{\Gamma\in\mathcal{P}_1}\sum_{n\in\{0,1,2,3\}} \| \Gamma^\alpha E_0(x)\|_{L^2} 
\]
\be\label{march1steqn1}
+\| \Gamma^\alpha B_0(x)\|_{L^2} +\| \Gamma^\alpha E_0(x) \|_{X_n}+   \|  \Gamma^\alpha B_0(x)\|_{X_n}\leq \epsilon_0,
\ee
where    $\epsilon_0$ is some sufficiently small constant. Then the relativistic Vlasov-Maxwell system \textup{(\ref{mainequation})} admits a global solution and scatters to a linear solution in a lower regularity space  and the high order energy of the nonlinear solution grows at most at rate $(1+|t|)^{\delta}$ over time.  Moreover,  we have the following decay estimates for the derivatives of the average of the distribution function and the derivatives of the electromagnetic  field, 
\be\label{desiredecayaverage}
\sup_{t\in[0,\infty)}\sum_{|\alpha|\leq N_0-20}(1+|t|)^{ (3+|\alpha|)/p} \Big|\int_{\R^3} \nabla_x^\alpha \big|f(t,x,v)\big|^p d v  \Big|^{1/p} \lesssim \epsilon_0, \quad \textup{where}\,\, p\in[1, \infty)\cap \mathbb{Z},
\ee
\be\label{decayingeneral}
\sup_{t\in[0,\infty)}\sum_{|\alpha|\leq 10} (1+|t| )(1+||t|-|x||)^{|\alpha|+1}\big( \big|\nabla_x^{\alpha} E(t,x)\big| + \big|  \nabla_x^{\alpha} B(t,x)\big|\big)\lesssim \epsilon_0.
\ee
\end{theorem}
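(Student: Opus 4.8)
The plan is to set up a bootstrap argument on a maximal time interval $[0,T]$ where a collection of energy and decay norms remain bounded by $C\epsilon_0(1+t)^{C\delta}$, and then improve every such bound under the a priori assumptions. Concretely, I would propagate: (i) high-order energies $\|\Gamma^\alpha g\|_{L^2_{x,v}}$ for the full set of commuting vector fields (the $\widetilde{K}_v$-type fields, the rotations $\widetilde{\Omega}_i$, scaling, Lorentz, and $\nabla_x$) applied to the Vlasov profile $g$, with $|\alpha|\le N_0$; (ii) the Sobolev-type norms $\|\Gamma^\alpha h_1\|_{X_n}+\|\Gamma^\alpha h_2\|_{X_n}$ for the electromagnetic profiles, which encode the sharp low-frequency behavior needed for the improved $1/((1+t)(1+||t|-|x||))$ decay of $E,B$; and (iii) pointwise decay for the velocity-averages $\int |\nabla_x^\alpha f|^p dv$. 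The decay estimate \eqref{decayingeneral} for $(E,B)$ then follows from (ii) via the Klainerman--Sobolev-type embedding adapted to the $X_n$ spaces (as in \cite{wang}), and \eqref{desiredecayaverage} follows from (i) by integrating out the $v$-variable using the weights built into the initial-data norm and the change of variables $x\mapsto x+\hat v t$.

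The heart of the argument is closing the energy estimate (i). Applying a string of vector fields $\Gamma^\alpha$ to the Vlasov equation and commuting through $\partial_t+\hat v\cdot\nabla_x$, one obtains a transport equation for $\Gamma^\alpha g$ whose source terms are products of (derivatives of) the electromagnetic field evaluated along the characteristic $x+\hat v t$ with (derivatives of) $g$, including the dangerous bulk term arising from $\nabla_v f$. Here I would split $\nabla_v$ using \eqref{march6eqn1}: the radial-in-$v$ piece carries the harmless coefficient $1/(1+|v|^2)^{1/2}$ from \eqref{march6eqn3}, while the rotational-in-$v$ piece via \eqref{march6eqn20} or \eqref{march6eqn21} loses a weight of size $|v|$. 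For the top-order rotational bulk term, the plan is to abandon the pure energy method and pass to the Fourier side as in \eqref{march7eqn10}--\eqref{march20eqn23}: write the term as in \eqref{march7eqn20}, exploit that the symbol $(e_i\times v/|v|^2)\cdot\eta$ is controlled by the size of the phase $|\eta|-\hat v\cdot\eta$ near the time-resonant set, and integrate by parts in time. Away from the time-resonant set one gains a factor $1/(|\eta|-\hat v\cdot\eta)$ from the oscillation, which by \eqref{march20eqn23} is exactly $\lesssim 1+|t|$ times the reciprocal of the lost weight; combined with the $L^\infty$ decay of $h_1,h_2$ from (ii) and the estimate \eqref{march7eqn28} on $a_i$, this closes without losing powers of $|v|$ beyond what the weighted initial-data norm absorbs. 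The lower-order terms, the elliptic/transport parts of the Maxwell system, and the contributions where the electromagnetic field has fewer derivatives are handled by the more standard combination of vector-field energy estimates and the decay of $(E,B)$.

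For the electromagnetic energy (ii), I would use the Duhamel formula for $h_1,h_2$ against the currents $\int f\,dv$ and $\int f\hat v\,dv$, apply $\Gamma^\alpha$ (noting $\mathcal P_1$ commutes with the wave operator), and estimate in the $X_n$ norm. The key inputs are that the source currents decay like $(1+t)^{-3}$ in $L^\infty_x$ (and have good low-frequency behavior) by the averaging estimate for $f$, so that the time integral converges and only loses $(1+t)^{C\delta}$ at worst from the slow growth of the Vlasov energies. The scattering statement follows once these norms are bounded: the Duhamel integrals converge as $t\to\infty$ in a space of one degree less regularity.

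The main obstacle, as the introduction signals, is the rotational-in-$v$ bulk term with maximal $x$-derivatives in the regime $|v|\sim\sqrt{1+|t|}$, $|x|\sim|t|$, $|\tilde d|\sim 1$, where neither decomposition \eqref{march6eqn20} nor \eqref{march6eqn21} helps by itself. Making the hidden null structure \eqref{march20eqn23} quantitatively effective requires a careful frequency and phase-space decomposition: one must partition $\eta$-space by the dyadic size of $|\eta|-\hat v\cdot\eta$, perform the time integration by parts only where the phase is non-degenerate, treat the genuinely time-resonant region (where the symbol is already $\lesssim 1/(1+|t|)$) directly, and keep track of how each integration by parts acts on $\widehat{\tilde g^\alpha}$ — in particular one must control $\partial_t\widehat{\tilde g^\alpha}$, which feeds back into the very bulk terms being estimated, so the bootstrap must be organized to close this circularity. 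I expect the bulk of the technical work of the paper to be in this step; the rest is a (lengthy but) relatively routine orchestration of vector-field energy estimates, Klainerman--Sobolev embeddings, and the $X_n$-space decay machinery developed in \cite{wang}.
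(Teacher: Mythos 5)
Your proposal follows essentially the same route as the paper: a bootstrap on weighted energies of the profile $g(t,x,v)$ under the modified vector fields, $X_n$-norm control of the electromagnetic profiles giving the $1/((1+t)(1+||t|-|x||))$ decay, and resolution of the rotational-in-$v$ bulk term by exploiting the null structure \eqref{march20eqn23} through integration by parts in time with a phase-space decomposition near the time-resonant set. You also correctly flag the two genuine technical pressure points the paper spends Sections 7--8 on, namely the feedback of $\p_t g$ into the bulk term after the integration by parts and the need to localize in both frequency and distance to the light cone.
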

 
The rest of this paper is organized as follows.

\begin{enumerate}
	\item[$\bullet$]
    In section \ref{preliminary},  we introduce the notations used in this paper and then record a linear decay estimate for the half-wave equation and a decay estimate for the average of the distribution function. 
    \item[$\bullet$] In section \ref{vectorfieldssetup}, we introduce the vector fields method based framework developed in \cite{wang}, which includes two sets of vector fields, commutation rules and the process of trading regularities for the decay of with respect to the light cone.
     \item[$\bullet$] In section \ref{setupofenergyestimate}, we set up the energy estimate, classify the nonlinearities of the high order derivatives of the distribution function into the \textit{non-bulk term} and the \textit{bulk term},   define appropriate low order energy and high order energy for both the electromagnetic field and the distribution function, and  use the bootstrap argument to prove    Theorem \ref{precisetheorem}. 
      \item[$\bullet$] In section \ref{energyelectromagnetic}, we estimate the increment of both the low order energy and the high order energy over time for the electromagnetic field.
       \item[$\bullet$] In section \ref{estimateofnonbulkterm}, we estimate the low order energy and the high order energy of the \textit{non-bulk terms} for the distribution function  over time.
       \item[$\bullet$] In section \ref{corelemmaproof1}, we estimate  the high order energy  of the \textit{bulk terms} over time   under the assumption that a key Lemma, Lemma  \ref{trilinearestimate1}, holds. In section \ref{proofofmainlemmalinear}, we finish the proof of Lemma \ref{trilinearestimate1}. 
 \end{enumerate}

\noindent \textbf{Acknowledgment}\qquad This project was initiated when I was a semester postdoc at ICERM, Brown University. I thank   Yan Guo for suggesting this direction in kinetic theory and thank ICERM for the warm hospitality. Also, I thank Lingbing He and Pin Yu for several helpful conversations. The author is supported  by a startup grant at Tsinghua University. 

 \section{Preliminaries}\label{preliminary}
 For any two numbers $A$ and $B$, we use  $A\lesssim B$, $A\approx B$,  and $A\ll B$ to denote  $A\leq C B$, $|A-B|\leq c A$, and $A\leq c B$ respectively, where $C$ is an absolute constant and $c$ is a sufficiently small absolute constant. We use $A\sim B$ to denote the case when $A\lesssim B$ and $B\lesssim A$.   For an integer $k\in\mathbb{Z}$, we use ``$k_{+}$'' to denote $\max\{k,0\}$ and  use ``$k_{-}$'' to denote $\min\{k,0\}$. For any two vectors $\xi, \eta\in \R^3$, we use $\angle(\xi, \eta)$ to denote the angle between ``$\xi$'' and ``$\eta$''. Moreover, we use the convention that $\angle(\xi, \eta)\in [0, \pi]$. 

 For an integrable function $f(x)$, we use both $\widehat{f}(\xi)$ and $\mathcal{F}(f)(\xi)$ to denote the Fourier transform of $f$, which is defined as follows,
\[
\mathcal{F}(f)(\xi)= \int e^{-ix \cdot \xi} f(x) d x.
\]
We use $\mathcal{F}^{-1}(g)$ to denote the inverse Fourier transform of $g(\xi)$. Moreover, for a distribution function $f:\R_x^3 \times \R_v^3\rightarrow \mathbb{C}$, we use the following notation to denote the Fourier transform of $f(x,v)$ in ``$x$'', 
\[ 
\widehat{f}( \xi, v):=\int_{\R^3} e^{-i x\cdot \xi} f( x,v) d x. 
\] 
Basically, ``$v$'' is treated as a fixed parameter during the Fourier transform. 

We  fix an even smooth function $\tilde{\psi}:\R \rightarrow [0,1]$, which is supported in $[-3/2,3/2]$ and equals to one  in $[-5/4, 5/4]$. For any $k\in \mathbb{Z}$, we define
\[
\psi_{k}(x) := \tilde{\psi}(x/2^k) -\tilde{\psi}(x/2^{k-1}), \quad \psi_{\leq k}(x):= \tilde{\psi}(x/2^k)=\sum_{l\leq k}\psi_{l}(x), \quad \psi_{\geq k}(x):= 1-\psi_{\leq k-1}(x).
\]
Moreover, we use  $P_{k}$, $P_{\leq k}$ and $P_{\geq k}$ to denote the projection operators  by the Fourier multipliers $\psi_{k}(\cdot),$ $\psi_{\leq k}(\cdot)$ and $\psi_{\geq k }(\cdot)$ respectively. We   use  $f_{k}(x)$ to abbreviate $P_{k} f(x)$.

For any integrable function $f$, we define
\be\label{signnotation}
f^{+}:=f,\quad P_{+}[f]:=f , \quad f^{-}:= \bar{f}, \quad P_{-}[f]:=\bar{f}.
\ee
Define the cutoff function $\psi_{l;\bar{l}}(\cdot)$ with the threshold $\bar{l}$  as follows,
  \be\label{cutoffwiththreshold}
\psi_{l;\bar{l}}(x):=\left\{\begin{array}{ll}
\psi_{\leq \bar{l}}(x) & \textup{if\,\,} l=\bar{l}\\
\psi_l(x) & \textup{if\,\,} l>\bar{l}.\\
\end{array}
\right.
  \ee
In particular, if the threshold $\bar{l}=0$, we use the following notation, 
\be\label{cutoffwiththreshold100}
\varphi_k(x):=\psi_{k;0}(x), \quad k \in \mathbb{Z}_{+},\quad \varphi_{[k_1,k_2]}(x):=\left\{\begin{array}{ll}
\sum_{k_1\leq k\leq k_2} \psi_{k}(x) & \textup{if\,} k_1 >0\\
\\
 \psi_{\leq k_2}(x) & \textup{if\,} k_1 \leq 0.\\
\end{array}\right.
\ee

We define the unit vectors of the Cartesian coordinate  system in $\R^3$ as follows, 
\be\label{2020feb18eqn1}
e_1:=(1,0,0), \quad e_2:=(0,1,0), \quad e_3:=(0,0,1).
\ee
Moreover, for any $i\in \{1,2,3\}$,  we define the following   vectors, 
\be\label{eqn16}
 X_i =  e_i\times x,\quad   V_i=e_i\times v, \quad \hat{V}_i=e_i \times \hat{v},\quad \tilde{V}_i=e_i\times \tilde{v},   \quad \tilde{v}:=\frac{v}{|v|},\quad \tilde{v}_i:= \tilde{v}\cdot e_i,\quad \hat{v}_i:=\hat{v}\cdot e_i. 
\ee
As a result of direct computations,  we have
\be\label{octeqn456}
u= \tilde{v} \tilde{v}\cdot u     + \sum_{i=1,2,3}\tilde{V}_i \tilde{V}_i \cdot u, \quad u \in \mathbb{R}^3, \quad \tilde{v}\cdot\nabla_v \hat{v}= \frac{\tilde{v}}{(1+|v|^2)^{3/2}}, \quad \tilde{V}_i\cdot\nabla_v \hat{v}= \frac{\tilde{V}_i}{(1+|v|^2)^{1/2}}, \quad i\in\{1,2,3\}.
\ee

For any $k\in \mathbb{Z}$, we define  the $\mathcal{S}^\infty_k$-norm associated with symbols and  a class of symbol as follows, 
\be\label{symbolnorm}
\|m(\xi)\|_{\mathcal{S}^\infty_k}:= \sum_{l=0,1,\cdots, 10}2^{l k}\|\mathcal{F}^{-1}[\nabla_\xi^l m(\xi)\psi_k(\xi)]\|_{L^1}, \quad 
\mathcal{S}^\infty:=\{m(\xi): \quad\| m(\xi)\|_{\mathcal{S}^\infty}:=\sup_{k\in \mathbb{Z}} \| m(\xi)\|_{\mathcal{S}^\infty_k} < \infty \}. 
\ee

 \begin{definition}\label{definitioninversederivative}
We define a linear operator as follows, 
\be\label{sepeqn732}
Q_i:=-R_i \d^{-1}, Q:=(Q_1, Q_2, Q_3), \quad i\in \{1,2,3\},
\ee
where $R_i$, $i\in \{1,2,3\}$, denote the Riesz transforms.  Hence, 
\be\label{sepeqn734}
Id = \nabla\cdot Q. 
\ee
 \end{definition} 
  It is well known that the density of the distribution function decays over time. Now, there are several ways to prove this fact, e.g., performing change of variables, using the vector fields method. We refer readers to a recent result  by Wong \cite{wong} for more detailed discussion. In \cite{wang3}, we used a Fourier based method  to derive two  decay estimates as in the following Lemma, 
 \begin{lemma}\label{decayestimateofdensity}
For any fixed $a(v)\in\{v,\hat{v}\}$,  $x\in \R^3$, $a, t\in \R$, s.t, $|t|\geq 1$, $a> -3$, and any given symbol $m(\xi,v)\in L^\infty_v \mathcal{S}^\infty$, the following decay estimate holds, 
\[
 \big|\int_{\R^3}\int_{\R^3} e^{i x\cdot \xi + i t a(v)  \cdot \xi} m(\xi, v)|\xi|^{a}\widehat{g}(t, \xi, v) d v d\xi   \big|\lesssim \sum_{|\alpha|\leq 5+\lfloor a\rfloor} \big(\sum_{|\beta|\leq 5+\lfloor a\rfloor}   \|\nabla_v^\beta m(\xi,v)\|_{L^\infty_v\mathcal{S}^\infty}\big)
 \]
\be\label{densitydecay}
\times  \big[|t|^{-3-a}    \|(1+|v|)^{5+|a|} \nabla_v^\alpha \widehat{g}(t,0,v) \|_{L^1_v } +|t|^{-4-a} \| (1+|v|)^{5+|a|} (1+|x|)\nabla_v^\alpha g(t,x,v)\|_{L^1_x L^1_v}\big].
\ee
\end{lemma}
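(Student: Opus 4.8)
The plan is to prove the decay estimate by viewing the integral as the value at a point of an inverse Fourier transform and exploiting the non-stationary phase in the $\xi$-integral, combined with integration by parts in $v$ to convert the lack of compact support into polynomial weights. First I would dyadically decompose $\widehat{g}(t,\xi,v)$ in the frequency variable, writing the left-hand side as $\sum_{k\in\mathbb{Z}}\int\int e^{ix\cdot\xi+ita(v)\cdot\xi}m(\xi,v)|\xi|^a\widehat{g}(t,\xi,v)\psi_k(\xi)\,dv\,d\xi$. For the low-frequency pieces $2^k\lesssim |t|^{-1}$ one simply takes absolute values: the volume of the $\xi$-support is $\sim 2^{3k}$, the factor $|\xi|^a$ contributes $2^{ak}$, and summing the geometric series $\sum_{2^k\lesssim|t|^{-1}}2^{(3+a)k}$ (which converges since $a>-3$) produces the main term $|t|^{-3-a}$, with $\widehat{g}(t,\xi,v)$ replaced by $\widehat{g}(t,0,v)$ up to an error controlled by $\nabla_\xi\widehat{g}$, i.e. by the weight $|x|$ in physical space; the $m$ and $|\xi|^a$ factors are handled by the $\mathcal{S}^\infty_k$-norm bookkeeping.

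For the high-frequency regime $2^k\gtrsim |t|^{-1}$, the key point is that the phase $\Phi(\xi,v)=x\cdot\xi+ta(v)\cdot\xi$ has $\nabla_\xi\Phi = x+ta(v)$, which is genuinely large \emph{away} from the set $|x+ta(v)|\lesssim$ something. The standard trick here, following the Fourier-based approach of \cite{wang3}, is instead to integrate by parts in $v$: since $e^{ita(v)\cdot\xi}$ oscillates in $v$ with $v$-gradient of size $\sim t|\xi|$ (using that $\nabla_v\hat v$ is nondegenerate, or that $\nabla_v v = \mathrm{Id}$), one gains a factor $(t|\xi|)^{-1}\sim (t 2^k)^{-1}$ per integration by parts, at the cost of landing derivatives on $m(\xi,v)$, on $\widehat{g}$, and on the weights. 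Performing $5+\lfloor a\rfloor$ such integrations by parts (this is the origin of the exponent $5+\lfloor a\rfloor$ in the number of $v$-derivatives in the statement) yields a gain of $(t2^k)^{-5-\lfloor a\rfloor}$; after again taking absolute values, estimating $\int|\widehat g(t,\xi,v)|\psi_k(\xi)\,d\xi\lesssim 2^{3k}\|g(t,\cdot,v)\|_{L^1_x}$ and $\int|\nabla_\xi\widehat g|\psi_k\lesssim 2^{3k}\||x|g\|_{L^1_x}$, one is left to sum $\sum_{2^k\gtrsim|t|^{-1}}2^{(3+a)k}(t2^k)^{-5-\lfloor a\rfloor}$. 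Since $5+\lfloor a\rfloor > 3+a$, this series converges at its lower endpoint $2^k\sim|t|^{-1}$ and again produces $|t|^{-3-a}$ from the $\widehat g(t,0,v)$-type contribution and $|t|^{-4-a}$ from the $\nabla_\xi\widehat g$ contribution (the extra $|t|^{-1}$ coming precisely from the one derivative hitting $e^{ix\cdot\xi}$, equivalently the $|x|$-weight). The $v$-weights $(1+|v|)^{5+|a|}$ appear because each $v$-derivative of $a(v)=v$ or $\hat v$, of $m$, and of the measure produces at worst such polynomial growth, and because we must absorb the Jacobian factors; the $\nabla_v^\beta m$ terms are collected into the $L^\infty_v\mathcal{S}^\infty$ seminorms on the right.

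The main obstacle I expect is the careful tracking of the competition between the frequency weight $|\xi|^a$ (which is singular at $\xi=0$ when $a<0$ and growing when $a>0$) and the oscillatory gain: one must split at the correct threshold $2^k\sim|t|^{-1}$ and verify that in \emph{both} regimes the relevant geometric series converges, which forces the number of integrations by parts in $v$ to be at least $\lceil 3+a\rceil$, hence the $5+\lfloor a\rfloor$ in the hypothesis gives room to spare. A secondary technical point is to justify the $v$-integration by parts uniformly in $\xi$ on the support of $\psi_k$ — i.e. to control $|\nabla_v\Phi|=t|\xi|\sim t2^k$ from below on that support and to ensure no boundary terms arise, which is where the polynomial decay of the initial data (propagated to $g$ via the bootstrap, though here only the $L^1$-type norms of $g$ and $|x|g$ on the right are needed) enters. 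Everything else — the symbol calculus with $\mathcal{S}^\infty_k$, the Leibniz rule distributing $v$-derivatives, the $L^1_xL^1_v$ versus $L^1_v$ bookkeeping — is routine once the two-regime decomposition is in place.
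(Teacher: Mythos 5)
The paper does not actually prove this lemma; it is quoted verbatim from \cite{wang3}[Lemma 3.1], and your two-regime argument (dyadic decomposition in $\xi$ with threshold $2^k\sim |t|^{-1}$, direct estimation plus the zero-mode split $\widehat{g}(t,\xi,v)=\widehat{g}(t,0,v)+O(|\xi|\,\nabla_\xi\widehat g)$ at low frequency, integration by parts in $v$ at high frequency, with $a>-3$ making the low-frequency sum converge and $5+\lfloor a\rfloor>4+a$ making the high-frequency sum converge) is precisely the standard Fourier-based proof behind that citation. So the architecture is right.

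Two points in your write-up do not close as written. First, in the high-frequency regime you bound $\int|\widehat g(t,\xi,v)|\psi_k(\xi)\,d\xi\lesssim 2^{3k}\|g(t,\cdot,v)\|_{L^1_x}$; summing this gives $|t|^{-3-a}\|g\|_{L^1_xL^1_v}$, which is \emph{not} one of the allowed right-hand-side quantities — the $|t|^{-3-a}$ term is only permitted to see $\|\nabla_v^\alpha\widehat g(t,0,v)\|_{L^1_v}$, and the full $L^1_{x,v}$ norm only enters with the extra $|t|^{-1}$ and the weight $1+|x|$. You must perform the split $\nabla_v^\alpha\widehat g(t,\xi,v)=\nabla_v^\alpha\widehat g(t,0,v)+\int_0^1\xi\cdot\nabla_\xi\nabla_v^\alpha\widehat g(t,s\xi,v)\,ds$ in the high-frequency regime as well (your closing sentence suggests you intend this, but the displayed bound contradicts it). Second, the weight bookkeeping is too optimistic: for $a(v)=\hat v$ one has $|(\nabla_v\hat v)\xi|^2=\frac{|\xi|^2}{1+|v|^2}\big[\sin^2\theta+(1+|v|^2)^{-2}\cos^2\theta\big]$ with $\theta=\angle(\xi,v)$, so each integration by parts costs a factor as large as $(1+|v|^2)^{3/2}|\xi|^{-1}$ when $\xi$ is nearly parallel to $v$, and $5+\lfloor a\rfloor$ iterations naively cost $(1+|v|)^{3(5+\lfloor a\rfloor)}$, far exceeding the stated $(1+|v|)^{5+|a|}$. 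Reaching the stated weight requires either an additional dyadic decomposition in $\angle(\xi,v)$ (trading the small measure of the degenerate angular region against the worse lower bound there, as is done elsewhere in this paper, e.g.\ in the proof of Lemma \ref{twisteddecaylemma3}) or simply accepting a larger power of $1+|v|$, which would still suffice for every application in this paper but would not be the lemma as stated.
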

\begin{proof}
See \cite{wang3}[Lemma 3.1].
\end{proof}
In the later argument,   we will reduce  the Maxwell equation   to a nonlinear half wave equation, which is convenient to study on the Fourier side. 
For the linear half wave equation, we have  the following $L^\infty_x$-type decay estimate. 
\begin{lemma}[The linear decay estimate]\label{twistedlineardecay}
For any   $\mu \in\{+,-\}$, the following estimate holds, 
\[
\big|\int_{\R^3}  e^{ i x\cdot \xi-i \mu t |\xi|  }     m(\xi) \widehat{f}(\xi) \psi_k(\xi) d \xi \big|   \lesssim   \min\{2^{ k_{-}}, (1+|t|+|x|)^{-1} \} 2^{k}\|m(\xi)\|_{\mathcal{S}^\infty_k}
\]
\be\label{noveqn555}
\times \big(\sum_{|\alpha|\leq 1} 2^{k} \|\widehat{\nabla_x^\alpha f}(t, \xi)\psi_k(\xi)\|_{L^\infty_\xi} + 2^{2k} \|\nabla_\xi \widehat{  f}(t, \xi)\psi_k(\xi)\|_{L^\infty_\xi} \big).
\ee
\end{lemma}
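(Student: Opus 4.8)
The plan is to prove the linear decay estimate (\ref{noveqn555}) by a standard stationary-phase / integration-by-parts argument, localized in frequency to a single dyadic shell $|\xi|\sim 2^k$, and to keep careful track of how the derivative weights on $\widehat f$ enter. Write the oscillatory integral as $I(t,x):=\int_{\R^3} e^{i\Phi(\xi)} m(\xi)\widehat f(\xi)\psi_k(\xi)\,d\xi$ with phase $\Phi(\xi)=x\cdot\xi-\mu t|\xi|$. The bound $\min\{2^{k_-},(1+|t|+|x|)^{-1}\}2^k\|m\|_{\mathcal S^\infty_k}(\cdots)$ has two regimes: the trivial (no-oscillation) bound, valid always, and the decaying bound, which one needs only when $1+|t|+|x|\gg 2^{-k_-}$, i.e.\ roughly when $|t|+|x|\gtrsim 2^{-k}$ in the low-frequency case and $|t|+|x|\gtrsim 1$ in the high-frequency case.

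First I would dispose of the trivial bound. Since $m(\xi)\psi_k(\xi)=\mathcal F[\mathcal F^{-1}(m\psi_k)]$, Plancherel/Young lets us write $I=\big(\mathcal F^{-1}(m\psi_k)\ast (\text{something})\big)$; more directly, crudely estimating $|I|\le \|m\psi_k\|_{L^\infty}\int_{|\xi|\sim 2^k}|\widehat f(\xi)|\,d\xi \lesssim 2^{k}\|m\|_{\mathcal S^\infty_k}\cdot 2^{3k_-}\cdot 2^{-k}\|\widehat f\psi_k\|_{L^\infty_\xi}$ — wait, more carefully: the measure of $\{|\xi|\sim 2^k\}$ is $2^{3k}$, so $|I|\lesssim 2^{3k}\|m\psi_k\|_\infty\|\widehat f\psi_k\|_\infty$; comparing with the claimed $2^{k_-}2^k(\cdots)$, one sees the $L^\infty_\xi$ factors on the right already carry $2^k\|\widehat{\nabla_x f}\psi_k\|_\infty\sim 2^{2k}\|\widehat f\psi_k\|_\infty$ worth of room plus the $2^{2k}\|\nabla_\xi\widehat f\psi_k\|_\infty$ term, so the high-frequency case is immediate and the low-frequency trivial case follows from $2^{3k}\le 2^{k_-}\cdot 2^{2k}$ when $k\le0$. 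So the trivial regime is routine.

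The substantive part is the decay factor $(1+|t|+|x|)^{-1}$ when $|t|+|x|$ is large relative to $2^{-k_-}$. Here I would integrate by parts in $\xi$ using the vector field $\mathcal L=\frac{\nabla_\xi\Phi}{i|\nabla_\xi\Phi|^2}\cdot\nabla_\xi$, which satisfies $\mathcal L e^{i\Phi}=e^{i\Phi}$, after splitting into the region where $|\nabla_\xi\Phi|=|x-\mu t\xi/|\xi||$ is small (the stationary region, where $x$ and $\mu t\hat\xi$ nearly coincide, so $|x|\sim|t|$) and the region where it is bounded below. Away from the stationary set, one application of $\mathcal L$ (or a few, but one suffices given we only want one power of decay) gains a factor $\sim(|t|+|x|)^{-1}$ while the $\xi$-derivative lands either on $m\psi_k$ (controlled by $\|m\|_{\mathcal S^\infty_k}$ and $2^{-k}$ from the shell), on the amplitude $\widehat f\psi_k$ (producing the $\nabla_\xi\widehat f$ term, with the weight $2^{2k}$ accounting for $2^{-k}$ from $|\nabla_\xi\Phi|^{-1}$ near the shell boundary times $2^k$ normalization), or on $1/|\nabla_\xi\Phi|^2$ itself. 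For the stationary region, when $|t|\sim|x|$ is large, I would use the standard $3$-dimensional stationary phase bound for the half-wave: the Hessian of $|\xi|$ restricted to the sphere has two nonzero eigenvalues of size $|\xi|^{-1}\sim 2^{-k}$, giving a $(|t|2^{-k})^{-1}=2^k|t|^{-1}$ gain per the two tangential directions — i.e.\ $(t2^{-k})^{-1}$ — which combined with the $\|\widehat{\nabla_x f}\psi_k\|$ terms (absorbing the loss in the radial direction and the derivative falling on the amplitude) yields exactly the claimed $(1+|t|)^{-1}2^k\|m\|_{\mathcal S^\infty_k}(\cdots)$; and since on the stationary set $|x|\sim|t|$, replacing $|t|^{-1}$ by $(1+|t|+|x|)^{-1}$ is harmless.

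The main obstacle I anticipate is bookkeeping: making the two-regime split quantitatively consistent so the $\min\{2^{k_-},(1+|t|+|x|)^{-1}\}$ comes out cleanly, and ensuring that in the integration-by-parts the derivatives falling on $\widehat f$ generate \emph{at most} the combination $\sum_{|\alpha|\le1}2^k\|\widehat{\nabla_x^\alpha f}\psi_k\|_{L^\infty_\xi}+2^{2k}\|\nabla_\xi\widehat f\psi_k\|_{L^\infty_\xi}$ on the right — in particular that no second $\xi$-derivative of $\widehat f$ is needed, which is why only one (rather than the usual $N$) integration by parts is performed and why the estimate is stated with a single power of $(1+|t|+|x|)$ rather than arbitrary decay. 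A cleaner alternative, which I would likely use to avoid the delicate stationary-region analysis, is to run everything through the known fixed-time dispersive estimate for the half-wave propagator $e^{-i\mu t|\xi|}$ on Littlewood--Paley pieces, $\|e^{-i\mu t\d}P_k g\|_{L^\infty_x}\lesssim \min\{2^{3k},2^{2k}|t|^{-1}\}\|\widehat g\|_{L^1}$-type bounds combined with a $\xi$-integration-by-parts to convert $\|\widehat g\|_{L^1}$ over the shell into the $L^\infty_\xi$-norms of $\widehat f$ and $\nabla_\xi\widehat f$ with the stated $2^k$ weights; this reduces the whole lemma to quoting the classical half-wave dispersive estimate plus elementary Fourier-support bookkeeping.
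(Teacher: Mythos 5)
The paper does not prove this lemma here; it cites \cite{wang}[Lemma 2.2]. Your primary route --- the trivial volume bound for the $2^{k_-}$ branch, and integration by parts in $\xi$ with the vector field $\nabla_\xi\Phi/(i|\nabla_\xi\Phi|^2)\cdot\nabla_\xi$ for the decaying branch, split into a non-stationary region ($|x|\not\sim|t|$, where $|\nabla_\xi\Phi|\gtrsim|t|+|x|$) and a stationary region treated by dyadic decomposition in $\angle(\xi,x)$ --- is the right one, and it is exactly the technique this paper deploys for the closely related operator $T_k^\mu$ in the proof of Lemma \ref{twisteddecaylemma3} (see the partition of unity (\ref{partitionofunity}) with threshold $\bar l=-m/2-k/2$). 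Your bookkeeping of how the $2^k$ and $2^{2k}$ weights on $\|\widehat f\psi_k\|_{L^\infty_\xi}$ and $\|\nabla_\xi\widehat f\psi_k\|_{L^\infty_\xi}$ arise is also correct.

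There are two concrete problems. First, in the stationary region a \emph{single} angular integration by parts is not enough: per dyadic angle $\theta\sim 2^l$ the gain $(|x|\theta)^{-1}$ times the cost $2^{-k}\theta^{-1}$ times the volume $2^{3k}\theta^2$ is $2^{2k}|x|^{-1}$, independent of $l$, so summing over the $O(\log(2^k|t|))$ dyadic blocks between $\theta_0\sim(2^k|t|)^{-1/2}$ and $1$ loses a logarithm. The fix --- visible in the paper's own proof of Lemma \ref{twisteddecaylemma3} via the splitting $I_l=I_l^1+I_l^2$ --- is to integrate by parts a second time only in the terms where the first derivative did \emph{not} land on $\widehat f$ (it landed on the symbol, the cutoffs, or the phase denominator), and to estimate the term carrying $\nabla_\xi\widehat f$ directly; this restores summability in $l$ without ever producing $\nabla_\xi^2\widehat f$. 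Second, your proposed ``cleaner alternative'' of quoting the classical fixed-time dispersive estimate for $e^{-i\mu t\d}P_k$ does not apply as stated: that estimate requires $f\in L^1_x$ (or a Besov norm of $f$), whereas the hypotheses here are $L^\infty_\xi$ bounds on $\widehat f$ and $\nabla_\xi\widehat f$, which do not control $\|f\|_{L^1_x}$; the whole point of working in the $|\xi|^{-1}L^\infty_\xi$-type space is to avoid such physical-space integrability, so the Fourier-side stationary-phase argument cannot be bypassed.
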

\begin{proof}
See \cite{wang}[Lemma 2.2]. 
\end{proof}
\section{Two sets of vector fields for the relativistic Vlasov-Maxwell system}\label{vectorfieldssetup}
 
In this section, we review the framework we introduced in the first paper \cite{wang} for the study of $3D$ relativistic massive Vlasov-Nrodstr\"om system. This framework is very general and suitable for the study of Vlasov-wave type coupled system.

In \cite{wang}, we used two sets of vector fields for the relativistic Vlasov equation. The first set of vector fields for the distribution function $f(t,x,v)$ is given as follows, 
\be\label{firstsetfordist}
 \mathfrak{P}_1:=\{S,\tilde{\Omega}_i, \tilde{L}_i, \p_{x_i}, i\in \{1,2,3\} \}.
\ee
Correspondingly, we define the following set of vector fields for the electromagnetic field $(E(t,x), B(t,x))$, 
 \be\label{firstsetforeb}
 \mathcal{P}_1:=\{S,  {\Omega}_i,  {L}_i, \p_{x_i}, i\in \{1,2,3\} \},
 \ee
 where
 \be\label{dec18eqn5}
S:=t \p_t + x \cdot\nabla_x,\quad \Omega_i= X_i \cdot \nabla_x,  \quad \tilde{\Omega}_{i}:=  {V}_i \cdot \nabla_v +X_i \cdot \nabla_x,\quad i=1,2,3, 
\ee
\be\label{lorentz}
 \quad L_i:= t \p_{x_i} + x_i \p_t, \quad \tilde{L}_i: = t \p_{x_i} + x_i \p_t + \sqrt{1+|v|^2} \p_{v_i},\,\, L:=(L_1, L_2, L_3), \,\, \tilde{L}:=(\tilde{L}_1, \tilde{L}_2, \tilde{L}_3),
\ee
where ``$S$'', ``$\Omega_i$'', and ``$L_i$'' are the well-known scaling vector field, rotational vector fields, and the Lorentz vector fields, which all commutate with the linear operator of the nonlinear wave equation, see the classic works of Klainerman \cite{Klainerman1,Klainerman2} for the introduction of the original vector field method and the works of Fajman-Joudioux-Smulevici \cite{smulevic1,smulevic2,smulevic3} for more detailed introduction of  vector fields in $\mathfrak{P}_1.$

The second set of vector fields constructed in \cite{wang} aims  to better understand  $\nabla_v f$ in the nonlinearity of the Vlasov equation. To better see the structure of $\nabla_v f$,  we studied  the profile $g(t,x,v)$ of $f(t,x,v)$. More precisely, we define
 \[
g(t, x,v)= f(t, x+\hat{v}t, v), \Longrightarrow  \big(\p_t + \hat{v}\cdot\nabla_x) f(t,x,v)= \big(\p_t g \big)(t,x-\hat{v}t, v ),
 \]
 \[
\nabla_v f(t,x,v)= (D_v g)(t, x-\hat{v}t, v), \quad \textup{where\,}\, D_v:= \nabla_v - t\nabla_v \hat{v}\cdot\nabla_x.
 \]
 Therefore, for  any given vector field that commutes with $\p_t$, we can find a corresponding vector field that commutes with ``$\p_t +\hat{v}\cdot \nabla_x$''. With this intuition, we are looking for a good unknown $\omega(x,v)$	, which doesn't depend on $t$. Instead of decomposing $D_v$ into $\nabla_v$ and $-t \nabla_v \cdot \hat{v}\cdot\nabla_x $, we decompose it as  $\nabla_v- \omega(x,v)\nabla_x$ and $\omega(x,v)\nabla_x -t\nabla_v\hat{v}\cdot\nabla_x$. 

 The choice of good unknown $\omega(x,v)$ depends on an observation of the light cone $C_{t}:=\{(x,v): x,v \in \R^3, |t|-|x+t\hat{v}|=0\}$ in $(x,v)$ space. In \cite{wang}, we defined an inhomogeneous modulation for the light cone $|t|^2-|x+\hat{v}t|^2=0$ in $(x,v)$-space as follows, 
 \begin{definition}
We define the \textit{homogeneous modulation} $d(t,x,v)$ and the \textit{inhomogeneous modulation}  $\tilde{d}(t,x,v)$ as follows, 
\be\label{homogeneousmodulation}
 {d}(t,x,v):=  \frac{t}{1+|v|^2}-\frac{x\cdot v + \sqrt{(x\cdot v)^2+|x|^2} }{\sqrt{1+|v|^2}}, 
\ee
\be\label{inhomogeneousmodulation}
\tilde{d}(t,x,v):=  \frac{t}{1+|v|^2}-\frac{\omega (x,v)}{\sqrt{1+|v|^2}} ,\quad \textup{where}\, \omega(x, v)=  \psi_{\geq 0}(|x|^2 +(x\cdot v)^2)\big(x\cdot v +\sqrt{(x\cdot {v})^2  + |x|^2 }\big). 
\ee
\end{definition}
 The main intuition behind the above definition  is that $|t|^2-|x+\hat{v}t|^2=0$ if and only if $d(t,x,v)=0$. More precisely, the following identity holds, 
 \be\label{march18eqn54}
 |t^2|-|x+\hat{v}t|^2= \frac{t^2}{1+|v|^2}- \frac{2t x\cdot v}{\sqrt{1+|v|^2}} - |x|^2= d(t,x,v)\big(t- \sqrt{1+|v|^2}\big(x\cdot v -\sqrt{(x\cdot v)^2 +|x|^2}\big)\big).
 \ee
Moreover,  from (\ref{homogeneousmodulation})   (\ref{inhomogeneousmodulation}), and (\ref{march18eqn54}), it is easy to check that the following estimate holds, 
 \be\label{feb19eqn1}
|d(t,x,v)| + |\tilde{d}(t,x,v)|\lesssim 1+||t|-|x+\hat{v}t||.
 \ee

  	With the above motivation, we define, 
\be\label{eqq10}
K_v:= \nabla_v - \sqrt{1+|v|^2} \omega(x,v)\nabla_v  \hat{v}\cdot \nabla_x ,\quad S^v: = \tilde{v} \cdot \nabla_v, \quad S^x:= \tilde{v}  \cdot \nabla_x, \quad \Omega^v_i= \tilde{V}_i \cdot \nabla_v, \quad \Omega^x_i= \tilde{V}_i \cdot \nabla_x, 
\ee
where $i \in \{1,2,3\}$, $\tilde{v}$ and $\tilde{V}_i$  are defined in (\ref{eqn16}).  Moreover, we   define a set of vector fields  as follows, 
\be\label{sepeqq2}
   \widehat{S}^v:= \tilde{v}\cdot K_v= S^v - \frac{\omega(x,v)}{ {1+|v|^2}} S^x, \quad \widehat{\Omega}^{v}_i:=  \tilde{V}_i \cdot K_v = \Omega_i^v - \omega(x,v) \Omega_i^x,\quad  K_{v_i}:=K_v \cdot e_i. 
\ee

Note that the vector fields defined in (\ref{sepeqq2}) will be applied on the profile ``$g(t,x,v):=f(t,x+\hat{v}t,v)$'' instead of the original distribution ``$f(t,x,v)$''. Note that
\[
K_{v } g(t,x,v) = \big(\nabla_{v }- \sqrt{1+|v|^2}\omega(x,v) \nabla_{v }\hat{v}\cdot \nabla_x \big) \big(f(t,x+\hat{v}t, v)\big)
\]
\[
= (\nabla_{v }f)(t, x+\hat{v}t, v) +(t-  \sqrt{1+|v|^2}\omega(x,v)) \nabla_{v }\hat{v}\cdot \nabla_x f(t,x+\hat{v}t, v)=: (\widetilde{K}_{v } f)(t, x+\hat{v}t, v),
\]
where
\be\label{pullbackvector}
\widetilde{K}_{v }:= \nabla_{v }+ (t-  \sqrt{1+|v|^2}\omega(x-\hat{v} t ,v)) \nabla_{v }\hat{v}\cdot \nabla_x.
\ee
Since $K_v$ commutates with $\p_t$, we know that $\widetilde{K}_{v}$ commutates with the linear operator $\p_t + \hat{v} \cdot \nabla_x$ of the Vlasov equation.

To sum up,  we define the following  set of vector fields, which will be applied on \textit{the profile} $g(t,x,v)$ instead of the original distribution function $f(t,x,v)$, 
\be\label{dec28eqn1}
\mathfrak{P}_2:=\{\Gamma_i, \quad i\in\{1,\cdots, 17\}\},  
\ee
where 
\be\label{dec26eqn5}
\Gamma_1= \psi_{\geq 1}(|v|) \widehat{S}^v, \quad \Gamma_2:=\psi_{\geq 1}(|v|)S^x, \quad  \Gamma_{i+2}:=\psi_{\geq 1}(|v|)\widehat{\Omega}_i, \quad  \Gamma_{i+5}:=\psi_{\geq 1}(|v|) \Omega^x_i, 
\ee
\be\label{dec26eqn6}
\Gamma_{i+8}:=\psi_{\leq 0}(|v|) K_{v_i},\quad \Gamma_{i+11}:=\psi_{\leq 0}(|v|) \p_{x_i},\quad \Gamma_{i+14}:=\tilde{\Omega}_i, \quad i=1,2,3. 
\ee
Correspondingly, we can find the following associated set of  vector fields which will be applied on the original distribution function $f(t,x,v)$, 
\be\label{secondsetfordist}
\mathcal{P}_2:=\{\widehat{\Gamma}_i, \quad i\in\{1,\cdots, 17\}\}, 
\ee
where
\be\label{dec27eqn21}
\widehat{\Gamma}_1= \psi_{\geq 1}(|v|) \tilde{v}\cdot \widetilde{K}_v, \quad\widehat{\Gamma}_2:=\psi_{\geq 1}(|v|)S^x, \quad  \Gamma_{i+2}:=\psi_{\geq 1}(|v|)\tilde{V}_i \cdot \widetilde{K}_v, \quad  \widehat{\Gamma}_{i+5}:=\psi_{\geq 1}(|v|) \Omega^x_i, 
\ee
\be\label{dec27eqn22}
\widehat{\Gamma}_{i+8}:=\psi_{\leq 0}(|v|) K_{v_i},\quad \widehat{\Gamma}_{i+11}:=\psi_{\leq 0}(|v|) \p_{x_i},\quad \widehat{\Gamma}_{i+14}:=\tilde{\Omega}_i, \quad i=1,2,3. 
\ee

For convenience, we   define  notations to uniformly represents those vector fields.  The notations were introduced in \cite{wang}. For readers' convenience, we redefine them here. 
\begin{definition}
We define a set of vector fields as follows, 
\be\label{lowhighvdecomposition}
  X_1:= \psi_{\geq 1}(|v|)\tilde{v}\cdot D_v, \quad X_{i+1}=  \psi_{\geq 1}(|v|) \tilde{V}_i \cdot D_v,\quad  
X_{i+4}=\psi_{\leq 0}(|v|) D_{v_i}, \quad i=1,2,3,	
\ee
 From (\ref{lowhighvdecomposition}), we have 
 \be\label{noveq1}
 D_v = \tilde{v} X_1 + \tilde{V}_i X_{i+1} + e_i X_{i+4} :=\sum_{i=1,\cdots 7} \alpha_i(v) X_i,
 \ee
 where 
 \be\label{dec28eqn10}
\alpha_1(v):= \psi_{\geq 1}(|v|)\tilde{v}, \quad \alpha_{i+1}(v):= \psi_{\geq 1}(|v|)\tilde{V}_i, \quad \alpha_{i+4}(v):= \psi_{< 1}(|v|){e}_i,\quad i=1,2,3. 
\ee
 \end{definition}

For any vectors $ e=(e_1, \cdots, e_n)\in R^{n},$   $ f=(f_1,\cdots, f_m) \in \R^m$, where $e_1, \cdots, e_n, f_1, \cdots, f_m \in \R$,  we  define
\[
 e\circ f :=(e_1,\cdots, e_n,f_1, \cdots, f_m),\quad |e|:=\sum_{i=1,\cdots, n}|e_i|, \quad \Longrightarrow |e\circ f|=|e|+|f|.
\]
\begin{definition}
Let
\[
\mathcal{A}:=\{\vec{a}: \vec{a}\in\{0,1\}^{10}, |\vec{a}|=0,1\}, \quad \vec{0}:=(0,\cdots, 0),
\]
\[
  \vec{a}_i:=(0,\cdots,\underbrace{1}_{\text{$i$-th} },\cdots,0),\quad  \textit{if\,\,}   \vec{0}, \vec{a}_i\in \mathcal{A}, \quad \mathcal{B}:=\cup_{k\in\mathbb{N}_{+}}\mathcal{A}^k. 
\]
\be\label{dec28eqn28}
\Gamma^{\vec{0}} :=Id, \quad \Gamma^{\vec{a}_1}:=S, \quad \Gamma^{\vec{a}_{i+1}}:=\p_{x_i}, \quad \Gamma^{\vec{a}_{i+4}}:=\Omega_{i}, \quad \Gamma^{\vec{a}_{i+7}}:=L_i, \quad i=1,2,3,
\ee
\be\label{dec28eqn29}
\tilde{\Gamma}^{\vec{0}} :=Id, \quad \tilde{\Gamma}^{\vec{a}_1}:=  {S}, \quad \tilde{\Gamma}^{\vec{a}_{i+1}}:=\p_{x_i}, \quad \tilde{\Gamma}^{\vec{a}_{i+4}}:=\tilde{\Omega}_{i}, \quad \tilde{\Gamma}^{\vec{a}_{i+7}}:=\tilde{L}_i, \quad i=1,2,3.
\ee
Hence, we can represent the high order derivatives of the first set of vector field $\mathfrak{P}_1$ and $\mathcal{P}_1$ (see (\ref{firstsetfordist}) and (\ref{firstsetforeb})) as follows, 
\be\label{dec28eqn30}
\tilde{\Gamma}^{\alpha_1\circ\alpha_2}:=\tilde{\Gamma}^{\alpha_1}\tilde{\Gamma}^{\alpha_2}   ,\quad \Gamma^{\alpha_1\circ\alpha_2}:=\Gamma^{\alpha_1}\Gamma^{\alpha_2}, \quad \alpha_1, \alpha_2\in \mathcal{B}.
\ee
\end{definition}
\begin{definition}
We define
\[
\mathcal{K}:=\{\vec{e}:\,\,\vec{e}\in \{0,1\}^{17}, |\vec{e}|=0,1\},\quad \vec{0}:=(0,\cdots, 0), 
\vec{e}_i:=(0,\cdots,\underbrace{1}_{\text{$i$-th} },\cdots,0),\quad \textit{if\,\,} \vec{0}, \vec{e}_i\in \mathcal{K}, \]
\[
  \mathcal{S}:=\cup_{k\in\mathbb{N}_{+}}\mathcal{K}^{k},\quad 
\Lambda^{\vec{ {0}}}:=Id, \quad \Lambda^{\vec{e}_i}:=\Gamma_i,\quad \Gamma_i\in \mathfrak{P}_2, \quad  \vec{e}_i \in \mathcal{K},
\]
where $\mathfrak{P}_2$ is defined in (\ref{dec28eqn1}).
Hence, we can represent the high order derivatives of the second set of vector fields for the profile ``$g(t,x,v)$'' as follows, 
\[ 
\Lambda^{e\circ f}:= \Lambda^{e}\Lambda^{f},  \quad e, f\in \mathcal{S}.
\]
\end{definition}

\begin{definition}
For any $\kappa, \gamma \in \mathcal{S}$, we define the equivalence relation between ``$ \kappa$'' and ``$ \gamma$'' as follows,
\be\label{equivalencerelation}
 {\kappa}\thicksim  {\gamma}\, \textup{and}\,  \Lambda^{\kappa}\thicksim  \Lambda^{\gamma}, \,\, \textup{if and only if\,} \Lambda^{\kappa} h(x,v)= \Lambda^{\gamma} h(x,v)\,\,\textup{for any differentiable function\,} h(x,v),
\ee
\be\label{nontequivalencerelation}
  {\kappa}\nsim  {\gamma}\, \textup{and}\,  \Lambda^{\kappa}\nsim  \Lambda^{\gamma},\,\,   \textup{if and only if\,\,} \Lambda^{\kappa} h(x,v)\neq \Lambda^{\gamma} h(x,v)\,\,\textup{for all non-constant differentiable function\,} h(x,v).
\ee
Very similarly, we can define the corresponding equivalence relation for $\alpha_1, \alpha_2\in \mathcal{B}$. 
Note that, for any $\beta\in \mathcal{S}$ and $\alpha \in \mathcal{B}$, there exists a unique expansion such that 
\be\label{uniqueexpansion}
 {\beta} \thicksim {\iota_1\circ \cdots\iota_{|\beta|}},\quad \iota_i\in \mathcal{K}, |\iota_i|=1,\quad i\in\{1,\cdots, |\beta|\},
\ee
\be\label{uniqueexpansionBeta}
\alpha \thicksim \gamma_{1}\circ  \cdots \gamma_{|\alpha|}, \quad \gamma_i\in \mathcal{A}, |\gamma_i|=1, \quad i \in \{1,\cdots, |\alpha|\}.
\ee
\end{definition}

\begin{definition}\label{definitiongoodderivative}
For any $\iota\in \mathcal{K}/\{\vec{0}\}$ and $\beta\in \mathcal{S}$, we define two  indexes as follows, 
 \be\label{countingnumber}
c(\iota) =\left\{\begin{array}{ll}
1 & \textup{if\,} \Lambda^\iota \thicksim \widehat{S}^v, \textup{or}\, \Omega_i^x, i\in\{1,2,3\}  \\
\\
0 & \textup{otherwise} \\
\end{array}\right. , \quad i(\iota)=\left\{\begin{array}{ll}
1 & \textup{if}\, \Lambda^\iota \thicksim  {\Omega}_i^x, i\in\{1,2,3\}\\ 
\\ 
0 & \textup{otherwise}
\end{array}\right.,
\ee
\be\label{march26eqn1}
  c(\beta)= \sum_{i=1,\cdots, |\beta|} c(\iota_{i}),  \quad   i(\beta)=\sum_{i=1, \cdots, |\beta|} i(\iota_i), \quad  \textup{where}\, \beta \thicksim \iota_1\circ \cdots \iota_{|\beta|},  \iota_i\in \mathcal{K}/\{\vec{0}\}. 
\ee
\end{definition}
\begin{remark}
The indexes $c(\beta)$ and $i(\beta)$ defined above are same as the index $c_{\textup{vm}}(\beta)$ and $i(\beta)$ defined in \cite{wang}.  The index ``$c(\beta)$''  indicates the total number of ``\textit{good derivatives}'', which are  {$\widehat{S}^v$} and  {$\Omega_i^x$, $i\in\{1,2,3\}$}, inside the total derivative   ``$\Lambda^\beta$''.  We will explain with more details about in what sense derivatives $\widehat{S}^v$ and $\Omega_i^x$  are   ``\textit{good}'' in subsection \ref{reductionoflemmabulk}. The index $i(\beta)$  counts the total number of $\Omega_i^x,i\in\{1,2,3\}, $ derivatives inside $\Lambda^\beta$.
\end{remark}

 With the above defined notation and the vector fields defined in $\mathfrak{P}_1$ (\ref{firstsetfordist}) and $\mathfrak{P}_2$ (\ref{dec28eqn1}), we can understand the bulk derivative ``$D_v$'' as two linear combinations of the above defined vector fields with \textit{good} coefficients as in the following Lemma. As we mentioned in the subsection \ref{losingvissue},   two decompositions in (\ref{summaryoftwodecomposition}) correspond to the two decompositions in (\ref{march6eqn20}) and (\ref{march6eqn21}) 
 \begin{lemma}\label{twodecompositionlemma}
The following two decompositions for ``$D_v$'' holds, 
\be\label{summaryoftwodecomposition}
D_v=\sum_{\rho\in \mathcal{K},|\rho|=1}d_{\rho}(t,x,v) \Lambda^\rho= \sum_{\rho\in \mathcal{K},|\rho|=1}e_{\rho}(t,x,v) \Lambda^\rho,
\ee
where
\be\label{sepeq947}
 {d}_{\rho}(t,x,v)=\left\{\begin{array}{ll}
\tilde{v} \psi_{\geq -1}(|v|)& \textup{if\,\,} \Lambda^\rho \thicksim \psi_{\geq   1}(|v|) \widehat{S}^v \\
\tilde{v} \tilde{d}(t,x,v) (1+|v|^2)^{-1/2}  \psi_{\geq -1}(|v|) & \textup{if\,\,} \Lambda^\rho \thicksim \psi_{\geq 1}(|v|) {S}^x\\
\tilde{V}_i  \psi_{\geq -1}(|v|) & \textup{if\,\,} \Lambda^\rho \thicksim  \psi_{\geq 1}(|v|) \widehat{\Omega}_i^v , i=1,2,3\\
\tilde{V}_i \tilde{d}(t,x,v) (1+|v|^2)^{ 1/2}  \psi_{\geq -1}(|v|) & \textup{if\,\,} \Lambda^\rho \thicksim   \psi_{\geq 1}(|v|) {\Omega}^x_i , i=1,2,3\\
  \psi_{\leq 2 }(|v|) &  \textup{if\,\,} \Lambda^\rho \thicksim   \psi_{\leq 0}(|v|) K_{v_i} , i=1,2,3	\\
- \tilde{d}(t,x,v)(1+|v|^2)\nabla_v\hat{v}_i \psi_{\leq 2}(|v|) &  \textup{if\,\,} \Lambda^\rho \thicksim   \psi_{\leq 0}(|v|) \p_{x_i} , i=1,2,3 \\
0 & \textup{if\,\,}  \Lambda^\rho \thicksim   \tilde{\Omega}_i , i=1,2,3 \\
 \end{array}\right. ,
\ee
\be\label{sepeq932}
 {e}_{\rho}(t,x,v)=\left\{\begin{array}{ll}
\tilde{v} \psi_{\geq -1}(|v|) & \textup{if\,\,} \Lambda^\rho \thicksim \psi_{\geq 1}(|v|) \widehat{S}^v \\
\displaystyle{-\psi_{\geq -1}(|v|)\big(\frac{ \tilde{d}(t,x,v) \tilde{v}}{(1+|v|^2)^{1/2}}+\frac{\tilde{V}_i(X_i\cdot \tilde{v})}{|v|}}\big)  &   \textup{if\,\,} \Lambda^\rho \thicksim \psi_{\geq 1}(|v|) {S}^x\\
0 &  \textup{if\,\,} \Lambda^\rho \thicksim  \psi_{\geq  1}(|v|) \widehat{\Omega}_i^v , i=1,2,3\\
-\psi_{\geq -1 }(|v|) |v|^{-1}{\tilde{V}_j}(X_j+\hat{V}_j t )\cdot \tilde{V}_i  & \textup{if\,\,} \Lambda^\rho \thicksim   \psi_{\geq 1}(|v|) {\Omega}^x_i , i=1,2,3\\
\psi_{\leq  2}(|v|)&   \textup{if\,\,} \Lambda^\rho \thicksim   \psi_{\leq 0}(|v|) K_{v_i} , i=1,2,3	\\
- \psi_{\leq  2}(|v|) \tilde{d}(t,x,v)(1+|v|^2)\nabla_v\hat{v}_i &  \textup{if\,\,} \Lambda^\rho \thicksim   \psi_{\leq 0}(|v|) \p_{x_i} , i=1,2,3 \\
   \psi_{\geq -1}(|v|)|v|^{-1}{\tilde{V}_i} & \textup{if\,\,}  \Lambda^\rho \thicksim   \tilde{\Omega}_i , i=1,2,3 \\
\end{array}\right. .
\ee
From the detailed formula of $d_{\rho}(t,x,v)$ in \textup{(\ref{sepeq947})}, we have
\be\label{jan15eqn2}
\sum_{\rho\in \mathcal{K}, |\rho|=1} \big| (1+|v|)^{-c(\rho)} d_{\rho}(t,x,v)\big|\lesssim 1+|\tilde{d}(t,x,v)|. 
\ee
 \end{lemma}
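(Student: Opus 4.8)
\textbf{Proof strategy for Lemma \ref{twodecompositionlemma}.}

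The plan is to verify the two decompositions in (\ref{summaryoftwodecomposition}) by direct algebraic computation, starting from the definition $D_v = \nabla_v - t\nabla_v\hat{v}\cdot\nabla_x$ (recall $\nabla_v f(t,x,v) = (D_v g)(t,x-\hat{v}t,v)$) and the orthogonal decomposition of $\mathbb{R}^3$ into the radial direction $\tilde v$ and the rotational directions $\tilde V_i$ given in (\ref{octeqn456}). First I would split into the two regimes $|v|\gtrsim 1$ and $|v|\lesssim 1$ dictated by the cutoffs in $\mathfrak{P}_2$: on $\{|v|\leq 2\}$ the operators $K_{v_i}$ and $\partial_{x_i}$ (i.e.\ $\Gamma_{i+8},\Gamma_{i+11}$) together with the identity $D_{v_i} = K_{v_i} + (\sqrt{1+|v|^2}\,\omega(x,v) - t)\nabla_v\hat v_i\cdot\nabla_x$ and (\ref{inhomogeneousmodulation})--(\ref{march18eqn54}) rewriting $t - \sqrt{1+|v|^2}\,\omega(x,v) = \sqrt{1+|v|^2}\,(1+|v|^2)\,\tilde d(t,x,v)$ immediately give the last two ``$\psi_{\leq 0}$'' rows of both (\ref{sepeq947}) and (\ref{sepeq932}). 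On $\{|v|\geq 1\}$ I would contract $D_v$ against $\tilde v$ and against $\tilde V_i$ separately, using the right-most identities in (\ref{octeqn456}): $\tilde v\cdot\nabla_v\hat v = \tilde v/(1+|v|^2)^{3/2}$ and $\tilde V_i\cdot\nabla_v\hat v = \tilde V_i/(1+|v|^2)^{1/2}$.

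For the \emph{first} decomposition (the coefficients $d_\rho$, corresponding to (\ref{march6eqn20})), I would write $\tilde v\cdot D_v = S^v - t\,(\tilde v\cdot\nabla_v\hat v)\cdot\nabla_x = \widehat S^v + \tfrac{\omega(x,v)}{1+|v|^2}S^x - \tfrac{t}{(1+|v|^2)^{3/2}}\tilde v\cdot\nabla_x$, and then collapse the two $S^x$-type terms using $\tfrac{\omega}{1+|v|^2} - \tfrac{t}{(1+|v|^2)^{3/2}} = -\tfrac{\tilde d(t,x,v)}{\sqrt{1+|v|^2}}$ from (\ref{inhomogeneousmodulation}); this yields rows one and two. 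The rotational contraction $\tilde V_i\cdot D_v = \Omega_i^v - t(\tilde V_i\cdot\nabla_v\hat v)\cdot\nabla_x = \widehat\Omega_i^v + \omega(x,v)\Omega_i^x - \tfrac{t}{\sqrt{1+|v|^2}}\tilde V_i\cdot\nabla_x = \widehat\Omega_i^v + \big(\omega - \tfrac{t}{\sqrt{1+|v|^2}}\big)\Omega_i^x$; since $\omega - \tfrac{t}{\sqrt{1+|v|^2}} = -\sqrt{1+|v|^2}\,\tilde d(t,x,v)$, this gives rows three and four with the stated weight $(1+|v|^2)^{1/2}$. Reassembling via $D_v = \tilde v\,(\tilde v\cdot D_v) + \sum_i \tilde V_i\,(\tilde V_i\cdot D_v)$ and inserting the appropriate $\psi_{\geq 1}$ (and, after the usual harmless fattening of supports, $\psi_{\geq -1}$) cutoffs to match the domains of the $\Gamma_j$'s produces exactly (\ref{sepeq947}); the $\tilde\Omega_i$ row is $0$ because $\tilde\Omega_i$ is not used in this first decomposition.

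For the \emph{second} decomposition (the coefficients $e_\rho$, corresponding to (\ref{march6eqn21})), the idea is instead to trade the radial-$x$ derivative appearing above for the rotational vector field $\tilde\Omega_i = V_i\cdot\nabla_v + X_i\cdot\nabla_x$. From $\tilde\Omega_i = |v|\,\Omega_i^v + X_i\cdot\nabla_x$ one solves $\Omega_i^v = \tfrac{1}{|v|}\tilde\Omega_i - \tfrac{1}{|v|}X_i\cdot\nabla_x$, and similarly after commuting with the flow $X_i \mapsto X_i + \hat V_i t$ on the profile level (this is where the $(X_j + \hat V_j t)$ combination in row four of (\ref{sepeq932}) comes from). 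Substituting into $\tilde V_i\cdot D_v = \Omega_i^v - t(\tilde V_i\cdot\nabla_v\hat v)\cdot\nabla_x$ and then decomposing the leftover $\nabla_x$ back into its $\tilde v$ and $\tilde V_j$ components via (\ref{octeqn456}) yields the $|v|^{-1}$-weighted coefficients, and a parallel manipulation for the radial contraction produces the second row of (\ref{sepeq932}); the $K_{v_i},\partial_{x_i}$ rows are unchanged from the first decomposition and the $\tilde\Omega_i$ row now carries $|v|^{-1}\tilde V_i$. Finally, (\ref{jan15eqn2}) follows by simple inspection of (\ref{sepeq947}): only the $S^x$ and $\Omega_i^x$ rows contain a factor of $\tilde d(t,x,v)$, and precisely these are the rows for which $c(\rho)=1$ (by Definition \ref{definitiongoodderivative}), so dividing by $(1+|v|)^{c(\rho)}$ the weights $(1+|v|^2)^{\mp 1/2}$ become $\lesssim 1$ and every coefficient is bounded by $1 + |\tilde d(t,x,v)|$; all other rows have $c(\rho)=0$ and bounded coefficients.

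\textbf{Main obstacle.} The routine part is the linear algebra; the delicate point is bookkeeping the cutoffs. One must check that composing the smooth cutoffs $\psi_{\geq 1}(|v|)$ built into the $\Gamma_j$ with the coefficients (which are only defined, e.g.\ $\tilde v = v/|v|$, away from $v=0$) is consistent, hence the appearance of the slightly fattened $\psi_{\geq -1}(|v|)$ and $\psi_{\leq 2}(|v|)$ multipliers in (\ref{sepeq947})--(\ref{sepeq932}): one is implicitly using that $\psi_{\geq 1}(|v|)\,\psi_{\geq -1}(|v|) = \psi_{\geq 1}(|v|)$ and similarly for the low-frequency pieces, so that multiplying $\Lambda^\rho = \psi_{\geq 1}(|v|)(\cdots)$ by the stated $d_\rho$ reproduces the correct term without spurious support near $|v|\sim 1$. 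The other mild subtlety, already flagged above, is the passage between $X_i$ acting on $f(t,x,v)$ and $X_i + \hat V_i t$ acting on the profile $g(t,x,v) = f(t,x+\hat v t, v)$, which is the reason $e_\rho$ for the $\Omega_i^x$ row involves $X_j + \hat V_j t$ rather than just $X_j$; keeping the profile-versus-physical-space distinction straight throughout is where an error would most easily creep in.
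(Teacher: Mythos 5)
The paper does not prove this lemma at all: it simply cites \cite{wang}[Lemma 3.4]. Your direct algebraic verification is therefore a self-contained substitute for that citation, and the strategy — contract $D_v$ against $\tilde v$ and $\tilde V_i$ via (\ref{octeqn456}), convert between $S^v,\Omega_i^v$ and $\widehat S^v,\widehat\Omega_i^v$ using $\omega$, collapse the leftover $\nabla_x$-terms with the identity relating $\omega$, $t$ and $\tilde d$, and for the second decomposition trade $\Omega_i^v$ for $\tilde\Omega_i$ with the profile shift $X_i\mapsto X_i+\hat V_i t$ — is exactly the right one and does produce both tables. Your support bookkeeping for the cutoffs is also correct.

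Three slips to fix. First, your low-$|v|$ identity should read $t-\sqrt{1+|v|^2}\,\omega(x,v)=(1+|v|^2)\,\tilde d(t,x,v)$; you have an extra factor $\sqrt{1+|v|^2}$, and it is the correct version that produces the weight $(1+|v|^2)$ in the $\p_{x_i}$ rows of (\ref{sepeq947})--(\ref{sepeq932}). Second, your computation gives $\tilde v\cdot D_v=\widehat S^v-\tilde d(1+|v|^2)^{-1/2}S^x$ and $\tilde V_i\cdot D_v=\widehat\Omega_i^v-\sqrt{1+|v|^2}\,\tilde d\,\Omega_i^x$, i.e.\ with minus signs, whereas rows two and four of (\ref{sepeq947}) are stated with plus signs; the minus sign is confirmed by the $-\tilde d\tilde v(1+|v|^2)^{-1/2}$ term in the $S^x$ row of (\ref{sepeq932}), so this is almost certainly a sign typo in the printed table rather than an error in your algebra, but you assert agreement without noticing the discrepancy — you should flag it, since only $|d_\rho|$ is used downstream. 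Third, your justification of (\ref{jan15eqn2}) misidentifies the good derivatives: by Definition \ref{definitiongoodderivative}, $c(\rho)=1$ for $\Lambda^\rho\thicksim\widehat S^v$ and $\Omega_i^x$, \emph{not} for $S^x$ and $\Omega_i^x$. The estimate still follows by inspection — the $S^x$ coefficient already carries $(1+|v|^2)^{-1/2}\lesssim 1$ with $c(\rho)=0$, the $\Omega_i^x$ coefficient's $(1+|v|^2)^{1/2}$ is absorbed by $(1+|v|)^{-c(\rho)}=(1+|v|)^{-1}$, and the low-$|v|$ $\p_{x_i}$ row is bounded on its support — but the stated reason is wrong as written.
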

 \begin{proof}
 See \cite{wang}[Lemma 3.4]. 
 \end{proof}
 As stated in the following Lemma,  a very interesting property of the inhomogeneous modulation ``$\tilde{d}(t,x,v)$'' is that its structure is stable when the vector fields in $\mathfrak{P}_2$ (see (\ref{dec28eqn1})) act  on it.  
 \begin{lemma}\label{derivativesofcoefficient}
For any $\rho \in \mathcal{S}, |\rho|=1$, the following equality holds, 
\be\label{dec26eqn1}
\Lambda^\rho(\tilde{d}(t,x,v)) = e^\rho_1(x,v) \tilde{d}(t,x,v) + e^\rho_2(x,v),  \quad 
D_v(\tilde{d}(t,x,v)) = \hat{e}_1(x,v) \tilde{d}(t,x,v) + \hat{e}_2(x,v),
\ee
where the coefficients  satisfy the following estimate, 
\be\label{jan26eqn101}
|e^\rho_1(x,v)|+ |e^\rho_2(x,v)| + |\hat{e}_1(x,v)|+ |\hat{e}_2(x,v)|   \lesssim 1, \quad |\hat{e}_2|(x,v)\psi_{\geq 2}(|x|)=0.
\ee
Moreover, the following rough estimate holds for any $\beta\in \mathcal{S}$, 
\be\label{jan23eqn11}
\sum_{i=1,2}|\Lambda^\beta e^\rho_i(x,v) | + |\Lambda^\beta \hat{e}_i (x,v) |\lesssim (1+|x|)^{|\beta|} (1+|v|)^{|\beta|}.
\ee
\end{lemma}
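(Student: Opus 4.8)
The plan is to establish the two identities in (\ref{dec26eqn1}) by direct computation, exploiting the explicit algebraic form of $\tilde d(t,x,v)$ given in (\ref{inhomogeneousmodulation}), and then to bootstrap the coefficient bounds (\ref{jan26eqn101}) and (\ref{jan23eqn11}) from the structural stability that the first identity encodes. First I would recall that $\tilde d(t,x,v) = t/(1+|v|^2) - \omega(x,v)/\sqrt{1+|v|^2}$ with $\omega(x,v)=\psi_{\geq 0}(|x|^2+(x\cdot v)^2)\big(x\cdot v + \sqrt{(x\cdot v)^2+|x|^2}\big)$, so that the only $t$-dependence sits in the term $t/(1+|v|^2)$, which is annihilated by every vector field in $\mathfrak P_2$ that involves only $x$-derivatives, and is multiplied by a bounded factor under the $v$-derivative vector fields $\widehat S^v, \widehat\Omega^v_i, K_{v_i}$ (using the third and fourth identities in (\ref{octeqn456}), which control $\nabla_v\hat v$). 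The key point to verify is that the ``homogeneous part'' $\sqrt{(x\cdot v)^2+|x|^2}$ and the linear part $x\cdot v$ are each sent, under any single vector field $\Lambda^\rho$ with $|\rho|=1$, to a bounded linear combination of themselves, of $|x|$, and of lower-order smooth bounded functions; for instance $S^x = \tilde v\cdot\nabla_x$ and $\Omega^x_i = \tilde V_i\cdot\nabla_x$ act on $\sqrt{(x\cdot v)^2+|x|^2}$ to produce $\big((x\cdot v)(\tilde v\cdot v) + x\cdot\tilde v\big)/\sqrt{(x\cdot v)^2+|x|^2}$ and similar, each of which is $\lesssim 1$ in modulus and of the stated form after multiplying and dividing by $\sqrt{1+|v|^2}$ or $|v|$ as appropriate. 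The cutoff $\psi_{\geq 0}$ and its derivatives are supported where $|x|^2+(x\cdot v)^2\sim 1$, which is exactly where one can afford to absorb things into $e^\rho_2$ rather than into the $\tilde d$-proportional term; this explains the claim $|\hat e_2|(x,v)\psi_{\geq 2}(|x|)=0$, since on $\{|x|\geq 2\}$ one has $\psi_{\geq 0}\equiv 1$ and the $D_v$-derivative of $\omega$ hits only the smooth homogeneous bracket, which gets absorbed entirely into the $\tilde d$-proportional piece $\hat e_1\tilde d$ using again $D_v(t/(1+|v|^2)) = -t\,\nabla_v\hat v\cdot\nabla_x(\cdots)+\nabla_v(t/(1+|v|^2))$ and the relation $t/(1+|v|^2) = \tilde d + \omega/\sqrt{1+|v|^2}$ to re-express any leftover $t$. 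I would carry out the $D_v$ case first, since it is the cleanest (it is the raw decomposition $\nabla_v - t\nabla_v\hat v\cdot\nabla_x$ with the good unknown $\omega$ already built in), then treat the seventeen vector fields of $\mathfrak P_2$ in three groups: the $x$-only ones ($S^x,\Omega^x_i,\partial_{x_i}$, which kill the $t$-term outright), the ``hatted'' $v$-ones ($\widehat S^v = S^v - \omega(1+|v|^2)^{-1}S^x$, $\widehat\Omega^v_i = \Omega^v_i - \omega\,\Omega^x_i$, and $K_{v_i}$), and the relativistic rotation $\tilde\Omega_i$; in each case the restriction $|\rho|=1$ keeps the bookkeeping finite and the cutoffs $\psi_{\geq 1}(|v|),\psi_{\leq 0}(|v|)$ make every coefficient of the form $|v|^{\pm 1}$ bounded on the relevant support.

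For the bounds (\ref{jan26eqn101}), once the explicit formulas for $e^\rho_1,e^\rho_2,\hat e_1,\hat e_2$ are in hand they are all quotients of polynomials in $(x,v)$ by $\sqrt{(x\cdot v)^2+|x|^2}$, $\sqrt{1+|v|^2}$, or $|v|$, each homogeneous of total degree $0$ in the sense that numerator and denominator balance, times bounded smooth cutoffs; the Cauchy–Schwarz inequality $|x\cdot v|\leq |x||v|$ together with $|x|\leq\sqrt{(x\cdot v)^2+|x|^2}$ gives the uniform $O(1)$ bound, and the support property of $\hat e_2$ is read off as above. The rough estimate (\ref{jan23eqn11}) then follows by iterating the first identity: applying $\Lambda^\beta$ and using that each additional vector field in $\mathfrak P_2$ either differentiates a bounded smooth function (producing at worst one more power of $(1+|x|)(1+|v|)$ from the polynomial coefficients and the cutoff derivatives, which are harmless) or acts on $\tilde d$ itself, in which case (\ref{dec26eqn1}) reduces the count by one and replaces $\tilde d$ by $e^\rho_1\tilde d + e^\rho_2$ with bounded $e^\rho_i$; a straightforward induction on $|\beta|$, combined with $|\tilde d(t,x,v)|\lesssim 1+||t|-|x+\hat v t||\lesssim 1+|x|$ hidden inside — wait, rather: keeping $\tilde d$ as an abstract bounded-per-slice symbol and only tracking the polynomial-in-$(x,v)$ growth of the coefficients — yields the claimed $(1+|x|)^{|\beta|}(1+|v|)^{|\beta|}$. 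The induction is clean because $\mathfrak P_2$ is stable under commutators up to lower order (this is the content of the commutation rules recorded in Section \ref{vectorfieldssetup}).

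The main obstacle, I expect, is not any single computation but the \emph{uniformity across all seventeen vector fields and both decompositions}: one must check that the hatted fields $\widehat S^v,\widehat\Omega^v_i$ — which were designed precisely so that their coefficients vanish near the light cone — do not reintroduce an unbounded $\omega$ when they hit $\tilde d$, and this works only because the combination $S^v - \omega(1+|v|^2)^{-1}S^x$ applied to $t/(1+|v|^2) - \omega/\sqrt{1+|v|^2}$ produces a cancellation of the dangerous $\omega S^x(t/(1+|v|^2))$-type term; verifying this cancellation, and the analogous one for $\widehat\Omega^v_i$, together with keeping track of which smooth cutoff ($\psi_{\geq 0}$ in $\omega$ versus $\psi_{\geq 1}(|v|)$/$\psi_{\leq 0}(|v|)$ in $\Gamma_i$) is responsible for which boundedness, is the delicate part. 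Everything else is bounded-symbol bookkeeping, and since the analogous statement was proved in \cite{wang} for the Vlasov–Nordström setting (where $\tilde d$ is literally the same object, by (\ref{inhomogeneousmodulation})), I would in fact cite that computation wherever the $v$-weights coincide and only spell out the steps where the absence of the good coefficient $1/\sqrt{1+|v|^2}$ forces a slightly different accounting — but for this lemma, which concerns only $\tilde d$ and not the Vlasov nonlinearity, that difference does not arise, so the proof reduces essentially to transcribing \cite{wang}[Lemma 3.4 and its corollaries] with the observation that $\tilde d$ here is unchanged.
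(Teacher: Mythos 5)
The paper's proof of this lemma is a single citation to the companion paper: it invokes \cite{wang}[Lemma~A.1] verbatim, because $\tilde d$ and the vector fields in $\mathfrak P_2$ are identical in both papers. Your proposal correctly identifies that the result transfers unchanged from \cite{wang} and sketches the underlying computation, but you point to \cite{wang}[Lemma~3.4] — that is the decomposition lemma for $D_v$ (reproduced here as Lemma~\ref{twodecompositionlemma}), not the derivative-stability statement for $\tilde d$, which lives in \cite{wang}[Lemma~A.1].

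On the substance, two points merit tightening. First, your explanation of $|\hat e_2|(x,v)\psi_{\geq 2}(|x|)=0$ as the $\omega$-contribution being ``absorbed into $\hat e_1\tilde d$'' is not quite the right mechanism. After writing $D_v\tilde d$ out and substituting $t=(1+|v|^2)\tilde d+\sqrt{1+|v|^2}\,\omega$ to eliminate $t$, one is left, on $\{|x|\geq 2\}$, with a collection of $\omega$-proportional terms that do not go into $\hat e_1\tilde d$; rather, they \emph{cancel identically}. Concretely, writing $R=\sqrt{(x\cdot v)^2+|x|^2}$ one has $\nabla_v\omega = x\omega/R$, $S^x\omega=(|v|\omega+x\cdot\tilde v)/R$, $\Omega_i^x\omega=\tilde V_i\cdot x/R$, and the combination $-\tfrac{v\omega}{(1+|v|^2)^{3/2}}-\tfrac{\nabla_v\omega}{\sqrt{1+|v|^2}}+\omega\nabla_v\hat v\cdot\nabla_x\omega$ collapses to $0$ after using $\sum_i\tilde V_i(\tilde V_i\cdot x)=x-\tilde v(x\cdot\tilde v)$ and $(x\cdot v)|v|+x\cdot\tilde v=(x\cdot\tilde v)(1+|v|^2)$. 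This exact algebraic cancellation (not an absorption) is the crux of the lemma, and a complete proof would need to display it. Second, your argument for (\ref{jan23eqn11}) conflates two different inductions: (\ref{jan23eqn11}) is a polynomial-growth bound on $\Lambda^\beta$ applied to the coefficients $e^\rho_i,\hat e_i$, which are functions of $(x,v)$ alone and do not involve $\tilde d$, so ``iterating the first identity'' is not the relevant step — what is needed is that each vector field in $\mathfrak P_2$ has coefficients of size $\lesssim (1+|x|)(1+|v|)$ and thus costs at most one such factor per derivative applied to any smooth symbol of the type at hand. Neither of these is fatal — the broad strokes match what the cited computation does — but as written the proof would not compile into a self-contained argument without filling in the cancellation identity above and rephrasing the bookkeeping for (\ref{jan23eqn11}).
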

\begin{proof}
See \cite{wang}[Lemma A.1]. 
\end{proof}
 
Through using the vector fields in $\mathcal{P}_1$ in (\ref{firstsetforeb}), we can trade one spatial derivative for the decay of the distance with respect to the light cone  in the following sense,
\be\label{feb16eqn1}
(|t|-|x|)\p_i = \sum_{j=1,2,3} \frac{-x_j}{|t|+|x|} \Omega_{ij} + \frac{t}{|t|+|x|} L_i - \frac{x_i}{|t|+|x|}S, \quad i\in\{1,2,3\}, 
\ee  
where $ \Omega_{ij}=x_i\p_{x_j}- x_j\p_{x_i}\in \{\pm \Omega_i, i\in\{1,2,3\}\}$.

We will use this idea to  prove that the electromagnetic field $(E(t,x),B(t,x))$ decays at rate $1/\big((1+|t|)(1+||t|-|x||)$, which is slight stronger than the Klainerman-Sobolev embedding. To this end, instead of dealing with a perfect spatial derivative, we will deal with Fourier multiplier operators.  For any given symbol $m(\xi)\in \mathcal{S}^\infty$ and the associated  Fourier multiplier operator $T$, we   derive an analogue of (\ref{feb16eqn1}) in the following Lemma.

\begin{lemma}\label{tradethreetimes}
For any given Fourier multiplier operator $T$ with the Fourier symbol $m(\xi)$,  
the following equality holds for any $k\in \mathbb{Z},$ 
\be\label{noveqn171}
(|t|  -|x| )^3 T_k[f](t,x)= \sum_{
\begin{subarray}{c}
i=0,1,2,
\alpha\in \mathcal{B}, |\alpha|\leq 3\\
\end{subarray} } \tilde{c}_{\alpha}^i(t,x) \tilde{T}^i_{k,\alpha}(\p_t^i f^\alpha)+(|t| -|x| )e_{\alpha}(t, x) \tilde{T}_{k,\alpha}^3((\p_t^2-\Delta)f),
\ee
where  the coefficients $\tilde{c}_{\alpha}^i(t,x),  i=0,1,2 $, and $e_\alpha(t,x)$,   satisfy the following estimates 
\be\label{april3eqn1}
 |\tilde{c}_{\alpha}^i(t,x)|+ |t\p_t \tilde{c}_{\alpha}^i(t,x)|+  |e_{\alpha}(t, x)|+  |t\p_t e_{\alpha}(t,x)|\lesssim1,\quad 
  |\nabla_x \tilde{c}_{\alpha}^i(t,x)|+ |\nabla_x e_{\alpha}(t, x)|\lesssim (|t|+|x|)^{-1}.
\ee
Moreover, the symbols $\tilde{m}_{k,\alpha}^i(\xi) $   of the Fourier multiplier operators ``$\tilde{T}_{k,\alpha}^i(\cdot)$'', $i\in\{0,1,2,3\}$,   satisfy the following estimates
  \be\label{noveqn141}
\sum_{i=0,1,2 } 2^{ik }\|\tilde{m}_{k,\alpha}^i(\xi)  \|_{\mathcal{S}^\infty} \lesssim  2^{-3k}, \quad \| \tilde{m}_{k,\alpha}^3(\xi) \|_{\mathcal{S}^\infty}\lesssim 2^{-4k}.
\ee 
\end{lemma}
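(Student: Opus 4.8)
The goal is to establish the identity \eqref{noveqn171}, which upgrades the single-derivative trade-off \eqref{feb16eqn1} to three derivatives while keeping track of a Fourier multiplier $T$ rather than a plain spatial derivative. The plan is to iterate \eqref{feb16eqn1} three times and carefully redistribute the multiplier symbols and the coefficients that arise. Concretely, I would first rewrite the scalar $|t|-|x|$ as $(|t|^2-|x|^2)/(|t|+|x|)$ and use the wave-operator identity $(\p_t^2-\Delta)(t^2-|x|^2) \cdot(\text{lower order})$ — or more directly, use the fact that $(t^2-|x|^2)\p_i$ can be expressed through the vector fields $\Omega_{ij}$, $L_i$, $S$ with coefficients that are homogeneous of degree zero in $(t,x)$, namely $-x_j/(|t|+|x|)$, $t/(|t|+|x|)$, $-x_i/(|t|+|x|)$. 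These coefficients, call them $b_\ast(t,x)$, obviously satisfy $|b_\ast|+|t\p_t b_\ast|\lesssim 1$ and $|\nabla_x b_\ast|\lesssim (|t|+|x|)^{-1}$, which is exactly the bound recorded in \eqref{april3eqn1}.

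The main body of the argument is the induction. I would prove the following by induction on $n$: for each $n\le 3$,
\[
(|t|-|x|)^n T_k[f] = \sum_{i, \alpha} c^{i}_{n,\alpha}(t,x)\, \tilde T^{i}_{k,n,\alpha}(\p_t^i f^\alpha),
\]
where $i$ runs over $0,\dots,n-1$, $|\alpha|\le n$, the coefficients obey the bounds in \eqref{april3eqn1}, and the symbols obey $\sum_i 2^{ik}\|\tilde m^i_{k,n,\alpha}\|_{\mathcal S^\infty}\lesssim 2^{-nk}$. For the inductive step, one applies \eqref{feb16eqn1} (in its Fourier-multiplier form: $(|t|-|x|) T_k[f] = \sum_j b_j(t,x)\, \tilde T_{k,j}[\Gamma_j f]$, where $\Gamma_j\in\{\Omega_{ij},L_i,S\}$ and $\tilde T_{k,j}$ has symbol $m(\xi)\xi_i/|\xi|^2$ times a cutoff, hence gaining $2^{-k}$) to $(|t|-|x|)^{n-1}T_k[f]$. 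Pushing the extra factor of $(|t|-|x|)$ through: commute $\Gamma_j$ past the coefficients $c^i_{n-1,\alpha}$ (which by the homogeneity/scaling structure produces only comparable coefficients — here one uses that $S$, $\Omega_i$, $L_i$ applied to degree-zero homogeneous functions stay bounded with the stated derivative bounds), commute $\Gamma_j$ past $\tilde T^i_{k,n-1,\alpha}$ (multiplier operators commute with differential operators with constant coefficients; the scaling vector field $S$ acting on a Fourier multiplier produces another multiplier of the same $\mathcal S^\infty$-size by Euler's relation), and absorb $\Gamma_j f^\alpha$ into $f^{\alpha'}$ with $|\alpha'|=|\alpha|+1\le n$. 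When $\Gamma_j=S=t\p_t + x\cdot\nabla_x$ hits $\p_t^i f^\alpha$, the $t\p_t$ part raises the number of time derivatives; this is the source of the $\p_t^i$ with $i$ up to $n-1$, and at $n=3$ one time derivative too many can appear, which is precisely why the last term of \eqref{noveqn171} contains $(\p_t^2-\Delta)f$: I would use the equation $\p_t^2 f = \Delta f + (\p_t^2-\Delta)f$ to trade the excess time derivative for spatial derivatives (absorbable into $f^\alpha$) plus a wave-operator term, which picks up only one surviving factor of $|t|-|x|$ since this substitution is done at the top level. The symbol bookkeeping is routine: each application of \eqref{feb16eqn1} multiplies the symbol by something of size $2^{-k}$ in $\mathcal S^\infty_k$, so after three steps we land at $2^{-3k}$, and the wave-operator term, having effectively used four ``derivative slots'' worth of multiplier gain, sits at $2^{-4k}$.

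The step I expect to be the main obstacle is controlling the coefficients $\tilde c^i_\alpha(t,x)$ and $e_\alpha(t,x)$ — specifically, verifying that after iterating the identity and commuting the Klainerman vector fields past the degree-zero coefficients $b_\ast(t,x)=-x_j/(|t|+|x|)$ etc.\ three times, the resulting coefficients still satisfy $|\tilde c^i_\alpha|+|t\p_t\tilde c^i_\alpha|\lesssim 1$ and $|\nabla_x\tilde c^i_\alpha|\lesssim(|t|+|x|)^{-1}$. This requires checking that the class of functions bounded by $1$ with $|t\p_t\cdot|\lesssim 1$ and $|\nabla_x\cdot|\lesssim(|t|+|x|)^{-1}$ is stable under multiplication and under the action of $S$, $\Omega_i$, $L_i$; the scaling and rotation cases are immediate from homogeneity, but the Lorentz boost $L_i = t\p_{x_i}+x_i\p_t$ applied to such a coefficient is the delicate one, since $t\p_{x_i}$ of something with a $(|t|+|x|)^{-1}$ gradient bound gives a term of size $t/(|t|+|x|)\le 1$, which is fine, but one must also re-examine its own $t\p_t$- and $\nabla_x$-derivatives. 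I would organize this into a short standalone sublemma on the algebra of such coefficient classes, proved once and then invoked mechanically at each step of the induction. Everything else — the symbol estimates via the $\mathcal S^\infty_k$-norm and the combinatorics of distributing $\le 3$ vector fields onto $f$ — is bookkeeping that follows the pattern already set in \cite{wang}.
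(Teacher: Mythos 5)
Your plan — iterate (\ref{feb16eqn1}) three times by first writing $T_k=\sum_i\p_{x_i}Q_iT_k$ via $Id=\nabla\cdot Q$, track the degree-zero coefficient class and the $\mathcal{S}^\infty_k$ symbol sizes through each step, and substitute $\p_t^2 f=\Delta f+(\p_t^2-\Delta)f$ before the final trading so that the wave-operator remainder retains one factor of $(|t|-|x|)$ and a symbol of size $2^{-4k}$ — is exactly the argument behind this lemma (the paper itself only cites \cite{wang}[Lemma 3.6] for the proof), and your proposed sublemma on the stability of the coefficient class under $S$, $\Omega_i$, $L_i$ is the right way to organize (\ref{april3eqn1}).

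One mechanistic correction. The bare time derivatives $\p_t^i$ in (\ref{noveqn171}) do not come from splitting $S$ into $t\p_t+x\cdot\nabla_x$: that split would produce $t\,\p_t f$ with the unbounded coefficient $t$, and in any case $S$ is itself one of the admissible vector fields, so $Sf^\alpha$ is simply absorbed into $f^{\alpha'}$. The actual source is the commutator of the Lorentz boosts with the Fourier multipliers: $[L_i,T]=[x_i\p_t,T]=[x_i,T]\p_t$, whose symbol is $i\p_{\xi_i}$ of the symbol of $T$ and therefore gains a factor $2^{-k}$ on the $\psi_k$ support (the commutators with $S$ and $\Omega_{ij}$ produce only multipliers of comparable size, by Euler's relation and frequency rotation respectively, with no $\p_t$). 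This is precisely what closes your symbol bookkeeping $2^{ik}\|\tilde m^i_{k,\alpha}\|_{\mathcal{S}^\infty}\lesssim 2^{-3k}$: each extra $\p_t$ is paid for by one $\xi$-derivative of the symbol. With that replacement your induction, the $i\le 2$ count after the wave-equation substitution, and the $2^{-4k}$ bound for $\tilde m^3_{k,\alpha}$ all go through as you describe.
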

\begin{proof}
See \cite{wang}[Lemma 3.6]. 
\end{proof}

In the  energy estimate of the profile $g(t,x,v)$,   we will use      the  commutation rules between $D_v$ $\big($equivalently speaking, $X_i,  i\in\{1,\cdots, 7\}$, see (\ref{noveq1})$\big)$ and $\Lambda^\beta$, $\beta\in \mathcal{S}$, which are summarized in the following Lemma.   
 \begin{lemma}\label{summaryofhighordercommutation}
The following commutation rules hold for any $i\in \{1,\cdots,7\}$, and $\beta\in \mathcal{S},$  
\be\label{noveq521}
[X_i, \Lambda^\beta]= Y_i^\beta +   \sum_{\kappa\in \mathcal{S}, |\kappa|\leq |\beta|-1 } \big[ \tilde{d}(t,x,v)\tilde{e}_{\beta,i}^{\kappa, 1}(x,v) +\tilde{e}_{\beta,i}^{\kappa, 2}(x,v)\big]  \Lambda^\kappa,
\ee
where $Y_i^\beta$ denote  the top order commutators. More precisely, 
\be\label{sepeqn522}
Y_i^\beta= \sum_{\kappa\in \mathcal{S}, |\kappa|=|\beta|, |i(\kappa)-i(\beta)|\leq 1 } \big[  \tilde{d}(t,x,v)\tilde{e}_{\beta,i}^{\kappa, 1}(x,v) +\tilde{e}_{\beta,i}^{\kappa, 2}(x,v)\big]  \Lambda^\kappa. 
\ee
Moreover, for  any $  i\in \{1, \cdots, 7\}$, and $\kappa\in \mathcal{S}$,   the following   estimates hold for the coefficients   $\tilde{e}_{\beta,i}^{\kappa, 1}(x,v) $ and  $\tilde{e}_{\beta,i}^{\kappa,2}(x,v) $,
\be\label{jan15eqn41}
 |\Lambda^\rho \tilde{e}_{\beta,i}^{\kappa, 1}(x,v) |+|\Lambda^{\rho} \tilde{e}_{\beta,i}^{\kappa, 2}(x,v) |\lesssim (1+|x|)^{|\rho|+ |\beta|-|\kappa| +2} (1+|v|)^{|\rho|+|\beta|-|\kappa| +4},
\ee
\be\label{sepeqn524}
 |\tilde{e}_{\beta,i}^{\kappa, 1}(x,v) |+|\tilde{e}_{\beta,i}^{\kappa, 2}(x,v) |\lesssim (1+|x|)^{|\beta|-|\kappa| +2} (1+|v|)^{|\beta|-|\kappa| +4}, \quad \textup{when\,\,}  |\kappa|\leq |\beta|-1,
\ee
 \be\label{sepeqn904}
|\tilde{e}_{\beta,i}^{\kappa, 1}(x,v)| + |\tilde{e}_{\beta,i}^{\kappa, 2}(x,v) |\lesssim (1+|v|)^{ c(\kappa)-c(\beta)},\quad \textup{when\,\,} |\kappa|=|\beta|. 
\ee 
 Moreover, if $i(\kappa)-i(\beta)>0$ and $|\kappa|=|\beta|$, then   the following improved estimate holds for the coefficients $\tilde{e}_{\beta,i}^{\kappa, 2}(x,v)$ of the commutation rule in \textup{(\ref{noveq521})}, 
\be\label{march25eqn10}
 |\tilde{e}_{\beta,i}^{\kappa, 2}(x,v) |\lesssim (1+|v|)^{-1+c_{}(\kappa)-c_{}(\beta)}.
\ee
\end{lemma}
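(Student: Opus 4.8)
The plan is to proceed by induction on $|\beta|$, peeling off one vector field at a time from $\Lambda^\beta$ and using the base-case commutation rules between the single generators $X_i$ (the components of $D_v$, see (\ref{lowhighvdecomposition})–(\ref{noveq1})) and the single vector fields in $\mathfrak{P}_2$. For $|\beta|=1$ one writes $[X_i,\Gamma_j]$ explicitly for each $\Gamma_j$ in (\ref{dec26eqn5})–(\ref{dec26eqn6}); since $X_i$ and each $\Gamma_j$ are first-order differential operators in $(x,v)$ with coefficients built from $\tilde v$, $\tilde V_i$, $\omega(x,v)$, $\sqrt{1+|v|^2}$, and the cutoffs $\psi_{\geq 1}(|v|),\psi_{\leq 0}(|v|)$, the commutator is again a first-order operator whose coefficients are rational expressions in those same quantities. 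One then matches each resulting first-order operator against the basis $\{\Lambda^\kappa:|\kappa|\le 1\}$; the part that cannot be absorbed into $|\kappa|\le 0$ terms is collected into $Y_i^\beta$. The key bookkeeping point at this stage is that differentiating $\omega(x,v)$ (equivalently $\tilde d(t,x,v)$, via (\ref{inhomogeneousmodulation})) produces exactly the affine-in-$\tilde d$ structure of Lemma \ref{derivativesofcoefficient}, which is why every coefficient splits as $\tilde d \cdot \tilde e^{\kappa,1}+\tilde e^{\kappa,2}$; the weight counts in (\ref{jan26eqn101})–(\ref{jan23eqn11}) give the base-case instances of (\ref{jan15eqn41})–(\ref{sepeqn524}).

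For the inductive step, write $\Lambda^\beta=\Lambda^\rho\Lambda^{\beta'}$ with $|\rho|=1$, $|\beta'|=|\beta|-1$, and use the Jacobi-type identity $[X_i,\Lambda^\rho\Lambda^{\beta'}]=[X_i,\Lambda^\rho]\Lambda^{\beta'}+\Lambda^\rho[X_i,\Lambda^{\beta'}]$. The first term is handled by the base case (expand $[X_i,\Lambda^\rho]$ into $\sum_j c_j \Lambda^{\rho_j}$, $|\rho_j|\le 1$, then $\Lambda^{\rho_j}\Lambda^{\beta'}$ is already of the desired form after re-expanding the coefficient functions $c_j$ in the vector-field basis and re-applying Lemma \ref{derivativesofcoefficient} to the $\tilde d$ factors). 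For the second term, apply the induction hypothesis to $[X_i,\Lambda^{\beta'}]$, then commute the outer $\Lambda^\rho$ past both the coefficient functions and the $\Lambda^\kappa$: moving $\Lambda^\rho$ past a coefficient $\tilde d\,\tilde e^{\kappa,1}+\tilde e^{\kappa,2}$ again invokes Lemma \ref{derivativesofcoefficient} for the $\tilde d$ piece and produces a loss of at most one factor of $(1+|x|)(1+|v|)$ by (\ref{jan23eqn11}), which exactly accounts for the increment of the exponents in (\ref{jan15eqn41})–(\ref{sepeqn524}) as $|\beta|$ grows; moving $\Lambda^\rho$ past $\Lambda^\kappa$ introduces a lower-order commutator $[\Lambda^\rho,\Lambda^\kappa]$, which by the commutation structure of $\mathfrak P_2$ is a combination of $\Lambda^{\kappa'}$ with $|\kappa'|\le|\kappa|$ and hence drops into the lower-order sum. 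Throughout, the constraint $|i(\kappa)-i(\beta)|\le1$ in (\ref{sepeqn522}) is tracked by noting that each single commutation can create or destroy at most one $\Omega_i^x$-type factor, and the refined weight bounds (\ref{sepeqn904}) and (\ref{march25eqn10}) come from keeping, rather than discarding, the cancellations between the $|v|$-weights carried by $\tilde d(1+|v|^2)^{1/2}$ in the $\Omega_i^x$ coefficients and those carried by $\tilde v,\tilde V_i$ in the good derivatives — i.e. from a careful count of $c(\cdot)$ and $i(\cdot)$ along the expansion.

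The main obstacle I expect is precisely the sharp form of the top-order estimates (\ref{sepeqn904}) and the improved bound (\ref{march25eqn10}): the crude bound (\ref{jan23eqn11}) is easy, but to see that at top order the coefficients grow only like $(1+|v|)^{c(\kappa)-c(\beta)}$ — and that gaining an extra $\Omega_i^x$ (i.e. $i(\kappa)>i(\beta)$) costs an extra power of $(1+|v|)^{-1}$ in the \emph{non}-$\tilde d$ part $\tilde e^{\kappa,2}$ — one must exploit the specific algebraic relations in (\ref{octeqn456}) (namely $\tilde V_i\cdot\nabla_v\hat v=(1+|v|^2)^{-1/2}\tilde V_i$ and $\tilde v\cdot\nabla_v\hat v=(1+|v|^2)^{-3/2}\tilde v$) together with the fact that $\Omega_i^x=\tilde V_i\cdot\nabla_x$ carries no $v$-weight while $K_{v_i}$ and $\widehat S^v$ do. Concretely, one has to verify that whenever a commutation converts a "$\tilde d\cdot(1+|v|^2)^{1/2}\,\Omega_i^x$"-type term into something of higher $i(\cdot)$, the accompanying coefficient has already shed a compensating factor of $(1+|v|)^{-1}$; this is a finite but delicate case check over the seventeen generators, and it is the place where the induction must be run with the indices $c(\beta),i(\beta)$ carried as part of the inductive statement rather than verified \emph{a posteriori}. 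Everything else is routine multilinear bookkeeping once Lemmas \ref{twodecompositionlemma} and \ref{derivativesofcoefficient} are in hand.
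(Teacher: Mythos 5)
The paper itself gives no proof of this lemma: it is quoted verbatim from the companion paper and the proof is deferred to \cite{wang}[Lemma 3.9], so there is no in-paper argument to compare yours against line by line. That said, your strategy --- induction on $|\beta|$ via $[X_i,\Lambda^{\rho}\Lambda^{\beta'}]=[X_i,\Lambda^{\rho}]\Lambda^{\beta'}+\Lambda^{\rho}[X_i,\Lambda^{\beta'}]$, with the base case computed generator by generator and the affine-in-$\tilde{d}$ structure of the coefficients propagated through Lemma \ref{derivativesofcoefficient} --- is the standard and essentially the only reasonable route, and your diagnosis of where the difficulty sits (the sharp exponents in (\ref{sepeqn904}) and the improved bound (\ref{march25eqn10}), which require carrying $c(\cdot)$ and $i(\cdot)$ inside the inductive statement and exploiting the anisotropy of $\nabla_v\hat{v}$ in (\ref{octeqn456})) is exactly right. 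One small simplification: in the inductive step you do not need to commute the outer $\Lambda^{\rho}$ past $\Lambda^{\kappa}$ at all, since $(\Lambda^{\rho}c_{\kappa})\Lambda^{\kappa}+c_{\kappa}\Lambda^{\rho\circ\kappa}$ is already in the required form, with $\Lambda^{\rho\circ\kappa}$ contributing to $Y_i^{\beta}$ when $|\kappa|=|\beta|-1$; invoking closure of $\mathfrak{P}_2$ under brackets is an unnecessary extra claim.

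The genuine shortfall is that your proposal remains an outline precisely where the content of the lemma lives. The statement is ultimately a collection of quantitative claims about explicitly computable coefficients, and you have not produced any of the seventeen base-case commutators $[X_i,\Gamma_j]$, nor carried out the ``finite but delicate case check'' that you correctly identify as the crux of (\ref{sepeqn904}) and (\ref{march25eqn10}). In particular, the claim that converting a term into one of higher $i(\cdot)$ always sheds a compensating factor of $(1+|v|)^{-1}$ in the non-$\tilde{d}$ part $\tilde{e}^{\kappa,2}$ is asserted, not verified; this is the one place where a sign or weight miscount would silently break the bulk-term analysis in Sections 6--7. As a strategy document your proposal is sound and matches the expected argument; as a proof it is incomplete until those computations are written out.
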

\begin{proof}
See \cite{wang}[Lemma 3.9]. 
\end{proof}

\section{Set-up of the energy estimate}\label{setupofenergyestimate}

Recall (\ref{mainequation}). We can reduce the equation satisfied by the electromagnetic field into   standard nonlinear wave equations as follows, 
 \be\label{standardwave}
(\textup{RVM})\qquad \left\{\begin{array}{c}
\p_t f +  \hat{v} \cdot \nabla_x f + (E+  \hat{v}\times B)\cdot \nabla_v f =0,\\
\nabla \cdot E = 4 \pi \displaystyle{\int  f(t, x, v) d v},\qquad \nabla \cdot B =0, \\
\p_t^2 E- \Delta E = - 4\pi \displaystyle{\int \p_t f(t, x, v)  \hat{v} d v }- 4\pi \int \nabla_x f(t, x , v) d v ,  \\
\p_t^2 B- \Delta B = \displaystyle{ 4\pi\int \hat v\times \nabla_x f(t, x, v) d v},\\

f(0,x,v)=f_0(x,v), \quad E(0,x)=E_0(x), \quad B(0,x)=B_0(x). 
\end{array}\right.
\ee
Let 
\be\label{eqn387}
 K(t, x,v)= E(t,x )+\hat{v}\times B(t,x ).
\ee
As a result of direct computations, we have 
\be\label{sepeqn61}
  \nabla_v \cdot K(t,x,v)=0.
 \ee
From (\ref{standardwave}) and (\ref{sepeqn61}), we can rewrite one of the nonlinearities  inside (\ref{standardwave}) as follows,
 \[
\int \p_t f(t,x,v)\hat{v} d v 
 = - \int \hat{v}\cdot \nabla_x f(t,x,v) \hat{v} d v + \int    f(t,x,v)  K(t,x,v)\cdot \nabla_v \hat{v} d v.
 \]
 Therefore, we reduce the equation satisfied by the electric field in (\ref{standardwave}) as follows,
 \be\label{eqn10}
 \p_t^2 E- \Delta E =   4\pi   \int \hat{v}\cdot \nabla_x f(t,x,v) \hat{v} d v - 4\pi \int \nabla_x f(t, x , v) d v+   4\pi \int    f(t,x,v)  K(t,x,v)\cdot \nabla_v \hat{v} d v. 
 \ee

\subsection{The equation satisfied by the high order derivatives of the profile $g(t,x,v)$}

In this subsection, our main goal is to compute the equation satisfied by the high order derivatives of the Vlasov-Maxwell system.  For the sake of readers, we show the final equation (\ref{sepeqn43}) step by step.

Note that the following commutation rules hold for any $i,j\in\{1,2,3\}$,
\be\label{commutationrules1}
[\nabla_v, S]=0, \quad [\p_{v_i}, \tilde{\Omega}_j]= \p_{v_i} V_j\cdot \nabla_v,\quad [\p_{v_i}, \tilde{L}_j]= [\p_{v_i}, t \p_{x_j} + x_j \p_t + \sqrt{1+|v|^2} \p_{v_j}]= \frac{v_i}{\sqrt{1+|v|^2}} \p_{v_j}. 
\ee
From the above commutation rules, we know that  the following equality holds for any $\alpha \in \mathcal{B},$
\be\label{dec27eqn4}
\tilde{\Gamma}^\alpha\big((\p_t +\hat{v}\cdot \nabla_v) f \big)= -\sum_{\beta, \gamma\in \mathcal{B}, \beta+\gamma=\alpha}(\Gamma^\beta E + \tilde{\Gamma}^\beta(\hat{v}\times B) )\cdot \tilde{\Gamma}^\gamma(\nabla_v f ).
\ee
For   simplicity in notation, we use the following abbreviation, 
\be\label{dec29eqn1}
f^{\alpha}(t,x,v):=\tilde{\Gamma}^\alpha f(t, x,v), \quad u^\beta(t, x):= \Gamma^\beta u(t,x), \quad u\in \{E, B\}.
\ee
  From the commutation rules in (\ref{commutationrules1}) and (\ref{dec27eqn4}),  the following equation satisfied by $f^\alpha(t,x,v)$ holds, 
\be\label{eqn830}
(\p_t + \hat{v}\cdot \nabla_x) f^\alpha= \sum_{\beta, \gamma\in \mathcal{B}, |\beta| +|\gamma|\leq |\alpha|} \big(\hat{a}_{\alpha;\beta, \gamma}(v) E^\beta+ \hat{b}_{\alpha;\beta, \gamma}(v) B^\beta\big)\cdot \nabla_v f^{\gamma}(t, x,v),
\ee
where $\hat{a}_{\alpha;\beta, \gamma}(v) $ and $\hat{b}_{\alpha;\beta, \gamma}(v)$ are some   coefficients, whose explicit formulas are not pursued here. Moreover, the following equalities and the rough estimate holds for any $\alpha, \beta, \gamma\in \mathcal{B},$
\be\label{dec28eqn41}
\hat{a}_{\alpha;0,\alpha}(v)=-1, \quad \hat{b}_{\alpha;0,\alpha}(v)=-\left[\begin{array}{ccc}
0  & -\hat{v}_3 & \hat{v}_2\\
\hat{v}_3 & 0 & -\hat{v}_1\\
-\hat{v}_2 & \hat{v}_1 & 0\\
\end{array}\right], \quad |\hat{a}_{\alpha;\beta, \gamma}(v) | + |\hat{b}_{\alpha;\beta, \gamma}(v)|\lesssim 1. 
\ee

Define the profile of    $f^{\alpha}(t,x,v)$ as follows, 
\[
g^\alpha(t,x,v):= f^\alpha(t,x+\hat{v}t,v), \quad \Longrightarrow f^\alpha(t,x,v)= g^\alpha(t,x-\hat{v}t,v). 
\]
From (\ref{eqn830}), we have
\be\label{eqn841}
\p_t g^\alpha(t, x,v)= \sum_{\beta, \gamma\in \mathcal{B}, |\beta| +|\gamma|\leq |\alpha|} \big(\hat{a}_{\alpha;\beta, \gamma}(v) E^\beta(t, x+\hat{v} t )+ \hat{b}_{\alpha;\beta, \gamma}(v) B^\beta(t, x+\hat{v}t )\big)\cdot D_v g^{\gamma}(t, x,v). 
\ee
Define 
\be\label{sepeqn334}
K_{\alpha;\beta,\gamma}(t,x+\hat{v} t,v)= \hat{a}_{\alpha;\beta, \gamma}(v) E^\beta(t, x+\hat{v} t )+ \hat{b}_{\alpha;\beta, \gamma}(v) B^\beta(t, x+\hat{v}t ). 
\ee
In particular, from (\ref{dec28eqn41}) and (\ref{eqn387}), we have
\be\label{dec28eqn42}
K_{\alpha;\vec{0},\alpha}(t,x,v)=-E(t,x) - \hat{v}\times B(t,x)= -K(t,x,v).
\ee
Recall the decomposition of $D_v$ in (\ref{noveq1}), we have
\be\label{sourceform}
K_{\alpha;\beta, \gamma}(t,x+\hat{v}t,v)\cdot D_v g^\gamma (t,x,v)= \sum_{i=  1, \cdots 7} K^i_{\alpha;\beta, \gamma}(t,x+\hat{v}t,v) X_i g^\gamma (t,x,v),
\ee
where
\be\label{sepeqn335}
K_{\alpha;\beta, \gamma}^i(t, x+\hat{v}t,v)=\alpha_i(v)\cdot K_{\alpha;\beta, \gamma} (t, x+\hat{v}t,v),\quad i=1,\cdots, 7.
\ee
 Therefore, we can rewrite (\ref{eqn841}) as follows, 
\be\label{sepeqn31}
\p_t g^\alpha(t, x,v)= \sum_{\beta, \gamma\in \mathcal{B}, |\beta| +|\gamma|\leq |\alpha|} \sum_{i=  1, \cdots 7} K_{\alpha;\beta, \gamma}^i(t,x+\hat{v}t,v)\cdot X_i g^{\gamma}(t, x,v).
\ee

Now, we apply the second set of vector fields on $g^\alpha(t,x,v)$.  
For any $\beta \in \mathcal{S} $ and any $\alpha\in \mathcal{B}$, we define 
\be\label{sepeqn40}
g^\alpha_\beta(t,x,v):= \Lambda^{\beta} g^\alpha(t,x,v),\quad \beta\thicksim \iota_1\circ\iota_2\circ\cdots\circ\iota_{|\beta|},\quad \iota_i\in \mathcal{K}, |\iota_i|=1,  \quad i=1,\cdots, |\beta|.
\ee
Note that $[\p_t, \Lambda^\beta]=0$. From (\ref{dec28eqn42}) and (\ref{sepeqn31}), based on the order of derivatives, we classify the nonlinearity of the equation satisfied by $g^\alpha_\beta(t, x,v)$ as follows, 
\be\label{sepeqn43}
\p_t g^\alpha_\beta(t,x,v)=- K(t,x+\hat{v}t,v)\cdot D_v g^\alpha_\beta(t,x,v)+ \textit{h.o.t}_{\beta}^\alpha(t,x,v) + \textit{l.o.t}_{\beta}^\alpha(t,x,v),
\ee
 where ``$\textit{h.o.t}_{\beta}^\alpha(t,x,v)$'' denotes all the terms in which the total number of derivatives acts on $g(t,x,v)$ is  `` $|\alpha|+|\beta|$ ''  and  ``$\textit{l.o.t}_{\beta}^\alpha(t,x,v)$'' denotes all the terms in which  the total number of derivatives  acts on $g(t,x,v)$  is strictly less than `` $|\alpha|+|\beta|$''.  We remind readers  that  the total number of derivatives act on the electromagnetic field is possible to be  $|\alpha| +|\beta|$ in ``$\textit{l.o.t}_{\beta}^\alpha(t,x,v)$''.

Based on the source of the top order terms, we classify  ``$\textit{h.o.t}_{\beta}^\alpha(t,x,v)$'' further into three parts as follow,
\be\label{sepeqn180}
\textit{h.o.t}_{\beta}^\alpha(t,x,v)=\sum_{i=1,2,3} \textit{h.o.t}_{\beta;i}^\alpha(t,x,v),
\ee
where $\textit{h.o.t}_{\beta;1}^\alpha(t,x,v)$ arises from the case when only one derivative of $\Lambda^\beta$ hits $K^i(t,x,v)$, $\textit{h.o.t}_{\beta;2}^\alpha(t,x,v)$ arises from the case when the entire derivative of $\Lambda^\beta$ hits $g^\gamma(t,x,v)$ where $|\gamma|=|\beta|-1$, and $\textit{h.o.t}_{\beta;3}^\alpha(t,x,v)$ arise from the top order commutator between $X_i$ and $\Lambda^\beta$, see (\ref{noveq521}) in Lemma \ref{summaryofhighordercommutation}. More precisely, 
\be\label{sepeqn312}
\textit{h.o.t}_{\beta;1}^\alpha(t,x,v)=\sum_{
\begin{subarray}{c}
\iota, \kappa\in \mathcal{S}, i=1,\cdots,7,\iota+\kappa=\beta, |\iota|=1 
\end{subarray}} \Lambda^\iota(K^i(t,x+\hat{v}t,v))  X_i g^\alpha_\kappa(t,x, v),
\ee
\be\label{sepeqn311}
\textit{h.o.t}_{\beta;2}^\alpha(t,x,v)=  \sum_{|\rho|  \leq 1,|\gamma|=|\alpha|-1}\sum_{i=1,\cdots, 7} K_{\alpha;\rho, \gamma}^i(t,x+\hat{v}t,v)   X_i g^{\gamma}_{\beta}(t, x,v),
 \ee
 \be\label{sepeqn310}
\textit{h.o.t}_{\beta;3}^\alpha(t,x,v)=\sum_{i=1,\cdots, 7}    K^i(t,x+\hat{v}t,v)    Y_i^\beta g^\alpha(t,x,v).
\ee

Next, we will identify the bulk term which appears in the high order terms $\textit{h.o.t}_{\beta}^\alpha(t,x,v)$. Based on the possible vector field of  ``$\Lambda^{\iota}$'' in (\ref{sepeqn312}), we   separate $\textit{h.o.t}_{\beta;1}^\alpha(t,x,v)$ further into two parts  as follows,
\be\label{sepeq90}
\textit{h.o.t}_{\beta;1}^\alpha(t,x,v)= \textit{h.o.t}_{\beta;1}^{\alpha;1}(t,x,v) + \textit{h.o.t}_{\beta;1}^{\alpha;2}(t,x,v),
\ee
where
 \be\label{jan15eqn60}
\textit{h.o.t}_{\beta;1}^{\alpha;1}(t,x,v)=\sum_{ 
j=1,2,3,
i=1,\cdots,7 
 } \sum_{
\iota+\kappa=\beta,\iota, \kappa\in \mathcal{S},
 |\iota|=1, \Lambda^\iota \nsim \psi_{\geq 1}(|v|) \widehat{\Omega}^v_j \textup{or}\,\psi_{\geq 1}(|v|) \Omega_j^x  }  \Lambda^\iota(K^i(t,x+\hat{v}t,v))  X_i g^\alpha_\kappa(t,x, v),
\ee
\be\label{sepeqn952}
\textit{h.o.t}_{\beta;1}^{\alpha;2}(t,x,v)=\sum_{\begin{subarray}{l}
j=1,2,3,
i=1,\cdots,7 
\end{subarray}} \sum_{
\begin{subarray}{c}
\iota+\kappa=\beta,\iota, \kappa\in \mathcal{S},
 |\iota|=1, \Lambda^\iota \sim \psi_{\geq 1}(|v|)\widehat{\Omega}^v_j \textup{or}\,\psi_{\geq 1}(|v|)\Omega_j^x  \\
\end{subarray}}  \Lambda^\iota(K^i(t,x+\hat{v}t,v))  X_i g^\alpha_\kappa(t,x, v).
\ee
  Note that the following equality holds if $\Lambda^{\iota}\sim \psi_{\geq 1}(|v|) \widehat{\Omega}^v_j$ or $ \psi_{\geq 1}(|v|) \Omega_j^x$, where $j\in\{1,2,3\},$
\be\label{nove105}
\Lambda^\iota(K^i(t,x+\hat{v}t,v))= K_{\iota;1}^i(t,x+\hat{v}t,v)+ K_{\iota;2}^i(t,x+\hat{v}t,v),
\ee
where
  \be\label{nove99}
K_{\iota;1}^i(t,x+\hat{v}t,v)= \big(\sqrt{1+|v|^2}\tilde{d}(t,x,v) \big)^{1-c(\iota)}\alpha_i(v)\cdot \Omega_j^x \big(E(t,x+\hat{v}) + \hat{v}\times B(t,x+\hat{v}t) \big)\psi_{\geq 1}(|v|),
 \ee
 \be\label{nove98}
 K_{\iota;2}^i(t,x+\hat{v}t,v)= (1-c(\iota))\psi_{\geq 1}(|v|) \big[\tilde{V}_j\cdot \nabla_v(\alpha_i(v))\big(E(t,x+\hat{v}t) + \hat{v}\times B(t,x+\hat{v}t) \big) + \alpha_i(v)	\cdot \big((\tilde{V}_j\cdot \nabla_v)\hat{v}\times B(t,x+\hat{v}t) \big)  \big]. 
 \ee
 Motivated from the above decomposition, we can separate ``$\textit{h.o.t}_{\beta;1}^{\alpha;2}(t,x,v)$'' further into two parts as follows, 
\be\label{nove100}
\textit{h.o.t}_{\beta;1}^{\alpha;2}(t,x,v)= \textit{bulk}_{\beta}^\alpha(t,x,v) + \textit{error}_{\beta}^\alpha(t,x,v),
\ee
where
\be\label{nove120}
\textit{bulk}_{\beta}^\alpha(t,x,v):=\sum_{\begin{subarray}{l}
j=1,2,3,
i=1,\cdots,7 
\end{subarray}} \sum_{
\begin{subarray}{c}
\iota+\kappa=\beta,\iota, \kappa\in \mathcal{S},
 |\iota|=1, \Lambda^\iota \sim \psi_{\geq 1}(|v|)\widehat{\Omega}^v_j \textup{or}\,\psi_{\geq 1}(|v|)\Omega_j^x 
\end{subarray}}  K_{\iota;1}^i(t,x+\hat{v}t,v)  X_i g^\alpha_\kappa(t,x, v),
\ee
\be\label{nove121}
\textit{error}_{\beta}^\alpha(t,x,v):=\sum_{\begin{subarray}{l}
j=1,2,3,
i=1,\cdots,7 
\end{subarray}} \sum_{
\begin{subarray}{c}
\iota+\kappa=\beta,\iota, \kappa\in \mathcal{S},
 |\iota|=1, \Lambda^\iota \sim  \psi_{\geq 1}(|v|) \widehat{\Omega}^v_j \textup{or}\,\psi_{\geq 1}(|v|) \Omega_j^x 
\end{subarray}}  K_{\iota;2}^i(t,x+\hat{v}t,v)  X_i g^\alpha_\kappa(t,x, v).	
\ee

  Recall (\ref{sepeqn43}). Lastly,  we   classify the 
 low order term  $\textit{l.o.t}_\beta^\alpha(t,x,v)$ and decompose it into four parts as follows,
\be\label{loworderterm}
 \textit{l.o.t}_{\beta}^\alpha(t,x,v)=\sum_{i=1,\cdots,4} \textit{l.o.t}_{\beta;i}^\alpha(t,x,v),
\ee
where
\be\label{sepeqn400}
\textit{l.o.t}_{\beta;1}^\alpha(t,x,v)= \sum_{i=1,\cdots, 7}  \sum_{\kappa\in \mathcal{S}, |\kappa|\leq |\beta|-1}   K^i(t,x+\hat{v}t,v)  \big[ \tilde{d}(t,x,v)\tilde{e}_{\beta,i}^{\kappa, 1}(x,v) +\tilde{e}_{\beta,i}^{\kappa, 2}(x,v)\big]  \Lambda^\kappa g^\alpha(t,x,v),
 \ee
 \[
\textit{l.o.t}_{\beta;2 }^\alpha(t,x,v)= \sum_{
\begin{subarray}{c}
\iota+\kappa=\beta, |\iota|=1\\
i=1,\cdots,7,\iota, \kappa\in \mathcal{S}
\end{subarray}} \big[   \Lambda^\iota(K^i(t,x+\hat{v}t,v)) [ \Lambda^\kappa,X_i] g^\alpha (t,x, v)  + \sum_{  |\gamma|\leq |\alpha|-1} \Lambda^\iota(K^i_{\alpha;\vec{0}, \gamma}(t,x+\hat{v}t,v))    \Lambda^\kappa  X_i g^\gamma(t,x,v)\big]
\]
\be\label{sepeq200}
 +\sum_{|\rho| = 1 }\big[\sum_{|\gamma|=|\alpha|-1} K_{\alpha;\rho, \gamma}^i(t,x+\hat{v}t,v)\big(  [\Lambda^\beta, X_i] g^{\gamma}_{ }(t, x,v)\big) + \sum_{  |\gamma|\leq |\alpha|-2}  K_{\alpha;\rho, \gamma}^i(t,x+\hat{v}t,v) \times \Lambda^\beta\big(X_i g^{\gamma} \big)(t,x,v)\big],
\ee
 
\be\label{sepeqn600}
\textit{l.o.t}_{\beta;3 }^\alpha(t,x,v)=  \sum_{\begin{subarray}{c}
\rho, \gamma\in \mathcal{B}, |\rho|+|\gamma|\leq |\alpha|\\
\iota+\kappa=\beta, \iota, \kappa\in \mathcal{S}\\
i=1,\cdots,7,|\iota|+|\rho|\geq 12\\
\end{subarray}}  \big(\Lambda^{\iota}K^i_{\alpha;\rho,\gamma}(t,x+\hat{v}t,v)\big)  \Lambda^{\kappa}\big(X_i g^{\gamma}(t, x,v)\big),
\ee
\be\label{sepeq201}
\textit{l.o.t}_{\beta;4 }^\alpha(t,x,v)=   \sum_{\begin{subarray}{c}
\rho, \gamma\in \mathcal{B}, |\rho|+|\gamma|\leq |\alpha|\\
\iota+\kappa=\beta,   \iota, \kappa\in \mathcal{S}\\
i=1,\cdots,7,1< |\iota|+|\rho|<  12\\
\end{subarray}} \big(\Lambda^{\iota}K^i_{\alpha;\rho,\gamma}(t,x+\hat{v}t,v)\big)\Lambda^{\kappa}\big(X_i g^{\gamma}(t, x,v)\big),
\ee
where $\textit{l.o.t}_{\beta;1}^\alpha(t,x,v)$ arises from the low order commutator between $X_i$ and $\Lambda^\beta$,  see (\ref{noveq521}) in Lemma \ref{summaryofhighordercommutation}; $\textit{l.o.t}_{\beta;2}^\alpha(t,x,v)$ arises from the commutator between $X_i$ and $\Lambda^{\kappa}$, $\kappa \in \mathcal{S}, |\kappa|=|\beta|-1$ or between $X_i$ and $\Lambda^\beta$ when there is only one derivative hits on $K^i(t,x,v)$, or all other low order terms which have at most one derivative on the electromagnetic field; $\textit{l.o.t}_{\beta;3}^\alpha(t,x,v)$ arises from the case when there are at least twelve  derivatives hit on the electromagnetic field, and $\textit{l.o.t}_{\beta;4}^\alpha(t,x,v)$ denotes all the other low order terms, in which  there are at most twelve derivatives and at least two derivatives hit on the electromagnetic field and the total number of derivatives hit on $g(t,x,v)$ is strictly less than $|\alpha|+|\beta|$.

Recall (\ref{sepeqn334}) and (\ref{sepeqn335}).  To analyze   ``$\Lambda^{\iota}\big(K^i_{\alpha;\rho,\gamma}(t,x+\hat{v} t,v)\big)$'' in  (\ref{sepeqn600}) and (\ref{sepeq201}),   the following Lemma is useful.

 \begin{lemma}\label{decompositionofderivatives}
The following identity holds for any $\rho \in \mathcal{S}, $  
\be\label{sepeqn610}
\Lambda^{\rho}\big( f  (t,x+\hat{v}t  )\big)=\sum_{\iota\in \mathcal{B}, |\iota|\leq |\rho|}  {c}_{ \rho}^{\iota} (x,v)f^{  \iota}(t,x+\hat{v} t) 
	\ee
where  the coefficients $ {c}_{ \rho}^{\iota} (x,v) $, $\iota\in \mathcal{B},$   satisfy the following estimate,
\be\label{sepeqn88}
| {c}_{ \rho}^{\iota} (x,v) |  \lesssim  (1+|x|)^{|\rho|-|\iota|}  (1+|v|)^{  |\rho|-|\iota|} (1+|v|)^{|\rho|-c_{}(\rho) },\quad  \textup{where}\,  |\iota|\leq |\rho|, 
\ee
\be\label{march13eqn10}
|c_{\rho}^{\iota}(x,v) | \lesssim (1+|v|)^{-c(\rho)},\quad \textup{if}\,\, |\rho|=1, \quad \Lambda^\rho \nsim \psi_{\geq 1}(|v|) \widehat{\Omega}_j^v,\textup{or}\, \psi_{\geq 1}(|v|) \Omega_j^x, \quad j\in\{1,2,3\}.
\ee
Moreover, the following rough estimate holds for any $\kappa\in \mathcal{S}, $
\be\label{noveq781}
| \Lambda^{\kappa}\big( {c}_{ \rho}^{\iota} (x,v)\big)| \lesssim (1+|x|)^{|\kappa|+|\rho|-|\iota| } (1+|v|)^{|\kappa|+ |\rho|-|\iota| }  (1+|v|)^{|\rho|-c_{}(\rho) } .
\ee
\end{lemma}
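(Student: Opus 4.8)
The plan is to prove (\ref{sepeqn88}) and (\ref{noveq781}) by a simultaneous induction on $|\rho|$, with (\ref{march13eqn10}) being part of the base case; the whole content is an explicit chain-rule computation at level $|\rho|=1$ together with a Leibniz bookkeeping at higher levels. For $|\rho|=0$ the identity holds with $c^{\vec 0}_{\vec 0}\equiv 1$.

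For $|\rho|=1$ I would compute $\Lambda^\rho\big(f(t,x+\hat v t)\big)$ for each of the seventeen vector fields in (\ref{dec28eqn1}), using
\[
\nabla_x\big(f(t,x+\hat v t)\big)=(\nabla_x f)(t,x+\hat v t),\qquad \nabla_v\big(f(t,x+\hat v t)\big)=t\,\nabla_v\hat v\cdot(\nabla_x f)(t,x+\hat v t),
\]
the gradient identities $\tilde v\cdot\nabla_v\hat v=(1+|v|^2)^{-3/2}\tilde v$, $\tilde V_i\cdot\nabla_v\hat v=(1+|v|^2)^{-1/2}\tilde V_i$ and $V_i\cdot\nabla_v\hat v=\hat V_i$ from (\ref{octeqn456}), and the identity $t-\sqrt{1+|v|^2}\,\omega(x,v)=(1+|v|^2)\,\tilde d(t,x,v)$ coming from (\ref{homogeneousmodulation})--(\ref{inhomogeneousmodulation}). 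For the spatial fields this gives $S^x\big(f(t,x+\hat v t)\big)=\sum_j\tilde v_j(\partial_{x_j}f)(t,x+\hat v t)$, similarly for $\Omega^x_i$, with coefficients bounded by $1$; for $\tilde\Omega_i$ the factor $t$ produced by $V_i\cdot\nabla_v$ recombines with $X_i\cdot\nabla_x$ so that $\tilde\Omega_i\big(f(t,x+\hat v t)\big)=(\Omega_i f)(t,x+\hat v t)$, with constant coefficient $1$ attached to the single vector field $\Gamma^{\vec a_{i+4}}=\Omega_i$ — which is exactly why the basis $\mathcal B$ of (\ref{dec28eqn28}) must contain $S,\Omega_i,L_i$, so that the $t$- and $x$-growth stays inside one $\Gamma^\iota$ rather than leaking into the coefficient; and for the $K_v$-type fields $\widehat S^v,\widehat\Omega^v_i,K_{v_i},\psi_{\le0}(|v|)\partial_{x_i}$ the $t$ from $\nabla_v$ merges with the built-in $\omega$-correction into the modulation $\tilde d(t,x,v)$ multiplied by a coefficient of $|v|$-size $(1+|v|^2)^{-1/2}$ along $\widehat S^v$ but $(1+|v|^2)^{+1/2}$ along $\widehat\Omega^v_i$ — the very dichotomy recorded by the index $c(\rho)$ of Definition \ref{definitiongoodderivative}. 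Reading off $c^\iota_\rho$ from these formulas verifies (\ref{sepeqn88}) and (\ref{noveq781}) at level $1$; and (\ref{march13eqn10}) is the observation that, among first-order fields, only $\widehat\Omega^v_i$ (which loses a power of $|v|$) and $\Omega^x_i$ (whose coefficient does not decay in $|v|$) fail $|c^\iota_\rho|\lesssim(1+|v|)^{-c(\rho)}$.

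For the inductive step, write $\rho\thicksim\iota\circ\rho''$ with $|\iota|=1$, $|\rho''|=|\rho|-1$, so $\Lambda^\rho\big(f(t,x+\hat v t)\big)=\Gamma_j\big[\Lambda^{\rho''}\big(f(t,x+\hat v t)\big)\big]$ with $\Gamma_j=\Lambda^\iota$; apply the inductive hypothesis to $\Lambda^{\rho''}$ and expand $\Gamma_j$ by the Leibniz rule over each term $c^\kappa_{\rho''}(x,v)\,f^\kappa(t,x+\hat v t)$. When $\Gamma_j$ hits $f^\kappa(t,x+\hat v t)$, apply the $|\rho|=1$ base case to $\tilde\Gamma^\kappa f$ — which is again a function of $(t,x)$, on which $\tilde\Gamma^\kappa$ coincides with $\Gamma^\kappa$ — getting $\sum_{|\kappa'|\le1}c^{\kappa'}_{\Gamma_j}(x,v)f^{\kappa'\circ\kappa}(t,x+\hat v t)$ with $|\kappa'\circ\kappa|\le|\rho|$; when $\Gamma_j$ hits the coefficient, one gets $\big(\Gamma_j c^\kappa_{\rho''}\big)(x,v)f^\kappa(t,x+\hat v t)$, controlled by (\ref{noveq781}) at level $|\rho''|$, together with harmless $|v|\sim1$-supported bounded terms arising when $\Gamma_j$ differentiates the $\psi_{\ge1}(|v|)$ or $\psi_{\le0}(|v|)$ cutoffs attached to the vector fields. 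Multiplying the two factor bounds and using $|\iota\circ\rho''|=1+|\rho''|$ together with the additivity $c(\rho)=c(\iota)+c(\rho'')$ reproduces (\ref{sepeqn88}); the estimate (\ref{noveq781}) propagates in the same way, with $\Lambda^\kappa\tilde d$ bounded via Lemma \ref{derivativesofcoefficient} and the derivatives of the remaining building blocks $\tilde v,\tilde V_i,\omega,\nabla_v\hat v$ and of the rational functions of $|v|$ in $c^\kappa_\rho$ estimated and their degrees counted by direct computation.

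The step I expect to be the real work is the weight bookkeeping in $(1+|v|)$: one must check, vector field by vector field through the induction, that every derivative costs at most one power of $1+|v|$, that the two genuinely good directions $\widehat S^v$ and $\Omega^x_i$ cost none, and — the delicate point — that the Leibniz cross terms, where $\Gamma_j$ differentiates the $\omega$- or $\hat v$-dependent coefficient of a term produced at an earlier stage, do not secretly raise the $|v|$-power beyond what Definition \ref{definitiongoodderivative} permits. This is where the precise homogeneity of $\omega(x,v)$ in (\ref{inhomogeneousmodulation}) and the gradient structure (\ref{octeqn456}) get used; everything else is routine multilinear algebra and polynomial degree counting in $(x,v)$.
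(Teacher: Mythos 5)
Your strategy --- an explicit chain-rule computation for each of the seventeen fields at order one, followed by induction with the Leibniz rule --- is the natural one, and the paper itself offers no proof to compare against, deferring to \cite{wang}[Lemma 4.1]. Your level-one formulas are correct: $\tilde\Omega_i\big(f(t,x+\hat vt)\big)=(\Omega_if)(t,x+\hat vt)$, $K_v\big(f(t,x+\hat vt)\big)=(1+|v|^2)\tilde d(t,x,v)\,\nabla_v\hat v\cdot(\nabla_xf)(t,x+\hat vt)$, the $(1+|v|^2)^{\mp 1/2}$ dichotomy between $\widehat S^v$ and $\widehat\Omega^v_i$, and the additivity $c(\iota\circ\rho'')=c(\iota)+c(\rho'')$ that makes the degree count close in the inductive step. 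Your identification of which first-order fields fail the improved bound (\ref{march13eqn10}) is also correct.

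One point you must make explicit, because as written your base case appears to contradict the very bounds you are proving: the coefficients you compute for $\widehat S^v$, $\widehat\Omega^v_i$, $K_{v_i}$ contain the factor $\tilde d(t,x,v)$, which depends on $t$ and is only bounded by $1+||t|-|x+\hat vt||$, whereas (\ref{sepeqn610})--(\ref{march13eqn10}) write $c^\iota_\rho(x,v)$ and assert $t$-free bounds; taken literally, (\ref{sepeqn88}) fails for the coefficient $\sqrt{1+|v|^2}\,\tilde d(t,x,v)\,\tilde V_j$ that your own computation produces for $\widehat\Omega^v_j$. The statement has to be read in the form $c^\iota_\rho=a^\iota_\rho(x,v)+\tilde d(t,x,v)\,b^\iota_\rho(x,v)$ with the estimates (\ref{sepeqn88})--(\ref{noveq781}) holding for $a$ and $b$ separately --- exactly the format of Lemma \ref{summaryofhighordercommutation} and of (\ref{nove99}), and consistent with the fact that every application of this lemma in the paper pairs it with a bound on $(1+|\tilde d(t,x,v)|)\,u(t,x+\hat vt)$. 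With that reading your induction does close: when $\Lambda^\iota$ hits a coefficient produced at an earlier stage, you differentiate $\tilde d$ via Lemma \ref{derivativesofcoefficient}, which reproduces the same $a+\tilde d\,b$ structure, and the weight bookkeeping you describe goes through unchanged. State this structure explicitly in the base case and carry it through the induction; otherwise the proof proves something formally different from (indeed stronger than, and false as) the literal claim.
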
 
\begin{proof}
See \cite{wang}[Lemma 4.1]. 
\end{proof}

 \subsection{The equation satisfied by the profiles of the electromagnetic field}
In this subsection, we mainly compute the equation satisfied by the high order derivatives of the electromagnetic fields. Recall (\ref{firstsetfordist}) and (\ref{firstsetforeb}). Note that the vector fields we will apply on the distribution function $f(t,x,v)$ and the electromagnetic field  are not exactly same. As a preliminary step before computing the equation satisfied by the high order derivatives of the electromagnetic fields, we compute the difference between the high order derivatives $\Gamma^\alpha$ and $\tilde{\Gamma}^\alpha, \alpha \in \mathcal{B}$, see (\ref{dec28eqn30}).

Recall (\ref{dec28eqn28}), (\ref{dec28eqn29}), and (\ref{dec28eqn30}). We have  
\be\label{dec28eqn71}
{\Gamma}^\alpha- \tilde{\Gamma}^\alpha = \sum_{\beta, \gamma\in \mathcal{B}, |\beta|+|\gamma|\leq |\alpha|, |\beta|\geq 1} a_{\alpha;\beta, \gamma}(v)\cdot\nabla_v^\beta \tilde{\Gamma}^\gamma,
\ee
where ``$a_{\alpha;\beta, \gamma}(v)$'', $\beta, \gamma\in\mathcal{B}, $ are some determined coefficients, whose explicit formulas are not pursued here and the vector ``$\nabla_v^\beta$'', $ \beta\in \mathcal{B}$, is defined as follows, 
\[
\nabla_v^\beta:= \nabla_v^{\gamma_1}\circ   \cdots \nabla_{v}^{\gamma_{|\beta|}}, \quad \beta \thicksim \gamma_1\circ\cdots \gamma_{|\beta|}, \quad \gamma_i\in \mathcal{A}, |\gamma_i|=1, i\in\{1,\cdots, |\beta|\},
\]
\be\label{feb16eqn10}
\nabla_v^\gamma =\left\{\begin{array}{ll}
 {V}_i\cdot \nabla_v & \textup{if}\,\, \gamma=\vec{a}_{i+4}, i=1,2,3 \\
 
\sqrt{1+|v|^2}\p_{v_i} & \textup{if}\,\, \gamma=\vec{a}_{i+7}, i=1,2,3 \\
 
Id & \textup{otherwise}  \\
\end{array}
\right. , \quad \textup{where}\,\, \gamma\in \mathcal{A}.
\ee
 Due to the fact that $\nabla_v$ is possible to hit the coefficients during the expansion, we have $|\beta|+ |\gamma|\leq |\alpha|$ instead of $|\beta|+ |\gamma|= |\alpha|$ in (\ref{dec28eqn71}). From (\ref{dec18eqn5}) and (\ref{lorentz}), we know that  the following rough estimate of  the coefficients  $a_{\alpha;\beta, \gamma}(v)$ holds, 	
\be\label{octeqn1701}
|a_{\alpha;\beta, \gamma}(v)|\lesssim 1, \quad \beta, \gamma\in \mathcal{B}, |\beta|+|\gamma|\leq |\alpha|, |\beta|\geq 1.
\ee
Let 
\be\label{dec28eqn81}
a_{\alpha:\vec{0},\alpha}(v):=1, \Longrightarrow 
\Gamma^\alpha = \sum_{\beta,\gamma\in \mathcal{B},|\beta|+|\gamma|\leq |\alpha| } a_{\alpha;\beta, \gamma}(v)\cdot\nabla_v^\beta \tilde{\Gamma}^\gamma.
\ee
  
Recall (\ref{standardwave}) and (\ref{eqn10}). From (\ref{dec28eqn81}),  we have
\[
 \Gamma^\alpha \big( \p_t^2 E- \Delta   E \big)=  \sum_{|\beta|+|\gamma|\leq |\alpha| }    \int  4\pi\hat{v} \cdot  \Big(a_{\alpha;\beta, \gamma}(v)\cdot\nabla_v^\beta  \tilde{\Gamma}^\gamma\nabla_x f\Big) \hat{v}  -   4\pi  a_{\alpha;\beta, \gamma}(v)\cdot\nabla_v^\beta \tilde{\Gamma}^\gamma\nabla_x f(t, x , v) 
 \]
\be\label{dec29eqn2}
 + \sum_{\beta+\gamma=\alpha}\sum_{|\iota|+ |\kappa|\leq |\gamma| }  4\pi  \nabla_v  \hat{v}  \cdot(E^\beta +\hat{v}\times B^\beta)a_{\gamma;\iota, \kappa}(v)\cdot\nabla_v^\iota \tilde{\Gamma}^\kappa f d v, 
\ee
\be\label{dec29eqn3}
\Gamma^\alpha \big(\p_t^2 B- \Delta     B) =  \sum_{|\beta|+ |\gamma|\leq |\alpha|}  \displaystyle{ \int 4\pi \hat{v}\times \big( a_{\alpha;\beta, \gamma}(v)\cdot \nabla_v^\beta  \tilde{\Gamma}^\gamma \nabla_x f(t, x, v) \big) d v}.
\ee
Note that
\[
[\p_t^2 - \Delta, S]= -S, \quad [\p_t^2 - \Delta, \Omega_{i} ]=0, \quad [ \p_t^2 - \Delta, L_i]= 0.
\]
From the above commutation rules,  after doing   the integration by parts in ``$v$'' for the integral on the right hand side of (\ref{dec29eqn2}) and (\ref{dec29eqn3}) to move around the  derivatives ``$\nabla_v^\beta$'', we know that the following  equations satisfied by $   E^\alpha(t,x)$ and $ B^\alpha(t,x)$ hold, 
 \be\label{maxwellvectorfield}
(\p_t^2 -\Delta) E^\alpha  =\mathcal{N}^{\alpha}_1, \quad (\p_t^2 -\Delta) B^\alpha  =\mathcal{N}^{\alpha}_2, 
\ee
where
 \be\label{feb16eqn16}
\mathcal{N}^{\alpha}_1=  \sum_{\begin{subarray}{c}
\beta, \gamma\in \mathcal{B},
|\beta| + |\gamma|\leq |\alpha| 
\end{subarray}} \int_{\R^3} \tilde{a}_{\alpha; \gamma}(v)  \nabla_x   f^\gamma (t,x,v)  + \big(   \tilde{b}^{\alpha}_{\beta, \gamma}(v)  E^\beta(t,x) +\tilde{c}^{\alpha}_{\beta, \gamma}(v)  B^\beta(t,x)\big)  f^\gamma(t,x,v)d v, 
  \ee
 \be\label{feb16eqn17}
   \mathcal{N}^\alpha_2=\sum_{\gamma\in \mathcal{B}, |\gamma| \leq |\alpha|}    \int_{\R^3}  \tilde{d}_{\alpha; \gamma}(v)  \nabla_x    f^\gamma (t, x, v)  d v,
\ee
where $\tilde{a}_{\alpha; \gamma}(v)$, $ \tilde{b}^{\alpha}_{\beta, \gamma}(v)$, $ \tilde{c}^{\alpha}_{\beta, \gamma}(v)$, and $ \tilde{d}_{\alpha;\gamma}(v)$ are some determined coefficients, 	  whose explicit formulas are not pursued here. Moreover, from the equality (\ref{feb16eqn10}) and the  rough estimate of coefficients in (\ref{octeqn1701}), we have the following rough estimate,
\be\label{feb16eqn15}
|\tilde{a}_{\alpha; \gamma}(v)| + |\tilde{b}^{\alpha}_{\beta, \gamma}(v) | + |\tilde{c}^{\alpha}_{\beta, \gamma}(v)| + |\tilde{d}_{\alpha; \gamma}(v)| \lesssim 1. 
\ee

Define the half wave part of the electromagnetic field as follows, 
\be\label{octeqn1054}
u_1^\alpha (t,x):= \d^{-1}(\p_t - i \d) E^\alpha (t,x),\qquad u_2^\alpha (t,x):=\d^{-1} (\p_t - i \d) B^\alpha (t,x).
\ee 
As a result of direct computations, we can recover the electromagnetic field from the above defined half wave as follows, 
\be\label{jan5eqn3}
\p_t E^\alpha= \frac{\d}{2}\big(u_1^\alpha+\overline{u_1^\alpha}\big), \quad \p_t B^\alpha= \frac{\d}{2}\big(u_2^\alpha+\overline{u_2^\alpha}\big),\quad E^\alpha=\frac{-u_1^\alpha+ \overline{u_1^\alpha}}{2i}, \quad B^\alpha=\frac{-u_2^\alpha+ \overline{u_2^\alpha}}{2i}.
\ee
From (\ref{maxwellvectorfield}) and (\ref{octeqn1054}), we have 
\be\label{octeqn1089}
\left\{\begin{array}{l}
(\p_t + i \d) u_1^\alpha (t, x)= \d^{-1} \mathcal{N}^{\alpha}_1 =: \widetilde{ \mathcal{N}_1^\alpha}  \\
(\p_t + i \d) u_2^\alpha (t, x) =   \d^{-1} \mathcal{N}^{\alpha}_2 =: \widetilde{ \mathcal{N}_2^\alpha}. \\
\end{array}\right.
\ee
Correspondingly, we define the profiles of $u^\alpha_i(t)$, $i\in\{1,2\},$ as follows,
\be\label{octeqn1055}
h_1^\alpha(t): =e^{ it \d} u_1^\alpha(t), \quad h_2^\alpha(t): =e^{ it \d} u_2^\alpha (t).
\ee
On the Fourier side, from (\ref{octeqn1089}),  the following equations satisfied by the profiles hold,
\be\label{octeqn1725}
\p_t \widehat{h_1^\alpha}(t,\xi)= \widehat{\mathcal{N}_{1;1}^\alpha}(t,\xi)+ \widehat{\mathcal{N}_{1;2}^\alpha}(t,\xi),  
\ee
\be\label{eqqq3}
\p_t \widehat{h_2^\alpha}(t,\xi)=\sum_{\gamma\in \mathcal{B}, |\gamma| \leq |\alpha|}    \int_{\R^3} e^{it|\xi|-it \hat{v}\cdot \xi }    \tilde{d}_{\alpha;\gamma}(v)  \frac{i \xi}{|\xi|}  \widehat{g^\gamma}(t,\xi, v) d v,
\ee
where
\be\label{octeqn1728}
 \widehat{\mathcal{N}_{1;1}^\alpha}(t, \xi) = \sum_{\gamma\in \mathcal{B},  |\gamma| \leq |\alpha|} \int_{\R^3} e^{it|\xi|-it \hat{v}\cdot \xi }   \tilde{a}_{\alpha; \gamma}(v)  \frac{ i \xi}{|\xi|}  \widehat{g^\gamma}(t,\xi, v) d v, 
 \ee
\be\label{octeqn798}
    \widehat{\mathcal{N}_{1;2}^\alpha}(t,\xi) = \sum_{\begin{subarray}{c}
\beta, \gamma\in \mathcal{B},\mu\in\{+,-\}\\
|\beta| + |\gamma|\leq |\alpha| 
\end{subarray}} \int_{\R^3} \int_{\R^3} e^{i t|\xi|-it\mu |\xi-\eta|- i t\hat{v}\cdot \eta} \frac{ c_{\mu }}{|\xi|} \big(\tilde{b}^{\alpha}_{\beta, \gamma}(v) \widehat{P_{\mu}[h^\beta_1]}(t, \xi-\eta)  +\tilde{c}^{\alpha}_{\beta, \gamma}(v) \widehat{P_{\mu}[h^\beta_2]}(t, \xi-\eta)\big) \widehat{g^\gamma}(t, \eta, v) d\eta d  v,
\ee
where the operator $P_{\mu}[\cdot], \mu\in\{+,-\}$, is defined in (\ref{signnotation}) and $c_{\mu}= i\mu/2$. 
\subsection{The modified profiles of the electromagnetic field}
Note that both the equations (\ref{octeqn1725}) and (\ref{eqqq3}) contain  a linear term, which is the density (i.e., the average of the  distribution  function). To adjust the growth effect comes from this part, we add the correction terms to the profile and define the modified profiles as follows,  
\be\label{octe631}
\widehat{\widetilde{h_1^\alpha}}(t, \xi)= {\widehat{h_1^\alpha}}(t, \xi)- \sum_{\gamma\in \mathcal{B}, |\gamma| \leq |\alpha|} \int_{\R^3} e^{it|\xi|-it \hat{v}\cdot \xi } \frac{   \tilde{a}_{\alpha; \gamma}(v) \xi}{|\xi|(|\xi|-\hat{v}\cdot \xi )}  \widehat{g^\gamma}(t,\xi, v) d v,
\ee
\be\label{octe633}
\widehat{\widetilde{h_2^\alpha}}(t, \xi)= {\widehat{h_2^\alpha}}(t, \xi) - \sum_{\gamma\in \mathcal{B}, |\gamma| \leq |\alpha|} \int_{\R^3} e^{it|\xi|-it \hat{v}\cdot \xi } \frac{   \tilde{d}_{\alpha; \gamma}(v) \xi}{|\xi|(|\xi|-\hat{v}\cdot \xi )}  \widehat{g^\gamma}(t,\xi, v) d v.
\ee
Correspondingly, for any $\alpha\in \mathcal{B}$, we can define the modified electromagnetic field as follows, 
\be\label{noveq800}
\widetilde{E^\alpha}(t)=    c_{+} e^{-it \d} \widetilde{h_1^\alpha}(t) + c_{-}\overline{e^{-it \d} \widetilde{h_1^\alpha}(t)} , \quad \widetilde{B^{\alpha}}(t)=  c_{+} e^{-it \d} \widetilde{h_2^\alpha}(t) + c_{-}\overline{e^{-it \d} \widetilde{h_2^\alpha}(t)}. 
\ee
Define 
\be\label{noveq510}
E^E_{\alpha;\gamma}(f)(t,x):=  \mathcal{F}^{-1}\big[\int_{\R^3} e^{-it \hat{v}\cdot \xi } \frac{     \tilde{a}_{\alpha; \gamma}(v) \xi}{|\xi|(|\xi|-\hat{v}\cdot \xi )}  \widehat{f}(t,\xi, v) d v\big](x), 
\ee
\be\label{noveq511}
E_{\alpha;\gamma}^B(f)(t,x):= \mathcal{F}^{-1}\big[\int_{\R^3} e^{-it \hat{v}\cdot \xi } \frac{  \tilde{d}_{\alpha; \gamma}(v) \xi}{|\xi|(|\xi|-\hat{v}\cdot \xi )}  \widehat{f}(t,\xi, v) d v\big](x).
\ee
Recall (\ref{jan5eqn3}), we have
\be\label{noveq241}
E^{\alpha}(t)=  c_{+}e^{-it \d} h_1^\alpha(t) +c_{-} \overline{e^{-it \d} h_1^\alpha(t)} , \quad B^{\alpha}(t)= c_{+}e^{-it \d} h_2^\alpha(t) +c_{-} \overline{e^{-it \d} h_2^\alpha(t)}.
\ee
From the  equalities (\ref{jan5eqn3}), (\ref{octeqn1055}), (\ref{octe631}), and (\ref{octe633}),  for any  $\alpha \in \mathcal{B}$, we  can link the relation between the electromagnetic field and the modified electromagnetic field via  the  equalities as follows, 
\be\label{noveq432}
E^{\alpha}(t)= \widetilde{E^\alpha}(t) -\sum_{ \gamma\in\mathcal{B}, |\gamma|\leq |\alpha|} \textup{Im}\big[ E_{\alpha;\gamma}^{E}(g^\gamma)(t)\big], \quad B^{\alpha}(t)= \widetilde{B^\alpha}(t) - \sum_{\gamma\in\mathcal{B},|\gamma|\leq |\alpha|} \textup{Im}\big[E_{\alpha;\gamma}^{B}(g^\gamma)(t)\big]. 
\ee

Recall the equations satisfied by the profiles $h^\alpha_1(t)$ and  $h^\alpha_2(t)$ in  (\ref{octeqn1725}) and (\ref{eqqq3}). From (\ref{octe631}) and (\ref{octe633}), it is easy to derive the equations satisfied by the modified profiles $\widetilde{h^\alpha_1}(t)$ and  $\widetilde{h^\alpha_2}(t)$ on the Fourier side as follows,   
\be\label{octe662}
\p_t \widehat{\widetilde{h_1^\alpha}}(t, \xi)= \sum_{ |\gamma| \leq |\alpha|}  \mathfrak{N}_{\alpha;  \gamma}^1(t,\xi)  + \widehat{\mathcal{N}_{1;2}^\alpha}(t,\xi), \quad 
\p_t \widehat{\widetilde{h_2^\alpha}}(t, \xi)= \sum_{ |\gamma| \leq |\alpha|}  \mathfrak{N}_{\alpha;\gamma}^2(t,\xi),
\ee
where
\be\label{dec29eqn41}
   \mathfrak{N}_{\alpha;  \gamma}^1(t,\xi):=- \int_{\R^3} e^{it|\xi|-it \hat{v}\cdot \xi } \frac{\tilde{a}_{\alpha; \gamma}(v)  \xi}{|\xi|(|\xi|-\hat{v}\cdot \xi)}   \widehat{\p_t g^\gamma}(t,\xi, v) d v, \quad \mathfrak{N}_{\alpha;  \gamma}^2(t,\xi):=- \int_{\R^3} e^{it|\xi|-it \hat{v}\cdot \xi } \frac{\tilde{d}_{\alpha;  \gamma}(v)  \xi}{|\xi|(|\xi|-\hat{v}\cdot \xi)}   \widehat{\p_t g^\gamma}(t,\xi, v) d v. 
\ee
 Recall  (\ref{eqn841}). After plugging in the equation satisfied by $\p_t g^\gamma$,   we have  the following equality for any fixed $\alpha, \beta, \gamma\in \mathcal{B}$, 
\[
\mathfrak{N}_{\alpha;\gamma}^1(t,\xi)=  \sum_{ \iota, \kappa\in\mathcal{B},
 |\iota| +|\kappa|\leq |\gamma|\\
  }\sum_{ \mu\in\{+,-\} } \int_{\R^3}\int_{\R^3} e^{it|\xi|- i \mu t |\xi-\eta| -it \hat{v}\cdot \eta } \frac{- c_{\mu} \tilde{a}_{\alpha;  \gamma}(v)  \xi}{|\xi|(|\xi|-\hat{v}\cdot \xi)} \]
\[
\times \big(\hat{a}_{\gamma;\iota, \kappa}(v) {\widehat{ P_{\mu}[h_1^\iota]}}(t,\xi-\eta)+ \hat{b}_{\gamma;\iota, \kappa}(v) \widehat{ P_{\mu}[h_2^\iota]}(t, \xi-\eta)\big)\cdot \big( \nabla_v  - i t \nabla_v(\hat{v}\cdot \eta)  )\widehat{g^\kappa}(t, \eta, v)  d \eta d v
\]
\[
= \sum_{ \iota, \kappa\in\mathcal{B},
 |\iota| +|\kappa|\leq |\gamma|\\
  } \sum_{\mu\in\{+,-\}}  \int_{\R^3}\int_{\R^3} e^{it|\xi|- i \mu t |\xi-\eta| -it \hat{v}\cdot \eta } \nabla_v\cdot\big[\frac{c_{\mu}\tilde{a}_{\alpha;  \gamma}(v) 	 \xi}{|\xi|(|\xi|-\hat{v}\cdot \xi)} \big(\hat{a}_{\gamma;\iota, \kappa}(v)\widehat{ P_{\mu}[h_1^\iota]}(t,\xi-\eta)
\]
\be\label{octe661}
 + \hat{b}_{\gamma;\iota, \kappa}(v) \widehat{ P_{\mu}[h_2^\iota]}(t, \xi-\eta)\big) \big]  \widehat{g^\kappa}(t, \eta, v)d \eta d v.
\ee
In the above equality, we did the integration by parts in ``$v$'' to move around the derivative ``$\nabla_v$'' in front of $\widehat{g^{\kappa}}(t, \eta, v)$. With minor modifications, we can reduce  ``$ \mathfrak{N}_{\alpha;  \gamma}^2(t,\xi)$ '' in (\ref{dec29eqn41}) as follows, 
\[
\mathfrak{N}_{\alpha;  \gamma}^2(t,\xi)= \sum_{ \iota, \kappa\in\mathcal{B},
 |\iota| +|\kappa|\leq |\gamma|\\
  }\sum_{\mu\in\{+,-\}}  \int_{\R^3}\int_{\R^3} e^{it|\xi|- i \mu t |\xi-\eta| -it \hat{v}\cdot \eta } \nabla_v\cdot\big[\frac{c_{\mu}\tilde{d}_{\alpha; \gamma}(v)  \xi}{|\xi|(|\xi|-\hat{v}\cdot \xi)}  \big(\hat{a}_{\gamma;\iota, \kappa}(v)\widehat{ P_{\mu}[h_1^\iota]}(t,\xi-\eta)
\]
\be\label{octe51}
+ \hat{b}_{\gamma;\iota, \kappa}(v) \widehat{ P_{\mu}[h_2^\iota]}(t, \xi-\eta)\big) \big]  \widehat{g^\kappa}(t, \eta, v)d \eta d v.
\ee

From the above equalities and the detailed formula of  $ \widehat{\mathcal{N}_{1;2}^\alpha}(t,\xi)$ in (\ref{octeqn798}), we know that the nonlinearities of the  equation satisfied by the modified profiles in (\ref{octe662}) are quadratic, which are more favorable in the energy estimate.  
 
 \subsection{The energy functionals   for the Vlasov-Maxwell system}\label{constructionofinitialenergy}
In this subsection, we  construct the energy functionals  for the Vlasov-Maxwell system.  The energy functionals we will use for the Vlasov-Maxwell system are   similar to  the energy functionals used in the Vlasov-Nordstr\"om system, see \cite{wang}. For the sake of readers, we still elaborate the ideas behind the construction of energy functionals.

We define the high order energy for the profile $g(t,x,v)$ of the distribution function as follows,
\be\label{highorderenergy1}
E_{\textup{high}}^{f}(t):=  E_{\textup{high}}^{f;1}(t)+ E_{\textup{high}}^{f;2}(t), \quad E_{\textup{high}}^{f:1}(t):= \sum_{\alpha\in \mathcal{B}, \beta\in \mathcal{S}, |\alpha|+|\beta|= N_0}   \| \omega_{\beta}^{\alpha}( x, v)  g^\alpha_\beta (t,x,v)\|_{L^2_{x,v}} ,
\ee
\be\label{highordersecondpro}
  E_{\textup{high}}^{f:2}(t):= \sum_{\alpha\in \mathcal{B}, \beta\in \mathcal{S}, |\alpha|+|\beta| < N_0}   \| \omega_{\beta}^{\alpha}( x, v)  g^\alpha_\beta (t,x,v)\|_{L^2_{x,v}} ,
\ee
where $g^\alpha_\beta(t,x,v)$ is defined in (\ref{sepeqn40}) and the weight function $\omega_\beta^\alpha(t,x,v)$ is defined as follows, 
 \be\label{highorderweight}
\omega^\alpha_\beta(t,x, v) = (1 +|x |^2 + (x\cdot v)^2 +|v|^{20} )^{20N_0-10(|\alpha|+|\beta|)}   (1+|v|)^{c(\beta)}(\phi(t,x,v))^{|\beta|-i(\beta)},
\ee
 where the indexes  $c(\beta)$ and $i(\beta)$ are  defined in (\ref{countingnumber}) and the time dependent  weight function $\phi(t,x,v)$ is defined as follows, 
 \be\label{april2eqn1}
  \phi(t,x,v):=1-\frac{ x\cdot v 	}{1+|x|} f((1+|t|)/(|x||v|))\eta(x\cdot\tilde{v})\psi_{\geq 1}(|v|), 
 \ee
where $\eta(x):\R\rightarrow \R$ is supported inside $(-\infty,-10]$ and  equals to one inside $(-\infty,-20]$ and the function $f(x):\R_{+}\rightarrow \R$ is a bump function defined as follows, 
\be\label{april2eqn151}
f(x):=\left\{\begin{array}{cc}
e^{-1/(1-2^5 x)} &  0\leq x < 2^{-5}\\
0 & x \geq 2^{-5}\\ 
\end{array}\right., \Longrightarrow \quad  f'(x)\leq 0. 
\ee

 The main ideas of making the choice of weight function as in (\ref{highorderweight}) can be summarized as follows: (i) For   different order of derivatives of the profile,  we set up a hierarchy for the order of the associated weight function. For the profile with  more  derivatives, we propagate less weight inside the energy. The choice of such hierarchy   makes the estimate of lower order terms easier in the estimate of the high order terms. (ii) Comparing with the ordinary derivatives of the profile, we expect that the \textit{  good derivatives} of the profile, which are $\widehat{S}^v$ and $\Omega_i^x$, are capable of propagating more weight in ``$|v|$''; (iii) We used an anisotropic weight in $x$ in (\ref{highorderweight}) instead of a radial weight $|x|$ to guarantee  that the first  estimate  (\ref{feb8eqn51}) in Lemma \ref{derivativeofweightfunction} holds, which plays an important role  for the case when all the derivatives hit on $D_v g(t,x,v)$,  see Proposition \ref{notbulkterm1}  for more details.  (iv) We used  $\phi(t,x,v)$ in the weight function $\omega^\alpha_\beta(t,x, v)$ to capture the fact that  the inhomogeneous modulation $\tilde{d}(t,x,v)$ is much smaller than the distance to the light cone $||t|-|x+\hat{v}t||$ 
 if $x\cdot \tilde{v} < 0, |v|\gtrsim 1 $, and $ |x|\geq (1+|t|)/|v|$, see the second part of the estimate (\ref{feb8eqn51}) in Lemma \ref{derivativeofweightfunction}. This observation is also crucial for the estimate of the worst scenario after exploiting the null structure by doing integration by parts in time, see the proof of   Lemma \textup{\ref{bulktermbilinearestimate1}} in subsection  \ref{proofofbulklemma}.

  \begin{lemma}\label{derivativeofweightfunction}
For any $\alpha\in \mathcal{B}, \beta\in \mathcal{S},$ s.t., $|\alpha|+|\beta|\leq N_0$, the following estimate holds for any $x, v\in \mathbb{R}^3$, 
\be\label{feb8eqn51}
  \Big| \frac{{D}_v \omega_{\beta}^{\alpha}(t, x, v)}{\omega_{\beta}^{\alpha}( t, x, v)} \Big| \frac{1}{1+||t|-|x+\hat{v} t||} \lesssim 1, \quad \Big|\frac{\tilde{d}(t,x,v)\phi(x,v)}{1+||t|-|x+\hat{v} t||}\Big|\lesssim 1. 
\ee
\end{lemma}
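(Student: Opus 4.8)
The plan is to split the first estimate into three independent pieces coming from the product structure of the weight, dispose of the two easy ones, and then handle the remaining piece together with the second estimate, since both are governed by the geometry of the sector on which $\phi-1\neq0$. Write
\[
\omega_\beta^\alpha(t,x,v)=M^a\,(1+|v|)^{c(\beta)}\,\phi^{b},\qquad M:=1+|x|^2+(x\cdot v)^2+|v|^{20},
\]
with $a=20N_0-10(|\alpha|+|\beta|)$ and $b=|\beta|-i(\beta)$, both nonnegative and $\lesssim N_0$. Since $D_v$ is first order, the logarithmic derivative factors as
\[
\frac{D_v\omega_\beta^\alpha}{\omega_\beta^\alpha}=a\,\frac{D_vM}{M}+c(\beta)\,\frac{\nabla_v(1+|v|)}{1+|v|}+b\,\frac{D_v\phi}{\phi},
\]
so it suffices to bound each of the three ratios by $1+||t|-|x+\hat v t||$. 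The middle term is immediate: $D_v(1+|v|)=\nabla_v|v|=v/|v|$, giving a ratio $\lesssim(1+|v|)^{-1}\lesssim1$.

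For the $M$-term I would compute $D_vM$ directly from $D_{v_i}=\p_{v_i}-t(\p_{v_i}\hat v)\cdot\nabla_x$ and $\p_{v_i}\hat v_j=\delta_{ij}(1+|v|^2)^{-1/2}-v_iv_j(1+|v|^2)^{-3/2}$. The key is a cancellation: the $\nabla_x$-contributions of $D_v$ to $|x|^2$ and to $(x\cdot v)^2$ each contain the term $2tv_i(x\cdot v)(1+|v|^2)^{-3/2}$ with opposite signs and drop out, leaving
\[
D_{v_i}M=2(x\cdot v)x_i-\frac{2tx_i}{\sqrt{1+|v|^2}}+20|v|^{18}v_i.
\]
After dividing by $M$, the terms $(x\cdot v)x$ and $|v|^{18}v$ are $O(1)$ since $M$ dominates $(x\cdot v)^2$, $|x|^2$ and $|v|^{20}$. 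For the remaining term I rewrite $t(1+|v|^2)^{-1/2}=\sqrt{1+|v|^2}\,\tilde d(t,x,v)+\omega(x,v)$ straight from the definition of $\tilde d$, use $|\tilde d|\lesssim1+||t|-|x+\hat v t||$ from \eqref{feb19eqn1} and $|\omega(x,v)|\lesssim|x\cdot v|+|x|$, and note $\sqrt{1+|v|^2}\,|x|\lesssim M$ (clear when $|v|\lesssim1$; when $|v|\gtrsim1$ use $|v|^{20}+|x|^2\geq2|v|^{10}|x|\geq2\sqrt{1+|v|^2}\,|x|$). This closes the $M$-term.

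The hard part is the $\phi$-term and the second estimate, which need only be checked on the support of $\phi-1$. There, by construction, $x\cdot\tilde v\leq-10$, $|v|\gtrsim1$, and $|x||v|\gtrsim1+|t|$, so in particular $x\cdot v<0$, whence $\phi\geq1$ and $\phi\lesssim1+\tfrac{|x\cdot v|}{1+|x|}f\!\big(\tfrac{1+|t|}{|x||v|}\big)$, with the \emph{same} small factor $f(\cdot)\leq e^{-1}$ multiplying every term of $D_v\phi$ except the one in which $D_v$ hits $f$ itself, which produces $f'$, again exponentially small near the edge $2^{-5}$ of the support of $f$. Since $|x\cdot v|\geq10|v|\gg1$ one has $\psi_{\geq0}(|x|^2+(x\cdot v)^2)=1$, so $\omega(x,v)=\sqrt{(x\cdot v)^2+|x|^2}-|x\cdot v|$, and rationalizing gives $0\leq\omega(x,v)\leq\tfrac{|x|^2}{2|x\cdot v|}$; together with \eqref{march18eqn54}, which on this sector reads $|t|^2-|x+\hat v t|^2=\tilde d\big(t+\sqrt{1+|v|^2}\,(\sqrt{(x\cdot v)^2+|x|^2}-x\cdot v)\big)$, this forces $\tilde d$ to be small and $||t|-|x+\hat v t||$ comparatively large. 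Concretely I would use the elementary identity $|x+\hat v t|^2=(t|\hat v|+x\cdot\tilde v)^2+(|x|^2-(x\cdot\tilde v)^2)$ to bound $||t|-|x+\hat v t||$ from below by the transversal part $\sim\big(|x|^2-(x\cdot\tilde v)^2\big)/(|t|+|x|)$ together with $t(1-|\hat v|)\sim t/(1+|v|^2)$, then match this against $|\tilde d|\phi$ after inserting $|x||v|\gtrsim(1+|t|)/s$ on $\{f(s)\neq0\}$; for the $\phi$-term I would similarly reduce to $\tfrac{|t|}{1+|x|}f(\cdot)\lesssim(1+||t|-|x+\hat v t||)\,\phi$ and split into the region where $\tfrac{|x\cdot v|}{1+|x|}f(\cdot)\lesssim1$ (so $\phi\approx1$ and the geometric lower bound on the distance must carry the term) and the region where $\tfrac{|x\cdot v|}{1+|x|}f(\cdot)\gtrsim1$ (so $\phi$ itself dominates). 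I expect this last sector analysis — balancing the exponentially damped growth $\tfrac{|t|}{1+|x|}f(\cdot)$ against both the distance to the light cone and the size of $\phi$, which is essentially sharp near $|t|\sim|v|^2$ — to be the main obstacle; everything else is routine product-rule bookkeeping.
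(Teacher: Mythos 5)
Your reduction to the three factors of the weight is sound, and your treatment of the first two is correct and complete: the $(1+|v|)^{c(\beta)}$ term is trivial, and for the $M$-term you correctly identified both the cancellation between the $-t\nabla_v\hat v\cdot\nabla_x$ contributions to $|x|^2$ and $(x\cdot v)^2$ (which is precisely why the anisotropic weight $(x\cdot v)^2$ is used) and the substitution $t(1+|v|^2)^{-1/2}=\sqrt{1+|v|^2}\,\tilde d+\omega(x,v)$ combined with (\ref{feb19eqn1}). This part is a self-contained re-derivation of \cite{wang}[Lemma 4.2], which the paper merely cites. The paper's actual proof of this lemma is therefore entirely about the $\phi$-term and the second estimate — exactly the part you flag as ``the main obstacle'' and do not carry out, so the proposal as written does not close the lemma.

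Moreover, two of the intermediate claims you propose to use there are false. First, the lower bound $||t|-|x+\hat v t||\gtrsim(|x|^2-(x\cdot\tilde v)^2)/(|t|+|x|)$ fails even on $\textit{supp}(\phi-1)$: writing $t^2-|x+\hat vt|^2=(t-t|\hat v|-x\cdot\tilde v)(t+t|\hat v|+x\cdot\tilde v)-(|x|^2-(x\cdot\tilde v)^2)$, the first (positive, possibly large) product can exactly cancel the transversal part, putting the point on the light cone while $(|x|^2-(x\cdot\tilde v)^2)/(|t|+|x|)$ is as large as you like. Second, the reduction of the $\phi$-term to ``$\frac{|t|}{1+|x|}f(\cdot)\lesssim(1+||t|-|x+\hat vt||)\phi$'' drops the factors $(1+|v|)^{-1}$ and $(1+|v|)^{-2}$ that the $S^x$- and $\Omega^x$-components of $D_v$ carry through $\nabla_v\hat v$, and without them the inequality is false: on the cone with $t\sim|v|^2$, $|x|\sim 2^6 t/|v|$ and $x\cdot\tilde v\sim-2^{11}$ one has $\frac{|t|}{1+|x|}f\sim|v|e^{-2}2^{-6}\to\infty$ while $(1+||t|-|x+\hat vt||)\phi=O(1)$. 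The mechanism that actually works (and that the paper uses) is different from your dichotomy on the size of $\frac{|x\cdot v|}{1+|x|}f$: after dividing by $\phi\geq\frac{|x\cdot v|}{1+|x|}f\eta\psi$ one is left with terms such as $\frac{|t|}{(1+|v|)^{3}|x\cdot\tilde v|}$ and $\frac{|x|}{|x\cdot v|}$ as in (\ref{april2eqn31}), and one splits on whether $|x\cdot\tilde v|\gtrless 2^{-10}|x|^2/t$: in the first regime the constraint $|x||v|\gtrsim 1+|t|$ from (\ref{april2ndeqn21}) closes the estimate directly, and in the second regime both $t^2/(1+|v|^2)$ and $2t|x\cdot\hat v|$ are $\ll|x|^2$, so $|t^2-|x+\hat vt|^2|\gtrsim|x|^2$ and hence $(1+||t|-|x+\hat vt||)^{-1}\lesssim(|t|+|x|)/|x|^2$ as in (\ref{april2eqn26}). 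A correct completion needs this (or an equivalent) case analysis, together with the factorization (\ref{march18eqn54}) for the second estimate; the exponential smallness of $f$ plays no role.
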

\begin{proof}
Recall (\ref{highorderweight}). Let 
\[
\hat{\omega}_{\beta}^\alpha(x,v):= (1 +|x |^2 + (x\cdot v)^2 +|v|^{20} )^{20N_0-10(|\alpha|+|\beta|)}(1+|v|)^{c(\beta)}.
\]
From \cite{wang}[Lemma 4.2], we know that the following estimate holds, 
\be\label{march25eqn42}
\Big|\frac{D_v\hat{\omega}_\beta^\alpha(x,v)}{\hat{\omega}_\beta^\alpha(x,v)}\Big|\frac{1}{1+||t|-|x+\hat{v}t| |}\lesssim 1. 
\ee
Therefore, to prove our desired first estimate in 
(\ref{feb8eqn51}), it would be sufficient to consider the case when  $D_v$ hits the weight function $\phi(t,x,v)$. Recall (\ref{april2eqn1}). Note that the following estimates hold for any fixed $x,v\in \textit{supp}(\phi(t,x,v)-1)$, 
\be\label{april2ndeqn21}
x\cdot \tilde{v}\leq -10,\quad |x|\geq 10,  |v|\gtrsim 1,  \quad \frac{1+|t|}{|x||v|} \leq 2^{-4}, \quad |x|\geq 2^{4}(1+|t|)/|v|. 	
\ee
Based on the possible destination of $D_v$, we separate into three cases as follows. 

\noindent $\bullet$\quad The case when $D_v$ hits the coefficient $(x\cdot v)/(1+|x|)$. Recall the equalities in (\ref{octeqn456}).  The following decomposition of $D_v$  holds, 
\be\label{april2eqn61}
D_v= \tilde{v} S^v - \frac{t}{(1+|v|^2)^{3/2}}S^x + \sum_{i=1,2,3} \tilde{V}_i \Omega_i^v - \frac{t}{(1+|v|^2)^{1/2}}\Omega_i^x.
\ee
As   results of direct computations, the following equalities hold for any $i\in\{1,2,3\}$,
\be\label{april2eqn51}
S^v\big( {x\cdot v}  \big)=  {x\cdot \tilde{v}},\,\,  \Omega_i^v\big( {x\cdot v}  \big)=  {x\cdot \tilde{V}_i},\,\,  S^x\big( {x\cdot v}  \big)=|v|, \,\,   \Omega_i^x\big( {x\cdot v}  \big)=0,  
\ee 
\be\label{april9eqn21}
S^x\big(\frac{1}{1+ |x|} \big)= - \frac{ x\cdot \tilde{v} }{ |x|(1+|x|)^2}, \,\, \Omega_i^x\big(\frac{1}{ 1+|x|} \big)=- \frac{  \tilde{V}_i \cdot x  }{ |x|(1+|x|)^2}.	
\ee
Therefore, from the above equalities, the following estimate holds for any fixed $x,v\in \textit{supp}(\phi(t,x,v)-1)$,
\[
\Big[\big|\frac{S^v\big((x\cdot v)/(1+|x|) \big)}{\phi(t,x,v)}\big| + \frac{t}{(1+|v|^2)^{3/2}}\big|\frac{S^x\big((x\cdot v)/(1+|x|) \big)}{\phi(t,x,v)}\big|+\sum_{i=1,2,3} \big|\frac{\Omega_i^v\big((x\cdot v)/(1+|x|) \big)}{\phi(t,x,v)}\big|\]
\[
+ \frac{t}{(1+|v|^2)^{1/2}} \big|\frac{\Omega_i^x\big((x\cdot v)/(1+|x|) \big)}{\phi(t,x,v)}\big|\Big] 
  \frac{1}{1+||t|-|x+\hat{v}t||}\]
\be\label{april2eqn31}
\lesssim  1+ \frac{|t|}{(1+|x|)(1+|v|)} +\big[\frac{|t|}{ (1+|v|)^3} \frac{1}{|x\cdot \tilde{v}|} + \frac{|x|}{|x\cdot v |}\big]\frac{1}{1+||t|-|x+\hat{v}t||}.
\ee
If $x\cdot \tilde{v}\leq -2^{-10}|x|^2/t$, from (\ref{april2ndeqn21}), we know that the following estimate holds for any fixed $x,v\in \textit{supp}(\phi(t,x,v)-1)$
\be\label{aprileqn33}
 \frac{t}{ (1+|v|)^2} \frac{1}{|x\cdot \tilde{v}|} + \frac{|x|}{|x\cdot v|} \lesssim \frac{ |t| }{(1+|v|) |x| }+ \frac{t^2}{(1+|v|)^2|x|^2}\lesssim 1. 
\ee
If $x\cdot \tilde{v}\geq  -2^{-10}|x|^2/t$ , then from (\ref{april2ndeqn21}),  the following estimate holds for the distance with respect to the light cone, 
\be\label{april2eqn26}
\frac{1}{1+||t|-|x+\hat{v}t||} = \frac{1+||t|+|x+\hat{v}t||}{1+||t|-|x+\hat{v}t|| + ||t|+|x+\hat{v}t|| + |\frac{t^2}{1+|v|^2}-2tx\cdot \tilde{v}-|x|^2|}  \lesssim \frac{|t|+|x|}{|x|^2}.
\ee
Therefore, from the above estimate and the estimate  (\ref{april2ndeqn21}), we have  
\be\label{april2eqn29}
\big(\frac{t}{(1+|v|^2)}+\frac{|x|}{|x\cdot v|}\big)\frac{1}{1+||t|-|x+\hat{v}t||} \lesssim 1+ \frac{ |t| }{(1+|v|) |x| }+ \frac{t^2}{(1+|v|)^2|x|^2}\lesssim 1. 
\ee
To sum up, from the estimates   (\ref{aprileqn33}) and (\ref{april2eqn29}), in whichever case,  the following estimate holds if $D_v$ hits the coefficient $(x\cdot v)/(1+|x|)$, 
\be\label{april2eqn81}
\textup{(\ref{april2eqn31})} \lesssim 1. 
\ee

\noindent $\bullet$\quad The case when $D_v$ hits the cutoff function $\eta(x\cdot\tilde{v})$.  For this case, we know that $x\cdot \tilde{v}\in(-20,-10)$ inside the support. Recall the equalities in (\ref{april2eqn51}) and (\ref{april9eqn21}). We know that the following estimate holds for any fixed $x,v\in \textit{supp}(\phi(t,x,v)-1)\cap \textit{supp}(\eta'(x\cdot \tilde{v}))$.  
\be\label{april2eqn55}
\big|D_v(\eta(x\cdot \tilde{v})) \big| \lesssim \frac{1}{|v|} + \frac{|x|}{|v|} + \frac{|t|}{1+|v|^3}.
\ee
We first consider the case when $|x|\leq 2^{10}|v|$.  Since $|x|\geq (1+|t|)/|v|$ (see(\ref{april2ndeqn21})), for this case,  we have $|v|\gtrsim (1+|t|)^{1/2}$. From the estimate (\ref{april2eqn55}), we have 
\be\label{april2eqn82}
\big|D_v(\eta(x\cdot \tilde{v})) \big| \lesssim 1.
\ee
It remains to consider the case when $|x|\geq 2^{10} |v|$. For this case we have $|x|^2 \geq 2^{10}|x||v|\geq  (1+|t|) $, which implies that $|x|\geq 2^{5}(1+|t|)^{1/2}$. Therefore, from the equality in (\ref{april2eqn26}), we have
\[
\frac{1}{1+||t|-|x+\hat{v}t||} \lesssim \frac{|t|+|x|}{|x|^2}.
\]
Therefore, from the above estimate and the estimate (\ref{april2eqn55}), the following estimate holds,  
\be\label{april2eqn83}
\big|D_v(\eta(x\cdot \tilde{v})) \big|  \frac{1}{1+||t|-|x+\hat{v}t||} \lesssim 1 + \frac{t}{|x||v|} + \frac{t^2}{|x|^2|v|^2}\lesssim 1. 
\ee

\noindent $\bullet$\quad The case when $D_v$ hits the cutoff function  $f((1+|t|)/( |x| |v| ))$. From the decomposition of ``$D_v$'' in (\ref{april2eqn61}), as a result of   direct computations, we know  that the following estimate holds for any $x,v\in\textit{supp}(\phi(t,x,v)-1)$,
\be\label{april2eqn84}
|D_v\big(f((1+|t|)/(|x||v|)\big) | \lesssim \frac{t}{|x||v|^2} + \frac{t^2}{|x|^2|v|^2} \lesssim 1. 
\ee
To sum up, from the estimate (\ref{april2eqn81}), (\ref{april2eqn82}), (\ref{april2eqn83}), and (\ref{april2eqn84}),  we have the following estimate, 
\be\label{april2eqn87}
\big|\frac{D_v\phi(t,x,v)}{\phi(t,x,v)} \big|\frac{1}{1+||t|-|x+\hat{v}t||} \lesssim 1. 
\ee
Recall the definition of $\omega_{\beta}^\alpha(t,x,v)$ in (\ref{highorderweight}),  our    desired first estimate in (\ref{feb8eqn51}) holds from the estimates (\ref{march25eqn42}) and (\ref{april2eqn87}).

 Now, we proceed to prove the second part of the desired estimate (\ref{feb8eqn51}). Recall the equality (\ref{march18eqn54}).   Note that the following estimate holds, 
\[
\big|\frac{\tilde{d}(t,x,v)\phi(x,v)}{1+||t|-|x+\hat{v}t||}\big| \lesssim 1+ \frac{1+ |t|+|x+\hat{v} t| }{ (t+(|1+|v|)(|x\cdot v | +|x|)} \frac{|x\cdot v |}{|x|} f((1+|t|)/(|x||v|))
\]
\be\label{april2eqn131}
\lesssim 1+\frac{|t|+|x|}{|x|(1+|v|)}f((1+|t|)/(|x||v|)) \lesssim 1. 
\ee
Hence finishing the proof of our desired second estimate in  (\ref{feb8eqn51}).
\end{proof}

 From  the estimate  (\ref{densitydecay})  in Lemma \ref{decayestimateofdensity}, we know that  the zero frequency  of the profile plays the leading role in the decay estimate of the density type function. With this intuition, 
similar to the study of the Vlasov-Poisson system in \cite{wang3} and the study of Vlasov-Nordstr\"om system in \cite{wang},  
 we define a lower order energy for the profile $g(t,x,v)$ as follows, 
\be\label{octeqn1896}
E_{\textup{low}}^{f}(t):=\sum_{\gamma\in \mathcal{B}, |\alpha|+|\gamma| \leq N_0 }   \| \widetilde{\omega_{ }^{\alpha}}(  v)\big(\nabla_v^\alpha \widehat{ g^\gamma }(t,0,v)-\nabla_v\cdot  \widetilde{g}_{\alpha, \gamma}(t, v) \big)\|_{  L^2_v},\,\,\,\, \widetilde{\omega_{\gamma }^{\alpha}}(  v):=  (1   +|v|^{ } )^{20N_0-10(|\alpha|+|\gamma|) } .
\ee 
where the correction term $ \widetilde{g}_{\alpha,\gamma}(t, v)$, which is introduced to avoid losing derivatives for the study of the time evolution of $\nabla_v^\alpha \widehat{ g^\gamma }(t,0,v)$, is defined as follows,
\be\label{correctionterm}
 \widetilde{g}_{\alpha,\gamma}(t, v):=\left\{\begin{array}{ll}
 \displaystyle{\int_0^t \int_{\R^3} -K(s,x+\hat{v}s,v) \nabla_v^\alpha g^{\gamma  }(s,x,v) } d x  d s & \textup{if}\, |\alpha| +|\gamma|=N_0\\ 
 &\\
 0 & \textup{if}\, |\alpha| +|\gamma| < N_0 ,\\ 
 \end{array}
 \right.
\ee
where $K(t,x,v)$ is defined in (\ref{eqn387}).

 We define a high order energy  of  the electromagnetic field as follows, 
\[
   E_{\textup{high}}^{eb}(t):=  \sum_{\alpha \in \mathcal{B}, |\alpha|\leq N_0}\sum_{i=1,2}  
  \big[\sup_{k\in \mathbb{Z}}   2^{k}  \| \widehat{h_i^\alpha }(t,\xi)\psi_k(\xi) \|_{L^\infty_\xi}  +  2^{k}  \| \widehat{\widetilde{h_i^\alpha} }(t,\xi)\psi_k(\xi) \|_{L^\infty_\xi} +  2^{k/2} \| \nabla_\xi	 \widehat{\widetilde{h_i^\alpha} }(t,\xi)\psi_k(\xi)\|_{L^2_\xi}   \big] 
\]
 \be\label{highorderenergy2}
 +     \| \widehat{h_i^\alpha }(t,\xi) \|_{L^2_\xi}  +  \|   \widehat{\widetilde{h_i^\alpha} }(t,\xi) \|_{L^2_\xi} .
\ee

The first part of energy $E_{\textup{high}}^{eb}(t)$, which is stronger than $L^2$ at low frequencies, controls the low frequency part of the profiles $h_i^\alpha(t)$, $i\in\{1,2\}$;  the second part of energy $E_{\textup{high}}^{eb}(t)$, which has the same scaling level as the first part of energy $E_{\textup{high}}^{eb}(t)$,  aims to control the first order weighted norm of the modified profiles $\widetilde{h_i^\alpha}(t)$, $i\in\{1,2\}$; the third part   of energy $E_{\textup{high}}^{eb}(t)$, controls the high frequency part of the profiles $h_i^\alpha(t)$, $i\in\{1,2\}$. 

Moreover, we define a low order energy for the profiles $h_i^\alpha(t)$, $i\in\{1,2\},$ of the electromagnetic field as follows, 
\be\label{secondorderloworder}
   E_{\textup{low}}^{eb}(t):= \big[\sum_{n=0,1,2,3 }\sum_{i=1,2}\sum_{\alpha\in \mathcal{B}, |\alpha|\leq 20-3n} \| h^\alpha_i(t)\|_{X_n} + (1+t)\| \p_t h_i^\alpha (t)\|_{X_n} + (1+t)^2\| \p_t \nabla_x(1+|\nabla_x|)^{-1} h_i^\alpha (t)\|_{X_n}\big],
\ee
where the $X_n$-normed space, $n\in\{0,1,2,3\}$, is defined as follows, 
 \be\label{definitionofXnorm}
\|h\|_{X_n}:=  \sup_{k\in \mathbb{Z}}2^{(n+1)k }\| \nabla_\xi^n \widehat{h }(t, \xi)\psi_k(\xi)\|_{L^\infty_\xi}.
\ee

To show that  the electromagnetic field decays at rate $1/\big((1+|t|)(1+||t|-|x||)\big)$ over time, from the   decay estimates in Lemma \ref{sharpdecaywithderivatives}, it would be sufficient to show that the low order energy $E_{\textup{low}}^{eb}(t)$ doesn't grow over time.

 \begin{lemma}\label{sharpdecaywithderivatives}
For any given Fourier multiplier operator $T$ with symbol $m(\xi)\in \mathcal{S}^\infty$,   the following estimate holds, 
\[
\sum_{\begin{subarray}{c}
 \alpha    \in \mathcal{B},  |\alpha|  \leq 10, 
 u\in \{E^\alpha, B^\alpha	\}\\
 \end{subarray}}     |   T(u) (t, x)| + (1+||t|-|x||) \big|   \nabla_x T(u) (t, x)\big| + \sum_{|\alpha|\leq 10, v\in\{E,B\}}(1+||t|-|x||)^{|\alpha|} \big|   \nabla_x^\alpha T( v) (t, x)\big| 
 \]
\be\label{noveqn78}
 \lesssim (1+|t|)^{-1} (1+||t|-|x||)^{-1 } \|m(\xi)\|_{\mathcal{S}^\infty} E_{\textup{low}}^{eb}(t),
\ee
\be\label{march18eqn30}
\sum_{ 
 \alpha    \in \mathcal{B},  |\alpha|  \leq 10,
 u\in \{E^\alpha, B^\alpha	\}
}   |   P_k\circ T(\p_t u) (t, x)| +  |   P_k\circ T(\d u) (t, x)|  \lesssim (1+|t|)^{-1} (1+||t|-|x||)^{-1 }2^{k-4k_{+}} \|m(\xi)\|_{\mathcal{S}^\infty_k} E_{\textup{low}}^{eb}(t). 
\ee
\end{lemma}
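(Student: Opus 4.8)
\emph{Reduction to the half-wave profiles.} The plan is to reduce everything to $L^\infty_x$ dispersive estimates for the half-wave profiles $h_1^\beta,h_2^\beta$, and then to manufacture the decay in $||t|-|x||$ by the trading identities (\ref{feb16eqn1}) and Lemma \ref{tradethreetimes}. First I would use (\ref{noveq241}), together with $\partial_t E^\beta=\tfrac{|\nabla|}{2}(u_1^\beta+\overline{u_1^\beta})$ and the companion relations in (\ref{jan5eqn3}), to write every quantity on the left-hand side of (\ref{noveqn78})–(\ref{march18eqn30}) as a finite sum of terms $e^{\mp it|\nabla|}$ applied to a Fourier-multiplier transform of some $h_i^\beta$ (or its complex conjugate), where $|\beta|\le|\alpha|+3\le 13$ after the trading below. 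After a Littlewood--Paley decomposition it then suffices to estimate each dyadic block $P_k(\cdot)$ and sum in $k$ against $\sup_k\|m\|_{\mathcal{S}^\infty_k}$; the factor $2^{-4k_+}$ (from extra $\xi$-regularity) and the low-frequency factor $2^{k_-}$, resp.\ the additional $2^k$ in (\ref{march18eqn30}) produced by the extra $|\nabla|$ in $\partial_t$ and $|\nabla|$, are precisely what render the $k$-sum convergent.

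\emph{The bare $(1+|t|)^{-1}$ decay.} For each dyadic block this is Lemma \ref{twistedlineardecay}, whose right-hand side involves $\|h_i^\beta\|_{X_0},\|h_i^\beta\|_{X_1}$ (with a $2^{k_+}$ loss); iterating Lemma \ref{twistedlineardecay} while absorbing two more spatial derivatives into the $\nabla_\xi$-weighted norms $X_2,X_3$ supplies the extra $2^{-4k_+}$ needed for summability. All of the norms $\|h_i^\beta\|_{X_n}$, $n\le 3$, that appear are controlled by $E_{\textup{low}}^{eb}(t)$ directly from the definition (\ref{secondorderloworder}), provided the trading below is organized so that at most a bounded number of $\mathcal{P}_1$-vector fields act simultaneously with $\nabla_x^\alpha$, keeping the total derivative count under the thresholds $20-3n$.

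\emph{The $||t|-|x||$ gain.} When $||t|-|x||\le 1$ there is nothing more to do. When $||t|-|x||\gg 1$, I would apply Lemma \ref{tradethreetimes} to the wave solution $u\in\{E^\beta,B^\beta\}$, rewriting $(|t|-|x|)^3 T_k[u]$ as $\sum\tilde c_\alpha^i(t,x)\,\tilde T^i_{k,\alpha}(\partial_t^i u^\alpha)$ ($|\alpha|\le 3$, $i\le 2$) plus a remainder $(|t|-|x|)e_\alpha(t,x)\,\tilde T^3_{k,\alpha}\bigl((\partial_t^2-\Delta)u\bigr)$. For the first family I reapply the linear decay, using $\partial_t u_i^\beta=-i|\nabla|u_i^\beta+e^{-it|\nabla|}\partial_t h_i^\beta$ so that the three extra powers of $|t|$ hidden in $(|t|-|x|)^{-3}$ are defeated by the $(1+t)$- and $(1+t)^2$-weighted pieces of $E_{\textup{low}}^{eb}(t)$ in (\ref{secondorderloworder}). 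For the remainder I invoke the reduced Maxwell system (\ref{maxwellvectorfield}) with (\ref{feb16eqn16})–(\ref{feb16eqn17}): $(\partial_t^2-\Delta)E^\beta$, $(\partial_t^2-\Delta)B^\beta$ are velocity averages of $\nabla_x f^\gamma$ (plus, for $E$, a quadratic term in $(E^\beta,B^\beta,f^\gamma)$), so the density-decay estimate (\ref{densitydecay}) of Lemma \ref{decayestimateofdensity}, fed by the distribution-function energy available in the bootstrap, bounds it with a surplus of time decay — only one power of $|t|-|x|$ is spent there. This yields $|P_kT(u)|\lesssim(1+|t|)^{-1}(1+||t|-|x||)^{-3}2^{k_--4k_+}\|m\|_{\mathcal{S}^\infty_k}E_{\textup{low}}^{eb}(t)$ (up to the negligible $f$-contribution), stronger than claimed after summing in $k$. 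For the weighted pieces $(1+||t|-|x||)^{|\alpha|}|\nabla_x^\alpha T(v)|$ with $v\in\{E,B\}$, $|\alpha|\le 10$, I would \emph{not} pull $\nabla_x^\alpha$ through $T$ (that costs $2^{k|\alpha|}$ in frequency); instead I use (\ref{feb16eqn1}) repeatedly to replace $(|t|-|x|)^{|\alpha|}\nabla_x^\alpha$ by a combination of $\le|\alpha|$ vector fields from $\mathcal{P}_1$ with bounded coefficients, then commute those past $T$ (the commutators $[\Omega_i,T],[S,T],[L_i,T]$ have $\mathcal{S}^\infty$-symbols, up to an extra $\partial_t$ handled as above), so that the frequency loss is zero and the estimate reduces to the $u=E^\gamma,B^\gamma$ case with $|\gamma|\le 10$ already settled.

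\emph{Main obstacle.} The genuinely delicate point is the bookkeeping: after the trades (Lemma \ref{tradethreetimes} and (\ref{feb16eqn1})) and the iterated integrations by parts in $\xi$ that produce $2^{-4k_+}$, one must check that the total number of $\mathcal{P}_1$-vector fields plus $\nabla_\xi$-derivatives falling on the profiles never exceeds the thresholds $20-3n$ hard-coded in $E_{\textup{low}}^{eb}(t)$, and that the $(1+t)$- and $(1+t)^2$-weighted norms in (\ref{secondorderloworder}) exactly cancel the $(|t|-|x|)^{-3}$ emitted by Lemma \ref{tradethreetimes}; relatedly, one must verify that the coefficient bounds (\ref{april3eqn1}) and (\ref{noveqn141}) are sharp enough to permit a clean dyadic summation in $k$ uniformly for both the electric and the magnetic components, the former carrying the additional quadratic source in $\mathcal{N}_1^\alpha$.
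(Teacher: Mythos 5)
Your proposal follows essentially the same route as the paper: the paper's proof simply cites \cite{wang}[Lemma 6.3] and identifies exactly the two ingredients you use, namely the linear decay estimate (\ref{noveqn555}) of Lemma \ref{twistedlineardecay} applied to the half-wave profiles, combined with trading spatial derivatives for powers of $||t|-|x||$ via (\ref{feb16eqn1}) and Lemma \ref{tradethreetimes}. Your additional details (handling the $(\p_t^2-\Delta)u$ remainder through the reduced Maxwell system and the density decay of Lemma \ref{decayestimateofdensity}, and the derivative-count bookkeeping against the thresholds $20-3n$ in $E_{\textup{low}}^{eb}$) are consistent with how the cited argument is set up.
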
 
\begin{proof}
 With minor modification in the proof of  \cite{wang}[Lemma 6.3], the desired estimates (\ref{noveqn78}) and (\ref{march18eqn30}) hold from the linear decay estimate  (\ref{noveqn555})   in Lemma \ref{twistedlineardecay}.  The main idea of the proof lies in the process of trading 
one spatial derivative for the decay of modulation of size ``$\big(1+|t|-|x|\big)^{-1}$'' in the sense of equality (\ref{feb16eqn1}) and the equality (\ref{noveqn171}) in Lemma \ref{tradethreetimes}. 
\end{proof}

 \subsection{Proof of the   Theorem \ref{precisetheorem}}\label{proofofthetheorem}

To prove our main  theorem, we use the standard  bootstrap argument. From the local existence theory, we know that the lifespan of the solution is at least of size $(1/\epsilon_0)^{1/2}$ if the given initial data is of size $\epsilon_0$, where $\epsilon_0\ll 1$. Moreover, the assumption imposed on the initial data in (\ref{march1steqn1}) is strong enough to guarantee that the  initial energy is of size $\epsilon_0$.  For convenience, the starting time of our bootstrap assumption is one. More precisely, the following estimate holds, 
\be\label{april8eqn51}
\sup_{t\in[0,1] } (1+t)^{-\delta}\big( E_{\textup{high}}^{f;1}(t )  + E_{\textup{high}}^{eb}(t ) \big)+(1+t)^{-\delta/2}  E_{\textup{high}}^{f;2}(t )  + E_{\textup{low}}^f(t )  + E_{\textup{low}}^{eb}(t ) \lesssim \epsilon_0.
\ee
   We expect that the high order energy grows sub-polynomially and the low order energy doesn't grow over time. Therefore, we make the following bootstrap assumption, 
\be\label{bootstrap}
\sup_{t\in[1,T] }  (1+t)^{-\delta}\big( E_{\textup{high}}^{f;1}(t )  + E_{\textup{high}}^{eb}(t ) \big)+(1+t)^{-\delta/2}  E_{\textup{high}}^{f;2}(t )  + E_{\textup{low}}^f(t )  + E_{\textup{low}}^{eb}(t )  \lesssim \epsilon_1:=\epsilon_0^{5/6},
\ee
where $T>1$. 

Recall the definition of the correction term $\widetilde{g}_{\alpha,\gamma}(t, v)$ in (\ref{correctionterm}). From the  $L^2_{x,v}-L^\infty_{x,v}$ type bilinear estimate, the equality (\ref{feb16eqn1}) and the  decay estimate (\ref{noveqn78}) in Lemma \ref{sharpdecaywithderivatives}, the following estimate holds for the correction term, 
\be\label{may22eqn100}
\sum_{|\alpha|+|\gamma|\leq N_0}\| \widetilde{\omega^\alpha_\gamma }(v) \widetilde{g}_{\alpha,\gamma}(t, v)\|_{L^2_v} \lesssim \int_{0}^t (1+s)^{-2} E_{\textup{low}}^{eb}(s)E_{\textup{high}}^{f}(s) ds \lesssim \int_{0}^t (1+s)^{-2+\delta} \epsilon_1^2 d s \lesssim \epsilon_0.
\ee

Recall the decompositions (\ref{eqq50}) and (\ref{nov104}). From the estimate  
(\ref{april2eqn171}), the estimate (\ref{sepeqn110}) in Proposition \ref{notbulkterm1}, the estimates (\ref{nov102}) and (\ref{may22eqn41})  in Proposition \ref{notbulkterm2},   the estimates (\ref{nov418}) in Lemma \ref{awayfromthecone}, and the estimate (\ref{nov412}) in Lemma \ref{dec30proposition1}, we have
\be\label{may22eqn1}
\sup_{t\in[1,T]} (1+t)^{-\delta} E_{\textup{high}}^{f;1}(t) +(1+t)^{-\delta/2} E_{\textup{high}}^{f;2}(t)  \lesssim \epsilon_0. 
\ee
From  the above estimate (\ref{may22eqn1}),  the estimate (\ref{octeq501}) in Proposition \ref{loworderenergyprop1}, the estimate (\ref{octe691}) in Proposition \ref{highorderenergyprop1},  the estimate (\ref{loworderenergyvlasov}) in Proposition \ref{loworderenergy},   the following estimate holds,  
\be\label{may22eqn3}
\sup_{t\in[0,T] } (1+t)^{-\delta}  E_{\textup{high}}^{eb}(t)   + E_{\textup{low}}^f(t )  + E_{\textup{low}}^{eb}(t ) \lesssim \epsilon_0.
\ee

From the estimates (\ref{may22eqn1}) and (\ref{may22eqn3}), we know  that our bootstrap assumption  (\ref{april8eqn51}) is improved. Hence,   we can keep extending  the length of the lifespan of the nonlinear solution, i.e.,  $T=+\infty$. Moreover,  the following estimate holds 
 \be\label{march30eqn110}
\sup_{t\in[0,\infty) }  (1+t)^{-\delta}\big( E_{\textup{high}}^{f;1}(t )  + E_{\textup{high}}^{eb}(t ) \big)+(1+t)^{-\delta/2}  E_{\textup{high}}^{f;2}(t )  + E_{\textup{low}}^f(t )  + E_{\textup{low}}^{eb}(t )  \lesssim \epsilon_0.
\ee

Since the low order energy doesn't grow over time, from  the definition of the low order energy of the electromagnetic field in (\ref{secondorderloworder})  and  the estimate (\ref{loworderenergyvlasov}) in Proposition \ref{loworderenergy}, we know  that the nonlinear solution scatters to a linear solution in a low regularity space.

Moreover, the desired decay estimates (\ref{desiredecayaverage}) and (\ref{decayingeneral}) holds directly from the decay estimate (\ref{densitydecay}) in Lemma \ref{decayestimateofdensity}, the decay estimate (\ref{noveqn78}) in Lemma \ref{sharpdecaywithderivatives}, and the fact that the low order energy $E_{\textup{low}}^f(t )$ and $E_{\textup{low}}^{eb}(t )$ do not grow over time, see the estimate (\ref{march30eqn110}). Hence finishing the proof of the main theorem.

\section{Energy estimates for the electromagnetic field}\label{energyelectromagnetic}
 
This section is devoted to control both the low order energy and the high order energy of the profiles of the electromagnetic field over time, i.e., controlling $E_{\textup{high}}^{eb}(t)$, which is defined in (\ref{highorderenergy2}), and $E_{\textup{low}}^{eb}(t)$, which is defined in (\ref{secondorderloworder}),
over time. 

Although there is little essential difference between the nonlinear wave part of the Vlasov-Maxwell system and the Vlasov-Nordstr\"om system, for the sake of readers, we still give a concise proof here. The main ingredients of the energy estimate of the electromagnetic field are a linear estimate and several bilinear estimates, which have been derived and proved in \cite{wang}. We first record these multilinear estimates here and then use these general multilinear estimates as black boxes to estimate the increment of the high order energy $E_{\textup{high}}^{eb}(t)$ and the low order energy $E_{\textup{low}}^{eb}(t)$ over time. 

\subsection{Some multilinear estimates}
Recall (\ref{octeqn1725}) and (\ref{eqqq3}). To estimate the $X_n$-norms, $n\in\{0,1,2,3\}$, of the linear terms inside the nonlinearities of $\p_t\widehat{h_i^\alpha}(t, \xi), i\in\{1,2\},\alpha\in \mathcal{B}$, we use the following Lemma. 
\begin{lemma}\label{Alinearestimate}
Given any given $n\in \mathbb{N}_{+}, n\leq 10$,   and any given symbol $m(\xi, v)$ that satisfies the following  estimate,
 \be\label{roughestimateofsymbolgenarl2}
 \sup_{k\in \mathbb{Z}}\sum_{i=0,1,\cdots,10,|a|\leq 15 }  2^{ik-(n-1)k} \|(1+|v|)^{-20-4i} \nabla_\xi^i\nabla_v^a m(\xi, v)\psi_k(\xi)\|_{L^\infty_\xi L^\infty_v}\lesssim 1,
\ee
then the following estimate holds for any $i\in\{0,1,2,3\},$  
\[
\| \int_{\R^3} e^{i t|\xi|- i\mu t\hat{v}\cdot \xi}    m(\xi, v) \widehat{g}(t, \xi, v) d v \|_{X_i} 
\]
\be\label{octe0303}
 \lesssim   \sum_{ |\alpha|\leq  i+n }  (1+|t|)^{-n} \|(1+|v|)^{30} \nabla_v^\alpha \widehat{g}(t,  0, v)  \|_{  L^1_v } +   \sum_{\beta\in \mathcal{S}, |\beta|\leq  i+n }  (1+|t|)^{-n-1} \| (1+|x|^2+|v|^2)^{20}\Lambda^\beta g(t,x,v)\|_{L^2_{x }L^2_v}.
 \ee
 Moreover, for any differentiable vector value function $\tilde{g}(t,v):\R_t\times \R_v^3\rightarrow \R^3$,  the following $L^\infty_\xi$-type estimate holds for any fixed $k\in \mathbb{Z},$ 
 \[
2^k \|\int_{\R^3} e^{i t|\xi|- i\mu t\hat{v}\cdot \xi}    m(\xi, v) \widehat{g}(t, \xi, v)\psi_{k}(\xi) d v\|_{L^\infty_\xi } \lesssim 2^{nk}\big(\|(1+|v|)^{20}\big(\widehat{g}(t,0,v)-\nabla_v\cdot \tilde{g}(t,v)\big)\|_{L^1_v} \]
\be\label{may17eqn12}
+ (1+|t|2^{k})\| (1+|v|)^{20} \tilde{g}(t,v)\|_{L^1_v} + 2^{k} \| (1+|x|+|v|)^{30} g(t,x,v)\|_{L^2_x L^2_v}\big).
 \ee
\end{lemma}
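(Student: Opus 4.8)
## Proof Strategy for Lemma \ref{Alinearestimate}

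The plan is to exploit the non-stationary phase in the oscillatory factor $e^{it|\xi| - i\mu t\hat v\cdot\xi}$ and convert decay in $t$ into decay of the symbol, much as in the classical Klainerman-Sobolev mechanism, but working entirely on the Fourier side where the $X_n$-norms live. First I would fix $k\in\mathbb{Z}$ and localize the $\xi$-integral to the dyadic shell $|\xi|\sim 2^k$ via $\psi_k$. The key point is that $\nabla_\xi(t|\xi| - \mu t\hat v\cdot\xi) = t(\xi/|\xi| - \mu\hat v)$, and since $|\hat v| < 1$ strictly, this gradient has size $\gtrsim |t|$ on the relevant frequency support — there is no stationary point in $\xi$. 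Hence I would integrate by parts $n$ times in the $v$-variable is the wrong move; rather, the correct object is that for the $X_i$-norm we must estimate $\nabla_\xi^i$ of the integral, and each $\xi$-derivative that lands on the phase produces a factor of $t$. To counteract this, I would instead perform the integration by parts in $x$ on the profile $g$: writing $\widehat g(t,\xi,v) = \int e^{-ix\cdot\xi} g(t,x,v)\,dx$ is not directly available since $\widehat g$ is the given object, so the cleaner route is to Taylor-expand $\widehat g(t,\xi,v)$ around $\xi = 0$ to order $n+i$, using that low frequencies of a density-type average dominate (this is exactly the philosophy flagged after Lemma \ref{decayestimateofdensity}).

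Concretely, for the estimate \eqref{octe0303}: I would write $\widehat g(t,\xi,v) = \sum_{|\alpha|\le n+i-1} \frac{\xi^\alpha}{\alpha!}\partial_\xi^\alpha\widehat g(t,0,v) + R(t,\xi,v)$ where the remainder $R$ carries $n+i$ factors of $|\xi|\sim 2^k$ and is controlled in $L^2$ by the spatial weighted norm of $g$ (by Plancherel and the fact that $\partial_\xi^{n+i}\widehat g$ corresponds to $x^{n+i}g$). For the polynomial part, each monomial $\xi^\alpha\partial_\xi^\alpha\widehat g(t,0,v)$ paired against $e^{it|\xi|-i\mu t\hat v\cdot\xi}m(\xi,v)$ is integrated in $v$; after taking $i$ $\xi$-derivatives for the $X_i$-norm and accounting for the $2^{(i+1)k}$ prefactor, the worst term has a $t$-power that is exactly compensated by repeated integration by parts in $\xi$ against the non-degenerate phase gradient $t(\xi/|\xi|-\mu\hat v)$ — but integration by parts in $\xi$ moves derivatives onto $m$ and onto $\widehat g$, and here is where the hypothesis \eqref{roughestimateofsymbolgenarl2} enters: it bounds $2^{ik-(n-1)k}\|(1+|v|)^{-20-4i}\nabla_\xi^i\nabla_v^a m\|$, precisely matching the $2^{-(n-1)k}$ gain needed. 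The velocity weights $(1+|v|)^{30}$ in the conclusion absorb both the $(1+|v|)^{-20-4i}$ loss in the symbol hypothesis and the Jacobian-type factors $(\xi/|\xi| - \mu\hat v)^{-1}$, which behave like $(1+|v|^2)$ near the degeneracy.

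For the $L^\infty_\xi$-type estimate \eqref{may17eqn12}, the structure is the same but simpler since no $\xi$-derivatives of the output are taken: I would split $\widehat g(t,0,v) = \big(\widehat g(t,0,v) - \nabla_v\cdot\tilde g(t,v)\big) + \nabla_v\cdot\tilde g(t,v)$, estimate the first piece directly in $L^1_v$ with the $2^{nk}$ loss coming from one Taylor step plus the symbol hypothesis, and for the second piece integrate by parts in $v$ so that $\nabla_v$ falls on $e^{-i\mu t\hat v\cdot\xi}m(\xi,v)$, producing the factor $t\nabla_v\hat v\cdot\xi$ of size $(1+|t|2^k)$ — this explains the shape of the bound. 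The remainder term in the $v$-Taylor expansion is again the spatial $L^2$ norm of $g$ with the $2^k$ prefactor.

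The main obstacle I anticipate is bookkeeping the interplay between the three competing parameters: the frequency scale $2^k$ (which can be arbitrarily small or large), the power of $t$ generated by $\xi$-derivatives of the phase, and the velocity weights. In particular one must verify that when $2^k$ is small the Taylor remainder genuinely beats the required $2^{(i+1)k}$ or $2^{nk}$ weight in the $X_i$-norm — i.e. that no logarithmic or negative-power divergence in $k$ appears — and when $2^k$ is large that the symbol hypothesis \eqref{roughestimateofsymbolgenarl2} supplies enough high-frequency decay; both are delicate because the $X_n$-norms are $L^\infty_\xi$-based and weighted by $2^{(n+1)k}$, so there is no room to lose even a single power of $2^k$. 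Since the paper explicitly states this is a minor modification of estimates proved in \cite{wang}, I expect the resolution is to cite the analogous lemma there and indicate only the changes forced by the slightly different symbol class; a fully self-contained argument would require tracking these $k$-uniform bounds carefully through each integration by parts.
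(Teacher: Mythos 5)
Your proposed mechanism for the main estimate \eqref{octe0303} does not work, and the problem is structural rather than a matter of bookkeeping. The quantity being estimated, $\int_{\R^3}e^{it|\xi|-i\mu t\hat v\cdot\xi}m(\xi,v)\widehat g(t,\xi,v)\,dv$, is an integral in $v$ alone, evaluated pointwise in $\xi$ (the $X_i$-norm is an $L^\infty_\xi$ norm of its $\xi$-derivatives). There is therefore no $\xi$-integral available, and ``repeated integration by parts in $\xi$ against the phase gradient $t(\xi/|\xi|-\mu\hat v)$'', the step you rely on to cancel the powers of $t$ generated when $\nabla_\xi^i$ hits the phase, cannot be performed. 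The only oscillation one can exploit is in the $v$-variable: $\nabla_v(\hat v\cdot\xi)=\nabla_v\hat v\cdot\xi$ is nonzero for $\xi\ne 0$ (of size $\gtrsim|\xi|/(1+|v|)^{3}$ by (\ref{octeqn456})), and repeated integration by parts in $v$ is precisely what produces the $(1+|t|)^{-n}$ decay. You explicitly reject this route (``integrate by parts $n$ times in the $v$-variable is the wrong move''), yet it is the one forced by the shape of the statement: it explains why the hypothesis \eqref{roughestimateofsymbolgenarl2} controls $\nabla_v^a m$ for $|a|\le 15$, and why the conclusion is phrased in terms of $\nabla_v^\alpha\widehat g(t,0,v)$ rather than $\xi$-derivatives. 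Your Taylor expansion of $\widehat g$ in $\xi$ about $\xi=0$ instead produces the $x$-moments $\partial_\xi^\alpha\widehat g(t,0,v)=\int(-ix)^\alpha g\,dx$, which are not the quantities appearing in \eqref{octe0303}; worse, these polynomial terms have no remaining $\xi$-dependence in $\widehat g$, so no time decay can be extracted from them at a fixed $\xi$ by any manipulation. A secondary error: the claim that $|\nabla_\xi(t|\xi|-\mu t\hat v\cdot\xi)|\gtrsim|t|$ because $|\hat v|<1$ is false uniformly in $v$ --- the gradient degenerates like $t/(1+|v|^2)$ when $\xi/|\xi|$ is close to $\mu\hat v$, which is exactly the near-resonant regime the whole paper is organized around.

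The correct skeleton (carried out in \cite{wang}[Lemma 5.1], which the paper simply cites) is: split $\widehat g(t,\xi,v)=\widehat g(t,0,v)+\big(\widehat g(t,\xi,v)-\widehat g(t,0,v)\big)$, integrate by parts $n$ times in $v$ for the first piece (and once more for the difference, whose extra factor $|\xi|\,\|x\,g\|$ pays for the additional $(t|\xi|)^{-1}$ and yields the $(1+|t|)^{-n-1}$ weighted $L^2$ term), and absorb the velocity weights generated by $(\nabla_v\hat v\cdot\xi)^{-1}$ and by $\nabla_v^a m$ into $(1+|v|)^{30}$ and the $L^2$ weight. Your treatment of the second estimate \eqref{may17eqn12} --- splitting off $\nabla_v\cdot\tilde g$ and integrating by parts in $v$ to produce the factor $(1+|t|2^k)$ --- is essentially right, and is in fact the same $v$-integration-by-parts mechanism you declined to use for \eqref{octe0303}.
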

\begin{proof}
See \cite{wang}[Lemma 5.1]. 
\end{proof}
 
Recall     (\ref{octeqn1725}),  (\ref{octeqn798}), (\ref{octe662}), (\ref{octe661}), and  (\ref{octe51}). Motivated from the Vlasov-wave type interaction  structure of quadratic terms, we  study a bilinear form that will be suitable for the estimate of all quadratic terms in  $\p_t\widehat{h_i^\alpha}(t, \xi)$ and  $\p_t\widehat{\widetilde{h_i^\alpha}}(t, \xi)$, $i\in\{1,2\}.$ More precisely, for any $l\in \{0,1\}$ and any given symbol $m(\xi, v)$ that satisfies the following estimate, 
\be\label{generalestimateofsymbol}
 \sup_{k\in \mathbb{Z}}\sum_{n=0,1,2,3} \sum_{|\alpha|\leq 5}  2^{lk+nk} \|(1+|v|)^{-20}\nabla_\xi^n \nabla_{v}^\alpha m (\xi, v)\psi_k(\xi)\|_{  L^\infty_v \mathcal{S}^\infty_k}  \lesssim 1,
 \ee
 we define  a bilinear operator as follows, 
\be\label{octeqn1923}
T_{\mu}  (h, f)(t, \xi):= \int_{\R^3}\int_{\R^3} e^{it|\xi|-i\mu t |\xi-\eta| - i  t \hat{v}\cdot \eta} m(\xi, v)\widehat{h^\mu}(t, \xi-\eta) \widehat{f}(t,\eta, v) d \eta d v. 
\ee

For the above defined bilinear operator, we have several bilinear estimates in different function spaces, which will be used in the low order energy estimate and the high order energy estimate. 
 \begin{lemma}\label{bilinearinXnormed}
Given any $n \in\{0,1,2,3 \}$, any $l\in\{0,1\}$, and  any given symbol ``$m(\xi, v)$'' that satisfies the estimate \textup{(\ref{generalestimateofsymbol})}, the following estimate holds for the bilinear form  $T_{\mu} (h, f)(t, \xi) $ defined in \textup{(\ref{octeqn1923})}, 
\[
\sup_{k\in \mathbb{Z}} 2^{(n+1)k} \| \nabla_\xi^n \big(T_{\mu} (h, f)(t, \xi) \big)\psi_{k}(\xi)\|_{L^\infty_\xi}\lesssim \sum_{\beta\in\mathcal{S}, |\beta|\leq n+3} \sum_{|a|\leq n+3}   (1+|t|)^{-3+l} \big( \sum_{0\leq c \leq n } \sum_{0\leq b \leq n-c}\sum_{|\alpha|\leq c}\| h^{\alpha}\|_{X_b} \big)
\]
\be\label{octe364}
 \times    \| (1+|x|^2+|v|^2)^{20} \Lambda^{\beta} {f}(t, x,v) \|_{L^2_x  L^2_v } .
\ee
\end{lemma}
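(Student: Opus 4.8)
The plan is to estimate $\nabla_\xi^n\big(T_\mu(h,f)(t,\xi)\big)\psi_k(\xi)$ in $L^\infty_\xi$ by a stationary-phase / integration-by-parts argument in the $\eta$-variable, exploiting the fact that the only oscillation that depends jointly on $\eta$ and $v$ is the Vlasov phase $e^{-it\hat v\cdot\eta}$, while the wave phase $e^{-i\mu t|\xi-\eta|}$ is $v$-independent. First I would apply $\nabla_\xi^n$ to the integrand in \eqref{octeqn1923}; by the Leibniz rule this produces a bounded number of terms in which the $n$ derivatives are distributed among the symbol $m(\xi,v)$, the phase factors $e^{it|\xi|}$ and $e^{-i\mu t|\xi-\eta|}$, and (after a change of variables moving one derivative onto $\widehat{h^\mu}$) the factor $\widehat{h^\mu}(t,\xi-\eta)$. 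Each $\xi$-derivative hitting a phase costs a factor $(1+|t|)$ and the worst case is exactly $n$ such factors, which is why the left-hand side carries the weight $2^{(n+1)k}$ and the $X_b$-norms with $b+c\le n$ appear on the right: a $b$-fold derivative landing on $\widehat{h^\mu}$ is precisely what the $X_b$-norm of $h^\alpha$ controls, and $c$ counts how many vector fields have already been absorbed into $h^\alpha$.

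The core estimate is then the following: for the remaining $\eta$-integral, after localizing $\xi-\eta$ to dyadic frequency $2^{k_1}$ and $\eta$ to dyadic frequency $2^{k_2}$ with $\max\{k_1,k_2\}\gtrsim k$, one integrates by parts in $\eta$ using the identity
\[
e^{-i\mu t|\xi-\eta|-it\hat v\cdot\eta}=\frac{(1+t^2|\mu(\xi-\eta)/|\xi-\eta|+\hat v|^2)^{-1}}{\text{(elliptic factor)}}\,\big(1-it(\cdots)\cdot\nabla_\eta\big)^{M}\big(e^{-i\mu t|\xi-\eta|-it\hat v\cdot\eta}\big)
\]
so that on the region where $|\mu(\xi-\eta)/|\xi-\eta|+\hat v|\gtrsim (1+t)^{-1/2}$ the integral gains decay $(1+t)^{-3}$ from the three-dimensional $\eta$-integration, consistent with the $(1+|t|)^{-3}$ loss in \eqref{octe364}; in the complementary small-angle region (where the wave and transport characteristics are nearly parallel) one uses instead the volume bound $|\{\eta:\angle(\mu\eta,-\hat v t)\le (1+t)^{-1/2}\}|\lesssim (1+t)^{-1}$ together with the density-type $L^1_v$ control of $f$, repackaging the loss through the weighted $L^2_xL^2_v$ norm via Cauchy--Schwarz. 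The extra factor $(1+|t|)^{l}$ reflects the one derivative of $m$ in $\xi$ permitted when $l=1$ (see \eqref{generalestimateofsymbol}), which can cost one power of $t$ after integration by parts. Throughout, the $v$-integral is handled by expanding $\widehat f(t,\eta,v)$ via the weighted norms: writing $\widehat f(t,\eta,v)=\langle v\rangle^{-40}\cdot\langle v\rangle^{40}\widehat f$, the weight is absorbed into the $(1+|x|^2+|v|^2)^{20}$ factor on the right-hand side, and the vector fields $\Lambda^\beta$ with $|\beta|\le n+3$ arise because each $\nabla_\eta$ acting on $\widehat f(t,\eta,v)$ (up to $n$ of them, plus the up to $3$ needed for Sobolev embedding in $\eta$ to pass from $L^2_\eta$ to $L^\infty_\eta$) corresponds to multiplication by $x+\hat v t$ on the physical side, which is converted into $\Lambda^\beta g$ using $D_v$ and the decompositions of Lemma~\ref{twodecompositionlemma}.

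I would carry this out in the order: (i) reduce to a single dyadic piece in $k_1,k_2$ and dispose of the easy case $\max\{k_1,k_2\}\gg k$ or $\ll k$ by the $\mathcal S^\infty_k$ bound on $m$ and crude estimates; (ii) in the balanced case split the $\eta$-domain by the angle $\angle(\mu\eta,-\hat v)$ at threshold $(1+t)^{-1/2}$; (iii) on the large-angle piece integrate by parts $M\gtrsim 10$ times in $\eta$, bound the resulting symbol using the $\mathcal S^\infty_k$ structure of $m$, its $\nabla_v^\alpha$ and $\nabla_\xi^n$ regularity from \eqref{generalestimateofsymbol}, and the fact that derivatives of $(1+|t|^2|\mu\hat\eta+\hat v|^2)^{-1}$ only improve decay, then apply Cauchy--Schwarz in $(\eta,v)$; (iv) on the small-angle piece use the volume estimate and pass the remaining $v$-integral through $\|\langle v\rangle^{20}\widehat f(t,\eta,v)\|_{L^1_v}\lesssim$ the weighted $L^2$ norm; (v) sum the dyadic pieces and translate $\eta$-derivatives of $\widehat f$ into $\Lambda^\beta g$. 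The main obstacle is step (iv)—the small-angle region—where no oscillation is available and one must extract the full $(1+t)^{-3+l}$ decay purely from the transversality volume bound and the decay already built into $h$ through $X_b$; making the bookkeeping of how the $b$ derivatives on $h$, the $n$ derivatives on the output, and the $\le 3$ Sobolev derivatives interact so that the total never exceeds the budget $n+3$ on $\Lambda^\beta f$ and $b+c\le n$ on $h^\alpha$ is the delicate combinatorial point, but it is routine given the commutation structure of $\mathfrak P_1,\mathfrak P_2$ recorded in Lemmas~\ref{twodecompositionlemma}--\ref{summaryofhighordercommutation}.
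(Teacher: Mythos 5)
The paper does not prove this lemma in the text: the proof is the single line ``See \cite{wang}[Lemma 5.2]'', so your proposal can only be measured against the mechanism that the framework of \cite{wang} actually uses. Against that, your central device does not work. You propose to extract the $(1+|t|)^{-3}$ decay by integrating by parts in $\eta$ using the phase $e^{-i\mu t|\xi-\eta|-it\hat v\cdot\eta}$. Any such integration by parts necessarily lands derivatives on $\widehat{h^\mu}(t,\xi-\eta)$, and each such derivative is only controlled by passing from $X_b$ to $X_{b+1}$; but the right-hand side of \textup{(\ref{octe364})} permits only $\|h^\alpha\|_{X_b}$ with $b+c\le n$, i.e.\ \emph{no} $\xi$- or $\eta$-derivatives on $\widehat{h}$ at all when $n=0$. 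So three (or more) integrations by parts in $\eta$ cannot close with the stated norms. The decay in this framework comes instead from the $v$-integration: the phase $e^{-it\hat v\cdot\eta}$ against $dv$ is the velocity-averaging/density-decay mechanism of Lemma \ref{decayestimateofdensity} (equivalently, the change of variables $v\mapsto \hat v t$ on the physical side), which produces $(t|\eta|)^{-N}$ at the cost of $\nabla_v$ acting on $\widehat f(t,\eta,v)$ and on $m(\xi,v)$ — and it is precisely these $v$-derivatives that are converted into $\Lambda^\beta f$ plus $(1+|x|^2+|v|^2)$-weights via $\nabla_v=K_v+\sqrt{1+|v|^2}\,\omega(x,v)\nabla_v\hat v\cdot\nabla_x$ and the decompositions of Lemma \ref{twodecompositionlemma}. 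That is why $\Lambda^\beta f$ with $|\beta|\le n+3$ and the weight $(1+|x|^2+|v|^2)^{20}$ appear, while $h$ enters only through $X_b$, $b\le n$. Your step (iv) is where this shows: in the region you call ``small angle'' you concede that no oscillation is available and you only claim a volume gain of $(1+t)^{-1}$, which is two powers of $t$ short of the target; the resolution is that in the complementary low-frequency region $|\eta|\lesssim (1+t)^{-1}$ the full $\eta$-volume is $(1+t)^{-3}$ and $|\widehat{h^\mu}(t,\xi-\eta)|\lesssim 2^{-k}\|h\|_{X_0}$ already gives the claim, while for $|\eta|\gtrsim (1+t)^{-1}$ one uses the $v$-oscillation as above. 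Note also that $\nabla_\eta\big(-\mu|\xi-\eta|-\hat v\cdot\eta\big)=\mu\frac{\xi-\eta}{|\xi-\eta|}-\hat v$ never vanishes (since $|\hat v|<1$); the genuine degeneracy is in $v$ (the factor $(1+|v|^2)^{-3/2}$ in $|\nabla_v(\hat v\cdot\eta)|$), which is absorbed by the $v$-weights — your stationary-set picture in $\eta$ misidentifies where the difficulty sits.

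Two further bookkeeping claims are incorrect. First, ``each $\xi$-derivative hitting a phase costs a factor $(1+|t|)$ and the worst case is exactly $n$ such factors'' is incompatible with the statement: $n$ uncompensated powers of $t$ would degrade the bound to $(1+|t|)^{-3+n}$, whereas \textup{(\ref{octe364})} keeps $(1+|t|)^{-3+l}$ for all $n$; the extra weight $2^{nk}$ on the left compensates powers of $|\xi|^{-1}$, not powers of $t$, and the $t$-growth from $\nabla_\xi$ of the combined phase $t(|\xi|-\mu|\xi-\eta|)$ must be cancelled (e.g.\ it is $O(t|\eta|\,2^{-k})$ for $\mu=+$ after localization, or handled by a further integration by parts), not merely tolerated. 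Second, the factor $(1+|t|)^{l}$ does not come from ``one derivative of $m$ in $\xi$'': the parameter $l$ in \textup{(\ref{generalestimateofsymbol})} measures the low-frequency singularity $m\sim|\xi|^{-l}$ of the symbol, and the extra power of $t$ for $l=1$ arises because the dominant contribution lives at frequencies $|\eta|\sim (1+t)^{-1}$ where $2^{-lk_2}\sim (1+t)^{l}$. As it stands the proposal would not produce \textup{(\ref{octe364})}; the correct route is the $v$-oscillation argument sketched above, as carried out in \cite{wang}[Lemma 5.2].
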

 \begin{proof}
 See \cite{wang}[Lemma 5.2]. 
 \end{proof}
    \begin{lemma}\label{highorderbilinearlemma1}
Given   any    symbol ``$m(\xi, v)$'' that satisfies the estimate \textup{(\ref{generalestimateofsymbol})} with $l=1$, 
  the following estimate holds for the bilinear form  $T_{\mu} (h, f)(t, \xi) $ defined in \textup{(\ref{octeqn1923})}, 
\be\label{octe103}
 \sup_{k \in \mathbb{Z}} 2^{ k }\| T_{\mu} (h, f)(t, \xi)\psi_k(\xi)\|_{L^\infty_\xi} \lesssim \sum_{n=0,1,2, \alpha\in \mathcal{B},  |\alpha|\leq 4} (1+|t|)^{-2+\delta}  \|  {h^\alpha}(t )\|_{X_n}   \| (1+|x|^2+|v|^2)^{20} f(t,x,v)\|_{L^2_x L^2_v}. 
\ee
Moreover,  the following $L^2$-type estimate holds, 
\[ 
  \| T_{\mu} (h, f)(t, \xi) \|_{L^2_\xi}\lesssim   \min \big\{  \sum_{n=0,1, \alpha\in \mathcal{B},  |\alpha|\leq 4}  (1+|t|)^{-1}  \|  {h^\alpha}(t )\|_{X_n}    \| (1+|x|^2+|v|^2)^{20} f(t,x,v)\|_{L^2_x L^2_v}, 
\]
\be\label{deceqn98}
  \sum_{\beta\in \mathcal{S}, |\beta|\leq 3} (1+|t|)^{-2}  \|  h(t)\|_{L^2}   \| (1+|x|^2+|v|^2)^{20}\Lambda^\beta f(t,x,v) \|_{L^2_{x,v}} \big\}.
\ee
\end{lemma}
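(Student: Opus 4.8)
The plan is to read off the Vlasov--wave structure of $T_\mu$. After dropping the unimodular factor $e^{it|\xi|}$, the inner $\eta$--integral is, up to the Fourier multiplier with symbol $m(\xi,v)$ and the $v$--integration, the Fourier transform (in the output variable) of the product of the half--wave evolution $e^{-i\mu t\d}h^\mu$, which obeys the pointwise bound of Lemma \ref{twistedlineardecay}, with the physical profile $F(t,x,v):=\mathcal F_\eta^{-1}\big[e^{-it\hat v\cdot\eta}\widehat f(t,\eta,v)\big](x)$ (so $F(t,x,v)=f(t,x-\hat v t,v)$ when $f$ is itself a profile, as in all the applications). The extra power of $(1+|t|)^{-1}$ beyond the decay of $e^{-i\mu t\d}h^\mu$ will come from the non--resonance of the $\eta$--phase: since $|\hat v|<1$,
\[
\big|\n_\eta\big(-\mu|\xi-\eta|-\hat v\cdot\eta\big)\big| = \Big|\mu\frac{\xi-\eta}{|\xi-\eta|}-\hat v\Big| \geq 1-|\hat v| \gtrsim (1+|v|^2)^{-1},
\]
so one can integrate by parts in $\eta$, gaining $(1+|v|^2)/(1+|t|)$ per step, the power of $1+|v|^2$ being absorbed later by the $(1+|v|)^{20}$--weight carried by $f$ together with the $v$--growth allowed for $m$ in (\ref{generalestimateofsymbol}). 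First I would Littlewood--Paley decompose $h^\mu=\sum_{k_1}h^\mu_{k_1}$ and $f=\sum_{k_2}f_{k_2}$; only $\max(k_1,k_2)\gtrsim k-O(1)$ feeds the output at frequency $2^k$.

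For the $L^\infty_\xi$ bound (\ref{octe103}), on the pieces with $2^{k_1}\gtrsim(1+|t|)^{-1}$ I would integrate by parts twice in $\eta$. The $\le 2$ derivatives falling on $\widehat{h^\mu}(t,\xi-\eta)$ are controlled by the $X_n$--norms with $n\le 2$; the high frequencies of $h$ are dealt with by first replacing $h$ by $\n_x^\beta h$, $|\beta|\le 4$, so that the extra $2^{|\beta|k_1}$ makes the $k_1$--sum converge --- this is where the range $|\alpha|\le 4$ in the statement appears. The $\le 2$ derivatives falling on $\widehat f(t,\eta,v)$ produce $\widehat{x^{\le 2}f}$, estimated in $L^\infty_\eta$ by a weighted $L^1_x$ norm and then integrated in $v$ by Cauchy--Schwarz against the $(1+|v|)^{20}$--weight. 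On the complementary pieces ($2^k\lesssim(1+|t|)^{-1}$, which forces $2^{k_1},2^{k_2}\lesssim(1+|t|)^{-1}$ on the dominant terms) I would estimate directly: the relevant $\eta$--region has measure $\lesssim 2^{3k}$, $\|\widehat{h^\mu}\psi_{\le k}\|_{L^\infty_\xi}\lesssim 2^{-k}\|h\|_{X_0}$, and the symbol bound gives $|m(\xi,v)|\lesssim 2^{-k}(1+|v|)^{20}$, so after the $2^{k}$ prefactor this contributes $\lesssim 2^{2k}\lesssim(1+|t|)^{-2}$. The dyadic summation then costs at most a factor $(1+|t|)^{\delta}$, which accounts for the $\delta$ in (\ref{octe103}).

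For the two bounds in (\ref{deceqn98}): the first one uses no oscillation. One writes $T_\mu(h,f)(t,\cdot)$, modulo the multiplier with symbol $m$ and the $v$--integration, as $\mathcal F\big[(e^{-i\mu t\d}h^\mu)\,F(\cdot,v)\big]$, applies H\"older $L^\infty_x\cdot L^2_x$, bounds $\|e^{-i\mu t\d}h^\mu\|_{L^\infty_x}\lesssim(1+|t|)^{-1}\sum_{|\alpha|\le 4,\,n\le 1}\|h^\alpha\|_{X_n}$ by Lemma \ref{twistedlineardecay} (after Littlewood--Paley summation, again using $\n_x^\beta h$ at high frequency), and moves the $v$--integral out by Minkowski, paying the $(1+|v|)^{20}$--weight of $f$ against the polynomial $v$--growth of $m$. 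The second bound keeps $h$ in $L^2$ and pays with vector fields on $f$ instead: by Cauchy--Schwarz in the frequency variable one peels off $\|h\|_{L^2}$, and the remaining $\eta$--integral of the $v$--averaged, density--type kernel $\int_{\R^3}m(\xi,v)e^{-it\hat v\cdot\eta}\widehat f(t,\eta,v)\,dv$ is bounded by $(1+|t|)^{-2}$ times weighted norms of $f$ carrying at most three of the vector fields $\Lambda^\beta$, via the density--decay argument of Lemma \ref{decayestimateofdensity} (splitting $|\eta|\lesssim(1+|t|)^{-1}$, where the zero mode is used and only $x,v$--weights enter, from $|\eta|\gtrsim(1+|t|)^{-1}$, where integration by parts in $v$ together with $\le 3$ spatial derivatives --- recovered from $S^x,\Omega^x_i$ via the spanning identities (\ref{octeqn456}) --- make the $\eta$--tail summable); the $(1+|x|^2+|v|^2)^{20}$--weight also converts the weighted $L^1_{x,v}$ norms of $f$ produced by that lemma into the stated weighted $L^2$ norms.

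The hard part will be the bookkeeping rather than the ideas: one must carry the $v$--weights through the repeated $\eta$--integration by parts (each step costs a factor $1+|v|^2$, and $m$ is already allowed to grow like $(1+|v|)^{20}$, so one checks that the $(1+|v|)^{20}$--weight on $f$ still leaves an integrable $v$--tail after a Cauchy--Schwarz) while simultaneously controlling the high frequencies of $h$ --- which forces the use of $\n_x^\beta h$, $|\beta|\le 4$, hence the sum $\sum_{|\alpha|\le 4}\|h^\alpha\|_{X_n}$ --- without letting those spatial derivatives collide with the $\le 2$ extra $\n_\xi$'s generated by the integration by parts. The only genuinely non--routine loss is the $(1+|t|)^{\delta}$, which enters in the transition regime $2^k\sim(1+|t|)^{-1}$, where neither the clean double integration by parts nor the crude estimate is by itself sharp and one interpolates between them; and the density--decay step behind the second $L^2$ bound, which is where the restriction $|\beta|\le 3$ and the factor $(1+|t|)^{-2}$ are pinned down.
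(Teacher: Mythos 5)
Your sketch captures the essential mechanism: the Vlasov--wave product structure hidden in $T_\mu$, the uniform non-resonance bound $|\n_\eta(-\mu|\xi-\eta|-\hat v\cdot\eta)|\geq 1-|\hat v|\gtrsim(1+|v|^2)^{-1}$ exploited by integration by parts in $\eta$, the use of the $X_n$-norms together with spatial derivatives on $h$ to make the Littlewood--Paley sums converge (hence the range $|\alpha|\le 4$), and the split between the wave $L^\infty_x$-decay and the density-decay route for the two alternatives in (\ref{deceqn98}). This is exactly the toolkit the paper deploys in the surrounding Lemmas \ref{Alinearestimate}--\ref{firstorderweighthigh1} and is consistent with the cited source \cite{wang}[Lemmas 5.3--5.4], so I regard the approach as the intended one. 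Two small corrections. First, in the case split for (\ref{octe103}) the complement of $\{2^{k_1}\gtrsim(1+|t|)^{-1}\}$ is $\{2^{k_1}\lesssim(1+|t|)^{-1}\}$, which does \emph{not} force $2^{k}\lesssim(1+|t|)^{-1}$ (one can have $2^{k_2}\gg 1/t$); this is harmless because the crude volume-of-support estimate you describe already yields $\lesssim 2^{2k_1}\lesssim(1+|t|)^{-2}$ on all of $\{2^{k_1}\lesssim(1+|t|)^{-1}\}$ independently of $k,k_2$, but the split should be stated that way. Second, the step you call ``Cauchy--Schwarz in the frequency variable'' for the second bound in (\ref{deceqn98}) is really a Schur/Young bound for the $\eta$-kernel $\int_{\R^3} m(\xi,v)e^{-it\hat v\cdot\eta}\widehat f(t,\eta,v)\,dv$; what the density-decay argument must produce is a uniform-in-$\xi$ bound on its $L^1_\eta$-norm (not merely a pointwise $L^\infty_\eta$ bound), and that is indeed where the restriction $|\beta|\le3$ and the $(1+|x|^2+|v|^2)^{20}$-weight on $f$ are pinned down, as you anticipate.
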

  \begin{proof}
 See \cite{wang}[Lemma 5.3 \& Lemma 5.4]. 
 \end{proof}

\begin{lemma}\label{firstorderweighthigh1}
Given   any    symbol ``$m(\xi, v)$'' that satisfies the estimate \textup{(\ref{generalestimateofsymbol})} with $l=1$,  
  the following estimate holds for the bilinear form  $T_{\mu} (h, f)(t, \xi) $ defined in \textup{(\ref{octeqn1923})}, 
  \[
    2^{k/2}\|\nabla_\xi\big(T_{\mu}(h, f)(t, \xi) \big)\psi_k(\xi)\|_{L^2}\lesssim   \sum_{0\leq n \leq 3}\sum_{ \alpha\in\mathcal{B}, |\alpha|\leq 4} \big( (1+|t|)^{-1}2^{k_{-}   } + (1+|t|)^{-2+\delta}\big)  \| {h^\alpha}(t   )\|_{X_n} 
  \]
\be\label{octe406}
\times \| (1+|x|^2+|v|^2)^{20}  f(t, x, v)\|_{L^2_x L^2_v}.
\ee
Moreover, we have
\[
\sup_{k\in \mathbb{Z}} 2^{k/2}\|\nabla_\xi\big(T_{\mu}(h, f)(t, \xi) \big)\psi_k(\xi)\|_{L^2}\lesssim  (1+|t|)^{-2}\big(\sup_{k\in \mathbb{Z}} 2^{k} \| \widehat{h}(t, \xi)\psi_k(\xi)\|_{L^\infty_\xi} + 2^{k/2}\|\nabla_\xi \widehat{h}(t, \xi)\psi_k(\xi)\|_{L^2}\big)
\]
\be\label{octe581}
\times \big(\sum_{\beta\in \mathcal{S}, |\beta|\leq 4} \|  (1+|x|^2+|v|^2)^{20} \Lambda^\beta f(t,x,v)\|_{L^2_x L^2_v} \big).
\ee
\end{lemma}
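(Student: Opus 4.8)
The plan is to follow the proof of Lemma~\ref{highorderbilinearlemma1} and let the extra derivative $\nabla_\xi$ act, via the Leibniz rule, on one of the three $\xi$-dependent factors in the integrand of (\ref{octeqn1923}): the phase $e^{it|\xi|-i\mu t|\xi-\eta|}$, the symbol $m(\xi,v)$, or the profile $\widehat{h^\mu}(t,\xi-\eta)$. Write accordingly $\nabla_\xi T_\mu(h,f)=\mathrm I+\mathrm{II}+\mathrm{III}$, where $\mathrm I$ has the extra vector factor $it\big(\tfrac{\xi}{|\xi|}-\mu\tfrac{\xi-\eta}{|\xi-\eta|}\big)$ inserted in the integral, $\mathrm{II}$ has $m$ replaced by $\nabla_\xi m$, and $\mathrm{III}$ has $\widehat{h^\mu}(t,\xi-\eta)$ replaced by $(\nabla\widehat{h^\mu})(t,\xi-\eta)$. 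The term $\mathrm{II}$ is immediate: $\nabla_\xi m$ still obeys the hypothesis (\ref{generalestimateofsymbol}) with $l=1$ after one extra power $2^{-k}$ at the output frequency $2^k$, which is exactly absorbed by the $2^{k/2}$-prefactor on the left of (\ref{octe406})--(\ref{octe581}) and, in effect, manufactures the low-frequency factor $2^{k_-}$ of (\ref{octe406}); so the bilinear bounds (\ref{octe103})--(\ref{deceqn98}) apply verbatim to $\mathrm{II}$. For $\mathrm{III}$ I would, in the proof of (\ref{octe581}), simply treat $\nabla\widehat{h^\mu}(t,\cdot)$ as a new profile factor, whose only relevant norm on the right of (\ref{octe581}) is precisely $\sup_k 2^{k/2}\|\nabla_\xi\widehat h\,\psi_k\|_{L^2}$, and rerun the $L^2$-type bound; in the proof of (\ref{octe406}) I would instead restore the phase $e^{-i\mu t|\xi-\eta|}$ and use the scaling and Lorentz identities for the half-wave equation to write $\nabla\widehat{h^\mu}(t,\xi-\eta)$ as a bounded combination of $\widehat{h^\alpha}$ with $|\alpha|\le1$ plus a remainder weighted by $\min\{|t|,2^{-k_1}\}$ at the input frequency $2^{k_1}$, the $2^{-k_1}$-branch going again into (\ref{octe103})--(\ref{deceqn98}) and the $|t|$-branch having the same form as $\mathrm I$.

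Everything thus reduces to $\mathrm I$, which, after the standard decomposition of $m(\xi,v)$ into a superposition of products, is (up to a constant) the physical-space weight $x$ acting on a product of the free wave $e^{-i\mu t|\nabla|}P_{k_1}h^\mu$ and a density-type function $\rho_{k_2}(t,x)=\mathcal F^{-1}\big[\int_{\R^3}e^{-it\hat v\cdot\eta}\,n(\eta,v)\,\widehat f(t,\eta,v)\,dv\big]$ (with $n$ a symbol derived from $m$, of the same class). Here the surplus factor $|t|$ hidden in the phase differentiation reappears as the $|t|$-growth of the weight $x$. To neutralize it I would bound the $L^2_x$-norm of the product by H\"older, placing $\rho_{k_2}$ in $L^\infty_x$ — where Lemma~\ref{decayestimateofdensity} (applicable because $f=g^\gamma$ is a profile) gives the enhanced decay $\|\rho_{k_2}(t)\|_{L^\infty_x}\lesssim(1+|t|)^{-3}(\cdots)$ — and the $x$-weighted wave factor in $L^2_x$, where Lemma~\ref{twistedlineardecay} together with the $(1+|t|+|x|)^{-1}$-decay of the wave and the transport structure of $\rho_{k_2}$ confine the effective weight to $|x|\lesssim(1+|t|)$, yielding a bound $\lesssim(1+|t|)\,(\cdots)$ in terms of the norms on the right. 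Since $(1+|t|)\cdot(1+|t|)^{-3}\lesssim(1+|t|)^{-2}$, this produces the $(1+|t|)^{-2}$-factor of (\ref{octe581}) directly, and — after the same $O(\delta)$-slack already built into the $L^\infty_\xi$-estimate (\ref{octe103}) — the $(1+|t|)^{-2+\delta}$-term of (\ref{octe406}); the weighted wave factor is controlled for (\ref{octe406}) by the $X_n$-norms of $h^\alpha$, $|\alpha|\le4$, $n\le3$ (the extra spatial derivatives making the high-frequency $k_1$-sum converge), and for (\ref{octe581}) by $\sup_k\big(2^k\|\widehat h\psi_k\|_{L^\infty_\xi}+2^{k/2}\|\nabla_\xi\widehat h\psi_k\|_{L^2}\big)$ together with the extra $x$-smoothing of $\rho_{k_2}$ afforded by the vector fields $\Lambda^\beta f$, $|\beta|\le4$, available on the right of (\ref{octe581}); the complementary $(1+|t|)^{-1}2^{k_-}$-term of (\ref{octe406}) is the one obtained when $x$ is bounded crudely by $(1+|x|)$ on the profile-weighted support of $f$, without invoking the density decay.

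I expect the main obstacle to be exactly term $\mathrm I$: the power of $|t|$ liberated by differentiating the oscillatory phase has to be reabsorbed, and doing so forces one to balance simultaneously (i) the frequency localization, so as to land on the $2^{k_-}$-gain of (\ref{octe406}) rather than on a flat constant, and (ii) the number of vector fields and polynomial weights spent on $f$ (and, through the scaling field, on the half-wave), so as to reach $(1+|t|)^{-2+\delta}$, resp.\ $(1+|t|)^{-2}$, with no derivative lost on $h$. This is the same Vlasov--wave trade-off already carried out in Lemma~\ref{highorderbilinearlemma1}, here performed with one extra factor of $|t|$ neutralized by the improved density decay of Lemma~\ref{decayestimateofdensity}; the remaining manipulations are routine and parallel to \cite{wang}.
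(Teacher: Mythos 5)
The high-level decomposition $\nabla_\xi T_\mu = \mathrm I + \mathrm{II} + \mathrm{III}$ (phase derivative, symbol derivative, profile derivative) is the right starting point, and you correctly identify the two linear ingredients — the density decay of Lemma \ref{decayestimateofdensity} and the wave decay of Lemma \ref{twistedlineardecay}. The treatment of $\mathrm{II}$ and $\mathrm{III}$ is plausible in outline. However, the central step — the handling of $\mathrm I$ — has a concrete gap.

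You claim that ``$\mathrm I$ is (up to a constant) the physical-space weight $x$ acting on a product'' of the wave and a density. This is not correct. The full gradient $\nabla_\xi T_\mu(h,f)(t,\xi)$ is what corresponds (up to $e^{it|\xi|}$) to the physical-space weight $-ix$, and in your own Leibniz decomposition that full gradient is $\mathrm I + \mathrm{II} + \mathrm{III}$; the term $\mathrm I$ alone is just the two phase derivatives combined, $it\big(\tfrac{\xi}{|\xi|}-\mu\tfrac{\xi-\eta}{|\xi-\eta|}\big)$, which is a sum of a Riesz transform on the output and a Riesz transform on the wave input, and does not factor as a single weight applied to a single physical-space product. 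As a consequence, the H\"older step you propose, $\|x\,u\,\rho\|_{L^2}\le\|xu\|_{L^2}\|\rho\|_{L^\infty}$, bounds an object that is not $\mathrm I$.

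More importantly, the mechanism that makes $\mathrm I$ tractable — the null-form cancellation
$\big|\tfrac{\xi}{|\xi|}-\mu\tfrac{\xi-\eta}{|\xi-\eta|}\big|\lesssim\tfrac{|\eta|}{\max\{|\xi|,|\xi-\eta|\}}$
— is never invoked in your proposal. If you split $\mathrm I$ into its two Riesz pieces and estimate each separately, each piece contributes a bare factor $t$ multiplying a $T_\mu$-type bilinear form, which (even using the $(1+t)^{-2}$ $L^2$-branch of (\ref{deceqn98})) only gives $(1+t)^{-1}$, short of both targets. The smallness of the null symbol when the Vlasov frequency $\eta$ is the lowest is precisely what cancels the $t$-loss and provides the low-frequency summability (and, at low output frequency $k$, the $2^{k_-}$-gain of (\ref{octe406}), which you attribute instead to a $2^{-k}$-gain from $\nabla_\xi m$ in $\mathrm{II}$ — but $2^{k/2}\cdot 2^{-k}=2^{-k/2}$ is a loss for $k<0$, not the required $2^{k_-}$). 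Finally, the justification ``wave decay and the transport structure of $\rho_{k_2}$ confine the effective weight to $|x|\lesssim(1+|t|)$'' is internally inconsistent: once $\rho$ is placed in $L^\infty_x$, its transport support is discarded and cannot confine anything; the honest version of that bound, $\|xu\|_{L^2}\lesssim t\|\widehat h\|_{L^2}+\|\nabla_\xi\widehat h\|_{L^2}$ by Plancherel, does give a factor $(1+t)$, but only for the full weight $x$ applied to the wave, not for $\mathrm I$. In short: the strategy is the right one, but the argument for $\mathrm I$ must be rebuilt around the null-form smallness of $\tfrac{\xi}{|\xi|}-\mu\tfrac{\xi-\eta}{|\xi-\eta|}$ and a genuine frequency decomposition, not around the misidentification of $\mathrm I$ with a physical weight.
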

 \begin{proof}
 See \cite{wang}[Proposition 5.3]. 
 \end{proof}
Moreover, as summarized in the following Lemma, we also have a bilinear estimate  for the Vlasov-Vlasov type interaction.  
 \begin{lemma}\label{twoaveragedistri}
For any symbols $m_1(\xi, v), m_2(\xi,v)$ that satisfy \textup{(\ref{generalestimateofsymbol})} with $l=1$, and any two distribution functions $f, g:\R_{t}\times \R_x^3\times \R_v^3\longrightarrow \mathbb{R}$, we define a   bilinear operator  as follows, 
\be\label{may15eqn60}
 K^{\mu}  (g, f)(t, \xi):= \int_{\R^3}\int_{\R^3}\int_{\R^3} e^{it|\xi|-i\mu t \hat{u}\cdot (\xi-\eta) - i  t \hat{v}\cdot \eta} m_1(\xi,   v)m_2(\xi-\eta,   u)\widehat{g}(t, \xi-\eta, u) \widehat{f}(t,\eta, v) d \eta d u d v  .
\ee
Then the following bilinear estimate holds for any fixed $k\in \mathbb{Z}, $  
\[
2^{k/2}\|\nabla_\xi\big( K^{\mu}  (g, f)(t, \xi) \big)\psi_k(\xi) \|_{L^2_\xi }
\]
\be\label{may15eqn410}
 \lesssim \sum_{\beta\in \mathcal{S}, |\beta|\leq 5} (1+|t|)^{-2}\|(1+|x|^2+|v|^2)^{20}g(t,x,v)\|_{L^2_x L^2_v}\|(1+|x|^2+|v|^2)^{20} \Lambda^{\beta}f(t,x,v)\|_{L^2_x L^2_v}.
\ee
 \end{lemma}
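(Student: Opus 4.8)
The plan is to treat $K^{\mu}(g,f)$ as the ``Vlasov--Vlasov'' analogue of the ``Vlasov--wave'' bilinear forms $T_{\mu}(h,f)$ from Lemmas~\ref{bilinearinXnormed}--\ref{firstorderweighthigh1}, with the role of the wave profile $h^{\mu}$ now played by a second density-type quantity built out of $g$. Carrying out the $\eta$--integration first, one obtains the factorization
\be\label{prop:factorization}
K^{\mu}(g,f)(t,\xi)=e^{it|\xi|}\,\mathcal F_z\big[\rho_g(t,z)\,\rho_f(t,z;\xi)\big](\xi),
\ee
where $\rho_g(t,\cdot)=\mathcal F_y^{-1}\big[\int_{\R^3}e^{-i\mu t\hat u\cdot y}m_2(y,u)\widehat g(t,y,u)\,du\big]$ is the density of $g$ transported along the relativistic characteristics and weighted by $m_2$, and $\rho_f(t,z;\xi)=\int_{\R^3}m_1(\xi,v)f(t,z-t\hat v,v)\,dv$ is the corresponding transported, $m_1$--weighted density of $f$. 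Both are of the type controlled by Lemma~\ref{decayestimateofdensity}.

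After Littlewood--Paley decompositions in the output frequency $|\xi|\sim 2^{k}$ and the internal frequency $|\xi-\eta|\sim 2^{k_1}$, I would apply $\nabla_{\xi}$ to \eqref{prop:factorization} and distribute it. As in the proof of the corresponding $T_{\mu}$--estimates \eqref{octe406}--\eqref{octe581}, the derivative either hits the linear phase $e^{it|\xi|}$ (costing a factor $|t|$), or the transport phase $e^{-i\mu t\hat u\cdot(\xi-\eta)}$ (producing $-i\mu t\hat u$, which a further integration by parts trades back at the price of an $x$/$v$--weight and one extra derivative), or the symbols $m_1,m_2$ (which only improves them, adding one factor of $|\cdot|^{-1}$ and hence the room to invoke Lemma~\ref{decayestimateofdensity} with exponent $a=-1$ in place of $a=0$), or $\widehat g(t,\xi-\eta,u)$ (which produces a harmless weight $(1+|x|)$ on $g$, absorbed by the weight in \eqref{may15eqn410}; note $\widehat f(t,\eta,v)$ is independent of $\xi$). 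So in every sub-case one loses at most a single power of $|t|$.

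To close each sub-case I would bound the $f$--density $\rho_f$ in $L^{\infty}_z$ by Lemma~\ref{decayestimateofdensity}, applied in the $v$--variable after transferring the $(1+|v|)^{20}$ growth of $m_1$ onto $f$; this delivers the decay $(1+|t|)^{-3}$ (respectively $(1+|t|)^{-2}$ when a $\nabla_{\xi}$ has already improved a symbol) at the cost of at most five $v$--derivatives of the profile $f$, which are re-expressed through the $\mathfrak P_2$ fields using Lemma~\ref{twodecompositionlemma} and the commutation Lemma~\ref{summaryofhighordercommutation} and bounded by $\sum_{|\beta|\le 5}\|(1+|x|^2+|v|^2)^{20}\Lambda^{\beta}f\|_{L^2_xL^2_v}$ together with the $L^2_v$--control of $\nabla_v^{\le 5}\widehat f(t,0,v)$ contained in $E_{\textup{low}}^{f}$. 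Simultaneously I would control the $g$--density $\rho_g$ in $L^2_z$ via Plancherel and the fact that the (possibly $|y|^{-1}$--singular) symbol $m_2$ is square-integrable against $\widehat g$ over $\R^3_y$, which needs no derivatives of $g$, consistently with the plain weight $\|(1+|x|^2+|v|^2)^{20}g\|_{L^2_xL^2_v}$ on the right. Pairing $\|\rho_g\|_{L^2_z}\cdot\|\rho_f\|_{L^{\infty}_z}$ against the lost factor $|t|$ gives the claimed $(1+|t|)^{-2}$; the dyadic sums converge because the measure factor $2^{3k/2}$ from $\|\psi_k\|_{L^2_\xi}$ wins at low output frequency and the factor $2^{-k}$ carried by $m_1$ wins at high output frequency, both against the weight $2^{k/2}$, while the $k_1$--sum converges by $\mathcal S^{\infty}$--integrability of $m_2$.

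I expect the delicate point to be the low- and internal-frequency bookkeeping: matching the $2^{k/2}$ weight on the left against the $|\xi|^{-1}$-- and $|\xi-\eta|^{-1}$--type singularities that hypothesis \eqref{generalestimateofsymbol} (with $l=1$) permits for $m_1,m_2$, and, relatedly, checking that the five $v$--derivatives of $f$ produced by Lemma~\ref{decayestimateofdensity} are genuinely dominated by $\sum_{|\beta|\le 5}\|(1+|x|^2+|v|^2)^{20}\Lambda^{\beta}f\|_{L^2_xL^2_v}$ with $t$--bounded rather than $t$--growing coefficients; this is exactly where the design of the $\mathfrak P_2$ fields, the weights in the energies, and the $|t|^{-4}$--weighted term of Lemma~\ref{decayestimateofdensity} must be used in concert. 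The remainder is a routine repetition of the bilinear arguments already established for $T_{\mu}$.
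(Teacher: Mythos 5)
The paper does not actually prove this lemma: it defers to \cite{wang}[Lemma 5.9], so there is no in-paper argument to compare against, and your proposal has to be judged on its own. Your skeleton is the right one: the factorization \eqref{may15eqn60} $=e^{it|\xi|}\mathcal F_z[\rho_g\rho_f]$ into two transported, symbol-weighted velocity averages is correct; distributing $\nabla_\xi$ over the two phases, the symbols and $\widehat g$ costs at most one power of $t$ (or a harmless $x$-weight); Lemma \ref{decayestimateofdensity} does give $\|\rho_f\|_{L^\infty_z}\lesssim 2^{-k}(1+|t|)^{-3}$ at the price of five $v$-derivatives of $f$, and these convert to $\Lambda^{\beta}$, $|\beta|\le 5$, with $t$-independent polynomial coefficients because on the profile side $\nabla_v=K_v+\sqrt{1+|v|^2}\,\omega(x,v)\nabla_v\hat v\cdot\nabla_x$ and $\omega(x,v)\lesssim |x|(1+|v|)$.

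The genuine gap is in your treatment of the $g$-factor. You propose to control $\rho_g$ in $L^2_z$ ``via Plancherel,'' which yields no time decay, and to pair $\|\rho_g\|_{L^2_z}\|\rho_f\|_{L^\infty_z}$ against the lost factor $|t|$. That chain gives $2^{k/2}\cdot t\cdot 1\cdot 2^{-k}t^{-3}=2^{-k/2}(1+|t|)^{-2}$, which diverges as $k\to-\infty$. Your stated fallback at low output frequency — the volume factor $2^{3k/2}$ together with $\|\rho_g\rho_f\|_{L^1_z}$ — does not rescue it: $\|\rho_g\|_{L^1_z}$ is not summable over the low internal frequencies where \eqref{generalestimateofsymbol} with $l=1$ allows $m_2\sim|\xi-\eta|^{-1}$, so you are forced to $\|\rho_g\|_{L^2_z}\|\rho_f\|_{L^2_z}$, and interpolation degrades the $f$-decay from $t^{-3}$ to $t^{-3/2}$; at the critical frequency $2^{k}\sim(1+|t|)^{-1}$ both routes give only $(1+|t|)^{-3/2}$, a factor $(1+|t|)^{1/2}$ short of \eqref{may15eqn410}. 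What is missing is dispersive decay of the $g$-density itself, and since the right-hand side of \eqref{may15eqn410} carries no derivatives of $g$, it cannot come from Lemma \ref{decayestimateofdensity}; it must be extracted in $L^2_z$ using only the spatial weights, e.g. $\|P_{k_1}\rho_g\|_{L^2_z}\lesssim 2^{-k_1}\min\{2^{3k_1/2},(1+|t|)^{-3/2}\}\|(1+|x|^2+|v|^2)^{20}g\|_{L^2_{x,v}}$ via a finite-overlap (Schur-test in $u$, using $(1+|w|)(1+|w+t(\hat u-\hat u')|)\gtrsim 1+t|\hat u-\hat u'|$) argument. Summed over $k_1$ this gives $\|\rho_g\|_{L^2_z}\lesssim(1+|t|)^{-1/2}$, after which your two regimes $2^k\gtrless (1+|t|)^{-1}$ both close to $(1+|t|)^{-2}$. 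Without this ingredient the argument, as written, does not reach the stated rate.
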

 \begin{proof}
 See \cite{wang}[Lemma 5.9].
 \end{proof}

With the previous preparation,  we first control the increment of the low order energy estimate of the electromagnetic field over time. More precisely, the following proposition holds. 
  \begin{proposition}\label{loworderenergyprop1}
Under the bootstrap assumption \textup{(\ref{bootstrap})}, the following estimate holds for any $t\in [1,T]$,
\be\label{octeq501}
 E_{\textup{low}}^{eb}(t) \lesssim E_{\textup{low}}^f(t) + |t|^{-1}  E_{\textup{high}}^f(t) + \epsilon_0 .
\ee
\end{proposition}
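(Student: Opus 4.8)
\textbf{Proof strategy for Proposition \ref{loworderenergyprop1}.}

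The plan is to estimate the $X_n$-norms, $n\in\{0,1,2,3\}$, of each of the three groups of quantities appearing in the definition \textup{(\ref{secondorderloworder})} of $E_{\textup{low}}^{eb}(t)$, namely $\|h_i^\alpha(t)\|_{X_n}$, $(1+t)\|\p_t h_i^\alpha(t)\|_{X_n}$ and $(1+t)^2\|\p_t\nabla_x(1+|\nabla_x|)^{-1} h_i^\alpha(t)\|_{X_n}$, for $|\alpha|\leq 20-3n$. First I would recall the evolution equations \textup{(\ref{octeqn1725})} and \textup{(\ref{eqqq3})} for $\p_t\widehat{h_i^\alpha}(t,\xi)$, which split the nonlinearity into a linear-in-$f$ (density) term $\widehat{\mathcal{N}_{1;1}^\alpha}$ of the form $\int e^{it|\xi|-it\hat v\cdot\xi}\tilde a_{\alpha;\gamma}(v)\frac{i\xi}{|\xi|}\widehat{g^\gamma}(t,\xi,v)\,dv$, and a genuinely quadratic term $\widehat{\mathcal{N}_{1;2}^\alpha}$ which has exactly the structure of the bilinear operator $T_\mu(h,f)$ in \textup{(\ref{octeqn1923})}. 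The symbols $\tilde a_{\alpha;\gamma}(v)/|\xi|$, $\tilde b^\alpha_{\beta,\gamma}(v)/|\xi|$, $\tilde c^\alpha_{\beta,\gamma}(v)/|\xi|$ satisfy the bound \textup{(\ref{roughestimateofsymbolgenarl2})} with $n=1$ (resp. \textup{(\ref{generalestimateofsymbol})} with $l=0$ after the $1/|\xi|$ loss is absorbed, or $l=1$ for the genuinely quadratic piece by reorganizing the $\d^{-1}$), thanks to the rough coefficient bound \textup{(\ref{feb16eqn15})} and the fact that the $|v|$-weights in $E_{\textup{high}}^f(t)$ and $E_{\textup{low}}^f(t)$ are much larger than $(1+|v|)^{20+4i}$ for the relevant range of derivatives.

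For the $X_n$-norm of $h_i^\alpha$ itself, I would write $h_i^\alpha(t)=h_i^\alpha(1)+\int_1^t \p_s\widehat{h_i^\alpha}(s)\,ds$ and apply the linear estimate \textup{(\ref{octe0303})} in Lemma \ref{Alinearestimate} to the density term with $n=1$: this gives a time-integrable bound $(1+s)^{-1}\|(1+|v|)^{30}\nabla_v^\gamma\widehat{g^\gamma}(s,0,v)\|_{L^1_v}+(1+s)^{-2}\|\cdots\Lambda^\beta g\|_{L^2}$, the first of which is controlled — after an application of Cauchy–Schwarz in $v$ and the weight comparison — by $(1+s)^{-1}E_{\textup{low}}^f(s)$... but this is not integrable, so one integrates the $X_n$ bound directly into the statement rather than into $\epsilon_0$; more precisely, because we are estimating $E_{\textup{low}}^{eb}(t)$ rather than its increment, I would instead bound $\|h_i^\alpha(t)\|_{X_n}$ pointwise in $t$ after one more derivative is traded using Lemma \ref{Alinearestimate} with $n+1$ derivatives, yielding an extra $(1+s)^{-1}$ and hence the integrable bound $\int_1^t (1+s)^{-2}(E_{\textup{low}}^f(s)+s^{-1}E_{\textup{high}}^f(s))\,ds$. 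In fact the cleanest route is: apply \textup{(\ref{octe0303})} with $n=1$ to get $\|\int_1^t(\text{density term})\,ds\|_{X_n}\lesssim \int_1^t (1+s)^{-1}E_{\textup{low}}^f(s)\,ds + \int_1^t(1+s)^{-2}E_{\textup{high}}^f(s)\,ds$ — and here one uses that $E_{\textup{low}}^f(s)$ is (by the bootstrap) bounded, so this is only logarithmically divergent, which is why the definition \textup{(\ref{secondorderloworder})} does not include a weight at $n=0$ for the bare $h_i^\alpha$ — no wait: the correct and intended mechanism is that the density term contributes to $\p_t h_i^\alpha$, not to $h_i^\alpha$, through the \emph{corrected} profile. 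I would therefore handle $\|h_i^\alpha\|_{X_n}$ by noting $h_i^\alpha = \widetilde{h_i^\alpha} + (\text{correction})$ where the correction is the explicit term subtracted in \textup{(\ref{octe631})}, which is itself of the form $\int e^{it|\xi|-it\hat v\cdot\xi}\frac{\tilde a_{\alpha;\gamma}(v)\xi}{|\xi|(|\xi|-\hat v\cdot\xi)}\widehat{g^\gamma}\,dv$ and is bounded in $X_n$ directly by \textup{(\ref{may17eqn12})} (using the correction term $\tilde g_{\alpha,\gamma}$ to avoid a derivative loss and the estimate \textup{(\ref{may22eqn100})}-type bound) by $E_{\textup{low}}^f(t)+t^{-1}E_{\textup{high}}^f(t)+\epsilon_0$; and $\widetilde{h_i^\alpha}$ is controlled by integrating its equation \textup{(\ref{octe662})}, whose right side is now purely quadratic $\mathfrak N^1_{\alpha;\gamma}+\widehat{\mathcal N^\alpha_{1;2}}$ and hence handled by Lemma \ref{bilinearinXnormed}, giving a time-integrable $(1+s)^{-3+l}\lesssim (1+s)^{-2+\delta}$ decay times $\|h^\alpha\|_{X_b}\,\|\cdots\Lambda^\beta f\|_{L^2}\lesssim (1+s)^{-2+\delta}E_{\textup{low}}^{eb}(s)E_{\textup{high}}^f(s)$. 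For $(1+t)\|\p_t h_i^\alpha\|_{X_n}$ I would bound $\p_t\widehat{h_i^\alpha}$ directly by \textup{(\ref{octe0303})} with $n=1$ on the density term (the extra $(1+t)^{-1}$ it produces cancels the prefactor $(1+t)$) plus Lemma \ref{bilinearinXnormed} with $l=0$ or $1$ on the quadratic term; similarly $(1+t)^2\|\p_t\nabla_x(1+|\nabla_x|)^{-1}h_i^\alpha\|_{X_n}$ gets an extra $2^{k_-}$ gain from the multiplier $\nabla_x(1+|\nabla_x|)^{-1}$ which, combined with \textup{(\ref{octe0303})} applied with $n=2$, produces the $(1+t)^{-2}$ decay matching the $(1+t)^2$ weight. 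The Vlasov–Vlasov piece inside $\mathfrak N^1$ arising from further substituting $\p_t g^\gamma$ is handled exactly by Lemma \ref{twoaveragedistri}.

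The main obstacle, and the point requiring the most care, is the density term $\int e^{it|\xi|-it\hat v\cdot\xi}\tilde a_{\alpha;\gamma}(v)\frac{i\xi}{|\xi|}\widehat{g^\gamma}\,dv$: naively it only decays like $(1+t)^{-1}$ in $X_n$ (one derivative gained via Lemma \ref{Alinearestimate} with $n=1$), which is not integrable, so it cannot be absorbed into $\epsilon_0$ — it is precisely this term that forces the energy $E_{\textup{low}}^{eb}(t)$ to be compared to $E_{\textup{low}}^f(t)$ (which controls $\nabla_v^\alpha\widehat{g^\gamma}(t,0,v)$ with the right weight) rather than shown to be small. The resolution, following the modified-profile construction in \textup{(\ref{octe631})}–\textup{(\ref{octe633})}, is that the dangerous linear term is removed from the equation for $\widetilde{h_i^\alpha}$ and instead appears as an explicit, already-bounded additive correction to $h_i^\alpha$; one must then check that applying Lemma \ref{Alinearestimate} to this explicit correction (with the correction term $\tilde g_{\alpha,\gamma}$ from \textup{(\ref{correctionterm})} inserted, to dodge the derivative loss when $|\alpha|+|\gamma|=N_0$) does indeed yield the bound $E_{\textup{low}}^f(t)+t^{-1}E_{\textup{high}}^f(t)$ uniformly in $k$ and in $t$. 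The remaining quadratic terms all carry genuine $(1+t)^{-2+\delta}$ decay and integrate to $\epsilon_0$ under the bootstrap assumption \textup{(\ref{bootstrap})}, using in each case the matching multilinear estimate from Lemmas \ref{bilinearinXnormed}, \ref{highorderbilinearlemma1}, \ref{firstorderweighthigh1}, \ref{twoaveragedistri}.
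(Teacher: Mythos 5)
Your proposal is correct and follows essentially the same route as the paper: bound $\p_t h_i^\alpha$ directly via the linear estimate (\ref{octe0303}) on the density term and the bilinear estimate (\ref{octe364}) on the quadratic term, then control $h_i^\alpha$ itself by writing it as the modified profile $\widetilde{h_i^\alpha}$ plus the explicit correction from (\ref{octe631})--(\ref{octe633}), bounding the correction pointwise in time by Lemma \ref{Alinearestimate} (which produces the $E_{\textup{low}}^f(t)+|t|^{-1}E_{\textup{high}}^f(t)$ terms) and integrating the purely quadratic equation (\ref{octe662}) for $\widetilde{h_i^\alpha}$. The only superfluous elements are the appeal to Lemma \ref{twoaveragedistri} and the worry about derivative loss via $\widetilde{g}_{\alpha,\gamma}$, since here $|\alpha|\le 20 < N_0$ so the correction term in (\ref{correctionterm}) vanishes and no such loss occurs.
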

\begin{proof}
 Recall (\ref{secondorderloworder}). We first estimate the $X_n$-norm of $\p_t h_i^\alpha(t)$. Recall  (\ref{octeqn1725}) and (\ref{eqqq3}). Form the estimate of coefficients in (\ref{feb16eqn15}),  the estimate (\ref{octe0303}) in Lemma \ref{Alinearestimate}, which is used for the linear terms, and the estimate (\ref{octe364}) in Lemma \ref{bilinearinXnormed}, which is used for the quadratic terms, we have
\[
 \sum_{n=0,1,2,3}\sum_{i=1,2}\sum_{\alpha \in \mathcal{B}, |\alpha|\leq 20-3n}  (1+|t|) \| \p_t h_i^\alpha (t)\|_{X_n} + (1+|t|)^2 \| \frac{\nabla_x}{1+|\nabla_x|} \p_t   h_i^\alpha(t)\|_{X_n} \lesssim   E_{\textup{low}}^f(t) + |t|^{-1}  E_{\textup{high}}^f(t)
\]
\be\label{octeq792}
 + |t|^{-1 }  E_{\textup{high}}^f(t) E_{\textup{low}}^{eb}(t)\lesssim   E_{\textup{low}}^f(t) + |t|^{-1}  E_{\textup{high}}^f(t)+ \epsilon_0.
\ee

Now, it remains to estimate the $X_n$-norm of $h_i^\alpha(t), i\in\{1,2\}$. Recall (\ref{octe631}) and (\ref{octe633}). As a result of direct computations, we know  the symbol $\xi/\big(|\xi|(|\xi|-\hat{v}\cdot \xi)\big)$ verifies the estimate (\ref{roughestimateofsymbolgenarl2}).   From  the estimate of coefficients in (\ref{feb16eqn15}) and the estimate (\ref{octe0303}) in Lemma \ref{Alinearestimate}, we have
 \be\label{octe673}
 \sum_{n=0,1,2,3 }\sum_{|\alpha|\leq 20-3n} \|\widetilde{h_i^\alpha}(t)- h_i^\alpha(t)\|_{X_n} \lesssim   E_{\textup{low}}^{f}(t) + |t|^{-1} E_{\textup{high}}^{f}(t).
 \ee
 Therefore, it would be sufficient to estimate the $X_n$-norm     of the modified profiles $\widetilde{h_i^\alpha}(t), i \in\{1,2\}$. Recall the equations satisfied by $\p_t \widehat{\widetilde{h_i^\alpha}}(t,\xi)$ in (\ref{octe662}) and the detailed formula of the quadratic terms in (\ref{octeqn798}), (\ref{octe661}), and (\ref{octe51}), we know that $\p_t\widetilde{h^\alpha}(t,\xi) $ is a linear combination of bilinear forms defined in (\ref{octeqn1923}). Therefore, from the estimate   (\ref{octe364}) in Lemma \ref{bilinearinXnormed}, we have
\be\label{octe667}
\sum_{n=0,1,2,3 }\sum_{i=1,2}\sum_{|\alpha|\leq 20-3n} \| \p_t  \widetilde{h_i^\alpha}(t    )\|_{X_n} \lesssim (1+|t|)^{-2}   E_{\textup{low}}^{eb}(t)  E_{\textup{high}}^{f}(t)\lesssim (1+|t|)^{-2+\delta}\epsilon_1^2. 
 \ee
Hence, from the above estimate (\ref{octe667}) and the estimate (\ref{octe673}), we have
\be\label{octe671}
 \sum_{n=0,1,2,3}\sum_{i=1,2}\sum_{|\alpha|\leq 20-3n} \|\widetilde{h_i^\alpha}(t)\|_{X_n} + \| {h_i^\alpha}(t)\|_{X_n}\lesssim    E_{\textup{low}}^{f}(t) + |t|^{-1} E_{\textup{high}}^{f}(t)+\epsilon_0+ \int_1^t  |s|^{-2+\delta}  \epsilon_1^2 d s.
\ee
To sum up, our desired estimate (\ref{octeq501}) hold from the estimates (\ref{octeq792}),    and (\ref{octe671}).   
\end{proof}
 
\begin{proposition}\label{highorderenergyprop1}
Under the bootstrap assumption \textup{(\ref{bootstrap})}, the following estimate holds for any $t\in [1,T]$,
\be\label{octe691}
   E_{\textup{high}}^{eb}(t)\lesssim  E_{\textup{high}}^f(t) +(1+|t|)^{\delta}\epsilon_0. 
\ee
\end{proposition}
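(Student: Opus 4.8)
The plan is to bound each of the five constituent pieces of $E_{\textup{high}}^{eb}(t)$ in (\ref{highorderenergy2}) separately, for each fixed $\alpha\in\mathcal{B}$ with $|\alpha|\le N_0$ and each $i\in\{1,2\}$, by integrating the appropriate evolution equation from time $1$ to time $t$ and feeding each term of the nonlinearity to one of the multilinear estimates in Lemmas \ref{Alinearestimate}--\ref{twoaveragedistri}. It is convenient to treat the three pieces involving the \emph{modified} profile $\widetilde{h_i^\alpha}$ first, namely its weighted $L^\infty_\xi$ norm $\sup_k 2^{k}\|\widehat{\widetilde{h_i^\alpha}}(t,\xi)\psi_k(\xi)\|_{L^\infty_\xi}$, its first order weighted $L^2_\xi$ norm $\sup_k 2^{k/2}\|\nabla_\xi\widehat{\widetilde{h_i^\alpha}}(t,\xi)\psi_k(\xi)\|_{L^2_\xi}$, and its $L^2_\xi$ norm $\|\widehat{\widetilde{h_i^\alpha}}(t)\|_{L^2_\xi}$; and then to recover the two remaining pieces, the weighted $L^\infty_\xi$ norm and the $L^2_\xi$ norm of the unmodified profile $h_i^\alpha$, from the relation (\ref{octe631})--(\ref{octe633}).

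For the $\widetilde{h_i^\alpha}$-pieces I would integrate the equations (\ref{octe662}). Their right-hand sides are purely quadratic: the term $\widehat{\mathcal{N}_{1;2}^\alpha}$ of (\ref{octeqn798}) together with the terms $\mathfrak{N}^1_{\alpha;\gamma},\mathfrak{N}^2_{\alpha;\gamma}$ of (\ref{octe661})--(\ref{octe51}), the latter being obtained by substituting the Vlasov equation for $\p_t g^\gamma$ and integrating by parts in $v$. After that integration by parts each of these terms is an instance of the bilinear operator $T_\mu$ of (\ref{octeqn1923}) -- and, for the genuinely Vlasov--Vlasov contribution hidden inside $\mathfrak{N}$, of the operator $K^\mu$ of (\ref{may15eqn60}) -- with symbol of the form $\xi\,\tilde a_{\alpha;\gamma}(v)/\big(|\xi|(|\xi|-\hat v\cdot\xi)\big)$, a $\nabla_v$ thereof, or the $\tilde c$/$\tilde d$ analogue; using $|\xi|-\hat v\cdot\xi\gtrsim|\xi|(1+|v|^2)^{-1}$ together with the crude bounds (\ref{feb16eqn15}) on the coefficients, one checks that these symbols satisfy the hypothesis (\ref{generalestimateofsymbol}) with $l=1$. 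Lemma \ref{bilinearinXnormed} then bounds $\|\p_t\widetilde{h_i^\alpha}(s)\|_{X_0}$, Lemma \ref{firstorderweighthigh1} bounds the first order weighted $L^2$ norm of $\p_t\widehat{\widetilde{h_i^\alpha}}(s)$, and Lemma \ref{highorderbilinearlemma1} together with Lemma \ref{twoaveragedistri} bounds $\|\p_t\widehat{\widetilde{h_i^\alpha}}(s)\|_{L^2}$. In each case, after splitting the bilinear inputs by frequency and by which factor carries the most derivatives, and inserting the bootstrap bounds $E_{\textup{low}}^{eb}(s)\lesssim\epsilon_1$, $E_{\textup{high}}^f(s)\lesssim(1+s)^\delta\epsilon_1$ and $E_{\textup{high}}^{eb}(s)\lesssim(1+s)^\delta\epsilon_1$, one obtains integrands of size $(1+s)^{-2+\delta}\epsilon_1^2$ plus, from (\ref{octe406}), the borderline term $(1+s)^{-1+\delta}2^{k_{-}}\epsilon_1^2$. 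Integrating over $[1,t]$ and adding the initial contribution $\lesssim\epsilon_0$ at $s=1$ gives all three $\widetilde{h_i^\alpha}$-pieces $\lesssim\epsilon_0+(1+t)^\delta\epsilon_1^2\lesssim(1+t)^\delta\epsilon_0$.

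To pass from $\widetilde{h_i^\alpha}$ back to $h_i^\alpha$, recall from (\ref{octe631})--(\ref{octe633}) that $\widehat{h_i^\alpha}-\widehat{\widetilde{h_i^\alpha}}$ is exactly the density-type correction $\sum_\gamma\int_{\R^3} e^{it|\xi|-it\hat v\cdot\xi}\frac{\tilde a_{\alpha;\gamma}(v)\,\xi}{|\xi|(|\xi|-\hat v\cdot\xi)}\widehat{g^\gamma}(t,\xi,v)\,dv$ (and its $\tilde d$ analogue). The crucial point is that this correction has to be controlled at the fixed time $t$, not integrated in time, because its zero mode $\widehat{g^\gamma}(t,0,v)=\int_{\R^3} g^\gamma(t,x,v)\,dx$ does not decay. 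Since again $|\xi|-\hat v\cdot\xi\gtrsim|\xi|(1+|v|^2)^{-1}$, the symbol satisfies the hypothesis (\ref{roughestimateofsymbolgenarl2}) of Lemma \ref{Alinearestimate}; the $L^\infty_\xi$-type bound (\ref{may17eqn12}), applied with the internal correction taken to be $\widetilde{g}_{\alpha,\gamma}$ from (\ref{correctionterm}) and combined with the bound (\ref{may22eqn100}) on $\widetilde{g}_{\alpha,\gamma}$, yields $\sup_k 2^{k}\|(\widehat{h_i^\alpha}-\widehat{\widetilde{h_i^\alpha}})\psi_k\|_{L^\infty_\xi}\lesssim E_{\textup{low}}^f(t)+|t|^{-1}E_{\textup{high}}^f(t)+\epsilon_0$. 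For the $L^2_\xi$ norm of the same correction I would split into $|\xi|\le1$ and $|\xi|\gtrsim1$, using $\||\xi|^{-1}\psi_{\le 0}\|_{L^2_\xi}<\infty$ in $\R^3$ together with $|\widehat{g^\gamma}(t,\xi,v)|\le\|g^\gamma(t,\cdot,v)\|_{L^1_x}$ on the low-frequency part, and absorbing the $L^1_x$ and $(1+|v|)$-weight losses into the polynomial weights of $E_{\textup{high}}^f$, obtaining $\|\widehat{h_i^\alpha}-\widehat{\widetilde{h_i^\alpha}}\|_{L^2_\xi}\lesssim E_{\textup{high}}^f(t)$. Adding the $\widetilde{h_i^\alpha}$-bounds from the previous paragraph, summing over $\alpha$ and $i$, and using $E_{\textup{low}}^f(t)\lesssim\epsilon_0$ then gives (\ref{octe691}).

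The step I expect to be the main obstacle is the uniform bookkeeping of the quadratic terms in (\ref{octe662}). One must check that, after the integration by parts in $v$ in (\ref{octe661})--(\ref{octe51}), the factor $t\,\nabla_v(\hat v\cdot\eta)$ produced by differentiating the profile phase $e^{-it\hat v\cdot\eta}$ is genuinely absorbed -- through the identity $e^{-it\hat v\cdot\eta}\big(\nabla_v-it\nabla_v(\hat v\cdot\eta)\big)\widehat{g^\kappa}=\nabla_v\big(e^{-it\hat v\cdot\eta}\widehat{g^\kappa}\big)$ the $v$-derivative lands on the $t$-independent modulation $1/(|\xi|-\hat v\cdot\xi)$ -- so that the resulting symbols satisfy (\ref{generalestimateofsymbol}) with $l=1$ rather than carrying an uncompensated power of $t$; the mildly singular factor $1/(|\xi|-\hat v\cdot\xi)$ then has to be balanced against the $|v|$-weights available in the energies. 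A related delicacy is the first order weighted $L^2$ estimate via (\ref{octe581}) when the high-order bilinear input is an unmodified profile $h_i^\iota$ with $|\iota|$ large: one re-expands $h_i^\iota=\widetilde{h_i^\iota}+(\text{correction})$ and checks that the slowly growing ($\sim\log t$, hence $\lesssim(1+t)^\delta$) contribution of $\nabla_\xi$ of the correction term is the source of the $(1+t)^\delta$ loss allowed by the statement. Since the nonlinear wave part of the RVM system differs from that of the RVN system treated in \cite{wang} only through the explicit form of the coefficients, the argument is otherwise a routine transcription of the corresponding proof there.
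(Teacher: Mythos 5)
Your proposal follows essentially the same route as the paper: estimate the three modified-profile components of $E_{\textup{high}}^{eb}$ by integrating (\ref{octe662}) and feeding the quadratic terms to Lemmas \ref{bilinearinXnormed}, \ref{highorderbilinearlemma1}, \ref{firstorderweighthigh1} and \ref{twoaveragedistri} (including the re-expansion of $h_i^\iota$ into modified profile plus density correction for the weighted $L^2$ piece with many derivatives), and then control the difference $\widehat{h_i^\alpha}-\widehat{\widetilde{h_i^\alpha}}$ at fixed time by $E_{\textup{high}}^f(t)$ via the low/high-frequency split. The only inessential deviation is that you invoke (\ref{may17eqn12}) for the $L^\infty_\xi$ norm of that difference where the paper uses the cruder direct bound $\lesssim\|(1+|v|)^{5+4(|\alpha|-|\gamma|)}\widehat{g^\gamma}\|_{L^\infty_\xi L^1_v}$; both suffice.
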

\begin{proof}
Recall the definition of high order energy  $ E_{\textup{high}}^{eb}(t)$ in  (\ref{highorderenergy2}). Based on the different types of norms in the high order energy of the electromagnetic field, we separate into three cases as follow.

$\bullet$ \textbf{Case} $1$:\quad  The $L^\infty_\xi$-estimate of the profiles and the modified profiles. 

 Recall (\ref{octe631}) and (\ref{octe633}). From the estimate of coefficients in  (\ref{feb16eqn15}), we know that the following estimate holds for any $\alpha\in \mathcal{B}, |\alpha|\leq N_0$,
\be\label{octe695}
\sup_{k\in \mathbb{Z}}2^{k} \|\big(\widehat{\widetilde{h_i^\alpha}}(t, \xi)-\widehat{ {h_i^\alpha}}(t, \xi)\big)\psi_k(\xi)\|_{L^\infty_\xi} \lesssim \sum_{\gamma\in \mathcal{B}, |\gamma|\leq |\alpha|} \| (1+|v|)^{5+4(|\alpha|-|\gamma|)} \widehat{g^\gamma}(t, \xi, v)\|_{L^\infty_\xi L^1_v}\lesssim E_{\textup{high}}^f(t).  
\ee

Now, it would be sufficient to estimate the $L^\infty_\xi$-norm of $\widehat{\widetilde{h_i^\alpha}}(t, \xi)$. Recall (\ref{octe662}). From the estimate (\ref{octe364}) in Lemma \ref{bilinearinXnormed}, which is used when $h_i(t)$ has relatively more derivatives, and the estimate (\ref{octe103}) in Lemma \ref{highorderbilinearlemma1}, which is used when $g(t,x,v)$ has relatively more derivatives, we have
\be\label{octe700}
\sup_{k\in \mathbb{Z}} 2^k \|\p_t \widehat{\widetilde{h_i^\alpha}}(t, \xi)\psi_k(\xi)\|_{L^\infty_\xi } \lesssim (1+|t|)^{-2+\delta}\big(  E_{\textup{high}}^{eb}(t) + E_{\textup{low}}^{eb}(t)\big)  E_{\textup{high}}^f(t) \lesssim (1+|t|)^{-2+3\delta} \epsilon_1^2.
\ee
From (\ref{octe695}) and (\ref{octe700}), we have
\be\label{octe741}
\sup_{k\in \mathbb{Z}} \sum_{|\alpha|\leq N_0}   \sum_{i=1,2}  2^{k} \|  \widehat{ \widetilde{h_i^\alpha}}(t, \xi) \psi_k(\xi)\|_{L^\infty_\xi}  \lesssim \epsilon_0, \quad \sup_{k\in \mathbb{Z}} \sum_{|\alpha|\leq N_0} \sum_{i=1,2}   2^{k} \|  \widehat{ {h_i^\alpha}}(t, \xi) \psi_k(\xi)\|_{L^\infty_\xi}  \lesssim   E_{\textup{high}}^f(t) +\epsilon_0.
 \ee
 $\bullet$ \textbf{Case} $2$:\quad  The $L^2$-estimate of the profiles and the modified profiles. 

By using the first estimate in (\ref{deceqn98}) in Lemma \ref{highorderbilinearlemma1} for the case when there are more derivatives on  the distribution function  $g(t,x,v)$  and using the second   estimate in (\ref{deceqn98}) in Lemma \ref{highorderbilinearlemma1} for the case when there are more derivatives on  the electromagnetic field, we have 
\be\label{2020feb15eqn1}
\sup_{k\in \mathbb{Z}}  \|\p_t \widehat{\widetilde{h_i^\alpha}}(t, \xi)\psi_k(\xi)\|_{L^2_\xi } \lesssim (1+t)^{-1} E_{\textup{high}}^f(t)  E_{\textup{low}}^{eb}(t) + (1+t)^{-2} E_{\textup{high}}^f(t) E_{\textup{high}}^{eb} (t) \lesssim (1+t)^{-1+\delta}\epsilon_1^2\lesssim (1+t)^{-1+\delta}\epsilon_0.
\ee
Moreover, from the estimate (\ref{octe695}), which is used at low frequencies, and the Minkowski inequality, which is used at high frequencies,  we  have 
\[
  \|\big(\widehat{\widetilde{h_i^\alpha}}(t, \xi)-\widehat{ {h_i^\alpha}}(t, \xi)\big) \|_{L^2_\xi} \lesssim \sum_{k\leq 0} 2^{k/2} E_{\textup{high}}^f(t)+  \sum_{|\gamma|\leq |\alpha|,k\geq 0 } 2^{-k} \|  (1+|v|)^{5+4(|\alpha|-|\gamma|)}\widehat{g^\gamma}(t, \xi, v)\|_{L^1_v L^2_\xi }
\]
\be\label{deceqn191}
 \lesssim E_{\textup{high}}^f(t)+ \sum_{|\gamma|\leq |\alpha|}\| \omega_{\gamma}^{\vec{0}}(t,x,v) g^\gamma(t,x,v)\|_{L^2_x L^2_v}\lesssim E_{\textup{high}}^f(t). 
 \ee
From the  estimate (\ref{2020feb15eqn1})   and the estimate (\ref{deceqn191}), we have 
 \be\label{dec30eqn21}
\sup_{k\in \mathbb{Z}  }  \sum_{\alpha\in \mathcal{B}, |\alpha|\leq N_0}   \| \widehat{ {h_i^\alpha}}(t, \xi)\psi_k(\xi)\|_{L^2_\xi } +   \| \widehat{ \widetilde{h_i^\alpha}}(t, \xi)\psi_k(\xi)\|_{L^2_\xi } \lesssim   E_{\textup{high}}^f(t) + (1+t)^{\delta } \epsilon_0. 	
\ee
 $\bullet$ \textbf{Case} $3$:\quad  The weighted $L^2$-estimate of   the modified profiles.

  Recall the equations satisfied by $ \widehat{\widetilde{h^\alpha_i}}(t, \xi)$, $i\in\{1,2\},$ in (\ref{octe662}). 

   We use different strategy for different type of nonlinearity. 
   If the total number of derivatives act on the profiles is less than ten, then we use the estimate (\ref{octe406})  in Lemma \ref{firstorderweighthigh1}. If the total number of derivatives act on the profiles is greater than ten, by using  the equalities (\ref{octe631}) and (\ref{octe633}),
 we first decompose the profiles $\widehat{h_i^\iota}(t,\xi-\eta)$, $i\in\{1,2\}$, in  (\ref{octeqn798}), (\ref{octe661}), and (\ref{octe51}) into two parts: the modified profile part and the density type function part. Then we use   the 
    estimate (\ref{octe581}) in Lemma \ref{firstorderweighthigh1}  for the modified profile part and  use the  estimate (\ref{may15eqn410}) in Lemma \ref{twoaveragedistri} for the density type function part.

     As a result, the following estimate holds  for any $\alpha\in\mathcal{B}, |\alpha|\leq N_0$, $i\in\{1,2\},$
\[
\sup_{k\in \mathbb{Z}}   2^{k/2}\|\p_t \nabla_\xi \widehat{\widetilde{h_i^\alpha}}(t, \xi)\psi_k(\xi)\|_{L^2_\xi } \lesssim \big( (1+t)^{-1}2^{k_{-}} +  (1+t)^{-2+\delta}\big) E_{\textup{low}}^{eb}(t)E_{\textup{high}}^{f}(t)  +  (1+t)^{-2 }\big(E_{\textup{high}}^{eb}(t) +  E_{\textup{high}}^{f}(t)\big) E_{\textup{high}}^{f}(t)  \]
\be
\lesssim (1+t)^{-1+\delta}2^{k_{-}}\epsilon_1^2 + (1+t)^{-2+2\delta}\epsilon_1^2\lesssim (1+t)^{-1+\delta}2^{k_{-}}\epsilon_0 + (1+t)^{-2+2\delta}\epsilon_0.
\ee
Hence, from the above estimate, we know  that the following estimate holds for any $i\in\{1,2\}$ and any fixed $k\in \mathbb{Z}$,
\be\label{octe742}
   \sum_{|\alpha|\leq N_0} 2^{k/2}\|  \nabla_\xi \widehat{\widetilde{h_i^\alpha}}(t, \xi)\psi_k(\xi)\|_{L^2_\xi } \lesssim \epsilon_0 + (1+t)^{\delta} 2^{k_{-}}\epsilon_0.  
\ee
To sum up, recall   (\ref{highorderenergy2}), our desired estimate (\ref{octe691}) holds from the estimates (\ref{octe741}), (\ref{dec30eqn21}), and (\ref{octe742}).  
\end{proof}

  \section{Energy estimates for the non-bulk terms}\label{estimateofnonbulkterm}

 In this section, we mainly finish the following two tasks: (i) Estimate the increment of the low order energy $E_{\textup{low}}^f(t)$  over time. (ii) Recall    the equation satisfied by $g_{\beta}^\alpha(t,x,v)$ in  (\ref{sepeqn43}) and the decompositions of $\textit{h.o.t}_{\beta}^\alpha(t,x,v)$ in (\ref{sepeqn180}), (\ref{sepeq90}), and (\ref{nove100}). We  estimate the high order energy of  all nonlinearities except the bulk term $\textit{bulk}_{\beta}^\alpha(t,x,v)$, see (\ref{nove120}). We refer those terms as \textit{non-bulk} terms;

  Because the issue  of losing $|v|$ caused by the bad coefficient doesn't appear  in  the low order energy estimate and the high order estimate of the  \textit{non-bulk} terms, there is little essential difference between these estimates and the corresponding estimates in the study of Vlasov-Nordstr\"om system   in \cite{wang}.   We only give  concise proofs for these estimates in this section. 

  Recall  (\ref{highorderenergy1}), (\ref{highordersecondpro}), and (\ref{sepeqn43}).   As a result of direct computations,  the following equality holds  for any fixed  $t \in[1, T]$, $\alpha\in \mathcal{B}, \beta \in \mathcal{S}$, s.t., $|\alpha| +|\beta|\leq N_0$,  
\[
 \h \| \omega_{\beta}^{\alpha}( t,x, v)  g^\alpha_\beta (t ,x,v)\|_{L^2_{x,v}}^2- \h \| \omega_{\beta}^{\alpha}(1, x, v)  g^\alpha_\beta (1,x,v)\|_{L^2_{x,v}}^2\]
 \be\label{eqq50}
 = K_{\beta}^\alpha(t)+ \textup{Re}\Big[\int_{1}^{t } \int_{\R^3} \int_{\R^3}\big(\omega_{\beta}^{\alpha}(t, x, v)\big)^2  g^\alpha_\beta (t  ,x,v) \p_t g^\alpha_\beta(t,x,v) d x d v \Big]= K_{\beta}^\alpha(t)+ \sum_{i=1,2,3,4 }  \textup{Re}[I_{\beta;i}^{\alpha}(t)], 
\ee
where
\be\label{april2eqn91}
K_{\beta}^\alpha(t) =  \int_{1}^{t }\int_{\R^3} \int_{\R^3}  \omega_{\beta}^{\alpha}(s,x, v)  \p_t \omega_{\beta}^{\alpha}(s,x, v)  \big|g^\alpha_\beta (s  ,x,v)\big|^2 d x d v d s, 
\ee
\be\label{sepeqn809}
 I_{\beta;1}^{\alpha}(t) = - \int_{1}^{t } \int_{\R^3} \int_{\R^3}\big(\omega_{\beta}^{\alpha}(s,x, v)\big)^2  g^\alpha_\beta (s ,x,v) K(s,x+\hat{v}s,v)\cdot  {D}_v g^\alpha_{\beta}(s,x,v)  d x d v ds,
\ee
\be\label{sepeqn813}
 I_{\beta;2}^{\alpha}(t) =   \int_{1}^{t } \int_{\R^3} \int_{\R^3}\big(\omega_{\beta}^{\alpha}(s, x, v)\big)^2  g^\alpha_\beta (s ,x,v)  \textit{l.o.t}_{\beta}^\alpha(s,x,v) d x  dv d s, 
 \ee
  \be\label{sepeqn810}
 I_{\beta;3}^{\alpha}(t) =   \int_{1}^{t } \int_{\R^3} \int_{\R^3}\big(\omega_{\beta}^{\alpha}(s,  x, v)\big)^2  g^\alpha_\beta (s ,x,v)  \big(\textit{h.o.t}_{\beta}^\alpha(s,x,v) - \textit{bulk}_{\beta}^\alpha(s,x,v)\big) d x  dv ds, 
 \ee
 \be\label{nov96}
 I_{\beta;4}^{\alpha}(t) =  \int_{1}^{t }  \int_{\R^3} \int_{\R^3}\big(\omega_{\beta}^{\alpha}(s, x, v)\big)^2  g^\alpha_\beta (s ,x,v)    \textit{bulk}_{\beta}^\alpha(s,x,v) d x  dv d s,
 \ee
where $\textit{bulk}_{\beta}^\alpha(t,x,v)$ is defined in (\ref{nove120}). Recall the definition of $\omega_{\beta}^\alpha(t,x,v)$ in (\ref{highorderweight}) and the estimate (\ref{april2eqn151}), we have  
\be\label{april2eqn171}
K_{\beta}^\alpha(t) \leq 0,
\ee
 which is a good sign. Hence, there is no need to estimate this term.    We defer the estimate of  bulk term $I_{\beta;4}^\alpha(t)$ to the next section and   estimate all other terms; i.e, $I_{\beta;i}^\alpha(t)$, $i\in\{1,2,3\}$ in this section. 
\begin{proposition}\label{notbulkterm1}
Under the bootstrap assumption \textup{(\ref{bootstrap})}, the following estimate holds for any $t \in [1,T],$
\be\label{sepeqn110}
\sum_{\alpha\in \mathcal{B}, \beta\in \mathcal{S}, |\alpha|+|\beta|= N_0}| I_{\beta;1}^{\alpha}(t)|\lesssim (1+t)^{2\delta}\epsilon_0, \quad   \sum_{\alpha\in \mathcal{B}, \beta\in \mathcal{S}, |\alpha|+|\beta| < N_0}| I_{\beta;1}^{\alpha}(t)|\lesssim (1+t)^{\delta }\epsilon_0
\ee
\end{proposition}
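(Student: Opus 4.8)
The plan is to estimate $I_{\beta;1}^\alpha(t)$ in \eqref{sepeqn809} by an $L^2_{x,v}$--$L^\infty_{x,v}$ split: the factor $g^\alpha_\beta$ and the factor $D_v g^\alpha_\beta$ each carry the weight $\omega_\beta^\alpha$, and the electromagnetic coefficient $K(s,x+\hat v s,v) = E(s,x+\hat v s) + \hat v\times B(s,x+\hat v s)$ is placed in $L^\infty$ using the sharp decay estimate \eqref{noveqn78}. First I would recall the decomposition $D_v = \sum_{i=1}^7 \alpha_i(v)X_i$ from \eqref{noveq1}; writing $X_i g^\alpha_\beta = \Lambda^{\rho_i} g^\alpha_\beta$ for an appropriate $\rho_i\in\mathcal K$ with $|\rho_i|=1$ (up to harmless lower-order vector fields), the quantity $\omega_\beta^\alpha X_i g^\alpha_\beta$ is essentially $\omega_{\beta\circ\rho_i}^\alpha g^\alpha_{\beta\circ\rho_i}$ times a bounded ratio of weights, so it is controlled by $E_{\textup{high}}^f$ (or its lower-order counterpart). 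The genuinely dangerous case is when $D_v$ lands so that the total number of vector fields on $g$ becomes $|\alpha|+|\beta|+1$ exceeding the budget — but this does not happen here because $I_{\beta;1}^\alpha$ as written already has exactly $|\alpha|+|\beta|$ derivatives on one factor and $|\alpha|+|\beta|+1$ on the other only through the single extra $D_v$; that is exactly the top-order term, and the mechanism that saves it is integration by parts in $v$.

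Concretely, the core step is to integrate by parts in $v$ in $I_{\beta;1}^\alpha$: since $\nabla_v\cdot K(s,x,v)=0$ by \eqref{sepeqn61}, moving $D_v$ off the second copy of $g^\alpha_\beta$ produces (i) a term where $D_v$ hits $\big(\omega_\beta^\alpha\big)^2$, (ii) a term where $D_v$ hits the other factor $g^\alpha_\beta$, symmetrizing back into the same integral, and (iii) commutator terms coming from $D_v$ acting on the phase $x+\hat v s$ inside $K(s,x+\hat v s,v)$, i.e. terms involving $s\nabla_v\hat v\cdot\nabla_x K$. For (ii) one gets $\frac12\int \big(\omega_\beta^\alpha\big)^2 D_v\big(|g^\alpha_\beta|^2\big) K = -\frac12\int D_v\big((\omega_\beta^\alpha)^2 K\big)|g^\alpha_\beta|^2$, so all top-order terms collapse to quantities with only $|\alpha|+|\beta|$ derivatives on $g^\alpha_\beta$ multiplied by $D_v\log(\omega_\beta^\alpha)^2 \cdot K$ or by $s\,\nabla_v\hat v\cdot\nabla_x K$. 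The first is handled by the crucial weight estimate \eqref{feb8eqn51}: $|D_v\omega_\beta^\alpha/\omega_\beta^\alpha|\lesssim 1+||s|-|x+\hat v s||$, and then \eqref{noveqn78} gives $|K|\lesssim (1+s)^{-1}(1+||s|-|x+\hat v s||)^{-1}E_{\textup{low}}^{eb}$, so the product is $\lesssim (1+s)^{-1}E_{\textup{low}}^{eb}$, integrable in time against $(1+s)^{-1}$ up to the $(1+t)^{\delta}$ loss from $E_{\textup{high}}^f(s)\lesssim (1+s)^\delta\epsilon_1$. For the commutator term $s\nabla_v\hat v\cdot\nabla_x K$ I would use that $|\nabla_v\hat v|\lesssim (1+|v|)^{-1}$ together with $\nabla_x K$ carrying an extra $(1+||s|-|x||)^{-1}$ from the second term in \eqref{noveqn78}, so the factor $s(1+|v|)^{-1}(1+||s|-|x||)^{-1}(1+s)^{-1}(1+||s|-|x||)^{-1}$ times the weight-ratio bound again yields an integrable $(1+s)^{-1}$ bound; here the anisotropic $x$-weight in \eqref{highorderweight} is what makes the $\nabla_x$ absorbable.

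Assembling: each piece is bounded by $\int_1^t (1+s)^{-1}E_{\textup{low}}^{eb}(s)E_{\textup{high}}^f(s)\,ds$ for the $|\alpha|+|\beta|=N_0$ case (giving $(1+t)^{2\delta}\epsilon_0$ after inserting $E_{\textup{low}}^{eb}\lesssim\epsilon_1$, $E_{\textup{high}}^{f;1}\lesssim (1+s)^\delta\epsilon_1$ and using $\epsilon_1^2\lesssim\epsilon_0$ with one extra $(1+t)^\delta$ slack), and by the analogous integral with $E_{\textup{high}}^{f;2}\lesssim (1+s)^{\delta/2}\epsilon_1$ for the $|\alpha|+|\beta|<N_0$ case (giving $(1+t)^{\delta}\epsilon_0$). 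I expect the main obstacle to be bookkeeping the commutator terms from $D_v$ hitting $K(s,x+\hat v s,v)$ — verifying that every one of them either carries a gain $(1+|v|)^{-1}$ matching a potential weight loss or carries an extra modulation decay $(1+||s|-|x||)^{-1}$ from $\nabla_x$ — rather than any single hard estimate; the decay inputs \eqref{noveqn78} and the weight identity \eqref{feb8eqn51} do all the real work once the integration by parts is set up correctly.
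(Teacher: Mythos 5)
Your overall plan matches the paper's one-line proof: integrate by parts using the divergence--free property of $K$, bound the surviving weight term by Lemma \ref{derivativeofweightfunction}, and close with the $L^\infty$ decay \eqref{noveqn78}. But your item (iii) contains a genuine error. You claim that after integrating $D_v$ off the $D_v g_\beta^\alpha$ factor, there remain ``commutator terms involving $s\nabla_v\hat v\cdot\nabla_x K$.'' There are none. The operator $D_v=\nabla_v-t\nabla_v\hat v\cdot\nabla_x$ is designed precisely so that, for any function $F$ evaluated along the characteristic, $D_v\big(F(t,x+\hat v t,v)\big)=(\nabla_v F)(t,x+\hat v t,v)$: the chain-rule contribution $s\,\nabla_v\hat v\cdot(\nabla_x K)(s,x+\hat v s,v)$ coming from $\nabla_v$ hitting the shifted argument cancels exactly against the $-s\nabla_v\hat v\cdot\nabla_x$ piece of $D_v$. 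Thus $D_v\cdot\big(K(s,x+\hat v s,v)\big)=(\nabla_v\cdot K)(s,x+\hat v s,v)=0$ by \eqref{sepeqn61}, and the integration by parts in both $x$ and $v$ collapses $I_{\beta;1}^\alpha$ cleanly to
\[
I_{\beta;1}^\alpha(t)=\int_1^t\!\int\!\int \big(\omega_\beta^\alpha\,g_\beta^\alpha\big)^2\,\frac{K(s,x+\hat vs,v)\cdot D_v\omega_\beta^\alpha}{2\,\omega_\beta^\alpha}\,dx\,dv\,ds,
\]
with no further remainder. Your proposed bound for the spurious term would in fact not close: the factor $s(1+|v|)^{-1}(1+s)^{-1}(1+||s|-|x+\hat vs||)^{-2}$ you write down is not $O((1+s)^{-1})$ — take $|v|\lesssim 1$ and $||s|-|x+\hat vs||\lesssim 1$ to see it is $O(1)$ — so if that commutator term were really present, this step of your argument would fail. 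It is only because the term is identically zero that your final conclusion still comes out right. The remaining bookkeeping you describe — $|D_v\omega/\omega|\lesssim 1+||s|-|x+\hat vs||$ from \eqref{feb8eqn51}, combined with the $(1+s)^{-1}(1+||s|-|x+\hat vs||)^{-1}$ decay of $K$ from \eqref{noveqn78}, and then $E^{f;1}_{\textup{high}}\lesssim(1+s)^\delta\epsilon_1$ versus $E^{f;2}_{\textup{high}}\lesssim(1+s)^{\delta/2}\epsilon_1$ — is correct and gives the two claimed rates.
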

\begin{proof}
  Note that
\[
g^\alpha_\beta (t ,x,v)    {D}_v g^\alpha_{\beta}(t,x,v) = \frac{1}{2} D_v\big(g^\alpha_\beta (t ,x,v)  \big)^2, \quad D_v= \nabla_v - t\nabla_v \hat{v}\cdot \nabla_x. 
\]
Recall  (\ref{sepeqn61}). After doing integration by parts in $x$ and $v$ to move around the derivative ``$D_v$'', the following equality holds, 
\[
 I_{\beta;1}^{\alpha}(t)= \int_{1}^{t } \int_{\R^3} \int_{\R^3} \big(\omega_{\beta}^{\alpha}(s, x, v)g^\alpha_\beta (s ,x,v)\big)^2  \frac{  K(s,x+\hat{v}s,v)\cdot  {D}_v \omega_{\beta}^{\alpha}(s, x, v)}{2\omega_{\beta}^{\alpha}( s,x, v)}  d x dv d s. 	
\]
Therefore,  our 	desired estimate (\ref{sepeqn110}) holds from the $L^2_{x,v}-L^2_{x,v}-L^\infty_{x,v}$ type multilinear estimate,  the estimate (\ref{feb8eqn51}) in Lemma \ref{derivativeofweightfunction}, and the $L^\infty$ decay estimate (\ref{noveqn78}) in Lemma \ref{sharpdecaywithderivatives}.
 \end{proof}

  The main ingredients of the estimate of \textit{non-bulk} terms, i.e., the estimate of  $I_{\beta;2}^{\alpha}(t)$ and $I_{\beta;3}^{\alpha}(t)$,  are several  bilinear estimates, which have been studied and obtained  in the study of Vlasov-Nordstr\"om system in \cite{wang}. We record those bilinear estimates in the following two Lemmas respectively. 
 \begin{lemma}\label{multilinearlemma1}
Given any fixed signs $\mu, \nu\in\{+,-\}$, fixed time $t\in \R_{+}$, fixed $k_1,k_2 \in \mathbb{Z}$.  Moreover, given     any functions $f_1, f_2:\R_t\times \R_x^3 \rightarrow \mathbb{C}$, and any distribution function $g:\R_t\times \R_x^3\times \R_v^3\rightarrow \mathbb{R}$, we define a trilinear form as follows,
\be\label{jan16eqn1}
T(f_1,f_2, g):= \int_{\R^3} \int_{\R^3} e^{-i\mu t\d} P_{k_1}[f_1](t,x+\hat{v}t )e^{-i\nu t\d}  P_{k_2}[f_2](t,x+\hat{v}t ) g(t,x,v) d x d v.
\ee
Then the following estimate holds, 
\[
|T(f_1,f_2,g)| \lesssim \sum_{|\alpha|\leq 4} (1+|t|)^{-5} \|(1+|x|)^{2}(1+|v|)^{25} \nabla_v^\alpha g(t,x,v)\|_{L^1_{x,v}}\big(  2^{-k_{1,-}}\|\widehat{f_1}(t,\xi)\psi_{k_1}(\xi) \|_{L^2} 
\]
\be\label{multilinear1jan16}
 +\|\nabla_\xi \widehat{f_1}(t,\xi)\psi_{k_1}(\xi) \|_{L^2} \big) \big(  2^{-k_{2,-}}\|\widehat{f_2}(t,\xi)\psi_{k_2}(\xi) \|_{L^2}  +\|\nabla_\xi \widehat{f_2}(t,\xi)\psi_{k_2}(\xi) \|_{L^2} \big).
\ee
Moreover,  if $|k_1-k_2|\geq 5$, then the following estimate holds, 
\[
|T(f_1,f_2,g)| \lesssim \sum_{|\alpha|\leq 4} (1+|t|)^{-5} 2^{-\max\{k_1,k_2\}} \|(1+|x|)^{2}(1+|v|)^{25} \nabla_x \nabla_v^\alpha g(t,x,v)\|_{L^1_{x,v}}\big(  2^{-k_{1,-}}\|\widehat{f_1}(t,\xi)\psi_{k_1}(\xi) \|_{L^2} 
\]
\be\label{trilinearfeb20est1}
 +\|\nabla_\xi \widehat{f_1}(t,\xi)\psi_{k_1}(\xi) \|_{L^2} \big) \big(  2^{-k_{2,-}}\|\widehat{f_2}(t,\xi)\psi_{k_2}(\xi) \|_{L^2}  +\|\nabla_\xi \widehat{f_2}(t,\xi)\psi_{k_2}(\xi) \|_{L^2} \big).
\ee
 \end{lemma}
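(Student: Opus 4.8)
The plan is to pass to the Fourier side in $x$, integrate by parts in the frequency and velocity variables to exploit the absence of stationary points coming from the strict inequality $|\hat v|<1$, and carefully track which input each derivative lands on. Writing out the half-wave propagators and integrating in $x$,
\[
T(f_1,f_2,g)=c\int_{\R^3}\int_{\R^3}\int_{\R^3} e^{it\Phi(v,\xi,\eta)}\psi_{k_1}(\xi)\psi_{k_2}(\eta)\widehat{f_1}(t,\xi)\widehat{f_2}(t,\eta)\widehat{g}(t,-\xi-\eta,v)\,d\eta\,d\xi\,dv ,
\]
where $\Phi(v,\xi,\eta)=\hat v\cdot(\xi+\eta)-\mu|\xi|-\nu|\eta|$. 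The crucial structural fact is that $\nabla_\xi\Phi=\hat v-\mu\xi/|\xi|$ and $\nabla_\eta\Phi=\hat v-\nu\eta/|\eta|$ never vanish, and in fact $|\nabla_\xi\Phi|,|\nabla_\eta\Phi|\gtrsim(1+|v|^2)^{-1}$ since $|\hat v|=|v|/\sqrt{1+|v|^2}<1$; moreover $\nabla_v\Phi=(\nabla_v\hat v)(\xi+\eta)$ satisfies $|\nabla_v\Phi|\gtrsim(1+|v|^2)^{-3/2}|\xi+\eta|$ by \textup{(\ref{octeqn456})}.

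First I would split into the low-frequency regime $2^{\min\{k_1,k_2\}}(1+|t|)\lesssim1$ and its complement. In the low-frequency regime, rewriting $T$ as $\int_{\R^3}(e^{-i\mu t\d}P_{k_1}f_1)(t,z)(e^{-i\nu t\d}P_{k_2}f_2)(t,z)\,\rho_g(t,z)\,dz$ with $\rho_g(t,z):=\int_{\R^3}g(t,z-\hat v t,v)\,dv$ the charge density, one estimates the two wave factors in $L^2_z$ by the $L^2$-boundedness of the flow and Bernstein (gaining $2^{3k_1/2}2^{3k_2/2}$), and bounds $\rho_g$ in $L^\infty_z$ by the density decay estimate \textup{(\ref{densitydecay})} of Lemma \ref{decayestimateofdensity}; combining these with $2^{k_j}\lesssim(1+|t|)^{-1}$ yields the desired power of $(1+|t|)$.

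In the main regime I would integrate by parts repeatedly in $e^{it\Phi}$: once in $\xi$ letting the derivative fall on $\widehat{f_1}$ (absorbed by $\|\nabla_\xi\widehat{f_1}\psi_{k_1}\|_{L^2}$), once in $\eta$ on $\widehat{f_2}$, and up to two further times letting the derivatives fall on $\widehat g(-\xi-\eta,v)$ (producing $\widehat{x^2 g}$, absorbed by the weight $(1+|x|)^2$); each integration by parts produces a factor $(1+|t|)^{-1}(1+|v|^2)$, plus, when the derivative hits the amplitude $(t\nabla_\xi\Phi)^{-1}$, a negative power of $2^{k_j}$ that is harmless in this regime, and the accumulated $(1+|v|^2)$-weights are absorbed by $(1+|v|)^{25}$. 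A final integration by parts in $v$, paying $\nabla_v g$ and using $|\nabla_v\Phi|\gtrsim(1+|v|^2)^{-3/2}|\xi+\eta|$, supplies the last factor $(1+|t|)^{-1}$; the remaining $\xi,\eta$-integrals are then closed by Cauchy--Schwarz against the band-limited supports. The improved estimate under $|k_1-k_2|\ge5$ follows the same scheme with one additional integration by parts, now admissible because $|\xi+\eta|\sim2^{\max\{k_1,k_2\}}$ is bounded below, giving the gain $2^{-\max\{k_1,k_2\}}$ at the cost of one further derivative on $g$.

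The main obstacle is precisely this last step: the $v$-integration by parts degenerates when $|\xi+\eta|$ is small, i.e. in the near-resonant region $\eta\approx-\xi$ (possible only when $|k_1-k_2|$ is small), where oscillation in $v$ is unavailable. There I would instead keep only the first four integrations by parts and recover the missing $(1+|t|)^{-1}$ from the density decay estimate \textup{(\ref{densitydecay})} applied after the $v$-integral, using that on this region $\widehat g(-\xi-\eta,v)$ is close to $\widehat g(0,v)$ and that the region has small measure in $(\xi,\eta)$; balancing these two mechanisms across the dyadic pieces is the delicate point, but is structurally identical to the corresponding step in the bilinear estimates of \cite{wang}.
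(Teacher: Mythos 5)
The paper does not prove this lemma in-house (its ``proof'' is the citation to \cite{wang}[Lemma 6.4]), so your argument has to stand on its own, and as written it has a genuine gap in the mechanism that produces the fifth power of $(1+|t|)^{-1}$ and in the final H\"older step. Your plan is to gain $t^{-4}$ from four integrations by parts in $(\xi,\eta)$, gain the last $t^{-1}$ from one integration by parts in $v$ using $|\nabla_v\Phi|\gtrsim(1+|v|^2)^{-3/2}|\xi+\eta|$, and then ``close the remaining $\xi,\eta$-integrals by Cauchy--Schwarz against the band-limited supports.'' That last step is fatal: Cauchy--Schwarz on the annuli converts $\|\widehat{f_j}\psi_{k_j}\|_{L^2}$ into $L^1_\xi$ norms at the cost of $2^{3k_1/2}2^{3k_2/2}$, and the right-hand side of (\ref{multilinear1jan16}) contains no positive powers of $2^{k_j}$ to absorb this, so the bound cannot be closed this way at high frequencies. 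Moreover the $v$-integration by parts degenerates exactly on the set $\xi+\eta=0$, which you correctly identify as the main obstacle, but the patch you offer there (``$\widehat g(-\xi-\eta,v)$ is close to $\widehat g(0,v)$'' plus ``small measure'') is not an argument: the measure gain in $\eta$ for $|\xi+\eta|\leq\epsilon$ is $\epsilon^{3/2}$ via Cauchy--Schwarz, and balancing it against $\epsilon^{-1}$ from $|\nabla_v\Phi|^{-1}$ over the dyadic pieces $|\xi+\eta|\sim 2^l$ leaves an unabsorbable positive power of $2^{\max\{k_1,k_2\}}$. Your treatment of the low-frequency regime is also internally inconsistent (you cannot gain $2^{3k_1/2}2^{3k_2/2}$ by Bernstein while keeping both wave factors in $L^2_z$ and the density in $L^\infty_z$).

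The structure of the right-hand side tells you what the correct decomposition of the five powers of $t^{-1}$ must be: one integration by parts in $\xi$ and one in $\eta$ (one derivative or one factor $2^{-k_{j,-}}$ per $\widehat{f_j}$, with overflow derivatives landing on $\widehat g(-(\xi+\eta),v)$ and producing the $(1+|x|)^2$ weight), giving $t^{-2}$; the remaining $t^{-3}$ comes all at once from the decay of the velocity average. Concretely, after the two integrations by parts one should recognize the convolution structure $\iint A(\xi)B(\eta)C(\xi+\eta,v)\,d\xi\,d\eta=\int \check C\,\check A\,\check B$, return to physical space, estimate the two wave factors in $L^2_x$ by Plancherel (no Bernstein loss), and estimate the velocity-averaged factor $\int_{\R^3} m(t,v,D)g(t,\cdot-\hat vt,v)\,dv$ in $L^\infty_x$ by $t^{-3}$ times weighted norms of $\nabla_v^{\leq 4}g$, exactly as in Lemma \ref{decayestimateofdensity}; this mechanism is uniform in $\xi+\eta$ (it rests on the change of variables $v\mapsto\hat v$, not on oscillation in $\xi+\eta$), so no resonant region needs separate treatment. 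The second estimate (\ref{trilinearfeb20est1}) then follows simply from $|\widehat g(-(\xi+\eta),v)|\leq|\xi+\eta|^{-1}|\widehat{\nabla_xg}(-(\xi+\eta),v)|$ together with $|\xi+\eta|\sim 2^{\max\{k_1,k_2\}}$ when $|k_1-k_2|\geq5$, rather than from an additional integration by parts. You have the right structural ingredients ($|\hat v|<1$, the non-degeneracy of $\nabla_\xi\Phi$), but the way you assemble them does not yield the stated scale-critical bound.
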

 \begin{proof}
 See \cite{wang}[Lemma 6.4].
 \end{proof}
  For any fixed sign $\mu\in\{+,-\}$, any two distribution functions $f_1(t,x,v)$ and $f_2(t,x,v)$, any fixed $k\in\mathbb{Z}$, any symbol $m(\xi,v)\in L^\infty_v \mathcal{S}^\infty_k$, and  any differentiable coefficient $c(v)$,  we define a bilinear operator as follows, 
\be\label{noveq260}
B_k  (f_1,f_2)(t,x,v):= f_1(t,x,v) E (P_{k}[f_2(t)])(x+a(v)t), 
\ee
where
\[
E(P_k[f])(t,x):=\int_{\R^3}\int_{\R^3} e^{i x \cdot \xi} e^{-i \mu t \hat{u}\cdot \xi} {c(u)  m(\xi,u) \psi_k(\xi)}  \widehat{f}(t, \xi, u) d\xi d u. 
\]
For the above defined bilinear operator, we have  
 \begin{lemma}\label{bilineardensitylemma}
For any fixed  $t\in \R, |t|\geq 1$, and any localized differentiable function $f_3(t,v):\R_t\times \R_v^3\longrightarrow \mathbb{C}$,
   the following bilinear estimate  holds for the bilinear operators defined in  \textup{(\ref{noveq260})},
 \[
\| B_k (f_1,f_2)(t,x  ,v)\|_{L^2_x L^2_v}  \lesssim \sum_{|\alpha|\leq 5} \big( \|m(\xi,v)\|_{L^\infty_v\mathcal{S}^\infty_k} + \|m(\xi,v)\|_{L^\infty_v\mathcal{S}^\infty_k}\big) \big[ |t|^{-2}2^{k} \|\big(|c(v)|+|\nabla_v c(v)|\big) f_3(t,v) \|_{L^2_v} 
     \]
 \[
+ |t|^{-3}  2^{ k }    \|(1+ |v|+|x|)^{20}c(v) f_2(t,x,v) \|_{  L^2_x L^2_v} +  |t|^{-3} \| c(v)\big( \widehat{f_2}(t,0,v ) -\nabla_v\cdot f_3(t, v)\big)\|_{L^2_v}   \big]
 \]
 \be\label{bilineardensity}
  \times  \|(1+ |v|+|x|)^{20}  \nabla_v^\alpha f_1(t,x,v)\|_{L^2_xL^2_v}, \quad \textit{if\,\,} k\in \mathbb{Z}, |t|^{-1}\lesssim  2^{k}\leq 1.
 \ee
Alternatively,  the following rough bilinear estimate holds for any $k\in \mathbb{Z}$,
 \[
\| B_k (f_1,f_2)(t,x  ,v)\|_{L^2_x L^2_v}  \lesssim \sum_{|\alpha|\leq 5}\min\{ |t|^{-3}   ,2^{3k}\}   \|m(\xi,v)\|_{L^\infty_v \mathcal{S}^\infty_k}    
 \|(1+ |v|+|x|)^{20}c(v) f_2(t,x,v) \|_{  L^2_x L^2_v}   \]
\be\label{bilineardensitylargek}
 \times \|(1+ |v|+|x|)^{20} \nabla_v^\alpha f_1(t,x,v)\|_{L^2_xL^2_v} .
\ee

 \end{lemma}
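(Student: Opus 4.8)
The plan is to prove Lemma \ref{bilineardensitylemma} by combining the decay estimate for density-type functions from Lemma \ref{decayestimateofdensity} with an $L^2_{x,v}$-$L^\infty_{x,v}$ bilinear estimate. The key structural observation is that in $B_k(f_1,f_2)(t,x,v)$ the factor $E(P_k[f_2(t)])(x+a(v)t)$ is \emph{exactly} the type of spatial average studied in Lemma \ref{decayestimateofdensity}: writing $a(v)=\hat u$ in the exponent, the integral $E(P_k[f_2])(t,x+a(v)t)$ matches (up to the coefficient $c(u)$, the symbol $m(\xi,u)\psi_k(\xi)$, and the outer variable $v$ playing a passive role) the integral $\int\int e^{ix\cdot\xi+it\hat u\cdot\xi}m(\xi,u)\widehat g(t,\xi,u)\,dud\xi$ appearing there. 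Hence the whole argument reduces to controlling $\|f_1(t,x,v)\|_{L^2_{x,v}}$ against $\|E(P_k[f_2(t)])(x+a(v)t)\|_{L^\infty_{x,v}}$, except that the ``$x$'' variable inside the average is shifted by $a(v)t$ which depends on the $L^2$-variable $v$; so I cannot simply take $L^\infty_x$ of the average and then $L^2_{x,v}$ of $f_1$ separately — I must keep track of the shift.

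First I would fix the sign $\mu$ and the frequency shell $k$, and apply Lemma \ref{decayestimateofdensity} with $a(u)=\hat u$ and $a=0$ (using the weighted symbol $c(u)m(\xi,u)\psi_k(\xi)$, whose $L^\infty_u\mathcal S^\infty$-type norms are bounded by a constant times $2^{-|\alpha|k}$-weighted $\mathcal S^\infty_k$-norms of $m$ plus derivatives of $c$) to obtain, \emph{uniformly in the shift point} $x+a(v)t$, the bound
\[
\bigl|E(P_k[f_2(t)])(x+a(v)t)\bigr|\lesssim \sum_{|\alpha|\leq 5}\|m(\xi,u)\|_{L^\infty_u\mathcal S^\infty_k}\bigl[|t|^{-3}\|(1+|u|)^{5}c(u)\nabla_u^\alpha\widehat{f_2}(t,0,u)\|_{L^1_u}+|t|^{-4}\|(1+|u|)^5(1+|y|)c(u)\nabla_u^\alpha f_2(t,y,u)\|_{L^1_yL^1_u}\bigr].
\]
Since this pointwise bound is independent of $x$ and of $v$, I can then estimate $\|B_k(f_1,f_2)\|_{L^2_xL^2_v}$ by pulling the above supremum out of the $L^2_{x,v}$ integral and multiplying by $\|f_1(t,x,v)\|_{L^2_xL^2_v}$. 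To get the sharper exponents $|t|^{-2}2^k$ and $|t|^{-3}2^k$ claimed with the correction-term structure $\widehat{f_2}(t,0,v)-\nabla_v\cdot f_3(t,v)$, I would instead invoke the refined part of Lemma \ref{decayestimateofdensity} in the frequency-localized form (the factor $\psi_k(\xi)$ supplies one extra $2^k$ via $|\xi|^a$ with effectively $a$ replaced by the localization, or via integrating by parts once more in $\xi$ using $\nabla_\xi$ acting on $\psi_k$), which converts two powers of $|t|^{-1}$ into $|t|^{-2}2^k$ when $2^k\gtrsim|t|^{-1}$; this is exactly the regime $|t|^{-1}\lesssim 2^k\leq 1$ isolated in \eqref{bilineardensity}. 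The $L^1_u\to L^2_u$ and $L^1_{y,u}\to L^2_{y,u}$ upgrade in the statement is then just Cauchy–Schwarz in $u$ (resp.\ $y,u$) paying the polynomial weights $(1+|v|+|x|)^{20}$, which dominate the $(1+|u|)^5(1+|y|)$ weights above.

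For the rough estimate \eqref{bilineardensitylargek} valid for all $k\in\mathbb Z$, I would use a cruder bound on the average: on the one hand $|E(P_k[f_2])(t,x+a(v)t)|\lesssim \|m\|_{L^\infty_v\mathcal S^\infty_k}\,2^{3k}\|f_2\|_{L^\infty_u L^1_y}$-type (Bernstein, losing all decay) giving the $2^{3k}$ branch, and on the other hand the non-localized decay estimate \eqref{densitydecay} with $a=0$ giving the $|t|^{-3}$ branch; taking the minimum and then applying Cauchy–Schwarz against $\|f_1\|_{L^2_{x,v}}$ yields \eqref{bilineardensitylargek}. The only place requiring real care — and the main obstacle — is the $v$-dependence of the shift $a(v)t$ inside the average combined with the differentiation in $v$ demanded by Lemma \ref{decayestimateofdensity}: when the hypotheses of that lemma are unpacked, the $\nabla_v^\alpha$ there lands on $\widehat g$ with $g$ here being $c(u)\cdot(\text{stuff})$, but the outer $v$ in $B_k$ is a \emph{different} variable, so one must check that the bound is genuinely uniform in the outer $(x,v)$ and that no hidden $v$-derivative of the shift is needed; this is true precisely because we estimate $E(P_k[f_2])$ at a \emph{point} and only afterwards integrate in the outer variables, so the $v$-dependence of the shift is harmless. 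I expect the bulk of the remaining work to be bookkeeping of the polynomial weights and the symbol norms, which is routine.
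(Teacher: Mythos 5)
The paper itself contains no proof of this lemma (it is quoted from \cite{wang3}[Lemma 3.2 \& Lemma 3.3]), so your argument has to stand on its own, and it does not: the central step --- bound $E(P_k[f_2])$ pointwise via Lemma \ref{decayestimateofdensity} and then apply H\"older, with the $v$-dependent shift declared harmless --- cannot produce the stated right-hand side. Lemma \ref{decayestimateofdensity} pays for its $|t|^{-3}$ decay with up to five derivatives $\nabla_u^\alpha$ of the \emph{averaged} function, i.e.\ of $c(u)\widehat{f_2}(t,\xi,u)$; your route therefore yields norms of $\nabla_u^{\le 5}f_2$ and $\nabla_u^{\le 5}f_3$. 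But the statement you are proving allows no derivatives at all on $f_2$ or $f_3$ --- all five derivatives sit on the \emph{other} factor $f_1$. This is not bookkeeping: in the application (see $H_k^2$ in (\ref{noveq291}) and the estimate (\ref{jan16eqn64})), $f_2$ carries the top-order vector fields and $f_3=\widetilde g_{\alpha,\gamma}$ is the top-order correction term (\ref{correctionterm}); differentiating either five more times is exactly the loss of derivatives the lemma exists to avoid. Relatedly, there is no ``refined part of Lemma \ref{decayestimateofdensity}'' carrying the structure $\widehat{f_2}(t,0,v)-\nabla_v\cdot f_3$; producing that structure (Taylor-expand $\widehat{f_2}(t,\xi,u)$ at $\xi=0$, split off $\nabla_u\cdot f_3$, integrate by parts once in $u$ to generate the factor $t\,\nabla_u\hat u\cdot\xi\sim t2^k$) and then verifying the exponents against \emph{underivated} $L^2$ norms is the actual content of the lemma, and it is not supplied by citing (\ref{densitydecay}).

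The gap is already visible quantitatively in the rough estimate (\ref{bilineardensitylargek}). Without derivatives on $f_2$, a pointwise bound of $E(P_k[f_2])(z)$ in terms of $\|(1+|x|+|v|)^{20}c f_2\|_{L^2_{x,v}}$ costs, by Cauchy--Schwarz against the kernel $\check m_k(z-y-\mu t\hat u,u)$, at best a factor of order $(2^{3k}\cdot t^{-3})^{1/2}=|t|^{-3/2}2^{3k/2}$, the geometric mean of the two branches of the minimum, hence never $\lesssim \min\{|t|^{-3},2^{3k}\}$ in the relevant regime. The missing half of the decay is recovered only from the structure you discard: for fixed $(y,u)$ the kernel $\check m_k(x+a(v)t-y-\mu t\hat u,u)$ \emph{also} integrates to $O(t^{-3})$ in the outer variable $v$ (change of variables $v\mapsto a(v)t$), so a Schur-type test in $(x,v)\leftrightarrow(y,u)$ harvests $t^{-3}$ from each side and gives $|t|^{-3}$ against weighted $L^2$ norms of both factors --- at the price of an $L^\infty_{x,v}$-type bound on $f_1$, which is precisely where the $\sum_{|\alpha|\le5}\|\nabla_v^\alpha f_1\|_{L^2}$ on the right-hand side comes from. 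In other words, the $v$-dependence of the shift $a(v)t$ is not ``harmless''; it is the source of half the decay, and ``estimate $E$ at a point and only afterwards integrate in the outer variables'' throws it away. A correct proof must keep the inner $u$-average and the outer $(x,v)$-integration coupled.
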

  \begin{proof}
  See \cite{wang3}[Lemma 3.2\& Lemma 3.3].
  \end{proof}
  With the above bilinear estimates, we are ready to estimate the high order energy of the \textit{non-bulk} terms. 

\begin{lemma}\label{fixedtimeestimate1}
Under the bootstrap assumption \textup{(\ref{bootstrap})}, the following estimate holds for any $t\in [1,T]$, 
\be\label{jan15eqn21}
\sum_{\alpha\in \mathcal{B}, \beta\in \mathcal{S}, |\alpha|+|\beta| = N_0} \|\omega_{\beta}^\alpha(t,x,v)\big(\textit{h.o.t}_\beta^\alpha(t,x,v)- \textit{bulk}_\beta^\alpha(t,x,v)  \big)\|_{L^2_x L^2_v}\lesssim (1+|t|)^{-1+\delta}\epsilon_1^2,\ee
\be\label{may22eqn12}
 \sum_{\alpha\in \mathcal{B}, \beta\in \mathcal{S}, |\alpha|+|\beta| < N_0}\|\omega_{\beta}^\alpha(t,x,v)\big(\textit{h.o.t}_\beta^\alpha(t,x,v)- \textit{bulk}_\beta^\alpha(t,x,v)  \big)\|_{L^2_x L^2_v}\lesssim (1+|t|)^{-1+\delta/2}\epsilon_1^2 .
\ee
\end{lemma}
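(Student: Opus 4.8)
The plan is to split $\textit{h.o.t}_\beta^\alpha-\textit{bulk}_\beta^\alpha$ according to the decompositions \textup{(\ref{sepeqn180})}, \textup{(\ref{sepeq90})} and \textup{(\ref{nove100})} into the four pieces $\textit{h.o.t}_{\beta;1}^{\alpha;1}$, $\textit{error}_\beta^\alpha$, $\textit{h.o.t}_{\beta;2}^\alpha$ and $\textit{h.o.t}_{\beta;3}^\alpha$, given in \textup{(\ref{jan15eqn60})}, \textup{(\ref{nove121})}, \textup{(\ref{sepeqn311})} and \textup{(\ref{sepeqn310})}, and to bound each of them separately. The structural fact that makes these \emph{non-bulk} terms much easier than the genuine bulk term is that in every summand of these four pieces at most one vector field lands on the electromagnetic field: a single $\Lambda^\iota$ with $|\iota|=1$ hits $K^i$ in $\textit{h.o.t}_{\beta;1}^{\alpha;1}$ and $\textit{error}_\beta^\alpha$; one has $|\rho|\leq 1$ in $\textit{h.o.t}_{\beta;2}^\alpha$; and no derivative hits $K^i$ in $\textit{h.o.t}_{\beta;3}^\alpha$. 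Hence the electromagnetic factor always carries at most $10$ derivatives, so the linear decay estimate \textup{(\ref{noveqn78})} in Lemma \ref{sharpdecaywithderivatives} applies and contributes a gain $(1+|t|)^{-1}(1+||t|-|x+\hat{v}t||)^{-1}E_{\textup{low}}^{eb}(t)$.

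The uniform scheme is then an $L^2_{x,v}\times L^2_{x,v}\times L^\infty_{x,v}$ H\"older split: the electromagnetic factor is put into $L^\infty_{x,v}$ via \textup{(\ref{noveqn78})}, and the remaining distribution factor — a weighted $\Lambda$-derivative of $g$ of total order at most $|\alpha|+|\beta|$, controlled by $E_{\textup{high}}^f(t)$ — into $L^2_{x,v}$. To bring the distribution factor into the form seen by the energy I would rewrite each $X_i g$ through the decomposition $D_v=\sum_{|\rho|=1}d_\rho(t,x,v)\Lambda^\rho$ of Lemma \ref{twodecompositionlemma}, and each $\Lambda^\iota$ acting on a translated field $K^i(t,x+\hat{v}t,v)$ through Lemma \ref{decompositionofderivatives}; since the relevant $\Lambda^\iota$ in $\textit{h.o.t}_{\beta;1}^{\alpha;1}$ is $\nsim\psi_{\geq1}(|v|)\widehat{\Omega}^v_j$ and $\nsim\psi_{\geq1}(|v|)\Omega^x_j$, the resulting coefficients obey the bounded, $(1+|v|)^{-c(\iota)}$-gaining estimate \textup{(\ref{march13eqn10})}. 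What then remains is to check, case by case, a weight inequality of the schematic shape $\omega_\beta^\alpha\,|\textup{coefficient}|/\omega_{\beta'}^{\alpha'}\lesssim 1+||t|-|x+\hat{v}t||$; this is precisely what lets the single modulation factor $(1+||t|-|x+\hat{v}t||)^{-1}$ from the decay estimate be absorbed, leaving $(1+|t|)^{-1}E_{\textup{low}}^{eb}(t)E_{\textup{high}}^f(t)$, which by \textup{(\ref{bootstrap})} is $\lesssim(1+|t|)^{-1+\delta}\epsilon_1^2$ when $|\alpha|+|\beta|=N_0$ (using $E_{\textup{high}}^{f;1}(t)\lesssim(1+t)^\delta\epsilon_1$) and $\lesssim(1+|t|)^{-1+\delta/2}\epsilon_1^2$ when $|\alpha|+|\beta|<N_0$ (using $E_{\textup{high}}^{f;2}(t)\lesssim(1+t)^{\delta/2}\epsilon_1$). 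The weight inequalities themselves follow from \textup{(\ref{jan15eqn2})}, the commutator coefficient bounds \textup{(\ref{sepeqn904})} and \textup{(\ref{march25eqn10})}, the bound $|\tilde{d}(t,x,v)|\lesssim 1+||t|-|x+\hat{v}t||$ of \textup{(\ref{feb19eqn1})}, and the two estimates in \textup{(\ref{feb8eqn51})} of Lemma \ref{derivativeofweightfunction}: the powers of the anisotropic weight in \textup{(\ref{highorderweight})} balance because the rewriting does not change the total order of $\Lambda$-derivatives, while the $(1+|v|)$-losses in the rotational directions — from the $\Omega^x_j$-entries of $d_\rho$ in \textup{(\ref{sepeq947})}, from the commutator coefficients $\tilde{e}_{\beta,i}^{\kappa,\cdot}$, and from $|\phi(t,x,v)|\lesssim 1+|v|$ (which follows from \textup{(\ref{april2eqn1})}) when $i(\kappa)\neq i(\beta)$ — are exactly compensated by the $(1+|v|)^{c(\beta)}$ hierarchy built into $\omega_\beta^\alpha$ together with the gain in \textup{(\ref{march13eqn10})}.

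Running through the four pieces: $\textit{h.o.t}_{\beta;1}^{\alpha;1}$ is exactly the scheme above; for $\textit{error}_\beta^\alpha$ one notes that its $\Omega^x_j$-contribution vanishes, since $c(\iota)=1$ there makes $K_{\iota;2}^i$ in \textup{(\ref{nove98})} zero, and that the surviving $\widehat{\Omega}^v_j$-contribution carries only bounded (indeed $|v|$-decaying) coefficients because $(\tilde{V}_j\cdot\nabla_v)\hat{v}=(1+|v|^2)^{-1/2}\tilde{V}_j$ by \textup{(\ref{octeqn456})}, so no weight is lost; for $\textit{h.o.t}_{\beta;2}^\alpha$ the coefficients $\hat{a}_{\alpha;\rho,\gamma}(v),\hat{b}_{\alpha;\rho,\gamma}(v)$ are bounded by \textup{(\ref{dec28eqn41})} and the factor $X_i g^\gamma_\beta$ with $|\gamma|=|\alpha|-1$ is rewritten via Lemma \ref{twodecompositionlemma} into $g^\gamma_{\beta\circ\rho}$ with $|\gamma|+|\beta\circ\rho|=|\alpha|+|\beta|$, for which the weight check is identical; and for $\textit{h.o.t}_{\beta;3}^\alpha=\sum_i K^i Y_i^\beta g^\alpha$ one expands $Y_i^\beta$ by \textup{(\ref{sepeqn522})}, where the constraint $|i(\kappa)-i(\beta)|\leq1$ guarantees that the ratio $\omega_\beta^\alpha/\omega_\kappa^\alpha$ contains at most one power of $\phi$, absorbed by the improved estimate \textup{(\ref{march25eqn10})} and $|\phi|\lesssim 1+|v|$, while the $\tilde{d}$-entry is handled by $|\tilde{d}\,\phi|\lesssim 1+||t|-|x+\hat{v}t||$ from \textup{(\ref{feb8eqn51})}. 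The main obstacle is not any single estimate but precisely this bookkeeping: verifying in each rotational-direction subcase that the $(1+|v|)$-losses are matched by the energy-weight hierarchy and that no summand is worse than linear in $\tilde{d}(t,x,v)$, so that the lone modulation-decay factor supplied by Lemma \ref{sharpdecaywithderivatives} already suffices — which is exactly the feature that separates these non-bulk terms from the bulk term treated in the next section, where the hidden null structure becomes indispensable.
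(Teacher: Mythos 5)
Your proposal is correct and follows essentially the same route as the paper: the same four-piece decomposition of $\textit{h.o.t}_\beta^\alpha-\textit{bulk}_\beta^\alpha$, the $L^2_{x,v}$--$L^2_{x,v}$--$L^\infty_{x,v}$ H\"older split with the electromagnetic factor controlled by the decay estimate (\ref{noveqn78}), the first decomposition of $D_v$ from Lemma \ref{twodecompositionlemma} together with Lemma \ref{decompositionofderivatives} and the coefficient bounds (\ref{march13eqn10}), (\ref{jan15eqn2}), (\ref{sepeqn904}), (\ref{march25eqn10}), and the second estimate of (\ref{feb8eqn51}) to absorb the single modulation factor. Your weight bookkeeping (including the vanishing of the $\Omega_j^x$-contribution to $\textit{error}_\beta^\alpha$ via the $(1-c(\iota))$ factor) is consistent with what the paper does, and the passage to (\ref{may22eqn12}) via the $(1+t)^{\delta/2}$ growth of $E_{\textup{high}}^{f;2}$ matches the paper's remark.
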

\begin{proof}
 Recall the decompositions of $ \textit{h.o.t}_\beta^\alpha(t,x,v)$ in (\ref{sepeqn180}), (\ref{sepeq90}), and (\ref{nove100}). We have
\be\label{jan15eqn24}
\textit{h.o.t}_\beta^\alpha(t,x,v)- \textit{bulk}_\beta^\alpha(t,x,v) = \sum_{i=2,3}\textit{h.o.t}_{\beta;i}^\alpha(t,x,v)+  \textit{h.o.t}_{\beta;1}^{\alpha;1}(t,x,v) + \textit{error}_{\beta }^{\alpha }(t,x,v).
\ee
 Motivated from the above equality, we separate into three cases as follows. 

\noindent $\bullet$\quad The estimate of $\textit{h.o.t}_{\beta;2}^\alpha(t,x,v) $ and $\textit{h.o.t}_{\beta;3}^\alpha(t,x,v) $. 

 Recall (\ref{sepeqn311}) and (\ref{sepeqn310}). Moreover,  recall the first decomposition of $D_v$ in (\ref{summaryoftwodecomposition}) in Lemma \ref{twodecompositionlemma} , the detailed formula of $d_{\rho}(t,x,v)$ in (\ref{sepeq947}), and the detailed formula of $Y_i^\beta$ in (\ref{sepeqn522}). From the estimate of coefficients in (\ref{jan15eqn2}), (\ref{sepeqn904}),  and (\ref{march25eqn10}), the second part of the estimate (\ref{feb8eqn51}) in Lemma \ref{derivativeofweightfunction},  and the decay estimate (\ref{noveqn78}) in Lemma \ref{sharpdecaywithderivatives}, the following estimate holds from the $L^2_{x,v}-L^\infty_{x,v}$ type bilinear estimate, 
 \[
\sum_{i=2,3}\sum_{\alpha\in \mathcal{B}, \beta\in \mathcal{S}, |\alpha|+|\beta| = N_0} \|\omega_{\beta}^\alpha(t,x,v) \textit{h.o.t}_{\beta;i}^\alpha(t,x,v) \|_{L^2_{x,v}}  \lesssim  \sum_{\gamma\in \mathcal{B}, \kappa\in \mathcal{S}, |\gamma|+|\kappa|\leq N_0 } \sum_{\rho\in\mathcal{B}, |\rho|\leq 3,  u\in\{E^\rho, B^\rho\}}\| \omega_{\kappa}^\gamma(t,x,v)  g_{\kappa}^\gamma(t,x,v)\|_{L^2_{x,v}}
 \]
 \[
\times   \| (1+||t|-|x+\hat{v}t||) u(t,x+\hat{v}t)\|_{L^\infty_{x,v}}\lesssim (1+|t|)^{-1} E_{\textup{high}}^{f}(t)E_{\textup{low}}^{eb}(t)\lesssim (1+|t|)^{-1+\delta}\epsilon_1^2.
\]

\noindent $\bullet$\quad The estimate of $\textit{h.o.t}_{\beta;1}^{\alpha;1}(t,x,v) $.

Recall (\ref{jan15eqn60}). For this   term, we use the first decomposition of ``$D_v$'' (\ref{summaryoftwodecomposition}) in Lemma \ref{twodecompositionlemma}. Recall the detailed formula of $d_{\rho}(t,x,v)$ in (\ref{sepeq947}).  From the equality (\ref{sepeqn610}), the estimate of coefficients in (\ref{march13eqn10}) and (\ref{jan15eqn2}), the second part of the estimate (\ref{feb8eqn51}) in Lemma \ref{derivativeofweightfunction},  and the decay estimate (\ref{noveqn78}) in Lemma \ref{sharpdecaywithderivatives}, the following estimate holds from the $L^2_{x,v}-L^\infty_{x,v}$ type bilinear estimate, 
 \[
\sum_{\alpha\in \mathcal{B}, \beta\in \mathcal{S}, |\alpha|+|\beta| = N_0} \|\omega_{\beta}^\alpha(t,x,v) \textit{h.o.t}_{\beta;1}^{\alpha;1}(t,x,v) \|_{L^2_{x,v}}  \lesssim \sum_{\gamma\in \mathcal{B}, \kappa\in \mathcal{S}, |\gamma|+|\kappa|\leq N_0 } \sum_{\rho\in\mathcal{B}, |\rho|\leq 3,  u\in\{E^\rho, B^\rho\}} \| \omega_{\kappa}^\gamma(t,x,v)  g_{\kappa}^\gamma(t,x,v)\|_{L^2_{x,v}}
 \]
\be
\times   \| (1+||t|-|x+\hat{v}t||) u(t,x+\hat{v}t)\|_{L^\infty_{x,v}}\lesssim (1+|t|)^{-1} E_{\textup{high}}^{f}(t)E_{\textup{low}}^{eb}(t)\lesssim   (1+|t|)^{-1+\delta}\epsilon_1^2.
\ee

\noindent $\bullet$\quad The estimate of $\textit{error}_{\beta }^{\alpha }(t,x,v) $. 

Recall (\ref{nove121}) and (\ref{nove98}). We use the first decomposition of ``$D_v$'' (\ref{summaryoftwodecomposition}) in Lemma \ref{twodecompositionlemma}.  Recall the detailed formula of $d_{\rho}(t,x,v)$ in (\ref{sepeq947}). From the estimate of coefficients (\ref{jan15eqn2}), the second part of the estimate (\ref{feb8eqn51}) in Lemma \ref{derivativeofweightfunction}, and the decay estimate (\ref{noveqn78}) in Lemma \ref{sharpdecaywithderivatives},   the following estimate holds from the $L^2_{x,v}-L^\infty_{x,v}$ type bilinear estimate, 
\[
 \sum_{\alpha\in \mathcal{B}, \beta\in \mathcal{S}, |\alpha|+|\beta| = N_0} \|\omega_{\beta}^\alpha(t,x,v)\textit{error}_{\beta }^{\alpha }(t,x,v)\|_{L^2_{x,v}} \lesssim  \sum_{\gamma\in \mathcal{B}, \kappa\in \mathcal{S}, |\gamma|+|\kappa|\leq N_0 } \sum_{\rho\in\mathcal{B}, |\rho|\leq 3,  u\in\{E^\rho, B^\rho\}}\| \omega_{\kappa}^\gamma(t,x,v)  g_{\kappa}^\gamma(t,x,v)\|_{L^2_{x,v}}
\]
\be
\times   \| (1+||t|-|x+\hat{v}t||)u(t,x+\hat{v}t)\|_{L^\infty_{x,v}}\lesssim (1+|t|)^{-1} E_{\textup{high}}^{f}(t)E_{\textup{low}}^{eb}(t) \lesssim   (1+|t|)^{-1+\delta}\epsilon_1^2.
\ee
Hence finishing the proof of the desired estimate (\ref{jan15eqn21}).

 With minor modifications, our desired estimate (\ref{may22eqn12})   holds after redoing the above argument for fixed $\alpha\in \mathcal{B}, \beta\in \mathcal{S},$ s.t.,  $ |\alpha|+|\beta| < N_0$.
\end{proof}

\begin{lemma}\label{fixedtimeestimate2}
 Under the bootstrap assumption \textup{(\ref{bootstrap})}, the following estimate holds for any $t\in [1,T]$, 
\be\label{jan15eqn91}
\sum_{  |\alpha|+|\beta| = N_0}\|\omega_{\beta}^\alpha(t,x,v)  \textit{l.o.t}_\beta^\alpha(t,x,v)  \|_{L^2_x L^2_v}   \lesssim  (1+|t|)^{-1+\delta}\epsilon_1^2,
\ee 
\be\label{may22eqn200}
 \sum_{   |\alpha|+|\beta| < N_0}\|\omega_{\beta}^\alpha(t,x,v)  \textit{l.o.t}_\beta^\alpha(t,x,v)  \|_{L^2_x L^2_v}   \lesssim  (1+|t|)^{-1+\delta/2}\epsilon_1^2.
\ee
\end{lemma}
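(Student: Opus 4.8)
The plan is to estimate $\textit{l.o.t}_\beta^\alpha(t,x,v)$ by working through the four pieces of the decomposition (\ref{loworderterm}): $\textit{l.o.t}_{\beta;i}^\alpha(t,x,v)$, $i\in\{1,2,3,4\}$, treating each according to how many derivatives fall on the electromagnetic field versus on the profile $g(t,x,v)$. For $\textit{l.o.t}_{\beta;1}^\alpha(t,x,v)$ (the low order commutator between $X_i$ and $\Lambda^\beta$, see (\ref{sepeqn400})), I would use the first decomposition of $D_v$ in (\ref{summaryoftwodecomposition}), the estimates of the commutator coefficients in (\ref{sepeqn524}) and (\ref{jan15eqn41}) together with the estimate (\ref{jan15eqn2}), and then combine the $L^\infty$ decay estimate (\ref{noveqn78}) for the electromagnetic field with the second part of (\ref{feb8eqn51}) in Lemma \ref{derivativeofweightfunction} to absorb the $\tilde{d}(t,x,v)$ factor against the light-cone weight $\phi(t,x,v)$. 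Since there is at most one derivative on $K$ here, a clean $L^2_{x,v}-L^2_{x,v}-L^\infty_{x,v}$ Hölder split gives the $(1+|t|)^{-1}E_{\textup{high}}^f E_{\textup{low}}^{eb}$ bound, hence $(1+|t|)^{-1+\delta}\epsilon_1^2$ via the bootstrap assumption (\ref{bootstrap}).

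For $\textit{l.o.t}_{\beta;2}^\alpha(t,x,v)$ (see (\ref{sepeq200})), the structure is again one derivative on $K$ and the rest on $g$, but now I must also control the commutators $[\Lambda^\kappa, X_i]$ and $[\Lambda^\beta, X_i]$ via Lemma \ref{summaryofhighordercommutation}, and use Lemma \ref{decompositionofderivatives} to expand $\Lambda^\iota(K^i_{\alpha;\rho,\gamma}(t,x+\hat{v}t,v))$ into the $E^\iota, B^\iota$-basis with coefficients controlled by (\ref{sepeqn88}) and especially the improved estimate (\ref{march13eqn10}). The same $L^2-L^2-L^\infty$ argument as for $\textit{l.o.t}_{\beta;1}^\alpha$ applies, with the polynomial-in-$(1+|x|)(1+|v|)$ losses absorbed by the hierarchy built into the weight $\omega_\beta^\alpha$ in (\ref{highorderweight}) (lower-order factors of $g$ carry strictly more weight). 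For $\textit{l.o.t}_{\beta;3}^\alpha(t,x,v)$ (at least twelve derivatives on the electromagnetic field, (\ref{sepeqn600})), the electromagnetic factor is now high order so I would put it in $L^2$ and use its high order energy $E_{\textup{high}}^{eb}(t)\lesssim (1+|t|)^\delta\epsilon_1$, while $\Lambda^\kappa(X_i g^\gamma)$ with $|\kappa|+|\rho|$ much smaller than $N_0$ is a low order factor of $g$ that decays like a density — here I would invoke the decay estimate (\ref{densitydecay}) of Lemma \ref{decayestimateofdensity} (or directly the bilinear density estimate (\ref{bilineardensity})/(\ref{bilineardensitylargek}) in Lemma \ref{bilineardensitylemma}) to gain the needed $(1+|t|)^{-2}$-type decay, leaving room for the $(1+|t|)^\delta$ growth of $E_{\textup{high}}^{eb}$.

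The remaining term $\textit{l.o.t}_{\beta;4}^\alpha(t,x,v)$ (between two and twelve derivatives on the electromagnetic field, (\ref{sepeq201})) is the genuinely bilinear-in-the-fields-and-Vlasov case with the field factor at intermediate order; here I would apply the trilinear estimate (\ref{multilinear1jan16}) (and (\ref{trilinearfeb20est1}) when the frequency supports are separated) of Lemma \ref{multilinearlemma1}, which yields a sharp $(1+|t|)^{-5}$ gain against the weighted $L^1_{x,v}$ norm of a low order $\nabla_v^\alpha g$ controlled by $E_{\textup{high}}^f$, together with the $X_n$-norms of the fields controlled by $E_{\textup{low}}^{eb}$ and $E_{\textup{high}}^{eb}$. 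Summing all four contributions and using (\ref{bootstrap}) gives (\ref{jan15eqn91}); then (\ref{may22eqn200}) follows by rerunning the identical argument with $|\alpha|+|\beta|<N_0$, where the lower total order lets me afford the slightly weaker $(1+|t|)^{\delta/2}$ bookkeeping. I expect the main obstacle to be $\textit{l.o.t}_{\beta;4}^\alpha(t,x,v)$: one has to be careful that the field factor, while of intermediate order, never costs more than a fixed number of derivatives, so that Lemma \ref{multilinearlemma1} applies with enough remaining regularity on $g$, and one must check that all the polynomial weight losses from the commutator and coefficient lemmas (\ref{jan23eqn11}), (\ref{jan15eqn41}), (\ref{noveq781}) are genuinely swallowed by the gap $20N_0 - 10(|\alpha|+|\beta|)$ in the weight (\ref{highorderweight}) — this is the bookkeeping that makes or breaks the estimate.
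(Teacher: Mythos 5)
Your treatment of $\textit{l.o.t}_{\beta;1}^\alpha$ and $\textit{l.o.t}_{\beta;2}^\alpha$ matches the paper (which in fact groups $\textit{l.o.t}_{\beta;1}^\alpha$, $\textit{l.o.t}_{\beta;2}^\alpha$, \emph{and} $\textit{l.o.t}_{\beta;4}^\alpha$ together, since in all three cases the field carries at most twelve derivatives and the pointwise decay (\ref{noveqn78}) is available). The genuine gap is in $\textit{l.o.t}_{\beta;3}^\alpha$: you have assigned the easy argument to the hard term and the hard tool to the easy term. For $\textit{l.o.t}_{\beta;3}^\alpha$ the field can carry up to $N_0$ derivatives, so only $L^2$ information is available for it, and — as the paper flags explicitly — the coefficient contains $\tilde d(t,x,v)$, which can be of size $1+|t|$ even when $x,v\sim 1$, with no light-cone weight or field decay to compensate. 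Your plan (field in $L^2$, density decay from the Vlasov factor via (\ref{densitydecay})) does not close: the Vlasov factor $g_\rho^{\kappa_2}$ sits inside the $L^2_xL^2_v$ norm being estimated and is never velocity-averaged, so there is no density-decay mechanism attached to it; and even granting some decay, you never account for the $O(1+|t|)$ coefficient loss. The paper's resolution is structural: dyadically decompose the field in frequency, split it via (\ref{noveq432}) into the modified profile and the velocity-average correction, treat the correction with the bilinear density estimates (\ref{bilineardensity})--(\ref{bilineardensitylargek}) (here the $v$-average lives inside the field, not in $g_\rho^{\kappa_2}$), and treat the modified-profile part by squaring the $L^2$ norm to produce $\int |u_k|^2 G\,dx\,dv$ and applying the space-resonance trilinear estimates (\ref{multilinear1jan16}), (\ref{trilinearfeb20est1}), whose $(1+|t|)^{-5}$ gain absorbs the $(1+|t|)^2$ coefficient loss; the very low frequencies $2^k\le |t|^{-1}$ need a separate volume-of-support argument using the $X_n$-control of the profiles.

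Conversely, your application of Lemma \ref{multilinearlemma1} to $\textit{l.o.t}_{\beta;4}^\alpha$ does not type-check: the trilinear form (\ref{jan16eqn1}) takes \emph{two} half-wave field inputs and one distribution input, whereas $\textit{l.o.t}_{\beta;4}^\alpha$ is a single field factor times a Vlasov factor; that term is handled by the same $L^2_{x,v}$--$L^\infty_{x,v}$ H\"older argument as $\textit{l.o.t}_{\beta;1}^\alpha$ and $\textit{l.o.t}_{\beta;2}^\alpha$, since twelve derivatives on the field still leave (\ref{noveqn78}) applicable. Finally, for (\ref{may22eqn200}) the relevant point is not "lower order lets me afford weaker bookkeeping" but that the correction term $\widetilde{g}_{\alpha,\gamma}$ of (\ref{correctionterm}) vanishes below top order, which removes the logarithmic loss appearing in the top-order low-frequency sum and lets the bound improve to $(1+|t|)^{-1+\delta/2}$.
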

\begin{proof}
Recall (\ref{loworderterm}). Based on  the  total number derivatives act on the electromagnetic field,  we separate into two cases as follows. 

\noindent $\bullet$\quad The estimate of $ \textit{l.o.t}_{\beta;i}^\alpha(t,x,v),$ $i\in \{1,2,4\}$.

Recall (\ref{sepeqn400}), (\ref{sepeq200}), and (\ref{sepeq201}). Note that there are at most twelve derivatives hit on the electromagnetic field. Recall the commutation rule between $\Lambda^\beta$ and $X_i$ in  (\ref{noveq521}) and the equality (\ref{sepeqn610}). From the estimate of coefficients in (\ref{sepeqn524}), (\ref{sepeqn904}), and  (\ref{sepeqn88}), the following estimate holds from the linear decay estimate (\ref{noveqn78}) in Lemma \ref{sharpdecaywithderivatives} and the $L^2_{x,v}-L^\infty_{x,v}$ type bilinear estimate, 
\[
\sum_{i=1,2,4} \sum_{   |\alpha|+|\beta| =  N_0}\|\omega_{\beta}^\alpha(t,x,v)  \textit{l.o.t}_{\beta;i}^\alpha(t,x,v)  \|_{L^2_x L^2_v}\lesssim  \sum_{
\begin{subarray}{c}  
|\gamma|+|\kappa|\leq N_0, |\rho|\leq 12,
\rho, \gamma\in \mathcal{B}, \kappa\in \mathcal{S},  u\in\{E^\rho, B^\rho\}\\
\end{subarray}}  \| \omega_{\kappa}^\gamma(t,x,v)  g_{\kappa}^\gamma(t,x,v)\|_{L^2_{x,v}} \]
\be\label{jan16eqn71}
 \times \| (1+|\tilde{d}(t,x,v)|) u(t,x+\hat{v}t)\|_{L^\infty_{x,v}}  \lesssim (1+|t|)^{-1} E_{\textup{high}}^{f}(t)E_{\textup{low}}^{eb}(t)\lesssim (1+t)^{-1+\delta}\epsilon_1^2.
\ee

\noindent $\bullet$\quad The estimate of $ \textit{l.o.t}_{\beta;3}^\alpha(t,x,v)$.

Recall (\ref{sepeqn600}), (\ref{sepeqn334}), and (\ref{sepeqn335}). From the equality (\ref{sepeqn610}) in Lemma \ref{decompositionofderivatives},  the following equality holds,
 \be\label{jan15eqn81}
 \textit{l.o.t}_{\beta;3 }^\alpha(t,x,v)=  \sum_{\begin{subarray}{c}
\rho, \gamma\in \mathcal{B}, |\rho|+|\gamma|\leq |\alpha|, |\iota'|\leq |\iota|\\
\iota+\kappa=\beta, |\iota|, |\kappa|>0, \iota, \kappa\in \mathcal{S}\\
i=1,\cdots,7,|\iota|+|\rho|\geq 12	\\
\end{subarray}}   \big(\widehat{\alpha}_{\alpha;\rho, \gamma}^{\iota,i;\iota',1}(x,v)E^{\rho+\iota'}(t,x+\hat{v} t) + \widehat{\alpha}_{\alpha;\rho, \gamma}^{\iota,i;\iota',2}(x,v)B^{\rho+\iota'}(t,x+\hat{v} t)\big)  \Lambda^{\kappa}\big( X_i g^{\gamma}(t, x,v)\big),
 \ee
where  ``$\widehat{\alpha}_{\alpha;\rho, \gamma}^{\iota,i;\iota',1}(x,v)$'' and $\widehat{\alpha}_{\alpha;\rho, \gamma}^{\iota,i;\iota',2}(x,v)$ are some determined coefficients, whose explicit formulas are not pursued here. 
From the estimate of coefficients in (\ref{dec28eqn41}) and (\ref{sepeqn88}), the following rough estimate of coefficients holds, 
\be\label{march13eqn20}
|\widehat{\alpha}_{\alpha;\rho, \gamma}^{\iota,i;\iota',1}(x,v)| +|\widehat{\alpha}_{\alpha;\rho, \gamma}^{\iota,i;\iota',2}(x,v)|\lesssim (1+|x|^2+|v|^2)^{2|\iota|}.
\ee

 From the equalities (\ref{noveq521}) and  (\ref{sepeqn522}) in Lemma \ref{summaryofhighordercommutation} and the first decomposition of $D_v$ in (\ref{summaryoftwodecomposition}) in Lemma \ref{twodecompositionlemma}, we have
\[
\Lambda^{\kappa}\big(X_i g^{\gamma}(t, x,v)\big)= \big[ \alpha_i(v)\cdot D_v
\circ \Lambda^{\kappa} + [\Lambda^\kappa, X_i] \big] g^\gamma(t,x,v)= \sum_{\rho\in \mathcal{K}, |\rho|=1} \alpha_i(v)\cdot d_{\rho}(t, x, v) \Lambda^{\rho\circ \kappa} g^\gamma(t,x,v) \]
\be\label{noveq102}
+ Y_i^\kappa g^\gamma(t,x,v) + \sum_{\kappa'\in \mathcal{S}, |\kappa'|\leq |\kappa|-1} \big[ \tilde{d}(t,x,v)\tilde{e}_{\kappa,i}^{\kappa', 1}(x,v) +\tilde{e}_{\kappa,i}^{\kappa', 2}(x,v)\big]  \Lambda^{\kappa'}g^{\gamma}(t, x,v). 
\ee
 From (\ref{jan15eqn81}) and  (\ref{noveq102}), and the detailed formula of $d_{\rho}(t,x,v)$ in (\ref{sepeq947}),  we can rewrite ``$\textit{l.o.t}_{\beta;3}^\alpha(t, x, v)$'' as follows
\be\label{noveq130}
\textit{l.o.t}_{\beta;3}^\alpha(t, x, v)= \sum_{
\begin{subarray}{c}
\rho \in\mathcal{S}, \kappa_1, \kappa_2\in \mathcal{B}, u\in\{E, B\},   |\rho|\leq |\beta| \\
|\rho|+|\kappa_1|+|\kappa_2|\leq |\alpha|+|\beta|, |\kappa_2|\leq |\alpha|\\
|\rho|+|\kappa_2|\leq  |\alpha|+|\beta|-12\\
\end{subarray}}\big[\big(\tilde{d}(t,x,v)\widehat{e}^{u;1}_{\kappa_1, \kappa_2,\rho}(t,x,v)  +\widehat{e}^{u;2}_{\kappa_1, \kappa_2,\rho}(t,x,v)   \big) u^{\kappa_1}(t, x+\hat{v}t) g_{\rho}^{\kappa_2}(t,x,v), 
\ee
where the coefficients $\widehat{e}^{u;i}_{\kappa_1, \kappa_2,\rho}(t,x,v), i\in\{1,2\} $, satisfy the following estimate for any $i\in\{1,2\}$, and any $u\in\{E,B\}$,
\be\label{noveq220}
  | \widehat{e}^{u;i}_{\kappa_1, \kappa_2,\rho}(t,x,v) |\lesssim (1+|x|^2+|v|^2)^{|\alpha|+2|\beta|-2|\rho|-|\kappa_2|+10},
\ee
which can be derived from   the estimate (\ref{march13eqn20})  and the estimates (\ref{sepeqn524}) and (\ref{sepeqn904})	 in Lemma \ref{summaryofhighordercommutation}. 

The main difficulty of estimating $\textit{l.o.t}_{\beta;3}^\alpha(t,x,v)$ is that we cannot use the decay of the electromagnetic field or trade regularities for the inhomogeneous modulation because the electromagnetic field can have the maximal number of vector fields. Moreover, the loss caused by the coefficient is possible of  size ``$1+|t|$'', e.g., when $x, v\sim 1$. To get around this issue, we exploit the smallness of space-resonance set by using   the estimate   (\ref{multilinear1jan16}) in Lemma \ref{multilinearlemma1}, which allows us to gain   extra decay rate over time.

 Recall (\ref{noveq130}). We first do dyadic decomposition for the electromagnetic field. As a result, we have
\be\label{jan16eqn67}
\textit{l.o.t}_{\beta;3 }^\alpha(t,x,v)=\sum_{k\in \mathbb{Z}} H_k(t,x,v),
\ee
where
\[
 H_k(t,x,v):= \sum_{
\begin{subarray}{c}
\rho \in\mathcal{S}, \kappa_1, \kappa_2\in \mathcal{B}, u\in\{E, B\},   |\rho|\leq |\beta| \\
|\rho|+|\kappa_1|+|\kappa_2|\leq |\alpha|+|\beta|, |\kappa_2|\leq |\alpha|\\
|\rho|+|\kappa_2|\leq  |\alpha|+|\beta|-12\\
\end{subarray}}\big[\big(\tilde{d}(t,x,v)\widehat{e}^{u;1}_{\kappa_1, \kappa_2,\rho}(t,x,v)  +\widehat{e}^{u;2}_{\kappa_1, \kappa_2,\rho}(t,x,v)   \big) u_k^{\kappa_1}(t, x+\hat{v}t) g_{\rho}^{\kappa_2}(t,x,v). 
\]
Based on the possible size of $k$, we separate into two cases as follows. 

\noindent $\bullet$ \quad If $k\leq 0$. 

Recall the equalities (\ref{octe631}) and (\ref{octe633}). From the estimate of modified profiles in (\ref{octe741}), the estimate of correction terms $ \widetilde{g}_{\alpha, \gamma}(t, v)$ in (\ref{may22eqn100}),  and the estimate (\ref{may17eqn12})  in Lemma \ref{Alinearestimate}, the following estimate holds after using the volume of support of $\xi$,
\[
\sum_{\alpha \in \mathcal{B}, |\alpha|\leq N_0}\sum_{i=1,2} \| \widehat{h_i^\alpha}(t, \xi)\psi_k(\xi)\|_{L^2_\xi}\lesssim 2^{k/2}\epsilon_1 + |t|2^{3k/2}\epsilon_1.
\] 
  From   the estimate of coefficients in (\ref{noveq220}),  after  using the $L^2_x-L^\infty_x L^2_v$ type estimate, the volume of the frequency support of the electromagnetic field, and the decay estimate (\ref{densitydecay}) in Lemma \ref{decayestimateofdensity}, the following estimate holds  if $2^{k}\leq |t|^{-1}$, 
\[
\|\omega_{\beta}^\alpha(t,x,v)H_k(t,x,v) \|_{L^2_xL^2_v}\lesssim \sum_{|\rho|+|\kappa|\leq N_0- 5} 
  |t|^{-1/2}\big(2^{k/2}+ |t|2^{3k/2}  \big) \epsilon_1    \| {\omega_{\rho}^\kappa}(x-\hat{v}t ,v) g_{\rho}^\kappa(t,x-\hat{v}t,v)\|_{L^2_x L^2_v} 
	\]
  \be\label{jan16eqn61}
\lesssim  |t|^{-1/2+\delta/2}   \big(2^{k/2}+ |t|2^{3k/2}  \big) \epsilon_0.
\ee
It remains to consider  the case   $|t|^{-1}\leq 2^{k}\leq 1$.  From the decomposition (\ref{noveq432}),  the following decomposition holds for $H_k$,
\be\label{jan16eqn69}
H_k(t,x,v)= H_k^1(t,x,v) + H_k^2(t,x,v), 
\ee
where
\be\label{feb19eqn2}
 H_k^1(t,x,v) := \sum_{
\begin{subarray}{c}
\rho \in\mathcal{S}, \kappa_1, \kappa_2\in \mathcal{B}, u\in\{E, B\},   |\rho|\leq |\beta| \\
|\rho|+|\kappa_1|+|\kappa_2|\leq |\alpha|+|\beta|, |\kappa_2|\leq |\alpha|\\
|\rho|+|\kappa_2|\leq  |\alpha|+|\beta|-12\\
\end{subarray}} \big[\big(\tilde{d}(t,x,v)\widehat{e}^{u;1}_{\kappa_1, \kappa_2,\rho}(t,x,v)  +\widehat{e}^{u;2}_{\kappa_1, \kappa_2,\rho}(t,x,v)   \big) \widetilde{u_k^{\kappa_1}}(t, x+\hat{v}t)g_{\rho}^{\kappa_2}(t,x,v),
\ee
\be\label{noveq291}
 H_k^2(t,x,v) :=  \sum_{
\begin{subarray}{c}
\rho \in\mathcal{S}, \kappa_1, \kappa_2, \eta \in \mathcal{B}, u\in\{E, B\},|\eta|\leq |\kappa_1|   \\
|\rho|+|\kappa_1|+|\kappa_2|\leq |\alpha|+|\beta|, |\kappa_2|\leq |\alpha|\\
|\rho|+|\kappa_2|\leq  |\alpha|+|\beta|-12, |\rho|\leq |\beta| \\
\end{subarray}} -\big(\tilde{d}(t,x,v)\widehat{e}^1_{\kappa_1, \kappa_2,\rho}(t,x,v)  +\widehat{e}^2_{\kappa_1, \kappa_2,\rho}(t,x,v)   \big) \textup{Im}[E^{u}_{\kappa_1;\eta}(g^{\eta}_k)(t, x+\hat{v}t)] g_{\rho}^{\kappa_2}(t,x,v).
\ee
Note that the following estimate holds from the estimate of coefficients in (\ref{noveq220}),
\be\label{jan16eqn131}
 \|\omega_{\beta}^\alpha(t,x,v)H_k^1(t,x,v) \|_{L^2_xL^2_v}^2 \lesssim \sum_{\begin{subarray}{c}
\kappa, \gamma\in \mathcal{B},|\kappa|\leq N_0,
  |\rho|+|\gamma|\leq N_0-12,
 |\gamma|\leq |\alpha|, u\in \{E,B\} 
 \end{subarray}}   (1+t)^2 \int_{\R^3} \int_{\R^3} |\widetilde{u_k^\kappa}(t,x+\hat{v}t) |^2 G_{\rho}^{\gamma}(t,x,v) d x d v,
\ee
where
\[
 G_{\rho}^{\gamma}(t,x,v):=|\omega_{\rho}^\gamma(t,x,v) g_{\rho}^\gamma(t,x,v)|^2 (1+|x|^2+|v|^2)^{2|\alpha|+4|\beta|-4|\rho|-2|\gamma|+20}.
\]
Recall (\ref{noveq800}). From the estimate (\ref{jan16eqn131}),  the  multilinear estimate (\ref{multilinear1jan16}) in Lemma \ref{multilinearlemma1},  the estimates of modified profiles in (\ref{octe741}) and (\ref{octe742}), the hierarchy between the different order of weight functions and the Sobolev embedding in $v$, we have
\[
 \|\omega_{\beta}^\alpha(t,x,v)H_k^1(t,x,v) \|_{L^2_xL^2_v}\lesssim \sum_{|\rho|+|\kappa|\leq N_0- 8} 2^{-k/2} (1+t)^{-3/2} \| (1+|x|^2+|v|^2)^{|\alpha|+2|\beta|-2|\rho|-|\gamma|+30}  {\omega_{\beta}^\alpha}(t,x  ,v) g_{\rho}^\kappa(t,x ,v)\|_{L^2_x L^2_v}
\]
\be\label{jan16eqn62}
\times\big( \epsilon_0 + (1+t)^{\delta} 2^{k_{-}}\epsilon_0\big)   \lesssim   2^{-k/2} (1+t)^{-3/2+\delta/2}( 1+ (1+t)^{\delta} 2^{k_{-}})\epsilon_0.
\ee

Recall (\ref{noveq291}), (\ref{noveq432}), (\ref{noveq510}), and (\ref{noveq511}). Note that the terms inside $H_k^2(t,x,v)$ have the same structure as the bilinear form that we will define in (\ref{noveq260}). From the estimate of coefficients in (\ref{noveq220}), the estimate of correction terms $ \widetilde{g}_{\alpha,\gamma}(t, v)$ in (\ref{may22eqn100}),  and the bilinear estimate (\ref{bilineardensity}) in Lemma \ref{bilineardensitylemma}, we know  that the following estimate holds for any $k\in \mathbb{Z},$ s.t., $|t|^{-1}\leq 2^k \leq 1,$
\[
 \|\omega_{\beta}^\alpha(t,x,v)H_k^2(t,x,v) \|_{L^2_xL^2_v} \lesssim  \sum_{|\rho|+|\kappa|\leq N_0- 5} \big(|t|^{-1}+ |t|^{-2}2^{-k} +|t|^{-2+\delta}   \big)\epsilon_1\| {\omega_{\rho}^\kappa}(x  ,v) g_{\rho}^\kappa(t,x ,v)\|_{L^2_x L^2_v} 
\]
\be\label{jan16eqn64}
   \lesssim   \big(|t|^{-1+\delta/2}+ |t|^{-2+\delta/2}2^{-k} +|t|^{-2+2\delta}   \big)\epsilon_0.
\ee
To sum up, from the decompositions (\ref{jan16eqn67}) and (\ref{jan16eqn69}) and the  estimates (\ref{jan16eqn61}), (\ref{jan16eqn62}), and (\ref{jan16eqn64}), we have
\[
\sum_{|\alpha|+|\beta|\leq N_0}\sum_{k\in\mathbb{Z}, k\leq 0} \|\omega_{\beta}^\alpha(t,x,v)H_k(t,x,v) \|_{L^2_xL^2_v}  \lesssim \sum_{   2^{k}\leq  |t|^{-1 }}    |t|^{-1/2+\delta/2}   \big(2^{k/2}+ |t|2^{3k/2}  \big) \epsilon_0+  \sum_{  |t|^{-1 }\leq 2^k\leq 1}     \big(|t|^{-1+\delta/2}
\]
 \be\label{jan16eqn72}
+ |t|^{-3/2+\delta/2}2^{-k/2}+ |t|^{-2+\delta/2}2^{-k} +|t|^{-3/2+2\delta}2^{k/2} +|t|^{-2+2\delta} \big)\epsilon_0\lesssim  (1+t)^{-1+\delta/2}\log(1+t)\epsilon_0 .
 \ee

 \noindent $\bullet$\quad If $k\geq 0$. 

 From the estimate of coefficients in (\ref{noveq220})  and the bilinear estimate (\ref{bilineardensitylargek}) in Lemma \ref{bilineardensitylemma},  we have 
\be\label{feb12eqn161}
\sum_{k\geq 0, k\in \mathbb{Z}} \|\omega_{\beta}^\alpha(t,x,v)H_k^2(t,x,v) \|_{L^2_xL^2_v} \lesssim (1+t)^{-2 } \big(E_{\textup{high}}^{f}(t)     \big)^2\lesssim (1+t)^{-2 +2\delta}\epsilon_0.
\ee
Now, it remains to estimate ``$H_k^1(t,x,v)$''. Recall    (\ref{feb19eqn2}), we have
 \be\label{feb12eqn201}
 \|\sum_{k\in \mathbb{Z}, k\geq 0}\omega_{\beta}^\alpha(t,x,v)H_k^1(t,x,v) \|_{L^2_xL^2_v}^2 \lesssim   \sum_{
\begin{subarray}{c}
  k_1, k_2\in \mathbb{Z},   k_1, k_2\geq 0 \\
\end{subarray} 
  } (1+t)^2  K_{k_1,k_2} , 
  \ee
  where
\be\label{feb19eqn12}
  K_{k_1,k_2}:=\sum_{\begin{subarray}{c}
\kappa, \gamma\in \mathcal{B},|\kappa|\leq N_0,u_1, u_2\in\{E, B\}\\ 
  |\rho|+|\gamma|\leq N_0-12,  |\gamma|\leq |\alpha|\\
 \end{subarray}}  \Big| \int_{\R^3} \int_{\R^3}  G_{\rho;u_1}^{\gamma;u_2}(t,x,v)      (  \widetilde{u_1^\kappa})_{k_1}(t,x+\hat{v}t) (  \widetilde{u_2^\kappa})_{k_2}(t,x+\hat{v}t)  d x d v \Big|,
\ee
where $ G_{\rho;u_1}^{\gamma;u_2}(t,x,v)$, $u_1, u_2\in\{E, B\}$, are some determined function that satisfies the following estimate, 
\be\label{feb13eqn1}
\sum_{ u_1, u_2\in\{E, B\}}\sum_{\iota\in \mathcal{S}, |\iota|\leq 5}|\Lambda^\iota\big(G_{\rho;u_1}^{\gamma;u_2}(t,x,v)\big)|\lesssim\sum_{\iota\in\mathcal{S}, |\iota|\leq 5} |\omega_{\beta}^\alpha (t,x,v) g_{\iota\circ\rho}^\gamma(t,x,v)|^2 (1+|x|^2+|v|^2)^{2|\alpha|+4|\beta|-4|\rho|-2|\gamma|+30}. 
\ee

We first consider the case when $|k_1-k_2|\geq 10$. Recall (\ref{feb19eqn12}).  From the above estimate (\ref{feb13eqn1}),  the trilinear estimate (\ref{trilinearfeb20est1}) in Lemma \ref{multilinearlemma1} and the Sobolev embedding in ``$v$'', we know that the following  estimate holds, 
\[
\sum_{k_1,k_2\in \mathbb{Z}, k_1,k_2\geq 0,  |k_1-k_2|\geq 10 } | K_{k_1,k_2}|\lesssim \sum_{k_1,k_2\in \mathbb{Z}, k_1,k_2\geq 0,  |k_1-k_2|\geq 10 }\sum_{|\rho|+|\gamma|\leq N_0-12}\sum_{ u_1, u_2\in\{E, B\}}\sum_{|\alpha|\leq 4} 2^{-\max\{k_1,k_2\}}   \]
\[
\times (1+|t|)^{-5}  \big(E_{\textup{high}}^{eb}(t)\big)^2 \|(1+|x|^2)(1+|v|^{25}) \nabla_x \nabla_{v}^\alpha   G_{\rho; u_1}^{\gamma;u_2}(t,x,v)  \|_{L^1_{x,v}}
\]
\be\label{feb13eqn9} 
 \lesssim \sum_{\rho\in \mathcal{S}, \gamma\in \mathcal{B}, |\alpha|+|\rho|\leq N_0-5}   (1+|t|)^{-5}  \|  {\omega_{\rho}^\gamma}(x,v) g_{\rho}^\gamma(t,x,v) \|_{L^2_{x}L^2_v}^2  \big(E_{\textup{high}}^{eb}(t)\big)^2\lesssim   (1+|t|)^{-5+4\delta} \epsilon_0^2.  
\ee
Lastly, we consider the case when $|k_1-k_2|\leq 10$. Recall (\ref{feb19eqn12}). Again, from   the   estimate (\ref{feb13eqn1}),  the trilinear estimate (\ref{multilinear1jan16}) in Lemma \ref{multilinearlemma1}, the Cauchy-Schwarz inequality,  and the Sobolev embedding in ``$v$'', we know that the following  estimate holds, 
\[
\sum_{ k_1,k_2\in \mathbb{Z}, k_1,k_2\geq 0, |k_1-k_2|\leq 10} |K_{k_1,k_2}| \lesssim \sum_{
\begin{subarray}{c}
k_1,k_2\geq 0, |k_1-k_2|\leq 10 \\
 |\alpha|\leq 4, u_1, u_2\in\{E, B\}\\
 i=1,2,3,|\rho|+|\gamma|\leq N_0-12\\
\end{subarray}
} (1+|t|)^{-5} \|(1+|x|^2)(1+|v|^{25})  \nabla_{v}^\alpha  G_{\rho; u_1}^{\gamma;u_2}(t,x,v)  \|_{L^1_{x,v}}
\]
\[
\times \big(2^{-k_{1}/2} E_{\textup{high}}^{eb}(t) +\sum_{\iota\in \mathcal{B}, |\iota|\leq N_0} \| \widehat{h^\iota}(t, \xi)\psi_{k_1}(\xi)\|_{L^2} )   \big(2^{-k_{2}/2} E_{\textup{high}}^{eb}(t) +\sum_{\iota\in \mathcal{B}, |\iota|\leq N_0} \| \widehat{h^\iota}(t, \xi)\psi_{k_2}(\xi)\|_{L^2} ) 
\]
\be\label{feb13eqn10}
  \lesssim \sum_{\rho\in \mathcal{S}, \gamma\in \mathcal{B}, |\alpha|+|\rho|\leq N_0-5}   (1+|t|)^{-5}  \|  {\omega_{\rho}^\gamma}(x,v) g_{\rho}^\gamma(t,x,v) \|_{L^2_{x}L^2_v}^2  \big(E_{\textup{high}}^{eb}(t)\big)^2\lesssim   (1+|t|)^{-5+4\delta} \epsilon_0^2.
\ee
From   the estimates (\ref{feb12eqn201}), (\ref{feb13eqn9}), and (\ref{feb13eqn10}), it is easy to see that the following estimate holds, 
\be\label{feb13eqn19}
   \|\sum_{k\in \mathbb{Z}, k\geq 0}\omega_{\beta}^\alpha(t,x,v)H_k^1(t,x,v) \|_{L^2_xL^2_v} \lesssim  (1+|t|)^{-3/2+2\delta} \epsilon_0. 
\ee
Recall the decompositions (\ref{jan16eqn67}) and (\ref{jan16eqn69}). From the estimates (\ref{jan16eqn72}), (\ref{feb12eqn161}), and (\ref{feb13eqn19}), we have 
\be\label{feb13eqn24}
\sum_{\alpha\in\mathcal{B}, \beta\in \mathcal{S}, |\alpha|+|\beta|\leq N_0} \| \omega_{\beta}^\alpha(t,x,v) \textit{l.o.t}_{\beta;3}^\alpha(t,x,v) \|_{L^2_{x,v}}\lesssim (1+t)^{-1+\delta/2}\log(1+t)\epsilon_0 .
\ee
Recall the decomposition in  (\ref{loworderterm}). Our desired estimate  (\ref{jan15eqn91}) holds from the estimates (\ref{jan16eqn71}) and (\ref{feb13eqn24}).

Recall (\ref{correctionterm}). Since the correction term $\widetilde{g}_{\alpha,\gamma}(t, v)$, which contributes the logarithmic growth in the estimate (\ref{jan16eqn72}),  equals zero if $|\alpha|+|\gamma|< N_0$, with minor modifications in the above argument, the desired estimate (\ref{may22eqn200}) holds   similarly.

\end{proof}

 \begin{proposition}\label{notbulkterm2}
 Under the bootstrap assumption \textup{(\ref{bootstrap})}, the following estimate holds for any $t\in [1,T]$, 
 \be\label{nov102}
 \sum_{\alpha\in \mathcal{B}, \beta\in \mathcal{S}, |\alpha|+|\beta|=  N_0}|I_{\beta;2}^\alpha(t) |+ |I_{\beta;3}^\alpha(t) |  \lesssim (1+t)^{2\delta} \epsilon_0,
 \ee
  \be\label{may22eqn41}
 \sum_{\alpha\in \mathcal{B}, \beta\in \mathcal{S}, |\alpha|+|\beta| <  N_0}|I_{\beta;2}^\alpha(t) |+ |I_{\beta;3}^\alpha(t) |  \lesssim (1+t)^{\delta } \epsilon_0.
 \ee
 \end{proposition}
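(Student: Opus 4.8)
\textbf{Proof proposal for Proposition \ref{notbulkterm2}.}
The plan is to estimate the two time integrals $I_{\beta;2}^\alpha(t)$ and $I_{\beta;3}^\alpha(t)$ defined in \eqref{sepeqn813} and \eqref{sepeqn810} by a Cauchy--Schwarz argument in the $(x,v)$-variables, reducing both to the weighted $L^2_xL^2_v$ bounds for the corresponding nonlinearities that have already been established. First I would write, for each fixed time slice $s\in[1,T]$,
\[
|I_{\beta;2}^\alpha(t)|\leq \int_1^t \|\omega_\beta^\alpha(s,x,v)g^\alpha_\beta(s,x,v)\|_{L^2_{x,v}}\,\|\omega_\beta^\alpha(s,x,v)\,\textit{l.o.t}_\beta^\alpha(s,x,v)\|_{L^2_{x,v}}\,ds,
\]
and similarly bound $|I_{\beta;3}^\alpha(t)|$ with $\textit{h.o.t}_\beta^\alpha-\textit{bulk}_\beta^\alpha$ in place of $\textit{l.o.t}_\beta^\alpha$. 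The first factor is controlled by $E_{\textup{high}}^{f;1}(s)$ when $|\alpha|+|\beta|=N_0$ and by $E_{\textup{high}}^{f;2}(s)$ when $|\alpha|+|\beta|<N_0$; by the bootstrap assumption \eqref{bootstrap} these are $\lesssim (1+s)^{\delta}\epsilon_1$ and $\lesssim (1+s)^{\delta/2}\epsilon_1$ respectively. The second factor is exactly what Lemma \ref{fixedtimeestimate1} (for $\textit{h.o.t}_\beta^\alpha-\textit{bulk}_\beta^\alpha$, via \eqref{jan15eqn21} and \eqref{may22eqn12}) and Lemma \ref{fixedtimeestimate2} (for $\textit{l.o.t}_\beta^\alpha$, via \eqref{jan15eqn91} and \eqref{may22eqn200}) provide: in both cases it is $\lesssim (1+s)^{-1+\delta}\epsilon_1^2$ in the top-order case and $\lesssim (1+s)^{-1+\delta/2}\epsilon_1^2$ in the lower-order case.

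Combining these, the integrand in the top-order case is $\lesssim (1+s)^{-1+2\delta}\epsilon_1^3$, whose time integral over $[1,t]$ is $\lesssim (1+t)^{2\delta}\epsilon_1^3$; since $\epsilon_1=\epsilon_0^{5/6}$ we have $\epsilon_1^3=\epsilon_0^{5/2}\ll\epsilon_0$, giving the bound $(1+t)^{2\delta}\epsilon_0$ claimed in \eqref{nov102}. In the lower-order case the integrand is $\lesssim (1+s)^{-1+\delta}\epsilon_1^3$ (using $E_{\textup{high}}^{f;2}\lesssim(1+s)^{\delta/2}\epsilon_1$ against the $(1+s)^{-1+\delta/2}$ decay), whose integral is $\lesssim (1+t)^{\delta}\epsilon_1^3\lesssim(1+t)^\delta\epsilon_0$, which is \eqref{may22eqn41}. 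One bookkeeping subtlety is that Lemma \ref{fixedtimeestimate2} actually carries a harmless extra factor $\log(1+t)$ (coming from the correction term $\widetilde g_{\alpha,\gamma}$ in \eqref{jan16eqn72}); this is absorbed by enlarging the exponent slightly, i.e.\ by the trivial bound $\log(1+s)\lesssim (1+s)^{\delta/100}$, so that the integrable decay and the final power of $(1+t)$ are unaffected.

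The only genuine point requiring care, and the place I expect to spend most of the work, is verifying that the decomposition $\textit{h.o.t}_\beta^\alpha=\textit{bulk}_\beta^\alpha+(\textit{h.o.t}_\beta^\alpha-\textit{bulk}_\beta^\alpha)$ used here is exactly the one appearing in \eqref{sepeqn810}, and that $\textit{l.o.t}_\beta^\alpha$ in \eqref{sepeqn813} is the full low-order term of \eqref{loworderterm}; in other words the whole content of the proposition is the reduction to Lemmas \ref{fixedtimeestimate1} and \ref{fixedtimeestimate2}, and the rest is the elementary time-integration above. I would therefore simply cite \eqref{bootstrap}, invoke the $L^2_{x,v}$--$L^2_{x,v}$ Cauchy--Schwarz inequality, quote \eqref{jan15eqn21}, \eqref{may22eqn12}, \eqref{jan15eqn91}, \eqref{may22eqn200}, and perform the $\int_1^t(1+s)^{-1+c\delta}\,ds\lesssim (1+t)^{c\delta}$ computation, which completes the proof of \eqref{nov102} and \eqref{may22eqn41}.
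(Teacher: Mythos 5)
Your proposal is correct and follows essentially the same route as the paper: an $L^2_{x,v}$--$L^2_{x,v}$ Cauchy--Schwarz at each time slice, the bootstrap bound on $\|\omega_\beta^\alpha g_\beta^\alpha\|_{L^2_{x,v}}$, the fixed-time estimates (\ref{jan15eqn21}), (\ref{may22eqn12}), (\ref{jan15eqn91}), (\ref{may22eqn200}), and the elementary integration $\int_1^t(1+s)^{-1+c\delta}ds\lesssim(1+t)^{c\delta}$. Your remark about the logarithmic factor is also consistent with the paper, where it is already absorbed into the stated $(1+t)^{-1+\delta}\epsilon_1^2$ bound of Lemma \ref{fixedtimeestimate2}.
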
 
 \begin{proof}
 Recall (\ref{sepeqn813}) and (\ref{sepeqn810}). From the estimate (\ref{jan15eqn21}) in Lemma \ref{fixedtimeestimate1} and the estimate (\ref{jan15eqn91}) in Lemma \ref{fixedtimeestimate2}, we know  that the following estimate holds from the $L^2_{x,v}-L^2_{x,v}$ type estimate,
 \[
\sum_{\alpha\in \mathcal{B}, \beta\in \mathcal{S}, |\alpha|+|\beta|= N_0}|I_{\beta;2}^\alpha |+ |I_{\beta;3}^\alpha |\lesssim \sum_{\alpha\in \mathcal{B}, \beta\in \mathcal{S}, |\alpha|+|\beta|=  N_0}  \int_{1}^{t} \| \omega_{\beta}^{\alpha}(s, x, v)g^\alpha_\beta (s ,x,v)\|_{L^2_x L^2_v}  \big[ \|\omega_{\beta}^\alpha(s,x,v)  \textit{l.o.t}_\beta^\alpha(s,x,v)  \|_{L^2_x L^2_v} 
 \]
 \[
  +  \|\omega_{\beta}^\alpha(s,x,v)\big(\textit{h.o.t}_\beta^\alpha(s,x,v)- \textit{bulk}_\beta^\alpha(s,x,v)  \big)\|_{L^2_x L^2_v} \big] d s \lesssim \int_{1}^{t} (1+s)^{-1+2\delta}\epsilon_0 ds \lesssim (1+t)^{2\delta}\epsilon_0. 
 \]
 Hence finishing the proof of the desired estimate (\ref{nov102}). With minor modifications, the desired estimate (\ref{may22eqn41}) holds similarly from the estimate  (\ref{may22eqn12}) in Lemma  \ref{fixedtimeestimate1} and   the estimate (\ref{may22eqn200}) in Lemma \ref{fixedtimeestimate2}.   
 \end{proof}

As  a natural generalization of the aforementioned methods used in the estimate of \textit{non-bulk terms}, we prove the following two lemmas,  which will be helpful  in the   estimate of the \textit{bulk term} in the next section.  
\begin{lemma}\label{fixtimehighorder2}
  Under the bootstrap assumption \textup{(\ref{bootstrap})}, the following estimate holds for any $t\in [1,T]$, 
\be\label{jan15eqn22}
\sum_{\alpha\in \mathcal{B}, \kappa, \rho\in \mathcal{S}, |\rho|=1,  |\alpha|+|\kappa|\leq N_0-1}\|\omega_{\rho\circ \kappa}^\alpha(t,x,v)\Lambda^{\rho} \big(\textit{h.o.t}_\kappa^\alpha(t,x,v)- \textit{bulk}_\kappa^\alpha(t,x,v)  \big)\|_{L^2_x L^2_v}\lesssim (1+|t|)^{-1+\delta} \epsilon_0,
\ee
\be\label{may22eqn51}
\sum_{\alpha\in \mathcal{B}, \kappa, \rho\in \mathcal{S}, |\rho|=1,  |\alpha|+|\kappa|\leq N_0-2}\|\omega_{\rho\circ \kappa}^\alpha(t,x,v)\Lambda^{\rho} \big(\textit{h.o.t}_\kappa^\alpha(t,x,v)- \textit{bulk}_\kappa^\alpha(t,x,v)  \big)\|_{L^2_x L^2_v}\lesssim (1+|t|)^{-1+\delta/2} \epsilon_0,
\ee 
\end{lemma}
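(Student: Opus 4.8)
The plan is to reduce \eqref{jan15eqn22}--\eqref{may22eqn51} to the estimates already proved in Lemma \ref{fixedtimeestimate1}, the only new feature being the distribution of one extra vector field $\Lambda^\rho$, $|\rho|=1$, by the Leibniz rule. Specializing the decomposition \eqref{jan15eqn24} from the base index $\beta$ to $\kappa$,
\be
\textit{h.o.t}_\kappa^\alpha- \textit{bulk}_\kappa^\alpha = \sum_{i=2,3}\textit{h.o.t}_{\kappa;i}^\alpha+  \textit{h.o.t}_{\kappa;1}^{\alpha;1} + \textit{error}_{\kappa }^{\alpha },
\ee
each summand is, modulo the $\tilde d$--linear coefficients controlled by Lemma \ref{derivativesofcoefficient}, Lemma \ref{summaryofhighordercommutation}, Lemma \ref{decompositionofderivatives} and Lemma \ref{twodecompositionlemma}, a product of a \emph{bare} electromagnetic field carrying at most three vector fields and an $X_i$--derivative of a profile of total order at most $|\alpha|+|\kappa|$. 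I would apply $\Lambda^\rho$, expand by Leibniz, and move $\Lambda^\rho$ past $X_i$ using the commutation rule of Lemma \ref{summaryofhighordercommutation}. The extra derivative then lands either on a coefficient (costing polynomial factors $(1+|x|)^{O(1)}(1+|v|)^{O(1)}$, with the $\tilde d$--structure preserved by Lemma \ref{derivativesofcoefficient}), on the bare field (raising its vector--field count to at most four, still within the range $\leq 10$ required by Lemma \ref{sharpdecaywithderivatives}), or on the profile factor, raising the total order of $g$ to at most $|\alpha|+|\kappa|+1\leq N_0$; the commutator terms are of order $\leq N_0$ with the same $\tilde d$--linear coefficients (the top order part, \eqref{sepeqn904} and \eqref{march25eqn10}) or strictly lower order (\eqref{sepeqn524}). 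The role of the hypothesis $|\alpha|+|\kappa|\leq N_0-1$ is precisely to buy one unit of regularity so that every profile produced still sits at or below the threshold $N_0$ and is controlled by $E_{\textup{high}}^{f;1}(t)$.

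For every term produced this way the estimate would close by the same $L^2_{x,v}$--$L^\infty_{x,v}$ split used in Lemma \ref{fixedtimeestimate1}: place the bare electromagnetic field (and its at most four derivatives) in $L^\infty_{x,v}$ and invoke the decay bound \eqref{noveqn78}, which supplies the factor $(1+|t|)^{-1}(1+||t|-|x+\hat{v} t||)^{-1}E_{\textup{low}}^{eb}(t)$, and place the profile factor in $L^2_{x,v}$ bounded by $E_{\textup{high}}^{f}(t)$. Two structural facts make this go through: first, since the $\tilde d$--structure is stable under $\Lambda^\rho$, at most a single factor of $\tilde d$ survives, and it is pointwise $\lesssim 1+||t|-|x+\hat{v} t||$ by the second estimate in \eqref{feb8eqn51}, exactly cancelling the modulation in the denominator of \eqref{noveqn78}; second, the polynomial $(1+|x|)^{O(1)}(1+|v|)^{O(1)}$ factors generated whenever a coefficient is differentiated are absorbed by the ten--power--per--derivative drop in the weight $\omega_{\rho\circ\kappa}^\alpha$ of \eqref{highorderweight}, while the $|v|$--hierarchy is matched by the gain $(1+|v|)^{-c(\rho)}$ of \eqref{march13eqn10} together with the factor $(1+|v|)^{c(\cdot)}$ in the weight and the bound \eqref{jan15eqn2}. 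Summing the index contributions then yields $(1+|t|)^{-1+\delta}\epsilon_1^2\lesssim (1+|t|)^{-1+\delta}\epsilon_0$, which is \eqref{jan15eqn22}.

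The step I expect to be the main obstacle is checking that the extra $\Lambda^\rho$ never regenerates a genuine bulk term, that is, a rotation in $v$ carrying the weight--losing coefficient $\sqrt{1+|v|^2}\tilde d$ and hitting the field, multiplied by a \emph{top order} $X_i$--derivative of the profile. Such a coefficient can only appear when $\Lambda^\rho\thicksim\psi_{\geq 1}(|v|)\Omega_j^x$ or $\psi_{\geq 1}(|v|)\widehat{\Omega}_j^v$ falls on the bare field inside $\textit{h.o.t}_{\kappa;1}^{\alpha;1}$; but in that situation the accompanying profile factor $X_i g^\alpha_{\kappa'}$ has $|\kappa'|=|\kappa|-1$, hence total order $|\alpha|+|\kappa|\leq N_0-1$, strictly below the threshold, and, by the analogue of \eqref{nove99}--\eqref{nove98}, the coefficient equals $\big(\sqrt{1+|v|^2}\tilde d\big)^{1-c(\rho)}$ times a bounded factor: for $c(\rho)=1$ (the case $\Omega_j^x$) there is no $|v|$--loss at all, while for $c(\rho)=0$ (the case $\widehat{\Omega}_j^v$) the $\sqrt{1+|v|}$--loss comes paired with $\tilde d\lesssim 1+||t|-|x+\hat{v} t||$, which is absorbed by \eqref{noveqn78} as before. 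Thus the bulk mechanism is never activated and the $L^2_{x,v}$--$L^\infty_{x,v}$ estimate closes, proving \eqref{jan15eqn22}. Finally, \eqref{may22eqn51} follows by rerunning the argument under the stronger hypothesis $|\alpha|+|\kappa|\leq N_0-2$: then every profile produced has total order $< N_0$ and is controlled by $E_{\textup{high}}^{f;2}(t)$, which grows only like $(1+|t|)^{\delta/2}$ under the bootstrap assumption \eqref{bootstrap}, yielding the improved power $(1+|t|)^{-1+\delta/2}$, exactly as \eqref{may22eqn12} improves on \eqref{jan15eqn21} and \eqref{may22eqn200} on \eqref{jan15eqn91}.
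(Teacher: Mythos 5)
Your proposal is correct and follows essentially the same route as the paper: specialize the decomposition of $\textit{h.o.t}_\kappa^\alpha-\textit{bulk}_\kappa^\alpha$ to the base index $\kappa$, distribute $\Lambda^\rho$ by Leibniz and the commutation rules of Lemma \ref{summaryofhighordercommutation}, use the two decompositions of $D_v$ together with the coefficient bounds and the stability of the $\tilde d$--structure, and close with the $L^2_{x,v}$--$L^\infty_{x,v}$ split and the decay estimate (\ref{noveqn78}); the observation that any regenerated rotational derivative on the field is accompanied by a profile of order at most $N_0-1$ (so the weight hierarchy, rather than the modulation decay, absorbs the residual $(1+|v|)^{O(1)}$ loss) is exactly the point the paper's bookkeeping of top-order versus lower-order pieces is designed to capture.
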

\begin{proof}
Recall the decomposition  (\ref{jan15eqn24}) and  the corresponding detailed formulas in  (\ref{sepeqn311}), (\ref{sepeqn310}),  (\ref{jan15eqn60}), (\ref{nove100}), and (\ref{nove121}). Moreover, we recall the detailed formula of $Y_i^\beta$ in (\ref{sepeqn522}).  Based on the order of derivatives acting  on the profile $g(t,x,v)$, we    decompose     $\Lambda^\rho\big( \textit{h.o.t}_{\kappa;i}^\alpha\big)$,  $i\in \{ 2,3\}$,   $\Lambda^{\rho}\big(\textit{h.o.t}_{\kappa;1}^{\alpha;1}\big)$, and  $\Lambda^{\rho}\big(\textit{error}_{\kappa }^{\alpha }\big)$ as follows, 
\be\label{noveq41}
\Lambda^{\rho} \big(\textit{h.o.t}_{\kappa;i}^\alpha\big)(t,x,v)=\widetilde{ \textit{h.o.t}_{\kappa;i}^{\alpha;\rho } }(t,x,v) + \widetilde{ \textit{l.o.t}_{\kappa;i}^{\alpha;\rho} }(t,x,v),\quad i\in\{ 2,3\},
\ee
\be\label{2020feb15eqn11}
\Lambda^{\rho}\big(\textit{h.o.t}_{\kappa;1}^{\alpha;1}\big)(t,x,v)=\widetilde{ \textit{h.o.t}_{\kappa;1}^{\alpha;\rho} }(t,x,v)+ \widetilde{ \textit{l.o.t}_{\kappa;1}^{\alpha;\rho} }(t,x,v),
\ee
\be\label{2020feb15eqn12}
\Lambda^{\rho}\big(\textit{error}_{\kappa}^\alpha\big)(t,x,v) = \widetilde{\textit{error}_{\kappa;1}^{\alpha; \rho }}(t,x,v) + \widetilde{\textit{error}_{\kappa;2}^{\alpha;\rho }}(t,x,v),  
\ee
where
\be\label{noveq45}
\widetilde{ \textit{h.o.t}_{\kappa;2}^{\alpha;\rho} }(t,x,v)= \sum_{ 
 \begin{subarray}{c}
  |\iota|  \leq 1,|\gamma|=|\alpha|-1\\
  \end{subarray}}\sum_{  i=1,\cdots, 7}   K_{\alpha;\iota, \gamma}^i(t,x,v) \alpha_i(v)\cdot D_v  \Lambda^{ \rho } g_{\kappa}^\gamma(t,x,v),
\ee
\be\label{noveq47}
\widetilde{ \textit{l.o.t}_{\kappa;2}^{\alpha;\rho} }(t,x,v)=   \sum_{ 
 \begin{subarray}{c}
  |\iota|  \leq 1,|\gamma|=|\alpha|-1\\
  \end{subarray}}\sum_{  i=1,\cdots, 7}      \Lambda^{ \rho}\big( K_{\alpha;\iota, \gamma}^i(t,x,v)  \big) X_i  g_{\kappa}^\gamma(t,x,v) +   K_{\alpha;\iota, \gamma}^i(t,x,v)     [\Lambda^\rho, X_i] g_{\kappa}^\gamma(t,x,v) ,
 \ee
\be\label{noveq48}
 \widetilde{ \textit{h.o.t}_{\kappa;3}^{\alpha;\rho} }(t,x,v)=  \sum_{
\begin{subarray}{c}
 \iota\in \mathcal{S}, |\iota|=|\kappa|, 
  |i(\iota)- i(\kappa)|\leq 1\\
  \end{subarray}} \sum_{  	 i=1,\cdots, 7 }   K^i(t,x, v)   \big(\tilde{d}(t,x,v)\tilde{e}_{\kappa,i}^{\iota, 1}(x,v) +\tilde{e}_{\kappa,i}^{\iota, 2}(x,v) \big) \Lambda^{
\rho } g_{\iota}^\alpha(t,x,v),
\ee
\be\label{noveq49}
 \widetilde{ \textit{l.o.t}_{\kappa;3}^{\alpha;\rho} }(t,x,v)=   \sum_{
\begin{subarray}{c}
 \iota\in \mathcal{S}, |\iota|=|\kappa|, 
  |i(\iota)- i(\kappa)|\leq 1\\
  \end{subarray}} \sum_{  	 i=1,\cdots, 7 }    \Lambda^\rho\big[ K^i(t,x, v)   \big( \tilde{d}(t,x,v)\tilde{e}_{\kappa,i}^{\iota, 1}(x,v) +\tilde{e}_{\kappa,i}^{\iota, 2}(x,v)  \big)\big] g_{\iota}^\alpha(t,x,v),
\ee
 \be\label{noveq42}
 \widetilde{ \textit{h.o.t}_{\kappa;1}^{\alpha;\rho} }(t,x,v)=  \sum_{\begin{subarray}{l}
j=1,2,3,
i=1,\cdots,7\\
\end{subarray}} \sum_{
\begin{subarray}{c}
\iota+\kappa=\beta,\iota, \kappa\in \mathcal{S}, 
 |\iota|=1, \Lambda^\iota \nsim \psi_{\geq 1}(|v|) \widehat{\Omega}^v_j \textup{or}\,\psi_{\geq 1}(|v|) \Omega_j^x  \\
\end{subarray}}  \Lambda^\iota(K^i(t,x,v)) \alpha_i(v)\cdot D_v \Lambda^\rho g_{\kappa}^\alpha(t,x,v),
\ee
\[
\widetilde{ \textit{l.o.t}_{\kappa;1}^{\alpha;\rho} }(t,x,v)= \sum_{\begin{subarray}{l}
j=1,2,3,\\
i=1,\cdots,7 
\end{subarray}} \sum_{
\begin{subarray}{c}
\iota+\kappa=\beta,\iota, \kappa\in \mathcal{S},|\iota|=1,   \\
 \Lambda^\iota \nsim \psi_{\geq 1}(|v|) \widehat{\Omega}^v_j \textup{or}\,\psi_{\geq 1}(|v|) \Omega_j^x  \\
\end{subarray}}   \Lambda^{  \rho\circ \iota}\big(  K^i(t,x,v)   \big)  X_i g^\alpha_\kappa(t,x, v)  + 	   \Lambda^\iota(K^i(t,x,v))    [\Lambda^\rho, X_i] g_{\kappa}^\alpha(t,x,v),
\]
\[
 \widetilde{\textit{error}_{\kappa;1}^{\alpha; \rho }}(t,x,v) =\sum_{\begin{subarray}{l}
j=1,2,3,
i=1,\cdots,7 
\end{subarray}} \sum_{
\begin{subarray}{c}
\iota+\kappa=\beta,\iota, \kappa\in \mathcal{S},
 |\iota|=1, \Lambda^\iota \sim \psi_{\geq 1}(|v|) \widehat{\Omega}^v_j \textup{or}\,\psi_{\geq 1}(|v|) \Omega_j^x  \\
\end{subarray}}  K_{\iota;2}^i(t,x,v)\alpha_i(v)\cdot D_v \Lambda^\rho g^\alpha_\kappa(t,x, v),	
\]
\[
\widetilde{\textit{error}_{\kappa;2}^{\alpha; \rho }}(t,x,v)=\sum_{\begin{subarray}{c}
j=1,2,3\\
i=1,\cdots,7\\
\end{subarray}} \sum_{
\begin{subarray}{c}
\iota+\kappa=\beta,\iota, \kappa\in \mathcal{S}, |\iota|=1  \\
\Lambda^\iota \sim \psi_{\geq 1}(|v|) \widehat{\Omega}^v_j \textup{or}\,\psi_{\geq 1}(|v|) \Omega_j^x  \\
\end{subarray}} \Lambda^\rho\big( K_{\iota;2}^i(t,x,v) \big)\alpha_i(v)\cdot D_v  g^\alpha_\kappa(t,x, v) 	+  K_{\iota;2}^i(t,x,v)[\Lambda^\rho, X_i]  g^\alpha_\kappa(t,x, v),
\]
where $ K_{\iota;2}^i(t,x,v)$ is defined in  (\ref{nove98}).  

We  use the  the first decomposition of $D_v$ (\ref{summaryoftwodecomposition}) in Lemma \ref{twodecompositionlemma} and the equality (\ref{sepeqn610}) in Lemma \ref{decompositionofderivatives} for  $\widetilde{ \textit{h.o.t}_{\kappa;i}^{\alpha;\rho} }(t,x,v)$, $i\in\{1,2,3\}$, and $ \widetilde{\textit{error}_{\kappa;1}^{\alpha; \rho }}(t,x,v)$. Then from the linear decay estimate (\ref{noveqn78}) in Lemma \ref{sharpdecaywithderivatives}, the estimate of coefficients in (\ref{jan15eqn2}), (\ref{sepeqn904}), (\ref{march25eqn10}), and (\ref{march13eqn10}), the second part of the estimate (\ref{feb8eqn51}) in Lemma \ref{derivativeofweightfunction},  and the $L^2_{x,v}-L^\infty_{x,v}$ type bilinear estimate, we have 
\[
\sum_{\alpha\in \mathcal{B}, \kappa, \rho\in \mathcal{S}, |\rho|=1,  |\alpha|+|\kappa|\leq N_0-1}\sum_{i=1,2,3} \| \omega_{\rho\circ \kappa}^\alpha(t,x,v) \widetilde{ \textit{h.o.t}_{\kappa;i}^{\alpha;\rho} }(t,x,v)\|_{L^2_{x,v}} +     \| \omega_{\rho\circ \kappa}^\alpha(t,x,v) \widetilde{ \textit{error}_{\kappa;1}^{\alpha;\rho} }(t,x,v)\|_{L^2_{x,v}}
\]
\[ 
  \lesssim \sum_{ 
\begin{subarray}{c}
  \gamma\in \mathcal{B}, \kappa\in \mathcal{S}, |\gamma|+|\kappa|\leq N_0, \rho\in \mathcal{B},|\rho|\leq 3, u\in\{E^\rho, B^\rho\}
 \end{subarray} }  \| \omega_{\kappa}^\gamma(t,x,v)  g_{\kappa}^\gamma(t,x,v)\|_{L^2_{x,v}}\| (1+||t|-|x+\hat{v}t||) u(t,x+\hat{v}t)\|_{L^\infty_{x,v}} 
\] 
\be 
 \lesssim (1+|t|)^{-1} E_{\textup{high}}^{f}(t)E_{\textup{low}}^{eb}(t)\lesssim  (1+|t|)^{-1+\delta} \epsilon_1^2.
 \ee
 
Recall (\ref{noveq521}), (\ref{dec26eqn1}), (\ref{sepeqn610}), (\ref{sepeqn334}), and (\ref{sepeqn335}).  From the estimates of coefficients in (\ref{jan23eqn11}),  (\ref{jan15eqn41}), (\ref{sepeqn524}), (\ref{sepeqn904}), and (\ref{sepeqn88}), the following estimate holds from the $L^2_{x,v}-L^\infty_{x,v}$ type bilinear estimate, 
\[
\sum_{\alpha\in \mathcal{B}, \kappa, \rho\in \mathcal{S}, |\rho|=1,  |\alpha|+|\kappa|\leq N_0-1}\sum_{i=1,2,3} \| \omega_{\rho\circ \kappa}^\alpha(t,x,v) \widetilde{ \textit{l.o.t}_{\kappa;i}^{\alpha;\rho} }(t,x,v)\|_{L^2_{x,v}}  + \| \omega_{\rho\circ \kappa}^\alpha(t,x,v) \widetilde{ \textit{error}_{\kappa;2}^{\alpha;\rho} }(t,x,v)\|_{L^2_{x,v}} 
\]
\[
  \lesssim    \sum_{ 
\begin{subarray}{c}
  \gamma\in \mathcal{B}, \kappa\in \mathcal{S}, |\gamma|+|\kappa|\leq N_0, \rho\in \mathcal{B},|\rho|\leq 3, u\in\{E^\rho, B^\rho\}
 \end{subarray} }  \| \omega_{\kappa}^\gamma(t,x,v)  g_{\kappa}^\gamma(t,x,v)\|_{L^2_{x,v}}\| (1+|\tilde{d}(t,x,v)|) u(t,x+\hat{v}t)\|_{L^\infty_{x,v}}
 \]
 \[
   \lesssim (1+|t|)^{-1} E_{\textup{high}}^{f}(t)E_{\textup{low}}^{eb}(t)\lesssim  (1+|t|)^{-1+\delta} \epsilon_1^2.
 \]
 Hence, our desired estimate (\ref{jan15eqn22}) follows from the above estimate and the decompositions in (\ref{noveq41}), (\ref{2020feb15eqn11}), and (\ref{2020feb15eqn12}). With minor modifications, our  desired estimate (\ref{may22eqn51}) holds very similarly as we only allow $E_{\textup{high}}^{f;2}(t)$ grows at rate $(1+t)^{\delta/2}$ over time.  
 \end{proof}

\begin{lemma}\label{fixtimeloworder2}
  Under the bootstrap assumption \textup{(\ref{bootstrap})}, the following estimate holds for any $t\in [1,T]$, 
\be\label{march14eqn21}
\sum_{  \rho\in \mathcal{S}, |\rho|=1, |\alpha|+|\beta|\leq N_0-1} \|\omega_{\rho\circ \beta}^\alpha(t,x,v)\Lambda^{\rho} \big(\textit{l.o.t}_\beta^\alpha(t,x,v)   \big)\|_{L^2_x L^2_v}\lesssim   (1+|t|)^{-1+\delta} \epsilon_1^2 , 
\ee
\be\label{may22eqn65}
\sum_{  \rho\in \mathcal{S}, |\rho|=1, |\alpha|+|\beta|\leq N_0-2} \|\omega_{\rho\circ \beta}^\alpha(t,x,v)\Lambda^{\rho} \big(\textit{l.o.t}_\beta^\alpha(t,x,v)   \big)\|_{L^2_x L^2_v}\lesssim   (1+|t|)^{-1+\delta/2} \epsilon_1^2.
\ee
\end{lemma}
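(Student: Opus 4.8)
The plan is to follow the blueprint of Lemma \ref{fixedtimeestimate2} essentially verbatim: the only new feature is that one extra vector field $\Lambda^\rho$, $|\rho|=1$, is applied to the low--order nonlinearity, and the compensating fact is that the weight $\omega_{\rho\circ\beta}^\alpha$ sits one level below $\omega_\beta^\alpha$ in the hierarchy (\ref{highorderweight}), hence carries an exponent ten smaller in the $(1+|x|^2+(x\cdot v)^2+|v|^{20})$--factor. First I would recall the decomposition (\ref{loworderterm}) of $\textit{l.o.t}_\beta^\alpha$ into the four pieces $\textit{l.o.t}_{\beta;i}^\alpha$, $i\in\{1,2,3,4\}$, together with the formulas (\ref{sepeqn400}), (\ref{sepeq200}), (\ref{sepeqn600}), (\ref{sepeq201}), and then distribute $\Lambda^\rho$ over each summand by the Leibniz rule and the commutation identities. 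The vector field either (i) lands on a coefficient ($\tilde d$, the commutator coefficients $\tilde e_{\beta,i}^{\kappa,j}$, the expansion coefficients $c_\rho^\iota$, or the symbols $\widehat e^{u;j}_{\kappa_1,\kappa_2,\rho}$), producing a quantity of the same structure with at most a bounded polynomial loss of $x,v$--weight by Lemma \ref{derivativesofcoefficient}, Lemma \ref{summaryofhighordercommutation}, and Lemma \ref{decompositionofderivatives}; or (ii) lands on the electromagnetic factor $u^\kappa(t,x+\hat v t)$, raising its order by at most one via Lemma \ref{decompositionofderivatives}; or (iii) lands on the profile factor, raising the total count of derivatives on $g$ to at most $|\alpha|+|\beta|+1\le N_0$. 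In each case the loss is absorbed by the ten--unit gap in the weight hierarchy.

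Next I would treat the pieces $i\in\{1,2,4\}$, where after $\Lambda^\rho$ the electromagnetic field still carries at most thirteen vector fields. For these I would use the first decomposition of $D_v$ in (\ref{summaryoftwodecomposition}), the commutation rule (\ref{noveq521}), and the identity (\ref{sepeqn610}) to reduce to an $L^2_{x,v}$--$L^\infty_{x,v}$ bilinear estimate in which the electromagnetic field is placed in $L^\infty$ by the linear decay estimate (\ref{noveqn78}) of Lemma \ref{sharpdecaywithderivatives}, exactly as in the derivation of (\ref{jan16eqn71}). This contributes
\[
\sum_{i\in\{1,2,4\}}\sum_{\rho\in\mathcal{S},|\rho|=1,|\alpha|+|\beta|\le N_0-1}\|\omega_{\rho\circ\beta}^\alpha(t,x,v)\Lambda^\rho(\textit{l.o.t}_{\beta;i}^\alpha(t,x,v))\|_{L^2_xL^2_v}\lesssim (1+|t|)^{-1}E_{\textup{high}}^f(t)E_{\textup{low}}^{eb}(t)\lesssim (1+t)^{-1+\delta}\epsilon_1^2,
\]
the weighted profile norms on the right being controlled by $E_{\textup{high}}^f(t)$ because of the weight hierarchy.

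The hard part will be $\Lambda^\rho(\textit{l.o.t}_{\beta;3}^\alpha)$, where the electromagnetic field can carry the maximal number of vector fields — at most $|\alpha|+|\beta|\le N_0-1$ before $\Lambda^\rho$, hence at most $N_0$ afterwards — so neither the $L^\infty$ decay of the field nor trading regularity for the inhomogeneous modulation is available. Here I would mimic the proof of (\ref{feb13eqn24}): start from the representation (\ref{noveq130}), expand via (\ref{sepeqn610}), dyadically decompose the electromagnetic field, and split each low--frequency piece into a modified--profile part and a density--type part using (\ref{noveq432}), (\ref{noveq510}), (\ref{noveq511}). For the modified--profile part I would exploit the smallness of the space--resonance set through the trilinear estimates (\ref{multilinear1jan16}) and (\ref{trilinearfeb20est1}) in Lemma \ref{multilinearlemma1}, together with the bounds (\ref{octe741}), (\ref{octe742}) on the modified profiles; for the density--type part I would use the bilinear estimates (\ref{bilineardensity}), (\ref{bilineardensitylargek}) in Lemma \ref{bilineardensitylemma}, the decay estimate (\ref{densitydecay}) in Lemma \ref{decayestimateofdensity}, and the bound (\ref{may22eqn100}) on the correction terms. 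Summing over frequencies as in (\ref{jan16eqn72}) gives $(1+t)^{-1+\delta/2}\log(1+t)\,\epsilon_0\lesssim (1+t)^{-1+\delta}\epsilon_1^2$, which combined with the previous step yields (\ref{march14eqn21}).

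Finally, for (\ref{may22eqn65}) I would observe that when $|\alpha|+|\beta|\le N_0-2$ the total derivative count on $g$ never reaches $N_0$ even after $\Lambda^\rho$, so by (\ref{correctionterm}) the correction term $\widetilde g_{\alpha,\gamma}$ vanishes and the logarithm above disappears; moreover only the slower--growing $E_{\textup{high}}^{f;2}(t)\lesssim (1+t)^{\delta/2}\epsilon_1$ enters, so redoing the argument produces the bound $(1+t)^{-1+\delta/2}\epsilon_1^2$. The one genuine obstacle is the bookkeeping in the $\textit{l.o.t}_{\beta;3}^\alpha$ case — verifying that applying $\Lambda^\rho$ does not push the electromagnetic field beyond $N_0$ vector fields and that the resonance--based trilinear gain of Lemma \ref{multilinearlemma1} still applies in that regime; everything else is a routine adaptation of the Vlasov--Nordstr\"om estimates.
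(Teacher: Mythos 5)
Your proposal is correct and follows essentially the same route as the paper: the paper also reduces $\Lambda^\rho(\textit{l.o.t}_{\beta;i}^\alpha)$ for $i\in\{1,2,4\}$ to the $L^2_{x,v}$--$L^\infty_{x,v}$ argument behind (\ref{jan16eqn71}), handles $\Lambda^\rho(\textit{l.o.t}_{\beta;3}^\alpha)$ by repeating the proof of (\ref{feb13eqn24}) with the frequency decomposition, the modified-profile/density splitting and Lemmas \ref{multilinearlemma1} and \ref{bilineardensitylemma}, and obtains (\ref{may22eqn65}) from the vanishing of the correction term and the slower growth of $E_{\textup{high}}^{f;2}$. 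Your accounting of how the extra $\Lambda^\rho$ is absorbed by the ten-unit gap in the weight hierarchy, and that the electromagnetic field never exceeds $N_0$ vector fields, matches the paper's (terser) argument.
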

\begin{proof}
Recall (\ref{loworderterm}), (\ref{sepeqn400}), (\ref{sepeq200}), (\ref{sepeqn600}), and (\ref{sepeq201}). We have
\be\label{noveq739}
\Lambda^{\rho}(\textit{l.o.t}_{\beta}^\alpha)(t,x,v)= \sum_{i=1,2,3,4}   {\textit{l.o.t}_{\beta;i}^{\alpha;\rho}}(t,x,v),
\ee
where
\be 
\textit{l.o.t}_{\beta;1}^\alpha(t,x,v)= \sum_{i=1,\cdots, 7}  \sum_{\kappa\in \mathcal{S}, |\kappa|\leq |\beta|-1}  \Lambda^\rho\big( K^i(t,x+\hat{v}t,v)  \big[ \tilde{d}(t,x,v)\tilde{e}_{\beta,i}^{\kappa, 1}(x,v) +\tilde{e}_{\beta,i}^{\kappa, 2}(x,v)\big]  \Lambda^\kappa g^\alpha(t,x,v)\big),
 \ee
 \[
\textit{l.o.t}_{\beta;2 }^\alpha(t,x,v)= \sum_{
\begin{subarray}{c}
\iota+\kappa=\beta, |\iota|=1\\
i=1,\cdots,7,\iota, \kappa\in \mathcal{S}
\end{subarray}} \Lambda^\rho \big[   \Lambda^\iota(K^i(t,x+\hat{v}t,v)) [ \Lambda^\kappa,X_i] g^\alpha (t,x, v)  + \sum_{  |\gamma|\leq |\alpha|-1} \Lambda^\iota(K^i_{\alpha;\vec{0}, \gamma}(t,x+\hat{v}t,v))   \Lambda^\kappa  X_i g^\gamma(t,x,v)\big] 
\]
\be 
   +\sum_{|\rho|  \leq 1 } \Lambda^\rho \big[\sum_{|\gamma|=|\alpha|-1} K_{\alpha;\rho, \gamma}^i(t,x+\hat{v}t,v)\big(  [\Lambda^\beta, X_i] g^{\gamma}_{ }(t, x,v)\big)+ \sum_{  |\gamma|\leq |\alpha|-2}  K_{\alpha;\rho, \gamma}^i(t,x+\hat{v}t,v) \Lambda^\beta\big(X_i g^{\gamma} \big)(t,x,v)\big],
\ee
\be 
\textit{l.o.t}_{\beta;3 }^\alpha(t,x,v)=  \sum_{\begin{subarray}{c}
\rho, \gamma\in \mathcal{B}, |\rho|+|\gamma|\leq |\alpha|,
\iota+\kappa=\beta, \iota, \kappa\in \mathcal{S}\\
i=1,\cdots,7,|\iota|+|\rho|\geq 12\\
\end{subarray}} \Lambda^\rho\big[ \big(\Lambda^{\iota}K^i_{\alpha;\rho,\gamma}(t,x+\hat{v}t,v)\big)  \Lambda^{\kappa}\big(X_i g^{\gamma}(t, x,v)\big)\big],
\ee
\be 
\textit{l.o.t}_{\beta;4 }^\alpha(t,x,v)=   \sum_{\begin{subarray}{c}
\rho, \gamma\in \mathcal{B}, |\rho|+|\gamma|\leq |\alpha|,
\iota+\kappa=\beta,   \iota, \kappa\in \mathcal{S}\\
i=1,\cdots,7,1< |\iota|+|\rho|<  12\\
\end{subarray}} \Lambda^\rho\big[\big(\Lambda^{\iota}K^i_{\alpha;\rho,\gamma}(t,x+\hat{v}t,v)\big)\Lambda^{\kappa}\big(X_i g^{\gamma}(t, x,v)\big)\big].
\ee
Same as we did in the proof of the estimate (\ref{jan15eqn91}), we separate into two cases as follows. 

\noindent $\bullet$\quad The estimate of $\textit{l.o.t}_{\beta;i }^{\alpha;\rho}(t,x,v), i\in\{1,2,4\}. $

With minor modifications in the proof of the estimate (\ref{jan16eqn71}), we obtain the following estimate, 
\[
\sum_{\alpha\in \mathcal{B}, \beta, \rho\in \mathcal{S}, |\rho|=1,  |\alpha|+|\beta|\leq N_0-1} \sum_{i=1,2,4}\|\omega_{\rho\circ \beta}^\alpha(t,x,v)\Lambda^{\rho} \textit{l.o.t}_{\beta;i }^{\alpha;\rho}(t,x,v)\|_{L^2_x L^2_v} \lesssim (1+t)^{-1+\delta}\epsilon_1^2. 
\]

\noindent $\bullet$\quad The estimate of $\textit{l.o.t}_{\beta;3 }^{\alpha;\rho}(t,x,v). $

With minor modifications in the proof of the estimate (\ref{feb13eqn24}), we obtain the following estimate, 
\[
\sum_{\alpha\in \mathcal{B}, \beta, \rho\in \mathcal{S}, |\rho|=1,  |\alpha|+|\beta|\leq N_0-1} \|\omega_{\rho\circ \beta}^\alpha(t,x,v)\Lambda^{\rho} \textit{l.o.t}_{\beta;3 }^{\alpha;\rho}(t,x,v)\|_{L^2_x L^2_v} \lesssim   (1+t)^{-1+\delta/2}\epsilon_0.
\]
Hence finishing the desired estimate (\ref{march14eqn21}). With minor modifications, the desired estimate (\ref{may22eqn65}) holds very similarly because we only allow $E_{\textup{high}}^{f;2}(t)$ grows at rate $(1+t)^{\delta/2}$ over time and the correction term $\widetilde{g}_{\alpha,\gamma}(t, v)$, which contributes the logarithmic growth in the estimate (\ref{jan16eqn72}),  equals zero if  $|\alpha|+|\gamma|< N_0$, see (\ref{correctionterm}). 
\end{proof}
 
 Lastly, we estimate the increment of the low order energy  of the Vlasov part over time  as follows. 

   \begin{proposition}\label{loworderenergy}
Under the bootstrap assumption \textup{(\ref{bootstrap})}, the following estimate holds for any $t\in [1,T]$, 
\be\label{loworderenergyvlasov}
   E_{\textup{low}}^{f}(t )  \lesssim  \epsilon_0	+ \int_{1}^{t}(1+s)^{-3/2+4\delta }\epsilon_1^2  ds \lesssim\epsilon_0.
\ee
 
\end{proposition}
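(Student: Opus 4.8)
The plan is to control the time evolution of the low order energy $E_{\textup{low}}^{f}(t)$ defined in (\ref{octeqn1896}), which measures the zero-frequency profile $\nabla_v^\alpha\widehat{g^\gamma}(t,0,v)$ (corrected by $\widetilde g_{\alpha,\gamma}(t,v)$) in a weighted $L^2_v$ space. First I would differentiate $\|\widetilde{\omega^\alpha_\gamma}(v)(\nabla_v^\alpha\widehat{g^\gamma}(t,0,v)-\nabla_v\cdot\widetilde g_{\alpha,\gamma}(t,v))\|_{L^2_v}^2$ in time. The key point behind the correction term $\widetilde g_{\alpha,\gamma}$, recall (\ref{correctionterm}), is that it is designed exactly to cancel the top-order part $-K(t,x+\hat v t,v)\nabla_v^\alpha g^\gamma(t,x,v)$ of $\p_t\nabla_v^\alpha\widehat{g^\gamma}(t,0,v)$ so that no derivatives are lost; after this cancellation the remaining source terms in the equation for $\p_t(\nabla_v^\alpha\widehat{g^\gamma}(t,0,v)-\nabla_v\cdot\widetilde g_{\alpha,\gamma}(t,v))$ are all either of genuinely lower order or carry an extra power of decay in time.

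Next I would split the resulting source into: (i) the Vlasov-Maxwell quadratic terms where the electromagnetic field is evaluated on the light cone — for these I use the sharp decay estimate (\ref{noveqn78}) in Lemma \ref{sharpdecaywithderivatives}, giving a factor $(1+s)^{-1}(1+||s|-|x+\hat v s||)^{-1}$, combined with the observation that the relevant bilinear form is a density-type average, so one applies Lemma \ref{decayestimateofdensity} (the decay estimate (\ref{densitydecay})) to extract an additional $|s|^{-1/2}$ or better from the $x$-integration; (ii) the Vlasov-Vlasov self-interaction pieces controlled via the bilinear estimate (\ref{bilineardensity})--(\ref{bilineardensitylargek}) in Lemma \ref{bilineardensitylemma}, or the $K^\mu$ estimate (\ref{may15eqn410}) in Lemma \ref{twoaveragedistri}, which produce $(1+s)^{-2}$; and (iii) lower order commutator terms, handled exactly as in the non-bulk estimates of Lemma \ref{fixtimeloworder2} and Lemma \ref{fixtimehighorder2}. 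In every case the worst term should decay at rate $(1+s)^{-3/2+4\delta}$ once multiplied by $E_{\textup{high}}^f(s)\lesssim (1+s)^\delta\epsilon_1$ and $E_{\textup{low}}^{eb}(s)\lesssim\epsilon_1$ and the bootstrap bound $\epsilon_1=\epsilon_0^{5/6}$ is inserted, and $\int_1^t(1+s)^{-3/2+4\delta}\,ds<\infty$ closes the estimate with a bound $\lesssim\epsilon_0$.

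The main obstacle I anticipate is bookkeeping the top-order term carefully: when $|\alpha|+|\gamma|=N_0$ one cannot afford to lose a single $v$-derivative, so one must verify that after subtracting $\nabla_v\cdot\widetilde g_{\alpha,\gamma}$ the genuinely top-order contribution is exactly $-\nabla_v^\alpha\widehat{g^\gamma}(t,0,v)$ paired with $K$ evaluated at zero spatial frequency, which is precisely what (\ref{correctionterm}) removes; the leftover terms then all have either strictly fewer derivatives on $g$ or at least one extra $x$-integration that converts the mild $(1+s)^{-1}$ decay of $K$ into $(1+s)^{-3/2}$ via Lemma \ref{decayestimateofdensity}. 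A secondary subtlety is that $\widetilde g_{\alpha,\gamma}$ is itself an integral in $s$, so I must separately verify, as in (\ref{may22eqn100}), that $\|\widetilde\omega^\alpha_\gamma(v)\widetilde g_{\alpha,\gamma}(t,v)\|_{L^2_v}\lesssim\epsilon_0$ uniformly — this follows from an $L^2_{x,v}$–$L^\infty_{x,v}$ bilinear estimate together with (\ref{feb16eqn1}) and the decay (\ref{noveqn78}), integrating $(1+s)^{-2+\delta}$. Once these two points are in place the estimate (\ref{loworderenergyvlasov}) follows by summing the decayed contributions and invoking Gr\"onwall-type absorption, which is in fact unnecessary here since the time integral converges absolutely.
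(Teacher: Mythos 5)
Your overall skeleton matches the paper's: differentiate the low order energy in time, use the correction term $\widetilde{g}_{\alpha,\gamma}$ to avoid losing a derivative at the top order $|\alpha|+|\gamma|=N_0$, verify $\|\widetilde{\omega^\alpha_\gamma}(v)\widetilde{g}_{\alpha,\gamma}(t,v)\|_{L^2_v}\lesssim \epsilon_0$ as in (\ref{may22eqn100}), and then show that the remaining source terms are integrable in time. (The paper itself only sketches this step, deferring the details to \cite{wang}[Proposition 6.2].)

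There is, however, a concrete gap in how you propose to obtain the decay. The quantity whose time derivative you must bound is $\int_{\R^3}\nabla_v^\alpha\big(K(t,x+\hat{v}t,v)\cdot D_v g^{\gamma}(t,x,v)\big)\,dx$ at \emph{fixed} $v$, and both $D_v=\nabla_v-t\nabla_v\hat{v}\cdot\nabla_x$ and each $\nabla_v$ falling on $K(t,x+\hat{v}t,v)$ produce explicit factors of $t$. Your accounting for case (i) --- a factor $(1+s)^{-1}$ from $\|K\|_{L^\infty}$ via (\ref{noveqn78}) times an additional $|s|^{-1/2}$ ``from the $x$-integration'' --- ignores this factor of $s$, which exactly cancels the $(1+s)^{-1}$ and leaves a non-integrable $s^{-1/2}$, so the estimate as written does not close. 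The paper's stated key idea is precisely the step you omit: integrate by parts in $x$ so that the $\nabla_x$ in $t\nabla_v\hat{v}\cdot\nabla_x g^{\gamma}$ lands on the electromagnetic field, whose decay then improves from $(1+|t|)^{-1}(1+||t|-|x||)^{-1}$ to $(1+|t|)^{-1}(1+||t|-|x||)^{-2}$ by (\ref{noveqn78}); the factor $t$ is absorbed by the $(1+|t|)^{-1}$, and the extra power of $(1+||t|-|x||)^{-1}$ together with the spatial weights carried by $g$ is what produces the integrable rate $(1+s)^{-3/2+4\delta}$. Relatedly, your appeal to Lemma \ref{decayestimateofdensity} to extract decay from the $x$-integration is off target: the decay in (\ref{densitydecay}) comes from averaging in $v$ (dispersion of free transport), whereas in $E^{f}_{\textup{low}}$ the variable $v$ is a fixed parameter inside an $L^2_v$ norm and no $v$-average is available; the gain must come from the improved near-cone decay of $\nabla_x K$ paired with the $x$-localization of $g$, not from a stationary-phase argument in $v$.
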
 
\begin{proof}
  Recall the definition of the low order energy $E_{\textup{low}}^{f}(t)$  in  (\ref{octeqn1896})  and the definition of the correction term $\widetilde{g}_{\alpha,\gamma}(t, v)$ in  (\ref{correctionterm}). Since the set-up of the low order energy estimate is same as we did in the Vlasov-Nordstr\"om system setting  and the issue of losing ``$|v|$'' plays no role in  the low order energy, with minor modification in the proof of \cite{wang}[Proposition 6.2], our desired estimate (\ref{loworderenergyvlasov}) holds very similarly. 

To give a sense, we summarize the key idea of the proof here.   The key idea is that  the decay rate of electromagnetic field is improved because of the extra spatial derivative  in the worst scenario.  Recall (\ref{sepeqn31}). Intuitively speaking,  in  the equation satisfied by  $ \p_t \big(\nabla_v^\alpha \widehat{ g^\gamma }(t,0,v)-\nabla_v\cdot  \widetilde{g}_{\alpha, \gamma}(t, v) \big)$,  we can move   the spatial derivative $\nabla_x$ in front of ``$t\nabla_v\hat{v}\cdot \nabla_x g^{\iota}(t, x,v)$'' in $K_{\gamma;\beta, \iota}^i(t,x+\hat{v}t,v)\cdot X_i g^{\iota}(t, x,v)$   to the electromagnetic field by doing integration by parts in ``$x$''  .  Hence, comparing with the sub-polynomial growth of the high order energy, the low order energy doesn't grow over time.  
\end{proof}

 \section{The high order energy estimate of the bulk terms }\label{corelemmaproof1}

This section is devoted to control the bulk term of the high order energy estimate,  $I_{\beta;4}^\alpha(t)$ (see (\ref{nov96})),  which is also the last term to be estimated in the high order energy estimate.

The essential new ingredient of controlling the \textit{bulk terms} is the  \textit{hidden null structure} we mentioned in the subsection \ref{hiddennullstructure}. In this section, we will explain in what sense the \textit{hidden null structure} means and  how to make use of the   \textit{hidden null structure}. More precisely, we will lay out a step by step strategy to control $I_{\beta;4}^\alpha(t)$ and reduce the estimate of \textit{bulk} terms to the proof of a multilinear estimate in Lemma \ref{trilinearestimate1}, which will be carried out in the section \ref{proofofmainlemmalinear}. 

From the decay estimate (\ref{noveqn78})  in Lemma \ref{sharpdecaywithderivatives}, we know that  the electromagnetic field decays faster in time  if localized far away from the light cone.  We can reduce the estimate of \textit{bulk} terms further by ruling out the far away from the light cone case, e.g.,    $||t|-|x+\hat{v}t|\big|\geq 2^{-10}|t|$, so that we can focus  on the near light cone case later. More precisely, we decompose $I_{\beta;4}^\alpha(t)$ into two parts as follows, 
\be\label{nov104}
I_{\beta;4}^{\alpha }(t)= \widetilde{I}_{\beta;1}^\alpha(t) + \widetilde{I}_{\beta;2}^\alpha(t) ,
\ee
where
\[
\widetilde{I}_{\beta;1}^\alpha(t) =\sum_{\begin{subarray}{c}
j=1,2,3\\
i=1,\cdots,7\\
\end{subarray}}\sum_{
\begin{subarray}{c}
\iota+\kappa=\beta,\iota, \kappa\in \mathcal{S},|\iota|=1 \\
 \Lambda^\iota \thicksim \psi_{\geq 1}(|v|)\widehat{\Omega}^v_j \textup{or}\,\psi_{\geq 1}(|v|) \Omega_j^x  \\
\end{subarray}}  \int_{1}^{t } \int_{\R^3}\int_{\R^3}  \big(\omega_{\beta}^{\alpha}(s, x, v)\big)^2  g^\alpha_\beta (s ,x,v) \big(\sqrt{1+|v|^2}\tilde{d}(s, x,v)\big)^{1-c(\iota)}\psi_{\geq 1}(|v|)\]
\be\label{nov106}
 \times \psi_{\leq -10}(1-|x+\hat{v}s|/|s|)   \alpha_i(v)\cdot\Omega^x_j  \big( E(s, x+\hat{v}s) + \hat{v}\times  B(s, x+\hat{v}s) \big) \alpha_i(v) \cdot D_v  
g^\alpha_\kappa(s,x, v) d x d v d s,
\ee
\[
\widetilde{I}_{\beta;2}^\alpha(t) = \sum_{\begin{subarray}{c}
j=1,2,3\\
i=1,\cdots,7\\
\end{subarray}}\sum_{
\begin{subarray}{c}
\iota+\kappa=\beta,\iota, \kappa\in \mathcal{S}, |\iota|=1  \\
 \Lambda^\iota \thicksim \psi_{\geq 1}(|v|)\widehat{\Omega}^v_j \textup{or}\,\psi_{\geq 1}(|v|) \Omega_j^x  \\
\end{subarray}}  \int_{1}^{t} \int_{\R^3}\int_{\R^3}  \big(\omega_{\beta}^{\alpha}(s, x, v)\big)^2  g^\alpha_\beta (s ,x,v) \big(\sqrt{1+|v|^2}\tilde{d}(s, x,v)\big)^{1-c(\iota)}\psi_{\geq 1}(|v|) \]
\be\label{nov107}
 \times  \psi_{\geq -9}(1-|x+\hat{v}s|/|s|)   \alpha_i(v)\cdot\Omega^x_j  \big( E(s, x+\hat{v}s) + \hat{v}\times  B(s, x+\hat{v}s) \big) \alpha_i(v) \cdot D_v  
g^\alpha_\kappa(s,x, v) d x d v d s.
\ee
 
\begin{lemma}\label{awayfromthecone}
  Under the bootstrap assumption \textup{(\ref{bootstrap})}, the following estimate holds for any $t\in [1,T]$, 
\be\label{nov418}
\sum_{\alpha\in \mathcal{B}, \beta\in \mathcal{S}, |\alpha|+|\beta|=  N_0} | \widetilde{I}_{\beta;2}^\alpha(t)|  \lesssim  (1+t)^{2\delta}\epsilon_0, \quad \sum_{\alpha\in \mathcal{B}, \beta\in \mathcal{S}, |\alpha|+|\beta| < N_0} | \widetilde{I}_{\beta;2}^\alpha(t)|  \lesssim  (1+t)^{\delta }\epsilon_0.
\ee
\end{lemma}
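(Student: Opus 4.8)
The plan is to bound $\widetilde{I}_{\beta;2}^\alpha(t)$, the part of the bulk term localized to the far region $\big||s|-|x+\hat v s|\big|\gtrsim |s|$, by a direct $L^2_{x,v}\times L^2_{x,v}\times L^\infty_{x,v}$ estimate. First I would expand the bulk derivative by the first decomposition in (\ref{summaryoftwodecomposition}) of Lemma \ref{twodecompositionlemma}, $D_v g^\alpha_\kappa=\sum_{\rho\in\mathcal{K},|\rho|=1}d_\rho(s,x,v)\, g^\alpha_{\rho\circ\kappa}$, so that each summand of (\ref{nov107}) becomes the integral over $[1,t]\times\mathbb{R}^3\times\mathbb{R}^3$ of
\[
(\omega_\beta^\alpha)^2\, g^\alpha_\beta\cdot\big(\sqrt{1+|v|^2}\,\tilde d\big)^{1-c(\iota)}d_\rho\,\psi_{\geq 1}(|v|)\psi_{\geq -9}\big(1-|x+\hat v s|/|s|\big)\,\alpha_i(v)\!\cdot\!\Omega_j^x\big(E+\hat v\times B\big)(s,x+\hat v s)\cdot g^\alpha_{\rho\circ\kappa}\, .
\]
Putting one copy of $\omega_\beta^\alpha$ on $g^\alpha_\beta$, which is controlled by $E_{\textup{high}}^{f}(s)$ via (\ref{highorderenergy1}), and the other copy on the product of the coefficient, the electromagnetic field and $g^\alpha_{\rho\circ\kappa}$, it remains to estimate this product in $L^2_{x,v}$, where $g^\alpha_{\rho\circ\kappa}$ is again controlled by $E_{\textup{high}}^{f}(s)$ once $\omega_{\rho\circ\kappa}^\alpha$ is moved onto it.

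The gain from working away from the cone is that the electromagnetic field decays faster there: since $1+\big||s|-|x+\hat v s|\big|\gtrsim 1+|s|$ on the support of $\psi_{\geq -9}(1-|x+\hat v s|/|s|)$, the estimate (\ref{noveqn78}) of Lemma \ref{sharpdecaywithderivatives} yields $\big|\Omega_j^x(E+\hat v\times B)(s,x+\hat v s)\big|\lesssim (1+s)^{-1}\big(1+||s|-|x+\hat v s||\big)^{-2}E_{\textup{low}}^{eb}(s)\lesssim (1+s)^{-3}E_{\textup{low}}^{eb}(s)$, with further powers of $\big(1+||s|-|x+\hat v s||\big)^{-1}$ available by trading spatial derivatives in (\ref{noveqn78}). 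For the coefficient I would compare weights using (\ref{highorderweight}) and the indices of Definition \ref{definitiongoodderivative}: since $c(\widehat{\Omega}_j^v)=i(\widehat{\Omega}_j^v)=0$, a direct computation gives $\omega_\beta^\alpha/\omega_{\rho\circ\kappa}^\alpha\sim\phi^{\,i(\rho)-i(\iota)}(1+|v|)^{c(\iota)-c(\rho)}$, which together with $|d_\rho|\lesssim(1+|v|)^{c(\rho)}(1+|\tilde d|)$ from (\ref{jan15eqn2}), the identity $\big(\sqrt{1+|v|^2}\,\tilde d\big)^2=\big(s(1+|v|^2)^{-1/2}-\omega(x,v)\big)^2\lesssim s^2(1+|v|^2)^{-1}+(x\cdot v)^2+|x|^2$, and $|\tilde d|\lesssim 1+||s|-|x+\hat v s||$, reduces the factor multiplying $\omega_{\rho\circ\kappa}^\alpha g^\alpha_{\rho\circ\kappa}$ (after using the electromagnetic decay) to a sum of a harmless $\lesssim(1+s)^{-1}E_{\textup{low}}^{eb}(s)$ coming from the $s^2(1+|v|^2)^{-1}\cdot(1+s)^{-3}$ part, plus terms carrying a polynomial factor $(x\cdot v)^2+|x|^2$; the latter are absorbed using the geometry of the region ($|x|\lesssim|s|$ when $|x+\hat v s|\lesssim|s|$, and additional $\big(1+||s|-|x+\hat v s||\big)^{-1}$ decay of the field otherwise) together with the spare powers of the polynomial factor in $\omega_{\rho\circ\kappa}^\alpha$, so that in the end $(\text{coefficient})\times(\text{electromagnetic field})\lesssim(1+s)^{-1}E_{\textup{low}}^{eb}(s)$ after harmless weight absorptions.

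Granting this, every summand of $\widetilde{I}_{\beta;2}^\alpha(t)$ is bounded by $\int_1^t(1+s)^{-1}E_{\textup{low}}^{eb}(s)\,\|\omega_\beta^\alpha g^\alpha_\beta\|_{L^2_{x,v}}\,\|\omega_{\rho\circ\kappa}^\alpha g^\alpha_{\rho\circ\kappa}\|_{L^2_{x,v}}\,ds$; since $|\rho\circ\kappa|=|\beta|$, both Vlasov factors sit at order $|\alpha|+|\beta|$, so the bootstrap assumption (\ref{bootstrap}) gives an integrand $\lesssim(1+s)^{-1+2\delta}\epsilon_1^3$ when $|\alpha|+|\beta|=N_0$ (both factors controlled by $E_{\textup{high}}^{f;1}(s)\lesssim(1+s)^\delta\epsilon_1$) and $\lesssim(1+s)^{-1+\delta}\epsilon_1^3$ when $|\alpha|+|\beta|<N_0$ (both controlled by $E_{\textup{high}}^{f;2}(s)\lesssim(1+s)^{\delta/2}\epsilon_1$); integrating in $s$ and using $\epsilon_1^3=\epsilon_0^{5/2}\leq\epsilon_0$ yields the stated bounds $(1+t)^{2\delta}\epsilon_0$ and $(1+t)^{\delta}\epsilon_0$. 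The main obstacle is the loss of a weight of size $|v|$ carried by $\sqrt{1+|v|^2}\,\tilde d$ in the rotational-in-$v$ slot $c(\iota)=0$; it is defeated here because $\big(\sqrt{1+|v|^2}\,\tilde d\big)^2$ is only of size $s^2(1+|v|^2)^{-1}+(x\cdot v)^2+|x|^2$ while the electromagnetic field enjoys the improved $(1+s)^{-3}$ decay away from the cone, turning the dangerous $s^2$ into an integrable $(1+s)^{-1}$. The delicate point is the bookkeeping that absorbs the residual $(x\cdot v)^2+|x|^2$ into $\omega_{\rho\circ\kappa}^\alpha$ without overspending its polynomial weight — and it is precisely because this mechanism fails in the complementary near-cone region that $\widetilde{I}_{\beta;1}^\alpha$ must be handled separately via the hidden null structure.
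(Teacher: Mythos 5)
Your overall architecture — $L^2_{x,v}\times L^2_{x,v}\times L^\infty_{x,v}$ with the geometric fact $||s|-|x+\hat v s||\sim |x|+|s|$ on the support of $\psi_{\geq -9}$ and the improved decay $|\Omega_j^x u(s,x+\hat v s)|\lesssim (1+s)^{-1}(|x|+|s|)^{-2}E_{\textup{low}}^{eb}(s)$ from (\ref{noveqn78}) — is the same as the paper's, and your final bookkeeping of powers of $s$ and $\epsilon_1$ is right. The gap is in your choice of decomposition of $D_v$ and in the absorption step that is supposed to defeat the $|v|$-loss. With the \emph{first} decomposition, the rotational slot carries $d_\rho=\tilde V_i\,\tilde d\,(1+|v|^2)^{1/2}$, so in the worst case $c(\iota)=0$ the combined coefficient is $(\sqrt{1+|v|^2}\,\tilde d)^2$, and after the weight ratio $(1+|v|)^{c(\iota)-c(\rho)}=(1+|v|)^{-1}$ and the field decay $(|x|+|s|)^{-2}(1+s)^{-1}$ you are left with $(1+|v|)^{-1}\big[s^2(1+|v|)^{-2}+(x\cdot v)^2+|x|^2\big](|x|+|s|)^{-2}(1+s)^{-1}$. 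The first piece is harmless as you say, but the second piece is $\lesssim (1+|v|)\,|x|^2(|x|+|s|)^{-2}(1+s)^{-1}$, which still loses a full factor of $(1+|v|)$. Your proposed cure — absorbing $(x\cdot v)^2+|x|^2$ into ``spare powers of the polynomial factor in $\omega_{\rho\circ\kappa}^\alpha$'' — does not exist: since $|\rho\circ\kappa|=|\beta|$, the polynomial parts of $\omega_\beta^\alpha$ and $\omega_{\rho\circ\kappa}^\alpha$ in (\ref{highorderweight}) carry exactly the same exponent $20N_0-10(|\alpha|+|\beta|)$, so the ratio of the two weights contains no surplus of $(1+|x|^2+(x\cdot v)^2+|v|^{20})$ to spend, and weakening the weight on $g^\alpha_{\rho\circ\kappa}$ only makes the $L^\infty$ factor larger. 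This is precisely the ``losing weight of size $|v|$'' obstruction of subsection \ref{losingvissue}, and it is not removed by the extra decay away from the cone.

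The paper's proof avoids this entirely by expanding $D_v g^\alpha_\kappa$ with the \emph{second} decomposition of (\ref{summaryoftwodecomposition}), whose coefficients (\ref{sepeq932}) in the rotational slot are $e_\rho=-\psi_{\geq -1}(|v|)|v|^{-1}\tilde V_j(X_j+\hat V_j s)\cdot\tilde V_i$, i.e.\ of size $(|x|+|s|)/|v|$ rather than $(1+|v|)(|x|+|s|)$: the explicit $|v|^{-1}$ exactly cancels the $(1+|v|)$ coming from $(\sqrt{1+|v|^2}\tilde d)^{1-c(\iota)}$ combined with the weight ratio (\ref{jan1eqn1}), while the two powers of $(|x|+|s|)$ are killed by the $(|x|+|s|)^{-2}$ decay of $\nabla_x u$ in this region. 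If you want to keep your route, you must switch to the second decomposition here (the first decomposition is the right tool only near the cone, where $\tilde d$ is small); as written, your argument does not close.
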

\begin{proof}
Recall the second decomposition of $D_v$ in     (\ref{summaryoftwodecomposition})  in Lemma \ref{twodecompositionlemma}, we have
\[
\widetilde{I}_{\beta;2}^\alpha(t) =\sum_{\begin{subarray}{c}
j=1,2,3\\
i=1,\cdots,7\\
\end{subarray}}\sum_{
\begin{subarray}{c}
\iota+\kappa=\beta,\iota,\rho, \kappa\in \mathcal{S},  |\iota|=|\rho|=1, \\
 \Lambda^\iota \thicksim \psi_{\geq 1}(|v|)\widehat{\Omega}^v_j \textup{or}\,\psi_{\geq 1}(|v|) \Omega_j^x  \\
\end{subarray}}   \int_{1}^{t } \int_{\R^3}\int_{\R^3}  \big(\omega_{\beta}^{\alpha}( s,x, v)\big)^2  g^\alpha_\beta (s ,x,v) \big(\sqrt{1+|v|^2}\tilde{d}(s, x,v)\big)^{1-c(\iota)} \psi_{\geq 1}(|v|)    \]
\be
 \times    \psi_{\geq -9}(1-|x+\hat{v}s|/|s|)   \alpha_i(v)\cdot\Omega^x_j  \big( E(s, x+\hat{v}s) + \hat{v}\times  B(s, x+\hat{v}s) \big) \alpha_i(v) \cdot e_{\rho}(s, x,v)  \Lambda^\rho 
g^\alpha_\kappa(s,x, v) d x d v d s,
\ee
where $e_{\rho}(s, x,v)$ is defined in (\ref{sepeq932}). Recall (\ref{highorderweight}). For any $\iota\in \mathcal{K}, \kappa\in \mathcal{S}$, s.t., $|\iota|=1$ and $\iota+\kappa=\beta$, the following estimate holds, 
\be\label{jan1eqn1}
\Big| \frac{\omega_{\beta}^\alpha(s,x,v)}{\omega_{\rho\circ\kappa}^\alpha(s , x,v)} \Big|\lesssim (1+|v|)^{c(\iota)-c(\rho)}(\phi(s,x,v))^{\iota(\iota)-\iota(\rho)}. 
\ee
Note that the following estimate   holds inside the support of the  cutoff function  ``$\psi_{\geq -9}(1-|x+\hat{v}s|/|s|)$'' in $\widetilde{I}_{\beta;2}^\alpha(t)$,   
\be\label{noveqn100}
  |s-|x+\hat{v} s||\sim |x|+|s|.
\ee
Hence, from the above estimate and  the linear decay estimate (\ref{noveqn78}) 
in Lemma \ref{sharpdecaywithderivatives},  we have
\be\label{noveqn101}
\big(|\nabla_x E(s, x+\hat{v}s)| + |\nabla_x B(s, x+\hat{v}s)| \big) \psi_{\geq -9}(1-|x+\hat{v}s|/|s|)\lesssim \big(|x|+|s|\big)^{-2}(1+s)^{-1} \epsilon_1.
\ee
From the   estimates (\ref{jan1eqn1}), (\ref{noveqn100}), and (\ref{noveqn101}), the second   estimate in \ref{feb8eqn51}   in Lemma \ref{derivativeofweightfunction}, and the detailed formulas of coefficients $e_{\rho}(s,x,v)$, $\rho\in\mathcal{K}$, in (\ref{sepeq932}),   we know that our desired   estimate   (\ref{nov418})  holds from the $L^2_{x,v}-L^2_{x,v}-L^\infty_{x,v}$ type multilinear estimate.  
\end{proof}

Finally, the high order energy estimate is reduced to the estimate of bulk term $\widetilde{I}_{\beta;1}^\alpha(t)$. To be precise about the size of frequencies of the electromagnetic field and the size of    the distance with respect to the light cone ``$||t|-|x+\hat{v} t||$'', for any fixed $\alpha\in \mathcal{B}, \beta\in \mathcal{S}$,  we localize both the frequencies of the electromagnetic field and     the distance with respect to the light cone ``$||t|-|x+\hat{v} t||$'' for $ \widetilde{I}_{\beta;1}^\alpha(t) $   as follows, 
 \be\label{nov112}
\widetilde{I}_{\beta;1 }^\alpha(t) = \sum_{d\in \mathbb{Z}, d\geq 0} \sum_{k\in \mathbb{Z}} H_{k,d}(t), 
 \ee
where
\[
H_{k,d}(t)= \sum_{\begin{subarray}{c}
j=1,2,3\\
i=1,\cdots,7\\
\end{subarray}}\sum_{
\begin{subarray}{c}
\iota+\kappa=\beta,\iota, \kappa\in \mathcal{S}, |\iota|=1  \\
 \Lambda^\iota \thicksim \psi_{\geq 1}(|v|)\widehat{\Omega}^v_j \textup{or}\,\psi_{\geq 1}(|v|) \Omega_j^x  \\
\end{subarray}}  \int_{1}^{t } \int_{\R^3}\int_{\R^3}  \big(\omega_{\beta}^{\alpha}(s, x, v)\big)^2  g^\alpha_\beta (s ,x,v) \big(\sqrt{1+|v|^2}\tilde{d}(s, x,v)\big)^{1-c(\iota)}\psi_{\geq 1}(|v|) \]
\be\label{nov241}
\times \psi_{\leq -10}(1-|x+\hat{v}s|/|s|) \varphi_{d}\big(||s|-|x+\hat{v}s||\big)   \alpha_i(v)\cdot\Omega^x_j  \big( P_k[E](s, x+\hat{v}s) + \hat{v}\times  P_k[B](s, x+\hat{v}s) \big) \alpha_i(v) \cdot D_v  
g^\alpha_\kappa(s,x, v) d x d v d s,
\ee
 where the cutoff function ``$\varphi_{d}(\cdot)$'' is defined in (\ref{cutoffwiththreshold100}). 

For any fixed $k,d$, s.t., $k\in \mathbb{Z}, d\in \mathbb{Z}_{+}$, our strategy is to prove two estimates for $H_{k,d}$, which are stated in Lemma  \ref{firstprop1}  and   Lemma  \ref{secondprop1}. Those two estimates will help us  to get around a summability issue  with respect to the frequency of the electromagnetic field , which equivalents to an issue of logarithmic growth in time.

To improve presentation,  we define the following quantity, which measures the energy of profiles and the energy of \textit{non-bulk  terms} in a    region with     the distance  localized with respect to the light cone $C_{t}=\{(x,v): x, v\in \R^3, |t|-|x+\hat{v} t|=0\}$, 
\[
E_{\beta;d}^{\alpha}(t):=     \sum_{
\begin{subarray}{c}
\iota,   \kappa, \rho \in \mathcal{S},\iota+\kappa=\beta,
|\rho|=|\iota|=1   \\
\end{subarray}}  \|\omega_\beta^\alpha(t,x,v) g_{\beta}^\alpha(t, x,v)\varphi_{[d-1,d+1]} \big(||t|-|x+\hat{v}t||\big) \|_{L^2_{x}L^2_v}^2
\]
\[ 
+ (1+t)^2  \big[ \|\omega_{\beta}^\alpha(t,x, v)  \big(\textit{h.o.t}_{\beta}^\alpha(t,x,v) -  \textit{bulk}_{\beta}^\alpha(t,x,v)\big) \varphi_{[d-1,d+1]}\big(||t|-|x+\hat{v}t||\big) \|_{L^2_x L^2_v }^2 \]
\[
+\|\omega_{\rho\circ\kappa}^\alpha(t,x, v) \Lambda^\rho \big(\textit{h.o.t}_{\kappa}^\alpha(t,x,v) -  \textit{bulk}_{\kappa}^\alpha(t,x,v)\big) \varphi_{[d-1,d+1]}\big(||t|-|x+\hat{v}t||\big) \|_{L^2_x L^2_v }^2 +\|\omega_{\beta}^\alpha(t,x, v)  \big(\textit{l.o.t}_{\beta}^\alpha(t,x,v)  \big)\]
\be\label{jan11eqn80}
\times  \varphi_{[d-1,d+1]}\big(||t|-|x+\hat{v}t||\big)\|_{L^2_x L^2_v }^2+  \|\omega_{\rho\circ\kappa}^\alpha(t,x, v) \Lambda^\rho \big(\textit{l.o.t}_{\kappa}^\alpha(t,x,v) \big)\varphi_{[d-1,d+1]}\big(||t|-|x+\hat{v}t||\big) \|_{L^2_x L^2_v }^2 \big].
\ee

   Due to the fully nonlinear nature of the problem, the non-bulk terms $\Lambda^\rho \big(\textit{h.o.t}_{\kappa}^\alpha(t,x,v) -  \textit{bulk}_{\kappa}^\alpha(t,x,v)\big)$ and $ \Lambda^\rho \big(\textit{l.o.t}_{\kappa}^\alpha(t,x,v) \big)$ in (\ref{jan11eqn80}) will appear  when we utilize the \textit{hidden null structure} by doing  integration by parts in time once in the later argument, see section \ref{proofofmainlemmalinear}. We separate  out the localized energy ``$E_{\beta;d}^{\alpha}(t)$'' to help us identify the main enemy when estimating ``$H_{k,d}(t)$''.

\begin{lemma}\label{firstprop1}
For any $k \in \mathbb{Z}$,  $d\in \mathbb{N}_{+}, t\in [1,T]$, we have the following estimate,
\be\label{nov401}
|H_{k,d}(t)|\lesssim  (2^{k/2+d/2}+2^{2k+2d}  )2^{-4k_{+}}\epsilon_1 \big[ \sum_{ 
  \tau \in\{1,t\}  
} E_{\beta;d}^{\alpha}(\tau)  + \int_{1}^{t} (1+s)^{-1}   E_{\beta;d}^{\alpha}(s ) d s\big].
\ee

\end{lemma}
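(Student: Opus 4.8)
\textbf{Proof proposal for Lemma \ref{firstprop1}.}

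The plan is to estimate $H_{k,d}(t)$ by a crude $L^2_{x,v}$--$L^2_{x,v}$--$L^\infty_{x,v}$ trilinear bound, gaining the factor $2^{k/2+d/2}+2^{2k+2d}$ from the $L^\infty_{x,v}$ decay of the frequency-localized electromagnetic field and the two small cutoffs, and then absorbing the anisotropic loss of $\sqrt{1+|v|^2}\,\tilde d$ using the second estimate in \eqref{feb8eqn51}. First I would treat the three factors in the integrand of \eqref{nov241} as follows. The factor $\omega_\beta^\alpha g_\beta^\alpha$, restricted by $\varphi_d(||s|-|x+\hat v s||)$, lies in $L^2_{x,v}$ with norm $\big(E_{\beta;d}^\alpha(s)\big)^{1/2}$ by the first term of \eqref{jan11eqn80}. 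The factor $\alpha_i(v)\cdot D_v g^\alpha_\kappa$ I would decompose using the \emph{second} decomposition of $D_v$ in \eqref{summaryoftwodecomposition} of Lemma \ref{twodecompositionlemma}, writing it as $\sum_\rho \alpha_i(v)\cdot e_\rho(s,x,v)\Lambda^{\rho\circ\kappa}g^\alpha$; the point of choosing the $e_\rho$ decomposition (rather than $d_\rho$) is that each $e_\rho$ carries at most a weight $(1+|v|)^{c(\rho)}$ and, on the support $\psi_{\leq -10}(1-|x+\hat v s|/|s|)$, the troublesome coefficient involving $\tilde d$ is controlled by \eqref{sepeq932} and the weight ratio \eqref{jan1eqn1} gives exactly the missing $(1+|v|)^{c(\iota)-c(\rho)}(\phi)^{i(\iota)-i(\rho)}$; after rebalancing the weights $\omega_\beta^\alpha$ against $\omega_{\rho\circ\kappa}^\alpha$, this factor is bounded in $L^2_{x,v}$ by $\big(E_{\beta;d}^\alpha(s)\big)^{1/2}$ as well (using the energy of $\Lambda^{\rho}$ of the profile, which is part of the high order energy through $g^\alpha_{\rho\circ\kappa}$, and for which a localized version sits inside $E_{\beta;d}^\alpha$).

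The remaining factor is $\big(\sqrt{1+|v|^2}\tilde d(s,x,v)\big)^{1-c(\iota)}\psi_{\geq 1}(|v|)\,\psi_{\leq -10}(1-|x+\hat v s|/|s|)\varphi_d(||s|-|x+\hat v s||)\,\alpha_i(v)\cdot\Omega_j^x\big(P_k[E]+\hat v\times P_k[B]\big)(s,x+\hat v s)$, which I would put in $L^\infty_{x,v}$. Here I use two inputs: first, the linear decay estimate \eqref{march18eqn30} (or \eqref{noveqn78}) in Lemma \ref{sharpdecaywithderivatives} applied to $P_k$ of the electromagnetic field gives, after trading one $\Omega^x$ derivative for modulation decay in the sense of \eqref{feb16eqn1}, a pointwise bound of the shape $\min\{2^{k},(1+s+|x|)^{-1}\}\,2^{k}\,(1+s)^{-1}(1+||s|-|x+\hat v s||)^{-1}\epsilon_1$ on the relevant annulus; second, on the support of $\varphi_d$ the modulation is $\sim 2^d$, and the weight $\sqrt{1+|v|^2}|\tilde d|$ is controlled by $2^d$ up to the factor $\phi$, which in turn is exactly what \eqref{jan1eqn1} compensates. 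Multiplying the two $\min$-branches of the wave decay by $2^d$ (from $\sqrt{1+|v|^2}\tilde d$) and by the volume $2^{d}$ or $2^{3d}$ of the $(x,v)$-region where $\varphi_d$ and $\psi_{\leq -10}$ are simultaneously nonzero, and by the frequency volume $2^{3k}$, a Cauchy--Schwarz in the trilinear estimate produces the announced coefficient $(2^{k/2+d/2}+2^{2k+2d})2^{-4k_+}$; the $2^{-4k_+}$ comes from the extra high-frequency gain in \eqref{march18eqn30} and the smoothing symbol bound \eqref{noveqn141}. Finally I would split the time integral: the contribution where all the energy weight sits at the endpoint times $\tau\in\{1,t\}$ (coming from an integration by parts in $s$ of the $g_\beta^\alpha\,\partial_s g_\beta^\alpha$ structure that is implicit once one rewrites $H_{k,d}$ as a time integral of a derivative) yields the boundary terms $\sum_{\tau\in\{1,t\}}E_{\beta;d}^\alpha(\tau)$, while the remaining $\int_1^t$ keeps a factor $(1+s)^{-1}$ from the wave decay, giving $\int_1^t (1+s)^{-1}E_{\beta;d}^\alpha(s)\,ds$.

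I expect the main obstacle to be the bookkeeping of weights: one must verify, uniformly in the many cases of $\Lambda^\iota\thicksim\psi_{\geq1}(|v|)\widehat\Omega_j^v$ versus $\psi_{\geq1}(|v|)\Omega_j^x$ (i.e.\ $c(\iota)=0$ versus $c(\iota)=1$) and over the seven choices of $\alpha_i(v)$, that the powers of $(1+|v|)$ lost through $\sqrt{1+|v|^2}\tilde d$, through $e_\rho$, and through the weight ratio $\omega_\beta^\alpha/\omega_{\rho\circ\kappa}^\alpha$ really cancel, so that no net positive power of $|v|$ survives --- this is precisely the ``losing weight of size $|v|$'' difficulty described in \S\ref{losingvissue}, and it is resolved here only because we are in the region $\psi_{\leq -10}(1-|x+\hat v s|/|s|)$ far from the angularly-degenerate regime, where the second estimate in \eqref{feb8eqn51} and the cutoff $\phi$ do the work. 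A secondary technical point is justifying the integration by parts in $s$ cleanly at the level of the localized cutoffs $\varphi_d$ and $\psi_{\leq -10}(1-|x+\hat v s|/|s|)$, whose $s$-derivatives are themselves of size $(1+s)^{-1}$ times a comparable localizer and hence harmless; I would absorb those error terms into the $\int_1^t(1+s)^{-1}E_{\beta;d}^\alpha(s)\,ds$ term as well.
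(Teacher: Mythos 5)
Your core strategy --- a direct $L^2_{x,v}$--$L^2_{x,v}$--$L^\infty_{x,v}$ trilinear bound on \eqref{nov241} --- does not close, and the obstruction is exactly the ``losing weight of size $|v|$'' issue of subsection \ref{losingvissue}. Track the powers of $(1+|v|)$ in the worst case $\Lambda^{\iota}\thicksim\psi_{\geq 1}(|v|)\widehat{\Omega}^v_j$ (so $c(\iota)=0$): the coefficient is $\sqrt{1+|v|^2}\,\tilde d$, and the second estimate in \eqref{feb8eqn51} together with the weight ratio \eqref{jan1eqn1} only gives $|\tilde d\,\phi|\lesssim 2^{d}$, leaving a bare factor $\sqrt{1+|v|^2}$. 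On the other side, putting $\alpha_i(v)\cdot D_vg^\alpha_\kappa$ in $L^2$ via the second decomposition of \eqref{summaryoftwodecomposition} costs, by \eqref{sepeq932} and \eqref{nov318}, a factor $(1+s)/(1+|v|)$ after the weight rebalancing. The product is $(1+s)\cdot 2^{d}$ against a wave decay of $(1+s)^{-1}2^{-d}$ from \eqref{noveqn78} plus the residual $\sqrt{1+|v|^2}/(1+|v|)\cdot(1+|v|)$ bookkeeping; in the regime $|x|\sim s$, $|\tilde d|\sim 1$, $|v|\sim\sqrt{1+s}$ highlighted in subsection \ref{losingvissue}, one is left with an uncompensated $(1+s)^{1/2}$, i.e.\ an integrand of size $(1+s)^{-1/2}E^{\alpha}_{\beta;d}(s)$ rather than $(1+s)^{-1}E^{\alpha}_{\beta;d}(s)$. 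Your claim that the cutoff $\psi_{\leq -10}(1-|x+\hat v s|/|s|)$ saves you is backwards: that cutoff restricts to the \emph{near}-light-cone region, which is the hard case (the far region was already disposed of in Lemma \ref{awayfromthecone}), and neither \eqref{feb8eqn51} nor $\phi$ cancels the $\sqrt{1+|v|^2}$ there. No purely fixed-time trilinear estimate can produce \eqref{nov401}.

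The missing idea is the time oscillation of the electromagnetic field, i.e.\ the hidden null structure of subsection \ref{hiddennullstructure}: by \eqref{march20eqn23}/\eqref{jan4eqn8} the symbol $\tilde V_j\cdot\xi$ of the good derivative $\Omega_j^x$ is dominated by $(1+|v|)(|\xi|-\mu\hat v\cdot\xi)$, so one may divide by the phase and integrate by parts in time. This is exactly what the identity \eqref{nove91} of Lemma \ref{integrationbypartsintimephysical} implements in physical space: it writes $\Omega_j^x P_k[u](t,x+\hat vt)$ as $\p_tT_k^\mu+H_k^\mu+K_k^\mu$, turning $H_{k,d}$ into a combination of the multilinear forms of Definition \ref{trilinearoperators}, which are then controlled by Lemmas \ref{trilinearestimate1} and \ref{trilinearestimate2}. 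The half powers $2^{k/2+d/2}$ in \eqref{nov401}, which your volume-counting does not actually produce, arise precisely from the interplay between the $(1+|v|)$ gain near the resonance set and the localization $|\xi|-\mu\hat v\cdot\xi\lesssim t^{-1}$; and the boundary terms $\sum_{\tau\in\{1,t\}}E^{\alpha}_{\beta;d}(\tau)$ come from the integration by parts in $s$ of the $\p_sT_k^\mu$ piece (see \eqref{jan10eqn181}), not from any $g\,\p_sg$ structure --- the two profile factors in \eqref{nov241} carry different derivative multi-indices ($\beta$ versus $\kappa$ with $\beta=\iota+\kappa$), so no such exact-derivative structure is available.
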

\begin{proof}
See subsection \ref{reductionoflemmabulk}.
\end{proof}
\begin{lemma}\label{secondprop1}
For any $k, \in \mathbb{Z}$, $d\in \mathbb{N}_{+}$,  $d\geq 10, t\in [1,T]$, we have the following estimate 
\be\label{nov402}
|H_{k,d}(t)|\lesssim    (2^{-k-d}+2^{-7k/2-7d/2} ) 2^{-4k_{+}}\epsilon_1   \big[ \sum_{ 
  \tau \in\{1,t\}  
} E_{\beta;d}^{\alpha}(\tau)  + \int_{1}^{t} (1+s)^{-1}   E_{\beta;d}^{\alpha}(s ) d s\big].
\ee
 
\end{lemma}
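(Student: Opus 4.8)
\textbf{Plan of proof for Lemma \ref{secondprop1}.}
The target is a second, complementary bound for $H_{k,d}(t)$ that decays in $k$ and $d$ for $d\geq 10$, so that combining it with the bound of Lemma \ref{firstprop1} (which decays as $k,d\to -\infty$) allows summation of $\sum_{k\in\mathbb{Z},d\geq 0}H_{k,d}(t)$ without a logarithmic loss. The starting point is the same decomposition of $H_{k,d}(t)$ in \eqref{nov241}, but the key new input exploited here is that on the support of $\varphi_d(||s|-|x+\hat v s||)$ we have genuine separation from the light cone of dyadic size $2^d$, together with the restriction $\psi_{\leq -10}(1-|x+\hat v s|/|s|)$ which forces $|x|+|s|\sim |s|$ and hence $|x+\hat v s|\sim |s|$. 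The plan is: first, use the trading-regularity mechanism \eqref{feb16eqn1} (more precisely its operator version in Lemma \ref{tradethreetimes}, \eqref{noveqn171}) to convert the spatial derivative $\Omega_j^x$ acting on $P_k[E],P_k[B]$ into a gain of $2^{-d}$ at the price of an additional vector field $\Gamma\in\mathcal{P}_1$ hitting the electromagnetic field; this is exactly why $u^\rho$ with $|\rho|\leq N_0$ must be allowed in the energy. Since we are already at the maximal number of vector fields on $f$, we cannot afford to trade \emph{and} use the sharp $L^\infty$ decay, so instead we pair the trading estimate with the decay estimate \eqref{noveqn78} in Lemma \ref{sharpdecaywithderivatives} applied to $T=P_k$, which gives $|P_k\circ T(\p_t u)|+|P_k\circ T(\d u)|\lesssim (1+s)^{-1}(1+||s|-|x||)^{-1}2^{k-4k_+}\|m\|_{\mathcal S^\infty_k}E^{eb}_{\textup{low}}$, i.e.\ on the support of $\varphi_d$ the electromagnetic factor is bounded by $(1+s)^{-1}2^{-d}2^{k-4k_+}\epsilon_1$.

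The second step is to treat the coefficient $\big(\sqrt{1+|v|^2}\tilde d(s,x,v)\big)^{1-c(\iota)}\psi_{\geq 1}(|v|)$ and the bulk derivative $\alpha_i(v)\cdot D_v g^\alpha_\kappa$. Here we use the decomposition of $D_v$ from Lemma \ref{twodecompositionlemma} — the \emph{second} decomposition $\eqref{summaryoftwodecomposition}$ with coefficients $e_\rho(t,x,v)$ from \eqref{sepeq932} when $|v|\gtrsim 1$ (so that the $|v|^{-1}$ factors absorb the $\sqrt{1+|v|^2}$ loss in the rotational directions), and keep $K_{v_i}$ for $|v|\lesssim 1$. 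The weight ratio $\omega^\alpha_\beta/\omega^\alpha_{\rho\circ\kappa}\lesssim (1+|v|)^{c(\iota)-c(\rho)}\phi^{i(\iota)-i(\rho)}$, \eqref{jan1eqn1}, controls the power of $|v|$ one loses when moving one derivative onto $g^\alpha_\kappa$; crucially the factor $\big(\sqrt{1+|v|^2}\tilde d\big)^{1-c(\iota)}$ is compensated using the second estimate of \eqref{feb8eqn51}, $|\tilde d(s,x,v)\phi(x,v)|\lesssim 1+||s|-|x+\hat v s||$, which on the support of $\varphi_d$ is $\lesssim 2^d$. After all these reductions the integrand is estimated by an $L^2_{x,v}$–$L^2_{x,v}$–$L^\infty_{x,v}$ trilinear Hölder: $\|\omega^\alpha_\beta g^\alpha_\beta\varphi_{[d-1,d+1]}\|_{L^2}$ times the electromagnetic $L^\infty$ factor times $\|\omega^\alpha_{\rho\circ\kappa}\Lambda^\rho g^\alpha_\kappa\varphi_{[d-1,d+1]}\|_{L^2}$ (plus the lower-order terms that appear when $\Lambda^\rho$ hits the coefficients rather than $g^\alpha_\kappa$, which are exactly the non-bulk terms bundled into $E^\alpha_{\beta;d}$). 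The $s$-integral then produces $\int_1^t (1+s)^{-1}E^\alpha_{\beta;d}(s)\,ds$ after Cauchy–Schwarz in $s$, while the boundary contributions (from integrating by parts in $s$ against $\p_t g^\alpha_\beta$, which itself contains $\textit{h.o.t}-\textit{bulk}$ and $\textit{l.o.t}$) give the $E^\alpha_{\beta;d}(1)+E^\alpha_{\beta;d}(t)$ terms; this is the precise reason $E^\alpha_{\beta;d}$ was defined in \eqref{jan11eqn80} to include the $\Lambda^\rho$-differentiated non-bulk terms.

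Collecting the gains: one factor $2^{-d}$ from trading one derivative via \eqref{noveqn171}, one factor $2^d$ from $\sqrt{1+|v|^2}\tilde d\cdot\phi$ (using the weight $\phi^{|\beta|-i(\beta)}$), the decay factor $2^{k-4k_+}$ from \eqref{noveqn78}, and a further loss of at most $2^{-d/2}$ or $2^{d/2}$ from the Cauchy–Schwarz in the frequency/distance localization combined with the volume of the $x$-support of $\varphi_d(||s|-|x+\hat v s||)$ restricted to $|x+\hat v s|\sim |s|$. Balancing these yields the claimed $(2^{-k-d}+2^{-7k/2-7d/2})2^{-4k_+}$ after also iterating the trading procedure up to three times (via the full strength of Lemma \ref{tradethreetimes}, which costs $2^{-3d}$ and $2^{-3k}$ simultaneously) in the worst regime where $2^k$ is large; the two terms in the bound correspond to $k\leq 0$ (where $2^{-4k_+}=1$ and one trade suffices, giving $2^{-k-d}$) and $k\geq 0$ (where three trades are needed, giving $2^{-7k/2-7d/2}\cdot 2^{-4k}$, absorbed in the stated form). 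The main obstacle I anticipate is the careful bookkeeping when $\Lambda^\rho$ in the trading step lands on the cutoff $\psi_{\leq -10}(1-|x+\hat v s|/|s|)$ or on $\tilde d$: one must check that such terms are either again of the same form with $\varphi_{[d-1,d+1]}$ support (so absorbed into $E^\alpha_{\beta;d}$) or gain an extra $2^{-d}$ from the derivative of the cutoff landing near $|x+\hat v s|\sim |s|$ — this is where the estimates \eqref{jan23eqn11}–\eqref{noveq781} on the coefficients and the commutation rules of Lemma \ref{summaryofhighordercommutation} must be invoked with the correct $(1+|x|)(1+|v|)$ powers, and where one has to verify that the extra $|v|$-weights generated never exceed what the hierarchy $20N_0-10(|\alpha|+|\beta|)$ in $\omega^\alpha_\beta$ can absorb.
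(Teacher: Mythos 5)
Your proposal has the right first step (trading spatial derivatives for powers of $||s|-|x+\hat v s||$ via Lemma \ref{tradethreetimes}), but it replaces the essential mechanism of the paper's proof with a direct $L^2_{x,v}$--$L^2_{x,v}$--$L^\infty_{x,v}$ estimate that does not close. After trading, the paper does \emph{not} estimate the electromagnetic factor by the pointwise decay \eqref{noveqn78}; it observes that the leading terms $L^1_{k,j}[u]$ and $\widetilde{L_{k,j}}[u]$ of the traded expression still carry the good derivative $\Omega_j^x$ acting on a Fourier multiplier, applies the identity \eqref{nove91}--\eqref{jan5eqn1} to write $\Omega_j^x T_k[u]=\p_t T_k^\mu+H_k^\mu+K_k^\mu$, and then integrates by parts in time against the oscillation $e^{it(|\xi|-\mu\hat v\cdot\xi)}$ of the \emph{field}. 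The gain comes from the symbol bound $|\xi|-\mu\hat v\cdot\xi\gtrsim|\tilde V_j\cdot\xi|/(1+|v|)$ in \eqref{jan4eqn8}: dividing the good-derivative symbol $\tilde V_j\cdot\xi$ by the phase recovers exactly the power of $1+|v|$ lost by the coefficient $\sqrt{1+|v|^2}\,\tilde d$ of the rotational part of $D_v$. Your accounting skips this. In the critical regime $|x+\hat v s|\sim s$, $|v|\sim\sqrt{s}$ (identified in subsection \ref{losingvissue} as the one where neither decomposition of $D_v$ helps), your chain of bounds gives, per unit time, a factor $\sqrt{1+|v|^2}|\tilde d|\cdot|e_\rho|\cdot(\text{weight ratio})\cdot\|\Omega^x P_k u\|_{L^\infty}\sim |v|2^d\cdot\tfrac{s}{|v|}\cdot (1+s)^{-1}2^{-d}=O(1)$ rather than $O((1+s)^{-1})$, so the $s$-integral grows linearly in $t$. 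Your "integration by parts in $s$ against $\p_t g^\alpha_\beta$" produces boundary terms but no decay; the decisive integration by parts is against the half-wave phase of $h^\alpha_l$, which is absent from your argument.

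Two smaller points. First, the trading step (Lemma \ref{tradethreetimes}) only introduces $\Gamma^\alpha$ with $|\alpha|\leq 3$ on the field, not $|\rho|\leq N_0$, and it is always performed three (or four) times at once — the two terms $2^{-k-d}$ and $2^{-7k/2-7d/2}$ in \eqref{nov402} come from multiplying the fixed gains $\|\hat m^i_\alpha\|_{\mathcal S^\infty_k}\lesssim 2^{-3k}$, $\||x|^{-3}\psi_{[d-2,d+2]}\|_Y\lesssim 2^{-3d}$ (resp.\ $2^{-4k}, 2^{-4d}$ for the $\widetilde{L_{k,j}}$ piece) against the two terms $(2^{k/2+d/2}+2^{2k+2d})$ of the trilinear estimates in Lemmas \ref{trilinearestimate1}--\ref{trilinearestimate2}, not from choosing one trade for $k\leq 0$ and three for $k\geq 0$. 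Second, the non-bulk terms $\Lambda^\rho(\textit{h.o.t}-\textit{bulk})$ and $\Lambda^\rho(\textit{l.o.t})$ enter $E^\alpha_{\beta;d}$ because $\p_s$ can land on $D_v g^\alpha_\kappa$ after the time integration by parts (see \eqref{jan10eqn71} and the decomposition \eqref{jan23eqn19}), which again presupposes the oscillatory integration by parts you omitted. As written, your argument would reprove (a weaker form of) Lemma \ref{awayfromthecone}, not Lemma \ref{secondprop1}.
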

\begin{proof}
See subsection \ref{reductionoflemmabulk}.
\end{proof}
  Assuming  the validities of  the estimate (\ref{nov401}) in Lemma \ref{firstprop1} and the estimate (\ref{nov402}) in Lemma \ref{secondprop1}, as summarized in the following Lemma, we finish the estimate of the last term $\widetilde{I}_{\beta;1 }^\alpha(t)$. 
\begin{lemma}\label{dec30proposition1}
Under the  assumption  that  the   Lemma \textup{\ref{firstprop1}} and  the   Lemma  \textup{\ref{secondprop1}} hold, we have
\be\label{nov412}
\sum_{\alpha\in \mathcal{B}, \beta\in \mathcal{S}, |\alpha|+|\beta|=  N_0}| \widetilde{I}_{\beta;1}^\alpha(t)| \lesssim  (1+t)^{2\delta}\epsilon_0, \quad  \sum_{\alpha\in \mathcal{B}, \beta\in \mathcal{S}, |\alpha|+|\beta| <  N_0}| \widetilde{I}_{\beta;1}^\alpha(t)| \lesssim  (1+t)^{\delta }\epsilon_0.
\ee
\end{lemma}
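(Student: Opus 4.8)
The plan is to sum the two complementary bounds for $H_{k,d}(t)$ furnished by Lemma \ref{firstprop1} and Lemma \ref{secondprop1} over the decomposition (\ref{nov112}). The point is that the factor $(2^{k/2+d/2}+2^{2k+2d})$ in (\ref{nov401}) decays geometrically as $k+d\to-\infty$ (since $2^{2k+2d}\leq 2^{(k+d)/2}$ when $k+d\leq 0$), while the factor $(2^{-k-d}+2^{-7k/2-7d/2})$ in (\ref{nov402}) decays geometrically as $k+d\to+\infty$; the common gain $2^{-4k_{+}}$ disposes of the very high frequencies. I would fix $d$ and sum in $k$ as follows. If $d<10$, apply Lemma \ref{firstprop1} for every $k\in\mathbb Z$: the $k$-series converges because $2^{-4k_{+}}$ forces geometric decay as $k\to+\infty$, the relation $k+d<0$ forces geometric decay as $k\to-\infty$, and only finitely many intermediate $k$ remain (for which $2^{d/2}+2^{2d}=O(1)$). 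If $d\geq10$, split the sum at $k+d=0$, applying Lemma \ref{firstprop1} on $\{k\leq -d\}$ and Lemma \ref{secondprop1} (whose hypothesis $d\geq10$ is then met) on $\{k>-d\}$, so that in each region the series in $j:=k+d$ is geometric. In all cases this gives, for each fixed $d\geq0$,
\[
\sum_{k\in\mathbb Z}|H_{k,d}(t)|\lesssim \epsilon_1\Big[\sum_{\tau\in\{1,t\}}E_{\beta;d}^{\alpha}(\tau)+\int_{1}^{t}(1+s)^{-1}E_{\beta;d}^{\alpha}(s)\,ds\Big].
\]

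Next I would sum over $d\geq0$ and over $\alpha\in\mathcal B,\beta\in\mathcal S$ with $|\alpha|+|\beta|=N_0$ (together with the associated $\iota,\kappa,\rho$). Since the cutoffs $\varphi_{[d-1,d+1]}$ in (\ref{jan11eqn80}) have bounded overlap, the $d$-sum of $E_{\beta;d}^{\alpha}(t)$ collapses into the un-localized quantities: the leading term gives $(E_{\textup{high}}^{f;1}(t))^2\leq (E_{\textup{high}}^{f}(t))^2$, and the remaining non-bulk terms are bounded, via the fixed-time estimates (\ref{jan15eqn21}) of Lemma \ref{fixedtimeestimate1}, (\ref{jan15eqn22}) of Lemma \ref{fixtimehighorder2}, (\ref{jan15eqn91}) of Lemma \ref{fixedtimeestimate2}, and (\ref{march14eqn21}) of Lemma \ref{fixtimeloworder2}, by $(1+t)^2\big((1+t)^{-1+\delta}\epsilon_1\big)^2=(1+t)^{2\delta}\epsilon_1^2$. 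Using the bootstrap bound $E_{\textup{high}}^{f}(t)\lesssim(1+t)^{\delta}\epsilon_1$ I get $\sum_{|\alpha|+|\beta|=N_0}\sum_{d}E_{\beta;d}^{\alpha}(t)\lesssim(1+t)^{2\delta}\epsilon_1^2$, and the analogous bound at $t=1$ with right side $\epsilon_0^2$. Plugging these in and using $\int_1^t(1+s)^{-1+2\delta}\,ds\lesssim(1+t)^{2\delta}$ yields
\[
\sum_{\alpha\in\mathcal B,\beta\in\mathcal S,\,|\alpha|+|\beta|=N_0}|\widetilde I_{\beta;1}^{\alpha}(t)|\lesssim \epsilon_1\big(\epsilon_0^2+(1+t)^{2\delta}\epsilon_1^2\big)\lesssim (1+t)^{2\delta}\epsilon_1^3\lesssim (1+t)^{2\delta}\epsilon_0,
\]
the last step because $\epsilon_1=\epsilon_0^{5/6}$ forces $\epsilon_1^3=\epsilon_0^{5/2}\ll\epsilon_0$. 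This is the first half of (\ref{nov412}). For the second half I would repeat the argument for $|\alpha|+|\beta|<N_0$, now using $E_{\textup{high}}^{f;2}(t)\lesssim(1+t)^{\delta/2}\epsilon_1$ and the lower-order fixed-time estimates (\ref{may22eqn12}), (\ref{may22eqn51}), (\ref{may22eqn200}), (\ref{may22eqn65}), which replace each $(1+t)^{2\delta}$ by $(1+t)^{\delta}$ and give the bound $(1+t)^{\delta}\epsilon_0$.

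The crux — and the reason both Lemma \ref{firstprop1} and Lemma \ref{secondprop1} are needed — is the borderline regime $2^{k+d}\sim1$, where neither bound carries any decay and a single estimate alone would leak a factor $\log(1+t)$ coming from the range $0\leq d\lesssim\log(1+t)$ (on the support of $\psi_{\leq-10}(1-|x+\hat vs|/|s|)$ one has $2^d\lesssim 1+s$). Because the two estimates decay geometrically on the complementary sides of $k+d=0$, the $k$-summation converges with an absolute constant for every fixed $d$, and the $d$-summation is then absorbed by the bounded-overlap control of $\sum_d E_{\beta;d}^{\alpha}$ by the full high-order and non-bulk energies. The only bookkeeping point is the dichotomy $d<10$ versus $d\geq10$ forced by the hypothesis of Lemma \ref{secondprop1}; this is harmless, since for $d<10$ the single bound (\ref{nov401}) already closes the $k$-sum thanks to the $2^{-4k_{+}}$ gain.
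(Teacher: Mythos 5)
Your proposal is correct and follows essentially the same route as the paper: split the $k$-sum at $k=-d$, use Lemma \ref{firstprop1} on $k\leq -d$ and Lemma \ref{secondprop1} on $k\geq -d$ (with the $2^{-4k_{+}}$ gain handling $d<10$), then sum the localized energies $E_{\beta;d}^{\alpha}$ over $d$ via the fixed-time non-bulk estimates to get $(1+t)^{2\delta}\epsilon_1^2$, exactly as in the paper's displays (\ref{jan17eqn81}) and (\ref{march15eqn1}). Your explicit treatment of the $d<10$ versus $d\geq 10$ dichotomy is in fact slightly more careful than the paper's write-up.
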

\begin{proof}
Recall (\ref{nov112}). From the estimate (\ref{nov401}) in Lemma \ref{firstprop1} and the estimate (\ref{nov402}) in Lemma \ref{secondprop1}, the following estimate holds, 
\be\label{jan17eqn81}
 | \widetilde{I}_{\beta;1}^\alpha(t)|\lesssim \sum_{d\geq 0}   \big(   \sum_{k\leq -d  } (2^{k+d}+2^{2k+2d}  )+ \sum_{k\geq -d   }  (2^{-k-d}+2^{-7k/2-7d/2} )\big) \epsilon_1   \big[ \sum_{ 
  \tau \in\{1,t\}  
} E_{\beta;d}^{\alpha}(\tau)  + \int_{1}^{t} (1+s)^{-1}   E_{\beta;d}^{\alpha}(s ) d s\big] .
 \ee
Recall (\ref{jan11eqn80}). From the estimates (\ref{jan15eqn21}) and (\ref{may22eqn12})  in Lemma \ref{fixedtimeestimate1}, the estimates (\ref{jan15eqn22}) and (\ref{may22eqn51}) in Lemma \ref{fixtimehighorder2}, the estimates (\ref{jan15eqn91}) and (\ref{may22eqn200}) in Lemma \ref{fixedtimeestimate2}  and the estimates (\ref{march14eqn21}) and  (\ref{may22eqn65}) in Lemma \ref{fixtimeloworder2}, we have
  \be\label{march15eqn1}
\sum_{d\in \mathbb{N}_{+}}\sum_{\alpha\in \mathcal{B}, \beta\in \mathcal{S}, |\alpha|+|\beta|=N_0}  E_{\beta;d}^{\alpha}(t) \lesssim (1+t)^{2\delta}\epsilon_1^2, \quad \sum_{d\in \mathbb{N}_{+}}\sum_{\alpha\in \mathcal{B}, \beta\in \mathcal{S}, |\alpha|+|\beta|< N_0}  E_{\beta;d}^{\alpha}(t) \lesssim (1+t)^{ \delta}\epsilon_1^2. 
  \ee
Therefore, our desired estimate (\ref{nov412}) holds  from the estimates (\ref{jan17eqn81}) and (\ref{march15eqn1}).  
\end{proof}

\subsection{Reduction of the proof of Lemma \ref{firstprop1} and the proof of Lemma \ref{secondprop1}}\label{reductionoflemmabulk}

This section is devoted to lay out a strategy to prove the estimate (\ref{nov401}) in  Lemma \ref{firstprop1} and the estimate (\ref{nov402}) in  Lemma \ref{secondprop1}.

 Intuitively speaking, there are two main ingredients  in  proving these two desired estimates.  Firstly, by doing integration by parts in time,  we exploit the hidden null structure by taking the advantage of high   oscillation of phase in time, which solely depends on the electromagnetic field.  Secondly,  by using the equality (\ref{noveqn171}) in Lemma \ref{tradethreetimes},  we can trade the spatial derivative for   the decay  rate of the distance  with respect to the light cone ``$||t|-|x+\hat{v}t||$''. After comparing the gain and the loss, we decide whether to do the trading process. More precisely, to prove our desired estimate (\ref{nov401}) in Lemma \ref{firstprop1}, we don't do the trading process. However, to  prove the desired estimate (\ref{nov402}) in Lemma \ref{secondprop1}, we do the trading process.

 To better explain our strategy,  as an example, we use the following term inside $H_{k,d}(t)$, 
\[
   \int_{1}^{t} \int_{\R^3}\int_{\R^3}  \big(\omega_{\beta}^{\alpha}( s,x, v)\big)^2  g^\alpha_\beta (s ,x,v)  \psi_{\leq -10}(1-|x+\hat{v}s|/|s|) \sqrt{1+|v|^2}\tilde{d}(s, x,v)\psi_{\geq 1}(|v|) \alpha_i(v) \cdot D_v  
g^\alpha_\kappa(s,x, v) \]
\be\label{jan3eqn1}
\times \varphi_{d}\big(||s|-|x+\hat{v}s||\big)  \alpha_i(v)\cdot\Omega^x_j  \big( P_k[E](s, x+\hat{v}s) + \hat{v}\times  P_k[B](s, x+\hat{v}s) \big)  d x d v d s ,\quad 
\ee
where $\beta=\kappa+\iota$ and $\Lambda^{\iota}\thicksim \psi_{\geq 1}(|v|) \widehat{\Omega}_j^v$, see (\ref{nov241}). To make   the coefficient  $\sqrt{1+|v|^2}$ in (\ref{jan3eqn1}) controllable when  ``$|v|$'' is extremely large, we use the second decomposition of $D_v$ in (\ref{summaryoftwodecomposition}). After replacing $D_v$ in (\ref{jan3eqn1}) by the second decomposition of $D_v$ in (\ref{summaryoftwodecomposition}),   as an example, we consider the following   term, 
\[
   \int_{1}^{t  } \int_{\R^3}\int_{\R^3}  \big(\omega_{\beta}^{\alpha}( s, x, v)\big)^2  g^\alpha_\beta (s ,x,v)   \psi_{\leq -10}(1-|x+\hat{v}s|/|s|) \sqrt{1+|v|^2}\tilde{d}(s, x,v)  \psi_{\geq 1}(|v|)\varphi_{d}\big(||s|-|x+\hat{v}s||\big)  \alpha_i(v) \cdot e_{\rho}(s,x,v)  \]
\be\label{jan3eqn2}
\times \Lambda^\rho  
g^\alpha_\kappa(s,x, v)  \alpha_i(v)\cdot\Omega^x_j  \big( P_k[E](s, x+\hat{v}s) + \hat{v}\times  P_k[B](s, x+\hat{v}s) \big)  d x d v d s, 
\ee
where  $\Lambda^\rho\thicksim \Omega_i^x,  i\in\{1,2,3\}$.

 Since $\Lambda^\rho$ is a \textit{good} derivative, from (\ref{highorderweight}) and the second part of the estimate (\ref{feb8eqn51}) in Lemma \ref{derivativeofweightfunction}, we know  that  the following estimate holds for the case we are considering, 
\be\label{jan4eqn1}
\Big|\frac{\omega_{\beta}^{\alpha}(s,x,v)}{\omega_{\rho\circ\kappa}^\alpha(s,x,v)}\Big| \Big|\frac{\tilde{d}(s,x,v) }{1+||s |-|x+\hat{v} s||}\Big|\sim \frac{1}{1+|v|}. 
\ee

Thanks to the dyadic localization of the distance with respect to the light cone, we know that the size of ``$||s|-|x+\hat{v} s||$'' is at most ``$2^{d+2}$'', $d\in \mathbb{N}_{+}$. Let 
\be
F^{\alpha}_\beta(t,x,v):=  2^{-d} \big(\omega_{\beta}^{\alpha}( t,x, v)\big)^2  g^\alpha_\beta (t ,x,v) \Lambda^\rho g^\alpha_\kappa(t,x, v)      \psi_{\leq -10}(1-|x+\hat{v}t|/|t|) \sqrt{1+|v|^2}\tilde{d}(t, x,v) \varphi_{d}\big(||t|-|x+\hat{v}t||\big).
\ee
From the above definition and the estimate (\ref{jan4eqn1}), we have 
\be\label{jan4eqn2}
\| F^{\alpha}_\beta(t,x,v)\|_{L^1_x L^1_v} \lesssim \sum_{\alpha\in \mathcal{B}, \beta\in \mathcal{S}} \|\omega_{\beta}^{\alpha}(t, x, v)   g^\alpha_\beta (t ,x,v)  \varphi_{d}\big(||t|-|x+\hat{v}t||\big) \|_{L^2_{x,v}}^2. 
\ee

From the definition of $F_\beta^\alpha(t,x,v)$ and the detailed formula of $e_{\rho}(s,x,v)$ in  (\ref{sepeq932}), we can rewrite the integral in (\ref{jan3eqn2}) as follows, 
\be\label{jan4eqn3}
- \int_{1}^{t }\int_{\R^3} \int_{\R^3} 2^{d} F_\beta^\alpha(s,x,v) \alpha_i(v)\cdot \tilde{V}_j  \psi_{\geq 1}(|v|) |v|^{-1} (X_j+\hat{V}_j s )\cdot \tilde{V}_i \alpha_i(v)\cdot\Omega^x_j  \big( P_k[E](s, x+\hat{v}s) + \hat{v}\times  P_k[B](s, x+\hat{v}s) \big) d x d v d s. 
\ee

Since the magnetic field can be handled in the same way as the electric field, it would be sufficient to estimate the following term,
\be\label{jan4eqn4}
\int_{1}^{t  } \int_{\R^3} \int_{\R^3} \widetilde{F}_{\beta}^\alpha(s,x,v)    \psi_{\geq 1}(|v|) |v|^{-1}  
   \cdot\Omega^x_j   P_k[E](s, x+\hat{v}s) d x d v d s,
\ee
where 
\[
 \widetilde{F}_{\beta}^\alpha(s,x,v):= F_\beta^\alpha(s,x,v)  (X_j+\hat{V}_j s )\cdot \tilde{V}_i 2^{d} \alpha_i(v)\cdot \tilde{V}_j \alpha_i(v).
\]

 To better see the hidden null structure,  we write the integral in (\ref{jan4eqn4}) on the Fourier side. Here, we do Fourier transform in ``$x$'' and view ``$t$'' and ``$v$'' as fixed parameters. As a result, we can reduce the integral in (\ref{jan4eqn4}) as follows, 
 \be\label{jan4eqn6}
\sum_{\mu\in \{+,-\}} i c_{\mu} 	\int_{1}^{t } \int_{\R^3} \int_{\R^3}  \overline{\mathcal{F}_{x}[\widetilde{F}_{\beta}^\alpha](s,\xi,v) }   e^{i s \hat{v}\cdot \xi - i \mu s|\xi|} \psi_{\geq - 2}(|v|) \tilde{V}_j \cdot \xi  |v|^{-1}  
\psi_{k}(\xi) \widehat{h_1}(s, \xi)  d \xi d v d s,	
 \ee
 where $h_1(t)$ is the profile of electric field $E(t)$, see (\ref{octeqn1054}) and (\ref{octeqn1055}).  Note that 
 \be\label{jan4eqn8}
 |\xi|- \mu \hat{v}\cdot \xi \gtrsim |\xi|\big( \frac{1}{1+|v|^2} + (1-\cos(\mu v , \xi )) \gtrsim \big|\frac{\tilde{V}_j\cdot \xi }{1+|v|}\big|. 
 \ee
From the above estimate, we know  that  the price of doing integration by parts in time can be paid exactly by the symbol ``$\psi_{\geq 1}(|v|) \tilde{V}_j \cdot \xi  |v|^{-1} $'' in (\ref{jan4eqn6}). As a result, the integral over time actually doesn't grow dramatically. 

	Moreover, from the estimate (\ref{jan4eqn8}),  it is easy to see that both $\angle(\mu v, \xi)$ and $(1+|v|)^{-1}$ acts like the null structure. Note that $\widehat{S}^v$ contributes the smallness of  $(1+|v|)^{-3}$  and $\Omega_i^x$ contributes a symbol $\tilde{V}_i \cdot \xi \sim |\xi|\angle(\mu v, \xi)$ when these derivatives hit the pulled-back electromagnetic field $u(t,x+\hat{v}t)$, where $u\in \{E,B\}$. Because of this fact, we call these derivatives as ``\textit{good derivatives}''. 

In practice, we will use a more delicate version integration by parts in time.  Instead of doing integration by parts in time on Fourier side directly,  we will compare the size of phase ``$|\xi|-\mu \hat{v}\cdot \xi$'' with the size of ``$t$''. Moreover, since it is more convenient to work in the physical space due to the presence of complicated weight function   associated with the energy, we will formulate the Fourier based integration by parts in time into an equality on the physical space, which is the equality (\ref{nove91}) in 
Lemma \ref{integrationbypartsintimephysical}.

\begin{definition}\label{definitionofbilinearoperators}
For any  given Fourier  	  symbol $m(\xi)$ and any function $h(t,x)\in \{h^\alpha_i(t,x), i\in\{1,2\}, \alpha\in\mathcal{B}, |\alpha|\leq 10\}$,  we define 
\be\label{noveqn89}
  T_{k}^{\mu}(m(\xi), h) (t, x+\hat{v}t,v):=  \int_{\R^3} e^{i(x+t\hat{v})\cdot \xi - i t\mu |\xi|} \frac{    -i m(\xi)  \psi_k(\xi)}{\hat{v}\cdot \xi - \mu |\xi|} \widehat{h }(t, \xi) \psi_{> 10}\big(    t (|\xi|-\mu\hat{v}\cdot \xi  )\big)   d \xi, 
\ee
\be\label{noveqn90}
H_k^\mu(m(\xi), h)(t, x+\hat{v}t, v):=  \int_{\R^3} e^{i(x+t\hat{v})\cdot \xi - i t\mu |\xi|}   \frac{ i  m(\xi) \psi_k(\xi) }{\hat{v}\cdot \xi - \mu |\xi|} \p_t   \widehat{h  }(t, \xi)\psi_{> 10} (    t (|\xi|-\mu\hat{v}\cdot \xi ) ) d \xi, 
\ee
\be\label{noveqn96}
K_k^\mu(m(\xi), h)(t, x+\hat{v}t, v):=  \int_{\R^3} e^{i(x+t\hat{v})\cdot \xi - i t\mu |\xi|}  m(\xi) \psi_k(\xi)      \widehat{h }(t, \xi) \big[  -i\mu \psi_{> 10}' (    t (|\xi|-\mu\hat{v}\cdot \xi ) )+ \psi_{\leq 10}\big(    t (|\xi|-\mu\hat{v}\cdot \xi  )\big)\big]d \xi. 
\ee
\end{definition}

With the above definition, now we can decompose the good derivative of the electromagnetic field into  \textit{good errors} and the \textit{time derivative of a linear operator}. More precisely, we have
\begin{lemma}\label{integrationbypartsintimephysical}
For any $\alpha\in \mathcal{B}$, $u\in \{E^\alpha,B^\alpha\}$, $k\in \mathbb{Z},$ $j\in \{1,2,3\}$, and any Fourier multiplier operator $T$ with symbol $m(\xi)$,  the following equalities hold for some $l\in \{1,2\}$, 
\[
\Omega_j^x T_k[u^\alpha](t, x+\hat{v}t) =\sum_{\mu\in \{+,-\}} c_{\mu}\big[\p_t T_{k}^\mu(i\tilde{V}_j\cdot \xi m(\xi), (h_l^\alpha)^\mu)(t,x+\hat{v}t,v)+ H_{k}^\mu(i\tilde{V}_j\cdot \xi  m(\xi), (h_l^\alpha)^\mu)(t,x+\hat{v}t,v)
\]
\be\label{nove91}
+ K_{k}^\mu(i\tilde{V}_j\cdot \xi  m(\xi),  (h_l^\alpha)^\mu)(t,x+\hat{v}t,v)\big],
\ee
\[
\Omega_j^x T_k[\p_t u^\alpha](t, x+\hat{v}t) =\sum_{\mu\in \{+,-\}}  \h\big[\p_t T_{k}^\mu(i\tilde{V}_j\cdot \xi |\xi| m(\xi), (h_l^\alpha)^\mu)(t,x+\hat{v}t,v)+ H_{k}^\mu(i\tilde{V}_j\cdot \xi |\xi| m(\xi), (h_l^\alpha)^\mu)(t,x+\hat{v}t,v)
\]
\be\label{jan5eqn1}
+ K_{k}^\mu(i\tilde{V}_j\cdot \xi |\xi|   m(\xi), (h_l^\alpha)^\mu)(t,x+\hat{v}t,v)\big].
\ee
\end{lemma}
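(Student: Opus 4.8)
The plan is to derive both identities \eqref{nove91} and \eqref{jan5eqn1} by a single Fourier-side computation: write the pulled-back good derivative on the Fourier side, split the $\xi$-integral into a high-oscillation piece and a low-oscillation piece via the cutoffs $\psi_{>10}$ and $\psi_{\leq 10}$ evaluated at $t(|\xi|-\mu\hat v\cdot\xi)$, and on the high-oscillation piece integrate by parts in $t$ using the phase $\phi_\mu(t,\xi):=(x+\hat v t)\cdot\xi-\mu t|\xi|$, whose $t$-derivative is $\hat v\cdot\xi-\mu|\xi|$, which is bounded below in absolute value on the support of $\psi_{>10}$. First I would recall \eqref{noveq241} and \eqref{octeqn1055}, which express $E^\alpha=c_+e^{-it|\nabla|}h_1^\alpha+c_-\overline{e^{-it|\nabla|}h_1^\alpha}$ and similarly $B^\alpha$ with $h_2^\alpha$, so that for $u^\alpha\in\{E^\alpha,B^\alpha\}$ we can write $u^\alpha=\sum_{\mu}c_\mu e^{-i\mu t|\nabla|}(h_l^\alpha)^\mu$ for the appropriate $l\in\{1,2\}$, using the sign convention \eqref{signnotation}. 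Applying the Fourier multiplier $T$, the localization $\psi_k$, and the rotation $\Omega_j^x$ (which on the pulled-back function $T_k[u^\alpha](t,x+\hat v t)$ acts on the Fourier side by multiplication by $i\tilde V_j\cdot\xi$, since $\Omega_j^x=\tilde V_j\cdot\nabla_x$ and $\nabla_x$ is evaluated at $x+\hat v t$), gives
\[
\Omega_j^x T_k[u^\alpha](t,x+\hat v t)=\sum_\mu c_\mu\int_{\R^3}e^{i(x+\hat v t)\cdot\xi-i\mu t|\xi|}\,i\tilde V_j\cdot\xi\, m(\xi)\psi_k(\xi)\widehat{(h_l^\alpha)^\mu}(t,\xi)\,d\xi.
\]

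Next I would insert the partition of unity $1=\psi_{>10}(t(|\xi|-\mu\hat v\cdot\xi))+\psi_{\leq10}(t(|\xi|-\mu\hat v\cdot\xi))$ into the integrand. The low-oscillation term is exactly the $\psi_{\leq10}$ contribution to $K_k^\mu$ in \eqref{noveqn96} with symbol $i\tilde V_j\cdot\xi\,m(\xi)$. For the high-oscillation term, I would use the elementary identity $e^{i\phi_\mu}=\frac{1}{i\partial_t\phi_\mu}\partial_t e^{i\phi_\mu}$ with $\partial_t\phi_\mu=\hat v\cdot\xi-\mu|\xi|\neq0$ on $\mathrm{supp}\,\psi_{>10}(t(|\xi|-\mu\hat v\cdot\xi))$, and integrate by parts in $t$ — but here the integration by parts is being used to \emph{produce} the $\partial_t T_k^\mu$ term on the right-hand side rather than to evaluate an integral over $t$, so really the computation is the reverse: I would start from the claimed right-hand side, expand $\partial_t T_k^\mu$ using the product rule on the three $t$-dependent factors in \eqref{noveqn89} — the phase $e^{i\phi_\mu}$, the profile $\widehat{(h_l^\alpha)^\mu}(t,\xi)$, and the cutoff $\psi_{>10}(t(|\xi|-\mu\hat v\cdot\xi))$ — and check that the phase-derivative term reconstructs the high-oscillation piece of $\Omega_j^x T_k[u^\alpha]$, the profile-derivative term cancels against $H_k^\mu$ in \eqref{noveqn90}, and the cutoff-derivative term, which produces $\psi_{>10}'(t(|\xi|-\mu\hat v\cdot\xi))\cdot(|\xi|-\mu\hat v\cdot\xi)$, matches the $\psi_{>10}'$ contribution to $K_k^\mu$ in \eqref{noveqn96} (the factor $\hat v\cdot\xi-\mu|\xi|$ from the chain rule cancels the denominator in \eqref{noveqn96}, leaving the clean symbol $i\tilde V_j\cdot\xi\,m(\xi)$ with the sign $-i\mu$ as written). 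This is a bookkeeping verification of signs and of the factor $c_\mu$ versus the $1/(\hat v\cdot\xi-\mu|\xi|)$ denominators.

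For the second identity \eqref{jan5eqn1}, the only change is that $\partial_t u^\alpha$ on the Fourier side carries the extra factor: from \eqref{jan5eqn3} we have $\partial_t E^\alpha=\frac{|\nabla|}{2}(u_1^\alpha+\overline{u_1^\alpha})$, so $\widehat{\partial_t u^\alpha}=\frac12\sum_\mu|\xi|\,e^{-i\mu t|\xi|}\widehat{(h_l^\alpha)^\mu}(t,\xi)$ up to the sign convention, which is why the constant $c_\mu$ is replaced by $\tfrac12$ and the symbol acquires an extra $|\xi|$, giving $i\tilde V_j\cdot\xi\,|\xi|\,m(\xi)$ in all three operators $T_k^\mu$, $H_k^\mu$, $K_k^\mu$. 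I would run the identical computation. The main obstacle — really the only delicate point — is getting the constants and signs exactly right: reconciling the $c_\mu=i\mu/2$ convention from \eqref{octeqn798}, the $-i$ and $i$ prefactors in \eqref{noveqn89}--\eqref{noveqn96}, the $-i\mu$ versus $+1$ split of the cutoff-derivative terms, and the sign produced when moving $\partial_t$ off $e^{i\phi_\mu}$; everything else is a routine product-rule expansion, and the lower bound $|\hat v\cdot\xi-\mu|\xi||\gtrsim t^{-1}$ on $\mathrm{supp}\,\psi_{>10}(t(\cdot))$ (cf.\ \eqref{jan4eqn8}) is only needed to make the operators well-defined, not for the identity itself.
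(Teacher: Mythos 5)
Your proposal is correct and follows essentially the same route as the paper: write $u^\alpha$ in terms of the half-wave profiles via (\ref{jan5eqn3}) and (\ref{octeqn1055}) so that $\Omega_j^x T_k[u^\alpha](t,x+\hat v t)$ becomes the Fourier integral (\ref{noveqn88}) with symbol $i\tilde V_j\cdot\xi\,m(\xi)$, then observe that the product-rule expansion of $\p_t T_k^\mu$ (phase, profile, and cutoff factors) reproduces the $\psi_{>10}$ piece, cancels $H_k^\mu$, and combines with $K_k^\mu$ to restore the full partition of unity. The sign bookkeeping you flag is exactly the content of the check (note the chain-rule factor is $|\xi|-\mu\hat v\cdot\xi=-\mu(\hat v\cdot\xi-\mu|\xi|)$, whence the $-i\mu$ in (\ref{noveqn96})), and your treatment of the second identity via $\p_t E^\alpha=\tfrac{\d}{2}(u_1^\alpha+\overline{u_1^\alpha})$ matches the paper's (\ref{jan5eqn2}).
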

\begin{proof}
Recall (\ref{jan5eqn3})    and (\ref{octeqn1055}). 
Note that, for any $j\in\{1,2,3\}$, $\alpha\in \mathcal{B}$, and $u\in \{E^\alpha, B^\alpha\}$, the following equalities hold for some $l\in \{1,2\},$
\be\label{noveqn88} 
\Omega_j^x  T_k[u^\alpha](t, x+\hat{v}t)=
\sum_{\mu\in \{+,-\}}\int_{\R^3} e^{i(x+t\hat{v})\cdot \xi - i t\mu |\xi|}    i c_{\mu} \tilde{V}_j\cdot \xi m(\xi) \widehat{(h_l^\alpha)^\mu }(t, \xi)\psi_k(\xi) d \xi, 
\ee
\be\label{jan5eqn2} 
\Omega_j^x  T_k[\p_t u^\alpha](t, x+\hat{v}t)=
\sum_{\mu\in \{+,-\}}\int_{\R^3} e^{i(x+t\hat{v})\cdot \xi - i t\mu |\xi|}    \frac{i}{2}   \tilde{V}_j\cdot \xi |\xi| m(\xi) \widehat{(h_l^\alpha)^\mu}(t, \xi)\psi_k(\xi) d \xi .
\ee
Hence,   our desired equalities (\ref{nove91}) and (\ref{jan5eqn1}) hold from (\ref{noveqn88}), (\ref{jan5eqn2}), (\ref{noveqn89}), (\ref{noveqn90}), and (\ref{noveqn96}).
\end{proof}

With the above preparation, we are ready to lay out the strategy for the proof of the desired estimate (\ref{nov401}) in Lemma \ref{firstprop1}. Recall (\ref{nov241}). Firstly, we will using the equality (\ref{nove91}) for  $\Omega_j^x P_k[u](t, x+\hat{v}t)$, $u\in\{E,B\}$. Then, we  do integration by parts in time once to move the time derivative in front of $ \p_t T_{k}^\mu(i\tilde{V}_j\cdot \xi, h_l)(t,x+\hat{v}t) $. The rest of terms in the equality (\ref{nove91}) will be good error terms.

Now, we proceed to lay out our  strategy for the proof of the desired estimate (\ref{nov402}) in Lemma \ref{secondprop1}. Same as  the proof of the desired estimate (\ref{nov401}) in Lemma \ref{firstprop1},  we will also use the oscillation in time for the electromagnetic field. The only extra procedure we will do is trading the spatial derivatives for the decay of the distance with respect to the light cone, which will provide  the factor of $2^{-3k-3d}+2^{-4k-4d}$ and also explains the difference between the desired estimates (\ref{nov401}) and  (\ref{nov402}).

We summarize the main result of the trading process in  the following Lemma. 
\begin{lemma}\label{tradingreg4mod}
For any $j\in\{1,2,3\}$, $u\in \{E,B\}$, and $k\in \mathbb{Z}$, the following decomposition holds after trading the spatial derivatives for  the decay of the distance with respect to the light cone,
\be\label{jan6eqn10}
\Omega_j^x(u_k)(t,x+\hat{v}t)=L^1_{k,j}[u](t,x+\hat{v}t) +  \widetilde{L_{k,j}}[u](t,x+\hat{v}t,v) + \sum_{i=1,\cdots,5}  {\textup{E}}_{k,j}^i[u](t,x+\hat{v}t,v),
\ee
where the leading terms $L_{k,j}^1[u](t,x+\hat{v}t)$ and $\widetilde{L_{k,j}}[u](t,x+\hat{v}t,v) $  are  given  in \textup{(\ref{nove611})}  and  \textup{(\ref{april3eqn40})} respectively, and the error terms $ {\textup{E}}_{k,j}^i[u](t,x+\hat{v}t,v)$, $i\in\{1,\cdots,5\}$, are given in \textup{(\ref{nove510})}  and  \textup{(\ref{april4eqn2})} respectively.

\end{lemma}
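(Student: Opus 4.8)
\textbf{Proof plan for Lemma \ref{tradingreg4mod}.}

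The plan is to take the dyadic localized good derivative $\Omega_j^x(u_k)(t,x+\hat{v}t)$, view it on the Fourier side, and apply the trading identity (\ref{noveqn171}) from Lemma \ref{tradethreetimes} three times so as to convert three spatial derivatives into three copies of the distance factor $(|t|-|x|)^{-1}$. First I would write $\Omega_j^x(u_k)(t,x+\hat{v}t)$ using the recovery formulas (\ref{jan5eqn3}) to express $E_k,B_k$ in terms of the profiles $h_{l}$ via $u_k = c_{+}e^{-it|\nabla|}h_l + c_{-}\overline{e^{-it|\nabla|}h_l}$, so that $\Omega_j^x(u_k)(t,x+\hat{v}t)$ becomes a Fourier integral with symbol $i\tilde{V}_j\cdot\xi\,\psi_k(\xi)$ (this is exactly (\ref{noveqn88})). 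Since $\tilde{V}_j\cdot\xi$ carries a factor of $|\xi|\angle(v,\xi)$ which is, morally, one spatial derivative, after extracting it I would be left with needing to express the remaining $\psi_k(\xi)$-localized function in a form where three copies of $\p_{x_i}$ have been traded. Concretely I would apply the operator version of (\ref{feb16eqn1}), namely (\ref{noveqn171}), to the Fourier multiplier operator $T_k$ whose symbol is $i\tilde{V}_j\cdot\xi\,\psi_k(\xi)/|\xi|^3$-normalized appropriately; the identity produces a sum $\sum_{i,\alpha}\tilde{c}^i_\alpha(t,x)\tilde{T}^i_{k,\alpha}(\p_t^i f^\alpha) + (|t|-|x|)e_\alpha(t,x)\tilde{T}^3_{k,\alpha}((\p_t^2-\Delta)f)$, and $(|t|-|x|)^3$ on the left becomes, after dividing, the decay factor $(1+||t|-|x+\hat{v}t||)^{-3}$ times the $u^\alpha$'s, i.e.\ the factor $2^{-3d}$ (and the symbol bound (\ref{noveqn141}) supplies $2^{-3k}$, $2^{-4k}$, so combining gives the advertised $2^{-3k-3d}+2^{-4k-4d}$).

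The key steps, in order, are: (i) use (\ref{jan5eqn3}) and Definition \ref{definitionofbilinearoperators} style bookkeeping to write $\Omega_j^x(u_k)(t,x+\hat{v}t)$ as a superposition over $\mu\in\{+,-\}$ of Fourier integrals against $\widehat{(h_l^{\vec 0})^\mu}$ with symbol $i c_\mu\tilde{V}_j\cdot\xi\,\psi_k(\xi)$; (ii) apply Lemma \ref{tradethreetimes} (the $(|t|-|x|)^3$-trading identity) to the associated multiplier operator, which introduces the classical vector fields $\Gamma^\alpha$, $|\alpha|\le 3$, acting on the half-wave part, plus one wave-operator remainder term weighted by a single factor of $(|t|-|x|)$; (iii) read off the wave equation $(\p_t^2-\Delta)E^{\vec 0}=\mathcal{N}^{\vec 0}_1$ (resp.\ for $B$), cf.\ (\ref{maxwellvectorfield}), so that the remainder term $\tilde{T}^3_{k,\alpha}((\p_t^2-\Delta)f)$ becomes an explicitly quadratic/source expression — this is what becomes (most of) the error terms $\textup{E}^i_{k,j}[u]$; (iv) collect the two genuinely leading contributions: the term where all three traded vector fields are accounted for and no derivative lands on a coefficient — this is $L^1_{k,j}[u]$ in (\ref{nove611}) — and the term that still retains the explicit half-wave structure convenient for the later time-integration-by-parts, call it $\widetilde{L_{k,j}}[u]$ in (\ref{april3eqn40}); (v) bookkeep all the lower-order pieces (derivatives falling on the $\tilde{c}^i_\alpha(t,x)$, $e_\alpha(t,x)$ coefficients, whose $x$-derivatives gain $(|t|+|x|)^{-1}$ by (\ref{april3eqn1}), and the cross terms from $\tilde{V}_j\cdot\xi$ interacting with $\psi_k$) into the five error reservoirs $\textup{E}^i_{k,j}[u]$, $i\in\{1,\dots,5\}$, referenced as (\ref{nove510}) and (\ref{april4eqn2}).

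I expect the main obstacle to be the precise allocation — deciding which terms should be called "leading" ($L^1_{k,j}$, $\widetilde{L_{k,j}}$) versus "error" ($\textup{E}^i_{k,j}$), in a way that makes the subsequent estimates in Lemma \ref{secondprop1} go through cleanly. The trading identity (\ref{noveqn171}) is a black box here, so the computation itself is mechanical; the subtlety is that after trading we have moved the vector fields $\Gamma^\alpha$, $|\alpha|\le 3$, onto the electromagnetic profile, and we must keep track of the fact that these raise the derivative count of $u$ (hence we must still have room in the energy $E^{eb}_{\textup{high}}$ — which is why $|\alpha|\le 10$ is imposed in the statement, and why the later argument applies this only when the electromagnetic field carries few derivatives) while simultaneously the remainder term is already quadratic and thus can afford the loss. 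A secondary technical point is that the factor $\tilde{V}_j\cdot\xi$ — the "good-derivative symbol" — must be peeled off \emph{before} trading so that the three traded derivatives plus this one extra half-derivative are correctly matched with the $(1+|v|)^{-1}$-type gain exploited in (\ref{jan4eqn8}); getting the order of operations right (peel $\tilde{V}_j\cdot\xi$, then trade, then substitute the wave equation) is where most of the care goes, but it is all of the same flavor as the corresponding construction in \cite{wang} and carries over with the modifications forced by the losing-$|v|$ issue.
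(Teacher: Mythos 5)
Your plan does not reproduce the decomposition that the lemma actually asserts, and the reason is the order of operations in steps (i)--(ii). You propose to rewrite $\Omega_j^x(u_k)(t,x+\hat{v}t)$ on the Fourier side with symbol $i\tilde V_j\cdot\xi\,\psi_k(\xi)$ and then apply the trading identity (\ref{noveqn171}) to that composite multiplier (with $\tilde V_j\cdot\xi$ absorbed into the symbol, "normalized appropriately"). If you do that, the trading identity returns a single sum $\sum_{i,\alpha}\tilde c^i_\alpha\,\tilde S^i_{k,\alpha}(\p_t^i u^\alpha)+(|t|-|x+\hat{v}t|)e_\alpha\tilde S^3_{k,\alpha}((\p_t^2-\Delta)u)$ in which the good-derivative symbol $\tilde V_j\cdot\xi$ is already inside each $\tilde S^i_{k,\alpha}$, and there is nothing left for $\Omega_j^x$ to land on. That is a valid identity, but it consists of only the $L^1_{k,j}$-type leading term and a single wave-operator error; it contains no analogue of $\widetilde{L_{k,j}}[u]$, $E^1_{k,j}[u]$, or $E^5_{k,j}[u]$, and therefore it is not the decomposition claimed in (\ref{jan6eqn10}) with the specific formulas (\ref{nove611}), (\ref{april3eqn40}), (\ref{nove510}), (\ref{april4eqn2}). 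Moreover your own step (v) contemplates derivatives falling on $\tilde c^i_\alpha$, $e_\alpha$ — but those terms cannot arise once you have moved $\tilde V_j\cdot\xi$ inside the traded multiplier, so the plan is internally inconsistent.

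The paper's proof keeps $\Omega_j^x$ as a vector field throughout. It first writes $\Omega_j^x u_k = (|t|-|x+\hat{v}t|)^{-3}\Omega_j^x\big((|t|-|x+\hat{v}t|)^3 u_k\big) - 3\big(\Omega_j^x(|t|-|x+\hat{v}t|)\big)(|t|-|x+\hat{v}t|)^{-1}u_k$ by the product rule, applies (\ref{noveqn171}) with the trivial symbol $m(\xi)=1$ to the inner factor $(|t|-|x+\hat{v}t|)^3 u_k$ (and to the second term after multiplying by $(|t|-|x+\hat{v}t|)^{-4}(|t|-|x+\hat{v}t|)^3$), and only then distributes $\Omega_j^x$ via Leibniz. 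The Leibniz step is exactly what produces the extra reservoirs: $\Omega_j^x$ hitting the Fourier multiplier gives $L^1_{k,j}$; $\Omega_j^x$ hitting $\tilde c^i_\alpha$ or $(|t|-|x+\hat{v}t|)^{-3}$ gives $E^1_{k,j}$ and $L^2_{k,j}$; substituting $\p_t^2 = \Delta + (\p_t^2-\Delta)$ in the $\tilde T^2$ term gives $E^2_{k,j}$, $E^3_{k,j}$, $E^4_{k,j}$; and, contrary to your description, $\widetilde{L_{k,j}}[u]$ is not "the term that still retains the explicit half-wave structure" (that is $L^1_{k,j}$) — it comes from $\Omega_j^x(|t|-|x+\hat{v}t|)=-\tilde V_j\cdot x/|x+\hat{v}t|$ (equation (\ref{april3eqn21})) inside $L^2_{k,j}$, followed by the Fourier-side split $\tilde V_j\cdot x = \tilde V_j\cdot\big(x - t\mu\frac{\xi}{|\xi|}\big) + t\mu\frac{\tilde V_j\cdot\xi}{|\xi|}$, the first piece giving $E^5_{k,j}$ after an integration by parts in $\xi$ and the second giving $\widetilde{L_{k,j}}$. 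To repair the proposal you should trade first with $m=1$ and let $\Omega_j^x$ remain as a vector field to be distributed afterwards; peeling off $\tilde V_j\cdot\xi$ first destroys exactly the bookkeeping that the later lemmas (\ref{secondprop1} and \ref{remainderestimatejan6}) rely on.
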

 \begin{proof}
From the equality (\ref{noveqn171}) in Lemma \ref{tradethreetimes},  we can rewrite $\Omega_{j}^x(u_k(t,x+\hat{v}t)$, $u\in \{E, B\}$, as follows, 
\[
\Omega_j^x(u_k(t,x+\hat{v}t))= (|t|-|x+\hat{v}t|)^{-3} \Omega_j^x\big( (|t|-|x+\hat{v}t|)^3 u_k(t,x+\hat{v}t) \big) 
-  3\big(\Omega_j^x   (|t|-|x+\hat{v}t|)\big) (  (|t|-|x+\hat{v}t|))^{-1} u_k(t,x+\hat{v}t)
\]
\[
= \sum_{\alpha\in \mathcal{B}, |\alpha|\leq 3} \sum_{i=0,1,2 } ((|t|-|x+\hat{v}t|))^{-3} \Omega_j^x\big[ {\tilde{c}_{\alpha}^{i}}(t,x+\hat{v}t) \tilde{T}^i_{k,\alpha}(\p_t^i u^\alpha)(t, x+\hat{v}t)+
(|t|-|x+\hat{v}t|) {e}_{\alpha}(t, x+\hat{v}t)  \]
\[
\times \tilde{T}_{k,\alpha}^3((\p_t^2-\Delta)u)(t, x+\hat{v}t) \big]-  3\big(\Omega_j^x  (|t|-|x+\hat{v}t|)\big)  (|t|-|x+\hat{v}t|)^{-4}\big[ {\tilde{c}_{\alpha}^{i}}(t,x+\hat{v}t) \tilde{T}^i_{k,\alpha}(\p_t^i u^\alpha)(t, x+\hat{v}t)\]
\[
+   (|t|-|x+\hat{v}t|) {e}_{\alpha}(t, x+\hat{v}t) \tilde{T}_{k,\alpha}^3((\p_t^2-\Delta)u)(t, x+\hat{v}t) 
\big] 
\]
\[
= \sum_{\alpha\in \mathcal{B}, |\alpha|\leq 3} \sum_{i=0,1,2 }    (|t|-|x+\hat{v}t|)^{-3} {\tilde{c}_{\alpha}^{i}}(t,x+\hat{v}t) \Omega_j^x\big( \tilde{T}^i_{k,\alpha}(\p_t^i u^\alpha)(t, x+\hat{v}t)\big) +  {c}^j_{\alpha;i}(t,x+\hat{v}t) \tilde{T}^i_{k,\alpha}(\p_t^i u^\alpha)(t, x+\hat{v}t) \]
\be\label{nove321}
 +   \widehat{e}_{j,1}^{\alpha}(t, x+\hat{v}t)\tilde{T}_{k,\alpha}^3((\p_t^2-\Delta)u)(t, x+\hat{v}t)  + \widehat{e}_{j,2}^{\alpha}(t, x+\hat{v}t)\Omega_j^x \big(\tilde{T}_{k,\alpha}^3((\p_t^2-\Delta)u)(t, x+\hat{v}t) \big),
\ee
where
\be
  {c}_{\alpha;i}^j(t, x+\hat{v}t)=    (|t|-|x+\hat{v}t|)^{-3} \Omega_j^x\big(  {\tilde{c}_{\alpha}^{i}}(t,x+\hat{v}t) \big) - 3\big(\Omega_j^x    (|t|-|x+\hat{v}t|)\big)   (|t|-|x+\hat{v}t|)^{-4}  {\tilde{c}_{\alpha}^{i}}(t,x+\hat{v}t),
\ee
\be
 \widehat{e}_{j,1}^{\alpha}(t, x+\hat{v}t) =    (|t|-|x+\hat{v}t|)^{-3} \Omega_j^x\big(   (|t|-|x+\hat{v}t|) {e}_{\alpha}(t, x+\hat{v}t)  \big) - 3\big(\Omega_j^x  (|t|-|x+\hat{v}t|)\big)    (|t|-|x+\hat{v}t|)^{-3	} {e}_{\alpha} (t, x+\hat{v}t),
\ee
\be\label{nove643}
\widehat{e}_{j,2}^{\alpha}(t, x+\hat{v}t)=     (|t|-|x+\hat{v}t|)^{-2}{e}_{\alpha}(t, x+\hat{v}t).
\ee

To better estimate the coefficients,  we classify and  decompose $ \widehat{c}_{\alpha;i}^j(t,x,v)$, $ i\in\{0,1,2\},$  and $   \widehat{e}_{j,1}^{\alpha}(t, x, v)$ into two parts as follows,
\[
 {c}_{\alpha;i}^j(t, x+\hat{v}t)=\Omega_j^x   (|t|-|x+\hat{v}t|)  \widehat{c}_{\alpha;i}^{ 1}(t, x+\hat{v}t) + \widehat{c}_{\alpha;i}^{j;2}(t, x+\hat{v}t),\quad 
 \widehat{e}_{j,1}^{\alpha}(t, x+\hat{v}t)=  \Omega_j^x     (|t|-|x+\hat{v}t|) \widehat{e}_{ 1}^{\alpha;1}(t, x+\hat{v}t)+  \widehat{e}_{j,1}^{\alpha;2}(t, x+\hat{v}t),
\]
where 
\be\label{jan9eqn10}
 \widehat{c}_{\alpha;i}^{ 1}(t, x+\hat{v}t)=- 3    (|t|-|x+\hat{v}t|)^{-4}  {\tilde{c}_{\alpha}^{i}}(t,x+\hat{v}t), \quad \widehat{c}_{\alpha;i}^{j;2}(t,x,v)=    (|t|-|x+\hat{v}t|)^{-3}  \Omega_j^x\big( {\tilde{c}_{\alpha}^{i}}(t,x+\hat{v}t) \big),
\ee
\be\label{jan9eqn11}
   \widehat{e}_{ 1}^{\alpha;1} (t, x+\hat{v}t)= - 2   (|t|-|x+\hat{v}t|)^{-3	}  {e}_{\alpha}(t, x+\hat{v}t), \quad  \widehat{e}_{j,1}^{\alpha;2}(t, x, v)=    (|t|-|x+\hat{v}t|)^{-2} \Omega_j^x\big(   {e}_{\alpha}(t, x+\hat{v}t)  \big). 
 \ee
 Note that
 \be\label{april3eqn21}
\Omega_j^x (|t|-|x+\hat{v}t|)= - \frac{\tilde{V}_j\cdot x }{|x+\hat{v}t|}.
 \ee
As a result of  direct computations and the estimate   (\ref{april3eqn1})   in Lemma \ref{tradethreetimes}, the following  estimate   holds, 
\be\label{nove353}
 \sum_{i=0,1,2}  (|t|-|x+\hat{v}t|)^{ 3} | \widehat{c}_{\alpha;i}^{j;2} (t,x,v)| +   (|t|-|x+\hat{v}t|)^{ 2}  | \widehat{e}_{j,1}^{\alpha;2}(t,x,v)|  \lesssim  \big( \frac{1  }{t+|x+\hat{v}t| }\big)  .
\ee

Recall (\ref{nove321}).   For  the term $\tilde{T}^2_{\alpha}(\p_t^2 u^\alpha)(t, x+\hat{v}t)$,  we  decompose it further into two parts as follows, 
\be\label{nove324}
\tilde{T}^2_{k,\alpha}(\p_t^2 u^\alpha)(t, x+\hat{v}t)= \tilde{T}^2_{k,\alpha}(\Delta u^\alpha)(t, x+\hat{v}t) + \tilde{T}^2_{k,\alpha}((\p_t^2-\Delta) u^\alpha)(t, x+\hat{v}t).
\ee
From the equalities (\ref{nove321}) and  (\ref{nove324}), we   identify the leading terms of $\Omega_j^x u_k(t,x+\hat{v}t)$ and classify the error terms into four parts as follows,
\be\label{nove510}
\Omega_j^x(u_k)(t,x+\hat{v}t)=\sum_{i=1,2}L^i_{k,j}[u](t,x,v)  + \textup{Error}_{k,j}[u](t,x+\hat{v}t), \,\, \textup{Error}_{k,j}[u](t,x+\hat{v}t) = \sum_{i=1,\cdots, 4}E_{k,j}^i[u](t,x+\hat{v}t),
\ee
where
\be\label{nove611}
 L_{k,j}^1 [u](t,x,v) =\sum_{\alpha\in \mathcal{B}, |\alpha|\leq 3} (|t|-|x+\hat{v}t|)^{-3}\big[ \sum_{i=0,1} \tilde{c}_{\alpha}^i(t,x+\hat{v}t) \Omega_j^x\big( \tilde{T}^i_{k,\alpha}(\p_t^i u^\alpha)(t, x+\hat{v}t)\big)+ \tilde{c}_{\alpha}^2(t,x+\hat{v}t) \Omega_j^x\big( \tilde{T}^2_{k,\alpha}(\Delta u^\alpha)(t, x+\hat{v}t)\big)\big],
\ee
\be\label{nove612}
L_{k,j}^2[u] (t,x+\hat{v}t)= \sum_{\alpha\in \mathcal{B}, |\alpha|\leq 3} \sum_{i=0,1}\Omega_j^x   (|t|-|x+\hat{v}t|) \big[\widehat{c}_{\alpha;i}^{ 1}(t, x+\hat{v}t) \tilde{T}^i_{k,\alpha}(\p_t^i u^\alpha)(t, x+\hat{v}t) + \widehat{c}^{ 1}_{\alpha;2}(t, x+\hat{v}t) \tilde{T}^2_{k,\alpha}(\Delta u^\alpha)(t, x+\hat{v}t)\big],
\ee
\be\label{nove613}
E_{k,j}^1[u](t,x+\hat{v}t)= \sum_{\alpha\in \mathcal{B}, |\alpha|\leq 3} \sum_{i=0,1} \widehat{c}^{j;2}_{\alpha;i}(t, x+\hat{v}t) \tilde{T}^i_{k,\alpha}(\p_t^i u^\alpha)(t, x+\hat{v}t) + \widehat{c}^{j;2}_{\alpha;2}(t, x+\hat{v}t) \tilde{T}^2_{k,\alpha}(\Delta u^\alpha)(t, x+\hat{v}t),
\ee
\[
E_{k,j}^2[u](t,x+\hat{v}t)= (|t|-|x+\hat{v}t|)^{-3}   {\tilde{c}_{\alpha}^{2}}(t,x+\hat{v}t) \Omega_j^x\big( \tilde{T}^2_{k,\alpha}( (\p_t^2-\Delta) u^\alpha)(t, x+\hat{v}t)\big)
\]
\be\label{nove614}
 +  \widehat{e}_{j,2}^{\alpha}(t, x+\hat{v}t)\Omega_j^x \big(\tilde{T}_{k,\alpha}^3((\p_t^2-\Delta)u)(t, x+\hat{v}t) \big),
\ee
\[
E_{k,j}^3[u](t,x+\hat{v}t)= \Omega_j^x   (|t|-|x+\hat{v}t|)  \widehat{c}_{\alpha;i}^{ 1}(t, x+\hat{v}t)   \tilde{T}^2_{k,\alpha}( (\p_t^2-\Delta) u^\alpha)(t, x+\hat{v}t) 
\]
\be\label{nove615}
 +   \Omega_j^x     (|t|-|x+\hat{v}t|)\widehat{e}_{ 1}^{\alpha;1}(t, x+\hat{v}t) \tilde{T}_{k,\alpha}^3((\p_t^2-\Delta)u)(t, x+\hat{v}t),
\ee
\be\label{nove616}
E_{k,j}^4[u](t,x+\hat{v}t)=\widehat{c}^{j;2}_{\alpha;2}(t, x+\hat{v}t)  \tilde{T}^2_{k,\alpha}( (\p_t^2-\Delta) u^\alpha)(t, x+\hat{v}t) +  \widehat{e}_{j,1}^{\alpha;2}(t, x+\hat{v}t)\tilde{T}_{k,\alpha}^3((\p_t^2-\Delta)u)(t, x+\hat{v}t).
\ee
Moreover, from the equality(\ref{april3eqn21}), we can split $L_{k,j}^2$ in (\ref{nove612}) into two parts on the Fourier side as follows,
\be\label{april3eqn31}
L_{k,j}^2[u](t,x+\hat{v}t) =\widetilde{L_{k,j}}[u](t,x+\hat{v}t,v)+ E_{k,j}^5[u](t,x+\hat{v}t,v),
\ee
\[
\widetilde{L_{k,j}}[u](t,x+\hat{v}t,v)= \sum_{\alpha\in \mathcal{B}, |\alpha|\leq 3} \sum_{\mu\in\{+,-\}} \sum_{i=0,1}  \int_{\R^3} e^{i(x+\hat{v}t)\cdot \xi-it \mu|\xi|}\frac{- t\mu \xi\cdot \tilde{V}_j}{|x+\hat{v}t||\xi|}      \big( c_{\mu}\widehat{c}^{ 1}_{\alpha;0}(t, x+\hat{v}t) \tilde{m}_{k,\alpha}^0(\xi)  \]
\be\label{april3eqn40}
- c_{\mu}\widehat{c}^{ 1}_{\alpha;2}(t, x+\hat{v}t) \tilde{m}_{k,\alpha}^2(\xi)|\xi|^2+  \h \widehat{c}^{ 1}_{\alpha;1}(t, x+\hat{v}t) \tilde{m}_{k,\alpha}^1(\xi)|\xi|\big) \widehat{P_{\mu}[h_l^{\alpha}]}(t,\xi)  d \xi,
\ee
 \[
E_{k,j}^5[u](t,x+\hat{v}t,v)= \sum_{\alpha\in \mathcal{B}, |\alpha|\leq 3} \sum_{\mu\in\{+,-\}} \sum_{i=0,1}  \int_{\R^3} e^{i(x+\hat{v}t)\cdot \xi-it \mu|\xi|}\frac{-1}{|x+\hat{v}t|} \big(x -t\mu\frac{\xi}{|\xi|}  \big)\cdot \tilde{V}_j  \big( c_{\mu}\widehat{c}^{ 1}_{\alpha;0}(t, x+\hat{v}t) \tilde{m}_{k,\alpha}^0(\xi)  \]
\be\label{april4eqn2}
  - c_{\mu}\widehat{c}^{ 1}_{\alpha;2}(t, x+\hat{v}t) \tilde{m}_{k,\alpha}^2(\xi)|\xi|^2+  \h \widehat{c}^{ 1}_{\alpha;1}(t, x+\hat{v}t) \tilde{m}_{k,\alpha}^1(\xi)|\xi|\big) \widehat{P_{\mu}[h_l^{\alpha}]}(t,\xi)  d \xi,
\ee
where $l\in\{1,2\}$ is uniquely determined  by the type of input $u\in\{E,B\}$.

To sum up,  our desired decomposition (\ref{jan6eqn10}) holds from the decompositions (\ref{nove510}) and  (\ref{april3eqn31}). 
\end{proof}

Motivated from the decomposition (\ref{jan6eqn10}) in Lemma \ref{tradingreg4mod},    we decompose $H_{k,d}$  similarly (see (\ref{nov241})) into three terms after trading the spatial derivatives   for the decay of the distance with respect to the light cone as follows, 
\be\label{jan9eqn31}
H_{k,d}(t)= \widetilde{H}_{k,d}^1(t)+  \widetilde{H}_{k,d}^2(t)+ \widetilde{\textup{Error}}_{k,d}(t),\quad \widetilde{\textup{Error}}_{k,d}(t)=\sum_{l=1,\cdots,5}\widetilde{\textup{Error}}_{k,d}^l(t),
 \ee
where
\[
\widetilde{H}^1_{k,d}(t):= \sum_{
\begin{subarray}{c}
\iota+\kappa=\beta,  |\iota|=1,   j=1,2,3,
i=1,\cdots,7  \\
\Lambda^\iota \thicksim \psi_{\geq 1}(|v|)\widehat{\Omega}^v_j \textup{or}\,\psi_{\geq 1}(|v|) \Omega_j^x  \\
\end{subarray}}  \int_{1}^{t} \int_{\R^3}\int_{\R^3}  \big(\omega_{\beta}^{\alpha}(s, x, v)\big)^2  g^\alpha_\beta (s ,x,v) \big(\sqrt{1+|v|^2}\tilde{d}(s, x,v)\big)^{1-c(\iota)}   \psi_{\leq -10}(1-|x+\hat{v}s|/|s|) \]
\be\label{jan6eqn11}
 \times\psi_{\geq 1}(|v|) \varphi_{d}\big( |s|-|x+\hat{v}s|  \big)  \alpha_i(v)\cdot \big(  L^1_{k,j}[E](s,x+\hat{v}s)   + \hat{v}\times \big(L^1_{k,j}[B](s,x+\hat{v}s)    \big)\big)  \alpha_i(v) \cdot D_v  
g^\alpha_\kappa(s,x, v) d x d v d s,
\ee
\[
\widetilde{H}^2_{k,d}:=  \sum_{
\begin{subarray}{c}
\iota+\kappa=\beta,  |\iota|=1 , j=1,2,3,
i=1,\cdots,7  \\
  \Lambda^\iota \thicksim \psi_{\geq 1}(|v|)\widehat{\Omega}^v_j \textup{or}\,\psi_{\geq 1}(|v|) \Omega_j^x  \\
\end{subarray}}   \int_{1}^{t} \int_{\R^3}\int_{\R^3}  \big(\omega_{\beta}^{\alpha}( s,x, v)\big)^2  g^\alpha_\beta (s ,x,v) \big(\sqrt{1+|v|^2}\tilde{d}(s, x,v)\big)^{1-c(\iota)}    \psi_{\leq -10}(1-|x+\hat{v}s|/|s|) \]
\be\label{april3eqn41}
 \times\psi_{\geq 1}(|v|)   \varphi_{d}\big( |s|-|x+\hat{v}s|  \big) \alpha_i(v)\cdot \big(    \widetilde{L_{k,j}}[E](s,x+\hat{v}s,v)  + \hat{v}\times \big(  \widetilde{L_{k,j}}[B](s,x+\hat{v}s,v)  \big)\big)  \alpha_i(v) \cdot D_v  
g^\alpha_\kappa(s,x, v) d x d v d s,
\ee
\[
\widetilde{\textup{Error}}_{k,d}^l:=  \sum_{
\begin{subarray}{c}
\iota+\kappa=\beta,|\iota|=1 , j=1,2,3,
i=1,\cdots,7  \\
   \Lambda^\iota \thicksim \psi_{\geq 1}(|v|)\widehat{\Omega}^v_j \textup{or}\,\psi_{\geq 1}(|v|) \Omega_j^x  \\
\end{subarray}} \int_{1}^{t } \int_{\R^3}\int_{\R^3}  \big(\omega_{\beta}^{\alpha}(s, x, v)\big)^2  g^\alpha_\beta (s ,x,v) \big(\sqrt{1+|v|^2}\tilde{d}(s, x,v)\big)^{1-c(\iota)}   \psi_{\leq -10}(1-|x+\hat{v}s|/|s|) \]
\be\label{jan6eqn12}
 \times \psi_{\geq 1}(|v|) \varphi_{d}\big( |s|-|x+\hat{v}s|  \big) \alpha_i(v)\cdot \big( \textup{E}_{k,j}^l [E](s,x+\hat{v}s,v) + \hat{v}\times    \textup{E}_{k,j}^l [B](s,x+\hat{v}s,v)\big)  \alpha_i(v) \cdot D_v  
g^\alpha_\kappa(s,x, v) d x d v d s.
\ee

We summarize the estimate of error term $\widetilde{\textup{Error}}_{k,d}(t)$ in the following Lemma. 
\begin{lemma}\label{remainderestimatejan6}
 The following estimate holds, 
\be\label{jan9eqn2}
 |\widetilde{\textup{Error}}_{k,d}|\lesssim   \big( 2^{-3k-3d} +2^{-k-d}\big) 2^{-4k_{+}}\int_{1}^{t }   (1+|s|)^{-1}   E_{\textup{low}}^{eb}(s)  E_{\beta;d}^{\alpha}(s) d s.
  \ee 
\end{lemma}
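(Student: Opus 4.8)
\textbf{Proof plan for Lemma \ref{remainderestimatejan6}.}

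The plan is to estimate each of the five error terms $\widetilde{\textup{Error}}^l_{k,d}(t)$, $l\in\{1,\dots,5\}$, by the $L^2_{x,v}$--$L^2_{x,v}$--$L^\infty_{x,v}$ trilinear estimate, using the localized energy $E^\alpha_{\beta;d}(s)$ to absorb the first two factors and the decay of the pulled-back electromagnetic field multipliers to absorb the third. The key input is the structure exhibited in Lemma \ref{tradingreg4mod}: every error term $\textup{E}^l_{k,j}[u](s,x+\hat v s,v)$ is built from the multiplier operators $\tilde T^i_{k,\alpha}$ of Lemma \ref{tradethreetimes} acting on $\p_t^i u^\alpha$ or on $(\p_t^2-\Delta)u$, multiplied by coefficients carrying negative powers of $||s|-|x+\hat v s||\sim 2^d$ (see \textup{(\ref{nove353})}, \textup{(\ref{jan9eqn10})}, \textup{(\ref{jan9eqn11})}, \textup{(\ref{nove643})}). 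First I would record, from the symbol bounds \textup{(\ref{noveqn141})} and the decay estimate \textup{(\ref{march18eqn30})} in Lemma \ref{sharpdecaywithderivatives}, the pointwise bound on the frequency-$k$ pieces: $|P_k\circ\tilde T^i_{k,\alpha}(\p_t^i u)(s,x+\hat v s)|\lesssim (1+|s|)^{-1}(1+||s|-|x+\hat v s||)^{-1}2^{-ik-3k-4k_+}\|m\|E^{eb}_{\textup{low}}(s)$ for $i\le 2$, and similarly $|P_k\circ\tilde T^3_{k,\alpha}((\p_t^2-\Delta)u)|$ controlled after using the wave equation \textup{(\ref{maxwellvectorfield})} to rewrite $(\p_t^2-\Delta)u^\alpha$ as a quadratic expression which is lower order and contributes only a faster-decaying factor. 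Since on the support of the cutoffs $\psi_{\leq-10}(1-|x+\hat v s|/|s|)\varphi_d(|s|-|x+\hat v s|)$ one has $1+||s|-|x+\hat v s||\sim 2^d$, each coefficient power $(|t|-|x+\hat v t|)^{-3}$ etc. converts to $2^{-3d}$, and the extra $\Omega^x_j$-derivatives hitting $\tilde T^i_{k,\alpha}(\cdots)$ cost a factor $2^k$ each (a plain frequency-localized Fourier multiplier), giving at worst the combinations $2^{-3k-3d}$ and $2^{-k-d}$ after matching derivative counts against coefficient powers as in \textup{(\ref{nove510})}--\textup{(\ref{april4eqn2})}.

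Second, with these bounds in hand, I would bound
\[
|\widetilde{\textup{Error}}^l_{k,d}(t)|\lesssim \int_1^t \big\|\omega^\alpha_\beta(s)g^\alpha_\beta(s)\varphi_{[d-1,d+1]}\big\|_{L^2_{x,v}}\,\big\|\omega^\alpha_\beta(s)(1+|v|)^{c(\iota)-1}\alpha_i(v)D_v g^\alpha_\kappa(s)\varphi_{[d-1,d+1]}\big\|_{L^2_{x,v}}\, M^l_{k,d}(s)\,ds,
\]
where $M^l_{k,d}(s)$ is the $L^\infty_{x,v}$-norm of the coefficient times the electromagnetic multiplier, which by the previous paragraph is $\lesssim (2^{-3k-3d}+2^{-k-d})2^{-4k_+}(1+|s|)^{-1}E^{eb}_{\textup{low}}(s)$. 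Here the crucial point is that the $\sqrt{1+|v|^2}\,\tilde d(s,x,v)$ prefactor, when $c(\iota)=0$, is handled exactly as in the reduction of \textup{(\ref{jan3eqn1})}--\textup{(\ref{jan4eqn3})}: one replaces $D_v$ by the second decomposition \textup{(\ref{summaryoftwodecomposition})} so that the dangerous weight $\sqrt{1+|v|^2}$ is matched by the $|v|^{-1}$ in the coefficients $e_\rho$ from \textup{(\ref{sepeq932})}, while the ratio $\tilde d/(1+||s|-|x+\hat v s||)\sim (1+|v|)^{-1}$ (second part of \textup{(\ref{feb8eqn51})}) and the weight hierarchy \textup{(\ref{jan1eqn1})} show that the two $L^2$ factors are each bounded by $\big(E^\alpha_{\beta;d}(s)\big)^{1/2}$, with the localized derivative term $\Lambda^\rho g^\alpha_\kappa$ landing in $E^\alpha_{\beta;d}(s)$ as well; when $c(\iota)=1$ the prefactor is trivial and the estimate is even easier. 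Summing in the localization bands $\varphi_{[d-1,d+1]}$ and using Cauchy–Schwarz in $s$ then yields
\[
|\widetilde{\textup{Error}}_{k,d}(t)|\lesssim (2^{-3k-3d}+2^{-k-d})2^{-4k_+}\int_1^t (1+|s|)^{-1} E^{eb}_{\textup{low}}(s)\, E^\alpha_{\beta;d}(s)\, ds,
\]
which is the claimed \textup{(\ref{jan9eqn2})}.

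I expect the main obstacle to be the bookkeeping in term $\widetilde{\textup{Error}}^5_{k,d}$ (coming from $E^5_{k,j}[u]$ in \textup{(\ref{april4eqn2})}), where the coefficient $\tfrac{1}{|x+\hat v t|}(x-t\mu\xi/|\xi|)\cdot\tilde V_j$ is \emph{not} pointwise small: its size is genuinely $O(1)$ when $x$ and $t\mu\xi/|\xi|$ are not close. The resolution is that $E^5_{k,j}$ carries the extra coefficient $\widehat c^{\,1}_{\alpha;i}\sim(|t|-|x+\hat v t|)^{-4}$ or $\widehat e_1^{\alpha;1}\sim(|t|-|x+\hat v t|)^{-3}$ (from \textup{(\ref{jan9eqn10})}, \textup{(\ref{jan9eqn11})}), which provides $2^{-4d}$ or $2^{-3d}$ — one power more of modulation decay than the leading terms $\widetilde{L_{k,j}}$, which is precisely what compensates the lost smallness of the geometric coefficient on the relevant region; the $L^2$-factors are then estimated as above. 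A secondary technical point is that $\tilde T^3_{k,\alpha}$ acts on $(\p_t^2-\Delta)u$, so one must invoke \textup{(\ref{maxwellvectorfield})} to see this is lower order and bounded by $(1+|s|)^{-1}$ times a product of low-order energies, contributing the $\epsilon_1$ in $E^{eb}_{\textup{low}}(s)$ rather than creating a genuinely new, uncontrolled term; this is routine given the bilinear estimates already recorded.
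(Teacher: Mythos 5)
Your overall strategy matches the paper's: bound each $\widetilde{\textup{Error}}^l_{k,d}$ by an $L^2_{x,v}$--$L^2_{x,v}$--$L^\infty_{x,v}$ multilinear estimate, absorbing the two $L^2$ factors into $E^\alpha_{\beta;d}(s)$ via the weight hierarchy and the second part of (\ref{feb8eqn51}), and supplying the $L^\infty$ factor from pointwise decay of the coefficients times the multiplier operators (this is exactly Lemma \ref{tradeoffregularity} combined with (\ref{nov318})). Your treatment of $E^1_{k,j},\dots,E^4_{k,j}$ and of the $(\p_t^2-\Delta)u$ pieces is essentially the paper's.

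However, your resolution of the term you yourself flag as the obstacle, $E^5_{k,j}$, is wrong on two counts. First, the factual claim: $E^5_{k,j}$ does \emph{not} carry one more power of modulation decay than $\widetilde{L_{k,j}}$ --- compare (\ref{april3eqn40}) with (\ref{april4eqn2}); both contain exactly the same coefficients $\widehat{c}^{\,1}_{\alpha;i}\sim(|t|-|x+\hat v t|)^{-4}$. The two terms differ only in the geometric factor ($-t\mu\,\xi\cdot\tilde V_j/(|x+\hat v t||\xi|)$ versus $-(x-t\mu\xi/|\xi|)\cdot\tilde V_j/|x+\hat v t|$), both of which are $O(1)$ in size; $\widetilde{L_{k,j}}$ is "leading" not because it is larger but because its factor is a null-form symbol that must be fed into the oscillation-in-time machinery of $\widetilde H^2_{k,d}$. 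Second, even granting extra powers of $2^{-d}$, they cannot rescue the estimate: after the second decomposition of $D_v$, the coefficients $e_\rho$ cost a full factor $(1+s)$ (estimate (\ref{nov318})), so each $L^\infty$ multiplier bound must supply $(1+s)^{-2}$ to leave an integrable $(1+s)^{-1}$ in the $ds$ integral. A crude pointwise bound on $E^5_{k,j}$ only yields $(1+s)^{-1}$, and no amount of $2^{-d}$ smallness substitutes for the missing time decay. The correct mechanism, used in the paper for (\ref{april4eqn31}), is integration by parts in $\xi$ in the $\tilde V_j$ direction: since $\hat v\cdot\tilde V_j=0$, one has $(x-t\mu\xi/|\xi|)\cdot\tilde V_j\, e^{i(x+\hat v t)\cdot\xi-it\mu|\xi|}=-i\,\tilde V_j\cdot\nabla_\xi\big(e^{i(x+\hat v t)\cdot\xi-it\mu|\xi|}\big)$, and the resulting integral is bounded by the weighted norms in $E^{eb}_{\textup{low}}$ with the extra $(1+t)^{-1}$ gain. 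Without this step your proof of the lemma is incomplete.
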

\begin{proof}
Postponed to subsection \ref{proofoferrorterm}.
\end{proof}

From the equalities (\ref{nove611}), (\ref{april3eqn40}), (\ref{nove91}), and (\ref{jan5eqn1}), modulo the coefficients and the symbol of the Fourier multipliers, we know that both the leading term $L_{k,j}[u](t,x+\hat{v}t)$ and $\widetilde{L_{k,j}}[u](t,x+\hat{v}t,v)$ , which appears in $\widetilde{H}_{k,d}^1$ and $\widetilde{H}_{k,d}^2$ , and $\Omega_j^x(u_k)(t,x+\hat{v}t)$, which appears in $H_{k,d} $ , can be   viewed as a good derivative $\Omega_j^x$ acting on a Fourier multiplier operator. Motivated from this observation and  the decompositions   (\ref{nove91}) and (\ref{jan5eqn1}) for the good derivative ``$\Omega_j^x$'' acting on a Fourier multiplier,   we  define the following multilinear operator.  
\begin{definition}\label{trilinearoperators}
For any fixed  $t_1, t_2\in \mathbb{R}$,  $i\in\{1,\cdots,7\}$, $j\in\{1,2,3\}$, $\alpha\in\mathcal{B}$, $\beta,\iota,\kappa\in \mathcal{S}$, $\mu\in\{+,-\}$, s.t., $\iota+\kappa=\beta$, $|\iota|=1$, and $ \Lambda^\iota \sim \psi_{\geq 1}(|v|)\widehat{\Omega}^v_j \textup{or}\, \psi_{\geq 1}(|v|) \Omega_j^x $, any given Fourier multiplier $m(\xi)$, any given coefficients $a:\R_x\rightarrow \mathbb{C}$, s.t., $a'(x)=0$ if $|x|\leq 2^{-5},$ and $c:\R_t\times \R_x^3\times\R_v^3\rightarrow \mathbb{C}$, and any given profile $h(t,x)\in\{h_i^\alpha(t,x),i\in\{1,2\},\alpha\in\mathcal{B},|\alpha|\leq 10\}$,  we define three multilinear forms as follows, 
\[
T (m , a,  c , h):= \sum_{\Lambda^\iota \sim \psi_{\geq 1}(|v|)\widehat{\Omega}^v_j \textup{or}\, \psi_{\geq 1}(|v|) \Omega_j^x}   \int_{1}^{t } \int_{\R^3}\int_{\R^3}  \big(\omega_{\beta}^{\alpha}(s, x, v)\big)^2  g^\alpha_\beta (s ,x,v) \big(\sqrt{1+|v|^2}\tilde{d}(s, x,v)\big)^{1-c(\iota)}  
\]
 \be\label{jan6eqn1}
\times     C_d(s,x,v) \p_s T_{k}^\mu(\tilde{V}_j\cdot\xi m(\xi), h)(s,x+\hat{v}s,v) \alpha_i(v) \cdot D_v  
g^\alpha_\kappa(s,x, v) d x d v d s,
\ee
\[
H   (m , a,  c , h):= \sum_{\Lambda^\iota \sim \psi_{\geq 1}(|v|)\widehat{\Omega}^v_j \textup{or}\, \psi_{\geq 1}(|v|) \Omega_j^x}    \int_{1}^{t } \int_{\R^3}\int_{\R^3}  \big(\omega_{\beta}^{\alpha}(s, x, v)\big)^2  g^\alpha_\beta (s  ,x,v) \big(\sqrt{1+|v|^2}\tilde{d}(s, x,v)\big)^{1-c(\iota)} 
\]
\be\label{jan6eqn2}
\times   C_d(s,x,v) H_{k}^\mu(\tilde{V}_j\cdot\xi m(\xi), h)(s,x+\hat{v}s,v)  \alpha_i(v) \cdot D_v  
g^\alpha_\kappa(s,x, v) d x d v d s,
\ee
\[
K (m , a, c , h):=  \sum_{\Lambda^\iota \sim \psi_{\geq 1}(|v|)\widehat{\Omega}^v_j \textup{or}\, \psi_{\geq 1}(|v|) \Omega_j^x}   \int_{1}^{t } \int_{\R^3}\int_{\R^3}  \big(\omega_{\beta}^{\alpha}(s, x, v)\big)^2  g^\alpha_\beta (s ,x,v) \big(\sqrt{1+|v|^2}\tilde{d}(s, x,v)\big)^{1-c(\iota)}  
\]
\be\label{jan6eqn3}
\times     C_d(s,x,v) K_{k}^\mu(\tilde{V}_j\cdot\xi m(\xi), h)(s,x+\hat{v}s,v)  \alpha_i(v) \cdot D_v  
g^\alpha_\kappa(s,x, v) d x d v d s,
\ee
 where the bilinear operators $T_{k}^\mu(\cdot, \cdot)$, $H_{k}^\mu(\cdot, \cdot)$, $K_{k}^\mu(\cdot, \cdot)$ are defined in Definition \ref{definitionofbilinearoperators} and the coefficient $C_{d}(s,x,v)$ is defined as follows,
\be\label{2020feb16eqn11}
C_{d}(s,x,v):=a(||s|-|x+\hat{v}s||)  c(s,x,v) \psi_{\leq -10}(1-|x+\hat{v}s|/|s|)   \varphi_{d}(||s|-|x+\hat{v}s||) \psi_{\geq 1}(|v|) .
\ee
\end{definition}

Now our goal is to show that the following two Lemmas hold for the above defined three multilinear  operators.

\begin{lemma}\label{trilinearestimate2}
For any given Fourier multiplier $m(\xi)$,  any given coefficients $a:\R_x\rightarrow \mathbb{C}$ , s.t., $a'(x)=0$ if $|x|\leq 2^{-5},$ and $c:\R_t\times \R_x^3\times\R_v^3\rightarrow \mathbb{C}$,  and any given profile  $h(t,x)\in\{h_i^\alpha(t,x),i\in\{1,2\},\alpha\in\mathcal{B},|\alpha|\leq 10\}$, the following estimate holds, 
\[
|H   (m ,a, c , h)| + |K  (m ,a, c , h)|
\]
  \be\label{jan6eqn21}
 \lesssim    (2^{k+d}+2^{2k+2d}) 2^{-4k_{+}} \|a\|_{Y }  \|m(\xi)\|_{\mathcal{S}_k^\infty} \big[   
  \int_{1}^{t } (1+s)^{-1}  \|c(s,x,v)\|_{L^\infty_{x,v}}   E_{\textup{low}}^{eb}(s)E_{\beta;d}^{\alpha}(s) d s\big],
\ee
where$\| a\|_{Y }:=   \sup_{x\in \R } |a(x)|+ |xa'(x)|$. 
\end{lemma}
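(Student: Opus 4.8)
The plan is to reduce the estimate of $H(m,a,c,h)$ and $K(m,a,c,h)$ to the bilinear estimates already available for the Vlasov--wave interaction, after carefully bookkeeping the symbols of the operators $H_k^\mu$ and $K_k^\mu$. First I would examine the Fourier multipliers appearing in $H_k^\mu(\tilde V_j\cdot\xi\, m(\xi),h)$ and $K_k^\mu(\tilde V_j\cdot\xi\, m(\xi),h)$, cf.\ Definition~\ref{definitionofbilinearoperators}. In $H_k^\mu$ the denominator $\hat v\cdot\xi-\mu|\xi|$ is partially cancelled: using the lower bound $||\xi|-\mu\hat v\cdot\xi|\gtrsim |\xi|(1+|v|^2)^{-1}$ together with the cutoff $\psi_{>10}(t(|\xi|-\mu\hat v\cdot\xi))$, which forces $|\xi|-\mu\hat v\cdot\xi\gtrsim 2^{-10}/t$, one controls $1/(\hat v\cdot\xi-\mu|\xi|)$ by $\min\{(1+|v|^2)2^{-k},\,t\}$; moreover the numerator $\tilde V_j\cdot\xi$ supplies exactly the smallness $\lesssim|\xi|\angle(\mu v,\xi)$ needed, so after pairing with $\big(\sqrt{1+|v|^2}\tilde d\big)^{1-c(\iota)}$ the net symbol is bounded by $2^k\|m\|_{\mathcal S^\infty_k}$ times acceptable $v$-weights. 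In $K_k^\mu$ one gains a further factor $\min\{2^{k+d},2^{2k+2d}\}$-type smallness from the derivative/support restriction $\psi_{>10}'$ or $\psi_{\le10}$ of $t(|\xi|-\mu\hat v\cdot\xi)$, combined with the localization $\varphi_d(||s|-|x+\hat v s||)$ inside $C_d$; this is the origin of the prefactor $(2^{k+d}+2^{2k+2d})2^{-4k_+}$ in \eqref{jan6eqn21}.

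Next I would perform the time integration estimate. For each fixed $s$, the spatial--velocity integral in \eqref{jan6eqn2} and \eqref{jan6eqn3} is a trilinear expression in $\omega_\beta^\alpha g_\beta^\alpha$, the localized electromagnetic quantity $H_k^\mu(\cdot)$ or $K_k^\mu(\cdot)$ evaluated at $x+\hat v s$, and $\alpha_i(v)\cdot D_v g_\kappa^\alpha$. I would use the second decomposition of $D_v$ from \eqref{summaryoftwodecomposition} to replace $\alpha_i(v)\cdot D_v g_\kappa^\alpha$ by $\sum_\rho\alpha_i(v)\cdot e_\rho(s,x,v)\Lambda^\rho g_\kappa^\alpha$ with coefficients controlled by \eqref{sepeq932}; the potentially large $e_\rho$ (size $|v|$ in the rotational direction) is absorbed by the hierarchy of weights \eqref{jan1eqn1} and the key estimate \eqref{jan4eqn1}, which exploits that $\Lambda^\rho\sim\Omega_i^x$ is a good derivative and that $\tilde d/(1+||t|-|x+\hat v t||)\sim(1+|v|)^{-1}$. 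Then an $L^2_{x,v}$--$L^\infty_{x,v}$--$L^2_{x,v}$ Hölder split, using the linear decay estimate \eqref{noveqn78} in Lemma~\ref{sharpdecaywithderivatives} for the $L^\infty$ factor (which contributes the $(1+s)^{-1}\|c\|_{L^\infty_{x,v}}E^{eb}_{\textup{low}}(s)$), together with the definition of $E^\alpha_{\beta;d}(s)$ in \eqref{jan11eqn80} for the two $L^2$ factors, yields the integrand bound; integrating in $s$ gives \eqref{jan6eqn21}. The cutoff $\psi_{\le-10}(1-|x+\hat v s|/|s|)$ confines $s-|x+\hat v s|$ to size $\sim s$, so $d\le \log s + O(1)$ and all $s$-powers are benign; the $\|a\|_Y$ norm simply bounds $a(||s|-|x+\hat v s||)$ pointwise inside $C_d$.

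Finally I would assemble the two pieces. Since $H_k^\mu$ and $K_k^\mu$ are structurally identical except for the symbol bounds above, the same Hölder argument applies verbatim to $K(m,a,c,h)$ once the extra $2^{k+d}+2^{2k+2d}$ smallness is recorded, so the two estimates in \eqref{jan6eqn21} are proved simultaneously. I expect the main obstacle to be the symbol analysis in the first step: one must check that the combined multiplier $\tilde V_j\cdot\xi\,m(\xi)\psi_k(\xi)/(\hat v\cdot\xi-\mu|\xi|)$, after multiplication by the various cutoffs in $t(|\xi|-\mu\hat v\cdot\xi)$, still lies in a weighted $\mathcal S^\infty_k$ class uniformly in $v$, with the $v$-derivatives (needed for any integration by parts in $v$ that arises when $D_v$ hits the phase) producing only polynomial-in-$|v|$ losses that the weight $\omega_\beta^\alpha$ can absorb --- this is where the interplay between the null structure \eqref{jan4eqn8} and the weight hierarchy is most delicate. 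The term $T(m,a,c,h)$ containing $\p_s T_k^\mu$ is deliberately left out of this lemma: it is the genuine ``integration by parts in time'' piece and is handled separately (in Lemma~\ref{trilinearestimate1}), since it requires transferring the $s$-derivative onto the weights, the profile $g_\beta^\alpha$, and the non-bulk source terms, which is exactly why $E^\alpha_{\beta;d}$ includes the $\Lambda^\rho(\textit{h.o.t}-\textit{bulk})$ and $\Lambda^\rho(\textit{l.o.t})$ contributions.
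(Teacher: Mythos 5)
Your high-level skeleton (second decomposition of $D_v$, weight hierarchy to tame $e_\rho$, then an $L^2_{x,v}$--$L^\infty_{x,v}$--$L^2_{x,v}$ H\"older split with $E^\alpha_{\beta;d}$ absorbing the two $L^2$ factors) matches the paper. But the substance of this lemma is the pointwise decay of the two operators $H_k^\mu(\tilde V_j\cdot\xi\,m(\xi),h)$ and $K_k^\mu(\tilde V_j\cdot\xi\,m(\xi),h)$, and there your proposal has a genuine gap. You propose to get the $L^\infty$ factor from Lemma \ref{sharpdecaywithderivatives}, i.e.\ estimate (\ref{noveqn78}); that lemma controls Fourier multipliers of the electromagnetic field $u\in\{E^\alpha,B^\alpha\}$ itself, whereas $H_k^\mu$ is built from $\p_t\widehat h$ divided by the singular factor $\hat v\cdot\xi-\mu|\xi|$, and $K_k^\mu$ is $\widehat h$ restricted to the time-resonant set. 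Neither is of the form covered by (\ref{noveqn78}). The paper instead proves two dedicated estimates (Lemmas \ref{estimateremainder1st} and \ref{auxillaryestimate5linear}): for $H_k^\mu$ the decay $(1+t)^{-2}2^{k}$ comes from the quadratic nature of $\p_t h$ as encoded in the $(1+t)^2\|\p_t\nabla_x(1+|\nabla_x|)^{-1}h\|_{X_n}$ component of $E^{eb}_{\textup{low}}$, combined with an angular dyadic decomposition in $\angle(\xi,\mu v)$; this $(1+t)^{-2}$ is essential because the coefficient $e_\rho$ costs a full factor of $(1+t)$ (estimate (\ref{nov318})), so a mere $(1+t)^{-1}$ bound would not close.

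For $K_k^\mu$ your proposed mechanism is incorrect: $K_k^\mu$ has no denominator $\hat v\cdot\xi-\mu|\xi|$ at all (see (\ref{noveqn96})), so the ``partial cancellation'' you describe is vacuous there, and the physical-space localization $\varphi_d(||s|-|x+\hat vs||)$ has nothing to do with the frequency-space support of $K_k^\mu$. The actual source of smallness is that the cutoffs $\psi'_{>10}$ and $\psi_{\le 10}$ confine $\xi$ to the set $|\xi|-\mu\hat v\cdot\xi\lesssim t^{-1}$, which forces $\angle(\xi,\mu v)\lesssim t^{-1/2}2^{-k/2}$ and $|v|\gtrsim t^{1/2}2^{k/2}$; the resulting volume of support yields $t^{-1}$ or $t^{-2}$ decay, and for the coefficient $e_\rho$ containing $(X_j+\hat V_j t)\cdot\tilde V_i$ one must additionally integrate by parts once in $\xi$ along $\tilde V_i$ (estimate (\ref{noveqn99})) to convert $x+t\hat v-it\mu\xi/|\xi|$ into a harmless $\nabla_\xi$. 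Your closing claim that ``the same H\"older argument applies verbatim to $K$'' because the two operators are ``structurally identical'' papers over exactly this: the matching of the three distinct $K_k^\mu$ estimates to the three shapes of $e_\rho$ in (\ref{sepeq932}) is where the factor $(2^{k+d}+2^{2k+2d})$ is actually earned.
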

\begin{proof}
Postponed to subsection \ref{errormultilinearestimate}. 
\end{proof}

\begin{lemma}\label{trilinearestimate1}
For any given Fourier multiplier $m(\xi)$,  any given coefficients $a:\R_x\rightarrow \mathbb{C}$, s.t., $a'(x)=0$ if $|x|\leq 2^{-5},$ and $c:\R_t\times \R_x^3\times\R_v^3\rightarrow \mathbb{C}$, and   any given profile  $h(t,x)\in\{h_i^\alpha(t,x),i\in\{1,2\},\alpha\in\mathcal{B},|\alpha|\leq 10\}$, the following estimate holds, 
\[
|T  (m ,a, c , h)| \lesssim    (2^{k/2+d/2}+2^{2k+2d})2^{-4k_{+}}\|a\|_{Y}   \int_{1}^{t } (1+|s|)^{-1} \big( \|c(s,x,v)\|_{L^\infty_{x,v}} + s\|\p_s c(s,x,v)\|_{L^\infty_{x,v}}
\]
\be\label{jan6eqn20}
 +\| D_v c(s,x,v)	\|_{L^\infty_{x,v}}\big)  \|m(\xi)\|_{\mathcal{S}_k^\infty} E_{\textup{low}}^{eb}(s)\big(1+E_{\textup{low}}^{eb}(s)\big)  E_{\beta;d}^{\alpha}(s) d s    .
  \ee
 
\end{lemma}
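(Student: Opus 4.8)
\textbf{Proof proposal for Lemma \ref{trilinearestimate1}.}

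The plan is to estimate the multilinear form $T(m,a,c,h)$ defined in (\ref{jan6eqn1}) by exploiting the \emph{hidden null structure} encoded in the denominator $\hat{v}\cdot\xi-\mu|\xi|$ appearing in $T_k^\mu(\tilde{V}_j\cdot\xi\, m(\xi),h)$, see (\ref{noveqn89}). First I would recall from the discussion around (\ref{jan4eqn8}) that on the support of $\psi_{>10}(s(|\xi|-\mu\hat{v}\cdot\xi))$ the phase is non-stationary, so the symbol $\tilde{V}_j\cdot\xi/(\hat v\cdot\xi-\mu|\xi|)$ is controlled by $(1+|v|)$ uniformly in $\xi$, but this factor of $1+|v|$ is precisely what is compensated by the coefficients: the factor $(\sqrt{1+|v|^2}\tilde d)^{1-c(\iota)}$ together with the second decomposition of $D_v$ from (\ref{summaryoftwodecomposition}) and the weight ratios controlled by (\ref{jan4eqn1}) keep the total power of $|v|$ bounded. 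Since the $\p_s$ in front of $T_k^\mu$ is the time derivative of a single quantity, I would integrate by parts in $s$ once: this produces a boundary term at $s=1$ and $s=t$, a term where $\p_s$ falls on the weight-and-coefficient factor $\big(\omega_\beta^\alpha\big)^2 (\sqrt{1+|v|^2}\tilde d)^{1-c(\iota)} C_d(s,x,v)$, a term where $\p_s$ hits $\alpha_i(v)\cdot D_v g^\alpha_\kappa$, and a term where $\p_s$ hits $g^\alpha_\beta$ itself. The first two are estimated directly by $L^2_{x,v}$--$L^2_{x,v}$--$L^\infty_{x,v}$ H\"older, using the $Y$-norm of $a$, the bound $\|m\|_{\mathcal S^\infty_k}$, the decay of $T_k^\mu$, and the hypotheses $\|c\|_{L^\infty}$, $s\|\p_s c\|_{L^\infty}$, $\|D_v c\|_{L^\infty}$ that appear on the right side of (\ref{jan6eqn20}); here $T_k^\mu$ itself obeys a fixed-time $L^\infty$ estimate of size $\lesssim (1+s)^{-1}(2^{k/2+d/2}+2^{2k+2d})2^{-4k_+}\|m\|_{\mathcal S^\infty_k}\|a\|_Y E^{eb}_{\mathrm{low}}(s)$, obtained by combining the linear decay estimate (\ref{noveqn555}) with the dyadic localization in $d$ and the trade-of-derivatives idea.

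The nontrivial terms are (i) the one where $\p_s$ hits $g^\alpha_\beta$, and (ii) the terms where $\p_s$ hits $\p_s\widehat h$ inside $T_k^\mu$, i.e.\ the term $H_k^\mu$ of Definition \ref{definitionofbilinearoperators}; the latter is exactly why $H(m,a,c,h)$ and $K(m,a,c,h)$ are introduced and handled separately in Lemma \ref{trilinearestimate2}, so after the integration by parts I would simply cite (\ref{jan6eqn21}) for those pieces. For term (i), I would substitute the equation $\p_s g^\alpha_\beta = -K\cdot D_v g^\alpha_\beta + \textit{h.o.t}^\alpha_\beta + \textit{l.o.t}^\alpha_\beta$ from (\ref{sepeqn43}). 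The contributions of $\textit{h.o.t}^\alpha_\beta-\textit{bulk}^\alpha_\beta$ and $\textit{l.o.t}^\alpha_\beta$ are absorbed into the localized energy $E^\alpha_{\beta;d}(s)$ via its very definition (\ref{jan11eqn80}); the $-K\cdot D_v g^\alpha_\beta$ term is treated as a product of $g^\alpha_\beta$ with $D_v g^\alpha_\beta$ and the $L^\infty$ decay of $K$ from (\ref{noveqn78}), again landing in $E^\alpha_{\beta;d}(s)$ (with an extra $E^{eb}_{\mathrm{low}}(s)$ factor, which explains the $1+E^{eb}_{\mathrm{low}}(s)$ on the right of (\ref{jan6eqn20})); and the genuine $\textit{bulk}^\alpha_\beta$ piece inside $\p_s g^\alpha_\beta$ would, in principle, create a self-referential bulk-times-bulk term, but it carries an extra power of the small quantity $E^{eb}_{\mathrm{low}}$ and another localized weight, so it too is dominated by $E^{eb}_{\mathrm{low}}(s)\big(1+E^{eb}_{\mathrm{low}}(s)\big)E^\alpha_{\beta;d}(s)$. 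Throughout, the spatial weight $(1+|x|^2+(x\cdot v)^2+|v|^{20})$ from $\omega^\alpha_\beta$ is used to localize the $x$-integral and convert an $L^1_{x,v}$ bound (as in (\ref{jan4eqn2})) into $\|\cdot\|_{L^2_{x,v}}^2$-type quantities that appear in $E^\alpha_{\beta;d}(s)$.

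The main obstacle, and the step I would spend the most care on, is the fixed-time bound for the operator $T_k^\mu$ after the integration by parts, together with the bookkeeping of the $v$-weights: one must show that the loss of $1+|v|$ from the null denominator is genuinely cancelled by the good coefficients, \emph{uniformly} in the dyadic parameters $k$ and $d$, and that summing the resulting $2^{k/2+d/2}+2^{2k+2d}$ factors against the $2^{-3k-3d}+2^{-4k-4d}$ gain available from trading derivatives (Lemma \ref{tradethreetimes}) — or, in the regime $2^k\leq 2^{-d}$, from the elementary volume bound on the frequency support — yields a convergent sum in Lemma \ref{dec30proposition1}. A secondary subtlety is the presence of the cutoffs $\psi_{>10}'$ and $\psi_{\leq 10}$ produced when $\p_s$ hits the sharp-in-$s$ cutoff $\psi_{>10}(s(|\xi|-\mu\hat v\cdot\xi))$; on their support one has $s(|\xi|-\mu\hat v\cdot\xi)\sim 1$, which both localizes $|v|$ relative to $2^k$ and $2^d$ and supplies the extra decay needed, and this is exactly the content packaged into $K_k^\mu$ and hence into Lemma \ref{trilinearestimate2}. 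Assembling these pieces — the integration-by-parts identity (\ref{nove91})/(\ref{jan5eqn1}), the absorption of non-bulk remainders into $E^\alpha_{\beta;d}$, and the citation of Lemma \ref{trilinearestimate2} for the $H$- and $K$-type terms — gives the stated bound (\ref{jan6eqn20}).
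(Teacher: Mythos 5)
Your opening moves match the paper's: integrate by parts in $s$ to move $\p_s$ off $T_k^\mu$, substitute the equation (\ref{sepeqn43}) for $\p_s g^\alpha_\beta$, absorb the non-bulk remainders into $E^\alpha_{\beta;d}$, and control the coefficient/boundary terms via the $L^\infty$ decay of $T_k^\mu$. But there are two genuine gaps at exactly the points you identify as routine. First, the term where $\p_s g^\alpha_\beta$ produces $-K\cdot D_v g^\alpha_\beta$ is \emph{not} "a product of $g^\alpha_\beta$ with $D_v g^\alpha_\beta$": the integrand is $(K\cdot D_v g^\alpha_\beta)(\alpha_i(v)\cdot D_v g^\alpha_\kappa)$, which carries $|\alpha|+|\beta|+1>N_0$ derivatives of $g$ and therefore loses a derivative; the same happens in the companion term where $\p_s$ hits $D_v g^\alpha_\kappa$ and produces $D_v(K\cdot D_v g^\alpha_\kappa)$. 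Neither piece can be closed by H\"older alone. The paper's resolution (equation (\ref{jan11ean5})) is to add $\tilde{T}_1^1$ and $\tilde{T}_2^1$ and observe that, using $D_v\cdot K=0$ and $[D_{v_m},D_{v_n}]=0$, their sum is a perfect divergence $D_v\cdot[K\,g^\alpha_\beta\,\alpha_i(v)\cdot D_v g^\alpha_\kappa]$ plus lower-order commutators, after which one integrates by parts in $(x,v)$ and lands the derivative on the weight and on $T_k^\mu$. Without this cancellation your argument does not close.

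Second, you dismiss the bulk-times-bulk term ("it carries an extra power of the small quantity $E^{eb}_{\mathrm{low}}$") — but smallness is not the issue; the new bulk factor loses another weight of size $|v|$ (equivalently up to $\sqrt{1+|t|}$), and no power of $\epsilon_1$ repairs that. This term is the core difficulty and occupies all of Lemma \ref{bulktermbilinearestimate1} and subsection \ref{proofofbulklemma}: one must exhibit a \emph{second} null structure in the product $\Omega^x_{j'}u_{k_1}(s,x+\hat v s)\cdot T_k^\mu(\cdot,\cdot)$, decompose it as in (\ref{jan14eqn91}) into a piece gaining $s^{-1}$ by integration by parts in $\eta$ (using (\ref{nove261})) and a piece carrying an extra good derivative, run a four-region case analysis in $(x,v)$ via the cutoffs (\ref{march27eqn101})--(\ref{march27eqn104}) together with the weight $\phi(t,x,v)$, and perform a second round of trading spatial derivatives to sum over the new frequency $k_1$ (Lemma \ref{auxillarybilinearlemma11}). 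Note also that the factor $2^{k/2+d/2}$ in (\ref{jan6eqn20}) — as opposed to the $2^{k+d}$ of Lemma \ref{trilinearestimate2} — is produced precisely by this analysis, so your proposal gives no account of where the claimed prefactor comes from. A smaller point: $H(m,a,c,h)$ and $K(m,a,c,h)$ are not generated by your integration by parts inside $T$; they are siblings of $T$ arising from the decomposition (\ref{nove91}) of $\Omega^x_j T_k[u]$ performed before Lemma \ref{trilinearestimate1} is invoked, so citing Lemma \ref{trilinearestimate2} at that stage of your argument is a structural confusion, albeit a harmless one.
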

\begin{proof}
Postponed to section \ref{proofofmainlemmalinear}. 
\end{proof}

Assuming that  the estimates in   Lemma \ref{trilinearestimate1},   Lemma \ref{trilinearestimate2}, and   Lemma \ref{remainderestimatejan6}  hold,   we can prove the desired estimate (\ref{nov401}) in Lemma \ref{firstprop1} and the desired estimate (\ref{nov402}) in Lemma \ref{secondprop1}. 

\vo

\noindent \textit{Proof of Lemma \textup{\ref{firstprop1}}}:

Recall (\ref{nov241}). From the equality (\ref{nove91}) in Lemma \ref{integrationbypartsintimephysical}, we know that $H_{k,d}$ is   a linear combination of the trilinear forms $T(1,1, a_i(v), h_i)$, $H(1,1, a_i(v), h_i)$, and $K(1,1, a_i(v), h_i)$, $i\in\{1,2\}$, where $a_i(v)$, $i\in\{1,2\}$, are some explicit coefficients that satisfies the following estimate, 
\[
\sum_{i=1,2}\|a_i(v)\|_{L^\infty_v}+ \|(1+|v|)\nabla_v a_i(v) \|_{L^\infty_v}\lesssim 1. 
\]
Therefore, the desired estimate (\ref{nov401}) follows directly from the estimate (\ref{jan6eqn20}) in Lemma \ref{trilinearestimate1} and the estimate (\ref{jan6eqn21}) in Lemma \ref{trilinearestimate2}.
\qed

\vo

\noindent \textit{Proof of Lemma \textup{\ref{secondprop1}}}:

Note that  we have $d\geq 10$ for the case we are considering.    Recall the decomposition (\ref{jan9eqn31}) and the equations (\ref{jan6eqn11}) and (\ref{april3eqn41}).  From the equalities (\ref{nove91}) and (\ref{jan5eqn1}) in Lemma \ref{integrationbypartsintimephysical}, the detailed formulas of $L^1_{k,j}[u](t,x+\hat{v}t)$ in  (\ref{nove611}) and $\widetilde{L_{k,j}}[u](t,x+\hat{v}t)$ in  (\ref{april3eqn40}) and the formulas of the coefficients $\widehat{c}^{ 1}_{\alpha;i}(t, x+\hat{v}t)$, $i\in\{1,2,3\}$, in (\ref{jan9eqn10}),  we know that we can write $\widetilde{H}_{k,d}^1$ and $\widetilde{H}_{k,d}^2$ as   linear combinations of multilinear forms as follows,
\[
\widetilde{H}_{k,d}^1= \sum_{\alpha\in \mathcal{B}, |\alpha|\leq 3}\sum_{i=0,1,2, l=1,2}T\big(\hat{m}_{\alpha}^i(\xi), |x|^{-3}\psi_{[d-2,d+2]}(x),    {\tilde{c}_{\alpha}^{i}}(t,x+\hat{v}t) a^1_{i,l}(v) , h_l^\alpha(t)\big)+ H \big(\hat{m}_{\alpha}^i(\xi), |x|^{-3}\psi_{[d-2,d+2]}(x),\]
\[
  {\tilde{c}_{\alpha}^{i}}(t,x+\hat{v}t) a^1_{i,l}(v), h_l^\alpha(t)\big) 
+ K\big(\hat{m}_{\alpha}^i(\xi), |x|^{-3}\psi_{[d-2,d+2]}(x),   {\tilde{c}_{\alpha}^{i}}(t,x+\hat{v}t)a^1_{i,l}(v), h_l^\alpha(t)\big),
\]
\[
\widetilde{H}_{k,d}^2= \sum_{\alpha\in \mathcal{B}, |\alpha|\leq 3}\sum_{i=0,1,2, l=1,2}T \big(\hat{m}_{\alpha}^i(\xi)|\xi|^{-1}, |x|^{-4}\psi_{[d-2,d+2]}(x),   \widehat{c_{\alpha}^i}(t,x+\hat{v}t)a^2_{i,l}(v) , h_l^\alpha(t)\big)+ H  \big(\hat{m}_{\alpha}^i(\xi)|\xi|^{-1} ,\]
\[
   |x|^{-4}\psi_{[d-2,d+2]}(x), \widehat{c_{\alpha}^i}(t,x+\hat{v}t) a^2_{i,l}(v), h_l^\alpha(t)\big) 
+ K \big(\hat{m}_{\alpha}^i(\xi)|\xi|^{-1}, |x|^{-4}\psi_{[d-2,d+2]}(x),    \widehat{c_{\alpha}^i}(t,x+\hat{v}t) a^2_{i,l}(v) , h_l^\alpha(t)\big),
\]
where the symbols $\hat{m}_{\alpha}^i(\xi)$ and the coefficients $\widehat{c^i_\alpha}(t,x+\hat{v}t)$, $i\in\{0,1,2\},$  are defined as follows, 
\be\label{march16eqn2}
\hat{m}_{\alpha}^0(\xi)= \tilde{m}_{\alpha}^0(\xi),\quad  \hat{m}_{\alpha}^1(\xi)=\tilde{m}_{\alpha}^1(\xi) |\xi|,\quad \hat{m}_{\alpha}^2(\xi)=\tilde{m}_{\alpha}^2(\xi) |\xi|^2,\,\, \widehat{c_{\alpha}^i}(t,x+\hat{v}t):= \frac{t}{|x+\hat{v}t|} {\tilde{c}_{\alpha}^{i}}(t,x+\hat{v}t)\psi_{\leq -5}(1-|x+\hat{v}t|/|t|),
\ee
and $a^n_{i,l}(v)$, $i\in\{0,1,2\}, l,n\in\{1,2\}$, are some explicit coefficients that satisfy the following estimate, 
\[
\sum_{i=0,1,2,n,l=1,2}\|a^n_{i,l}(v)\|_{L^\infty_v} +\|(1+|v|)\nabla_v a^n_{i,l}(v) \|_{L^\infty_v} \lesssim 1. 
\]
From (\ref{march16eqn2}) and the estimate (\ref{noveqn141}), we know that the following estimate holds, 
\be\label{jan9eqn3}
\sum_{i=0,1,2} \|\hat{m}_{\alpha}^i(\xi)\|_{\mathcal{S}^\infty_k}\lesssim 2^{-3k}.
\ee 
Recall the definition of $Y$-norm in Lemma \ref{trilinearestimate2}. We have 
\be\label{jan9eqn4}
\| |x|^{-3} \psi_{[d-2,d+2]}(x) \|_{Y }\lesssim 2^{-3d},\quad \| |x|^{-4}\psi_{[d-2,d+2]}(x)\|_{Y }\lesssim 2^{-4d}, \quad  \textup{when\,} d\geq 5.
\ee
 From the above estimates (\ref{jan9eqn3}) and (\ref{jan9eqn4}) and the estimate (\ref{april3eqn1}) for the coefficients $ {\tilde{c}_{\alpha}^{i}}(t,x,v)$, $i\in\{0,1,2\}$, we know  that  the desired estimate (\ref{nov402}) follows directly from the estimate (\ref{jan6eqn20}) in Lemma \ref{trilinearestimate1}, the estimate (\ref{jan6eqn21}) in Lemma \ref{trilinearestimate2}, and the estimate  (\ref{jan9eqn2}) in Lemma \ref{remainderestimatejan6}.
\qed

To sum up, we reduce the proofs of Lemma \ref{firstprop1} and   Lemma \ref{secondprop1}   to the proofs of    Lemma \ref{remainderestimatejan6}, Lemma \ref{trilinearestimate2}, and Lemma \ref{trilinearestimate1}.  We will prove Lemma \ref{remainderestimatejan6} and Lemma \ref{trilinearestimate2}  in next two subsections. For clarity, the proof of  Lemma \ref{trilinearestimate1}, which is more complicated,  is postponed to section \ref{proofofmainlemmalinear}.

 \subsection{Proof of Lemma \ref{remainderestimatejan6}}\label{proofoferrorterm}
In this subsection, we estimate the error term which arises from the process of trading the spatial derivative for the decay of modulations and finish the proof of Lemma \ref{remainderestimatejan6}. The main ingredient of the proof is the following Lemma. 

\begin{lemma}\label{tradeoffregularity}

The following estimate holds for any $j\in\{1,2,3\},$  $i\in\{1,\cdots,5\}$, $\rho\in\mathcal{K}, |\rho|=1$, $d\in \mathbb{N}_{+}, d\geq 5$,
\[
 \| (1+|v|)^{1-c(\rho)}e_{\rho}(t,x,v) E_{k,j}^i[u](t,x+\hat{v}t,v)\varphi_{d}\big( |t|-|x+\hat{v}t|  \big)\psi_{\leq -10}(1-|x+\hat{v}t|/|t|) \|_{L^\infty_{x,v}}
\]
\be\label{nov291} 
  \lesssim  (1+t)^{-1} \big( 2^{-4d-4k} +2^{-2d-2k}\big) 2^{k-4k_{+}}  E_{\textup{low}}^{eb}(t). 
\ee
\end{lemma}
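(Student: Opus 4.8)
Recall that the error terms $E^i_{k,j}[u]$, $i\in\{1,\dots,5\}$, are the by-products of the trading-regularity procedure of Lemma~\ref{tradingreg4mod}, with explicit formulas in (\ref{nove613})--(\ref{nove616}) and (\ref{april4eqn2}). Each of them is a product of an explicit coefficient---one of $\widehat{c}^{j;2}_{\alpha;i}$, $\widehat{e}^{\alpha;2}_{j,1}$, $\widehat{e}^{\alpha}_{j,2}$, $\widehat{c}^{1}_{\alpha;i}$, or $\widehat{c}^{1}_{\alpha;i}$ times the bounded factor $(x-t\mu\xi/|\xi|)\cdot\tilde{V}_j/|x+\hat{v}t|$---with a frequency-$2^k$-localized Fourier multiplier operator, possibly preceded by one good derivative $\Omega^x_j$, applied to $u^\alpha$, $\p_t u^\alpha$, $\Delta u^\alpha=|\nabla|(|\nabla|u^\alpha)$, to the profile $h^\alpha_l$ itself, or to $(\p_t^2-\Delta)u$ (resp.\ $(\p_t^2-\Delta)u^\alpha$). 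The first step is to record that on the support of the cutoffs in (\ref{nov291}) one has $|x+\hat{v}t|\sim|t|\sim t+|x+\hat{v}t|$, $|x|\lesssim t$, and $||t|-|x+\hat{v}t||\sim 2^d$, so that from (\ref{april3eqn1}), (\ref{nove353}), (\ref{jan9eqn10}) and (\ref{jan9eqn11}) every coefficient is bounded, schematically, by $2^{-3d}(1+t)^{-1}$ (for $\widehat{c}^{j;2}_{\alpha;i}$), by $2^{-2d}(1+t)^{-1}$ (for $\widehat{e}^{\alpha;2}_{j,1}$), by $2^{-2d}$ (for $\widehat{e}^{\alpha}_{j,2}$), or by $2^{-4d}$ (for $\widehat{c}^{1}_{\alpha;i}$), the extra $(1+t)^{-1}$ in the first two coming from $|\nabla_x\tilde{c}^i_\alpha|+|\nabla_x e_\alpha|\lesssim(|t|+|x|)^{-1}$. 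The prefactor $(1+|v|)^{1-c(\rho)}e_\rho$ I would handle by the bound on $e_\rho$ in (\ref{sepeq932}), exactly as in (\ref{jan15eqn2}), using the second estimate of (\ref{feb8eqn51}) to trade the resulting $\tilde{d}$-factor for $1+||t|-|x+\hat{v}t||\sim 2^d$; this costs at most one power of $2^d$, which is absorbed by the $2^{-2d}$--$2^{-4d}$ margins above.

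It then remains to bound the Fourier-multiplier parts pointwise, and I would split the five terms into two families. For $E^1_{k,j}[u]$ and $E^5_{k,j}[u]$, which involve only $u^\alpha$, $\p_t u^\alpha$, $\Delta u^\alpha$ or $h^\alpha_l$, I would invoke the pointwise decay estimates of Lemma~\ref{sharpdecaywithderivatives}---(\ref{noveqn78}) for the $u^\alpha$-piece and (\ref{march18eqn30}) for the $\p_t u^\alpha$- and $|\nabla|u^\alpha$-pieces---and, for $E^5_{k,j}[u]$, the linear decay estimate (\ref{noveqn555}) of Lemma~\ref{twistedlineardecay} applied to the profile $h^\alpha_l$ with spatial variable $x+\hat{v}t$. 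In each case the symbol bounds (\ref{noveqn141}) supply a factor $2^{-3k}$ (or $2^{-4k}$ after absorbing the $|\nabla|$'s), while a further $2^{k-4k_{+}}$ comes from the $2^{k-4k_{+}}\|m\|_{\mathcal{S}^\infty_k}$ in (\ref{march18eqn30}) or from the $2^k\|m\|_{\mathcal{S}^\infty_k}$ of (\ref{noveqn555}), the required profile norms being controlled by $\|h^\alpha_l\|_{X_0}+\|h^\alpha_l\|_{X_1}\lesssim E_{\textup{low}}^{eb}(t)$. Multiplying by the coefficient bounds, these contributions turn out to be stronger than what (\ref{nov291}) asks by at least one extra power of $2^{-d}$ or $(1+t)^{-1}$, so they cause no difficulty.

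The substantive part is the second family $E^2_{k,j}[u]$, $E^3_{k,j}[u]$, $E^4_{k,j}[u]$, each containing $\tilde{T}^2_{k,\alpha}((\p_t^2-\Delta)u^\alpha)$ or $\tilde{T}^3_{k,\alpha}((\p_t^2-\Delta)u)$, for which the linear field decay is unavailable. Here I would substitute the Maxwell system: by (\ref{maxwellvectorfield}) and (\ref{octeqn1089})--(\ref{octeqn1055}) one has $(\p_t^2-\Delta)u^\alpha=\mathcal{N}^\alpha_l=|\nabla|\,e^{-it|\nabla|}\p_t h^\alpha_l$, so that $\tilde{T}^{i'}_{k,\alpha}((\p_t^2-\Delta)u^\alpha)$, possibly with an $\Omega^x_j$ in front, becomes a Fourier multiplier with symbol of the form $\tilde{m}^{i'}_{k,\alpha}(\xi)|\xi|$ (times $\tilde{V}_j\cdot\xi$ when $\Omega^x_j$ is present, with $\|\tilde{m}^{i'}_{k,\alpha}\|_{\mathcal{S}^\infty}\lesssim 2^{-3k-i'k}$ by (\ref{noveqn141})) and half-wave propagator, acting on the profile $\p_t h^\alpha_l$. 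One then applies (\ref{noveqn555}) together with $\|\p_t h^\alpha_l\|_{X_n}\lesssim(1+t)^{-1}E_{\textup{low}}^{eb}(t)$ and $\|\p_t\nabla_x(1+|\nabla_x|)^{-1}h^\alpha_l\|_{X_n}\lesssim(1+t)^{-2}E_{\textup{low}}^{eb}(t)$, both of which are built into the definition (\ref{secondorderloworder}) of $E_{\textup{low}}^{eb}(t)$. The $(1+t)^{-1}$ (or, at high frequency, $(1+t)^{-2}$) carried by $\p_t h^\alpha_l$ compensates for the $(1+||t|-|x||)^{-1}$ factor absent from (\ref{noveqn555}), and the coefficient margins $2^{-2d},2^{-3d},2^{-4d}$, the symbol decay $2^{-4k}$ of $\tilde{m}^3_{k,\alpha}$, and the extra $2^k$ from the rotation then combine to produce the two contributions $2^{-4d-4k}2^{k-4k_{+}}$ and $2^{-2d-2k}2^{k-4k_{+}}$ in (\ref{nov291}).

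The main obstacle is precisely the frequency bookkeeping in this last family: after multiplying the coefficient bound, the symbol bound, the $\Omega^x_j$-factor and the frequency-dependent factor $\min\{2^{k_{-}},(1+|t|+|x|)^{-1}\}$ of (\ref{noveqn555}), one must still match the right-hand side of (\ref{nov291}) uniformly in $k$, which forces one to distinguish the ranges $2^k\lesssim(1+t)^{-1}$ and $2^k\gtrsim(1+t)^{-1}$ and, for $k\geq 0$, to pass to the $(1+t)^2$-weighted component of $E_{\textup{low}}^{eb}(t)$, so that, once inserted into (\ref{jan6eqn12}) and summed over $k\geq -d$ and $d\geq 5$ as in Lemma~\ref{dec30proposition1}, the total converges. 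Everything else---the treatment of $E^1_{k,j}[u]$, $E^5_{k,j}[u]$ and of all the coefficients---is a routine recombination of the coefficient, symbol and decay estimates already established.
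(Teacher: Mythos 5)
Your overall architecture matches the paper's: bound the prefactor $(1+|v|)^{1-c(\rho)}e_{\rho}$ near the light cone, bound each $E_{k,j}^i[u]$ by $(1+t)^{-2}$ times suitable powers of $2^{-d-k}$, and multiply; and your substitution $(\p_t^2-\Delta)u^\alpha=\d e^{-it\d}\p_t h^\alpha_l$ combined with the $(1+t)\|\p_t h_i^\alpha\|_{X_n}$ and $(1+t)^2\|\p_t\nabla_x(1+|\nabla_x|)^{-1}h_i^\alpha\|_{X_n}$ components of $E_{\textup{low}}^{eb}$ is exactly how the paper treats the $(\p_t^2-\Delta)u$-terms in $E^2,E^3,E^4$. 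The genuine gap is in $E_{k,j}^5[u]$. You declare $(x-t\mu\xi/|\xi|)\cdot\tilde{V}_j/|x+\hat{v}t|$ to be ``a bounded factor'' and then apply (\ref{noveqn555}) directly. The factor is indeed $O(1)$ pointwise (and even as an $\mathcal{S}^\infty_k$ symbol after splitting it into $x\cdot\tilde{V}_j$ and $t\mu(\xi/|\xi|)\cdot\tilde{V}_j$ and dividing by $|x+\hat{v}t|\sim t$), but this route only yields $|E_{k,j}^5[u]|\lesssim 2^{-4d}(1+t)^{-1}2^{-k}E_{\textup{low}}^{eb}(t)$, i.e.\ a single power of $(1+t)^{-1}$. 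Since the prefactor $(1+|v|)^{1-c(\rho)}e_{\rho}$ costs a full factor of $(1+t)$ near the cone (see below), the product has no time decay left, contradicting (\ref{nov291}); and even granting your claimed cost of $2^d$ one would still need $2^{-3d-k}\lesssim(2^{-4d-4k}+2^{-2d-2k})2^{k-4k_+}$, which fails for instance when $k>d/4$. The paper instead observes that $e^{i(x+t\hat{v})\cdot\xi-it\mu|\xi|}(x-t\mu\xi/|\xi|)\cdot\tilde{V}_j=-i\tilde{V}_j\cdot\nabla_\xi\big(e^{i(x+t\hat{v})\cdot\xi-it\mu|\xi|}\big)$ (using $\tilde{V}_j\cdot\hat{v}=0$) and integrates by parts in $\xi$ in the $\tilde{V}_j$ direction; only after the derivative has landed on the symbols and on $\widehat{h^\alpha_l}$ does the leftover $|x+\hat{v}t|^{-1}\sim(1+t)^{-1}$ act as a genuine second power of time decay, giving the bound $(1+t)^{-2}2^{-4d-4k}2^{k-4k_+}E_{\textup{low}}^{eb}(t)$ of (\ref{april4eqn31}) that the lemma actually requires.

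A secondary inaccuracy: the prefactor does not cost ``at most one power of $2^d$''. For $\Lambda^\rho\thicksim\psi_{\geq1}(|v|)\Omega^x_i$ the entry of $e_{\rho}$ in (\ref{sepeq932}) is $-|v|^{-1}\tilde{V}_j(X_j+\hat{V}_jt)\cdot\tilde{V}_i$, and for $\Lambda^\rho\thicksim\psi_{\geq1}(|v|)S^x$ it contains $\tilde{V}_i(X_i\cdot\tilde{v})/|v|$; with the weight $(1+|v|)^{1-c(\rho)}$ these are of size $|x|+t\sim t$ on the support of $\psi_{\leq-10}(1-|x+\hat{v}t|/|t|)$, not $2^d$, and they contain no $\tilde{d}$-factor to trade via (\ref{feb8eqn51}). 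The correct (and sufficient) bound is the paper's (\ref{nov318}), namely $(1+t)$ — which is precisely why each $E_{k,j}^i[u]$ must be estimated with two powers of $(1+t)^{-1}$, and why the integration by parts for $E_{k,j}^5[u]$ cannot be skipped.
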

\begin{proof}
   Note that the following estimate holds from the detailed formula of $e_{\rho}(t,x,v)$ in (\ref{sepeq932}),
\be\label{nov318}
\sum_{\rho\in \mathcal{K}, |\rho|=1}\| (1+|v|)^{1-c(\rho)}e_{\rho}(t,x,v)\psi_{\leq -10}(1-|x+\hat{v}t|/|t|)\|_{L^\infty_{x,v}} \lesssim  (1+|t|).
\ee

Recall (\ref{nove613}), (\ref{nove614}), and (\ref{nove643}). From the estimates of coefficients in (\ref{nove353}) and (\ref{april3eqn1}), the estimates of the symbols $\tilde{m}_{k,\alpha}^i(\xi)$ of the linear operator $\tilde{T}_{k,\alpha}^i(\cdot)$ in (\ref{noveqn141}),  and the linear decay estimate (\ref{noveqn555}) in Lemma \ref{twistedlineardecay}, the following estimate holds, 
\be\label{jan9eqn21}
\sum_{u\in\{E,B\}, i=1,2}
 \|   E_{k,j}^i[u](t,x+\hat{v}t) \varphi_{d}\big( |t|-|x+\hat{v}t|  \big) \|_{L^\infty_{x,v}} \lesssim(1+t)^{-2}\big(2^{-2d-2k}+ 2^{-3d-3k} \big) 2^{ k-4k_{+}} E_{\textup{low}}^{eb}(t). 
\ee

Now, we proceed to estimate $E_{k,j}^3[u](t,x+\hat{v}t)$ and $E_{k,j}^4[u](t,x+\hat{v}t)$,$ u\in\{E,B\}$. Recall their detailed formulas in (\ref{nove615}) and (\ref{nove616}) and the detailed formulas of corresponding coefficients in (\ref{jan9eqn10}) and (\ref{jan9eqn11}). From the equality (\ref{april3eqn21}),  the estimates of the symbols $\tilde{m}_{k,\alpha}^i(\xi)$ of the linear operator $\tilde{T}_{k,\alpha}^i(\cdot)$ in (\ref{noveqn141}), the estimate of coefficients in (\ref{nove353}), and the definition of low order energy in  (\ref{secondorderloworder}), we have
\be\label{april4eqn1}
\sum_{u\in\{E,B\}, i=3,4}\|   E_{k,j}^3[u](t,x+\hat{v}t) \varphi_{d}\big( |t|-|x+\hat{v}t|  \big) \|_{L^\infty_{x,v}}    \lesssim (1+|t|)^{-2}(2^{-4d-4k}+2^{-3d-3k}+2^{-2d-2k}) 2^{k-4k_{+}} E_{\textup{low}}^{eb}(t).
\ee

 Lastly, we estimate $E_{k,j}^5[u](t,x+\hat{v}t,v), u\in\{E,B\}$. Recall its detailed formula in (\ref{april4eqn2}). Note that the following equality holds, 
 \[
 e^{i(x+\hat{v}t)\cdot \xi-it \mu|\xi|}  \big(x -t\mu\frac{\xi}{|\xi|}  \big)\cdot \tilde{V}_j = -i\tilde{V}_j\cdot \nabla_\xi\big(  e^{i(x+\hat{v}t)\cdot \xi-it \mu|\xi|}\big). 
 \]
 Hence, after doing integration by parts for $\xi$ in $\tilde{V}_j$ direction, we  have 
  \[
E_{k,j}^5[u](t,x+\hat{v}t,v)= \sum_{\alpha\in \mathcal{B}, |\alpha|\leq 3} \sum_{\mu\in\{+,-\}} \sum_{i=0,1}  \int_{\R^3} e^{i(x+\hat{v}t)\cdot \xi-it \mu|\xi|}\frac{-i}{|x+\hat{v}t|} \tilde{V}_j\cdot \nabla_\xi \big[\big( c_{\mu}\widehat{c}^{ 1}_{\alpha;0}(t, x+\hat{v}t)\tilde{m}_{k,\alpha}^0(\xi)   \]
\[
  - c_{\mu}\widehat{c}^{ 1}_{\alpha;2}(t, x+\hat{v}t) \tilde{m}_{k,\alpha}^2(\xi)|\xi|^2+  \h \widehat{c}^{ 1}_{\alpha;1}(t, x+\hat{v}t) \tilde{m}_{k,\alpha}^1(\xi)|\xi|\big) \widehat{P_{\mu}[h_l^{\alpha}]}(t,\xi) \big] d \xi.
\]
From the above formula,  the detailed formula  of coefficients in (\ref{jan9eqn10}), the estimate of coefficients in (\ref{april3eqn1}), the estimate of symbols in (\ref{noveqn141}), the linear decay estimate (\ref{noveqn555}) in Lemma \ref{twistedlineardecay}, the following estimate holds, 
\be\label{april4eqn31}
\||E_{k,j}^5[u](t,x+\hat{v}t,v)\psi_{\leq -5}(1-|x+\hat{v}t|/|t|)\varphi_{d}\big( |t|-|x+\hat{v}t|  \big)  \|_{L^\infty_{x,v}} \lesssim (1+|t|)^{-2} 2^{-4d-4k} 2^{k-4k_{+}} E_{\textup{low}}^{eb}(t).
\ee

 To sum up, our desired estimate (\ref{nov291}) holds from the estimates (\ref{nov318}), (\ref{jan9eqn21}), (\ref{april4eqn1}), and (\ref{april4eqn31}). 
\end{proof}

\noindent \textit{Proof of   Lemma} \textup{\ref{remainderestimatejan6}}:

Recall (\ref{jan9eqn31}) and (\ref{jan6eqn12}). We use the second decomposition of ``$D_v$'' in (\ref{summaryoftwodecomposition}) in Lemma \ref{twodecompositionlemma}.  From the   estimate (\ref{nov291}) in Lemma \ref{tradeoffregularity}, the second part of the estimate (\ref{feb8eqn51}) in Lemma \ref{derivativeofweightfunction}, and the $L^2_{x,v}-L^2_{x,v}-L^\infty_{x,v}$ type multi-linear estimate, we have
 \be\label{nov261}
 \sum_{l=1,\cdots,5}| \widetilde{\textup{Error}}_{k,d}^l  |  \lesssim  \big( 2^{-3k-3d} +2^{-k-d}\big) 2^{-4k_{+}}  \int_{1}^{t}   (1+|s|)^{-1}  E_{\textup{low}}^{eb}(s)  E_{\beta;d}^{\alpha}(s) d s. 
 \ee
 Hence finishing the proof of the desired estimate (\ref{jan9eqn2}).  
\qed  

\subsection{Proof of Lemma \ref{trilinearestimate2}}\label{errormultilinearestimate}

 In this subsection, we prove our desired estimate (\ref{jan6eqn21}) in  Lemma \ref{trilinearestimate2}.  The main tools that we will use are two linear estimates associated with the bilinear operators defined in (\ref{noveqn90}) and (\ref{noveqn96}), which will be stated in Lemma \ref{estimateremainder1st} and Lemma \ref{auxillaryestimate5linear}. We will first derive these two estimates and then use these two  estimates to prove the desired estimate (\ref{jan6eqn21}).

\begin{lemma}\label{estimateremainder1st}
The following estimate  hold, 
\be\label{noveqn493}
 \|  H_{k}^\mu(\tilde{V}_j\cdot\xi m(\xi), h)(t,x+\hat{v}t,v) \|_{L^\infty_{x,v}}\lesssim \min\{ (1+|t|)^{-1} 2^{ 2 k}  , (1+|t|)^{-2} 2^{  k} \} 2^{-4k_{+}} \|m(\xi)\|_{\mathcal{S}^\infty_k}   E_{\textup{low}}^{eb}(t). 
\ee
\end{lemma}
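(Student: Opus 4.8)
The plan is to unwind the definition of $H_k^\mu$ in \textup{(\ref{noveqn90})} and recognize it, up to the harmless factor $1/(\hat{v}\cdot\xi-\mu|\xi|)$ restricted to the region $\{t(|\xi|-\mu\hat{v}\cdot\xi)>10\}$, as an oscillatory integral of exactly the type controlled by the linear decay estimate \textup{(\ref{noveqn555})} in Lemma \ref{twistedlineardecay}. First I would write
\[
H_k^\mu(\tilde{V}_j\cdot\xi\, m(\xi),h)(t,x+\hat{v}t,v)=\int_{\R^3} e^{i(x+\hat{v}t)\cdot\xi-it\mu|\xi|}\, \tilde{m}(\xi,v)\, \psi_k(\xi)\,\widehat{\p_t h}(t,\xi)\,d\xi,
\]
where $\tilde{m}(\xi,v):=\dfrac{i\,\tilde{V}_j\cdot\xi\, m(\xi)}{\hat{v}\cdot\xi-\mu|\xi|}\,\psi_{>10}\big(t(|\xi|-\mu\hat{v}\cdot\xi)\big)$. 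The key point is that $\tilde{m}(\xi,v)$ is an admissible symbol: on the support of $\psi_{>10}(t(|\xi|-\mu\hat{v}\cdot\xi))$ one has $|\hat{v}\cdot\xi-\mu|\xi||\gtrsim 2^{-k}|t|^{-1}\cdot|\xi|$ roughly, but more usefully, combined with the numerator $\tilde{V}_j\cdot\xi$ and the denominator estimate $|\xi|-\mu\hat{v}\cdot\xi\gtrsim |\xi|\big((1+|v|^2)^{-1}+(\tilde{V}_j\cdot\xi/(|v||\xi|))^2\big)$ from \textup{(\ref{march20eqn23})} (equivalently \textup{(\ref{jan4eqn8})}), the quotient $\tilde{V}_j\cdot\xi/(\hat{v}\cdot\xi-\mu|\xi|)$ is bounded by $O(1+|v|)$. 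Keeping track of the $\nabla_\xi$ derivatives and the cutoff $\psi_{>10}$, whose derivative contributes a factor $|t||\xi|$ that is again absorbed because $|\xi|-\mu\hat{v}\cdot\xi$ is then $\sim 2^k$ in that annular region, one checks $\|\tilde{m}(\xi,v)\psi_k(\xi)\|_{\mathcal{S}^\infty_k}\lesssim (1+|v|)\,\|m(\xi)\|_{\mathcal{S}^\infty_k}$ uniformly in $t,v$; the extra $(1+|v|)$ is harmless because $E_{\textup{low}}^{eb}$ controls weighted-in-$v$ norms via the $X_n$ spaces.

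With this symbol bound in hand, I would apply Lemma \ref{twistedlineardecay} to the function $\p_t h(t)$ in place of $f$, obtaining pointwise in $x,v$
\[
|H_k^\mu(\tilde{V}_j\cdot\xi\,m(\xi),h)(t,x+\hat{v}t,v)|\lesssim \min\{2^{k_-},(1+|t|)^{-1}\}\,2^k\,(1+|v|)\|m(\xi)\|_{\mathcal{S}^\infty_k}\times\Big(\sum_{|\alpha|\le 1}2^k\|\widehat{\nabla_x^\alpha \p_t h}\psi_k\|_{L^\infty_\xi}+2^{2k}\|\nabla_\xi\widehat{\p_t h}\psi_k\|_{L^\infty_\xi}\Big).
\]
Now $h\in\{h_i^\alpha(t)\}$ with $|\alpha|\le 10$, so the bracketed quantity involving $\p_t h$ is exactly of the form bounded by $E_{\textup{low}}^{eb}(t)$: indeed the definition \textup{(\ref{secondorderloworder})} contains the terms $(1+t)\|\p_t h_i^\alpha(t)\|_{X_n}$ and $(1+t)^2\|\p_t\nabla_x(1+|\nabla_x|)^{-1}h_i^\alpha(t)\|_{X_n}$ for $n\in\{0,1,2,3\}$. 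Translating the $X_n$ norms into the $L^\infty_\xi$ quantities above (this is just the definition \textup{(\ref{definitionofXnorm})}) gives $\sum_{|\alpha|\le 1}2^k\|\widehat{\nabla_x^\alpha\p_t h}\psi_k\|_{L^\infty_\xi}+2^{2k}\|\nabla_\xi\widehat{\p_t h}\psi_k\|_{L^\infty_\xi}\lesssim 2^{-2k_+}\big((1+t)^{-1}+2^{k}(1+t)^{-1}\big)E_{\textup{low}}^{eb}(t)$ for the $\p_t h$ part, where the $2^{-2k_+}$ comes from the $n=2$ component of $X_n$ and the factor $(1+t)^{-1}$ (respectively the low–frequency gain at $k\le 0$ and the extra $(1+t)^{-1}$ for the $\nabla_x(1+|\nabla_x|)^{-1}$ piece at $k\ge 0$) from the explicit $(1+t)$ weights. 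Multiplying the two factors and using $\min\{2^{k_-},(1+t)^{-1}\}\le (1+t)^{-1}$ when extracting the second alternative, and $\le 2^{k_-}$ for the first, yields the claimed bound $\min\{(1+t)^{-1}2^{2k},(1+t)^{-2}2^k\}\,2^{-4k_+}\|m\|_{\mathcal{S}^\infty_k}E_{\textup{low}}^{eb}(t)$; the full power $2^{-4k_+}$ is obtained by using the $n=3$ component $2^{4k_+}$-weighted part of $X_n$ for the high-frequency regime, exactly as in the proof of \cite{wang}[Lemma 6.3] recorded above as Lemma \ref{sharpdecaywithderivatives}.

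The one genuine subtlety — and the step I expect to require the most care — is verifying that $\tilde{m}(\xi,v)\psi_k(\xi)$ really is a bounded $\mathcal{S}^\infty_k$ symbol with the stated $v$-dependence, \emph{uniformly in $t$}, in particular handling the $\nabla_\xi$ falling on the sharp-in-scale cutoff $\psi_{>10}(t(|\xi|-\mu\hat{v}\cdot\xi))$. The resolution is that each such derivative produces $t\,\nabla_\xi(|\xi|-\mu\hat{v}\cdot\xi)$ times $\psi_{>10}'$, and on $\mathrm{supp}\,\psi_{>10}'$ one has $|\xi|-\mu\hat{v}\cdot\xi\sim 2^{-k}2^{-m}$ for some integer $m\approx \log_2(2^k t)$, forcing $|t|\sim 2^{k+m}\cdot 2^{-k}/(|\xi|-\mu\hat{v}\cdot\xi)$ in a way that makes $t\,\nabla_\xi(|\xi|-\mu\hat{v}\cdot\xi)/(\hat{v}\cdot\xi-\mu|\xi|)$ comparable to $2^{-k}(1+|v|)$; summing the dyadic-in-$m$ contributions (there are $O(\log t)$ of them but each localized piece is handled with an extra power of $|\xi|-\mu\hat{v}\cdot\xi$ in the denominator) gives a uniform bound. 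Once this symbol estimate is established, the rest is a direct invocation of Lemma \ref{twistedlineardecay} together with the definitions, so I would present the symbol verification carefully and treat the decay application as routine. This matches the statement and parallels the argument already used for Lemma \ref{sharpdecaywithderivatives}, with $u$ replaced by $\p_t u$ and $m$ replaced by the resonance-localized symbol.
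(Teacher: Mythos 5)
Your overall architecture (treat $H_k^\mu$ as an oscillatory integral against $\widehat{\partial_t h}$ and feed the $(1+t)$- and $(1+t)^2$-weighted $\partial_t h$ components of $E_{\textup{low}}^{eb}$ into the final bound) points in the right direction, but the step you yourself flag as the crux — that $\tilde{m}(\xi,v)=\frac{i\tilde{V}_j\cdot\xi\,m(\xi)}{\hat{v}\cdot\xi-\mu|\xi|}\psi_{>10}(t(|\xi|-\mu\hat{v}\cdot\xi))$ is an $\mathcal{S}^\infty_k$ symbol with norm $\lesssim(1+|v|)\|m\|_{\mathcal{S}^\infty_k}$ uniformly in $t$ — is a genuine gap, and I do not believe it can be repaired in the form you state. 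Two separate problems. First, each $\nabla_\xi$ falling on $(\hat{v}\cdot\xi-\mu|\xi|)^{-1}$ costs a factor comparable to $(1+|v|)2^{-k}$ (not merely one such factor in total), so the ten derivatives required by the $\mathcal{S}^\infty_k$ norm accumulate a large power of $(1+|v|)$; and each $\nabla_\xi$ falling on the cutoff produces $t\,\psi_{>10}'(t(\cdot))\nabla_\xi(|\xi|-\mu\hat{v}\cdot\xi)$, which on the support $|\xi|-\mu\hat{v}\cdot\xi\sim t^{-1}$ is of size up to $(t2^k)^{1/2}2^{-k}$, i.e.\ unbounded in $t$ after the $2^{k}$ weighting; your proposed ``sum over $O(\log t)$ dyadic pieces'' does not remove this growth. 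Second, and independently fatal: even if the symbol bound held with a single factor of $(1+|v|)$, that loss is \emph{not} harmless, because the target \textup{(\ref{noveqn493})} is an $L^\infty_{x,v}$ bound with no $v$-weight on the right-hand side, and $E_{\textup{low}}^{eb}$ is a norm on the spatial profiles $h_i^\alpha(t,x)$ that carries no decay in $v$ whatsoever. Any unabsorbed power of $(1+|v|)$ breaks the lemma.

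The paper's proof sidesteps all of this and is much more elementary: fix $x,v$, decompose dyadically in the angle $\angle(\xi,\mu v)\sim 2^n$, and estimate the integral by (volume of the sector) $\times$ ($L^\infty$ of the symbol) $\times\|\widehat{\partial_t h}\psi_k\|_{L^\infty_\xi}$. On each sector the quotient satisfies $|\tilde{V}_j\cdot\xi|/||\xi|-\mu\hat{v}\cdot\xi|\lesssim 2^{k+n}/\bigl(2^{k}(2^{2n}+(1+|v|^2)^{-1})\bigr)\lesssim 2^{-n}$ \emph{uniformly in $v$ and $t$}, the sector has volume $\lesssim 2^{3k+2n}$, and summing $2^{3k+n}$ over $n\le 2$ converges; the two alternatives in the $\min$ and the $2^{-4k_+}$ then come directly from the $\partial_t h_i^\alpha$ entries of \textup{(\ref{secondorderloworder})}. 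No $\mathcal{S}^\infty_k$ regularity of the resonance-localized symbol is ever needed, only its pointwise size on each angular sector. If you want to salvage your write-up, replace the appeal to Lemma \ref{twistedlineardecay} by this volume-counting argument; as written, the proposal's key estimate is unproven and its treatment of the $v$-dependence is incorrect.
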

\begin{proof}
Recall (\ref{noveqn90}). For any fixed $x$ and $v$, we do dyadic decomposition for the angle between $\xi$ and $\mu v$. As a result,   we have
\[
H_{k}^\mu(\tilde{V}_j\cdot\xi m(\xi), h)(t,x+\hat{v}t,v) = \sum_{n\in \mathbb{Z}, n\leq 2}\int_{\R^3} e^{i(x+t\hat{v})\cdot \xi - i t\mu |\xi|} \frac{   i \tilde{V}_j \cdot \xi  m(\xi) \psi_k(\xi) }{\hat{v}\cdot \xi - \mu |\xi|}\]
\[
\times  \psi_{> 10} (    t (|\xi|-\mu\hat{v}\cdot \xi ) )  \p_t \widehat{h }(t, \xi)\psi_k(\xi) \psi_{n}(\angle(\xi, \nu v)) d \xi.
\]
Hence, after  using   the volume of support of $\xi$ and the definition of the low order energy $E_{\textup{low}}^{eb}(\phi)$ in (\ref{secondorderloworder}), we have
\[
| H_{k}^\mu(\tilde{V}_j\cdot\xi m(\xi), h)(t,x+\hat{v}t,v) | \lesssim \sum_{n\in \mathbb{Z}, n\leq 2} 2^{3k+n} \| \p_t \widehat{h}(t, \xi)\psi_k(\xi)\|_{L^\infty_\xi} \|m(\xi)\|_{\mathcal{S}^\infty_k} 
\]
\be\label{noveqn91}
\lesssim \min\{ (1+|t|)^{-1} 2^{ 2 k}  , (1+|t|)^{-2} 2^{  k} \} 2^{-4k_{+}} \|m(\xi)\|_{\mathcal{S}^\infty_k}  E_{\textup{low}}^{eb}(t). 
\ee
\end{proof}
\begin{lemma}\label{auxillaryestimate5linear}
The following estimates hold for any $t\in[2^{m-1}, 2^{m}]$, $m\in\mathbb{Z}_{+}$, $i,j\in\{1,2,3\}$,
\be\label{noveqn93}
\| (1+|v|)^{-1} K_{k}^\mu(\tilde{V}_j\cdot\xi m(\xi), h)(t,x+\hat{v}t,v)\|_{L^\infty_{x,v}} \lesssim 2^{-2m+k-4k_{+}} \| m(\xi)\|_{\mathcal{S}^\infty_k}  E_{\textup{low}}^{eb}(t), 
\ee  
\be\label{jan9eqn41}
\|  K_{k}^\mu(\tilde{V}_j\cdot\xi m(\xi), h)(t,x+\hat{v}t,v) \|_{L^\infty_{x,v}}\lesssim 2^{-m+2k-4k_{+}} \| m(\xi)\|_{\mathcal{S}^\infty_k}  E_{\textup{low}}^{eb}(t) , 
\ee
\be\label{noveqn99}
 \| (X_i \cdot \tilde{v})   K_{k}^\mu(\tilde{V}_j\cdot\xi m(\xi), h)(t,x+\hat{v}t,v)\|_{L^\infty_{x,v}} \lesssim 2^{- m+k-4k_{+}} \| m(\xi)\|_{\mathcal{S}^\infty_k} E_{\textup{low}}^{eb}(t). 
\ee
\end{lemma}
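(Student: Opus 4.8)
The plan is to prove the three estimates \eqref{noveqn93}, \eqref{jan9eqn41}, and \eqref{noveqn99} for the bilinear operator $K_{k}^\mu(\cdot,\cdot)$ defined in \eqref{noveqn96} by exploiting the fact that the two cutoffs $-i\mu\psi_{>10}'\big(t(|\xi|-\mu\hat{v}\cdot\xi)\big)$ and $\psi_{\leq 10}\big(t(|\xi|-\mu\hat{v}\cdot\xi)\big)$ both force $t(|\xi|-\mu\hat{v}\cdot\xi)\lesssim 1$, hence $|\xi|-\mu\hat{v}\cdot\xi\lesssim 2^{-m}$ on the support of the integrand. First I would recall the lower bound $|\xi|-\mu\hat{v}\cdot\xi\gtrsim |\xi|\big((1+|v|^2)^{-1}+\angle(\mu v,\xi)^2\big)$, which is the content of \eqref{march20eqn23}/\eqref{jan4eqn8}; combined with the size restriction from the cutoffs, this gives $\angle(\mu v,\xi)^2\lesssim 2^{-m-k}(1+|v|^2)^{-1}\cdot(1+|v|^2)=2^{-m-k}$ — more precisely, on the support one has $|\xi|/(1+|v|^2)\lesssim 2^{-m}$, so $2^k\lesssim 2^{-m}(1+|v|^2)$, and simultaneously $2^k\angle(\mu v,\xi)^2\lesssim 2^{-m}$. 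Thus the angular variable is confined to a cap of radius $\lesssim 2^{-m/2-k/2}$, and the $\xi$-integration is over a set of volume $\lesssim 2^{3k}\cdot 2^{-m-k}=2^{2k-m}$ (the radial direction is free of size $2^k$, the two angular directions are each of size $2^{-m/2-k/2}$).

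With this volume bound in hand, the three estimates follow by the same scheme: bound the symbol factor $\tilde{V}_j\cdot\xi\, m(\xi)\psi_k(\xi)$ in $L^\infty_\xi$ by $2^k\|m(\xi)\|_{\mathcal{S}^\infty_k}$, bound $\widehat{h}(t,\xi)\psi_k(\xi)$ in $L^\infty_\xi$ by $2^{-k-4k_+}E_{\textup{low}}^{eb}(t)$ using the definition of $E_{\textup{low}}^{eb}$ in \eqref{secondorderloworder} (note the $X_0$-norm gives $2^k\|\widehat{h}(t,\xi)\psi_k(\xi)\|_{L^\infty_\xi}\lesssim E_{\textup{low}}^{eb}(t)$ for $|\alpha|\leq 20$, and the extra $2^{-4k_+}$ comes from the higher $X_n$-norms at high frequency), and then integrate over the volume $2^{2k-m}$. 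This yields $\lesssim 2^{2k-m}\cdot 2^k\cdot 2^{-k-4k_+}\|m\|_{\mathcal{S}^\infty_k}E_{\textup{low}}^{eb}(t)=2^{-m+2k-4k_+}\|m\|_{\mathcal{S}^\infty_k}E_{\textup{low}}^{eb}(t)$, which is exactly \eqref{jan9eqn41}. For \eqref{noveqn93} I would further extract the gain of $(1+|v|)^{-1}\cdot 2^{-m}$: since $|\xi|-\mu\hat{v}\cdot\xi\gtrsim |\xi|(1+|v|)^{-2}$ and the cutoff forces $t(|\xi|-\mu\hat{v}\cdot\xi)\lesssim 1$, we get $2^k\lesssim 2^{-m}(1+|v|)^2$, i.e. $(1+|v|)^{-1}\lesssim 2^{-m/2-k/2}(1+|v|)$... — more usefully, one simply observes that $(1+|v|)^{-1}$ times the volume bound improves by noting $2^{2k-m}(1+|v|)^{-1}\lesssim 2^{2k-m}\cdot 2^{-m/2}2^{-k/2}(1+|v|)\cdot(1+|v|)^{-2}$; cleaner is to use that on the support $2^{-k}\gtrsim 2^{m}(1+|v|)^{-2}$, so multiplying \eqref{jan9eqn41} by $(1+|v|)^{-1}\lesssim (1+|v|)^{-1}$ and absorbing one power of $2^{-m}$ against $2^k(1+|v|)^{-2}\lesssim 2^{-m}$ gives $2^{-2m+k-4k_+}$. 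For \eqref{noveqn99}, I would integrate by parts in $\xi$: the factor $X_i\cdot\tilde{v}$ multiplied by $e^{i(x+t\hat{v})\cdot\xi-it\mu|\xi|}$ is comparable to a $\xi$-derivative of the phase (the coefficient $x$ reconstructs $\nabla_\xi$ of the exponential, with an error $t\mu\xi/|\xi|$ that lands inside the light cone region where $|x+\hat{v}t|\sim|t|$ is under control), so after one integration by parts one gains a factor $2^{-m/2-k/2}$ from the cap geometry against the loss $2^{-k}$ from differentiating the symbol, netting the stated bound $2^{-m+k-4k_+}$.

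The main obstacle I anticipate is the bookkeeping of the angular localization: one must carefully track how the two competing lower bounds for $|\xi|-\mu\hat{v}\cdot\xi$ (the $(1+|v|^2)^{-1}$ term versus the $\angle(\mu v,\xi)^2$ term) interact with the cutoff constraint $t(|\xi|-\mu\hat{v}\cdot\xi)\lesssim 1$, and distinguish the regime $(1+|v|)^2\gtrsim 2^m 2^k$ (where the $v$-term dominates, the angular cap is of full size, and the volume is just $2^{3k}$ but the support in $\xi$ already forces $2^{m+k}\lesssim(1+|v|)^2$) from the regime $(1+|v|)^2\lesssim 2^m 2^k$ (where the angular localization does the work). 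A secondary subtlety is the behavior of $\psi_{>10}'$, which is supported where $t(|\xi|-\mu\hat{v}\cdot\xi)\sim 2^{10}$, hence $|\xi|-\mu\hat{v}\cdot\xi\sim 2^{-m}$ exactly — this is if anything more favorable than the $\psi_{\leq 10}$ piece since there the quantity is $\lesssim 2^{-m}$ but could be much smaller, yet in both cases the volume bound $2^{2k-m}$ holds. I expect \eqref{jan9eqn41} to be essentially immediate from the volume count, \eqref{noveqn93} to follow by the extra $(1+|v|)$-power trade described above, and \eqref{noveqn99} to require the one integration by parts in $\xi$; none of the three should need the equation satisfied by $h$, only the energy bound $E_{\textup{low}}^{eb}(t)$ and the symbol class bound, so the argument parallels the proof of Lemma~\ref{estimateremainder1st} with the additional geometric input.
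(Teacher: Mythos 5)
Your overall strategy --- use the cutoffs in \eqref{noveqn96} to force $|\xi|-\mu\hat v\cdot\xi\lesssim 2^{-m}$ on the support, deduce from \eqref{jan4eqn8} the angular localization $\angle(\xi,\mu v)\lesssim 2^{-(m+k)/2}$ together with $1+|v|\gtrsim 2^{(m+k)/2}$, count the volume $2^{2k-m}$, and integrate by parts in $\xi$ in the rotational direction for \eqref{noveqn99} --- is exactly the paper's, and your derivation of \eqref{jan9eqn41} is correct. However, your argument for \eqref{noveqn93} does not close. You bound the symbol factor $\tilde V_j\cdot\xi\,m(\xi)\psi_k(\xi)$ by $2^k\|m\|_{\mathcal{S}^\infty_k}$ and then try to produce the entire extra factor $2^{-m-k}$ (the gap between \eqref{jan9eqn41} and \eqref{noveqn93}) from $(1+|v|)^{-1}$ alone. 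But the support condition only gives $(1+|v|)^{-2}\lesssim 2^{-m-k}$, hence $(1+|v|)^{-1}\lesssim 2^{-(m+k)/2}$ --- half the needed exponent; your step ``absorbing one power of $2^{-m}$ against $2^k(1+|v|)^{-2}\lesssim 2^{-m}$'' would require $(1+|v|)^{-1}\lesssim 2^{-m-k}$, which is false, and as written your scheme only yields $2^{-3(m+k)/2}\cdot 2^{3k}$-type bounds, weaker than $2^{-2m+k}$ whenever $k\geq -m$. The missing half is the null-form smallness of the symbol itself: on the cap one has $|\tilde V_j\cdot\xi|\lesssim |\xi|\,\angle(\xi,\mu v)\lesssim 2^{(k-m)/2}$, not merely $2^k$. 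This is precisely what the paper exploits, via a dyadic sum over angular scales $l\le -(m+k)/2$ with symbol $\lesssim 2^{k+l}$ and volume $\lesssim 2^{3k+2l}$; combining $(1+|v|)^{-1}\lesssim 2^{-(m+k)/2}$, symbol $\lesssim 2^{(k-m)/2}$, volume $\lesssim 2^{2k-m}$, and $\|\widehat h\,\psi_k\|_{L^\infty_\xi}\lesssim 2^{-k-4k_+}E_{\textup{low}}^{eb}(t)$ gives exactly $2^{-2m+k-4k_+}$.

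The same omission makes your sketch of \eqref{noveqn99} incomplete. After writing $X_i\cdot\tilde v=-x\cdot\tilde V_i$ and completing $x$ to $x+t\hat v-it\mu\xi/|\xi|$ (which is the $\tilde V_i$-directional $\nabla_\xi$ of the phase, since $\hat v\cdot\tilde V_i=0$), the leftover term carries the factor $t\mu(\xi/|\xi|)\cdot\tilde V_i$, of raw size $2^m$. This is not controlled by ``landing inside the light cone region'' --- \eqref{noveqn99} is an $L^\infty_{x,v}$ bound over all $x$ --- but again by the angular smallness $|(\xi/|\xi|)\cdot\tilde V_i|\lesssim 2^{-(m+k)/2}$, which beats the $2^m$ down to $2^{(m-k)/2}$ and, together with the symbol gain $|\tilde V_j\cdot\xi|\lesssim 2^{k+l}$, closes the estimate. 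The same angular gain is needed for the boundary term where $\nabla_\xi$ hits $\psi_{>10}(t(|\xi|-\mu\hat v\cdot\xi))$, which otherwise costs a bare factor of $t\sim 2^m$. In short, the one idea you are missing throughout is that $\tilde V_j\cdot\xi$ and $\tilde V_i\cdot\xi/|\xi|$ are themselves \emph{small} on the support of the cutoffs, not just bounded.
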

\begin{proof}

Recall (\ref{noveqn96}). 
From the support of $\xi$ and the estimate (\ref{jan4eqn8}), we know that   the size of $|\xi|-\mu\hat{v}\cdot \xi$ is less than $2^{-m}$, which implies that the angle between $\xi$ and $\mu v$ is less than $2^{-m/2-k/2}$ and the size of $v$ is greater than $2^{m/2+k/2}$. 	 As a result, the following estimate holds, 
\[
 (1+|v|)^{-1}| K_{k}^\mu(\tilde{V}_j\cdot\xi m(\xi), h)(t,x+\hat{v}t,v)| \lesssim \sum_{l \leq -m/2-k/2}  2^{-m/2-k/2} 2^{k+l} 2^{3k+2l} \|\widehat{h}  (t, \xi)\psi_k(\xi)\|_{L^\infty_\xi}\|m(\xi)\|_{\mathcal{S}^\infty_k}
\]
\be\label{noveqn92}
  \lesssim  2^{-2m+k-4k_{+}} \|m(\xi)\|_{\mathcal{S}^\infty_k}  E_{\textup{low}}^{eb}(t) .
\ee
Similarly, we have
\be
  | K_{k}^\mu(\tilde{V}_j\cdot\xi m(\xi), h)(t,x+\hat{v}t,v)|   \lesssim \sum_{l \leq -m/2-k/2}    2^{k} 2^{3k+2l} \|\widehat{h} (t, \xi)\psi_k(\xi)\|_{L^\infty_\xi} \| m(\xi)\|_{\mathcal{S}^\infty_k} 
\lesssim 2^{-m+2k-4k_{+}} \| m(\xi)\|_{\mathcal{S}^\infty_k}  E_{\textup{low}}^{eb}(t).
\ee
Hence finishing the proofs of the desired estimates (\ref{noveqn93}) and (\ref{jan9eqn41}).

Lastly, we prove the desired estimate  (\ref{noveqn99}).  Recall (\ref{noveqn96}).  We have 
\[
(X_i \cdot \tilde{v})   K_{k}^\mu(\tilde{V}_j\cdot\xi m(\xi), h)(t,x+\hat{v}t,v)= I_1 + I_2,
 \]
where
\be\label{march30eqn21}
I_1 := \int_{\R^3} e^{i(x+t\hat{v})\cdot \xi - i t\mu |\xi|} i \tilde{V}_j\cdot \xi m(\xi)\big((X_i + t\hat{V}_i )- i t\mu \frac{e_i\times \xi }{|\xi|} \big)\cdot \tilde{v} \big[   - i\mu   \psi_{> 10}' (    t (|\xi|-\mu\hat{v}\cdot \xi ) )\big) +       \psi_{\leq 10}\big(    t (|\xi|-\mu\hat{v}\cdot \xi  )\big) \big]    \widehat{h }(t, \xi) \psi_k(\xi)      d \xi,
\ee
\[I_2 := \int_{\R^3} e^{i(x+t\hat{v})\cdot \xi - i t\mu |\xi|} i \tilde{V}_j\cdot \xi m(\xi)\big(  i t\mu \frac{
e_i\times \xi}{|\xi|} \big)\cdot \tilde{v}    \big[   - i\mu     \psi_{> 10}' (    t (|\xi|-\mu\hat{v}\cdot \xi ) ) 
 +        \psi_{\leq 10}\big(    t (|\xi|-\mu\hat{v}\cdot \xi  )\big) \big] \psi_k(\xi)       \widehat{h }(t, \xi) d \xi.
\]
From the volume of support of ``$\xi$'',  we have
\be\label{jan9eqn44}
 |I_2|\lesssim  \sum_{l\leq -m/2-k/2}   2^{m} 2^{k+2l} 2^{3k+2l}\|\widehat{h} (t, \xi)\psi_k(\xi)\|_{L^\infty_\xi}\|m(\xi)\|_{\mathcal{S}^\infty_k} \lesssim   2^{- m+k-4k_{+}}  \|m(\xi)\|_{\mathcal{S}^\infty_k} E_{\textup{low}}^{eb}(t).   
\ee

Note that
\[
\big((X_i + t\hat{V}_i )- i t\mu \frac{e_i\times \xi}{|\xi|} \big)\cdot \tilde{v} = \big( x+t\hat{v} -it \mu \frac{\xi}{|\xi|} \big)\cdot \tilde{V}_i.
\]
Hence, we can do integration by parts in $\xi$ once in the $\tilde{V}_i$ direction for $I_1$ in (\ref{march30eqn21}). As a result, we have
\[
I_1  = \int_{\R^3} e^{i(x+t\hat{v})\cdot \xi - i t\mu |\xi|}   \tilde{V}_i\cdot \nabla_\xi\big[\tilde{V}_j\cdot \xi m(\xi)  \psi_k(\xi)       \widehat{h }(t, \xi)  \big(   - i\mu  \psi_{> 10}' (    t (|\xi|-\mu\hat{v}\cdot \xi ) ) 
   +      \psi_{\leq 10} (    t (|\xi|-\mu\hat{v}\cdot \xi  )) \big) \big] d \xi.
\]
By using the volume of support of ``$\xi$'', we have
\[
|I_1|\lesssim  \sum_{l\leq -m/2-k/2} \big(2^{m+k+2l} + 1\big) 2^{3k+2l}\| \widehat{h} (t, \xi)\psi_k(\xi)\|_{L^\infty_\xi} \|m(\xi)\|_{\mathcal{S}^\infty_k} + 2^{4k+3l} \|\nabla_\xi \widehat{h} (t, \xi)\psi_k(\xi)\|_{L^\infty_\xi}  \|m(\xi)\|_{\mathcal{S}^\infty_k}
\]
\be\label{jan9eqn47}
 \lesssim   2^{-m+k-4k_{+}} \|m(\xi)\|_{\mathcal{S}^\infty_k} E_{\textup{low}}^{eb}(t). 
\ee

Therefore, our desired estimate      (\ref{noveqn99}) holds from the estimates (\ref{jan9eqn44}) and (\ref{jan9eqn47}). Hence finishing the proof.
 
\end{proof}

 \vo 
 
\noindent \textit{Proof of Lemma \textup{\ref{trilinearestimate2}}:}	

$\bullet$\quad The estimate of $H(m,a,c,h)$. 

Recall (\ref{jan6eqn2}). For this case, we use the second decomposition of ``$D_v$'' (\ref{summaryoftwodecomposition}) in Lemma \ref{twodecompositionlemma} for the term ``$D_v g_{\kappa}^\alpha(t,x,v)$'' in  (\ref{jan6eqn2}). From the   estimate (\ref{nov318}), the second part of the estimate (\ref{feb8eqn51}) in Lemma \ref{derivativeofweightfunction}, and   the estimate (\ref{noveqn493}) in Lemma 
\ref{estimateremainder1st}, the following estimate holds after using the $L^2_{x,v}-L^2_{x,v}-L^\infty_{x,v}$ type multilinear estimate, 
\[
|H(m,a,c,h)|\lesssim   \int_{1}^{t }(1+|s|)2^{d}\|a\|_{Y } \|  H_{k}^\mu(\tilde{V}_j\cdot\xi m(\xi), h)(s,x+\hat{v}s,v) \|_{L^\infty_{x,v}}	 \| c(s,x,v)\|_{L^\infty_{x,v}}  E_{\beta;d}^{\alpha}(s) d s
\]
\be\label{jan10eqn37} 
\lesssim     2^{k+d-4k_{+}} \|a\|_{Y }\|m(\xi)\|_{\mathcal{S}^\infty_k}    \int_{1}^{t }(1+|s|)^{-1} \| c(s,x,v)\|_{L^\infty_{x,v}}   E_{\textup{low}}^{eb}(s) E_{\beta;d}^{\alpha}(s)d s.
\ee

$\bullet$\quad The estimate of $K(m,a,c,h)$. 

Recall (\ref{jan6eqn3}).  For this case, we use the second decomposition of ``$D_v$'' (\ref{summaryoftwodecomposition}) in Lemma \ref{twodecompositionlemma} for the term ``$D_v g_{\kappa}^\alpha(t,x,v)$'' in (\ref{jan6eqn3}). Recall the detailed formulas of $e_{\rho}(t,x,v)$, $\rho\in\mathcal{K},|\rho|=1$, in (\ref{sepeq932}). From the estimates (\ref{noveqn93}), (\ref{jan9eqn41}), (\ref{noveqn99}) in Lemma 
\ref{auxillaryestimate5linear}, the following estimate holds, 
\[
\sum_{\rho\in \mathcal{K}, |\rho|=1}\|(1+|v|)^{1-c(\rho)} e_{\rho}(t,x,v) K_{k}^\mu(\tilde{V}_j\cdot\xi m(\xi), h)(t,x+\hat{v}t,v) \varphi_{d}\big( |t|-|x+\hat{v}t|  \big) \psi_{\leq -10}(1-|x+\hat{v}t|/|t|)\|_{L^\infty_{x,v}}
\]
\be
\lesssim (1+|t|)^{-1} (2^{k }+2^{2k+ d}) 2^{-4k_{+}}\|m(\xi)\|_{\mathcal{S}^\infty_k}  E_{\textup{low}}^{eb}(t).
\ee
Therefore, from the above estimate, the second part of the estimate (\ref{feb8eqn51}) in Lemma \ref{derivativeofweightfunction}, and  the $L^2_{x,v}-L^2_{x,v}-L^\infty_{x,v}$ type multilinear estimate, we have
 \be\label{jan10eqn16}
|K(m,a,c,h)|\lesssim   (2^{k +d}+2^{2k+ 2d}) 2^{-4k_{+}} \|a\|_{Y }\|m(\xi)\|_{\mathcal{S}^\infty_k}    \int_{1}^{t }(1+|s|)^{-1} \| c(s,x,v)\|_{L^\infty_{x,v}}   E_{\textup{low}}^{eb}(s) E_{\beta;d}^{\alpha}(s)d t.
\ee
To sum up,  our desired estimate (\ref{jan6eqn21}) holds from the estimates (\ref{jan10eqn37}) and (\ref{jan10eqn16}). 
\qed

\section{Proof of Lemma \ref{trilinearestimate1}}\label{proofofmainlemmalinear}
The main goal of this section is devoted to prove our last desired estimate (\ref{jan6eqn20}) in Lemma \ref{trilinearestimate1}. Hence finishing the whole argument.

Recall (\ref{jan6eqn1}). To take the advantage of oscillation in time for the electromagnetic field, we do integration by parts in time to move around the time derivative in front of ``$\p_t  T_{k}^\mu(\tilde{V}_j\cdot\xi m(\xi), h)(t,x+\hat{v}t)$''.  The proof of Lemma \ref{trilinearestimate1}  will be very complicated by the fact that many terms will be created when ``$\p_t$'' hits $g_{\beta}^\alpha(t,x,v)$ or $D_v g_{\kappa}^\alpha(t,x,v)$,   e.g., see the equation satisfied by $\p_t g_{\beta}^\alpha(t,x,v)$ in (\ref{sepeqn43}).

For clarity,   we first classify those terms. More precisely, after doing integration by parts in time for (\ref{jan6eqn1}), we have
\be\label{jan23eqn16}
T(m,a,c,h)= \tilde{T}_1(m,a,c,h) + \tilde{T}_2(m,a,c,h) + \textup{Error} , 
\ee
where
\[
\tilde{T}_1(m,a,c,h)=  \sum_{
\begin{subarray}{c}
 \Lambda^\iota \sim \psi_{\geq 1}(|v|) \widehat{\Omega}^v_j \textup{or}\,\psi_{\geq 1}(|v|)\Omega_j^x  \\
\end{subarray}}  - 	\int_{1}^{t  } \int_{\R^3}\int_{\R^3}  \big(\omega_{\beta}^{\alpha}(s, x, v)\big)^2 \p_s g^\alpha_\beta (s ,x,v) \big(\sqrt{1+|v|^2}\tilde{d}(s, x,v)\big)^{1-c(\iota)}   \]
\be\label{jan10eqn7100}
\times    C_{d}(s,x,v)     T_{k}^\mu(\tilde{V}_j\cdot\xi m(\xi), h)(s,x+\hat{v}s,v)  \alpha_i(v) \cdot D_v  
g^\alpha_\kappa(s,x, v) d x d v d s,
\ee

\[
\tilde{T}_2(m,a,c,h)=   \sum_{
\begin{subarray}{c}
 \Lambda^\iota \sim \psi_{\geq 1}(|v|) \widehat{\Omega}^v_j \textup{or}\,\psi_{\geq 1}(|v|)\Omega_j^x  \\
\end{subarray}}  - 	\int_{1  }^{t } \int_{\R^3}\int_{\R^3}  \big(\omega_{\beta}^{\alpha}(s, x, v)\big)^2  g^\alpha_\beta (s  ,x,v) \big(\sqrt{1+|v|^2}\tilde{d}(s, x,v)\big)^{1-c(\iota)}    \]
\be\label{jan10eqn71}
\times    C_{d}(s,x,v) T_{k}^\mu(\tilde{V}_j\cdot\xi m(\xi), h)(s,x+\hat{v}s,v) \alpha_i(v) \cdot \p_s\big(D_v  
g^\alpha_\kappa(s,x, v)\big) d x d v d s,
\ee
\[
\textup{Error} =    \sum_{
\begin{subarray}{c}
 \Lambda^\iota \sim \psi_{\geq 1}(|v|) \widehat{\Omega}^v_j \textup{or}\,\psi_{\geq 1}(|v|)\Omega_j^x  \\
\end{subarray}}  - 	\int_{1}^{t } \int_{\R^3}\int_{\R^3}  \big(\omega_{\beta}^{\alpha}(s, x, v)\big)^2   g^\alpha_\beta (s ,x,v)  T_{k}^\mu(\tilde{V}_j\cdot\xi m(\xi), h)(s,x+\hat{v}s,v)\]
\[
\times \p_s\big[ \big(\sqrt{1+|v|^2}\tilde{d}(s, x,v)\big)^{1-c(\iota)}   C_{d}(s,x,v)\big]   \alpha_i(v) \cdot D_v  
g^\alpha_\kappa(s,x, v)  d x d v d s, 
\]
\[
+ \sum_{i=1,2,t_1=1,t_2=t} (-1)^{i} \int_{\R^3}\int_{\R^3}  \big(\omega_{\beta}^{\alpha}(t_i, x, v)\big)^2    T_{k}^\mu(\tilde{V}_j\cdot\xi m(\xi), h)(t_i,x+\hat{v}t_i,v)\big(\sqrt{1+|v|^2}\tilde{d}(t_i, x,v)\big)^{1-c(\iota)}    \]
\be\label{jan10eqn181}
\times C_{d}(t_i,x,v) g^\alpha_\beta (t_i ,x,v)
    \alpha_i(v) \cdot D_v  
g^\alpha_\kappa(t_i,x, v) d x d v.
\ee

Recall the equation satisfied by $g_\beta^\alpha(t,x,v)$ in (\ref{sepeqn43}),  the classification of $\textit{h.o.t}_{\beta}^\alpha$ in decompositions  (\ref{sepeqn180}),  (\ref{sepeq90}), and (\ref{nove100}). We decompose $\tilde{T}_1(m,a,c,h)$ into four parts 	as follows, 
\be\label{jan23eqn17}
\tilde{T}_1(m,a,c,h)= \sum_{i=1,\cdots,4} \tilde{T}_1^i(m,a,c,h), 
\ee
where
\[
\tilde{T}_1^1(m,a,c,h)=      \sum_{
\begin{subarray}{c}
 \Lambda^\iota \sim \psi_{\geq 1}(|v|) \widehat{\Omega}^v_j \\
 \textup{or}\,\psi_{\geq 1}(|v|)\Omega_j^x  \\
\end{subarray}} \int_{1}^{t } \int_{\R^3}\int_{\R^3}   \big(\omega_{\beta}^{\alpha}( s,x, v)\big)^2  \big(  K(s, x+\hat{v}s,v)\cdot D_v g_\beta^\alpha(s,x,v) \alpha_i(v) \cdot D_v  
g^\alpha_\kappa(s ,x, v) \big)     \]
 \be\label{jan11eqn1}
 \times   C_{d}(s,x,v) \big(\sqrt{1+|v|^2}\tilde{d}(s , x,v)\big)^{1-c(\iota)}          T_{k}^\mu(\tilde{V}_j\cdot\xi m(\xi), h)(s,x+\hat{v}s ,v) d x d v d s, 
\ee
 \[
\tilde{T}_1^2(m,a,c,h)=      \sum_{
\begin{subarray}{c}
 \Lambda^\iota \sim \psi_{\geq 1}(|v|) \widehat{\Omega}^v_j \\
 \textup{or}\,\psi_{\geq 1}(|v|)\Omega_j^x  \\
\end{subarray}} \int_{1 }^{t } \int_{\R^3}\int_{\R^3}  -\big(\omega_{\beta}^{\alpha}(s, x, v)\big)^2\alpha_i(v) \cdot D_v  
g^\alpha_\kappa(s ,x, v)\big(\sqrt{1+|v|^2}\tilde{d}(s , x,v)\big)^{1-c(\iota)}        \]
\be\label{jan11eqn63}
 \times     C_{d}(s,x,v)    T_{k}^\mu(\tilde{V}_j\cdot\xi m(\xi), h)(s,x+\hat{v}s,v)     \textit{bulk}_{\beta}^\alpha(s,x,v)   d x d v d s , 
\ee
  \[
\tilde{T}_1^3(m,a,c,h)= \sum_{
\begin{subarray}{c}
 \Lambda^\iota \sim \psi_{\geq 1}(|v|) \widehat{\Omega}^v_j \\
 \textup{or}\,\psi_{\geq 1}(|v|)\Omega_j^x  \\
\end{subarray}} \int_{1}^{t } \int_{\R^3}\int_{\R^3}  -\big(\omega_{\beta}^{\alpha}( s,x, v)\big)^2\alpha_i(v) \cdot D_v  
g^\alpha_\kappa(s ,x, v)  \big(\sqrt{1+|v|^2}\tilde{d}(s , x,v)\big)^{1-c(\iota)}    \]
\be\label{jan11eqn60}
 \times     C_{d}(s,x,v)     T_{k}^\mu(\tilde{V}_j\cdot\xi m(\xi), h)(s,x+\hat{v}s,v)   \big(   \textit{h.o.t}_{\beta}^\alpha(s,x,v) -  \textit{bulk}_{\beta}^\alpha(s,x,v)\big)  d x d v d s, 
\ee
\[
\tilde{T}_1^4(m,a,c,h)=     \sum_{
\begin{subarray}{c}
 \Lambda^\iota \sim \psi_{\geq 1}(|v|) \widehat{\Omega}^v_j \\
 \textup{or}\,\psi_{\geq 1}(|v|)\Omega_j^x  \\
\end{subarray}}  \int_{1}^{t } \int_{\R^3}\int_{\R^3}  -\big(\omega_{\beta}^{\alpha}(s, x, v)\big)^2  \alpha_i(v) \cdot D_v  
g^\alpha_\kappa(s ,x, v)     \big(\sqrt{1+|v|^2}\tilde{d}(s  , x,v)\big)^{1-c(\iota)}   \]
\be\label{jan11eqn61}
 \times       C_{d}(s,x,v)        T_{k}^\mu(\tilde{V}_j\cdot\xi m(\xi), h)(s,x+\hat{v}s,v)     \textit{l.o.t}_{\beta}^\alpha(s,x,v) d x d v d s.
\ee
 It remains to classify terms inside $\tilde{T}_2(m,a,c,h)$. Recall (\ref{jan10eqn71}).  Note that
 \be\label{jan10eqn135}
[D_v, \p_t] = [\nabla_v - t \nabla_v\hat{v}\cdot \nabla_x, \p_t] = \nabla_v \hat{v}\cdot \nabla_x, \Longrightarrow \p_t(D_v g_{\kappa}^\alpha )= D_v(\p_t g_{\kappa}^\alpha ) - \nabla_v \hat{v}\cdot \nabla_x g_{\kappa}^\alpha. 
 \ee

Recall the equation satisfied by $\p_t g_{\beta}^\alpha(t,x,v)$ in (\ref{sepeqn43}).  Since the most problematic term inside $\textit{h.o.t}_{\beta}^\alpha(t,x,v)$ is $\textit{bulk}_{\beta}^\alpha(t,x,v)$, we first study the structure of  ``$D_v \textit{bulk}_{\beta}^\alpha(t,x,v)$'', which is summarized in the following Lemma. 
\begin{lemma}\label{bulkdecomposition}
The following equality holds, 
\be\label{march17eqn25}
D_v( \textit{bulk}_{\kappa}^\alpha(t,x,v))= \widetilde{bulk}_{\kappa}^{\alpha }(t,x,v)+ \widetilde{error}_{\kappa}^{\alpha }(t,x,v),
\ee
where the detailed formula of  $ \widetilde{bulk}_{\kappa}^{\alpha }(t,x,v) $ and $ \widetilde{error}_{\kappa}^{\alpha }(t,x,v)$ are given in \textup{(\ref{jan12eqn1})} and \textup{(\ref{jan16eqn100})}respectively. 
\end{lemma}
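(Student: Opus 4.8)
\textbf{Proof proposal for Lemma \ref{bulkdecomposition}.}

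The plan is to compute $D_v\big(\textit{bulk}_{\kappa}^\alpha(t,x,v)\big)$ directly from the definition \textup{(\ref{nove120})} by applying the Leibniz rule to each factor, and then to sort the resulting terms into a ``leading'' piece $\widetilde{bulk}_{\kappa}^{\alpha}$ in which $D_v$ has fallen on the high-order factor $X_i g^\alpha_\kappa(t,x,v)$, and an ``error'' piece $\widetilde{error}_{\kappa}^{\alpha}$ collecting everything else. First I would recall that, by \textup{(\ref{nove120})}, $\textit{bulk}_{\kappa}^\alpha(t,x,v)$ is a finite sum (over $j\in\{1,2,3\}$, $i\in\{1,\cdots,7\}$, and splittings $\iota+\kappa'=\kappa$ with $|\iota|=1$, $\Lambda^\iota\sim \psi_{\geq 1}(|v|)\widehat{\Omega}^v_j$ or $\psi_{\geq 1}(|v|)\Omega_j^x$) of products of the form
\[
K_{\iota;1}^i(t,x+\hat{v}t,v)\, X_i g^\alpha_{\kappa'}(t,x,v),
\]
where $K_{\iota;1}^i$ is given explicitly in \textup{(\ref{nove99})} as $\big(\sqrt{1+|v|^2}\tilde{d}(t,x,v)\big)^{1-c(\iota)}\alpha_i(v)\cdot\Omega_j^x\big(E(t,x+\hat{v}t)+\hat{v}\times B(t,x+\hat{v}t)\big)\psi_{\geq 1}(|v|)$. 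Applying $D_v=\nabla_v - t\nabla_v\hat{v}\cdot\nabla_x$ to such a product produces exactly two families of terms: (i) $K_{\iota;1}^i\, D_v\big(X_i g^\alpha_{\kappa'}\big)$, which I would take as the main constituent of $\widetilde{bulk}_{\kappa}^{\alpha}$, possibly after commuting $D_v$ through $X_i$ using the commutation structure implicit in \textup{(\ref{noveq521})}; and (ii) $\big(D_v K_{\iota;1}^i\big)\, X_i g^\alpha_{\kappa'}$, which goes into $\widetilde{error}_{\kappa}^{\alpha}$.

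The second step is to expand $D_v K_{\iota;1}^i$ and verify that it has the claimed good structure. Here $D_v$ can hit three things inside $K_{\iota;1}^i$: the scalar weight $\big(\sqrt{1+|v|^2}\tilde{d}(t,x,v)\big)^{1-c(\iota)}$, the coefficient $\alpha_i(v)\psi_{\geq 1}(|v|)$, or the pulled-back electromagnetic field $\Omega_j^x\big(E(t,x+\hat v t)+\hat v\times B(t,x+\hat v t)\big)$. For the first, I would invoke Lemma \ref{derivativesofcoefficient} (equation \textup{(\ref{dec26eqn1})}), which says $D_v(\tilde d)=\hat e_1\tilde d+\hat e_2$ with bounded coefficients, together with $D_v\sqrt{1+|v|^2}=\nabla_v\sqrt{1+|v|^2}=\hat v$ (the $\nabla_x$ part of $D_v$ annihilates it); this keeps the modulation-linear-plus-bounded structure. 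For the second, $D_v(\alpha_i(v)\psi_{\geq 1}(|v|))=\nabla_v(\alpha_i(v)\psi_{\geq 1}(|v|))$ is again an explicit bounded vector coefficient by \textup{(\ref{dec28eqn10})}. For the third, $D_v$ acting on $u(t,x+\hat v t)$ (with $u\in\{E,B\}$) is exactly the mechanism behind the identity $D_v(f(t,x+\hat v t))=(\nabla_v f)(t,x+\hat v t)$ used throughout Section \ref{vectorfieldssetup}, so it produces another derivative of the electromagnetic field evaluated at $x+\hat v t$ — i.e.\ it stays of the same ``derivative of $(E,B)$ composed with the flow'' type, at the cost of one extra $\nabla_x$ and the factor $\nabla_v\hat v$, which by \textup{(\ref{octeqn456})} is $O\big((1+|v|^2)^{-1/2}\big)$ in the rotational directions. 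Assembling these, $\widetilde{error}_{\kappa}^{\alpha}$ is a finite sum of terms of the schematic form $\big(\text{bounded or }\tilde d\text{-linear coeff}\big)\cdot\big(\nabla_x\text{-derivative of }(E,B)\text{ at }x+\hat v t\big)\cdot X_i g^\alpha_{\kappa'}$, which is precisely the structure one needs so that the later estimate of $\widetilde{error}_{\kappa}^{\alpha}$ falls under the $L^2_{x,v}$--$L^\infty_{x,v}$ bilinear machinery already used for the non-bulk terms (cf.\ Lemma \ref{fixedtimeestimate1}, Lemma \ref{fixtimehighorder2}).

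The third step is bookkeeping: write out \textup{(\ref{jan12eqn1})} as the explicit sum of the type-(i) terms (with $D_v$ replaced, via \textup{(\ref{noveq1})}, by $\sum_{i'}\alpha_{i'}(v)X_{i'}$ so that $\widetilde{bulk}_{\kappa}^{\alpha}$ is manifestly expressed in terms of the vector fields $X_{i'}X_i g^\alpha_{\kappa'}$ and the good derivative coefficients), and \textup{(\ref{jan16eqn100})} as the explicit sum of the type-(ii) terms with the three sub-cases above spelled out; then equation \textup{(\ref{march17eqn25})} holds by construction. I expect the only genuinely delicate point — and the main obstacle — to be tracking the weight bookkeeping when $D_v$ falls on the $\big(\sqrt{1+|v|^2}\tilde d\big)^{1-c(\iota)}$ factor: one must make sure the $\sqrt{1+|v|^2}$ picked up from $D_v$ (which in rotational directions behaves like \textup{(\ref{march6eqn6})}, i.e.\ can carry a weight of size $|v|$) is recorded correctly, since it is exactly this weight that the hidden null structure is designed to absorb later; getting the $c(\iota)$-dependent exponent and the accompanying $\alpha_i(v)$ normalization right is where care is needed, but it is purely algebraic and does not require any new estimate beyond Lemmas \ref{twodecompositionlemma}, \ref{derivativesofcoefficient}, and \ref{summaryofhighordercommutation}, which are already available.
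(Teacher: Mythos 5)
Your proposal is correct and follows essentially the same route as the paper: a Leibniz-rule split of $D_v$ applied to each product $K_{\iota';1}^i\,\alpha_i(v)\cdot D_v g^\alpha_{\kappa'}$, with the term where $D_v$ lands on the profile (giving $D_vD_vg^\alpha_{\kappa'}$) forming $\widetilde{bulk}_{\kappa}^{\alpha}$ and the term where it lands on the coefficient forming $\widetilde{error}_{\kappa}^{\alpha}$, the latter then expanded via the first decomposition of $D_v$ and the explicit formula (\ref{nove99}). The only small simplification you miss is that no commutation through $X_i$ is needed at all, since $X_i=\alpha_i(v)\cdot D_v$ and $[D_{v_m},D_{v_n}]=0$, which is exactly what the paper invokes.
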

\begin{proof}
Recall (\ref{nove120}). We have
\[
D_v( \textit{bulk}_{\kappa}^\alpha(t,x,v))= \widetilde{bulk}_{\kappa}^{\alpha  }(t,x,v)+ \widetilde{error}_{\kappa}^{\alpha }(t,x,v),
\]
where
\be\label{jan12eqn1}
\widetilde{bulk}_{\kappa}^{\alpha }(t,x,v)=\sum_{\begin{subarray}{l}
j=1,2,3,
i=1,\cdots,7 
\end{subarray}} \sum_{
\begin{subarray}{c}
\iota'+\kappa'=\kappa,\iota', \kappa'\in \mathcal{S},
 |\iota'|=1, \Lambda^{\iota'} \sim \psi_{\geq 1}(|v|)\widehat{\Omega}^v_j \textup{or}\,\psi_{\geq 1}(|v|) \Omega_j^x 
\end{subarray}}  K_{\iota';1}^i(t,x+\hat{v}t,v)\alpha_i(v)\cdot D_v D_v g^\alpha_{\kappa'}(t,x, v), 
\ee
\be\label{jan12eqn4}
 \widetilde{error}_{\kappa}^{\alpha  }(t,x,v)= \sum_{\begin{subarray}{l}
j=1,2,3,
i=1,\cdots,7 
\end{subarray}} \sum_{
\begin{subarray}{c}
\iota'+\kappa'=\kappa,\iota', \kappa'\in \mathcal{S},
 |\iota'|=1, \Lambda^{\iota'} \sim \psi_{\geq 1}(|v|) \widehat{\Omega}^v_j \textup{or}\,\psi_{\geq 1}(|v|) \Omega_j^x  \\
\end{subarray}}  D_v\big( K_{\iota';1}^i(t,x+\hat{v}t,v)\alpha_i(v)\big) \cdot D_v  g^\alpha_{\kappa'}(t,x, v). 
\ee
Note that we used the fact that $[D_{v_m}, D_{v_n}]=0,$ for any $ m, n\in \{1,2,3\}.$ After using the first decomposition of ``$D_v$'' in (\ref{summaryoftwodecomposition}) in Lemma \ref{twodecompositionlemma} and the detailed formula of $K_{\iota';1}^i(t,x+\hat{v}t, v)$ in (\ref{nove99}),   we have 
\[
 \widetilde{error}_{\kappa}^{\alpha  }(t,x,v)= \sum_{\begin{subarray}{l}
j=1,2,3,
i=1,\cdots,7\\
\rho_1, \rho_2\in \mathcal{K}, |\rho_1|=|\rho_2|=1
\end{subarray}} \sum_{
\begin{subarray}{c}
\iota'+\kappa'=\kappa,\iota', \kappa'\in \mathcal{S}  \\
 |\iota'|=1, \Lambda^{\iota'} \sim \psi_{\geq 1}(|v|) \widehat{\Omega}^v_j \textup{or}\,\psi_{\geq 1}(|v|) \Omega_j^x  \\
\end{subarray}}    d_{\rho_1}(t,x,v) d_{\rho_2}(t,x,v) \Lambda^{\rho_2}(g_{\kappa'}^\alpha(t,x,v))\]
\[
 \times \big[\Lambda^{\rho_1} \big(\alpha_i(v)\big(\sqrt{1+|v|^2}\tilde{d}(t,x,v) \big)^{1-c(\iota)}\alpha_i(v)\big)\cdot \Omega_j^x \big(E(t,x+\hat{v}t) + \hat{v}\times B(t,x+\hat{v}t) \big)
\]
\be\label{jan16eqn100}
 + \alpha_i(v)  \big(\sqrt{1+|v|^2}\tilde{d}(t,x,v) \big)^{1-c(\iota)}\alpha_i(v) \cdot  \Lambda^{\rho_1} \big(\Omega_j^x \big(E(t,x+\hat{v}t) + \hat{v}\times B(t,x+\hat{v}t) \big)\big]. 
\ee
 \end{proof}

We will show that both ``$D_v(\textit{h.o.t}_{\kappa}^\alpha(t,x,v)-\textit{bulk}_{\kappa}^\alpha(t,x,v) )$'' and ``$D_v(\textit{l.o.t}_{\kappa}^\alpha(t,x,v))$'' are \textit{non-bulk  terms}. Motivated from this expectation and  the decomposition (\ref{march17eqn25}) in Lemma \ref{bulkdecomposition},  we decompose $ \tilde{T}_2  (m,a,c,h)$ into five parts as follows, 
\be\label{jan23eqn19}
\tilde{T}_2(m,a,c,h)=\sum_{i=1,\cdots,5} \tilde{T}_2^i(m,a,c,h),
\ee
where
\[
\tilde{T}_2^1(m,a,c,h)=    \sum_{
\begin{subarray}{c}
 \Lambda^\iota \sim \psi_{\geq 1}(|v|) \widehat{\Omega}^v_j \\
 \textup{or}\,\psi_{\geq 1}(|v|)\Omega_j^x  \\
\end{subarray}}   	\int_{1}^{t } \int_{\R^3}\int_{\R^3}  \big(\omega_{\beta}^{\alpha}(s, x, v)\big)^2  g^\alpha_\beta (s ,x,v) \big(\sqrt{1+|v|^2}\tilde{d}(s, x,v)\big)^{1-c(\iota)} C_{d}(s,x,v)\]
\be\label{jan10eqn162}
\times     T_{k}^\mu(\tilde{V}_j\cdot\xi m(\xi), h)(s,x+\hat{v}s,v)  \alpha_i(v) \cdot D_v\big( 
 K(s,x+\hat{v}s,v)\cdot D_v g_{\kappa}^\alpha(s,x,v) \big) d x d v d s,
\ee

\[
\tilde{T}_2^2(m,a,c,h)=   \sum_{
\begin{subarray}{c}
 \Lambda^\iota \sim \psi_{\geq 1}(|v|) \widehat{\Omega}^v_j \\
 \textup{or}\,\psi_{\geq 1}(|v|)\Omega_j^x  \\
\end{subarray}}   	\int_{1}^{t } \int_{\R^3}\int_{\R^3} - \big(\omega_{\beta}^{\alpha}(s, x, v)\big)^2  g^\alpha_\beta (s ,x,v) \big(\sqrt{1+|v|^2}\tilde{d}(s, x,v)\big)^{1-c(\iota)} C_{d}(s,x,v) \]
\be\label{jan10eqn163}
\times   T_{k}^\mu(\tilde{V}_j\cdot\xi m(\xi), h)(s,x+\hat{v}s,v) \alpha_i(v) \cdot  \widetilde{bulk}_{\kappa}^{\alpha }(s,x,v)  d x d v d s,
\ee
\[
\tilde{T}_2^3(m,a,c,h)=  \sum_{
\begin{subarray}{c}
 \Lambda^\iota \sim \psi_{\geq 1}(|v|) \widehat{\Omega}^v_j \\
 \textup{or}\,\psi_{\geq 1}(|v|)\Omega_j^x  \\
\end{subarray}}   	\int_{t_1}^{t_2} \int_{\R^3}\int_{\R^3} - \big(\omega_{\beta}^{\alpha}( s, x, v)\big)^2  g^\alpha_\beta (s ,x,v) \big(\sqrt{1+|v|^2}\tilde{d}(s, x,v)\big)^{1-c(\iota)}    \]
\be\label{jan10eqn164}
 \times   C_{d}(s,x,v)  T_{k}^\mu(\tilde{V}_j\cdot\xi m(\xi), h)(s,x+\hat{v}s,v) \alpha_i(v) \cdot  D_v\big(  \textit{h.o.t}_{\kappa}^\alpha(s,x,v) - \textit{bulk}_{\kappa}^\alpha(s,x,v)\big)    d x d v d s,
\ee
\[
\tilde{T}_2^4(m,a,c,h)= \sum_{
\begin{subarray}{c}
 \Lambda^\iota \sim \psi_{\geq 1}(|v|) \widehat{\Omega}^v_j \\
 \textup{or}\,\psi_{\geq 1}(|v|)\Omega_j^x  \\
\end{subarray}} 	\int_{1}^{t } \int_{\R^3}\int_{\R^3} - \big(\omega_{\beta}^{\alpha}(s, x, v)\big)^2  g^\alpha_\beta (t ,x,v) \big(\sqrt{1+|v|^2}\tilde{d}(s, x,v)\big)^{1-c(\iota)}  \]
\be\label{jan10eqn161}
\times  C_{d}(s,x,v)     T_{k}^\mu(\tilde{V}_j\cdot\xi m(\xi), h)(s,x+\hat{v}s,v)  \alpha_i(v) \cdot  D_v\big( \textit{l.o.t}_{\kappa}^\alpha(s,x,v) \big)   d x d v d s,
\ee
\[
\tilde{T}_2^5(m,a,c,h)=  \sum_{
\begin{subarray}{c}
 \Lambda^\iota \sim \psi_{\geq 1}(|v|) \widehat{\Omega}^v_j \\
 \textup{or}\,\psi_{\geq 1}(|v|)\Omega_j^x  \\
\end{subarray}}  	\int_{1}^{t } \int_{\R^3}\int_{\R^3}   \big(\omega_{\beta}^{\alpha}( s, x, v)\big)^2  g^\alpha_\beta (s ,x,v) \big(\sqrt{1+|v|^2}\tilde{d}(s, x,v)\big)^{1-c(\iota)}   C_{d}(s,x,v) \]
\be\label{jan11eqn91}
\times    T_{k}^\mu(\tilde{V}_j\cdot\xi m(\xi), h)(s,x+\hat{v}s,v) \alpha_i(v) \cdot \big[  \nabla_v \hat{v}\cdot \nabla_x g_\kappa^\alpha(s,x,v) -     \widetilde{error}_{\kappa}^{\alpha }(s,x,v)  \big] d x d v d s.
\ee

Since there are many terms to be controlled to finish the proof of Lemma \ref{trilinearestimate1}, for the sake of readers, we provide the following plan of this section. 

\begin{enumerate}
\item[$\bullet$] Under the assumption that the $L^\infty_x$-type decay estimate (\ref{nove501}) in Lemma \ref{lineardecaylemma3}  holds for  the operator $T_k^\mu(\cdot, \cdot)$ and the validity of Lemma \ref{bulktermbilinearestimate1}, we finish the proof of Lemma \ref{trilinearestimate1} by estimating terms in the decompositions (\ref{jan23eqn16}), (\ref{jan23eqn17}), and (\ref{jan23eqn19}) one by one in subsection \ref{feb172020prf1}. 

\item[$\bullet$]Under the assumption that the $L^\infty_x$-type decay estimate (\ref{nove501}) in Lemma \ref{lineardecaylemma3}  holds, we finish the proof of Lemma  \ref{bulktermbilinearestimate1} in subsection \ref{proofofbulklemma}. 

\item[$\bullet$] We finish the proof of Lemma \ref{lineardecaylemma3} in subsection \ref{feb172020prf2}. Hence complete the whole proof.
\end{enumerate}

\subsection{Proof of Lemma \ref{trilinearestimate1}}\label{feb172020prf1}

 Recall the decomposition in  (\ref{jan23eqn16}). As summarized in the following Lemma,  the estimate of the error term ``$\textup{Error}$'' in  (\ref{jan10eqn503}) holds.   

\begin{lemma}\label{jan23errorestimate}
The following estimate holds, 
\[
|\textup{Error} |\lesssim  
 \big(2^{k+d}+2^{2k+2d}\big)  2^{ -4k_{+}}\|a\|_{Y } \|m(\xi)\|_{\mathcal{S}^\infty_k}\big[ \sum_{\tau \in\{1,t \}} \|c(\tau ,x,v)\|_{L^\infty_{x,v}}  E_{\beta;d}^{\alpha}(\tau ) E_{\textup{low}}^{eb}(\tau )    + \int_{1}^{t } (1+|s|)^{-1}
 \]
\be\label{jan10eqn503}
 \times \big( \|c(s,x,v)\|_{L^\infty_{x,v}} + s\|\p_s c(s,x,v)\|_{L^\infty_{x,v}}   \big) E_{\beta;d}^{\alpha}(s) E_{\textup{low}}^{eb}(s) d s\big].
\ee
\end{lemma}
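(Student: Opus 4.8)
\textbf{Proof proposal for Lemma \ref{jan23errorestimate}.}

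The plan is to estimate the two pieces of $\textup{Error}$ in (\ref{jan10eqn181}) separately: the boundary term at $t_1=1, t_2=t$, and the bulk term where $\p_s$ falls on the coefficient $\big(\sqrt{1+|v|^2}\tilde{d}(s,x,v)\big)^{1-c(\iota)} C_d(s,x,v)$. In both cases the strategy is the same as in the proof of Lemma \ref{trilinearestimate2}: replace the bulk derivative $D_v g_\kappa^\alpha(s,x,v)$ by the \emph{second} decomposition of $D_v$ in (\ref{summaryoftwodecomposition}) in Lemma \ref{twodecompositionlemma}, so that the potentially large coefficient $\sqrt{1+|v|^2}$ is traded against the good coefficients $e_\rho(s,x,v)$, whose detailed formulas are in (\ref{sepeq932}); then apply the $L^2_{x,v}-L^2_{x,v}-L^\infty_{x,v}$ trilinear Hölder estimate, putting the $g^\alpha_\beta$ factor and the $\Lambda^\rho g^\alpha_\kappa$ factor in $L^2_{x,v}$ (both controlled by $E_{\beta;d}^{\alpha}$ through (\ref{jan11eqn80}) after using the second part of (\ref{feb8eqn51}) in Lemma \ref{derivativeofweightfunction} to absorb the weight mismatch and the factor $\sqrt{1+|v|^2}\,\tilde d$ against $1+||s|-|x+\hat v s||\sim 2^d$), and the remaining factor, namely $T_k^\mu(\tilde V_j\cdot\xi\, m(\xi),h)$ times the relevant coefficients, in $L^\infty_{x,v}$.

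For the boundary term, the key input is an $L^\infty$ bound on $T_k^\mu(\tilde V_j\cdot\xi\, m(\xi),h)(\tau, x+\hat v\tau,v)$ for $\tau\in\{1,t\}$, which I would obtain from the $L^\infty_x$-type decay estimate (\ref{nove501}) in Lemma \ref{lineardecaylemma3} (assumed available for the purposes of this subsection) — this yields a bound of shape $(1+\tau)^{-1}(2^{2k}+2^{2k+d})2^{-4k_{+}}\|m\|_{\mathcal{S}^\infty_k}E_{\textup{low}}^{eb}(\tau)$, roughly, after accounting for the extra $\xi$-factor $\tilde V_j\cdot\xi$ and the dyadic angle decomposition exactly as in Lemma \ref{estimateremainder1st}. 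Multiplying by the $L^2\times L^2$ factor $\lesssim 2^d \|c(\tau)\|_{L^\infty_{x,v}}E_{\beta;d}^\alpha(\tau)$ (the $2^d$ coming from the $2^{-d}$ normalization inside $C_d$ versus the $\varphi_d$ localization, and the $1+\tau$ from (\ref{feb8eqn51})) produces the $(2^{k+d}+2^{2k+2d})2^{-4k_{+}}$ prefactor and the boundary contribution $\sum_{\tau\in\{1,t\}}\|c(\tau)\|_{L^\infty_{x,v}}E_{\beta;d}^\alpha(\tau)E_{\textup{low}}^{eb}(\tau)$ in (\ref{jan10eqn503}). For the $\|a\|_Y$ dependence, note $C_d$ carries the factor $a(||s|-|x+\hat v s||)$ which is bounded by $\|a\|_Y$ pointwise.

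For the interior term, $\p_s$ hits $\big(\sqrt{1+|v|^2}\tilde d(s,x,v)\big)^{1-c(\iota)}$, which by (\ref{inhomogeneousmodulation}) contributes $\p_s\tilde d = (1+|v|^2)^{-1}$, giving an extra factor of size $\lesssim (1+|v|)^{-2}\lesssim 1$ relative to $\tilde d$ itself — harmless — or $\p_s$ hits $C_d$, where the time derivative lands either on $c(s,x,v)$ (producing the $s\|\p_s c\|_{L^\infty_{x,v}}$ term in (\ref{jan10eqn503}), with the $s$ compensating the $(1+s)^{-1}$ from the time integral so that the bound matches) or on the geometric cutoffs $\psi_{\leq -10}(1-|x+\hat v s|/|s|)$, $\varphi_d(||s|-|x+\hat v s||)$, $a(||s|-|x+\hat v s||)$; for the latter, $\p_s$ of these cutoffs is $O(1/s)$ times a similarly localized function (the cutoff $\varphi_{[d-1,d+1]}$ in the definition of $E_{\beta;d}^\alpha$ is precisely what allows the derived cutoffs to be reabsorbed), and $\|a\|_Y$ controls $xa'(x)$ so the chain rule factor is bounded. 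One then applies the time-dependent $L^\infty$ bound on $T_k^\mu(\tilde V_j\cdot\xi\, m(\xi),h)$ from (\ref{nove501}), integrates $\int_1^t(1+s)^{-1}(\cdots)\,ds$, and collects terms. The main obstacle here is bookkeeping: ensuring that every weight mismatch $\omega_\beta^\alpha$ vs.\ $\omega_{\rho\circ\kappa}^\alpha$ is absorbed via (\ref{jan1eqn1})-type estimates and the second part of (\ref{feb8eqn51}) without losing a factor of $|v|$ — this is exactly the point where the choice of the \emph{second} decomposition of $D_v$ and the weight $\phi(t,x,v)$ built into $\omega_\beta^\alpha$ are essential, and it is the only genuinely delicate step; the rest is a routine application of the trilinear Hölder inequality and the assumed linear decay estimate.
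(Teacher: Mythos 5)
Your treatment of the boundary terms and of the term where $\p_s$ hits $c(s,x,v)$ matches the paper's handling of what it calls $\textup{Error}_1$, and is fine. The genuine gap is in your claim that when $\p_s$ hits the geometric factors, the result is ``$O(1/s)$ times a similarly localized function.'' This is false for the two factors that actually matter: $\p_s \tilde{d}(s,x,v)=(1+|v|^2)^{-1}$ and $\p_s\big(|s|-|x+\hat{v}s|\big)$ are generically $O(1)$ in time, not $O(1/s)$. The paper computes $\p_s(|s|-|x+\hat{v}s|)=c_1+c_2+c_3$ (see (\ref{jan10eqn201})--(\ref{april5eqn81})): only $c_1$ satisfies $|c_1|\lesssim (1+||s|-|x+\hat{v}s||)/|s|$ and can be absorbed as you describe; $c_2$ satisfies only $|c_2|\lesssim (1+|v|^2)^{-1}$, and $c_3$ is of size $|x\cdot\hat{v}|/(|s|+|x+\hat{v}s|)$, which can be large. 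For the $c_2$-type terms (and for the $\p_s\tilde{d}$ term) there is no intrinsic $s^{-1}$, so the time decay in the integrand must come entirely from the linear dispersive estimate; your uniform use of the \emph{second} decomposition of $D_v$ then fails, because the coefficients $e_\rho(s,x,v)$ grow like $(1+s)$ and (\ref{nove501}) carries no residual time decay once that growth is absorbed — the time integral would grow linearly. The paper instead switches to the \emph{first} decomposition of $D_v$ for these terms (whose coefficients $d_\rho$ are controlled by $1+|\tilde{d}|$ via (\ref{jan15eqn2}) and do not grow in $s$) and invokes the genuinely time-decaying, $(1+|v|)^{-1}$-weighted estimate (\ref{noveqn500}) of Lemma \ref{twisteddecaylemma3}, so that the $(1+|v|)^{-2}$ smallness of $c_2$ and $\p_s\tilde d$ compensates the $(1+|v|)$ in $d_\rho$.

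Separately, you do not address the $c_3$ region at all ($x\cdot\tilde{v}<0$, $|x|\gtrsim(1+|s|)/|v|$), where the chain-rule factor is of size $|x|/s$ and neither decomposition alone suffices; the paper handles it with the estimates (\ref{april12eqn81})--(\ref{april5eqn82}), which rely precisely on the $\phi(t,x,v)$ factor in the weight $\omega_\beta^\alpha$, together with the $|x|$-weighted decay estimate (\ref{noveqn532}) of Lemma \ref{auxillarylemmajan26}. You gesture at $\phi$ only as part of generic weight bookkeeping; its actual role in this lemma is to control exactly this term. As written, your argument does not close for $\textup{Error}_2$ and $\textup{Error}_3$.
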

\begin{proof}
Recall (\ref{jan10eqn181}). Since there are many possible destinations of the time derivative ``$\p_t$'', we first classify terms inside the error term ``$\textup{Error}$''. Note that
\be\label{jan10eqn201}
\p_t d(t,x,v) = \p_t \tilde{d}(t,x,v)=(1+|v|^2)^{-1},\quad \p_t\big(|t|-|x+\hat{v}t| \big) =\p_t\big(\frac{\frac{t^2}{1+|v|^2}-2tx\cdot \hat{v}-|x|^2}{|t|+|x+\hat{v}t|}\big)=\sum_{i=1,2,3}c_i(t,x,v),
\ee
where
\[
c_1(t,x,v):= - \frac{|t|-|x+\hat{v}t|}{ |t|+|x+\hat{v}t| }\big(\frac{t}{|t|} +\frac{(x+\hat{v}t)\cdot \hat{v}}{|x+\hat{v}t|}\big) + \frac{-2x\cdot \hat{v}(|t|-|x+\hat{v}t|)}{\frac{t^2}{1+|v|^2}-2tx\cdot \hat{v}-|x|^2 }\big[ \psi_{\geq 20}(x\cdot \hat{v}(1+|v|^2)/ |t|  \big)\mathbf{1}_{[-10,\infty)}(x\cdot \hat{v}) 
\]
\[
+ \psi_{\geq 20}(x\cdot \hat{v}(1+|v|^2)/ |t|  \big)\mathbf{1}_{(-\infty,-10) }(x\cdot \hat{v})\psi_{\geq 2}((1+|t|)/|x||v|)\big],
\]
\[
c_2(t,x,v) = \frac{2t }{(1+|v|^2)\big(|t|+|x+\hat{v}t|\big)} + \frac{-2x\cdot\hat{v}}{|t|+|x+\hat{v}t|} \psi_{< 20}(x\cdot \hat{v}(1+|v|^2)/ |t| \big),
\]
\be\label{april5eqn81}
c_3(t,x,v)= \frac{-2x\cdot\hat{v}}{|t|+|x+\hat{v}t|} \psi_{\geq 20}(x\cdot \hat{v}(1+|v|^2)/|t|\big)\mathbf{1}_{(-\infty,-10)}(x\cdot \hat{v})\psi_{< 2}((1+|t|)/|x||v|).
\ee
From the above detailed formula, we know that the following estimate holds, 
\be\label{april5eqn71}
 |c_1(t,x,v)| \lesssim \frac{1+||t|-|x+\hat{v}t||}{|t|}, \quad |c_2(t,x,v)|\lesssim \frac{1}{1+|v|^2}. 
\ee
Hence, from the equalities in (\ref{jan10eqn201}) and the detailed formula of $C_d(t,x,v)$ in (\ref{2020feb16eqn11}),  we can decompose the error term into three parts as follows, 
\be\label{april5eqn90}
\textup{Error} = \textup{Error}_{ 1}+ \textup{Error}_{ 2}+ \textup{Error}_{ 3},
\ee
where
\[
 \textup{Error}_{ 1}=   \sum_{
\begin{subarray}{c}
 \Lambda^\iota \sim \psi_{\geq 1}(|v|) \widehat{\Omega}^v_j \\
 \textup{or}\,\psi_{\geq 1}(|v|)\Omega_j^x  \\
\end{subarray}}   - 	\int_{1}^{t } \int_{\R^3}\int_{\R^3}  \big(\omega_{\beta}^{\alpha}(s , x, v)\big)^2   g^\alpha_\beta (s ,x,v)  T_{k}^\mu(\tilde{V}_j\cdot\xi m(\xi), h)(s,x+\hat{v}s,v) \big(\sqrt{1+|v|^2}\tilde{d}(s, x,v)\big)^{1-c(\iota)}   \]
\[
\times \psi_{\geq 1}(|v|) \big[     \p_s\big( c(s,x,v)   \psi_{\leq -10}(1-|x+\hat{v}s|/|s|)  \big)a(||s|-|x+\hat{v}s||) \varphi_{d} (||s|-|x+\hat{v}s||) + c_1(s,x,v)   \big( a'(||s|-|x+\hat{v}s||)\]
\[
\times  \varphi_{d} (||s|-|x+\hat{v}s||)+ 2^{-d}a(||s|-|x+\hat{v}s||)  \varphi_{d}'  (||s|-|x+\hat{v}s||) \big)  \big]
 \alpha_i(v) \cdot D_v  
g^\alpha_\kappa(s,x, v) d x d v d s 
\]
\[
+ \sum_{i=1,2,t_1=1,t_2=t} (-1)^{i} \int_{\R^3}\int_{\R^3}  \big(\omega_{\beta}^{\alpha}( t_i, x, v)\big)^2    T_{k}^\mu(\tilde{V}_j\cdot\xi m(\xi), h)(t_i,x+\hat{v}t_i,v)\big(\sqrt{1+|v|^2}\tilde{d}(t_i, x,v)\big)^{1-c(\iota)}c(t_i,x,v)  \psi_{\geq 1}(|v|) \]
\[
\times    a(||t_i|-|x+\hat{v}t_i||)   \psi_{\leq -10}(1-|x+\hat{v} t_i|/| t_i |) \varphi_{d}(||t_i|-|x+\hat{v}t_i||)  g^\alpha_\beta (t_i ,x,v)
    \alpha_i(v) \cdot D_v  
g^\alpha_\kappa(t_i,x, v) d x d v,
\]
\[
 \textup{Error}_{ 2}=   \sum_{
\begin{subarray}{c}
 \Lambda^\iota \sim \psi_{\geq 1}(|v|) \widehat{\Omega}^v_j \\
 \textup{or}\,\psi_{\geq 1}(|v|)\Omega_j^x  \\
\end{subarray}} - 	\int_{1}^{t } \int_{\R^3}\int_{\R^3}  \big(\omega_{\beta}^{\alpha}( s,x, v)\big)^2   g^\alpha_\beta (s ,x,v)  T_{k}^\mu(\tilde{V}_j\cdot\xi m(\xi), h)(s,x+\hat{v}s,v)     \psi_{\leq -10}(1-|x+\hat{v}s|/|s|)  	\]
\[
\times \psi_{\geq 1}(|v|)   c(s,x,v)\big(\sqrt{1+|v|^2}\big)^{1-c(\iota)}\big[
\p_s\big(\tilde{d}(s, x,v)\big)^{1-c(\iota)}  a(||s|-|x+\hat{v}s||) \varphi_{d}(||s|-|x+\hat{v}s||)+c_2(s,x,v) \big(\tilde{d}(s, x,v)\big)^{1-c(\iota)} \]
\[
\times   \big( a'(||s|-|x+\hat{v}s||)\varphi_{d} (||s|-|x+\hat{v}s||)+ 2^{-d}a(||s|-|x+\hat{v}s||)  \varphi_{d}'  (||s|-|x+\hat{v}s||) \big) \big]
  \alpha_i(v) \cdot D_v  
g^\alpha_\kappa(s,x, v) d x d v d s.
\]
\[
 \textup{Error}_{3}=   \sum_{
\begin{subarray}{c}
 \Lambda^\iota \sim \psi_{\geq 1}(|v|) \widehat{\Omega}^v_j \\
 \textup{or}\,\psi_{\geq 1}(|v|)\Omega_j^x  \\
\end{subarray}}  - 	\int_{1}^{t } \int_{\R^3}\int_{\R^3}  \big(\omega_{\beta}^{\alpha}(s, x, v)\big)^2   g^\alpha_\beta (s ,x,v)  T_{k}^\mu(\tilde{V}_j\cdot\xi m(\xi), h)(s,x+\hat{v}s,v)     \psi_{\leq -10}(1-|x+\hat{v}s|/|s|)  	\]
\[
\times   \psi_{\geq 1}(|v|) c(s,x,v) \big(\sqrt{1+|v|^2}\tilde{d}(s, x,v)\big)^{1-c(\iota)} 
 c_3(s,x,v)   \big[ a'(||t|-|x+\hat{v}s||)   \varphi_{d} (||t|-|x+\hat{v}s||)+ 2^{-d}a(||t|-|x+\hat{v}s||)  \]
 \be\label{april5eqn76}
 \times  \varphi_{d}'  (||s|-|x+\hat{v}s||) \big]
  \alpha_i(v) \cdot D_v  
g^\alpha_\kappa(s,x, v) d x d v d s.
\ee
Very importantly, recall that $a'(x)=\varphi_d'(x)=0$ if $|x|\leq 2^{-10}$, we can localize away from zero.

For the first part of error term ``$\textup{Error}_{1}$'', we use the second decomposition of ``$D_v$'' in (\ref{summaryoftwodecomposition}) in Lemma \ref{twodecompositionlemma}.  From the estimate (\ref{april5eqn71}) and  the estimate (\ref{nove501}) in Lemma \ref{lineardecaylemma3}, the following estimate holds, 
\[
|\textup{Error}_{ 1}|\lesssim  \sum_{\tau\in\{1,t \}} \|a\|_{Y } 	\big[\sum_{\rho\in \mathcal{K}, |\rho|=1} 2^{d}\|(1+|v|)^{1-c(\rho)} e_{\rho}(\tau,x,v)  T_{k}^\mu(\tilde{V}_j\cdot\xi m(\xi), h)(\tau,x+\hat{v}\tau,v) \|_{L^\infty_{x,v}} \big]  \|c(\tau  ,x,v)\|_{L^\infty_{x,v}}   E_{\beta;d}^{\alpha}(\tau )
\]
\[
  + \int_{1}^{t } \big[\sum_{\rho\in \mathcal{K}, |\rho|=1} 2^{d}\|(1+|v|)^{1-c(\rho)} e_{\rho}(s,x,v)  T_{k}^\mu(\tilde{V}_j\cdot\xi m(\xi), h)(s,x+\hat{v}s,v) \|_{L^\infty_{x,v}} \big] (1+|s|)^{-1}\|a\|_{Y }E_{\beta;d}^{\alpha}(s ) 
\]
\[
  \times  	 \big( \|c(s,x,v)\|_{L^\infty_{x,v}}  + s\|\p_s c(s,x,v)\|_{L^\infty_{x,v}}   \big) d s
\lesssim \big(2^{k+d}+2^{2k+2d}\big)  2^{ -4k_{+}}\|a\|_{Y } \|m(\xi)\|_{\mathcal{S}^\infty_k}\big[ \sum_{\tau\in\{1,t \}}   \|c(\tau,x,v)\|_{L^\infty_{x,v}}   
\]
\be\label{jan10eqn501}
\times  E_{\beta;d}^{\alpha}(\tau) E_{\textup{low}}^{eb}(\tau) + \int_{1}^{t } (1+|s|)^{-1}  \big( \|c(s,x,v)\|_{L^\infty_{x,v}}+ s\|\p_s c(s,x,v)\|_{L^\infty_{x,v}}   \big)  E_{\textup{low}}^{eb}(s)  E_{\beta;d}^{\alpha}(s) d s \big] .
\ee

For the second part of error term ``$\textup{Error}_{2}$'', we use the first decomposition of $D_v$ in (\ref{summaryoftwodecomposition}) in Lemma \ref{twodecompositionlemma}.  From the estimate of coefficients in (\ref{jan15eqn2}) in Lemma \ref{twodecompositionlemma},  the estimates(\ref{jan10eqn201}) and (\ref{april5eqn71}), and the estimate  (\ref{noveqn500}) in Lemma \ref{twisteddecaylemma3}, the following estimate holds, 
\be\label{jan10eqn502}
 |\textup{Error}_{ 2}|\lesssim   2^{ k+ d-4k_{+}}\|a\|_{Y }  \|m(\xi)\|_{\mathcal{S}^\infty_k}  \int_{1}^{t } (1+|s|)^{-1}  \|c(s,x,v)\|_{L^\infty_{x,v}}E_{\textup{low}}^{eb}(s)  E_{\beta;d}^{\alpha}(s) d s. 
 \ee

Lastly, we estimate $\textup{Error}_3$. Recall (\ref{april5eqn76}). For this case, we use the first decomposition of ``$D_v$'' in (\ref{summaryoftwodecomposition}) in Lemma \ref{twodecompositionlemma}. Recall the detailed formula of ``$d_{\rho}(t,x,v)$'' in (\ref{sepeq947}) and  the detailed formula of $c_3(t,x,v)$ in (\ref{april5eqn81}). From the equality (\ref{march18eqn54}), we have
\be\label{april12eqn81}
\big(\frac{| \tilde{d}(t,x,v)c_3(t,x,v)|}{||t|-|x+\hat{v}t||}   \big) \psi_{\leq -5}(1-|x+\hat{v}t|/|t|)   \lesssim \frac{ t}{t+(1+|v|)\big(|x\cdot v|+|x|\big)}\frac{|x\cdot \hat{v}|}{|t|+|x+\hat{v}t|} \lesssim \frac{1}{1+|v|^2}.
\ee
Recall the definition of $\phi(t,x,v)$ in (\ref{april2eqn1}). The following estimate holds for any fixed $x,v\in \textit{supp}(c_3(t,x,v)\psi_{\geq 1}(|v|))$, 
\be\label{april5eqn82}
\frac{|c_3(t,x,v)|\psi_{\geq 1}(|v|)}{\phi(t,x,v)} \lesssim \frac{|x|}{|t|(1+|v|)}.
\ee
From the above estimates (\ref{april12eqn81}) and (\ref{april5eqn82}), the estimate (\ref{noveqn600}) in Lemma \ref{twisteddecaylemma3}, and  the estimate (\ref{noveqn532}) in Lemma \ref{auxillarylemmajan26},   the following estimate holds 
\[
 |\textup{Error}_{3}|\lesssim \int_{1}^{t }2^{d} \big( \| \frac{1}{1+|v|} T_{k}^\mu(\tilde{V}_j\cdot\xi m(\xi), h)(s,x+\hat{v}s,v)\|_{L^\infty_{x,v}}+ \| \frac{|x|}{s} T_{k}^\mu(\tilde{V}_j\cdot\xi m(\xi), h)(s,x+\hat{v}s,v)\|_{L^\infty_{x,v}}\big) \|a\|_Y \|c(s,x,v)\|_{L^\infty_{x,v}}
\]
\be\label{april5eqn91}
\times   E_{\beta;d}^{\alpha}(s) d s \lesssim   2^{ k+ d-4k_{+}}\|a\|_{Y }  \|m(\xi)\|_{\mathcal{S}^\infty_k}  \int_{1}^{t } (1+|s|)^{-1}  \|c(s,x,v)\|_{L^\infty_{x,v}}E_{\textup{low}}^{eb}(s) E_{\beta;d}^{\alpha}(s) d s,
\ee

To sum up, recall the decomposition (\ref{april5eqn90}),  our desired estimate  (\ref{jan10eqn503}) holds from the estimates (\ref{jan10eqn501}), (\ref{jan10eqn502}) and (\ref{april5eqn91}). 
\end{proof}
\begin{lemma}\label{jan23lemma2}
The following estimate holds, 
\[
|\tilde{T}_1^1(m,a,c,h) + \tilde{T}_2^1(m,a,c,h) |\lesssim   (2^{k+d}+2^{2k+2d})2^{-4k_{+}} \| a\|_{Y } \| m (\xi)\|_{\mathcal{S}^\infty_k}  \int_{1}^{t } (1+|s|)^{-1}   \big(  E_{\textup{low}}^{eb}(s) \big)^2 E_{\beta;d}^{\alpha}(s )
\] 
 \be\label{jan11eqn31}
 \times  \big( \|c(s,x,v)\|_{L^\infty_{x,v}}+\|  D_v c(s,x,v)\|_{L^\infty_{x,v}}\big)   d s. 
\ee 

\end{lemma}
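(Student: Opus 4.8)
\textbf{Proof plan for Lemma \ref{jan23lemma2}.}

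The plan is to treat the two terms $\tilde{T}_1^1(m,a,c,h)$ and $\tilde{T}_2^1(m,a,c,h)$ together, since together they form the piece in which the time derivative landed on $g_\beta^\alpha$ or on $D_v g_\kappa^\alpha$ and in both cases reproduced the transport nonlinearity $K(s,x+\hat{v}s,v)\cdot D_v(\cdot)$; the common feature is that \emph{two} copies of the electromagnetic field now appear (one from the decay of $T_k^\mu$, one from the factor $K$), which is exactly why a factor $(E_{\textup{low}}^{eb}(s))^2$ shows up on the right-hand side of \textup{(\ref{jan11eqn31})}. First I would expand $D_v$ using the first decomposition in \textup{(\ref{summaryoftwodecomposition})} of Lemma \ref{twodecompositionlemma}, writing $D_v g_\kappa^\alpha = \sum_{\rho} d_\rho(s,x,v)\Lambda^\rho g_\kappa^\alpha$ and controlling the coefficients by \textup{(\ref{jan15eqn2})}; the weight ratio is absorbed using the second part of the estimate \textup{(\ref{feb8eqn51})} in Lemma \ref{derivativeofweightfunction} (so the inhomogeneous modulation factor $\tilde d$ is always paired against $1+||s|-|x+\hat{v}s||$ to gain smallness, while the admissible $|v|$-loss is compensated by the weight hierarchy built into $\omega_\beta^\alpha$). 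For $\tilde{T}_2^1$ one must first move the outer $D_v$ across the product $K\cdot D_v g_\kappa^\alpha$ by Leibniz; since $\nabla_v\cdot K=0$ (see \textup{(\ref{sepeqn61})}) the integration by parts in $v$ introduces no divergence term, and the pieces where $D_v$ hits $K$ produce the $\|D_v c(s,x,v)\|_{L^\infty_{x,v}}$ contribution in \textup{(\ref{jan11eqn31})} (here I am packaging the $D_v$ of the electromagnetic coefficients, together with $D_v$ of $\alpha_i(v)$ and of $\tilde d$, into an $L^\infty$ norm via Lemma \ref{derivativeofweightfunction} and the decay estimate \textup{(\ref{noveqn78})}).

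Next I would insert the $L^\infty_x$-type decay estimate for the bilinear operator $T_k^\mu(\tilde{V}_j\cdot\xi\,m(\xi),h)$, namely the estimate \textup{(\ref{nove501})} in Lemma \ref{lineardecaylemma3} (which is assumed available at this stage), to bound the oscillatory factor pointwise by $(2^{k+d}+2^{2k+2d})2^{-4k_+}(1+|s|)^{-1}\|m(\xi)\|_{\mathcal{S}^\infty_k}E_{\textup{low}}^{eb}(s)$, after pairing it against the $2^d$ coming from the dyadic localization of the modulation and the coefficient $\sqrt{1+|v|^2}\,\tilde d$; the same $L^\infty$ decay with a $|v|$-weight adjustment (via the coefficients $e_\rho$ in the second decomposition of $D_v$, estimate \textup{(\ref{sepeq932})}) is used whenever the rotational coefficient forces a $(1+|v|)$-loss. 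The remaining factor $K(s,x+\hat{v}s,v)$ is estimated in $L^\infty_{x,v}$ by the decay estimate \textup{(\ref{noveqn78})} in Lemma \ref{sharpdecaywithderivatives}, giving another $(1+|s|)^{-1}E_{\textup{low}}^{eb}(s)$; what is left inside the space-time integral is $\omega_\beta^\alpha g_\beta^\alpha$ times $\omega_{\rho\circ\kappa}^\alpha\Lambda^\rho g_\kappa^\alpha$ (or $g_\beta^\alpha$ times $D_v g_\kappa^\alpha$ with the weight distributed via the hierarchy), all cut off by $\varphi_{[d-1,d+1]}(||s|-|x+\hat{v}s||)$, which is precisely $E_{\beta;d}^{\alpha}(s)$ by definition \textup{(\ref{jan11eqn80})} — here I use Cauchy--Schwarz in $(x,v)$ and the elementary fact that the support localization in $1-|x+\hat{v}s|/|s|$ and in $||s|-|x+\hat{v}s||$ are compatible with $s\sim$ the light-cone scale. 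Summing the $L^2_{x,v}$–$L^2_{x,v}$–$L^\infty_{x,v}$ multilinear bound over the finitely many $\rho,\iota,\kappa,i,j,\mu$ yields \textup{(\ref{jan11eqn31})}.

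The main obstacle I anticipate is organizational rather than analytic: one has to verify that \emph{every} destination of the two $D_v$'s (the inner one from the nonlinearity, the outer one in $\tilde{T}_2^1$) either lands on a profile — contributing the $E_{\beta;d}^\alpha(s)$ factor — or lands on a coefficient that is bounded (by Lemma \ref{derivativeofweightfunction}, \ref{twodecompositionlemma}, or by the explicit formulas \textup{(\ref{sepeq947})}, \textup{(\ref{sepeq932})}) with an at-most-admissible $|v|$-growth, so that no uncontrolled weight of size $1+|v|$ survives; the delicate case is when $D_v$ hits the weight $\omega_\beta^\alpha$ itself, for which the first estimate in \textup{(\ref{feb8eqn51})} is exactly designed, and the case where $D_v$ acting on $\tilde d$ reproduces $\hat e_1\tilde d+\hat e_2$ via \textup{(\ref{dec26eqn1})}, keeping the null/modulation structure stable. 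Once this bookkeeping is done, the quantitative bounds are routine applications of the three $L^\infty$ decay estimates cited above, and the claimed $(2^{k+d}+2^{2k+2d})2^{-4k_+}$ factor, the integrated $(1+|s|)^{-1}$ weight, and the $(E_{\textup{low}}^{eb})^2$ and $\|c\|_{L^\infty}+\|D_v c\|_{L^\infty}$ dependence all fall out directly.
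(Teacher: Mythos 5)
Your proposal follows essentially the same route as the paper: the paper likewise treats $\tilde{T}_1^1+\tilde{T}_2^1$ together via the algebraic identity \textup{(\ref{jan11ean5})}, in which the derivative-losing pieces ($K\cdot D_vg_\beta^\alpha\,\alpha_i\cdot D_vg_\kappa^\alpha$ and $g_\beta^\alpha\,\alpha_i\cdot K\cdot D_vD_vg_\kappa^\alpha$) recombine into the total divergence $D_v\cdot\big[K\,g_\beta^\alpha\,\alpha_i(v)\cdot D_vg_\kappa^\alpha\big]$ thanks to $D_v\cdot K=0$, after which integration by parts in $(x,v)$ and the estimates \textup{(\ref{feb8eqn51})}, \textup{(\ref{noveqn78})}, \textup{(\ref{nove501})}, and \textup{(\ref{nove8})} give exactly the claimed bound. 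One bookkeeping correction: the $\|D_vc\|_{L^\infty_{x,v}}$ term in \textup{(\ref{jan11eqn31})} arises when the integrated-by-parts $D_v$ lands on the coefficient $C_d(s,x,v)$ (i.e.\ on $c$ and on $\tilde{d}$, via \textup{(\ref{jan11eqn41})}), not from $D_v$ hitting $K$ — the latter only produces the harmless $(\alpha_i(v)\cdot\nabla_v\hat{v})\times B$ and $K\cdot\nabla_v\alpha_i(v)$ contributions of $I^4_{\iota,\kappa,i,j}$.
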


 \begin{proof}
 Recall (\ref{jan11eqn1}), (\ref{jan10eqn162}), and   (\ref{sepeqn61}). As a result of direct computations, we have
\[
[D_{v_m}, D_{v_n}]=0,\quad D_v\cdot K(t,x+\hat{v} t ,v)=0, \quad m, n=1,2,3.
\]
Hence, we have
\[
  K(t, x+\hat{v} t,v)\cdot D_v g_\beta^\alpha(t,x,v)  \alpha_i(v) \cdot D_v  
g^\alpha_\kappa(t ,x, v) 
+ g_{\beta}^\alpha(t,x,v)\alpha_i(v)\cdot D_v\big( K(t,x+\hat{v} t,v)\cdot D_v g_{\kappa}^\alpha(t,x ,v) \big) 
\]
\[
= \sum_{m,n=1,2,3} K_n(t, x+\hat{v} t,v)  D_{v_n} g_\beta^\alpha(t,x,v)     (\alpha_i(v))_m    D_{v_m}  
g^\alpha_\kappa(t ,x, v) +  g_{\beta}^\alpha(t,x,v)(\alpha_i(v))_m   
\]
\[
\times  D_{v_m} K_n(t, x+\hat{v} t,v)  D_{v_n} g_\kappa^\alpha(t,x,v)  
 +  g_{\beta}^\alpha(t,x,v)(\alpha_i(v))_m    K_n(t, x+\hat{v} t,v)  D_{v_n}D_{v_m}  g_\kappa^\alpha(t,x,v)
\]
\[
=  g_{\beta}^\alpha(t,x,v) \big[\big(\alpha_i(v)\cdot \nabla_v \hat{v}\big)\times B(t,x+\hat{v}t)-  \big(K(t,x+\hat{v} t,v)\cdot \nabla_v \alpha_i(v)\big)\big]\cdot D_v g_{\kappa}^\alpha(t,x,v)  
\]
\be\label{jan11ean5}
 + D_{v}\cdot \big[K (t,x+\hat{v} t,v) g^\alpha_\beta(t,x,v) \alpha_i(v)\cdot D_v g_{\kappa}^\alpha(t,x,v) \big] .
\ee

From the above equality, the following equality holds after doing
integration by parts in $x$ and $v$ to 
move the derivatives ``$D_v$'' outside  ``$K (t,x+\hat{v} t, v) g^\alpha_\beta(t,x,v) \alpha_i(v)\cdot D_v g_{\kappa}^\alpha(t,x,v)$'' in  (\ref{jan11ean5}),
\be\label{jan11eqn53}
\tilde{T}_1^1(m,a,c,h) + \tilde{T}_2^1(m,a,c,h):=\sum_{
\begin{subarray}{c}
l=1,\cdots,4,
 j=1,2,3, 
i=1,\cdots,7
\end{subarray}} \sum_{
\begin{subarray}{c}
\iota+\kappa=\beta,\iota,   \kappa\in \mathcal{S}, 
 |\iota|=  1, \Lambda^\iota \sim \psi_{\geq 1}(|v|)\widehat{\Omega}^v_j \textup{or}\,\psi_{\geq 1}(|v|) \Omega_j^x  \\
\end{subarray}}  I_{\iota, \kappa,i,j}^l, 
\ee
where
\[
I_{\iota, \kappa,i,j}^1=\int_{1}^{t  } \int_{\R^3}\int_{\R^3}   \big[ K(s,x+\hat{v}s,v)  g_\beta^\alpha(s,x,v)  \alpha_i(v) \cdot D_v g^\alpha_\kappa(s ,x, v) \big]\cdot D_v\big((\omega_{\beta}^\alpha(s,x,v))^2\big)  \big(\sqrt{1+|v|^2}\tilde{d}(s , x,v)\big)^{1-c(\iota)} 
\]
\be\label{nove16}
\times C_d(s,x,v) T_{k}^\mu(\tilde{V}_j\cdot\xi m(\xi), h)(s,x+\hat{v}s,v)     d x d v d s,
\ee
\[
I_{\iota, \kappa,i,j}^2=  \int_{1}^{t } \int_{\R^3}\int_{\R^3}   \big[(\omega_{\beta}^\alpha(s,x,v))^2   g_\beta^\alpha(s,x,v)  \alpha_i(v) \cdot D_v g^\alpha_\kappa(s ,x, v) \big]  K(s,x+\hat{v} s,v)\cdot  D_v\big[  \big(\sqrt{1+|v|^2}\tilde{d}(s , x,v)\big)^{1-c(\iota)}   
\]
\be\label{jan11eqn21}
\times  C_d(s,x,v)  \big] T_{k}^\mu(\tilde{V}_j\cdot\xi m(\xi), h)(s,x+\hat{v}s,v)     d x d v ds ,
\ee
\[
I_{\iota, \kappa,i,j}^3=   \int_{1}^{t } \int_{\R^3}\int_{\R^3}   \big[(\omega_{\beta}^\alpha(s,x,v))^2   g_\beta^\alpha(s,x,v)  \alpha_i(v) \cdot D_v g^\alpha_\kappa(s ,x, v) \big] \big(\sqrt{1+|v|^2}\tilde{d}(s , x,v)\big)^{1-c(\iota)}  
\]
\be\label{jan11eqn23}
\times  C_d(s,x,v)    K(s  ,x+\hat{v} s,v)\cdot  D_v\big(  T_{k}^\mu(\tilde{V}_j\cdot\xi m(\xi), h)(s,x+\hat{v}s,v) \big)    d x d v d s,
\ee
\[
I_{\iota, \kappa,i,j}^4= \int_{1}^{t } \int_{\R^3}\int_{\R^3} (\omega_{\beta}^\alpha(s,x,v))^2 \big[\big(\alpha_i(v)\cdot \nabla_v \hat{v}\big)\times B(s,x+\hat{v}s)-  \big(K(s,x+\hat{v} s,v)\cdot \nabla_v \alpha_i(v)\big)\big]\cdot D_v g_{\kappa}^\alpha(s,x,v)  g_{\beta}^\alpha(s,x,v)
\]
\be\label{jan11eqn24}
\times \big(\sqrt{1+|v|^2}\tilde{d}(s , x,v)\big)^{1-c(\iota)}      C_d(s,x,v)   T_{k}^\mu(\tilde{V}_j\cdot\xi m(\xi), h)(s,x+\hat{v}s,v)      d x d v d s.
\ee
 
 \noindent $\bullet$\quad The estimate of $I_{\iota, \kappa,i,j}^1$.

Recall (\ref{nove16}). For the term ``$D_v g_\kappa^\alpha(t,x,v)$'' in  ``$I_{\iota, \kappa,i,j}^1$'', we use the second decomposition of $D_v$ in (\ref{summaryoftwodecomposition}).  From the   estimate (\ref{feb8eqn51}) in Lemma \ref{derivativeofweightfunction}, the decay estimate (\ref{noveqn78}) in Lemma \ref{sharpdecaywithderivatives},  and the estimate (\ref{nove501}) in Lemma \ref{lineardecaylemma3}, the following estimate holds from the $L^2_{x,v}-L^2_{x,v}-L^\infty_{x,v}$ type multilinear estimate, 
\[
|I_{\iota, \kappa,i,j}^1| \lesssim \sum_{
\begin{subarray}{c}
\rho\in \mathcal{K},\kappa\in \mathcal{S}, \gamma\in \mathcal{B}, \\
|\rho|=1 
\end{subarray}
}  \int_{1}^{t }  2^{d}  \big\|\frac{D_v\omega_\beta^\alpha(s,x,v)}{\omega_\beta^\alpha(s,x,v)} K(s,x+\hat{v} s,v)\big\|_{L^\infty_x L^\infty_v} \| \omega_{\kappa}^\gamma(s,x, v) g_{\kappa}^\gamma(s, x,v) \varphi_{[d-1,d+1]}  (||s|-|x+\hat{v}s||)   \|_{L^2_x L^2_v }^2  \]
\[
\times  \| a \|_{Y }\|c(s,x,v)\|_{L^\infty_{x,v}}  \|(1+|v|)^{1-c(\rho)} e_{\rho}(t,x,v) \varphi_{d}  (||s|-|x+\hat{v}s||) T_{k}^\mu(\tilde{V}_j\cdot\xi m(\xi), h)(s,x+\hat{v}s ,v)  \|_{L^\infty_{x,v}} ds \]
\be\label{nove73}
\lesssim  \big( 2^{d+k} +2^{2d+2k} \big)2^{-4k_{+}}\| a\|_{Y }\|m(\xi)\|_{\mathcal{S}^\infty_k}  \int_{1}^{t }       (1+|s|)^{-1}\|c(s,x,v)\|_{L^\infty_{x,v}} \big(E_{\textup{low}}^{eb}(s)\big)^2   E_{\beta;d}^{\alpha}(s) d s. 
\ee

 \noindent $\bullet$\quad The estimate of $I_{\iota, \kappa,i,j}^2$.

 Recall (\ref{jan11eqn21}).   From the second equality in  (\ref{dec26eqn1}),  the estimate of coefficients in (\ref{jan26eqn101}), the second part of the estimate (\ref{feb8eqn51}) in Lemma \ref{derivativeofweightfunction}, we know that the following estimate holds, 
\[
\big(\phi(t,x,v)\big)^{1-c(\iota)}\big|  D_v\big[  \big( \tilde{d}(t , x,v)\big)^{1-c(\iota)}  \psi_{\geq 1}(|v|)
  a(||t|-|x+\hat{v}t||)     \varphi_{d}(||t|-|x+\hat{v}t||) \psi_{\leq -10}(1-|x+\hat{v}t|/|t|) c(t,x,v) \big]\big|
  \]
 \be\label{jan11eqn41}
  \lesssim   2^{d} \| a\|_{Y } \big( \|c(t,x,v)\|_{L^\infty_{x,v}}+\|  D_v c(t,x,v)\|_{L^\infty_{x,v}}\big). 
 \ee
For the term ``$D_v g_\kappa^\alpha(t,x,v)$'' in  ``$I_{\iota, \kappa,i,j}^2$'', we use the second decomposition of $D_v$ in (\ref{summaryoftwodecomposition}). From the estimate (\ref{jan11eqn41}), the decay estimate (\ref{noveqn78}) in Lemma \ref{sharpdecaywithderivatives}, and the estimate (\ref{nove501}) in Lemma \ref{lineardecaylemma3}, the following estimate holds from the $L^2_x L^2_v-L^2_x L^2_v-L^\infty_{x} L^\infty_v$ type multilinear estimate, 
\[
|I_{\iota, \kappa,i,j}^2|\lesssim  \sum_{\kappa\in \mathcal{S}, \gamma\in \mathcal{B}, |\kappa|+|\gamma|\leq  N_0}\sum_{u\in\{E,B\}} \int_{1}^{t } (2^{k+d}+2^{2k+2d})2^{-4k_{+}}   \| a\|_{Y } \|\omega_{\kappa}^\gamma(s,x, v) g_{\kappa}^\gamma(s, x,v) \varphi_{[d-1,d+1]} (||s|-|x+\hat{v}s||)  \|_{L^2_x L^2_v }^2 
\]
\[
\times   \| u(s,x+\hat{v} s  )  \|_{L^\infty_{x,v}} \| m (\xi)\|_{\mathcal{S}^\infty_k} E_{\textup{low}}^{eb}(s) \big( \|c(s,x,v)\|_{L^\infty_{x,v}}+\|  D_v c(s,x,v)\|_{L^\infty_{x,v}}\big) d s
\lesssim    (2^{k+d}+2^{2k+2d})2^{-4k_{+}}   
\] 
\be\label{nove51}
  \times  \| a\|_{Y_d} \| m (\xi)\|_{\mathcal{S}^\infty_k}  \int_{1}^{t }  (1+|s|)^{-1}    \big( \|c(s,x,v)\|_{L^\infty_{x,v}}+\|  	D_v c(s,x,v)\|_{L^\infty_{x,v}}\big)  \big(E_{\textup{low}}^{eb}(s)\big)^2   E_{\beta;d}^{\alpha}(s)   d s. 
\ee

 \noindent $\bullet$\quad The estimate of $I_{\iota, \kappa,i,j}^3$.

  Recall   (\ref{jan11eqn23}) and (\ref{noveqn89}). We know  that the following equality holds,  
\be\label{nove4}
D_v\big(  T_{k}^\mu(\tilde{V}_j\cdot\xi m(\xi), h)(t,x+\hat{v}t,v) \big) =  \int_{\R^3} e^{i(x+t\hat{v})\cdot \xi - i t\mu |\xi|} \nabla_v\big[\frac{    -i \tilde{V}_j\cdot\xi m(\xi)  \psi_k(\xi)}{\hat{v}\cdot \xi - \mu |\xi|}\psi_{> 10}\big(    t (|\xi|-\mu\hat{v}\cdot \xi  ) \big] \widehat{h }(t, \xi)    d \xi.
\ee
Note that 
\be\label{nove1}
\nabla_v= \tilde{v}\tilde{v}\cdot \nabla_v + \tilde{V}_i \tilde{V}_i \cdot \nabla_v, \quad \tilde{V}_i\cdot\nabla_v (\hat{v}\cdot \xi)= \frac{1}{\sqrt{1+|v|^2}}\tilde{V}_i \cdot \xi, \quad \tilde{v}\cdot \nabla_v( \hat{v}\cdot \xi)= \frac{\tilde{v}\cdot \xi}{(1+|v|^2)^{3/2}}.
\ee
Recall (\ref{nove4}). From the above equalities (\ref{nove1}) and the decay estimates (\ref{jan25eqn16}) and (\ref{noveqn600}) in Lemma \ref{twisteddecaylemma3}, we have
\be\label{nove8}
\big\|(1+|v|) D_v\big(  T_{k}^\mu(\tilde{V}_j\cdot\xi m(\xi), h)(t,x+\hat{v}t,v) \big) \big\|_{L^\infty_{x,v}} \lesssim 2^{k-4k_{+}} \| m(\xi)\|_{\mathcal{S}^\infty_k} E_{\textup{low}}^{eb}(t).
\ee
   For the term ``$D_v g_\kappa^\alpha(t,x,v)$'' in  ``$I_{\iota, \kappa,i,j}^3$'', we use the first decomposition of $D_v$ in (\ref{summaryoftwodecomposition}).
From the above estimate (\ref{nove8}), the estimate of coefficients in (\ref{jan15eqn2}) in Lemma \ref{summaryoftwodecomposition}, the decay estimate (\ref{noveqn78}) in Lemma \ref{sharpdecaywithderivatives}, and  the $L^2_x L^2_v $--$L^2_x L^2_v $--$L^\infty_{x} L^\infty_v$ type multilinear estimate, the following estimate holds, 
\[
|I_{\iota, \kappa,i,j}^3|\lesssim \sum_{\kappa\in \mathcal{S}, \gamma\in \mathcal{B}, |\kappa|+|\gamma|\leq  N_0} \sum_{u\in\{E,B\}} \int_{1}^{t }  2^{ d}  \|\omega_{\kappa}^\gamma(s,x, v) g_{\kappa}^\gamma(s, x,v) \varphi_{[d-1,d+1]}  (||s|-|x+\hat{v}s||)   \|_{L^2_x L^2_v }^2\| a\|_{Y }    \|c(s,x,v)\|_{L^\infty_{x,v}}
\]
\[
\times  \|u(s,x+\hat{v}s )(1+||s|-|x+\hat{v}s||)\|_{L^\infty_{x,v}}  \big\|(1+|v|) D_v\big(  T_{k}^\mu(\tilde{V}_j\cdot\xi m(\xi), h)(s,x+\hat{v}s,v) \big) \big\|_{L^\infty_{x,v}}  ds 
\]
\be\label{nove61}
\lesssim     \int_{1}^{t }  (1+|s|)^{-1}      2^{d+k-4k_{+}}   \| a\|_{Y } \|m(\xi)\|_{\mathcal{S}^\infty_k}  \|c(s,x,v)\|_{L^\infty_{x,v}} \big(E_{\textup{low}}^{eb}(t)\big)^2  E_{\beta;d}^{\alpha}(s) d s.
\ee

 \noindent $\bullet$\quad The estimate of $I_{\iota, \kappa,i,j}^4$.

 Recall (\ref{jan11eqn24}).  As a result of direct computation, we have  
 \[
\big| (1+|v|) \nabla_v \hat{v}\big|   + \big| (1+|v|) \nabla_v \alpha_i(v)\big|  \lesssim 1.
 \]
   For the term ``$D_v g_\kappa^\alpha(t,x,v)$'' in  ``$I_{\iota, \kappa,i,j}^4$'', we use the first decomposition of ``$D_v$'' in (\ref{summaryoftwodecomposition}). From the $L^2_{x,v}-L^2_{x,v}-L^\infty_{x,v}$ type multilinear estimate, the estimate of coefficients in (\ref{jan15eqn2}) in Lemma \ref{summaryoftwodecomposition},   the decay estimate (\ref{noveqn78}) in Lemma \ref{sharpdecaywithderivatives}, and the estimate (\ref{jan25eqn16}) in Lemma \ref{twisteddecaylemma3},  we have 
   \[
|I_{\iota, \kappa,i,j}^4|\lesssim \sum_{\kappa\in \mathcal{S}, \gamma\in \mathcal{B}, |\kappa|+|\gamma|\leq  N_0} \sum_{u\in\{E,B\}}   \int_{1}^{t }     2^{ d}\|a\|_{Y }  \|\omega_{\kappa}^\gamma(s,x, v) g_{\kappa}^\gamma(s, x,v) \varphi_{[d-1,d+1]}( ||s|-|x+\hat{v}s||) \|_{L^2_x L^2_v }^2\| c(s,x,v)\|_{L^\infty_{x,v}}
      \]
\[
\times  \| u(s,x+\hat{v}s )\big(1+||s|-|x+\hat{v}s||\big) \|_{L^\infty_{x,v}}   \| T_{k}^\mu(\tilde{V}_j\cdot\xi m(\xi), h)(s,x+\hat{v}s, v)    \|_{L^\infty_{x,v}} d s
\]
\be\label{jan11eqn50}
\lesssim    2^{d+k-4k_{+}} \|a\|_{Y}  \|m(\xi)\|_{\mathcal{S}^\infty_k}    \int_{1}^{t } (1+|s|)^{-1}     \| c(s,x,v)\|_{L^\infty_{x,v}}  \big(E_{\textup{low}}^{eb}(s)\big)^2 E_{\beta;d}^{\alpha}(s)d s. 
\ee

To sum up, our desired estimate (\ref{jan11eqn31}) holds   from the decomposition (\ref{jan11eqn53}) and   the estimates (\ref{nove73}), (\ref{nove51}), (\ref{nove61}), and (\ref{jan11eqn50}). 
\end{proof}

\begin{lemma}\label{jan23lemma5}
The following estimate holds for any fixed $i\in\{1,2\}$ and $j\in\{3,4\}$,
\be\label{jan11eqn59}
  |\tilde{T}_i^j(m,a,c,h)|\lesssim   \big(2^{k+d}+2^{2k+2d}\big)2^{-4k_{+}} \|a\|_{Y } \| m(\xi)\|_{\mathcal{S}^\infty_k } \int_{1}^{t }  (1+|s|)^{-1}   \| c(s,x,v)\|_{L^\infty_{x,v}} E_{\textup{low}}^{eb}(s) E_{\beta;d}^{\alpha}(s)  d t.
\ee
\end{lemma}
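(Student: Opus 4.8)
\textbf{Plan of proof for Lemma \ref{jan23lemma5}.} Recall the definitions of $\tilde{T}_1^3(m,a,c,h)$ in \eqref{jan11eqn60}, $\tilde{T}_1^4(m,a,c,h)$ in \eqref{jan11eqn61}, $\tilde{T}_2^3(m,a,c,h)$ in \eqref{jan10eqn164}, and $\tilde{T}_2^4(m,a,c,h)$ in \eqref{jan10eqn161}. The common structure of all four quantities is a spacetime integral whose integrand is a product of: (i) $\big(\omega_\beta^\alpha(s,x,v)\big)^2 g^\alpha_\beta(s,x,v)$; (ii) the coefficient $\big(\sqrt{1+|v|^2}\tilde{d}(s,x,v)\big)^{1-c(\iota)} C_d(s,x,v)$, which is localized to the dyadic shell $\varphi_d(||s|-|x+\hat v s||)$; (iii) the oscillatory factor $T_k^\mu(\tilde V_j\cdot\xi m(\xi),h)(s,x+\hat v s,v)$; and (iv) either a \textit{non-bulk} source term — $\textit{h.o.t}_\beta^\alpha-\textit{bulk}_\beta^\alpha$ or $\textit{l.o.t}_\beta^\alpha$ for $j=3,4$ with $i=1$ — or its $D_v$ derivative, $D_v(\textit{h.o.t}_\kappa^\alpha-\textit{bulk}_\kappa^\alpha)$ or $D_v(\textit{l.o.t}_\kappa^\alpha)$ for $i=2$. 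The plan is to estimate each of the four cases by an $L^2_{x,v}$--$L^2_{x,v}$--$L^\infty_{x,v}$ type trilinear bound, pairing the $L^2_{x,v}$ norm of $\big(\omega_\beta^\alpha\big)^{} g^\alpha_\beta$ restricted to the shell against the $L^2_{x,v}$ norm of the (appropriately weighted) non-bulk source restricted to the shell, and placing $T_k^\mu$ together with the coefficient in (ii) in $L^\infty_{x,v}$.

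\textbf{Key steps, in order.} First I would reduce (ii) to a bounded coefficient: using the second part of estimate \eqref{feb8eqn51} in Lemma \ref{derivativeofweightfunction}, on the support of $\varphi_d(||s|-|x+\hat v s||)$ one has $|\tilde d(s,x,v)\phi(s,x,v)|\lesssim 1+||s|-|x+\hat v s||\lesssim 2^d$, so that the extra weight $\sqrt{1+|v|^2}$ coming from $\Lambda^\iota\thicksim\psi_{\geq1}(|v|)\widehat\Omega^v_j$ is absorbed by the hierarchy of weight functions in \eqref{highorderweight} at the cost of a factor $2^d\|a\|_Y\|c\|_{L^\infty_{x,v}}$. (When $\Lambda^\iota\thicksim\psi_{\geq1}(|v|)\Omega^x_j$ we have $c(\iota)=1$, so the factor $(\sqrt{1+|v|^2}\tilde d)^{1-c(\iota)}=1$ and no loss occurs.) Second, I would apply the $L^\infty_x$-type decay estimate \eqref{nove501} in Lemma \ref{lineardecaylemma3} to control $T_k^\mu(\tilde V_j\cdot\xi m(\xi),h)(s,x+\hat v s,v)$ on the shell $\varphi_d$, gaining $\lesssim (1+|s|)^{-1}\big(2^{k/2}2^{d/2}+2^{2k}2^{2d}\big)2^{-4k_{+}}\|m(\xi)\|_{\mathcal S_k^\infty}E_{\textup{low}}^{eb}(s)$ — the very same factor that appears on the right-hand side of \eqref{jan6eqn20}. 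Third, I would observe that the remaining $L^2_{x,v}$--$L^2_{x,v}$ pairing of the localized energy of $g_\beta^\alpha$ with the localized non-bulk source is exactly what the quantity $E_{\beta;d}^\alpha(s)$ in \eqref{jan11eqn80} is designed to capture: for $j=3$, $i=1$ the source is $\omega_\beta^\alpha(\textit{h.o.t}_\beta^\alpha-\textit{bulk}_\beta^\alpha)\varphi_{[d-1,d+1]}$; for $j=4$, $i=1$ it is $\omega_\beta^\alpha\,\textit{l.o.t}_\beta^\alpha\,\varphi_{[d-1,d+1]}$; and for $i=2$, after rewriting $D_v$ via the first decomposition in \eqref{summaryoftwodecomposition} of Lemma \ref{twodecompositionlemma} (the coefficients $d_\rho$ obeying \eqref{jan15eqn2}), the source is $\omega_{\rho\circ\kappa}^\alpha\Lambda^\rho(\textit{h.o.t}_\kappa^\alpha-\textit{bulk}_\kappa^\alpha)\varphi_{[d-1,d+1]}$ or $\omega_{\rho\circ\kappa}^\alpha\Lambda^\rho(\textit{l.o.t}_\kappa^\alpha)\varphi_{[d-1,d+1]}$ — both among the terms defining $E_{\beta;d}^\alpha(s)$, with the prefactor $(1+s)^2$ in \eqref{jan11eqn80} matching the two powers of $(1+s)^{-1}$ that come from combining the decay of $T_k^\mu$ with the $L^2$--$L^2$ Cauchy–Schwarz pairing. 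Collecting the powers of $2^k,2^d$ from steps one and two ($2^d$ from (ii) times $2^{k/2+d/2}+2^{2k+2d}$ from $T_k^\mu$ gives, after re-indexing, $\lesssim 2^{k+d}+2^{2k+2d}$ once one notes $2^{3d/2}\lesssim 2^{2d}$ and $2^{k/2+3d/2}\lesssim 2^{k+d}+2^{2k+2d}$ on each dyadic piece) and integrating in $s$ yields \eqref{jan11eqn59}.

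\textbf{Main obstacle.} The principal difficulty is to verify that, after the $L^\infty$ extraction of $T_k^\mu$ and the $L^2$--$L^2$ pairing, the surviving bilinear quantity is genuinely bounded by $E_{\beta;d}^\alpha(s)$ and not by something larger. This requires, for the $i=2$ cases, carefully commuting $D_v$ past the complicated weight $\omega_\beta^\alpha$ and past the vector fields hidden inside $\textit{h.o.t}_\kappa^\alpha$ and $\textit{l.o.t}_\kappa^\alpha$ — one uses $[D_v,\Lambda^\beta]$ from Lemma \ref{summaryofhighordercommutation}, whose top-order coefficients \eqref{sepeqn904}, \eqref{march25eqn10} are controlled in terms of $(1+|v|)^{c(\kappa)-c(\beta)}$ so the weight hierarchy in \eqref{highorderweight} again absorbs them — together with Lemma \ref{decompositionofderivatives} to handle $D_v$ hitting the pulled-back electromagnetic field $u(s,x+\hat v s)$, whose extra factor of $1+||s|-|x+\hat v s||\lesssim 2^d$ is again reabsorbed by the shell localization. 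One must also confirm that the decay rate supplied by Lemma \ref{lineardecaylemma3} is strong enough: since the non-bulk sources already decay like $(1+s)^{-1+\delta}$ in $L^2_{x,v}$ (Lemmas \ref{fixedtimeestimate1}, \ref{fixedtimeestimate2}, \ref{fixtimehighorder2}, \ref{fixtimeloworder2}), and $T_k^\mu$ contributes $(1+s)^{-1}$, the product is integrable up to the harmless dyadic factors, so no logarithmic loss arises here — in contrast to $\tilde T_1^2$ and $\tilde T_2^2$, which are the terms where the hidden null structure and Lemma \ref{bulktermbilinearestimate1} are genuinely needed. I would close by summing the four cases.
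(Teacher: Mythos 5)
Your overall architecture (an $L^2_{x,v}$--$L^2_{x,v}$--$L^\infty_{x,v}$ trilinear estimate, identification of the localized sources with the terms defining $E_{\beta;d}^\alpha$, absorption of $(\sqrt{1+|v|^2}\tilde d)^{1-c(\iota)}$ via the second part of (\ref{feb8eqn51})) is the paper's, but there are two genuine gaps in how you handle $D_v$ and the oscillatory factor. First, you misquote Lemma \ref{lineardecaylemma3}: estimate (\ref{nove501}) does not bound $T_k^\mu(\tilde V_j\cdot\xi\, m,h)$ alone by $(1+|s|)^{-1}(2^{k/2+d/2}+2^{2k+2d})\cdots$; it bounds the product $(1+|v|)^{1-c(\rho)}e_\rho(s,x,v)\,T_k^\mu(\cdots)\varphi_d$ by $(2^k+2^{2k+d})2^{-4k_+}\|m\|_{\mathcal S^\infty_k}E^{eb}_{\textup{low}}(s)$, with no explicit time decay — the second-decomposition coefficient $e_\rho$, which is itself of size $1+|s|$ by (\ref{nov318}), is an essential part of the left-hand side. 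Your misquotation forces the final re-indexing step $2^{k/2+3d/2}\lesssim 2^{k+d}+2^{2k+2d}$, which is false for $k$ very negative (take $k=-10d$); with the correct form of (\ref{nove501}) the arithmetic is exact, $2^d\cdot(2^k+2^{2k+d})=2^{k+d}+2^{2k+2d}$, and the single power of $(1+s)^{-1}$ in (\ref{jan11eqn59}) comes solely from the $(1+s)^2$ prefactor inside $E_{\beta;d}^\alpha$, not from the $L^\infty$ factor.

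Second, and more seriously, for the $i=2$ cases you propose the \emph{first} decomposition of $D_v$ with coefficients $d_\rho$. This reintroduces exactly the losing-$|v|$ issue the whole section is built to avoid: when $\Lambda^\iota\sim\psi_{\geq1}(|v|)\widehat\Omega^v_j$ (so $c(\iota)=0$) and $\Lambda^\rho\sim\psi_{\geq1}(|v|)\Omega^x_{i'}$, the prefactor $\sqrt{1+|v|^2}\,\tilde d$ times $d_\rho\sim\tilde V_{i'}\tilde d(1+|v|^2)^{1/2}$ times the weight ratio $(1+|v|)^{-c(\rho)}\phi^{i(\rho)}$ leaves a net factor $(1+|v|)\tilde d^2\phi\lesssim(1+|v|)2^{2d}$, and the only unconditional $L^\infty$ bound available for $T_k^\mu$ without the $e_\rho$ structure is $|T_k^\mu|\lesssim(1+|v|)2^k(1+|t|)^{-1}$ from (\ref{noveqn500}), so an unbounded $(1+|v|)^2$ survives. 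The paper uses the \emph{second} decomposition for all four terms precisely so that the large coefficients $e_\rho$ (of size $|x|/|v|$ and $t/|v|$) can be paired with $T_k^\mu$ inside (\ref{nove501}), whose proof absorbs the $|x|$-growth by integration by parts in $\xi$ (Lemma \ref{auxillarylemmajan26}) and the $t$-growth by the smallness of the phase near resonance. You need to replace $d_\rho$ by $e_\rho$ throughout (including for the $D_vg_\kappa^\alpha$ factor in the $i=1$ cases, which you leave undecomposed) and invoke (\ref{nove501}) for the combined quantity rather than for $T_k^\mu$ in isolation.
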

\begin{proof}
Recall (\ref{jan11eqn60}), (\ref{jan11eqn61}), (\ref{jan10eqn164}), and (\ref{jan10eqn161}). For these terms,   we use the second decomposition of ``$D_v$'' in (\ref{summaryoftwodecomposition}) in Lemma \ref{twodecompositionlemma}. From the $L^2_{x,v}-L^2_{x,v}-L^\infty_{x,v}$ type multilinear estimate, the second part of the estimate (\ref{feb8eqn51}) in Lemma \ref{derivativeofweightfunction}, and the estimate (\ref{nove501}) in Lemma \ref{lineardecaylemma3}, we know that the following estimate holds, 
\[
  |\tilde{T}_i^j(m,a,c,h)| \lesssim \sum_{
\begin{subarray}{c}
\iota,   \kappa, \rho \in \mathcal{S},\iota+\kappa=\beta \\
|\rho|=|\iota|=1    \\
\end{subarray}} \int_{1}^{t }    2^{ d}     \|\omega_{\beta}^\alpha(s,x, v) g_{\beta}^\alpha( s, x,v) \varphi_{[d-1,d+1]}(||s|-|x+\hat{v}s||) \|_{L^2_x L^2_v }    \|a\|_{Y }\| c(s,x,v)\|_{L^\infty_{x,v}}
\]
\[
\times \|(1+|v|)^{1-c(\rho)} e_{\rho}(s,x,v)\varphi_{d}(||s|-|x+\hat{v}s||) T_{k}^\mu(\tilde{V}_j\cdot\xi m(\xi), h)(s,x+\hat{v}s,v)    \|_{L^\infty_{x,v}}   \big[ \|\omega_{\beta}^\alpha(s,x, v)\varphi_{[d-1,d+1]}( ||s|-|x+\hat{v}s||)   \]
\[
\times \big(\textit{h.o.t}_{\beta}^\alpha(s,x,v) -  \textit{bulk}_{\beta}^\alpha(s,x,v)\big)  \|_{L^2_x L^2_v }+\|\omega_{\beta}^\alpha(s,x, v)  \big(\textit{l.o.t}_{\beta}^\alpha(s,x,v)  \big) \varphi_{[d-1,d+1]}(||s|-|x+\hat{v}s||) \|_{L^2_x L^2_v }
\]
\[
+\|\omega_{\rho\circ\kappa}^\alpha(s,x, v) \Lambda^\rho \big(\textit{h.o.t}_{\kappa}^\alpha(s,x,v) -  \textit{bulk}_{\kappa}^\alpha(s,x,v)\big) \varphi_{[d-1,d+1]}(||s|-|x+\hat{v}s||) \|_{L^2_x L^2_v } \]
\[
+  \|\omega_{\rho\circ\kappa}^\alpha(s,x, v) \Lambda^\rho \big(\textit{l.o.t}_{\kappa}^\alpha(s,x,v) \big)\varphi_{[d-1,d+1]}(||s|-|x+\hat{v}s ||) \|_{L^2_x L^2_v } \big] d s
\]
\be
\lesssim       \big(2^{k+d}+2^{2k+2d}\big)2^{-4k_{+}} \|a\|_{Y } \| m(\xi)\|_{\mathcal{S}^\infty_k }\int_{1}^{t }  (1+|s|)^{-1}   \| c(s,x,v)\|_{L^\infty_{x,v}} E_{\textup{low}}^{eb}(s)  E_{\beta;d}^{\alpha}(s) d s.
\ee
Hence finishing the proof of the desired estimate (\ref{jan11eqn59}). 
\end{proof}

\begin{lemma}\label{jan23lemma9}
The following estimate holds, 
\be\label{jan23eqn55}
 | \tilde{T}_2^5(m,a,c,h)|  \lesssim   2^{d+k-4k_{+}}\|a\|_{Y }\| m(\xi)\|_{\mathcal{S}^\infty_k}   \int_{1}^{t } (1+|s|)^{-1}  \| c(s,x,v)\|_{L^\infty_{x,v}} E_{\textup{low}}^{eb}(s)
   E_{\beta;d}^{\alpha}(s)(1+E_{\textup{low}}^{eb}(s)) d s. 
\ee
\end{lemma}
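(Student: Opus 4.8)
\textbf{Proof of Lemma \ref{jan23lemma9}.}

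The plan is to exploit the structure of the term $\tilde{T}_2^5(m,a,c,h)$ in \textup{(\ref{jan11eqn91})}, which has two pieces: the commutator term $\alpha_i(v)\cdot\big(\nabla_v\hat{v}\cdot\nabla_x g_\kappa^\alpha(s,x,v)\big)$ coming from $[D_v,\p_t]=\nabla_v\hat{v}\cdot\nabla_x$ (see \textup{(\ref{jan10eqn135})}), and the term $\alpha_i(v)\cdot\widetilde{error}_\kappa^\alpha(s,x,v)$ coming from the decomposition \textup{(\ref{march17eqn25})} in Lemma \ref{bulkdecomposition}, with the detailed formula \textup{(\ref{jan16eqn100})}. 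The common feature is that both pieces carry a genuine extra power of smallness in $|v|$: the coefficient $\nabla_v\hat{v}$ satisfies $|(1+|v|)\nabla_v\hat{v}|\lesssim 1$ by \textup{(\ref{octeqn456})}, and similarly $|(1+|v|)\nabla_v\alpha_i(v)|\lesssim 1$; for $\widetilde{error}_\kappa^\alpha$ the two coefficients $d_{\rho_1}(t,x,v), d_{\rho_2}(t,x,v)$ from the first decomposition of $D_v$ in \textup{(\ref{summaryoftwodecomposition})} obey the bound \textup{(\ref{jan15eqn2})} in Lemma \ref{twodecompositionlemma}, i.e.\ $\sum_{\rho}|(1+|v|)^{-c(\rho)}d_\rho(t,x,v)|\lesssim 1+|\tilde d(t,x,v)|$, and the coefficient $(\sqrt{1+|v|^2}\tilde d(s,x,v))^{1-c(\iota)}$ together with the weight ratio is controlled through the second part of \textup{(\ref{feb8eqn51})} in Lemma \ref{derivativeofweightfunction}.

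The key steps, in order, would be: (i) First split $\tilde{T}_2^5=\tilde{T}_2^{5;a}+\tilde{T}_2^{5;b}$ according to the two pieces above. (ii) For the commutator piece $\tilde{T}_2^{5;a}$, write $\nabla_v\hat{v}\cdot\nabla_x g_\kappa^\alpha = \sum_{i}(\tilde v\cdot\nabla_v\hat v)(\tilde v\cdot\nabla_x g_\kappa^\alpha)+\sum_i(\tilde V_i\cdot\nabla_v\hat v)(\tilde V_i\cdot\nabla_x g_\kappa^\alpha)$ using \textup{(\ref{octeqn456})}; note that $\tilde v\cdot\nabla_x = S^x$ and $\tilde V_i\cdot\nabla_x=\Omega_i^x$ are exactly the $\Lambda^\rho$ vector fields with $|\rho|=1$ appearing in the localized energy $E_{\beta;d}^\alpha(s)$ in \textup{(\ref{jan11eqn80})}. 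Thus $\tilde{T}_2^{5;a}$ reduces to an $L^2_{x,v}$ pairing of $\omega_\beta^\alpha g_\beta^\alpha$ against $\omega_{\rho\circ\kappa}^\alpha\Lambda^\rho g_\kappa^\alpha$, with the $L^\infty$ factor being the coefficient $(\sqrt{1+|v|^2}\tilde d)^{1-c(\iota)}C_d(s,x,v)\cdot(1+|v|)^{-1}\cdot T_k^\mu(\tilde V_j\cdot\xi m(\xi),h)$; applying the decay estimate \textup{(\ref{nove501})} in Lemma \ref{lineardecaylemma3} (bound $2^{k-4k_{+}}(1+|s|)^{-1}\|m\|_{\mathcal S^\infty_k}E_{\textup{low}}^{eb}(s)$ on $T_k^\mu$, or the weighted version that absorbs the extra powers of $2^d$ and the coefficient $(1+|v|)^{1-c(\rho)}e_\rho$, as used in the proof of Lemma \ref{jan23lemma2}) and \textup{(\ref{feb8eqn51})} gives the claimed bound. (iii) For the error piece $\tilde{T}_2^{5;b}$, substitute the formula \textup{(\ref{jan16eqn100})}: it is a sum of terms where $\Lambda^{\rho_2}g_\kappa^\alpha$ (again a $|\rho|=1$ good/ordinary derivative landing in $E_{\beta;d}^\alpha$) multiplies either $\Lambda^{\rho_1}$ of the coefficient-times-electromagnetic-field, or the coefficient times $\Lambda^{\rho_1}\Omega_j^x$ of the electromagnetic field. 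The electromagnetic factors are estimated by the decay estimate \textup{(\ref{noveqn78})} in Lemma \ref{sharpdecaywithderivatives} (gaining $(1+|s|)^{-1}(1+||s|-|x+\hat vs||)^{-1}E_{\textup{low}}^{eb}(s)$, and $\Omega_j^x$ costs nothing since $\Omega_j$ lies in $\mathcal P_1$), the coefficients are controlled by \textup{(\ref{jan15eqn2})}, \textup{(\ref{jan23eqn11})} in Lemma \ref{derivativesofcoefficient}, and \textup{(\ref{feb8eqn51})}, and then one runs the same $L^2_{x,v}$--$L^2_{x,v}$--$L^\infty_{x,v}$ multilinear estimate. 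The dyadic localization $\varphi_d$ converts the factor $(1+||s|-|x+\hat vs||)$ into $2^d$, which combines with $T_k^\mu$'s contribution (or the weighted version) to produce the $2^{d+k-4k_{+}}$ and $(1+E_{\textup{low}}^{eb}(s))$ factors in \textup{(\ref{jan23eqn55})}.

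I expect the main obstacle to be bookkeeping the interaction between the coefficient $(\sqrt{1+|v|^2}\tilde d(s,x,v))^{1-c(\iota)}$ and the weight ratio $\omega_\beta^\alpha/\omega_{\rho\circ\kappa}^\alpha$ when $c(\iota)=0$, since then $\sqrt{1+|v|^2}\tilde d$ is the ``dangerous'' factor that can be as large as $\sqrt{1+|s|}$ in the regime $|v|\sim\sqrt{1+|s|}$, $||s|-|x+\hat vs||\sim 1$ discussed in subsection \ref{losingvissue}. The point is that the extra $(1+|v|)^{-1}$ smallness from $\nabla_v\hat v$ or $\nabla_v\alpha_i$ (resp.\ the $d_{\rho_1}$-coefficient which is bounded by $1+|\tilde d|$ after dividing by $(1+|v|)^{c(\rho_1)}$), together with the weight function $\phi(s,x,v)$ built into $\omega_\beta^\alpha$ via \textup{(\ref{highorderweight})} which makes $|\tilde d\,\phi/(1+||s|-|x+\hat vs||)|\lesssim 1$ by the second estimate in \textup{(\ref{feb8eqn51})}, is exactly enough to tame this factor — this is the same mechanism already used repeatedly (e.g.\ in Lemmas \ref{fixedtimeestimate1}, \ref{fixtimehighorder2}), so the estimate should go through with only minor modifications of those arguments, and no new idea is needed beyond careful application of Lemma \ref{bulkdecomposition} and Lemma \ref{lineardecaylemma3}.
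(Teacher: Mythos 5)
Your proposal is correct and follows essentially the same route as the paper: the same splitting into the commutator piece and the $\widetilde{error}_{\kappa}^{\alpha}$ piece, the same decomposition $\nabla_v\hat{v}\cdot\nabla_x=\frac{\tilde{v}}{(1+|v|^2)^{3/2}}S^x+\sum_i\frac{\tilde{V}_i}{(1+|v|^2)^{1/2}}\Omega_i^x$, and the same ingredients (the formula (\ref{jan16eqn100}), the coefficient bounds (\ref{jan15eqn2}), (\ref{jan23eqn11}), the second part of (\ref{feb8eqn51}), the decay estimates (\ref{noveqn78}) and (\ref{noveqn500})/(\ref{nove501}), and the $L^2_{x,v}$--$L^2_{x,v}$--$L^\infty_{x,v}$ multilinear estimate). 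The only cosmetic difference is that the paper resolves the large-$|v|$ bookkeeping for the error piece by explicitly invoking the gain of at least $(1+|v|)^{-3}$ from the weight hierarchy (since $\widetilde{error}_{\kappa}^{\alpha}$ involves only derivatives of $g$ of order $\leq|\alpha|+|\beta|-1$), which is the same mechanism you describe via the $(1+|v|)^{-c(\rho)}$ normalization and the $\phi$-weight.
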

\begin{proof}
Recall  (\ref{jan11eqn91}) and  (\ref{jan16eqn100}). Note that, as a result of direct computations,  the following equality holds, 
\[
\nabla_v \hat{v}\cdot \nabla_x = \frac{\tilde{v}}{(1+|v|^2)^{3/2}} S^x + \frac{\tilde{V}_i}{(1+|v|^2)^{1/2}} \Omega_i^x. 
\]
Recall the first equality in  (\ref{dec26eqn1}) in Lemma \ref{derivativesofcoefficient} and the equality (\ref{sepeqn610}) in Lemma \ref{decompositionofderivatives}. From the estimate of coefficients in (\ref{jan23eqn11}) and (\ref{jan15eqn2}), the second part of the estimate (\ref{feb8eqn51}) in Lemma \ref{derivativeofweightfunction}, the decay estimate  (\ref{noveqn78}) in Lemma \ref{sharpdecaywithderivatives},  the estimate (\ref{noveqn500}) in Lemma \ref{twisteddecaylemma3}, and the $L^2_{x,v}-L^2_{x,v}-L^\infty_{x,v}$ type multilinear estimate,    we have 
\[
| \tilde{T}_2^5(m,a,c,h)| \lesssim 
\sum_{\alpha\in \mathcal{B}, \beta\in \mathcal{S}, |\alpha|+|\beta|\leq N_0}  \int_{1}^{t }   2^{ d}\|a\|_{Y } \|\omega_{\beta}^\alpha(s,x, v) g_{\beta}^\alpha(s, x,v) \varphi_{[d-1,d+1]}( || s|-|x+\hat{v}s||) \|_{L^2_x L^2_v }^2    \| c(s,x,v)\|_{L^\infty_{x,v}}
\]
\[
\times \big[ \sum_{\rho\in \mathcal{B}, |\rho|\leq 4} \sum_{u\in\{E^\rho,B^\rho\}} 1+ \|(1+  ||s|-|x+\hat{v}s|| )^2\nabla_x u  (s,x+\hat{v}s)\|_{L^\infty_{x,v}}  \big]   
     \| \frac{1}{1+|v|} T_{k}^\mu(\tilde{V}_j\cdot\xi m(\xi), h)(s,x+\hat{v}s,v) \|_{L^\infty_{x,v}}   d s
\]
\[
\lesssim    2^{d+k-4k_{+}}\|a\|_{Y }\| m(\xi)\|_{\mathcal{S}^\infty_k}  \int_{1}^{t }  (1+|s|)^{-1} \| c(s,x,v)\|_{L^\infty_{x,v}}    E_{\beta;d}^{\alpha}(s)E_{\textup{low}}^{eb}(s)(1+E_{\textup{low}}^{eb}(s)) d s. 
\]
In the above estimate, we used the fact that we can gain at least $(1+|v|)^{-3}$ from the hierarchy of the weight functions when estimating the error term $ \widetilde{error}_{\kappa}^{\alpha  }(t,x,v)$. 
\end{proof}

\begin{lemma}\label{bulktermbilinearestimate1}
The following estimate holds, 
\[
| \tilde{T}_1^2(m,a,c,h)|  + | \tilde{T}_2^2(m,a,c,h)| \lesssim  \big(2^{k/2+d/2}+2^{2k +2d } \big) 2^{-4k_{+} }  \|m(\xi)\|_{\mathcal{S}^\infty_k}\|a\|_{Y }
\]
\be\label{jan14eqn9}
 \times \big[ \int_{1}^{t }  (1+|s |)^{-1} \|c(s,x,v)\|_{L^\infty_{x,v}} \big(E_{\textup{low}}^{eb}(s)\big)^2   E_{\beta;d}^{\alpha}(s) d s \big]. 
\ee 
\end{lemma}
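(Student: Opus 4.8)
\textbf{Proof strategy for Lemma \ref{bulktermbilinearestimate1}.}
The plan is to exploit the hidden null structure one more time: the terms $\tilde{T}_1^2$ and $\tilde{T}_2^2$ (see \eqref{jan11eqn63} and \eqref{jan10eqn163}) contain $\textit{bulk}_\beta^\alpha$ and $\widetilde{bulk}_\kappa^\alpha$, which by \eqref{nove120} and \eqref{jan12eqn1} themselves carry a \textit{good derivative} $\Omega_j^x$ acting on the electromagnetic field. The first step is to recognize that after inserting the decompositions \eqref{nove120}, \eqref{jan12eqn1}, replacing each $D_v$ by its first decomposition in \eqref{summaryoftwodecomposition} from Lemma \ref{twodecompositionlemma}, and using the coefficient identity \eqref{nove99} for $K_{\iota';1}^i$, the integrands of $\tilde{T}_1^2$ and $\tilde{T}_2^2$ become exactly of the multilinear form $T(m,a,c,h)$, $H(m,a,c,h)$, $K(m,a,c,h)$ of Definition \ref{trilinearoperators}, up to bounded coefficients (controlled via \eqref{jan15eqn2}, \eqref{feb8eqn51}, \eqref{jan23eqn11}) and a symbol $m(\xi)$ with $\|m(\xi)\|_{\mathcal{S}^\infty_k}\lesssim 1$. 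One must be a little careful here: since $\textit{bulk}_\beta^\alpha$ is itself one of the quantities we are trying to bound, this is precisely the self-referential structure that forces us to keep the localized energy $E_{\beta;d}^\alpha(t)$ on the right-hand side rather than close an estimate by brute force; the integration by parts in time inside $T(m,a,c,h)$ already produced the non-bulk terms $\textit{h.o.t}-\textit{bulk}$, $\textit{l.o.t}$ and their $\Lambda^\rho$-derivatives that make up $E_{\beta;d}^\alpha$.

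Second, I would feed the reduced expressions into Lemma \ref{trilinearestimate1} and Lemma \ref{trilinearestimate2}, with $a(x)$ a smooth cutoff supported where $|x|\gtrsim 2^{-10}$ (so $a'(x)=0$ for $|x|\le 2^{-5}$, and $\|a\|_Y\lesssim 1$) and $c(t,x,v)$ built from $(\sqrt{1+|v|^2})^{1-c(\iota)}\psi_{\geq 1}(|v|)$ together with the bounded coefficients coming from the decompositions of $K^i_{\iota';1}$ and $\alpha_i(v)$. The key point is that these coefficients $c(t,x,v)$ satisfy $\|c\|_{L^\infty_{x,v}}+s\|\partial_s c\|_{L^\infty_{x,v}}+\|D_v c\|_{L^\infty_{x,v}}\lesssim \|c(s,x,v)\|_{L^\infty_{x,v}}$ up to harmless factors, because the dangerous weight $\sqrt{1+|v|^2}$ is tamed on the time-resonance set precisely by the symbol $\tilde{V}_j\cdot\xi$ produced by the integration by parts in time — this is the content of \eqref{jan4eqn8}. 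The estimate \eqref{jan6eqn20} for $T(m,a,c,h)$ contributes the term $(2^{k/2+d/2}+2^{2k+2d})2^{-4k_+}$, while \eqref{jan6eqn21} for $H$ and $K$ contributes only $(2^{k+d}+2^{2k+2d})2^{-4k_+}$; since $2^{k+d}\leq 2^{k/2+d/2}$ when $k+d\leq 0$ and is absorbed into $2^{2k+2d}$ otherwise, the worse of the two, namely $2^{k/2+d/2}+2^{2k+2d}$, survives, which matches the right-hand side of \eqref{jan14eqn9}.

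Third, summing the resulting bounds over the finitely many choices of $i\in\{1,\dots,7\}$, $j\in\{1,2,3\}$, $\mu\in\{+,-\}$, $\iota,\iota'$ with $\Lambda^\iota,\Lambda^{\iota'}\sim\psi_{\geq1}(|v|)\widehat\Omega_j^v$ or $\psi_{\geq1}(|v|)\Omega_j^x$, and using that $E_{\textup{low}}^{eb}(s)\lesssim\epsilon_1\ll 1$ so that $E_{\textup{low}}^{eb}(s)(1+E_{\textup{low}}^{eb}(s))\lesssim (E_{\textup{low}}^{eb}(s))^2 / E_{\textup{low}}^{eb}(s)$-type manipulations are harmless (more simply, $1+E_{\textup{low}}^{eb}(s)\lesssim 1$), yields \eqref{jan14eqn9}. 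The main obstacle I anticipate is bookkeeping rather than analysis: one must verify that after commuting $D_v$ past $D_v$ (using $[D_{v_m},D_{v_n}]=0$) and absorbing the extra factors $\Lambda^{\rho_1},\Lambda^{\rho_2}$ hitting the electromagnetic field into the symbol $m(\xi)$ via the $\mathcal{S}^\infty_k$ calculus, the total power of $(1+|v|)$ appearing in $c(t,x,v)$ is still exactly $(1+|v|)^{1-c(\iota)}$, so that the gain $(1+|v|)^{-1}$ available near the time-resonance set from $\tilde V_j\cdot\xi$ (cf.\ \eqref{jan4eqn8}) together with the weight hierarchy \eqref{jan1eqn1} and the second estimate in \eqref{feb8eqn51} exactly cancels it; this is the precise point where the ``losing weight of size $|v|$'' issue is resolved, and it requires tracking $c(\iota)$ and $i(\iota)$ through every decomposition. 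Once this accounting is done, the estimate is a direct consequence of Lemma \ref{trilinearestimate1} and Lemma \ref{trilinearestimate2}.
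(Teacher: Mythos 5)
There is a genuine gap, and it is structural. The terms $\tilde{T}_1^2$ and $\tilde{T}_2^2$ in (\ref{jan11eqn63}) and (\ref{jan10eqn163}) are \emph{quadrilinear}: they contain two electromagnetic factors, namely $T_k^\mu(\tilde{V}_j\cdot\xi\, m(\xi),h)$ from the first integration by parts in time \emph{and} a second factor $\Omega_{j'}^x\big(E+\hat v\times B\big)$ sitting inside $\textit{bulk}_\beta^\alpha$ (resp.\ $\widetilde{bulk}_\kappa^\alpha$), together with the weight $\big(\sqrt{1+|v|^2}\,\tilde d\big)^{2-c(\iota)-c(\iota')}$ carrying \emph{two} potential losses of $|v|$. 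They therefore cannot ``become exactly of the multilinear form $T(m,a,c,h)$, $H$, $K$ of Definition \ref{trilinearoperators}'', which have exactly one electromagnetic factor; absorbing the second factor into $c(t,x,v)$ does not work because the single symbol $\tilde V_j\cdot\xi$ tames only one of the two $\sqrt{1+|v|^2}$ weights via (\ref{jan4eqn8}). Worse, feeding the result back into Lemma \ref{trilinearestimate1} is circular: Lemma \ref{bulktermbilinearestimate1} is itself a step in the proof of Lemma \ref{trilinearestimate1} (it bounds the pieces $\tilde T_1^2,\tilde T_2^2$ of the decomposition (\ref{jan23eqn17}), (\ref{jan23eqn19}) of $T(m,a,c,h)$), and there is no decreasing parameter that would justify the recursion.

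The missing idea is the \emph{second} hidden null structure in the product of two good-derivative electromagnetic factors. The paper dyadically decomposes the new electromagnetic factor in frequency ($k_1$), writes the product $\Omega_{j'}^x u_{k_1}\cdot T_k^\mu(\cdots)$ as the bilinear form $T^\nu(f,g,a_1,a_2)$ of (\ref{jan14eqn11}), and splits it as in (\ref{jan14eqn91}): the piece $T^{\nu;1}$ carries the factor $\tilde V_{j'}\cdot\big(\tfrac{\xi-\eta}{|\xi-\eta|}-\mu\nu\tfrac{\eta}{|\eta|}\big)$, which yields an extra $t^{-1}$ after a \emph{second} integration by parts, now in $\eta$ (see (\ref{nove261})), while the piece $T^{\nu;2}$ puts a second good angular symbol on the same input. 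Controlling the remaining terms then requires the partition of $(x,v)$-space into the regions $\eta_1,\dots,\eta_4$ of (\ref{march27eqn101})--(\ref{march27eqn104}) with different choices of the two decompositions of $D_v$ in each region (Lemmas \ref{march28lemma2} and \ref{march29lemma3}), and a second round of trading spatial derivatives for decay of $||t|-|x+\hat v t||$ (Lemma \ref{auxillarybilinearlemma11}) to achieve summability over $k_1$ --- an issue your proposal does not mention. None of these steps is bookkeeping; they are the analytic core of the lemma, and without them the argument does not close.
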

\begin{proof}
Postponed  to subsection \ref{proofofbulklemma}.  
\end{proof}

\emph{Assuming the validity of  Lemma \ref{bulktermbilinearestimate1} holds}, we finish the proof of the desired estimate (\ref{jan6eqn20}) in Lemma \ref{trilinearestimate1}.

\noindent $\textit{Proof of Lemma \textup{\ref{trilinearestimate1}}}$

Recall the decompositions of $T(m,a,c,h)$ in (\ref{jan23eqn16}), (\ref{jan23eqn17}), and (\ref{jan23eqn19}). Our  desired estimate   (\ref{jan6eqn20}) in Lemma \ref{trilinearestimate1} follows directly from the estimate (\ref{jan10eqn503}) in Lemma \ref{jan23errorestimate}, the estimate (\ref{jan11eqn31}) in Lemma \ref{jan23lemma2}, the estimate  (\ref{jan11eqn59}) in Lemma \ref{jan23lemma5}, the estimate (\ref{jan23eqn55}) in Lemma \ref{jan23lemma9}, and the estimate (\ref{jan14eqn9}) in Lemma 	\ref{bulktermbilinearestimate1}. 

\qed

\subsection{Proof of Lemma \textup{\ref{bulktermbilinearestimate1}}}\label{proofofbulklemma}

 Recall the detailed formula of $\tilde{T}_1^2(m,a,c,h)$ in  (\ref{jan11eqn63}) and  the detailed formula of \textit{bulk} terms in  (\ref{nove120}). Since  new \textit{bulk} terms are introduced because of the integration by parts in time process, there is an issue of  losing another weight of size ``$|v|$'' caused by the new introduced \textit{bulk} terms.

  To get around this issue, intuitively speaking,  we observe that there exists a hidden null structure inside a bilinear form  of the  type ``$\Omega_j^x u_1(t,x+\hat{v}t) \Omega_i^x u_2(t,x+\hat{v}t)$'', $ u_1,u_2\in\{E,B\}$.  To better explain this observation,  we first do dyadic decompositions for   $E(t, x+\hat{v} t ) $ and   $B(t, x+\hat{v} t ) $ inside $\tilde{T}_1^2(m,a,c,h)$. As a result, we have,
\be\label{nov1}
\tilde{T}_1^2(m,a,c,h)=  \sum_{k_1\in \mathbb{Z}} K_{k_1,k}^d,   
\ee
where
\[
K_{k_1,k}^d:= \sum_{\begin{subarray}{l}
 j'=1,2,3\\
 i'=1,\cdots,7\\
\end{subarray}}   \sum_{
\begin{subarray}{c}
  \Lambda^\iota \sim \psi_{\geq 1}(|v|)\widehat{\Omega}^v_j\\  \textup{or}\,\psi_{\geq 1}(|v|) \Omega_j^x  \\
\end{subarray}} \sum_{
\begin{subarray}{c}
\iota'+\kappa'=\beta,\iota,  \kappa\in \mathcal{S},  |\iota'|= 1  \\
  \Lambda^{\iota'} \sim \psi_{\geq 1}(|v|)\widehat{\Omega}^v_{j'}\textup{or}\,\psi_{\geq 1}(|v|)\Omega_{j'}^x  \\
\end{subarray}}   \int_{1}^{t } \int_{\R^3}\int_{\R^3}  \big(\omega_{\beta}^{\alpha}(s, x, v)\big)^2 \alpha_{i'}(v)\cdot  D_v    g^\alpha_{\kappa'}(s,x, v)    \]
\[
 \times   \alpha_{i'}(v)\cdot \Omega_{j'}^x \big(E_{k_1}(s,x+\hat{v}s) + \hat{v}\times B_{k_1}(s,x+\hat{v}s) \big)\alpha_i(v)\cdot  D_v g^\alpha_\kappa(s ,x, v)   \big(\sqrt{1+|v|^2}\tilde{d}(s , x,v)\big)^{2-c(\iota)-c(\iota')} \]
 \be\label{nove141}
 \times   
   C_d(s,x,v)  T_{k}^\mu(\tilde{V}_j\cdot\xi m(\xi), h)(s,x+\hat{v}s, v )  d x d v d s,
\ee

Recall (\ref{jan10eqn163}),  (\ref{jan12eqn1}) and (\ref{nove99}). Similarly, we   do dyadic decomposition for   $E(t, x+\hat{v} t ) $ and   $B(t, x+\hat{v} t ) $ and  have the following decompositions for $\tilde{T}_2^2(m,a,c,h)$,
\be\label{jan14eqn1}
\tilde{T}_2^2(m,a,c,h)=  \sum_{k_1\in \mathbb{Z}} S_{k_1,k}^d, 
\ee
where
\[
S_{k_1,k}^d:=   \sum_{\begin{subarray}{l}
 j'=1,2,3\\
 i'=1,\cdots,7\\
\end{subarray}}   \sum_{
\begin{subarray}{c}
  \Lambda^\iota \sim \psi_{\geq 1}(|v|)\widehat{\Omega}^v_j\\  \textup{or}\,\psi_{\geq 1}(|v|) \Omega_j^x  \\
\end{subarray}} \sum_{
\begin{subarray}{c}
\iota'+\kappa'=\beta,\iota,  \kappa\in \mathcal{S},  |\iota'|= 1  \\
  \Lambda^{\iota'} \sim \psi_{\geq 1}(|v|)\widehat{\Omega}^v_{j'}\textup{or}\,\psi_{\geq 1}(|v|)\Omega_{j'}^x  \\
\end{subarray}}    \int_{1}^{t } \int_{\R^3}\int_{\R^3}  \big(\omega_{\beta}^{\alpha}( s, x, v)\big)^2 \big(\sqrt{1+|v|^2}\tilde{d}(s,x,v) \big)^{2-c(\iota)-c(\iota')}  \]
\[
 \times g_\beta^\alpha(s,x,v)    \alpha_i(v)\cdot \Omega_{j'}^x \big(E(s,x+\hat{v}s) + \hat{v}\times B(s,x+\hat{v}s) \big)    \alpha_i(v)\cdot \big( \alpha_i(v)\cdot D_v D_v g^\alpha_{\kappa'}(s  ,x, v) \big) \]
 \be\label{nov2}
 \times  C_d(s,x,v)	    T_{k}^\mu(\tilde{V}_j\cdot\xi m(\xi), h)(s,x+\hat{v}s,v )   d x d v d s.
\ee
 
We remark that, similar to the dyadic decomposition we did in (\ref{nov112}),  we did the above dyadic decomposition for the new introduced electromagnetic field in (\ref{nov1}) and (\ref{jan14eqn1}),  to get around a technical summability issue with respect to the frequency.

From the detailed formulas in  (\ref{nove141}) and (\ref{nov2}). We know  that there exists a bilinear form of type ``$\Omega_j^x u_{k_1}(t,x+\hat{v}t) T_k^\mu(\tilde{V}_j\cdot \xi m(\xi), h)$'', $u\in\{E,B\}$. Motivated from this type of product, we define  a more  general multilinear operator  as follows. 

\begin{definition}
For any fixed $k, k_1\in \mathbb{Z}$, fixed $j,j'\in\{1,2,3\}$, fixed $a_{\mu}\in\{1/2,i\mu/2,-i\mu/2\}$,  any  fixed $f, g\in\{h_1^\alpha(t), h_2^\alpha(t),\alpha\in \mathcal{B}, |\alpha|\leq 10\} $, where $h_1^\alpha(t)$ and $h_2^\alpha(t)$ are the profiles of the electromagnetic field,  and any given symbol $a_1,a_2\in \mathcal{S}^\infty$, we define a multilinear form $T^{  \nu} (\cdot, \cdot, \cdot,\cdot)$   as follows,  
\[
 T^{\nu}(f, g, a_1,a_2 )(t,x,v)=\sum_{\mu\in\{+,-\}} \int_{\R^3}\int_{\R^3}e^{i(x+t\hat{v})\cdot\xi -it\mu|\xi-\eta|-i t \nu |\eta|} i a_{\mu}\widehat{f}(t, \xi-\eta)    \widehat{g}(t, \eta) \frac{ \tilde{V}_{j'}\cdot(\xi-\eta)\tilde{V}_j\cdot\eta }{|\eta|-\nu \hat{v}\cdot \eta}  \]
 \be\label{jan14eqn11}
 \times a_1(\xi-\eta) a_2(\eta)  \psi_{\geq 10}(t(|\eta|-\nu \hat{v}\cdot \eta)) \psi_{k}(\eta)  \psi_{k_1}(\xi-\eta) d \eta  d\xi. 
\ee 
 
\end{definition}

  In particular,  the multilinear form   $T^{ \nu}(f, g, a_1,a_2 )(t,x,v)$   can be represented as a product of two integrals as follows, 
 \be\label{march17eqn10}
 T^{ \nu}(f, g, a_1,a_2 )(t,x,v)=  \sum_{\mu\in\{+,-\}} a_{\mu}\tilde{\Omega}_{j'}^x\big(\mathcal{F}^{-1}[e^{-it \mu|\xi|}  a_1(\xi)\psi_{k_1}(\xi) \widehat{f}(t,\xi)](t, x+\hat{v}t)\big) T_k^\nu(\tilde{V}_j\cdot\xi a_2(\xi),  g)(t, x+\hat{v}t,v),
 \ee
 where the operator $T^{\nu}_{k}(\cdot, \cdot)$ is defined in (\ref{noveqn89}).

To reveal the hidden null structure inside  the multilinear form $T^{  \nu}(f, g, a_1,a_2 )(t,x,v)$, we decompose $T^{  \nu}_{ }(f, g,a_1,a_2)(t, \xi,v)$ into two parts as follows, 
\be\label{jan14eqn91}
 T^{ \nu}(f, g, a_1,a_2 )(t,x,v)= T^{ \nu;1}(f, g, a_1,a_2 )(t,x,v)+  T^{ \nu;2}(f, g, a_1,a_2 )(t,x,v), 
\ee
where
\[
 T^{ \nu;1}(f, g, a_1,a_2 )(t,x,v)= \sum_{\mu\in\{+,-\}} \int_{\R^3}\int_{\R^3}e^{i(x+t\hat{v})\cdot\xi -it\mu|\xi-\eta|-i t \nu |\eta|}i a_{\mu}\widehat{f}(t, \xi-\eta)  \tilde{V}_{j'}\cdot\big(\frac{\xi-\eta}{|\xi-\eta|} -\mu\nu \frac{\eta}{|\eta|}\big)  \widehat{g}(t, \eta) \]
\be\label{nove153}
 \times  \frac{ |\xi-\eta| \tilde{V}_j\cdot\eta }{|\eta|-\nu \hat{v}\cdot \eta}  a_1(\xi-\eta) a_2(\eta)  \psi_{\geq 10}(t(|\eta|-\nu \hat{v}\cdot \eta)) \psi_{k}(\eta)  \psi_{k_1}(\xi-\eta) d \eta  d\xi, 
\ee 
\[
 T^{ \nu;2}(f, g, a_1,a_2 )(t,x,v)= \sum_{\mu\in\{+,-\}} \int_{\R^3}\int_{\R^3}e^{i(x+t\hat{v})\cdot\xi -it\mu|\xi-\eta|-i t \nu |\eta|} i a_{\mu}\widehat{f}(t, \xi-\eta)   \frac{ \mu\nu \tilde{V}_{j'}\cdot   \eta}{|\eta|}   \widehat{g}(t, \eta)  \frac{ |\xi-\eta| \tilde{V}_j\cdot\eta }{|\eta|-\nu \hat{v}\cdot \eta} \]
\be\label{nove301}
 \times   a_1(\xi-\eta) a_2(\eta)  \psi_{\geq 10}(t(|\eta|-\nu \hat{v}\cdot \eta)) \psi_{k}(\eta)  \psi_{k_1}(\xi-\eta) d \eta  d\xi  
\ee
\[
=  \mathcal{F}^{-1}[e^{-it \nu |\xi|}   \widehat{g}(t, \xi) \frac{ - \nu \tilde{V}_{j'}\cdot\xi \tilde{V}_j\cdot\xi }{|\xi|\big(|\xi|-\nu \hat{v}\cdot \xi\big)}  a_2(\xi)  \psi_{\geq 10}(t(|\xi|-\mu \hat{v}\cdot \xi)) \psi_{k}(\xi)  ](t,x+\hat{v}t,v)
\]
\be\label{march18eqn41}
\times \mathcal{F}^{-1}[a_1(\xi)\psi_{k_1}(\xi)\widehat{u}(t,\xi)](t,x+\hat{v}t),
\ee
where $u\in \{\p_t E^\alpha,\p_t B^\alpha, \d E^\alpha, \d B^\alpha , \alpha \in \mathcal{B}, |\alpha|\leq 10\} $.

We understand the hidden null structure  of the above defined multilinear form in the   sense that the decay rate over time  can be improved. More precisely,   for the first part ``$ T^{ \nu;1}(f, g, a_1,a_2 )(t,x,v)$'', we   can gain ``$1/t$'' by doing integration by parts in ``$\eta$''. Meanwhile,  for the second part, we have one more good derivative $\Omega_{j'}^x$ acts on ``$g$'', which improves the decay rate.

 Note that, as a result of direct computations, the following equality holds, 
 \be\label{nove261}
e^{i(x+t\hat{v})\cdot\xi -it\mu|\xi-\eta|-i t \nu |\eta|}\tilde{V}_{j'}\cdot \big(\frac{\xi-\eta}{|\xi-\eta|} - \mu \nu \frac{\eta}{|\eta|} \big)= \frac{-i \mu}{t} \tilde{V}_{j'} \cdot \nabla_\eta\big(e^{i(x+t\hat{v})\cdot\xi -it\mu|\xi-\eta|-i t \nu |\eta|} \big).
 \ee
 Hence, after doing integration by parts in ``$\eta$'' for $ T^{ \nu;1}(f, g, a_1,a_2 )(t,x,v)$, we have
\be\label{jan14eqn92}
 T^{ \nu;1}(f, g, a_1,a_2 )(t,x,v)=t^{-1}\big( I^1_{ \nu} (f, g, a_1,a_2 )(t, x,v) + I^2_{  \nu}(f, g, a_1,a_2 )(t,x,v)\big),
 \ee
 where
 \[
 I^1_{ \nu} (f, g, a_1,a_2 )(t, x,v)=\sum_{\mu\in\{+,-\}} -\int_{\R^3}\int_{\R^3}e^{i(x+t\hat{v})\cdot\xi -it\mu|\xi-\eta|-i t \nu |\eta|}   \mu a_{\mu}\tilde{V}_{j'}\cdot \nabla_\eta\big(\widehat{f}(t, \xi-\eta) |\xi-\eta|\psi_{k_1}(\xi-\eta)a_1(\xi-\eta)\big)  \]
\be\label{jan14eqn31}
 \times    \widehat{g}(t, \eta) \frac{  \tilde{V}_j\cdot\eta }{|\eta|-\nu \hat{v}\cdot \eta}  a_2(\eta)  \psi_{\geq 10}(t(|\eta|-\mu \hat{v}\cdot \eta)) \psi_{k}(\eta)   d \eta  d\xi. 
\ee 
 \[
I^2_{ \nu}(f, g, a_1,a_2 )(t,x, v)= \sum_{\mu\in\{+,-\}}- \int_{\R^3}\int_{\R^3}e^{i(x+t\hat{v})\cdot\xi -it\mu|\xi-\eta|-i t \nu |\eta|}   \mu a_{\mu} \widehat{f}(t, \xi-\eta) \tilde{V}_{j'}\cdot \nabla_\eta\big(   \widehat{g}(t, \eta) \frac{  \tilde{V}_j\cdot\eta }{|\eta|-\nu \hat{v}\cdot \eta} \]
\[
 \times a_2(\eta)  \psi_{\geq 10}(t(|\eta|-\mu \hat{v}\cdot \eta)) \psi_{k}(\eta) \big)   |\xi-\eta|\psi_{k_1}(\xi-\eta)a_1(\xi-\eta)  d \eta  d\xi=\mathcal{F}^{-1}\big[a_1(\xi)\psi_{k_1}(\xi)\widehat{  u }(t, \xi) \big](t, x+\hat{v}t)    
\]
\be\label{march18eqn42}
\times   \mathcal{F}^{-1}[e^{-it \nu |\xi|}   \tilde{V}_{j'}\cdot \nabla_\xi\big(  \frac{  - \tilde{V}_j\cdot\xi }{|\xi|-\nu \hat{v}\cdot \xi} \widehat{g}(t, \xi)  a_2(\xi)  \psi_{\geq 10}(t(|\xi|-\nu \hat{v}\cdot \xi)) \psi_{k}(\xi) \big) ](t,x+\hat{v}t, v) , 
\ee
where   $u\in \{\p_t E^\alpha,\p_t B^\alpha, \d E^\alpha, \d B^\alpha , \alpha \in \mathcal{B}, |\alpha|\leq 10\} $.

To sum up, from the decompositions (\ref{jan14eqn91}) and (\ref{jan14eqn92}), we have
\be\label{jan23eqn31}
 T^{ \nu}(f, g, a_1,a_2 )(t,x,v)-  t^{-1} I^1_{\nu}(f, g, a_1,a_2 )(t,x,v)= T^{ \nu;2}(f, g, a_1,a_2 )(t,x,v) + t^{-1} I^2_{  \nu}(f, g, a_1,a_2 )(t,x, v).
\ee
 Recall the product formulas of  $T^{ \nu;2}(f, g, a_1,a_2 )(t,x,v)$  and  $I^2_{  \nu}(f, g, a_1,a_2 )(t,x, v) $ in (\ref{march18eqn41}) and (\ref{march18eqn42}). From the decay estimate (\ref{march18eqn30}) in Lemma \ref{sharpdecaywithderivatives},  the estimates (\ref{jan25eqn16}) and (\ref{noveqn600}) in Lemma \ref{twisteddecaylemma3}, we know that the following estimate holds, 
\[
\big|T_1^{ \nu}(f, g, a_1,a_2 )(t,x,v)-  t^{-1} I^1_{\nu}(f, g, a_1,a_2 )(t,x,v) \big|\psi_{\leq -5}(1-|x+\hat{v}t|/|t|) 	 
\]
\be\label{march18eqn51}
\lesssim (1+|t|)^{-2}(1+||t|-|x+\hat{v}t|)^{-1}   2^{ k_1+k} 2^{-4k_{1,+}-4k_{+}} \| a_1(\xi)\|_{\mathcal{S}^\infty_{k_1}}  \| a_2(\xi)\|_{\mathcal{S}^\infty_{k }} \big(E_{\textup{low}}^{eb}(t) \big)^2.
\ee

With the above preparation, we are ready to estimate $K_{k_1,k}^d$ and $S_{k_1,k}^d$. Recall the detailed formulas of $K_{k_1,k}^d$ and $S_{k_1,k}^d$ in (\ref{nove141}) and (\ref{nov2}).  For notational simplicity, we  define the following quantities. 
\begin{definition}
For any fixed $i,i'\in\{1,\cdots,7\}$, $  \nu\in\{+,-\}$ $\alpha\in \mathcal{B}, \beta, \iota, \kappa, \iota',\kappa'. \gamma,\in \mathcal{S}$, s.t., $\iota+\kappa=\iota'+\kappa'=\beta, $ $\iota'+\gamma=\kappa$, $|\iota|=|\iota'|=1$, $\Lambda^\iota\thicksim \psi_{\geq 1}(|v|) \Omega_j^x$ or $\psi_{\geq 1}(|v|) \widehat{\Omega}^v_j $, $\Lambda^{\iota'}\thicksim \psi_{\geq 1}(|v|) \Omega_{j'}^x$ or $\psi_{\geq 1}(|v|) \widehat{\Omega}^v_{j'} $ for some $j, j'\in\{1,2,3\}$, we define four integrals as follows,
\[
\widetilde{K}_{k_1,k}^{d,1}:=     \int_{1}^{t } \int_{\R^3}\int_{\R^3}  \big(\omega_{\beta}^{\alpha}(s, x, v)\big)^2  \alpha_{i'}(v)\cdot  D_v    g^\alpha_{\kappa'}( s,x, v) \alpha_i(v)\cdot  D_v g^\alpha_\kappa(t ,x, v)    \big(\sqrt{1+|v|^2}\tilde{d}(s,x,v) \big)^{2-c(\iota)-c(\iota')}
\] 
\be\label{jan12eqn82} 
\times 	C_d(s,x,v)     \big[   s^{-1}   I^1_{ \nu} (f, g, a_1,a_2 )(s, x,v) \big] d x d v d s , 
\ee
\[
\widetilde{K}_{k_1,k}^{d,2}:=    \int_{1}^{t } \int_{\R^3}\int_{\R^3}  \big(\omega_{\beta}^{\alpha}( s, x, v)\big)^2  \alpha_{i'}(v)\cdot  D_v    g^\alpha_{\kappa'}(s,x, v) \alpha_i(v)\cdot  D_v g^\alpha_\kappa(s ,x, v)   \big(\sqrt{1+|v|^2}\tilde{d}(s,x,v) \big)^{2-c(\iota)-c(\iota')}
\] 
\be\label{jan12eqn81}
\times  C_d(s,x,v)      \big[ T^{ \nu}(f, g, a_1,a_2 )(s,x,v)-  s^{-1} I^1_{\nu}(s,x,v)\big] d x d v d s, 
\ee
\[
\widetilde{S}_{k_1,k}^{d,1}:=    \int_{1}^{t } \int_{\R^3}\int_{\R^3}  \big(\omega_{\beta}^{\alpha}( s, x, v)\big)^2   g^\alpha_{\beta }(s,x, v)\alpha_{i'}\cdot \big( \alpha_i(v)\cdot  D_v D_v g^\alpha_{\gamma}(s ,x, v) \big)      \big(\sqrt{1+|v|^2}\tilde{d}(s,x,v) \big)^{2-c(\iota)-c(\iota')}
\] 
\be\label{jan12eqn84}
\times C_d(s,x,v)       \big[  s^{-1}  I^1_{ \nu} (f, g, a_1,a_2 )(s,x,v)\big] d x d v d s, 
\ee
\[
\widetilde{S}_{k_1,k}^{d,2}:=    \int_{1}^{t } \int_{\R^3}\int_{\R^3}  \big(\omega_{\beta}^{\alpha}( s, x, v)\big)^2      g^\alpha_{\beta }(s,x, v)\alpha_{i'}\cdot \big( \alpha_i(v)\cdot  D_v D_v g^\alpha_{\gamma}(s ,x, v) \big)    \big(\sqrt{1+|v|^2}\tilde{d}(s,x,v) \big)^{2-c(\iota)-c(\iota')}
\] 
\be\label{jan12eqn83} 
\times  C_d(s,x,v)      \big[ T^{ \nu}(f, g, a_1,a_2 )(s,x,v)-  s^{-1}   I^1_{ \nu} (f, g, a_1,a_2 )(s,x,v)\big] d x d v d s. 
\ee
\end{definition}
 
\begin{lemma}\label{march28lemma1}
For the integrals $\widetilde{K}_{k_1,k}^{d,1}$ and $\widetilde{S}_{k_1,k}^{d,1}$ defined in \textup{(\ref{jan12eqn82})} and  \textup{(\ref{jan12eqn84})}, the following estimate holds, 
\[
  |\widetilde{K}_{k_1,k}^{d,1}| + |\widetilde{S}_{k_1,k}^{d,1} |\lesssim    \int_{1}^{t }  (1+|s|)^{-1}  \| c(s,x,v)\|_{L^\infty_{x,v}}  \big(E_{\textup{low}}^{eb}(s) \big)^2 E_{\beta;d}^{\alpha}(s) d  s
\]
\be\label{march18eqn34}
  \times 2^{d+k_1}\big(2^{k+d}+2^{2k+2d} \big)2^{-4k_{+}-4k_{1,+}}\|a\|_{Y } \| a_1(\xi)\|_{\mathcal{S}^\infty_{k_1}}  \| a_2(\xi)\|_{\mathcal{S}^\infty_{k }}.
\ee
\end{lemma}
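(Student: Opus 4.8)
\textbf{Plan of proof of Lemma \ref{march28lemma1}.}
The two integrals $\widetilde{K}_{k_1,k}^{d,1}$ and $\widetilde{S}_{k_1,k}^{d,1}$ are structurally almost identical: both contain the factor $s^{-1}I^1_{\nu}(f,g,a_1,a_2)(s,x,v)$, the weight $C_d(s,x,v)$ and $\big(\sqrt{1+|v|^2}\tilde d(s,x,v)\big)^{2-c(\iota)-c(\iota')}$, and differ only in whether the profile factor is $\alpha_{i'}(v)\cdot D_vg^\alpha_{\kappa'}\,\alpha_i(v)\cdot D_vg^\alpha_\kappa$ (a product of two first-order bulk derivatives of the profile) or $g^\alpha_\beta\,\alpha_{i'}\cdot(\alpha_i(v)\cdot D_vD_vg^\alpha_\gamma)$ (undifferentiated profile times a second-order bulk derivative). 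The plan is to treat both by the same $L^2_{x,v}$--$L^2_{x,v}$--$L^\infty_{x,v}$ trilinear scheme already used repeatedly in Section \ref{estimateofnonbulkterm} and in Lemma \ref{jan23lemma2}: two of the three factors are placed in $L^2_{x,v}$ (these will be the two pieces carrying $\Lambda$-derivatives of $g$, controlled by $E^\alpha_{\beta;d}(s)$), and the remaining factor, namely $s^{-1}I^1_\nu$ together with all the coefficients, is placed in $L^\infty_{x,v}$.

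\textbf{Key steps.} First I would expand $D_v$ in the profile factors using the decompositions of Lemma \ref{twodecompositionlemma}: for $\widetilde{K}_{k_1,k}^{d,1}$ apply the first decomposition $D_v=\sum_\rho d_\rho(t,x,v)\Lambda^\rho$ to one of the two $D_vg$ factors and the second decomposition $D_v=\sum_\rho e_\rho(t,x,v)\Lambda^\rho$ to the other, choosing them so that the loss of weight in $|v|$ from the rotational directions is absorbed by the weight hierarchy and by the crucial cancellation $|\tilde d(t,x,v)\phi(x,v)|\lesssim 1+||t|-|x+\hat v t||$ of Lemma \ref{derivativeofweightfunction}; for $\widetilde{S}_{k_1,k}^{d,1}$ expand $D_vD_v g^\alpha_\gamma$ analogously, as in the proof of Lemma \ref{bulkdecomposition}. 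After this expansion, each profile term is of the form (coefficient bounded via (\ref{jan15eqn2}), (\ref{jan26eqn101}), (\ref{jan23eqn11})) times $\Lambda^\rho g^\alpha_\kappa$ or $\Lambda^{\rho_1\circ\rho_2}g^\alpha_\gamma$, localized by $\varphi_{[d-1,d+1]}(||s|-|x+\hat vs||)$, hence bounded in $L^2_{x,v}$ by $\big(E^\alpha_{\beta;d}(s)\big)^{1/2}$. Second, I would bound the $L^\infty$ factor. Recall from (\ref{jan14eqn31}) that $I^1_\nu$ is essentially $\tilde V_{j'}\cdot\nabla_\eta\big(|\xi-\eta|\widehat f\,a_1\psi_{k_1}\big)$ convolved against the twisted-phase kernel carrying the symbol $\tilde V_j\cdot\eta\,a_2(\eta)/(|\eta|-\nu\hat v\cdot\eta)\,\psi_{\ge 10}(\cdots)\psi_k(\eta)$; the first factor is, modulo the extra $\nabla_\eta$, the same object appearing in the decay estimate (\ref{march18eqn30}) of Lemma \ref{sharpdecaywithderivatives} applied to $\Omega^x_{j'}$ acting on a Fourier multiplier of the electromagnetic profile (scale $2^{k_1}$, gain $2^{-4k_{1,+}}$), and the second factor is exactly the $T^\nu_k(\tilde V_j\cdot\xi a_2,\cdot)$-type kernel handled by Lemma \ref{twisteddecaylemma3}/Lemma \ref{lineardecaylemma3}, which gives $L^\infty$ control of size $(2^k+2^{2k+d})2^{-4k_{+}}\|a_2\|_{\mathcal S^\infty_k}E^{eb}_{\mathrm{low}}(s)$ after the weight $2^d$ from $e_\rho$ or $\tilde d$ is taken into account. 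Multiplying the two, together with the $s^{-1}$ in front, the localization $\psi_{\le-5}(1-|x+\hat vs|/|s|)$ and the coefficient bound $\|c\|_{L^\infty_{x,v}}$, yields the $L^\infty$-factor estimate
\[
\big\| s^{-1}I^1_\nu\,C_d(s,\cdot,\cdot)\big\|_{L^\infty_{x,v}}\lesssim (1+|s|)^{-1}\|c\|_{L^\infty_{x,v}}\,2^{d+k_1}\big(2^k+2^{2k+d}\big)2^{-4k_{+}-4k_{1,+}}\|a\|_Y\|a_1\|_{\mathcal S^\infty_{k_1}}\|a_2\|_{\mathcal S^\infty_k}\,E^{eb}_{\mathrm{low}}(s).
\]
Third, combining the two $L^2$ factors of total size $E^\alpha_{\beta;d}(s)$ with this $L^\infty$ factor and integrating in $s$ gives exactly (\ref{march18eqn34}), with one power of $E^{eb}_{\mathrm{low}}(s)$ from $I^1_\nu$ and a second from the decay estimate of the other $L^\infty$ piece.

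\textbf{Main obstacle.} The delicate point is bookkeeping the powers of $|v|$ and of the modulation. Because $I^1_\nu$ arose from differentiating a product that already contained the $\Omega^x$-symbol $\tilde V_j\cdot\eta$ and the near-resonance factor $(|\eta|-\nu\hat v\cdot\eta)^{-1}$, and because the profile side carries the coefficient $\big(\sqrt{1+|v|^2}\,\tilde d\big)^{2-c(\iota)-c(\iota')}$ which can be as large as $(1+|v|)^2\big(1+||s|-|x+\hat vs||\big)^2$, one must verify that the two "good derivatives'' $\iota,\iota'$ (either $\widehat S^v$, which contributes $(1+|v|)^{-3}$ when hitting the pulled-back field, or $\Omega^x_j$, whose symbol vanishes like $|\xi|\angle(\mu v,\xi)$ near the time-resonant set as quantified in (\ref{march20eqn23})--(\ref{jan4eqn8})) genuinely supply enough smallness. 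This is precisely the "hidden null structure'' accounting: the factor $\tilde V_j\cdot\xi/(|\xi|-\nu\hat v\cdot\xi)\lesssim 1$ together with the weight hierarchy of $\omega^\alpha_\beta$ versus $\omega^\alpha_{\rho\circ\kappa}$ (cf. (\ref{jan1eqn1})) and the cancellation $|\tilde d\,\phi|\lesssim 1+||s|-|x+\hat vs||$ must be invoked in the correct order so that no uncompensated power of $1+|v|$ or of $1+||s|-|x+\hat vs||$ survives; since only $\varphi_d$-localized pieces appear, the surviving $2^d$-powers are harmless and are exactly what produces the $2^{d+k_1}(2^{k+d}+2^{2k+2d})$ loss in (\ref{march18eqn34}). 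I expect this weight/null-structure reconciliation — rather than any analytic estimate — to be the crux, and it proceeds exactly as in the treatment of $\tilde T_1^1+\tilde T_2^1$ in Lemma \ref{jan23lemma2} and of $\tilde T_2^5$ in Lemma \ref{jan23lemma9}.
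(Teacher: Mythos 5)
Your overall scheme is the paper's: place the two profile factors in $L^2_{x,v}$ against the localized energy $E^\alpha_{\beta;d}(s)$, put $s^{-1}I^1_\nu$ together with all coefficients in $L^\infty_{x,v}$, and bound the latter by the linear decay estimates for the twisted operators. The gap is in your choice of decomposition for the bulk derivatives. You propose applying the first decomposition $D_v=\sum_\rho d_\rho\Lambda^\rho$ to one of the two $D_vg$ factors and the second, $D_v=\sum_\rho e_\rho\Lambda^\rho$, to the other. Track the powers of $1+|v|$: assign to each factor half of the prefactor, i.e. $(\sqrt{1+|v|^2}\,\tilde d)^{1-c(\iota)}$, and use the weight ratio $\omega^\alpha_\beta/\omega^\alpha_{\rho\circ\kappa}\sim(1+|v|)^{c(\iota)-c(\rho)}\phi^{i(\rho)-i(\iota)}$. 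On the factor where you use the first decomposition, $|d_\rho|\lesssim(1+|v|)^{c(\rho)}(1+|\tilde d|)$ leaves $(1+|v|)^{c(\iota)-c(\rho)}(1+|v|)^{1-c(\iota)}(1+|v|)^{c(\rho)}=(1+|v|)$ uncompensated. Since $|\rho\circ\kappa|=|\beta|$, the hierarchy of weights gives no gain, and the null-structure symbol $\tilde V_j\cdot\xi/(|\xi|-\nu\hat v\cdot\xi)$ is only \emph{bounded by} $1+|v|$ (note the $\frac{1}{1+|v|}$ prefactor on the left of (\ref{noveqn500}) is a loss on the operator, not a gain), so nothing absorbs this power: this is exactly the losing-$|v|$ issue of subsection \ref{losingvissue}. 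The paper closes the counting by using the \emph{second} decomposition for both $D_v$'s in (\ref{jan12eqn82}) and for all of $D_vD_v$ in (\ref{jan12eqn84}): then $(1+|v|)^{1-c(\rho)}e_\rho\lesssim 1+t$ by (\ref{jan13eqn9}), the $|v|$-powers cancel exactly, and the resulting $(1+t)^2$ loss is paid by the explicit $s^{-1}$ prefactor of $I^1_\nu$ combined with the $(1+t)^{-1}$ decay in the bound (\ref{jan13eqn1}) for $(1+|v|)^{1-c(\rho)}e_\rho\varphi_d I^1_\nu$; what survives is $2^{2d}$ from $\tilde d^{\,2-c(\iota)-c(\iota')}$, which accounts for the extra factor of $2^d$ in (\ref{march18eqn34}) relative to the $L^\infty$ bound you display. (A mixed first/second choice is used in the paper only for $\widetilde{K}^{d,2}_{k_1,k}$, and there only after splitting into $|v|$-regimes where the $|v|$-loss is quantifiable; no such splitting appears in your treatment of $\widetilde{K}^{d,1}_{k_1,k}$.)

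A secondary bookkeeping slip: both powers of $E^{eb}_{\textup{low}}(s)$ in (\ref{march18eqn34}) come from $I^1_\nu$ itself, which is a product of two electromagnetic profiles $f,g\in\{h^\alpha_i\}$ (one estimated through the $\nabla_\eta$-differentiated factor, one through the $T^\nu_k$-type kernel); there is no separate ``other $L^\infty$ piece'' supplying the second power, contrary to your final remark.
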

\begin{proof}

\noindent $\bullet$\quad  The  estimate of  $\widetilde{K}_{k_1,k}^{d,1}$.\qquad  Recall (\ref{jan12eqn82}). For all the vector fields ``$D_v$'' in (\ref{jan12eqn82}),  we use the second decomposition of $D_v$ in (\ref{summaryoftwodecomposition}) in Lemma \ref{twodecompositionlemma}. Note that the following estimate holds for any $\rho\in \mathcal{K}, |\rho|=1$  from the detailed formulas of $e_{\rho}(t,x,v)$   in  (\ref{sepeq932}),
 \be\label{jan13eqn9}
 \|(1+|v|)^{1-c(\rho)}e_{\rho}(t,x,v) \psi_{\leq -5}(1-|x+\hat{v}t|/|t|)  \|_{L^\infty_{x,v}}\lesssim (1+t).
 \ee

Recall (\ref{jan14eqn31}). For any $\rho\in \mathcal{K}, |\rho|=1$, after representing  the mulitlinear form $   I^1_{ \nu} (f, g, a_1,a_2 )(s,x,v)$ as a product  form, we know that  the following estimate holds   from the linear decay estimate (\ref{noveqn555}) in Lemma \ref{twistedlineardecay}  and the estimate (\ref{nove501}) in Lemma \ref{lineardecaylemma3},  
\[
 \|(1+|v|)^{1-c(\rho)   }e_{\rho }(t,x,v) \varphi_{d}(||t|-|x+\hat{v}t||)   I^1_{ \nu}(f, g, a_1,a_2 )(t,x,v) \|_{L^\infty_{x,v}}
\]
\be\label{jan13eqn1}
\lesssim (1+t)^{-1} 2^{k_1}(2^{k}+2^{2k+d})2^{-4k_{+}-4k_{1,+}}\|a_1(\xi)\|_{\mathcal{S}^\infty_{k_1}}\|a_2(\xi)\|_{\mathcal{S}^\infty_{k }}\big( E_{\textup{low}}^{eb}(t)\big)^2.
\ee
Therefore, from the estimate (\ref{jan13eqn9}),  the   estimate (\ref{jan13eqn1}), the second part of the  estimate (\ref{feb8eqn51}) in Lemma \ref{derivativeofweightfunction},  and the $L^2_{x,v}-L^2_{x,v}-L^\infty_{x,v}$ type multilinear estimate,  we know     that  the following estimate holds, 
\[
 |\widetilde{K}_{k_1,k}^{d,1}|     \lesssim \sum_{ \gamma\in \mathcal{S}, |\alpha|+|\gamma|\leq N_0}   2^{d+k_1}\big(2^{k+d}+2^{2k+2d} \big)2^{-4k_{+}-4k_{1,+}} \|a\|_{Y }   \int_{1}^{t }  \|\omega_{\gamma}^\alpha(x, v) g_{\gamma}^\alpha(s, x,v) \varphi_{[d-1,d+1]}  (||s|-|x+\hat{v}s||) \|_{L^2_x L^2_v }^2  
\]
\be\label{jan12eqn101}
\times  (1+|s|)^{-1} \|a_1(\xi)\|_{\mathcal{S}^\infty_{k_1}}\|a_2(\xi)\|_{\mathcal{S}^\infty_{k }}  \| c(t,x,v)\|_{L^\infty_{x,v}} \big( E_{\textup{low}}^{eb}(s)\big)^2  d  s.
\ee

\noindent $\bullet$\quad The estimate of $\widetilde{S}_{k_1,k}^{d,1}$. \qquad  Recall (\ref{jan12eqn84}). As in the estimate of  $\widetilde{K}_{k_1,k}^{d,1}$, we use the second decomposition of ``$D_v$'' in (\ref{summaryoftwodecomposition}) in Lemma \ref{twodecompositionlemma}   for all the vector fields ``$D_v$'' in (\ref{jan12eqn84}). As a result, the following equality holds 
 \be\label{jan12eqn110}
 D_v D_v g_\gamma^\alpha(t,x,v)=\sum_{\rho_1, \rho_2\in \mathcal{S}, |\rho_1|=|\rho_2|=1}   e_{\rho_1}(t,x,v)\Lambda^{\rho_1}\big( e_{\rho_2}(t,x,v)\Lambda^{\rho_2} g_\gamma^\alpha(t,x,v)\big).
 \ee
  Note that  the following estimate holds if  $ \iota+\iota'+\kappa'=\beta, \rho, \rho'\in \mathcal{K}/\{\vec{0}\}$, 
 \be\label{jan13eqn18}
\big| \frac{\omega_{\beta}^\alpha(x,v)}{\omega_{\rho'\circ\rho\circ\kappa'}^\alpha (x,v)} \big|\sim (1+|v|)^{c(\iota)+c(\iota')-c(\rho)-c(\rho')}\big(\phi(t,x,v)\big)^{i(\rho)+i(\rho')-i(\iota)-i(\iota')}. 
\ee
 Recall (\ref{sepeq932}). From the equality (\ref{dec26eqn1}) and the estimate (\ref{jan23eqn11}) in Lemma \ref{derivativesofcoefficient}, the following estimate holds, 
 \be\label{jan13eqn5}
\sum_{\rho_1, \rho_2\in\mathcal{S}, |\rho_1|=|\rho_2|=1} \|(1+|v|)^{1-c(\rho_2)}\Lambda^{\rho_1} e_{\rho_2}(t,x,v) \psi_{\leq -5}(1-|x+\hat{v}t|/|t|) \|_{L^\infty_{x,v}} \lesssim (1+t). 
 \ee
 From the estimates (\ref{jan13eqn9}) and (\ref{jan13eqn5}), the estimate (\ref{jan13eqn1}), and the estimate (\ref{jan13eqn18}), the following estimate holds for fixed $\gamma\in \mathcal{S}$, s.t., $\iota+\iota'+\gamma=\beta$,
 \[
 \|\omega_{\beta}^\alpha(x,v)  \big(\sqrt{1+|v|^2}\tilde{d}(t,x,v) \big)^{2-c(\iota)-c(\iota')}    D_v D_v g_\gamma^\alpha(t,x,v) T_2^{\mu,\nu}(f, g, a_1,a_2 )(t,x,v) \varphi_{d}(||t|-|x+\hat{v}t||) \psi_{\leq -5}(1-|x+\hat{v}t|/|t|) \|_{L^2_{x,v}}
 \]
 \[
 \lesssim \sum_{\kappa\in \mathcal{S}, |\alpha|+|\kappa|\leq N_0}   2^{d+k_1}\big(2^{k+d}+2^{2k+2d} \big)2^{-4k_{+}-4k_{1,+}}  \|a_1(\xi)\|_{\mathcal{S}^\infty_{k_1}}\|a_2(\xi)\|_{\mathcal{S}^\infty_{k }} \big(E_{\textup{low}}^{eb}(t) \big)^2
 \]
\be\label{jan13eqn25}
 \times \| \omega_{\kappa}^\alpha(x,v) g_\kappa^\alpha(t,x,v)\varphi_{[d-2,d+2]} (||t|-|x+\hat{v}t||)\|_{L^2_xL^2_v}. 
 \ee
 Therefore, from the above estimate (\ref{jan13eqn25})    and the  $L^2_{x,v}-L^2_{x,v}-L^\infty_{x,v}$ type multilinear estimate, the following estimate holds, 
\be\label{jan13eqn31}
 |\widetilde{S}_{k_1,k}^{d,1}|     \lesssim     2^{d+k_1}\big(2^{k+d}+2^{2k+2d} \big)2^{-4k_{+}-4k_{1,+}} \int_{1 }^{t }    (1+|s|)^{-1}\|a\|_{Y } \| a_1(\xi)\|_{\mathcal{S}^\infty_{k_1}}  \| a_2(\xi)\|_{\mathcal{S}^\infty_{k }}  \| c(s,x,v)\|_{L^\infty_{x,v}}  \big(E_{\textup{low}}^{eb}(s) \big)^2 E_{\beta;d}^{\alpha}(s) d  s.
\ee

To sum up, our desired estimate (\ref{march18eqn34}) holds  from the estimates (\ref{jan12eqn101}) and (\ref{jan13eqn31}).
\end{proof}

\begin{lemma}\label{march28lemma2}
For the integral  $\widetilde{K}_{k_1,k}^{d,2}$  defined in \textup{(\ref{jan12eqn81})}, the following estimate holds, 
\[
|\widetilde{K}_{k_1,k}^{d,2}| \lesssim \big(2^{k_1/2+k/2+d} + 2^{3(k_1+k)/2+3d} \big)2^{-4k_{1,+}-4k_{+}}  \|a\|_{Y } \| a_1(\xi)\|_{\mathcal{S}^\infty_{k_1}}  \| a_2(\xi)\|_{\mathcal{S}^\infty_{k }} \]
\be\label{march28eqn1}
\times \int_{1}^{t }  (1+|s|)^{-1}   \| c(s,x,v)\|_{L^\infty_{x,v}} \big(E_{\textup{low}}^{eb}(s) \big)^2 E_{\beta;d}^{\alpha}(s ) d s. 
\ee
\end{lemma}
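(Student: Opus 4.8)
\textbf{Proof plan for Lemma \ref{march28lemma2}.} Recall the detailed formula of $\widetilde{K}_{k_1,k}^{d,2}$ in (\ref{jan12eqn81}). The key identity driving the argument is the decomposition (\ref{jan23eqn31}), which expresses the difference $T^{\nu}(f,g,a_1,a_2) - s^{-1}I^1_\nu$ as the sum of two terms, namely $T^{\nu;2}(f,g,a_1,a_2)$ and $s^{-1}I^2_\nu$, both of which admit product representations: $T^{\nu;2}$ as in (\ref{march18eqn41}) and $I^2_\nu$ as in (\ref{march18eqn42}). Crucially, in both product forms one of the two factors is of the schematic type $\mathcal{F}^{-1}[a_1(\xi)\psi_{k_1}(\xi)\widehat{u}(t,\xi)](t,x+\hat{v}t)$ with $u \in \{\p_t E^\alpha, \p_t B^\alpha, \d E^\alpha, \d B^\alpha\}$, to which the sharp decay estimate (\ref{march18eqn30}) in Lemma \ref{sharpdecaywithderivatives} applies, gaining $(1+|s|)^{-1}(1+||s|-|x+\hat{v}s||)^{-1}$; and the other factor is a $T_k^\nu(\cdot,\cdot)$-type or $\nabla_\xi T_k^\nu(\cdot,\cdot)$-type expression in the profile $g$, to which the $L^\infty_{x,v}$ estimates (\ref{jan25eqn16}), (\ref{noveqn600}) in Lemma \ref{twisteddecaylemma3} and the decay estimate (\ref{nove501}) in Lemma \ref{lineardecaylemma3} apply. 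The upshot, already recorded in (\ref{march18eqn51}), is the pointwise bound
\[
\big|T^{\nu}(f,g,a_1,a_2)(s,x,v) - s^{-1}I^1_\nu(s,x,v)\big|\psi_{\leq -5}(1-|x+\hat{v}s|/|s|) \lesssim (1+|s|)^{-2}(1+||s|-|x+\hat{v}s||)^{-1} 2^{k_1+k} 2^{-4k_{1,+}-4k_{+}}\|a_1(\xi)\|_{\mathcal{S}^\infty_{k_1}}\|a_2(\xi)\|_{\mathcal{S}^\infty_{k}}\big(E_{\textup{low}}^{eb}(s)\big)^2.
\]

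First I would substitute this pointwise bound into (\ref{jan12eqn81}). For the two vector fields $D_v$ acting on $g^\alpha_{\kappa'}$ and $g^\alpha_\kappa$, I use the second decomposition of $D_v$ in (\ref{summaryoftwodecomposition}) of Lemma \ref{twodecompositionlemma}, picking up coefficients $e_\rho(s,x,v)$ whose relevant size is governed by (\ref{jan13eqn9}): on the support of $\psi_{\leq -5}(1-|x+\hat{v}s|/|s|)$ one has $\|(1+|v|)^{1-c(\rho)}e_\rho(s,x,v)\psi_{\leq -5}(\cdots)\|_{L^\infty_{x,v}}\lesssim (1+s)$, contributing a factor $(1+s)^2$ in total. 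Next, the factor $\big(\sqrt{1+|v|^2}\tilde{d}(s,x,v)\big)^{2-c(\iota)-c(\iota')}$ combined with the ratio of weight functions (\ref{jan1eqn1}) and the second part of the estimate (\ref{feb8eqn51}) in Lemma \ref{derivativeofweightfunction} is absorbed into the localized energy $E_{\beta;d}^\alpha(s)$ — concretely, $\big(\sqrt{1+|v|^2}\tilde{d}\big)^{2-c(\iota)-c(\iota')}$ times $(1+||s|-|x+\hat{v}s||)^{-1}$ behaves like $(1+||s|-|x+\hat{v}s||)$ weighted appropriately, which is exactly the dyadic-in-$d$ scale $2^d$ that matches the $\varphi_{[d-1,d+1]}$ localization in (\ref{jan11eqn80}). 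Then an $L^2_{x,v}$–$L^2_{x,v}$–$L^\infty_{x,v}$ Hölder estimate — putting $\omega_\beta^\alpha g^\alpha_\beta \varphi_{[d-1,d+1]}$ and $\omega_\beta^\alpha (\textit{lower order factor})\varphi_{[d-1,d+1]}$ in $L^2_{x,v}$ and the $T^{\nu}-s^{-1}I^1_\nu$ factor together with $c(s,x,v)$ and the $e_\rho$ coefficients in $L^\infty_{x,v}$ — yields a bound by $(1+s)^2 \cdot (1+s)^{-2} \cdot 2^d \cdot 2^{k_1+k} 2^{-4k_{1,+}-4k_{+}}\|a\|_Y \|a_1\|_{\mathcal{S}^\infty_{k_1}}\|a_2\|_{\mathcal{S}^\infty_k} \|c\|_{L^\infty_{x,v}} \big(E_{\textup{low}}^{eb}(s)\big)^2 E_{\beta;d}^\alpha(s)$, integrated in $s$.

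To reach the asserted $2^{k_1/2+k/2+d}+2^{3(k_1+k)/2+3d}$ factor rather than the naive $2^{k_1+k+d}$, I would exploit that near the time-resonant set the effective volume of the $\eta$-integration in (\ref{jan14eqn11}) is restricted: the angle $\angle(\nu v,\eta)\lesssim (2^k/|v|)^{1/2}$ and $|v|\gtrsim 2^{k/2}\sqrt{s}$, so summing over the angular dyadic pieces as in (\ref{noveqn92}) trades one full power of $2^k$ for $2^{k/2}$ plus a $(1+s)^{-1}$ gain, and symmetrically for $2^{k_1}$ — the product representations above make this transparent because the $T_k^\nu$-factor and the $\nabla_\xi T_k^\nu$-factor already carry these gains once the support restriction is invoked; the $2^{3(\cdot)/2+3d}$ alternative handles the large-$d$ regime where the distance-to-light-cone weight in $E_{\beta;d}^\alpha$ costs extra powers of $2^d$. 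For high frequencies $k_{1,+}, k_+ > 0$ the $2^{-4k_{1,+}-4k_+}$ factors come from the $4$-fold regularity gain in the symbol estimates (\ref{noveqn141}) propagated through the Fourier multipliers; low frequencies are controlled by the volume of the frequency support as usual. Finally, summing the pieces and using $\|a\|_{Y}\lesssim \sup_x(|a(x)|+|xa'(x)|)$ gives (\ref{march28eqn1}). \textbf{The main obstacle} I anticipate is the careful bookkeeping of which of $2^{k_1},2^k$ loses a square root in the time-resonance region versus remaining full power: one must check that the gain is consistent with the product structure in both (\ref{march18eqn41}) and (\ref{march18eqn42}), since in (\ref{march18eqn42}) the $\nabla_\xi$ can fall either on the symbol $\tilde{V}_j\cdot\xi/(|\xi|-\nu\hat{v}\cdot\xi)$ — producing a factor of size $2^{-k}$ times the near-resonance smallness — or on $\widehat{g}$, and these two cases must be balanced against the weight hierarchy in $E_{\beta;d}^\alpha$ so that no net loss of $|v|$ occurs; this is precisely the place where the good-derivative structure encoded by $c(\iota), c(\iota')$ must be used rather than merely the crude bound (\ref{jan15eqn2}).
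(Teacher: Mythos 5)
Your starting point is the same as the paper's: use the decomposition (\ref{jan23eqn31}) together with the pointwise bound (\ref{march18eqn51}) for $T^{\nu}-s^{-1}I^1_{\nu}$, and then run an $L^2_{x,v}$--$L^2_{x,v}$--$L^\infty_{x,v}$ H\"older estimate. However, there is a genuine gap in how you propose to close the estimate, and your own bookkeeping already signals it: after paying $(1+s)$ for each of the two coefficients $e_\rho$ via (\ref{jan13eqn9}), the $(1+s)^2$ exactly cancels the $(1+s)^{-2}$ from (\ref{march18eqn51}) and leaves an integrand with \emph{no} decay in $s$, so you are short both the factor $(1+s)^{-1}$ needed for the time integral and the improvement of $2^{k_1+k}$ to $2^{(k_1+k)/2}$. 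You propose to recover both from a volume-of-support argument in the $\eta$-integration ``near the time-resonant set,'' quoting the mechanism of (\ref{noveqn92}). This does not apply here: the operator $T^\mu_k(\cdot,\cdot)$ defined in (\ref{noveqn89}) carries the cutoff $\psi_{>10}(t(|\xi|-\mu\hat v\cdot\xi))$, i.e.\ its frequency support is the \emph{non-resonant} region, so there is no angular restriction $\angle(\nu v,\eta)\lesssim 2^{-m/2-k/2}$ and no constraint $|v|\gtrsim 2^{k/2}\sqrt s$ to exploit; that restriction is available only for the resonant operator $K^\mu_k$ of (\ref{noveqn96}), which does not occur in $\widetilde{K}^{d,2}_{k_1,k}$.

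What actually closes the estimate in the paper is a four-fold partition of physical $(x,v)$-space, (\ref{march27eqn101})--(\ref{march27eqn104}), comparing $|v|$ with $2^{(k+k_1)/4}\sqrt{t}$ and $|x|$ with $t/|v|+\sqrt{t}$. When $|v|\lesssim 2^{(k+k_1)/4}\sqrt t$ one uses the \emph{first} decomposition of $D_v$, so the loss is $|v|^2\lesssim 2^{(k+k_1)/2}(1+t)$ and the extra $(1+t)$ is paid by the $(1+t)^{-2}$ in (\ref{march18eqn51}); this is the source of the $2^{3(k_1+k)/2+3d}$ term (not, as you guessed, a large-$d$ effect). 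When $|v|\gtrsim 2^{(k+k_1)/4}\sqrt t$ one uses the second decomposition and gains $|v|^{-2}\lesssim 2^{-(k+k_1)/2}(1+t)^{-1}$, but this only controls the coefficients $e_\rho$ when $|x|\lesssim t/|v|+\sqrt t$; the remaining large-$|x|$ regions require the weight $\phi(t,x,v)$ of (\ref{april2eqn1}) and the identity (\ref{march18eqn54}) (estimates (\ref{april6eqn100}) and (\ref{march27eqn131})), split according to the sign and size of $x\cdot\tilde v$. None of this case analysis appears in your proposal, and without it the argument does not produce either the $(1+s)^{-1}$ or the $2^{(k_1+k)/2}$ in the stated bound.
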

\begin{proof}
  Recall the detailed formula of $\widetilde{K}_{k_1,k}^{d,2}$ in  (\ref{jan12eqn81}). Based on the possible size of ``$|v|$'', ``$x\cdot v$'', and ``$|x|$'', we separate into fours cases by utilizing the following partition of unity, 
\be\label{march27eqn101}
1=\sum_{i=1,\cdots,4}\eta_i(t,x,v), \quad \eta_1(t,x,v):=  \psi_{\leq   (k+ k_1)/4+10}\big(|v| |t|^{ -1/2}\big) ,
\ee
\be\label{march27eqn102}
  \eta_2(t,x,v):= \psi_{>   (k+ k_1)/4+10}\big( |v| |t|^{-1/2}\big) \psi_{ \leq 10}\big(|x|/\big(  |t| /|v|  +|t|^{1/2}  \big) \big) ,
\ee
\be\label{march27eqn103}
   \eta_3(t,x,v):=   \psi_{>   (k+ k_1)/4+10}\big( |v| |t|^{-1/2}\big) \psi_{> 10}\big(|x|/\big(  |t| /|v|  + |t|^{1/2}  \big) \big) \tilde{\eta}(x\cdot \tilde{v}|t|/|x|^2) ,
\ee
\be\label{march27eqn104}
   \eta_4(t,x,v):=  \psi_{>   (k+ k_1)/4+10}\big( |v| |t|^{-1/2}\big)   \psi_{> 10}\big(|x|/\big(  |t| /|v|  + |t|^{1/2}  \big) \big) \big(1- \tilde{\eta}(x\cdot \tilde{v} |t|/|x|^2)\big),
\ee
where $\tilde{\eta}(x):\R\longrightarrow\R$ is a smooth function such that it equals to one inside $(-\infty, 2^{-4}]$ and it is supported inside $(-\infty, 2^{-5}]$. Correspondingly, we define the following corresponding integrals, 
\[
J_i(t)=    \int_{\R^3}\int_{\R^3}  \big(\omega_{\beta}^{\alpha}( x, v)\big)^2  \alpha_{i'}(v)\cdot  D_v    g^\alpha_{\kappa'}(t,x, v) \alpha_i(v)\cdot  D_v g^\alpha_\kappa(t ,x, v)      \big(\sqrt{1+|v|^2}\tilde{d}(t,x,v) \big)^{2-c(\iota)-c(\iota')} 
\] 
\be\label{april14eqn20}
\times   C_d(t,x,v)      \big[ T^{ \nu}(f, g, a_1,a_2 )(t,x,v)-  t^{-1} I^1_{\nu}(f, g, a_1,a_2 )(t,x,v)\big] \eta_i(t,x,v)d x d v  ,  
\ee
where $i\in\{1,2,3,4\}.$
Hence, from    the detailed formula of $\widetilde{K}_{k_1,k}^{d,2}$ in  (\ref{jan12eqn81}) and the partition (\ref{march27eqn101}), we have
\be\label{march27eqn135}
\widetilde{K}_{k_1,k}^{d,2}=\sum_{i=1,\cdots,4}\int_{1}^{t} J_i(s) d s . 
\ee

\noindent $\oplus$\quad The estimate of $J_1$, i.e., the case when $ 1\leq |v|\leq  2^{  (k+ k_1)/4+10} (1+|t|)^{1/2}$.  

 For this case , we use the  first decomposition of $D_v$ in (\ref{summaryoftwodecomposition}) in Lemma \ref{twodecompositionlemma} for all vector fields ``$D_v$'' in   (\ref{april14eqn20}). From the estimate of coefficients in (\ref{jan15eqn2}), the second part of the estimate (\ref{feb8eqn51}) in Lemma \ref{derivativeofweightfunction},  the estimates (\ref{feb19eqn1}) and (\ref{march18eqn51}), and the $L^{2}_{x,v}-L^2_{x,v}-L^\infty_{x,v}$ type multilinear estimate, we know that  the following estimate holds for any fixed time $t\in[t_1,t_2]$,
\[
|J_1(t)|\lesssim 2^{ k_1+k+3d}2^{-4k_{1,+}-4k_{+}} (1+|t|)^{-1} \big(E_{\textup{low}}^{eb}(t) \big)^2 E_{\beta;d}^{\alpha}(t ) \|a\|_{Y } \| a_1(\xi)\|_{\mathcal{S}^\infty_{k_1}}  \| a_2(\xi)\|_{\mathcal{S}^\infty_{k }} \| c(t,x,v)\|_{L^\infty_{x,v}}
 \]
 \[
 \times\big(2^{-d}+ (1+|t|)^{-1}\| |v|^2\psi_{\leq (k+ k_1)/4+10}\big( |v| |t|^{-1/2}\big)\|_{L^\infty_{ v}}\big) \lesssim  (1+|t|)^{-1}\big(2^{3(k_1+k)/2+3d} +2^{  k_1+k +2d}\big) 
 \]
 \be\label{march27eqn141}
 \times 2^{-4k_{1,+}-4k_{+}} \|a\|_{Y } \| a_1(\xi)\|_{\mathcal{S}^\infty_{k_1}}  \| a_2(\xi)\|_{\mathcal{S}^\infty_{k }} \| c(t,x,v)\|_{L^\infty_{x,v}} \big(E_{\textup{low}}^{eb}(t) \big)^2 E_{\beta;d}^{\alpha}(t ).
 \ee

\noindent $\oplus$ \quad The estimate of $J_2$, i.e., the case when $ |v|  >   2^{(k+ k_1)/4+9}  |t|^{1/2}+1 $ and $|x|\leq 2^{10}\big( |t|/|v| +|t|^{1/2}\big) $.

For this case, we use the second decomposition of ``$D_v$'' in (\ref{summaryoftwodecomposition}) in Lemma \ref{twodecompositionlemma} for all the ``$D_v$'' derivatives in (\ref{april14eqn20}). Recall the detailed formula of $e_{\rho}(t,x,v)$ in (\ref{sepeq932}). From the estimate (\ref{march18eqn51}), the second part of the estimate (\ref{feb8eqn51}) in Lemma \ref{derivativeofweightfunction}, and the $L^2_{x,v}-L^2_{x,v}-L^\infty_{x,v}$ type multi-linear estimate, the following estimate holds for any fixed $t\in[t_1,t_2]$,	
\[
|J_2(t)| \lesssim 2^{k_1+k+d} 2^{-4k_{1,+}-4k_{+}}  \big(\|  |v|^{-2} \psi_{>   (k+ k_1)/4+10}\big( |v| |t|^{-1/2}\big)\|_{L^\infty_v}+ (1+|t|)^{-1} 2^{ d} \big)  \]
 \be\label{march27eqn142}
\times  \|a\|_{Y  } \| a_1(\xi)\|_{\mathcal{S}^\infty_{k_1}}  \| a_2(\xi)\|_{\mathcal{S}^\infty_{k }} \| c(t,x,v)\|_{L^\infty_{x,v}}  \big(E_{\textup{low}}^{eb}(t) \big)^2  E_{\beta;d}^{\alpha}(t ).
\ee

\noindent $\oplus$ \quad The estimate of $J_3$, i.e., the case when $|v|\geq  2^{  k_1 /2+9} |t|^{1/2}+1$,  $|x|\geq 2^{10}\big(|t|/|v|+|t|^{1/2}\big)$, and $x\cdot \tilde{v} \leq  -2^{-4} |x|^2/|t|$.

Recall the definition of $\phi(t,x,v)$ in (\ref{april2eqn1}).  We  have  $(x,v)\in \textit{supp}(\phi(t,x,v)-1)$ for the case we are considering. Recall  the equality (\ref{march18eqn54}).  We know that the following estimate holds for the case we are considering,
\be\label{april6eqn100}
|\frac{\tilde{d}(t,x,v)}{||t|-|x+\hat{v}t||}|  \lesssim \frac{t}{t+\big(1+|v|\big)(|x\cdot v|+|x|)}\lesssim \frac{t^2}{(1+|v|)^2|x|^2}, \quad 
\frac{1}{\phi(t,x,v)}\lesssim   \frac{|x|}{|x\cdot v|}  \lesssim  \frac{t}{|v||x|}.
\ee
For this case, we use the  second decomposition of $D_v$ in (\ref{summaryoftwodecomposition}) in Lemma \ref{twodecompositionlemma} for all vector fields ``$D_v$'' in   (\ref{april14eqn20}). From the above estimate (\ref{april6eqn100}), the detailed formulas of $e_{\rho}(t,x,v)$ in (\ref{sepeq932}), the estimate  (\ref{march18eqn51}), and the $L^{2}_{x,v}-L^2_{x,v}-L^\infty_{x,v}$ type multi-linear estimate, we have the following estimate holds for any fixed time $t \in[1,T ]$,
\[
|J_3(t)| \lesssim 2^{k_1+k+d} 2^{-4k_{1,+}-4k_{+}}  \big(\|  |v|^{-2} \psi_{>   (k+ k_1)/4+10}\big( |v| |t|^{-1/2}\big)\|_{L^\infty_v}+ (1+|t|)^{-1} 2^{ d} \big)  \]
 \be\label{march27eqn143}
\times  \|a\|_{Y  } \| a_1(\xi)\|_{\mathcal{S}^\infty_{k_1}}  \| a_2(\xi)\|_{\mathcal{S}^\infty_{k }} \| c(t,x,v)\|_{L^\infty_{x,v}}  \big(E_{\textup{low}}^{eb}(t) \big)^2    E_{\beta;d}^{\alpha}(t ) .
\ee

\noindent $\oplus$ \quad The estimate of $J_4$, i.e., the case when  $|v| \geq  2^{(k_1+k)/2+9}|t|^{1/2}+1$,    $|x|\geq 2^{10}\big(|t|/|v|+|t|^{1/2}\big)$ , and $x\cdot \tilde{v} \geq  -2^{-4} |x|^2/|t|$.

  Recall the first equality in  (\ref{march18eqn54}).  We know that the following estimate holds for the case we are considering,
\be\label{march27eqn131}
\big||t^2|-|x+\hat{v}t|^2\big| \geq |x|^2, \quad \Longrightarrow \quad \frac{1}{|x|^2|t|}\big(\frac{t^2}{|v|^2} + \frac{t|x|}{|v|}+|x|^2 \big)\lesssim \frac{1}{|t|}.
\ee
For this case, we use the second decomposition of ``$D_v$'' in (\ref{summaryoftwodecomposition}) in Lemma \ref{twodecompositionlemma} for all the ``$D_v$'' derivatives in (\ref{april14eqn20}). Recall the detailed formula of $e_{\rho}(t,x,v)$ in (\ref{sepeq932}). From the estimates (\ref{march18eqn51}) and (\ref{march27eqn131}), the second part of the estimate (\ref{feb8eqn51}) in Lemma \ref{derivativeofweightfunction}, and the $L^2_{x,v}-L^2_{x,v}-L^\infty_{x,v}$ type multi-linear estimate, the following estimate holds for any fixed $t\in[1,T]$,	
 \be\label{march27eqn144}
|J_4(t)| \lesssim 2^{k_1+k+2d}   2^{-4k_{1,+}-4k_{+}}   (1+|t|)^{-1}   \|a\|_{Y} \| a_1(\xi)\|_{\mathcal{S}^\infty_{k_1}}  \| a_2(\xi)\|_{\mathcal{S}^\infty_{k }} \| c(t,x,v)\|_{L^\infty_{x,v}} \big(E_{\textup{low}}^{eb}(t) \big)^2   E_{\beta;d}^{\alpha}(t ). 
\ee

To sum up, recall the decomposition in (\ref{march27eqn135}),  our desired estimate (\ref{march28eqn1}) holds from   the estimates (\ref{march27eqn141}), (\ref{march27eqn142}), (\ref{march27eqn143}), and (\ref{march27eqn144}). 
\end{proof}
\begin{lemma}\label{march29lemma3}
For the integral  $\widetilde{S}_{k_1,k}^{d,2}$ defined in    \textup{(\ref{jan12eqn83})}, the following estimate holds, 
\[
 |\widetilde{S}_{k_1,k}^{d,2} |\lesssim  \int_{1}^{t }  (1+|s|)^{-1}   \| c(s,x,v)\|_{L^\infty_{x,v}} \big(E_{\textup{low}}^{eb}(s) \big)^2 E_{\beta;d}^{\alpha}(s ) d s \]
\be\label{march28eqn2}
\times  \big(2^{(k_1+k)/2+d} + 2^{3(k_1+k)/2+3d} \big)2^{-4k_{1,+}-4k_{+}} \|a\|_{Y } \| a_1(\xi)\|_{\mathcal{S}^\infty_{k_1}}  \| a_2(\xi)\|_{\mathcal{S}^\infty_{k }} . 
\ee
\end{lemma}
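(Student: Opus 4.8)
\textbf{Proof strategy for Lemma \ref{march29lemma3}.}
The plan is to treat $\widetilde{S}_{k_1,k}^{d,2}$ in exactly the same spirit as $\widetilde{K}_{k_1,k}^{d,2}$ in Lemma \ref{march28lemma2}, since the only structural difference is that the factor $\alpha_{i'}(v)\cdot D_v g^\alpha_{\kappa'}\,\alpha_i(v)\cdot D_v g^\alpha_\kappa$ is replaced by $g^\alpha_\beta\,\alpha_{i'}\cdot(\alpha_i(v)\cdot D_v D_v g^\alpha_\gamma)$ with $\iota'+\gamma=\kappa$, so that two of the three ``$D_v$'' derivatives now land on a single profile and one weight of size $|v|$ worth of hierarchy is available (cf. the estimate of $\widetilde{S}_{k_1,k}^{d,1}$ in Lemma \ref{march28lemma1}). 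First I would recall the decomposition (\ref{jan23eqn31}), so that $T^{\nu}(f,g,a_1,a_2)-s^{-1}I^1_\nu$ equals $T^{\nu;2}+s^{-1}I^2_\nu$, whose product representations (\ref{march18eqn41}) and (\ref{march18eqn42}) are acted on by the decay estimate (\ref{march18eqn30}) in Lemma \ref{sharpdecaywithderivatives} together with (\ref{jan25eqn16}) and (\ref{noveqn600}) in the (as yet unstated) Lemma on $T^\nu_k$; this yields the pointwise bound (\ref{march18eqn51}), which provides the crucial extra factor $(1+||t|-|x+\hat v t||)^{-1}$ and the gain $(1+|t|)^{-2}$ instead of $(1+|t|)^{-1}$.

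Next I would expand the double $D_v$ as in (\ref{jan12eqn110}): $D_vD_v g_\gamma^\alpha=\sum_{\rho_1,\rho_2}e_{\rho_1}\Lambda^{\rho_1}(e_{\rho_2}\Lambda^{\rho_2}g_\gamma^\alpha)$, using the first decomposition of $D_v$ from (\ref{summaryoftwodecomposition}); I would then invoke the weight-ratio identity (\ref{jan13eqn18}) and the coefficient estimates (\ref{jan13eqn9}), (\ref{jan13eqn5}) (obtained from (\ref{dec26eqn1}) and (\ref{jan23eqn11})) to control the product of the two commuted coefficients by $(1+|s|)$ each modulo the hierarchy gain, exactly as in the proof of the $\widetilde{S}^{d,1}$ estimate. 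The same four-piece partition of unity (\ref{march27eqn101})--(\ref{march27eqn104}) in $|v|,\;x\cdot v,\;|x|$ is then used; on the region $1\le|v|\le 2^{(k+k_1)/4+10}(1+|t|)^{1/2}$ the first decomposition of $D_v$ from (\ref{summaryoftwodecomposition}) controls the loss $|v|^2$ directly by $(1+|t|)$, and on the three remaining regions the second decomposition of $D_v$ together with (\ref{april6eqn100}), (\ref{march27eqn131}), and the second part of (\ref{feb8eqn51}) in Lemma \ref{derivativeofweightfunction} absorbs the coefficient $(\sqrt{1+|v|^2}\tilde d)^{2-c(\iota)-c(\iota')}$ into the distance-to-the-cone factor supplied by (\ref{march18eqn51}). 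In every region one applies the $L^2_{x,v}-L^2_{x,v}-L^\infty_{x,v}$ multilinear estimate and sums against $E_{\beta;d}^\alpha(s)$; because $T^{\nu;2}+s^{-1}I^2_\nu$ already carries $(1+||s|-|x+\hat v s||)^{-1}$, pairing it with the localization $\varphi_{[d-1,d+1]}$ inside $E_{\beta;d}^\alpha$ costs at most $2^{d}$, which is how the factors $2^{(k_1+k)/2+d}+2^{3(k_1+k)/2+3d}$ and $2^{-4k_{1,+}-4k_{+}}$ appear.

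The main obstacle I anticipate is the bookkeeping in the region $\eta_3$ where $x\cdot\tilde v\le -2^{-4}|x|^2/|t|$: there the weight function $\phi(t,x,v)$ genuinely differs from $1$, so one must simultaneously use the improved ratio $|\tilde d(t,x,v)|/||t|-|x+\hat v t||\lesssim t^2/((1+|v|)^2|x|^2)$ and $\phi^{-1}\lesssim t/(|v||x|)$ from (\ref{april6eqn100}) while the two $\Lambda^{\rho_i}$ derivatives may themselves hit $\phi$ (via $\Omega^x_i$ or $\widehat S^v$) and thereby change $i(\rho)-i(\iota)$; keeping the exponents of $(1+|v|)$ and of $\phi$ balanced through (\ref{jan13eqn18}) and Lemma \ref{derivativeofweightfunction} is the delicate point. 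Once that region is handled, the remaining three regions are direct analogues of the corresponding cases for $\widetilde{K}_{k_1,k}^{d,2}$, and assembling the four estimates over the partition (\ref{march27eqn135}) and integrating in $s$ yields (\ref{march28eqn2}).
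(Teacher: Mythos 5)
Your proposal follows essentially the same route as the paper: the same four-region partition (\ref{march27eqn101})--(\ref{march27eqn104}), the first decomposition of $D_vD_v$ on the low-$|v|$ region and the second on the remaining three, the pointwise bound (\ref{march18eqn51}) for $T^{\nu;2}+s^{-1}I^2_\nu$, and the weight hierarchy to absorb the lower-order terms where one $\Lambda^{\rho_1}$ hits a coefficient. The only slip is cosmetic: the expansion $\sum e_{\rho_1}\Lambda^{\rho_1}(e_{\rho_2}\Lambda^{\rho_2}g_\gamma^\alpha)$ you quote is the \emph{second} decomposition, not the first, though your later region-by-region discussion makes clear you apply the correct one in each case.
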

\begin{proof}
Recall (\ref{jan12eqn83}). From the two decompositions of $D_v$ in (\ref{summaryoftwodecomposition}), we know    that  the following decomposition  holds for the second order derivative ``$D_vD_v$'', 
\be\label{march28eqn41}
D_vD_v= P_1(D_vD_v) + L_{1}(D_vD_v)= P_2(D_vD_v) + L_{2}(D_vD_v),
\ee
where $P_i(D_vD_v)$, $i\in\{1,2 \}$, denotes the principle term of ``$D_vD_v$'', which is a second order derivative and $L_{i}(D_vD_v)$, $i\in\{1,2 \}$, denotes the lower order term of ``$D_vD_v$'', which is a first order derivative. More precisely, we have
\be\label{march28eqn151}
P_1(D_vD_v)= \sum_{\begin{subarray}{c}
\rho_1, \rho_2\in \mathcal{K} \\ 
 |\rho_1|=|\rho_2|=1\\ 
 \end{subarray}} d_{\rho_1}(t,x,v)d_{\rho_2(t,x,v)} \Lambda^{\rho_1\circ\rho_2}, \quad P_2(D_vD_v)=\sum_{\begin{subarray}{c}
\rho_1, \rho_2\in \mathcal{K} \\ 
 |\rho_1|=|\rho_2|=1\\ 
 \end{subarray}}e_{\rho_1}(t,x,v) e_{\rho_2(t,x,v)} \Lambda^{\rho_1\circ\rho_2},
\ee
\be\label{march28eqn152}
L_1(D_vD_v)= \sum_{\begin{subarray}{c}
\rho_1, \rho_2\in \mathcal{K} \\ 
 |\rho_1|=|\rho_2|=1\\ 
 \end{subarray}} d_{\rho_1}(t,x,v)\Lambda^{\rho_1}\big(d_{\rho_2(t,x,v)}\big) \Lambda^{  \rho_2}, \quad L_2(D_vD_v)= \sum_{\begin{subarray}{c}
\rho_1, \rho_2\in \mathcal{K} \\ 
 |\rho_1|=|\rho_2|=1\\ 
 \end{subarray}} e_{\rho_1}(t,x,v)\Lambda^{\rho_1}\big(e_{\rho_2(t,x,v)}\big) \Lambda^{  \rho_2}, 
\ee
From the detailed formulas of $d_{\rho}(t,x,v)$ and $e_{\rho}(t,x,v)$ in (\ref{sepeq947}) and (\ref{sepeq932}), the first equality in (\ref{dec26eqn1}), and the estimate of coefficients in (\ref{jan26eqn101}) and (\ref{jan23eqn11}),  we know that the following estimate holds, 
\be\label{march28eqn31}
\sum_{\rho_1, \rho_2\in \mathcal{K}, |\rho_1|=|\rho_2|=1} |d_{\rho_1}(t,x,v)\Lambda^{\rho_1}\big(d_{\rho_2}(t,x,v) \big)|\lesssim \big(1+|\tilde{d}(t,x,v)| \big)^2(1+|v|)^2, 
\ee
\be\label{march28eqn32}
\sum_{\rho_1, \rho_2\in \mathcal{K}, |\rho_1|=|\rho_2|=1} |e_{\rho_1}(t,x,v)\Lambda^{\rho_1}\big(e_{\rho_2}(t,x,v) \big)|\psi_{\leq 3}(|x|/(1+|t|)\lesssim (1+|t|)^2. 
\ee
Similar to the estimate of $  \widetilde{K}_{k_1,k}^{d,2}  $ in Lemma \ref{march28lemma2},  by using the cutoff functions $\eta_i(t,x,v)$, $i\in\{1,\cdots,4\}$, defined in (\ref{march27eqn101}), (\ref{march27eqn102}), (\ref{march27eqn103}), and (\ref{march27eqn104}), we decompose  $  \widetilde{S}_{k_1,k}^{d,2} $  into four parts as follows, 
\be\label{march28eqn191}
\widetilde{S}_{k_1,k}^{d,2} = \sum_{i=1,\cdots,4} \int_{1}^{t } S_i(s) d s,
\ee
where
\[
S_i(t):=   \int_{\R^3}\int_{\R^3}  \big(\omega_{\beta}^{\alpha}( x, v)\big)^2      g^\alpha_{\beta }(t,x, v)\alpha_{i'}\cdot \big( \alpha_i(v)\cdot  D_v D_v g^\alpha_{\gamma}(t ,x, v) \big)    \big(\sqrt{1+|v|^2}\tilde{d}(t,x,v) \big)^{2-c(\iota)-c(\iota')} 
\] 
\[
\times     C_d(t,x,v) \eta_i(t,x,v)     \big[ T^{ \nu}(f, g, a_1,a_2 )(t,x,v)-  t^{-1} I^1_{\nu}(f, g, a_1,a_2 )(t,x,v)\big] d x d v  .
\]

Similar to the estimate of $J_i(t)$, $i\in\{1,2,3,4\}$, in the proof of Lemma \ref{march28lemma2}, we use the first decomposition of ``$D_v$'' for $S_1(t)$ and use the second decomposition of $D_v$ for $S_i(t)$, $i\in\{2,3,4\}$. More precisely, we separate into two cases as follows.

\noindent $\oplus$\quad The estimate of $S_1$.\quad For this   case we use the first decomposition of $D_v$. Equivalently speaking, we use the first decomposition of $D_vD_v$ in (\ref{march28eqn41}). As a result, the following decomposition holds, 
\be\label{march28eqn192}
S_1(t)=  S_{1,1}(t)+ S_{1,2}(t) , 
\ee
where
\[
S_{1,1}(t)=   \int_{\R^3}\int_{\R^3}  \big(\omega_{\beta}^{\alpha}( x, v)\big)^2      g^\alpha_{\beta }(t,x, v)\alpha_{i'}\cdot \big( \alpha_i(v)\cdot  P_1(D_v D_v) g^\alpha_{\gamma}(t ,x, v) \big)     \big(\sqrt{1+|v|^2}\tilde{d}(t,x,v) \big)^{2-c(\iota)-c(\iota')} 
\] 
\be\label{march28eqn161}
\times C_d(t,x,v) \eta_1(t,x,v)    \big[ T^{ \nu}(f, g, a_1,a_2 )(t,x,v)-  t^{-1} I^1_{\nu}(f, g, a_1,a_2 )(t,x,v)\big] d x d v  ,
\ee
\[
S_{1,2}(t)=    \int_{\R^3}\int_{\R^3}  \big(\omega_{\beta}^{\alpha}( x, v)\big)^2      g^\alpha_{\beta }(t,x, v)\alpha_{i'}\cdot \big( \alpha_i(v)\cdot  L_1(D_v D_v) g^\alpha_{\gamma}(t ,x, v) \big)     \big(\sqrt{1+|v|^2}\tilde{d}(t,x,v) \big)^{2-c(\iota)-c(\iota')} 
\] 
\be\label{march28eqn162}
\times   C_d(t,x,v) \eta_1(t,x,v)      \big[ T^{ \nu}(f, g, a_1,a_2 )(t,x,v)-  t^{-1} I^1_{\nu}(f, g, a_1,a_2 )(t,x,v)\big] d x d v . 
\ee

Recall the detailed formula of $P_1(D_vD_v)$ in (\ref{march28eqn151}) and the detailed formula of $S_{1,1}(t)$ in  (\ref{march28eqn161}). We know  that $S_{1,1}(t)$ and $J_1(t)$ are of the same type. With minor modifications in the estimate of $J_1(t)$ in (\ref{march27eqn141}), the following estimate holds, 
\[
 |S_{1,1}(t)|\lesssim  (1+|t|)^{-1}\big( 2^{ k_1+k+2d} +2^{3(k_1+k)/2+3d}\big)  2^{-4k_{1,+}-4k_{+}}\|a\|_{Y} \| a_1(\xi)\|_{\mathcal{S}^\infty_{k_1}}  \| a_2(\xi)\|_{\mathcal{S}^\infty_{k }} 
\]
 \be\label{march28eqn200}
 \times \| c(t,x,v)\|_{L^\infty_{x,v}} \big(E_{\textup{low}}^{eb}(t) \big)^2 E_{\beta;d}^{\alpha}(t ).
\ee

  Recall the detailed formula of $L_1(D_vD_v)$ in (\ref{march28eqn152}) and  the detailed formula of $S_{1,2}(t)$ in  (\ref{march28eqn162}). Since $L_1(D_vD_v)$ is a lower order derivatives,  we can gain at least $(1+|v|)^{-10}$ from   the hierarchy of the weight functions between different order derivatives.  As a result, from the estimates (\ref{march18eqn51}) and  (\ref{march28eqn31}) and the $L^2_{x,v}-L^2_{x,v}-L^\infty_{x,v}$ type multilinear estimate, we have
\be\label{march28eqn170}
|S_{1,2}(t)| \lesssim (1+|t|)^{-1}2^{ k_1+k  +2d}  2^{-4k_{1,+}-4k_{+}}\|a\|_{Y} \| a_1(\xi)\|_{\mathcal{S}^\infty_{k_1}}  \| a_2(\xi)\|_{\mathcal{S}^\infty_{k }} \| c(t,x,v)\|_{L^\infty_{x,v}} \big(E_{\textup{low}}^{eb}(t) \big)^2 E_{\beta;d}^{\alpha}(t ).
\ee

\noindent $\oplus$\quad The estimate of $S_i$, $i\in\{2,3,4\}$.\quad For these three terms,  we use the second decomposition of $D_vD_v$ in (\ref{march28eqn41}). As a result, the following decomposition holds, 
\be\label{march28eqn190}
S_i(t)= S_{i,1}(t)+S_{i,2}(t) , \quad i\in\{2,3,4\},
\ee
where
\[
S_{i,1}(t)=   \int_{\R^3}\int_{\R^3}  \big(\omega_{\beta}^{\alpha}( x, v)\big)^2      g^\alpha_{\beta }(t,x, v)\alpha_{i'}\cdot \big( \alpha_i(v)\cdot  P_2(D_v D_v) g^\alpha_{\gamma}(t ,x, v) \big)   \big(\sqrt{1+|v|^2}\tilde{d}(t,x,v) \big)^{2-c(\iota)-c(\iota')}
\] 
\be\label{march28eqn181}
\times  C_d(t,x,v) \eta_i(t,x,v)    \big[  T^{ \nu}(f, g, a_1,a_2 )(t,x,v)-  t^{-1} I^1_{\nu}(f, g, a_1,a_2 )(t,x,v)\big] d x d v  ,
\ee
\[
S_{i,2}(t)=    \int_{\R^3}\int_{\R^3}  \big(\omega_{\beta}^{\alpha}( x, v)\big)^2      g^\alpha_{\beta }(t,x, v)\alpha_{i'}\cdot \big( \alpha_i(v)\cdot  L_2(D_v D_v) g^\alpha_{\gamma}(t ,x, v) \big)   \big(\sqrt{1+|v|^2}\tilde{d}(t,x,v) \big)^{2-c(\iota)-c(\iota')} 
\] 
\be\label{march28eqn182}
\times    C_d(t,x,v)  \eta_i(t,x,v)     \big[  T^{ \nu}(f, g, a_1,a_2 )(t,x,v)-  t^{-1} I^1_{\nu}(f, g, a_1,a_2 )(t,x,v)\big] d x d v . 
\ee

Recall the detailed formula of $P_2(D_vD_v)$ in (\ref{march28eqn151}) and the detailed formula of $S_{i,1}(t)$ in  (\ref{march28eqn181}). We know that $S_{i,1}(t)$ and $J_i(t)$, $i\in\{2,3,4\}$, are of the same type. with minor modifications in the estimate of $J_i(t)$, $i\in\{2,3,4\}$, in (\ref{march27eqn142}), (\ref{march27eqn143}), and (\ref{march27eqn144}), the following estimate holds for any $i\in\{2,3,4\}$,
\[
 |S_{i,1}(t)| \lesssim  (1+|t|)^{-1} \big(2^{(k_1+k)/2+d}+ 2^{ k_1+k+2d}  \big)  2^{-4k_{1,+}-4k_{+}}\|a\|_{Y} \| a_1(\xi)\|_{\mathcal{S}^\infty_{k_1}}  \| a_2(\xi)\|_{\mathcal{S}^\infty_{k }} 
\]
 \be\label{march28eqn201}
 \times \| c(t,x,v)\|_{L^\infty_{x,v}} \big(E_{\textup{low}}^{eb}(t) \big)^2 E_{\beta;d}^{\alpha}(t ).
 \ee

Recall (\ref{march28eqn182}) and (\ref{march28eqn152}). Again,  since $L_2(D_vD_v)$ is a lower order derivative, we can gain at least $(1+|v|)^{-10}$ from   the hierarchy of the weight functions between the difference of the orders of derivatives. Note that $ |v| >   2^{(k+ k_1)/4+9} |t|^{1/2} $ inside the support of $\eta_i(t,x,v), i\in\{2,3,4\}$. Therefore, the following estimate holds from the estimates (\ref{march18eqn51}), (\ref{march28eqn32}), and  the $L^2_{x,v}-L^2_{x,v}-L^\infty_{x,v}$ type multi-linear estimate, 
\[
\sum_{i=2,3,4} |S_{i,2}(t)| \lesssim 2^{k_1+k+d}2^{-4k_{+}-4k_{1,+}}\| (1+|v|)^{-2}\psi_{>(k+ k_1)/4+10}( |v| |t|^{-1/2})\|_{L^\infty_v}\|a\|_{Y } \| a_1(\xi)\|_{\mathcal{S}^\infty_{k_1}}  
\]
\[
\times \| a_2(\xi)\|_{\mathcal{S}^\infty_{k }}    \| c(t,x,v)\|_{L^\infty_{x,v}} \big(E_{\textup{low}}^{eb}(t) \big)^2  E_{\beta;d}^{\alpha}(t ) \lesssim (1+|t|)^{-1} 2^{(k_1+k)/2+d}2^{-4k_{+}-4k_{1,+}}\|a\|_{Y_d} \| a_1(\xi)\|_{\mathcal{S}^\infty_{k_1}}  
\]
\be\label{march28eqn381}
\times \| a_2(\xi)\|_{\mathcal{S}^\infty_{k }}    \| c(t,x,v)\|_{L^\infty_{x,v}} \big(E_{\textup{low}}^{eb}(t) \big)^2  E_{\beta;d}^{\alpha}(t ).
\ee

To sum up, recall  the decompositions (\ref{march28eqn191}),  (\ref{march28eqn192}), and (\ref{march28eqn190}),    our desired estimate (\ref{march28eqn2}) holds from    the estimates (\ref{march28eqn200}), (\ref{march28eqn170}), (\ref{march28eqn201}), and (\ref{march28eqn381}). 
\end{proof}

\begin{lemma}\label{auxillarybilinearlemma10}
The following estimate holds, 
\[
|K_{k,k_1}^d | +|S_{k_1,k}^d|   \lesssim        \big( 2^{(k+k_1)/2+d} +2^{3(k+k_1)/2+3d} +2^{ k_1+2k+3d} \big) 2^{-4k_{+}-4k_{1,+}}  \|m(\xi)\|_{\mathcal{S}^\infty_k}\|a\|_{Y}   \]
\be\label{nove761}
\times \big[ \int_{1}^{t } (1+|s|)^{-1}  \|c(s,x,v)\|_{L^\infty_{x,v}}    \big(E_{\textup{low}}^{eb}(s) \big)^2  E_{\beta;d}^{\alpha}(s ) d s \big].
\ee 
  \end{lemma}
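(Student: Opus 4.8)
\textbf{Proof strategy for Lemma \ref{auxillarybilinearlemma10}.}
The plan is to combine the decomposition \eqref{jan23eqn31} of the multilinear form $T^\nu(f,g,a_1,a_2)$ with the four auxiliary estimates proved in Lemmas \ref{march28lemma1}, \ref{march28lemma2}, and \ref{march29lemma3}, and then bookkeep the various factors of $a_1,a_2$ back to the original symbols $m(\xi)$ of $\tilde{T}_1^2(m,a,c,h)$ and $\tilde{T}_2^2(m,a,c,h)$. First I would recall the dyadic decompositions \eqref{nov1} and \eqref{jan14eqn1}, which reduce the estimate of $\tilde{T}_1^2$ and $\tilde{T}_2^2$ to bounding $K^d_{k_1,k}$ and $S^d_{k_1,k}$ for each fixed frequency pair $(k_1,k)$; recall that the inner frequency $k_1$ is the frequency of the \emph{newly introduced} electromagnetic field coming from the $\Omega_{j'}^x$-type good derivative inside $\textit{bulk}_\beta^\alpha$ (see \eqref{nov120}), while $k$ is the frequency already carried by the operator $T_k^\mu$. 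Using the product representation \eqref{march17eqn10}, together with the definition \eqref{jan14eqn11} of $T^\nu(f,g,a_1,a_2)$, one identifies the bilinear core $\Omega_{j'}^x u_{k_1}(s,x+\hat v s)\, T_k^\mu(\tilde V_j\cdot\xi m(\xi),h)$ precisely with (a linear combination of) $T^\nu(f,g,a_1,a_2)$, where the symbols $a_1,a_2$ absorb the extra $\tilde V_{j'}\cdot(\xi-\eta)$ and $\tilde V_j\cdot\eta$ factors and the cutoffs, so that $\|a_1(\xi)\|_{\mathcal S^\infty_{k_1}}\lesssim 1$ and $\|a_2(\xi)\|_{\mathcal S^\infty_{k}}\lesssim \|m(\xi)\|_{\mathcal S^\infty_k}$.

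Next I would split $T^\nu$ according to \eqref{jan23eqn31}: the piece $t^{-1}I^1_\nu$ is treated by Lemma \ref{march28lemma1}, which supplies the bound with the benign factor $2^{d+k_1}(2^{k+d}+2^{2k+2d})2^{-4k_+-4k_{1,+}}$ for the ``$D_v$ hits profiles'' configurations $\widetilde K^{d,1}_{k_1,k},\widetilde S^{d,1}_{k_1,k}$, while the remainder $T^\nu - t^{-1}I^1_\nu = T^{\nu;2} + t^{-1}I^2_\nu$ is handled by Lemma \ref{march28lemma2} (for $\widetilde K^{d,2}_{k_1,k}$) and Lemma \ref{march29lemma3} (for $\widetilde S^{d,2}_{k_1,k}$), yielding the factors $(2^{(k_1+k)/2+d}+2^{3(k_1+k)/2+3d})2^{-4k_{1,+}-4k_+}$. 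For $\widetilde S^{d,2}_{k_1,k}$ one must additionally handle the second-order bulk derivative $D_vD_v g^\alpha_\gamma$ appearing in $\widetilde{bulk}^\alpha_\kappa$ from \eqref{jan12eqn1}; Lemma \ref{march29lemma3} does this via the two decompositions \eqref{march28eqn41} of $D_vD_v$ together with the weight-hierarchy gain for the lower-order piece $L_i(D_vD_v)$. Assembling all four contributions $\widetilde K^{d,1},\widetilde K^{d,2},\widetilde S^{d,1},\widetilde S^{d,2}$ and taking the worst of the resulting powers of $2$ gives, for fixed $(k_1,k)$,
\[
|K^d_{k_1,k}| + |S^d_{k_1,k}| \lesssim \big(2^{(k+k_1)/2+d}+2^{3(k+k_1)/2+3d}+2^{k_1+2k+3d}\big)2^{-4k_+-4k_{1,+}}\|m(\xi)\|_{\mathcal S^\infty_k}\|a\|_Y \int_1^t (1+|s|)^{-1}\|c(s,x,v)\|_{L^\infty_{x,v}}\big(E^{eb}_{\textup{low}}(s)\big)^2 E^\alpha_{\beta;d}(s)\,ds,
\]
which is exactly the claimed bound \eqref{nove761}; the term $2^{k_1+2k+3d}$ in particular comes from the worst configuration in $\widetilde S^{d,2}_{k_1,k}$ where the $T_k^\mu$ factor contributes its full $2^{2k}$ growth (via the $\psi_{\geq 10}$-restricted low-modulation region) before trading.

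\textbf{The main obstacle.} The delicate point is \emph{not} any single one of the four contributions but the interplay between the loss of a weight of size $|v|$ (coming from the coefficient $\sqrt{1+|v|^2}$ attached to $\tilde d(s,x,v)$ raised to the power $2-c(\iota)-c(\iota')$, i.e.\ a potential loss of $|v|^2$ when \emph{two} $S^x$-type good derivatives appear) and the smallness gained from the hidden null structure of $T^\nu$. Concretely, in the region where $|v|\sim (1+|t|)^{1/2}$ this quadratic loss is of size $1+|t|$, so one must extract a full power of $1/t$ from the oscillation; the integration by parts in $\eta$ in \eqref{jan14eqn92} yields the crucial $t^{-1}$ for $T^{\nu;1}$, and the second good derivative $\Omega_{j'}^x$ in $T^{\nu;2}$ (see \eqref{march18eqn41}) is what makes the $\partial_t$/$|\nabla|$-decay estimate \eqref{march18eqn30} of Lemma \ref{sharpdecaywithderivatives} applicable with an extra factor of $(1+||t|-|x+\hat v t||)^{-1}$, exactly as recorded in \eqref{march18eqn51}. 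Making these cancellations survive the case analysis on $(|v|,x\cdot v,|x|)$ — in particular in the near-light-cone regime $x\cdot\tilde v<0$, $|x|\gtrsim (1+|t|)/|v|$ where $\phi(t,x,v)\not\equiv1$ and one must invoke the second estimate in \eqref{feb8eqn51} of Lemma \ref{derivativeofweightfunction} to see that $\tilde d\cdot\phi$ is much smaller than the distance to the light cone — is the heart of the argument, and it is precisely this bookkeeping that Lemmas \ref{march28lemma2} and \ref{march29lemma3} carry out through the partition of unity \eqref{march27eqn101}.
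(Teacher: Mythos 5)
Your proposal follows essentially the same route as the paper: identify $\Omega_{j'}^x u_{k_1}(s,x+\hat v s)\,T_k^\mu(\tilde V_j\cdot\xi\, m(\xi),h)$ with the multilinear form $T^{\mu}(h_i,h,1,m)$, split via (\ref{jan23eqn31}), and invoke Lemmas \ref{march28lemma1}, \ref{march28lemma2}, and \ref{march29lemma3} for the four pieces $\widetilde K^{d,1},\widetilde K^{d,2},\widetilde S^{d,1},\widetilde S^{d,2}$. One small misattribution: the term $2^{k_1+2k+3d}$ in the final bound arises from the factor $2^{d+k_1}\cdot 2^{2k+2d}$ in Lemma \ref{march28lemma1} (the $I^1_\nu$ pieces), not from $\widetilde S^{d,2}_{k_1,k}$, whose bound in Lemma \ref{march29lemma3} only contributes $2^{(k_1+k)/2+d}+2^{3(k_1+k)/2+3d}$.
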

\begin{proof}
Recall (\ref{nove141}) and (\ref{nov2}). 
 Note that, for any $u\in\{E,B\}$, the following equality holds from some $i\in \{1,2\}$,
\[
 \Omega_{j'}^x u_{k_1}(t,x+\hat{v}t) T_{k}^\mu(\tilde{V}_j\cdot\xi m(\xi), h)(t,x+\hat{v}t)=      T^{\mu }( h_i(t),h(t),  1, m ).
\]
From the above equality, we know that the desired estimate (\ref{nove761}) follows directly from the   estimate (\ref{march18eqn34}) in Lemma \ref{march28lemma1}, the estimate  (\ref{march28eqn1}) in Lemma \ref{march28lemma2}, and the estimate (\ref{march28eqn2}) in Lemma \ref{march29lemma3}.

\end{proof}
 To get around the summability issue with respect to $k_1$, same as what we did in the decomposition (\ref{jan9eqn31}),  we also use the process of trading spatial derivatives for the decay of the distance with respect to the light cone "$||t|-|x+\hat{v}t||$". As a result, we have 
 \begin{lemma}\label{auxillarybilinearlemma11}
The following estimate holds for any $d\in\mathbb{N}_{+}, d\geq 10,$
\[
|K_{k,k_1}^d | +|S_{k_1,k}^d|   \lesssim     \big[  \big( 2^{(k+k_1)/2+d} +2^{3(k+k_1)/2+3d} +2^{ k_1+2k+3d} \big) \big(2^{-3k_1-3d} +2^{-4k_1-4d}\big) +2^{k-k_1}\big(1+2^{-2k_1-2d}       \]
\be\label{march29eqn6}
+ 2^{k-2k_1-d} + 2^{k+d} \big) \big] 2^{-4k_{+}-4k_{1,+}}  \|m(\xi)\|_{\mathcal{S}^\infty_k} \|a\|_{Y_d} \big[ \int_{1}^{t } (1+|s|)^{-1}  \|c(s,x,v)\|_{L^\infty_{x,v}}    \big(E_{\textup{low}}^{eb}(s) \big)^2  E_{\beta;d}^{\alpha}(s ) d s \big].
\ee 
\end{lemma}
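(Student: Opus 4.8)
The plan is to deduce Lemma \ref{auxillarybilinearlemma11} from Lemma \ref{auxillarybilinearlemma10} by running exactly the same trading-regularity procedure that produced the decomposition \eqref{jan9eqn31}, but now applied to the \emph{second} copy of the electromagnetic field that was introduced by the integration-by-parts-in-time step, i.e. to the factor $\Omega_{j'}^x u_{k_1}(t,x+\hat{v}t)$ appearing inside $K_{k_1,k}^d$ (see \eqref{nove141}) and $S_{k_1,k}^d$ (see \eqref{nov2}). Concretely, first I would apply Lemma \ref{tradingreg4mod} to $\Omega_{j'}^x u_{k_1}(s,x+\hat{v}s)$, writing it as $L^1_{k_1,j'}[u] + \widetilde{L_{k_1,j'}}[u] + \sum_{i=1}^5 \mathrm{E}_{k_1,j'}^i[u]$. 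This splits $K_{k_1,k}^d$ and $S_{k_1,k}^d$ into a ``leading'' piece, built from $L^1_{k_1,j'}[u]$ and $\widetilde{L_{k_1,j'}}[u]$, and an ``error'' piece built from the $\mathrm{E}_{k_1,j'}^i[u]$'s. The leading piece still has the structure of the multilinear form $T^\nu(f,g,a_1,a_2)$ of Definition (around \eqref{jan14eqn11}), now with $a_1(\xi)$ carrying an extra factor of $|x+\hat v s|^{-3}\psi_{[d-2,d+2]}$ type coefficients and symbols $\hat m_\alpha^i(\xi)$ satisfying $\|\hat m_\alpha^i(\xi)\|_{\mathcal S^\infty_{k_1}}\lesssim 2^{-3k_1}$ by \eqref{noveqn141}, exactly as in the proof of Lemma \ref{secondprop1}. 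Hence Lemma \ref{auxillarybilinearlemma10} applies to the leading piece with $\|a_1(\xi)\|_{\mathcal S^\infty_{k_1}}$ replaced by $2^{-3k_1}$ and an additional $\| |x|^{-3}\psi_{[d-2,d+2]}(x)\|_Y\lesssim 2^{-3d}$ (respectively $2^{-4d}$ for the $\widetilde{L_{k_1,j'}}$ piece), which produces the first bracket $\big(2^{(k+k_1)/2+d}+2^{3(k+k_1)/2+3d}+2^{k_1+2k+3d}\big)(2^{-3k_1-3d}+2^{-4k_1-4d})$ of \eqref{march29eqn6}.

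Second, I would estimate the error piece coming from $\sum_{i=1}^5 \mathrm{E}_{k_1,j'}^i[u]$. By Lemma \ref{tradeoffregularity}, the localized $L^\infty_{x,v}$ norm of $\mathrm{E}_{k_1,j'}^i[u]$ (weighted by $e_\rho$) is bounded by $(1+t)^{-1}(2^{-4d-4k_1}+2^{-2d-2k_1})2^{k_1-4k_{1,+}}E_{\mathrm{low}}^{eb}(t)$; this error term does \emph{not} have the oscillation-in-time/null structure that $T^\nu$ enjoys, so I would treat it crudely: pair it against $g_\beta^\alpha$, against $\alpha_i(v)\cdot D_v g_\kappa^\alpha$ (decomposed via \eqref{summaryoftwodecomposition}), and against $T_k^\mu(\tilde V_j\cdot\xi m(\xi),h)$, whose $L^\infty_{x,v}$-type and weighted $L^\infty_{x,v}$-type bounds are supplied by Lemma \ref{lineardecaylemma3} (estimate \eqref{nove501}, giving a factor $(2^{k+d}+2^{2k+2d})2^{-4k_+}$). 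A standard $L^2_{x,v}$--$L^2_{x,v}$--$L^\infty_{x,v}$--$L^\infty_{x,v}$ H\"older estimate, using the second part of \eqref{feb8eqn51} in Lemma \ref{derivativeofweightfunction} to absorb the $\tilde d\,\phi$ factors and the dyadic localization $\varphi_d(||s|-|x+\hat v s||)\sim 2^d$, then bounds the error contribution by the second bracket of \eqref{march29eqn6}, namely $2^{k-k_1}(1+2^{-2k_1-2d}+2^{k-2k_1-d}+2^{k+d})2^{-4k_+-4k_{1,+}}$ times the integral $\int_1^t(1+s)^{-1}\|c\|_{L^\infty_{x,v}}(E_{\mathrm{low}}^{eb}(s))^2 E_{\beta;d}^\alpha(s)\,ds$; the $2^{k-k_1}$ prefactor is what one reads off from pairing the $2^{k_1-4k_{1,+}}$ from Lemma \ref{tradeoffregularity} against the $2^{k-4k_+}$-type gain of the $T_k^\mu$ operator and the various powers of $2^{-k_1-d}$ from the trading procedure.

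Third, I would combine the two pieces: adding the leading-piece bound and the error-piece bound gives exactly the claimed sum of brackets in \eqref{march29eqn6}, with the common factors $2^{-4k_+-4k_{1,+}}\|m(\xi)\|_{\mathcal S^\infty_k}\|a\|_{Y_d}$ and the time integral pulled out front, which completes the proof. The main obstacle I anticipate is bookkeeping rather than analysis: one must carefully track how the three separate outputs of Lemma \ref{auxillarybilinearlemma10} — the pieces proportional to $2^{d+k_1}(2^{k+d}+2^{2k+2d})$ from Lemma \ref{march28lemma1}, the $J_i$-type pieces from Lemma \ref{march28lemma2}, and the $S_i$-type pieces from Lemma \ref{march29lemma3} — each get multiplied by the $2^{-3k_1-3d}$ (or $2^{-4k_1-4d}$) gain and by the replacement $\|a_1\|_{\mathcal S^\infty_{k_1}}\to 2^{-3k_1}$, and confirm that no term degrades the power counting in a way that breaks the eventual summability over $k_1$ and $d$; the $2^{k-k_1}$ error term is the delicate one, since for $k$ large and $k_1$ very negative it is not obviously summable, and one must use $d\geq 10$ together with the constraint $2^{k_1}\gtrsim 2^{-d}$ implied by the cutoff $\psi_{\leq -10}(1-|x+\hat v s|/|s|)\varphi_d$ (which forces the relevant frequency $2^{k_1}$ to interact with a distance-$2^d$ region near the light cone) to see that it is in fact controlled.
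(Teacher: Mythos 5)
Your proposal is correct and follows essentially the same route as the paper: apply the trading decomposition of Lemma \ref{tradingreg4mod} to the factor $\Omega_{j'}^x u_{k_1}(s,x+\hat v s)$, treat the leading pieces $L^1_{k_1,j'}[u]$ and $\widetilde{L_{k_1,j'}}[u]$ by feeding the symbols $\hat m^i_\alpha$ (with $\|\hat m^i_\alpha\|_{\mathcal S^\infty_{k_1}}\lesssim 2^{-3k_1}$ by \eqref{noveqn141}) and the $Y$-norm gains $2^{-3d}$, $2^{-4d}$ into the multilinear estimates underlying Lemma \ref{auxillarybilinearlemma10} (Lemmas \ref{march28lemma1}--\ref{march29lemma3}), and control the error pieces $\mathrm{E}^i_{k_1,j'}[u]$ crudely via Lemma \ref{tradeoffregularity}, Lemma \ref{lineardecaylemma3}, and an $L^2$--$L^2$--$L^\infty$--$L^\infty$ H\"older argument, exactly as in \eqref{jan13eqn93} and \eqref{jan13eqn103}. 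Your closing remark about summability of the $2^{k-k_1}$ term concerns the downstream application (where the paper simply uses Lemma \ref{auxillarybilinearlemma10} for $k_1\le -d$ and this lemma for $k_1\ge -d$), not the lemma itself, so it does not affect the proof.
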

\begin{proof}
 Recall the decomposition (\ref{jan6eqn10}) in Lemma \ref{tradingreg4mod}. For any $u\in\{E,B\}$, we have
\[
\Omega_{j'}^x u_{k_1}(t,x+\hat{v}t) =  L^1_{k_1,j'}[u](t,x+\hat{v}t) +  \widetilde{L_{k_1,j'}}[u](t,x+\hat{v}t,v) + \sum_{i=1,\cdots,5}  {\textup{E}}_{k_1,j'}^i[u](t,x+\hat{v}t,v), 
\]
where $L_{k_1,j'}^1[u](t,x,v)$ and $\widetilde{L_{k_1,j'}}[u](t,x+\hat{v}t,v)$ were defined in \textup{(\ref{nove611})}  and  \textup{(\ref{april3eqn40})}  and $ \textup{E}_{k_1,j'}^i[u](t,x+\hat{v}t,v)$, $i\in\{1,\cdots,5\}$, were defined in (\ref{nove510}) and (\ref{april4eqn2}).

 Correspondingly, we decompose $K_{k,k_1}^d$ and  $S_{k,k_1}^d$ into three parts as follows, 
\be\label{jan13eqn210}
K_{k,k_1}^{d;1}= \sum_{i=1,2,3} K_{k,k_1}^{d;i},\quad   K_{k,k_1}^{d;3}= \sum_{i=1,\cdots,5}   K_{k,k_1;i}^{d;3}, \quad  S_{k,k_1}^{d}= \sum_{i=1,2,3} S_{k,k_1}^{d;i},\quad  S_{k,k_1}^{d;3}= \sum_{i=1,\cdots,5}   S_{k,k_1;i}^{d;3},
\ee
where
\[
K_{k_1,k}^{d;1}:=  \sum_{\begin{subarray}{l}
 j'=1,2,3\\
 i'=1,\cdots,7\\
\end{subarray}}   \sum_{
\begin{subarray}{c}
  \Lambda^\iota \sim \psi_{\geq 1}(|v|)\widehat{\Omega}^v_j\\  \textup{or}\,\psi_{\geq 1}(|v|) \Omega_j^x  \\
\end{subarray}} \sum_{
\begin{subarray}{c}
\iota'+\kappa'=\beta,\iota,  \kappa\in \mathcal{S},  |\iota'|= 1  \\
  \Lambda^{\iota'} \sim \psi_{\geq 1}(|v|)\widehat{\Omega}^v_{j'}\textup{or}\,\psi_{\geq 1}(|v|)\Omega_{j'}^x  \\
\end{subarray}}   \int_{1}^{t  } \int_{\R^3}\int_{\R^3}  \big(\omega_{\beta}^{\alpha}(s, x, v)\big)^2 \alpha_{i'}(v)\cdot  D_v    g^\alpha_{\kappa'}(s,x, v)   \]
\[
 \times   \alpha_{i'}(v)\cdot   \big(L_{k_1,j'}^1[E](s,x+\hat{v}s ) + \hat{v}\times L_{k_1,j'}^1[B](s,x+\hat{v}s  )\big)  \alpha_i(v)\cdot  D_v g^\alpha_\kappa(s ,x, v)
     \]
 \be\label{jan13eqn71}
 \times    C_d(s,x,v)     \big(\sqrt{1+|v|^2}\tilde{d}(s , x,v)\big)^{2-c(\iota)-c(\iota')}      T_{k}^\mu(\tilde{V}_j\cdot\xi m(\xi), h)(s,x+\hat{v}s)  d x d v d s  ,
\ee

\[
K_{k_1,k}^{d;2}:=  \sum_{\begin{subarray}{l}
 j'=1,2,3\\
 i'=1,\cdots,7\\
\end{subarray}}   \sum_{
\begin{subarray}{c}
  \Lambda^\iota \sim \psi_{\geq 1}(|v|)\widehat{\Omega}^v_j\\  \textup{or}\,\psi_{\geq 1}(|v|) \Omega_j^x  \\
\end{subarray}} \sum_{
\begin{subarray}{c}
\iota'+\kappa'=\beta,\iota,  \kappa\in \mathcal{S},  |\iota'|= 1  \\
  \Lambda^{\iota'} \sim \psi_{\geq 1}(|v|)\widehat{\Omega}^v_{j'}\textup{or}\,\psi_{\geq 1}(|v|)\Omega_{j'}^x  \\
\end{subarray}}  \int_{1}^{t  } \int_{\R^3}\int_{\R^3}  \big(\omega_{\beta}^{\alpha}(s, x, v)\big)^2 \alpha_{i'}(v)\cdot  D_v    g^\alpha_{\kappa'}(s,x, v)      \]
\[
 \times   \alpha_{i'}(v)\cdot   \big( \widetilde{L_{k_1,j'}}[E](s,x+\hat{v}s, v) + \hat{v}\times \widetilde{ L_{k_1,j'}}[B](s,x+\hat{v}s, v)\big)  \alpha_i(v)\cdot  D_v g^\alpha_\kappa(s ,x, v)  
     \]
 \be\label{april8eqn21}
 \times C_d(s,x,v)   \big(\sqrt{1+|v|^2}\tilde{d}(s , x,v)\big)^{2-c(\iota)-c(\iota')}   T_{k}^\mu(\tilde{V}_j\cdot\xi m(\xi), h)(s,x+\hat{v}s)  d x d v d s,
\ee 

\[
K_{k_1,k;i}^{d;3}:=   \sum_{\begin{subarray}{l}
 j'=1,2,3\\
 i'=1,\cdots,7\\
\end{subarray}}   \sum_{
\begin{subarray}{c}
  \Lambda^\iota \sim \psi_{\geq 1}(|v|)\widehat{\Omega}^v_j\\  \textup{or}\,\psi_{\geq 1}(|v|) \Omega_j^x  \\
\end{subarray}} \sum_{
\begin{subarray}{c}
\iota'+\kappa'=\beta,\iota,  \kappa\in \mathcal{S},  |\iota'|= 1  \\
  \Lambda^{\iota'} \sim \psi_{\geq 1}(|v|)\widehat{\Omega}^v_{j'}\textup{or}\,\psi_{\geq 1}(|v|)\Omega_{j'}^x  \\
\end{subarray}}  \int_{1}^{t  } \int_{\R^3}\int_{\R^3}  \big(\omega_{\beta}^{\alpha}(s, x, v)\big)^2 \alpha_{i'}(v)\cdot  D_v    g^\alpha_{\kappa'}(s,x, v)  \]
\[
 \times  \alpha_i(v)\cdot  D_v g^\alpha_\kappa(s ,x, v)       \alpha_{i'}(v)\cdot   \big(E_{k_1,j'}^i[E](s,x+\hat{v}s, v) + \hat{v}\times E_{k_1,j'}^i[B](s,x+\hat{v}s, v)\big)  \]
 \be\label{jan13eqn72}
 \times C_d(s,x,v)  \big(\sqrt{1+|v|^2}\tilde{d}(s , x,v)\big)^{2-c(\iota)-c(\iota')}   T_{k}^\mu(\tilde{V}_j\cdot\xi m(\xi), h)(s,x+\hat{v}s)  d x d v d s,
\ee

\[
S_{k_1,k}^{d;1}:=  \sum_{\begin{subarray}{l}
 j'=1,2,3\\
 i'=1,\cdots,7\\
\end{subarray}}   \sum_{
\begin{subarray}{c}
  \Lambda^\iota \sim \psi_{\geq 1}(|v|)\widehat{\Omega}^v_j\\  \textup{or}\,\psi_{\geq 1}(|v|) \Omega_j^x  \\
\end{subarray}} \sum_{
\begin{subarray}{c}
\iota'+\kappa'=\beta,\iota,  \kappa\in \mathcal{S},  |\iota'|= 1  \\
  \Lambda^{\iota'} \sim \psi_{\geq 1}(|v|)\widehat{\Omega}^v_{j'}\textup{or}\,\psi_{\geq 1}(|v|)\Omega_{j'}^x  \\
\end{subarray}}  \int_{1}^{t  } \int_{\R^3}\int_{\R^3}  \big(\omega_{\beta}^{\alpha}(s, x, v)\big)^2 g_\beta^\alpha(s,x,v)   \]
\[
 \times    \alpha_i(v)\cdot  \big(L_{k_1,j'}[E](s, x+\hat{v}s ) + \hat{v}\times L_{k_1,j'}[B](s,x+\hat{v}s )\big)       \alpha_i(v)\cdot \big( \alpha_i(v)\cdot D_v D_v g^\alpha_{\kappa'}(s ,x, v) \big)	   \]
 \be\label{jan13eqn79}
 \times  C_d(s,x,v)    \big(\sqrt{1+|v|^2}\tilde{d}(  s , x,v)\big)^{2-c(\iota)-c(\iota')}  T_{k}^\mu(\tilde{V}_j\cdot\xi m(\xi), h)(s,x+\hat{v}s)   d x d v d s,
\ee

\[
S_{k_1,k}^{d;2}:=  \sum_{\begin{subarray}{l}
 j'=1,2,3\\
 i'=1,\cdots,7\\
\end{subarray}}   \sum_{
\begin{subarray}{c}
  \Lambda^\iota \sim \psi_{\geq 1}(|v|)\widehat{\Omega}^v_j\\  \textup{or}\,\psi_{\geq 1}(|v|) \Omega_j^x  \\
\end{subarray}} \sum_{
\begin{subarray}{c}
\iota'+\kappa'=\beta,\iota,  \kappa\in \mathcal{S},  |\iota'|= 1  \\
  \Lambda^{\iota'} \sim \psi_{\geq 1}(|v|)\widehat{\Omega}^v_{j'}\textup{or}\,\psi_{\geq 1}(|v|)\Omega_{j'}^x  \\
\end{subarray}}   \int_{1}^{t  } \int_{\R^3}\int_{\R^3}  \big(\omega_{\beta}^{\alpha}(s, x, v)\big)^2 g_\beta^\alpha(s,x,v)    \]
\[
 \times    \alpha_i(v)\cdot  \big(\widetilde{L_{k_1,j'}}[E](s, x+\hat{v}s, v) + \hat{v}\times \widetilde{L_{k_1,j'}}[B](s,x+\hat{v}s, v)\big)         \alpha_i(v)\cdot \big( \alpha_i(v)\cdot D_v D_v g^\alpha_{\kappa'}(s ,x, v) \big)	   \]
 \be\label{april8eqn25}
 \times     C_d(s,x,v)  \big(\sqrt{1+|v|^2}\tilde{d}(s , x,v)\big)^{2-c(\iota)-c(\iota')}  T_{k}^\mu(\tilde{V}_j\cdot\xi m(\xi), h)(s,x+\hat{v}s)   d x d v d s,
\ee

\[
S_{k_1,k;i}^{d;3}:=  \sum_{\begin{subarray}{l}
 j'=1,2,3\\
 i'=1,\cdots,7\\
\end{subarray}}   \sum_{
\begin{subarray}{c}
  \Lambda^\iota \sim \psi_{\geq 1}(|v|)\widehat{\Omega}^v_j\\  \textup{or}\,\psi_{\geq 1}(|v|) \Omega_j^x  \\
\end{subarray}} \sum_{
\begin{subarray}{c}
\iota'+\kappa'=\beta,\iota,  \kappa\in \mathcal{S},  |\iota'|= 1  \\
  \Lambda^{\iota'} \sim \psi_{\geq 1}(|v|)\widehat{\Omega}^v_{j'}\textup{or}\,\psi_{\geq 1}(|v|)\Omega_{j'}^x  \\
\end{subarray}}  \int_{1}^{t  } \int_{\R^3}\int_{\R^3}  \big(\omega_{\beta}^{\alpha}(s, x, v)\big)^2 g_\beta^\alpha(  s,x,v)   \]
\[
 \times     \alpha_i(v)\cdot  \big(E^i_{k_1,j'}[E](s,x+\hat{v}s, v) + \hat{v}\times E^i_{k_1,j'}[B](s,x+\hat{v}s , v)\big)   	 \alpha_i(v)\cdot \big( \alpha_i(v)\cdot D_v D_v g^\alpha_{\kappa'}(s ,x, v) \big)   
 \]
 \be\label{jan13eqn86}
 \times     C_d(s,x,v) \big(\sqrt{1+|v|^2}\tilde{d}(s , x,v)\big)^{2-c(\iota)-c(\iota')}  T_{k}^\mu(\tilde{V}_j\cdot\xi m(\xi), h)(s,x+\hat{v}s)   d x d v d s.
\ee

\noindent $\bullet$\quad The estimate of $K_{k,k_1}^{d;i}$ and $S_{k,k_1}^{d;i}$, $i\in\{1,2\}$.

 Recall      (\ref{jan13eqn71}), (\ref{april8eqn21}),    (\ref{jan13eqn79}), and (\ref{april8eqn25}). Moreover, recall again \textup{(\ref{nove611})}  and  \textup{(\ref{april3eqn40})}.  For any $u\in\{E,B\},$we know that the following equality holds  for some $i\in\{1,2\},$
 \[
L_{k_1,j'}^1[u](t,x, v) T_{k}^\mu(\tilde{V}_j\cdot\xi m(\xi), h)(t,x+\hat{v}t)=  \sum_{\alpha\in \mathcal{B}, |\alpha|\leq 3}   (|t|-|x+\hat{v}t|)^{-3} \big[     \tilde{c}_{\alpha}^{0} (t,x+\hat{v}t)  T_1^{\mu  }(  h^\alpha_i(t)  ,h (t),  \tilde{m}_{k_1,\alpha}^0, m ) \]
\[
  +  {i}  \tilde{c}_{\alpha}^{1} (t,x+\hat{v}t)     T_1^{\mu  }(    h^\alpha_i(t)   ,h (t),  |\xi|\tilde{m}_{k_1,\alpha}^1, m )
 -    {\tilde{c}_{\alpha}^{2}}(t,x+\hat{v}t)  T_1^{\mu }(   h^\alpha_i(t)  ,h (t),  |\xi|^2\tilde{m}_{k_1,\alpha}^2, m ) \big],
\]
 \[
\widetilde{L_{k_1,j'}}[u](t,x, v) T_{k}^\mu(\tilde{V}_j\cdot\xi m(\xi), h)(t,x+\hat{v}t)=  \sum_{\alpha\in \mathcal{B}, |\alpha|\leq 3}  - 3    (|t|-|x+\hat{v}t|)^{-4}\frac{i t}{|x+\hat{v}t|} \]
\[
\times  \big[\tilde{c}_{\alpha}^{0} (t,x+\hat{v}t)    T_1^{\mu  }(  h^\alpha_i(t)  ,h (t),  |\xi|^{-1}\tilde{m}_{k_1,\alpha}^0, m )+i   \tilde{c}_{\alpha}^{1} (t,x+\hat{v}t)     T_1^{\mu  }(   h^\alpha_i(t)   ,h (t),  \tilde{m}_{k_1,\alpha}^1, m ) \]
\[
-   {\tilde{c}_{\alpha}^{2}}(t,x+\hat{v}t)  T_1^{\mu }(   h^\alpha_i(t)  ,h (t),  |\xi| \tilde{m}_{k_1,\alpha}^2, m ) \big].
\]

Therefore, from the estimate of coefficients $  \tilde{c}_{\alpha}^{i}(t,x+\hat{v}t)$, $i\in\{1,2,3\}$, in (\ref{april3eqn1}), the estimate of symbols $\tilde{m}_{k_1,\alpha}^i(\xi)$ in (\ref{noveqn141}),  we know  that the following estimate holds from   the   estimate (\ref{march18eqn34}) in Lemma \ref{march28lemma1}, the estimate  (\ref{march28eqn1}) in Lemma \ref{march28lemma2}, and the estimate (\ref{march28eqn2}) in Lemma \ref{march29lemma3}, 
\[
 |K_{k,k_1}^{d;1} | +|S_{k_1,k}^{d;1}|  + |K_{k,k_1}^{d;2} | +|S_{k_1,k}^{d;2}|   \lesssim        \big( 2^{(k+k_1)/2+d} +2^{3(k+k_1)/2+3d} +2^{ k_1+2k+3d} \big) \big(2^{-3k_1-3d} +2^{-4k_1-4d}\big) 2^{-4k_{+}-4k_{1,+}}  \]
\be\label{jan13eqn93}
\times\|m(\xi)\|_{\mathcal{S}^\infty_k}  \|a\|_{Y } \big[ \int_{1}^{t } (1+|  s|)^{-1}  \|c(s,x,v)\|_{L^\infty_{x ,v}}    \big(E_{\textup{low}}^{eb}(s) \big)^2  E_{\beta;d}^{\alpha}(s  ) d s \big].
\ee
In the above estimate, we used the fact that $d\geq 10$ and $\||x|^{-3}\varphi_{[d-2,d+2]}(x) a\|_Y\lesssim 2^{-3d} \|a\|_Y$.

\noindent $\bullet$\quad The estimate of $K_{k,k_1}^{d;3}$ and $S_{k,k_1}^{d;3}$.

Recall (\ref{jan13eqn72}) and (\ref{jan13eqn86}). Moreover, we recall the estimates of error terms  (\ref{nov291}) in Lemma \ref{tradeoffregularity}.  For each fixed $i\in\{1,\cdots,5\}$, we use the second decomposition of  $D_v$ in (\ref{summaryoftwodecomposition}) in Lemma \ref{twodecompositionlemma} and the second decomposition of ``$D_vD_v$'' in (\ref{march28eqn41}). From the estimate (\ref{nov291}) in Lemma \ref{tradeoffregularity} and the estimate (\ref{nove501}) in Lemma \ref{lineardecaylemma3}, the following estimate holds for any fixed $\rho_1, \rho_2\in \mathcal{S},  $ s.t., $|\rho_1|=|\rho_2|=1$,   
\[
  \|(1+|v|)^{2-c(\rho_1)-c(\rho_2)} e_{\rho_1}(t,x,v)e_{\rho_2}(t,x,v) E_{k_1,j'}^i[u](t,x,v)  T_{k}^\mu(\tilde{V}_j\cdot\xi m(\xi), h)(t,x+\hat{v}t)  \|_{L^\infty_{x,v}}
\]
\be\label{jan13eqn121}
\lesssim  (1+t)^{-1} \big( 2^{-4d-4k_1} +2^{-2d-2k_1}\big) 2^{k_1}(2^k+2^{2k+d})2^{-4k_{+}-4k_{1,+}}\| m (\xi)\|_{\mathcal{S}^\infty_k}  \big(  E_{\textup{low}}^{eb}(t)\big)^2 . 
\ee

Moreover, from the estimate (\ref{jan13eqn5}), the estimate (\ref{nov291}) in Lemma \ref{tradeoffregularity} and the estimate (\ref{noveqn500}) in Lemma \ref{twisteddecaylemma3}, the following estimate holds for any fixed $\rho_1, \rho_2\in \mathcal{S},  $ s.t., $|\rho_1|=|\rho_2|=1$,   
\[
 \|(1+|v|)^{1-c(\rho_1)-c(\rho_2)} e_{\rho_1}(t,x,v)\Lambda^{\rho_1}e_{\rho_2}(t,x,v) E_{k_1,j'}^i[u](t,x,v) \varphi_d(||t|-|x+\hat{v}t||) T_{k}^\mu(\tilde{V}_j\cdot\xi m(\xi), h)(t,x+\hat{v}t) \]
\be\label{jan13eqn120}
\times \psi_{\leq -5}(1-|x+\hat{v}t|/|t|)  \|_{L^\infty_{x,v}}  \lesssim  (1+t)^{-1}   \big( 2^{-4d-4k_1} +2^{-2d-2k_1}\big) 2^{k_1}(2^k+2^{2k+d})2^{-4k_{+}-4k_{1,+}}\| m (\xi)\|_{\mathcal{S}^\infty_k}  \big( E_{\textup{low}}^{eb}(t)\big)^2.
\ee

 Recall (\ref{jan13eqn72}) and (\ref{jan13eqn86}).     From the above estimates (\ref{jan13eqn121}) and (\ref{jan13eqn120}), the second part of the estimate (\ref{feb8eqn51}) in Lemma \ref{derivativeofweightfunction}, and  the $L^2_{x,v}-L^2_{x,v}-L^\infty_{x,v}$ type multilinear estimate,   the following estimate holds
 \[
   |K_{k,k_1;i}^{d;3} | +|S_{k,k_1;i}^{d;3} | \lesssim     2^{k-k_1}\big(1+2^{-2k_1-2d}+ 2^{k-2k_1-d} + 2^{k+d} \big) 2^{-4k_{+}-4k_{1,+}} \| m (\xi)\|_{\mathcal{S}^\infty_k} \|a\|_{Y }
 \]
\be\label{jan13eqn103}
 \times   \big[ \int_{1}^{t } (1+|s|)^{-1}  \|c(s,x,v)\|_{L^\infty_{x,v}}    \big(E_{\textup{low}}^{eb}(s) \big)^2  E_{\beta;d}^{\alpha}(s ) d s \big].
\ee

To sum up, recall the decomposition (\ref{jan13eqn210}), our desired estimate  (\ref{march29eqn6}) holds from  the estimates (\ref{jan13eqn93}) and (\ref{jan13eqn103}). 
\end{proof}

\noindent \textit{Proof of Lemma \textup{\ref{bulktermbilinearestimate1}}}. 

Recall the decomposition of $\tilde{T}_{1}^2(m,a,c,h)$ and $\tilde{T}_{2}^2(m,a,c,h)$ in (\ref{nov1}) and (\ref{jan14eqn1}). From the estimate (\ref{nove761})  in Lemma \ref{auxillarybilinearlemma10}, we know that the desired  estimate (\ref{jan14eqn9}) holds directly if $d\leq 10$. If $d\geq 10$, then from  the estimate (\ref{nove761})  in Lemma \ref{auxillarybilinearlemma10} and the  estimate (\ref{march29eqn6}) in Lemma \ref{auxillarybilinearlemma11},   the following estimate holds
\[
|\tilde{T}_{1}^2(m,a,c,h)|+|\tilde{T}_{1}^2(m,a,c,h)| \lesssim  \sum_{k_1\in \mathbb{Z}} |K_{k_1,k}^d|	+ |S_{k_1,k}^d|	\lesssim  \big(2^{k/2+d/2}+2^{2k +2d  } \big) 2^{-4k_{+} }\big[\big(\sum_{k_1\leq -d}  2^{k_1/2+d/2} + 2^{2k_1+2d}  \big)
\] 
\[ 
   + \big[ \sum_{k_1\geq -d} \big(  \big( 2^{  k_1 /2+d/2}  +2^{2k_1 +2d} \big) \big(2^{-3k_1-3d} +2^{-4k_1-4d}\big) +2^{ -k_1-d}\big(1+2^{-2k_1-2d}    \big) \big]    \big)\big] \]
   \[
   \times   \|m(\xi)\|_{\mathcal{S}^\infty_k} \|a\|_{Y }  \big[ \int_{1}^{t } (1+|s|)^{-1}  \|c(s,x,v)\|_{L^\infty_{x,v}}\big( E_{\textup{low}}^{eb}(s) \big)^2  E_{\beta;d}^{\alpha}(s) d s \big]
\] 
\be\label{march29eqn51}
\lesssim  \big(2^{k/2+d/2}+2^{2k +2d } \big) 2^{-4k_{+} }  \|m(\xi)\|_{\mathcal{S}^\infty_k}\|a\|_{Y } \big[  \int_{1}^{t } (1+|s|)^{-1}  \|c(s,x,v)\|_{L^\infty_{x,v}}\big( E_{\textup{low}}^{eb}(s) \big)^2  E_{\beta;d}^{\alpha}(s) d s \big].
\ee
Hence finishing the proof of the desired estimate (\ref{jan14eqn9}) in Lemma \ref{bulktermbilinearestimate1}.

\qed

\subsection{The $L^\infty_x$-type decay estimate for the operator $T_k^\mu(\cdot, \cdot)$ }\label{feb172020prf2}

In this subsection,  by proving several  $L^\infty_x$-type decay estimates for the operator $T_k^\mu(\cdot, \cdot)$ defined  in (\ref{noveqn89}), we finish the proof of Lemma \ref{lineardecaylemma3}, which have been used as black boxes in the previous subsections for the proof of Lemma \ref{trilinearestimate1}.

 \begin{lemma}\label{lineardecaylemma3}
The following estimate holds for any profile $h(t,x)\in\{h^\alpha_i(t,x), i\in\{1,2\},\alpha\in \mathcal{B}, |\alpha|\leq 10\}$ and any $\rho\in \mathcal{K}$, s.t., $|\rho|=1$
\be\label{nove501}
 \|(1+|v|)^{1-c(\rho)} e_{\rho}(t,x,v)  T_{k}^\mu(\tilde{V}_j\cdot\xi m(\xi), h)(t,x+\hat{v}t) \varphi_{d} (||t|-|x+\hat{v}t||)  \|_{L^\infty_{x,v}}  \lesssim   \big(2^k + 2^{2k+d} \big)2^{-4k_{+}}\| m (\xi)\|_{\mathcal{S}^\infty_k} E_{\textup{low}}^{eb}(t). 
\ee 
\end{lemma}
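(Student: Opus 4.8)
The plan is to reduce \eqref{nove501} to a pointwise dispersive estimate for the oscillatory integral $T_k^\mu(\tilde V_j\cdot\xi\,m(\xi),h)$ of \eqref{noveqn89}, handling the coefficient $(1+|v|)^{1-c(\rho)}e_\rho(t,x,v)$ by the structure of the explicit formulas \eqref{sepeq932}. First I would separate three types of terms. (a) For $\Lambda^\rho\thicksim\widehat S^v$, $\psi_{\le0}(|v|)K_{v_i}$, $\tilde\Omega_i$ (and trivially for $\widehat\Omega_i^v$) one has $(1+|v|)^{1-c(\rho)}|e_\rho|\lesssim1$. (b) The terms proportional to $\tilde d(t,x,v)$ — those in the $\psi_{\ge1}(|v|)S^x$ and $\psi_{\le0}(|v|)\p_{x_i}$ cases — satisfy, on $\textup{supp}\,\varphi_d(||t|-|x+\hat vt||)$ and by \eqref{feb19eqn1}, the bound $(1+|v|)^{1-c(\rho)}|e_\rho|\lesssim1+||t|-|x+\hat vt||\sim1+2^d$. (c) The remaining ``spatial weight'' terms, $\tilde V_i(X_i\cdot\tilde v)/|v|$ for $S^x$ and $|v|^{-1}\tilde V_j(X_j+\hat V_jt)\cdot\tilde V_i$ for $\Omega_i^x$, are, using $\hat v\parallel\tilde v$ (so $(e_i\times\hat vt)\cdot\tilde v=0$), bounded multiples of components of $e_i\times(x+\hat vt)$; hence $(1+|v|)^{1-c(\rho)}|e_\rho|\lesssim|x+\hat vt|\lesssim1+|t|+||t|-|x+\hat vt||$. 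On $\textup{supp}\,\varphi_d$ all three are therefore $\lesssim\max\{1+|t|,\,2^d\}$.

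The core of the argument is to establish, for $h=h_i^\alpha$ with $|\alpha|\le10$, the estimate
\[
\big|T_k^\mu(\tilde V_j\cdot\xi\,m(\xi),h)(t,x+\hat vt,v)\big|\,\varphi_{d}\big(||t|-|x+\hat vt||\big)\ \lesssim\ \frac{\big(2^{k}+2^{2k}(1+2^{d})\big)\,2^{-4k_+}\,\|m(\xi)\|_{\mathcal S^\infty_k}\,E_{\textup{low}}^{eb}(t)}{(1+|t|)(1+2^{d})}.
\]
I would first decompose dyadically in the angle $\angle(\mu v,\xi)\sim2^{l}$. On each sector the denominator obeys $|\hat v\cdot\xi-\mu|\xi||=|\xi|-\mu\hat v\cdot\xi\sim2^{k}\big(\tfrac1{1+|v|^2}+2^{2l}\big)$ by \eqref{jan4eqn8}, while the good-derivative symbol obeys $\sum_j|\tilde V_j\cdot\xi|^{2}=|\xi|^{2}\sin^{2}\angle(v,\xi)\lesssim|\xi|\,\big(|\xi|-\mu\hat v\cdot\xi\big)$, so that $|\tilde V_j\cdot\xi|/|\hat v\cdot\xi-\mu|\xi||\lesssim2^{-l}$ when $2^{l}\gtrsim(1+|v|)^{-1}$ and $\lesssim2^{l}(1+|v|^{2})$ otherwise; the resulting geometric $l$-sum converges \emph{with no $(1+|v|)$ loss}, and, together with $\|\widehat h\,\psi_k\|_{L^\infty_\xi}\lesssim2^{-k}\|h\|_{X_0}$, already gives the crude bound $|T_k^\mu(\tilde V_j\cdot\xi\,m,h)|\lesssim2^{2k}\|m\|_{\mathcal S^\infty_k}E_{\textup{low}}^{eb}(t)$. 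To gain the factor $(1+|t|)^{-1}$ I would use that the phase $(x+\hat vt)\cdot\xi-t\mu|\xi|$ has a rank-two Hessian in $\xi$, so a sector-by-sector stationary-phase estimate upgrades this to a $t^{-1}$-decaying bound (the cutoff $\psi_{>10}(t(|\xi|-\mu\hat v\cdot\xi))$ keeping the denominator and its $\xi$-derivatives $\gtrsim1/t$ under control). To gain the modulation factor $(1+2^d)^{-1}$ I would trade one spatial derivative for one power of $(|t|-|x+\hat vt|)^{-1}$ by the twisted analogue of \eqref{feb16eqn1} — equivalently the $|\alpha|=1$ case of the identity in Lemma \ref{tradethreetimes} — passing to the profile; the admissible input $h_i^{\alpha'}$ then has $|\alpha'|\le13$, still within the range $|\alpha'|\le20-3n$ in which $E_{\textup{low}}^{eb}$ controls $h_i^{\alpha'}$ in $X_n$ for the values of $n$ needed by \eqref{noveqn555}. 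The high-frequency factor $2^{-4k_+}$ is produced in the standard way from the multiplier bounds \eqref{noveqn141} and the $X_n$-hierarchy of $E_{\textup{low}}^{eb}$, exactly as in the proof of Lemma \ref{sharpdecaywithderivatives}. Applying \eqref{noveqn555} on each angular sector and summing then gives the displayed estimate.

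Granting the displayed estimate, \eqref{nove501} follows immediately: in all three cases the coefficient is $\lesssim\max\{1+|t|,2^d\}$, and
\[
\max\{1+|t|,2^d\}\cdot\frac{\big(2^{k}+2^{2k}(1+2^{d})\big)2^{-4k_+}\|m\|_{\mathcal S^\infty_k}E_{\textup{low}}^{eb}(t)}{(1+|t|)(1+2^{d})}\ \lesssim\ \big(2^{k}+2^{2k+d}\big)2^{-4k_+}\|m\|_{\mathcal S^\infty_k}E_{\textup{low}}^{eb}(t),
\]
since $\tfrac{\max\{1+|t|,2^d\}}{(1+|t|)(1+2^d)}\lesssim1$ and $2^{2k}(1+2^d)\lesssim2^{2k+d}$ for $d\ge0$. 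The elementary regime $2^d\gtrsim1+|t|$ — i.e.\ $|x+\hat vt|\gtrsim1+|t|$, well outside the light cone — is handled separately: there $\nabla_\xi\big((x+\hat vt)\cdot\xi-t\mu|\xi|\big)$ has size $\sim|x+\hat vt|$, so repeated integration by parts in $\xi$ (again using $\psi_{>10}$ to bound the denominator and its $\xi$-derivatives) gives $|T_k^\mu|\lesssim2^{k}\|m\|_{\mathcal S^\infty_k}E_{\textup{low}}^{eb}(t)(1+|t|+|x+\hat vt|)^{-N}$ for any $N$, which dominates the required bound even after multiplication by $|e_\rho|\lesssim|x+\hat vt|$.

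The main obstacle is the uniform-in-$v$ proof of the displayed decay estimate. One cannot simply quote Lemma \ref{twistedlineardecay} for $T_k^\mu$, because the symbol $\tilde V_j\cdot\xi\,m(\xi)/(\hat v\cdot\xi-\mu|\xi|)$ has $\mathcal S^\infty_k$-norm that degenerates like a positive power of $(1+|v|)$ as $|v|\to\infty$; it is precisely the cancellation between the zero set of the good-derivative symbol $\tilde V_j\cdot\xi$ and the near-zero set of the denominator — the hidden null structure of Subsection \ref{hiddennullstructure} — that must be carried through the angular decomposition, the stationary-phase step and the trading-regularity step \emph{simultaneously}, keeping every constant independent of $|v|$ while still extracting $2^{-4k_+}$, $(1+2^d)^{-1}$ and $(1+|t|)^{-1}$. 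Controlling the interaction of the narrow angular sectors with the stationary-phase decay — the sectors near $\hat\xi\approx\mu\,(x+\hat vt)/|x+\hat vt|$, where both the Hessian and the denominator are smallest — is the delicate part of the computation.
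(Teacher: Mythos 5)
Your classification of the coefficients $e_\rho$ into bounded, $\tilde d$-proportional, and spatial-weight types is exactly the right first step, and your treatment of the first two types (crude volume-count bound $|T_k^\mu|\lesssim 2^{2k-4k_+}\|m\|_{\mathcal S^\infty_k}E^{eb}_{\textup{low}}$, which indeed holds with no $(1+|v|)$ loss, times a coefficient of size $1$ or $2^d$) matches how the paper gets the $2^k$ and $2^{2k+d}$ terms. The gap is in your ``displayed estimate.'' You claim
$|T_k^\mu(\tilde V_j\cdot\xi\,m,h)|\,\varphi_d\lesssim (2^k+2^{2k}(1+2^d))2^{-4k_+}\|m\|_{\mathcal S^\infty_k}E^{eb}_{\textup{low}}(t)\,(1+|t|)^{-1}(1+2^d)^{-1}$, i.e.\ time decay \emph{uniformly in $v$} plus an extra gain in the distance to the light cone. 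Neither factor is delivered by the argument you sketch. The paper's corresponding estimate, (\ref{noveqn500}) in Lemma \ref{twisteddecaylemma3}, establishes the $(1+|t|)^{-1}2^k$ decay only for $\tfrac{1}{1+|v|}T_k^\mu$: in the degenerate angular sector $\angle(\xi,\mu v)\sim(1+|v|)^{-1}$ the symbol $\tilde V_j\cdot\xi/(|\xi|-\mu\hat v\cdot\xi)$ has size $\sim 1+|v|$, and once you integrate by parts in $\xi$ to extract $t^{-1}$ (so that you can no longer exploit the sign-definite integrability of the symbol over angles) the stationary-phase/IBP bound on that sector carries the factor $1+|v|$ --- this is precisely why the paper's partition (\ref{partitionofunity}) and the final bounds (\ref{jan25eqn1})--(\ref{jan25eqn4}) all keep the $\tfrac1{1+|v|}$ prefactor. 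You correctly identify this as ``the main obstacle,'' but you do not resolve it, and it matters: for the $S^x$-coefficient $-\psi_{\geq-1}(|v|)\tilde V_i(X_i\cdot\tilde v)/|v|$ the prefactor $(1+|v|)^{1-c(\rho)}=1+|v|$ exactly cancels the $|v|^{-1}$, leaving a weight of size $|x|\sim 1+|t|$ near the cone with no spare power of $(1+|v|)$, so your assembly genuinely needs the uniform-in-$v$ decay you have not proved. The $(1+2^d)^{-1}$ gain via a ``twisted analogue of (\ref{feb16eqn1})'' is likewise never established: Lemma \ref{tradethreetimes} applies to Fourier multipliers acting on the physical field at $x$, not to the $v$-dependent operator $T_k^\mu$ with the singular denominator, the $\hat v t$ shift, and the cutoff $\psi_{>10}(t(|\xi|-\mu\hat v\cdot\xi))$; commuting $S$, $L_i$, $\Omega_i$ through that symbol is a substantial additional argument.

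The paper avoids both difficulties by treating the spatial-weight coefficients with Lemma \ref{auxillarylemmajan26}: decompose $|x|$ along $\tilde v$ and $\tilde V_i$, note that $(x-\mu t\xi/|\xi|)\cdot\tilde V_i$ and $(x+t\hat v-\mu t\xi/|\xi|)\cdot\tilde v$ are $-i$ times directional $\nabla_\xi$ of the phase, and integrate by parts once; the weight is absorbed entirely, the boundary terms land back on the already-proved estimates (\ref{noveqn500}), (\ref{noveqn600}), and no time decay or light-cone decay of $T_k^\mu$ uniform in $v$ is ever required. You should replace your displayed estimate and the twisted-trading step by this integration-by-parts argument (or supply a genuinely new proof of the uniform-in-$v$ decay, which would be a stronger statement than anything in the paper).
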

\begin{proof}
 Recall the detailed formulas of the coefficients $e_{\rho}(t,x,v)$ in (\ref{sepeq932})  in Lemma \ref{twodecompositionlemma}.  We know that the desired estimate  (\ref{nove501}) holds directly from the estimates (\ref{noveqn500}) and   (\ref{jan25eqn16})  in Lemma \ref{twisteddecaylemma3} and the estimate  (\ref{noveqn532}) in Lemma \ref{auxillarylemmajan26}. 
\end{proof}

\begin{lemma}\label{twisteddecaylemma3}
For any $i\in\{1,2,3\}$, the following decay estimate holds,
 \[
 \frac{1}{1+|v|}\Big| \int_{\R^3} e^{ix\cdot \xi - i\mu t |\xi|} \frac{m(\xi)\tilde{V}_i\cdot \xi}{|\xi|-\mu \hat{v}\cdot \xi}  \widehat{f}(\xi)\psi_{\geq 10}(t(|\xi|- \mu \hat{v}\cdot \xi))\psi_k(\xi) d \xi  \Big|
\]
\be\label{noveqn500}
  \lesssim  \sum_{\alpha\in \mathcal{B}, |\alpha|\leq 5}  2^{-4k_{+}} \|m(\xi)\|_{\mathcal{S}^\infty_k} \min\{  2^{ k} (1+|t|)^{-1}  \big(  \|f^\alpha\|_{X_0} +\|f^\alpha\|_{X_1}\big), 2^{2k} \|f^\alpha\|_{X_0}\}.
\ee
Moreover, the following estimates also hold for any $n\in\mathbb{N}_{+}$,$i,j\in\{1,2,3\}$,
\[
  \Big| \int_{\R^3} e^{ix\cdot \xi - i\mu t |\xi|} \frac{m(\xi)  }{\big(|\xi|-\mu \hat{v}\cdot \xi\big)^{n+1}}  \widehat{f}(\xi)\psi_{\geq 10}(t(|\xi|- \mu \hat{v}\cdot \xi))\psi_k(\xi) d \xi  \Big| 
\]
\be\label{jan25eqn16}
\lesssim  \sum_{\alpha\in \mathcal{B}, |\alpha|\leq 5}  2^{-4k_{+}} 2^{  k} (1+|t|)^{n}\|m(\xi)\|_{\mathcal{S}^\infty_k}  \big( \|f^\alpha\|_{X_0}+\|f^\alpha\|_{X_1}\big),
\ee
\[
 \frac{1}{1+|v|}\Big| \int_{\R^3} e^{ix\cdot \xi - i\mu t |\xi|} \frac{m(\xi)\tilde{V}_i\cdot \xi}{\big( |\xi|-\mu \hat{v}\cdot \xi\big)^{n+1}}  \widehat{f}(\xi)\psi_{\geq 10}(t(|\xi|- \mu \hat{v}\cdot \xi))\psi_k(\xi) d \xi  \Big|\]
 \[
 +  
  \Big| \int_{\R^3} e^{ix\cdot \xi - i\mu t |\xi|} \frac{m(\xi)(\tilde{V}_j\cdot \xi) (  \tilde{V}_i\cdot \xi)}{|\xi|\big(|\xi|-\mu \hat{v}\cdot \xi\big)^{n+1}}  \widehat{f}(\xi)\psi_k(\xi)\psi_{\geq 10}(t(|\xi|- \mu \hat{v}\cdot \xi)) d \xi  \Big|
\]
\be\label{noveqn600}
 \lesssim  \sum_{\alpha\in \mathcal{B}, |\alpha|\leq 5}  2^{-4k_{+}} (1+|t|)^{n} \|m(\xi)\|_{\mathcal{S}^\infty_k}  \min\{ 2^{ k} (1+|t|)^{-1} \big(  \|f^\alpha\|_{X_0} +\|f^\alpha\|_{X_1}\big), 2^{2k} \|f^\alpha\|_{X_0} \}  .
\ee
\end{lemma}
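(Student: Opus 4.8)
\textbf{Proof plan for Lemma \ref{twisteddecaylemma3}.}

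The plan is to reduce all three displayed estimates \eqref{noveqn500}, \eqref{jan25eqn16}, \eqref{noveqn600} to the single linear decay estimate \eqref{noveqn555} in Lemma \ref{twistedlineardecay} (and its more elementary companion arguments), after carefully accounting for the singular factor $(|\xi|-\mu\hat v\cdot\xi)^{-n-1}$ on the time-resonant set, which is exactly where the cutoff $\psi_{\geq 10}(t(|\xi|-\mu\hat v\cdot\xi))$ helps. First I would record the two basic pointwise facts that drive everything: on the support of $\psi_{\geq 10}(t(|\xi|-\mu\hat v\cdot\xi))$ one has $|\xi|-\mu\hat v\cdot\xi\gtrsim 2^{10}/t$, so $(|\xi|-\mu\hat v\cdot\xi)^{-n-1}\lesssim (1+|t|)^{n+1}$; and, from the elementary inequality $|\xi|-\mu\hat v\cdot\xi\gtrsim |\xi|\big((1+|v|^2)^{-1}+\angle(\mu v,\xi)^2\big)\gtrsim |\tilde V_j\cdot\xi|/(1+|v|)$ — this is the hidden-null-structure bound used in \eqref{jan4eqn8} — the numerator $\tilde V_i\cdot\xi$ (resp. the product $(\tilde V_j\cdot\xi)(\tilde V_i\cdot\xi)/|\xi|$) is controlled by the denominator times $(1+|v|)$ (resp. $(1+|v|)^2$). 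Consequently the symbol $m_{\mathrm{tw}}(\xi,v):=\tfrac{m(\xi)\tilde V_i\cdot\xi}{|\xi|-\mu\hat v\cdot\xi}\psi_{\geq 10}(t(|\xi|-\mu\hat v\cdot\xi))$, divided by $(1+|v|)$, belongs to $\mathcal S^\infty_k$ uniformly in $v$ with $\|m_{\mathrm{tw}}(\cdot,v)/(1+|v|)\|_{\mathcal S^\infty_k}\lesssim 2^{k}\|m(\xi)\|_{\mathcal S^\infty_k}$, once one checks that each $\xi$-derivative of the cutoff $\psi_{\geq 10}(t(|\xi|-\mu\hat v\cdot\xi))$ costs $t\lesssim t$ but is compensated because that derivative is also localized where $|\xi|-\mu\hat v\cdot\xi\sim 2^{10}/t$. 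I would carry out this symbol bookkeeping first, treating $v$ as a parameter.

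With the symbol class identified, the estimate \eqref{noveqn500} follows by applying Lemma \ref{twistedlineardecay} with the twisted symbol $m_{\mathrm{tw}}(\xi,v)/(1+|v|)$ in place of $m(\xi)$: the first term in \eqref{noveqn555}, $\min\{2^{k_-},(1+|t|+|x|)^{-1}\}2^k$, produces the $2^k(1+|t|)^{-1}$ factor in the minimum, while the crude bound $\min\{2^{k_-},(1+|t|+|x|)^{-1}\}\leq 2^{k_-}$ combined with the extra $2^k$ from the twisted symbol gives the $2^{2k}$ alternative; the weighted-$X_0$, $X_1$ norms of $\widehat{\Gamma^\alpha f}$ on the right of \eqref{noveqn555} are exactly $\|f^\alpha\|_{X_0}+\|f^\alpha\|_{X_1}$ up to the $2^{-4k_+}$ loss from pushing $|\alpha|\leq 5$ derivatives through and using $\|m(\xi)\|_{\mathcal S^\infty_k}\lesssim 2^{-4k_+}\cdots$ implicitly absorbed. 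For \eqref{jan25eqn16} the numerator is trivial (no $\tilde V_i\cdot\xi$), so no gain in $(1+|v|)$ is available, but one simply pays the full $(1+|t|)^{n+1}$ from $(|\xi|-\mu\hat v\cdot\xi)^{-n-1}$ against the cutoff as above and then applies Lemma \ref{twistedlineardecay} directly to the resulting $\mathcal S^\infty_k$ symbol, yielding $2^k(1+|t|)^{n}(1+|t|)^{-1}\cdot(1+|t|)=2^k(1+|t|)^n(\|f^\alpha\|_{X_0}+\|f^\alpha\|_{X_1})$ after one of the $(1+|t|)^{-1}$ factors is returned by the decay estimate. Finally \eqref{noveqn600} is the combination of the two mechanisms: the numerator carries one factor $\tilde V_i\cdot\xi$ (or the product $(\tilde V_j\cdot\xi)(\tilde V_i\cdot\xi)/|\xi|$), so divide by $(1+|v|)$ (resp. $(1+|v|)^2$, but one factor is kept to match the $(1+|v|)^{-1}$ on the left) to land back in $\mathcal S^\infty_k$, while the higher power $(|\xi|-\mu\hat v\cdot\xi)^{-n-1}$ costs $(1+|t|)^{n}$; applying Lemma \ref{twistedlineardecay} once more gives the stated minimum.

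The main obstacle I expect is the bookkeeping of the $\xi$-derivatives of the cutoff $\psi_{\geq 10}(t(|\xi|-\mu\hat v\cdot\xi))$ inside the $\mathcal S^\infty_k$ norm: each such derivative brings a factor $t\,\nabla_\xi(|\xi|-\mu\hat v\cdot\xi)$, which is only harmless because it is supported in the thin shell $|\xi|-\mu\hat v\cdot\xi\sim 2^{-m}$ (with $t\sim 2^m$), so the net contribution is $O(1)$ in the right normalization; one must verify that this survives convolution against $\mathcal F^{-1}[\psi_k]$ in $L^1$, i.e. that the geometry of that shell (a thin neighborhood of a cone through the origin when $|v|$ is large) does not destroy the $L^1$ bound. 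This is handled by splitting into dyadic angular sectors $\angle(\mu v,\xi)\sim 2^\ell$ as in the proof of Lemma \ref{auxillaryestimate5linear} and summing; the worst sector is $\ell\sim -m/2-k/2$, where $|v|\gtrsim 2^{m/2+k/2}$, and the extra $(1+|v|)$ or $(1+|v|)^2$ we divided out is precisely what absorbs the loss. Everything else is a routine invocation of the already-proved linear decay estimate together with the commutation relations packaged in the $X_n$ norms.
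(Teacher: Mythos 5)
Your central reduction does not go through as stated. You claim that $m_{\mathrm{tw}}(\xi,v)/(1+|v|)$, with $m_{\mathrm{tw}}(\xi,v)=\tfrac{m(\xi)\,\tilde V_i\cdot\xi}{|\xi|-\mu\hat v\cdot\xi}\psi_{\geq 10}(t(|\xi|-\mu\hat v\cdot\xi))$, lies in $\mathcal S^\infty_k$ uniformly in $v$ and $t$ with norm $\lesssim 2^k\|m\|_{\mathcal S^\infty_k}$, and then you feed it into Lemma \ref{twistedlineardecay}. The pointwise bound is indeed supplied by the null-structure inequality, but the $\mathcal S^\infty_k$ norm (and any stationary-phase argument behind Lemma \ref{twistedlineardecay}) requires $\xi$-derivatives, and these lose factors that the single $(1+|v|)^{-1}$ cannot absorb. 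Concretely, writing $\theta=\angle(\xi,\mu v)$ and $\epsilon=(1+|v|^2)^{-1/2}$, one has $|\xi|-\mu\hat v\cdot\xi\approx|\xi|(\epsilon^2+\theta^2)$, so already the first term of $\nabla_\xi\big(\tfrac{\tilde V_i\cdot\xi}{|\xi|-\mu\hat v\cdot\xi}\big)$, namely $\tfrac{\tilde V_i}{|\xi|-\mu\hat v\cdot\xi}$, is of size $(1+|v|^2)/|\xi|$ at $\theta\sim\epsilon$; after dividing by $(1+|v|)$ this is $(1+|v|)2^{-k}$ rather than the required $2^{-k}$, with no cancellation against the other term. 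Each further derivative of the denominator costs another factor $(1+|v|)$, and each derivative of the cutoff costs $\sim(t2^k)^{1/2}$ on its support, so the uniform symbol bound fails already at one derivative and catastrophically at the ten demanded by \eqref{symbolnorm}. Your own "main obstacle" paragraph senses this, but the proposed cure (dyadic angular sectors in $\angle(\mu v,\xi)$, as in Lemma \ref{auxillaryestimate5linear}) abandons the symbol-class framework: a symbol supported in an angular sector of width $2^\ell$ is not in $\mathcal S^\infty_k$ with a bounded constant, and applying Lemma \ref{twistedlineardecay} sector by sector does not yield $(1+|t|)^{-1}$ uniformly, because the decay there comes from non-stationary phase away from the critical direction $\mu x/|x|$, which may sit inside the sector.

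What is actually needed — and what the paper does — is a direct argument with \emph{two} angular decompositions: one relative to $\mu v$ (where the denominator degenerates) and one relative to $\xi_0=\mu x/|x|$ (where the phase is stationary), both at the threshold $\bar l=-m/2-k/2-10$ with $t\sim 2^m$. The region where $\angle(\xi,\xi_0)\leq 2^{\bar l}$ or $\angle(\xi,\mu v)\leq 2^{\bar l}$ is estimated purely by the volume of support together with \eqref{noveqn503} (this is the term $I_{\bar l}$, of size $2^{3k+2\bar l}=2^{-m+k}$), and on the complementary sectors one integrates by parts in $\xi$ once or twice with the vector field $\tfrac{x/t-\mu\xi/|\xi|}{t|x/t-\mu\xi/|\xi||^2}$, gaining $2^{-m-l_1}$ per integration and summing over the sectors; the cruder $2^{2k}\|f\|_{X_0}$ alternative comes from a single angular decomposition and volume counting. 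None of this is an invocation of Lemma \ref{twistedlineardecay}. Your arithmetic for the final answers is consistent with the target, but the mechanism producing the $(1+|t|)^{-1}$ is missing; you would need to supply the double angular partition and the explicit integrations by parts rather than cite the linear decay lemma as a black box.
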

\begin{proof}
After utilizing the volume of support of $\xi$, we know  that the desired estimates (\ref{noveqn500}), (\ref{jan25eqn16}), and (\ref{noveqn600}) hold easily if $|t|\leq 1$. Hence, from now on, we restrict ourself to the case when $|t|\geq 1$. Note that, for any fixed $t$, s.t., $|t|\geq 1$, there exists a unique $m\in \mathbb{Z}_{+}$, s.t., $t\in[2^{m-1},2^m]$.

 We first prove the desired estimate (\ref{noveqn500}). Note that
\be\label{noveqn503}
|\xi|-\mu\hat{v}\cdot \xi \gtrsim \frac{|\xi|}{1+|v|^2} + |\xi|(1-\cos(\angle(\xi, \mu v )))\gtrsim \frac{|\xi| \angle(\xi, \mu v)}{1+|v|}.
\ee
On one hand, after doing dyadic localization for the angle between $\xi$ and $\mu v$ and using the volume of support of $\xi$ and the above estimate (\ref{noveqn503}), we have
\[
\Big| \int_{\R^3} e^{ix\cdot \xi - i\mu t |\xi|} \frac{m(\xi)\tilde{V}_i\cdot \xi}{|\xi|-\mu \hat{v}\cdot \xi}  \widehat{f}(\xi)\psi_{\geq 10}(t(|\xi|- \mu \hat{v}\cdot \xi))\psi_k(\xi) d \xi  \Big|
\]
 \be\label{jan25eqn41}
\lesssim \sum_{l\in\mathbb{Z}, l\leq 2} 2^{3k+ l} \|m(\xi)\|_{\mathcal{S}^\infty_k} \|\widehat{f}(t, \xi)\psi_k(\xi)\|_{L^\infty_\xi} \lesssim \sum_{\alpha\in \mathcal{B}, |\alpha|\leq 5}    2^{2k-4k_{+}}\|m(\xi)\|_{\mathcal{S}^\infty_k}\|f^\alpha\|_{X_0}.
\ee
On the other hand, for any fixed $x$ and $v$, we define $\xi_0:=\mu x/|x|$ and $\bar{l}:= -m/2-k/2-10$. 
 Note that there exists a unique constant $l_{x,v}$, which depends on $x$ and $v$, such that
\be\label{march29eqn61}
\angle(\xi_0, \mu v)\in (2^{l_{x,v}-1}, 2^{l_{x,v}}].
\ee 
Using this observation, we know that the following partition of unity  holds, 
\[
1= \psi_{\leq \bar{l}}(\angle(\xi, \xi_0)) + \psi_{>  \bar{l}}(\angle(\xi, \xi_0)) \psi_{\leq \bar{l}}(\angle(\xi, \mu v)) + \psi_{>  \bar{l}}(\angle(\xi, \xi_0)) \psi_{> \bar{l}}(\angle(\xi, \mu v)) =  \psi_{\leq \bar{l}}(\angle(\xi, \xi_0))  + \psi_{>  \bar{l}}(\angle(\xi, \xi_0))
 \]
 \be\label{partitionofunity}
 \times\psi_{\leq \bar{l}}(\angle(\xi, \mu v)) + \sum_{l_1> \bar{l}}  \psi_{l_1}(\angle(\xi, \xi_0)) \psi_{\geq l_1-10}(\angle(\xi, \mu v)) + \sum_{\begin{subarray}{c}
 \bar{l}< l_2 < l_1-10\\
    \bar{l}< l_1\leq 2, |l_1-l_{x,v}|\leq 10\\
  \end{subarray}} \psi_{l_1}(\angle(\xi, \xi_0))\psi_{l_2}(\angle(\xi, \mu v)). 
 \ee
Hence, the following decomposition holds, 
\be\label{jan25eqn11}
\int_{\R^3} e^{ix\cdot \xi - i\mu t |\xi|} \frac{1}{1+|v|} \frac{m(\xi)\tilde{V}_i\cdot \xi}{|\xi|-\mu \hat{v}\cdot \xi}  \widehat{f}(\xi)\psi_{\geq 10}(t(|\xi|- \mu \hat{v}\cdot \xi))\psi_k(\xi)  d \xi = \sum_{  \bar{l}\leq l \leq 2}I_{l  }  +  \sum_{\begin{subarray}{c}
 \bar{l}< l_2 < l_1-10\\
 \bar{l}< l_1\leq 2, |l_1-l_{x,v}|\leq 10\\
  \end{subarray}}   I_{l_1,l_2},
\ee
where 
\[
 I_{\bar{l}}= \int_{\R^3} e^{ix\cdot \xi - i\mu t |\xi|} \frac{1}{1+|v|} \frac{m(\xi)\tilde{V}_i\cdot \xi}{|\xi|-\mu \hat{v}\cdot \xi}  \widehat{f}(\xi)\psi_{\geq 10}(t(|\xi|- \mu \hat{v}\cdot \xi))\psi_k(\xi)  
 \big[\psi_{\leq \bar{l}}(\angle(\xi, \xi_0)) + \psi_{> \bar{l}}(\angle(\xi, \xi_0)) \psi_{\leq \bar{l}}(\angle(\xi, \mu v)) \big] d \xi,  \]
  \[
 I_l= \int_{\R^3} e^{ix\cdot \xi - i\mu t |\xi|} \frac{1}{1+|v|} \frac{m(\xi)\tilde{V}_i\cdot \xi}{|\xi|-\mu \hat{v}\cdot \xi}  \widehat{f}(\xi)\psi_{\geq 10}(t(|\xi|- \mu \hat{v}\cdot \xi))\psi_k(\xi)     \psi_{l }(\angle(\xi, \xi_0)) \psi_{\geq l -10}(\angle(\xi, \mu v))d \xi,\quad \textup{if}\, l > \bar{l},
\]
\[
 I_{l_1,l_2}= \int_{\R^3} e^{ix\cdot \xi - i\mu t |\xi|} \frac{1}{1+|v|} \frac{m(\xi)\tilde{V}_i\cdot \xi}{|\xi|-\mu \hat{v}\cdot \xi}  \widehat{f}(\xi)\psi_{\geq 10}(t(|\xi|- \mu \hat{v}\cdot \xi))\psi_k(\xi)    \psi_{l_1}(\angle(\xi, \xi_0))\psi_{l_2}(\angle(\xi, \mu v))  d \xi.  
\]
 From the volume of support of $\xi$ and the estimate (\ref{noveqn503}), the following estimate holds for $I_{\bar{l}}$,
\be\label{noveqn524}
|I_{\bar{l}}|\lesssim 2^{3k+2\bar{l}} \|m(\xi)\|_{\mathcal{S}^\infty_k}  \|\widehat{f}(\xi)\psi_k(\xi)\|_{L^\infty_\xi}\lesssim   \sum_{\alpha\in \mathcal{B}, |\alpha|\leq 5}   2^{-m + k-4k_{+}}\|m(\xi)\|_{\mathcal{S}^\infty_k}  \|f^\alpha\|_{X_0}.
\ee
  For $I_{l_1,l_2}$ and $I_l$, $l> \bar{l}$, we do integration by parts in ``$\xi$''.  As a result, we have
\be\label{jan25eqn6}
I_l = I_l^1 + I_l^2,\quad I_{l_1,l_2}= I_{l_1,l_2}^1+ I_{l_1,l_2}^2,
\ee
where
\[
I_l^1= \int e^{i x \cdot \xi -\mu it|\xi|} i \frac{  \frac{x}{t} - \mu\frac{\xi}{|\xi|} }{t\big|\frac{x}{t} - \mu\frac{\xi}{|\xi|}\big|^2} \cdot \nabla_\xi \widehat{f}(\xi)  \frac{1}{1+|v|} \frac{m(\xi)\tilde{V}_i\cdot \xi}{|\xi|-\mu \hat{v}\cdot \xi}  \psi_{\geq 10}(t(|\xi|- \mu \hat{v}\cdot \xi))\psi_k(\xi)   \psi_{l  }(\angle(\xi, \xi_0)) \psi_{\geq l  -10}(\angle(\xi, \mu v))   d \xi,
\]
\[
I_l^2= \int e^{i x \cdot \xi -\mu it|\xi|} i \widehat{f}(\xi) \nabla_\xi \cdot  \big[\frac{\frac{x}{t} - \mu\frac{\xi}{|\xi|}}{t\big|\frac{x}{t} - \mu\frac{\xi}{|\xi|}\big|^2}    \frac{1}{1+|v|} \frac{m(\xi)\tilde{V}_i\cdot \xi}{|\xi|-\mu \hat{v}\cdot \xi}  \psi_{\geq 10}(t(|\xi|- \mu \hat{v}\cdot \xi))\psi_k(\xi)   \psi_{l  }(\angle(\xi, \xi_0)) \psi_{\geq l  -10}(\angle(\xi, \mu v))  \big] d \xi,
\]
\[
I_{l_1,l_2}^1= \int e^{i x \cdot \xi -\mu it|\xi|} i \frac{  \frac{x}{t} - \mu\frac{\xi}{|\xi|} }{t\big|\frac{x}{t} - \mu\frac{\xi}{|\xi|}\big|^2} \cdot \nabla_\xi \widehat{f}(\xi)  \frac{1}{1+|v|} \frac{m(\xi)\tilde{V}_i\cdot \xi}{|\xi|-\mu \hat{v}\cdot \xi}  \psi_{\geq 10}(t(|\xi|- \mu \hat{v}\cdot \xi))\psi_k(\xi)   \psi_{l_1}(\angle(\xi, \xi_0))\psi_{l_2}(\angle(\xi, \mu v))   d \xi,
\]
\[
I_{l_1,l_2}^2= \int e^{i x \cdot \xi -\mu it|\xi|} i \widehat{f}(\xi) \nabla_\xi \cdot  \big[\frac{\frac{x}{t} - \mu\frac{\xi}{|\xi|}}{t\big|\frac{x}{t} - \mu\frac{\xi}{|\xi|}\big|^2}    \frac{1}{1+|v|} \frac{m(\xi)\tilde{V}_i\cdot \xi}{|\xi|-\mu \hat{v}\cdot \xi}  \psi_{\geq 10}(t(|\xi|- \mu \hat{v}\cdot \xi))\psi_k(\xi)   \psi_{l_1}(\angle(\xi, \xi_0))\psi_{l_2}(\angle(\xi, \mu v))  \big] d \xi,
\]
For $I_{l}^2$  and $I_{l_1,l_2}^2$, we  do integration by parts in $\xi$ one more time. As a result, we have
\[
I_l^2= \int e^{i x \cdot \xi -\mu it|\xi|} - \nabla_\xi\cdot \Big[\frac{\frac{x}{t} - \mu\frac{\xi}{|\xi|}}{t\big|\frac{x}{t} - \mu\frac{\xi}{|\xi|}\big|^2} \widehat{f}(\xi) \nabla_\xi \cdot  \big[\frac{\frac{x}{t} - \mu\frac{\xi}{|\xi|}}{t\big|\frac{x}{t} - \mu\frac{\xi}{|\xi|}\big|^2}    \frac{1}{1+|v|} \frac{m(\xi)\tilde{V}_i\cdot \xi}{|\xi|-\mu \hat{v}\cdot \xi}   \psi_{\geq 10}(t(|\xi|- \mu \hat{v}\cdot \xi))\]
\[
\times \psi_k(\xi)  \psi_{l  }(\angle(\xi, \xi_0))   \psi_{\geq l  -10}(\angle(\xi, \mu v))  \big] \Big] 	d \xi,
\]
\[
I_{l_1,l_2}^2= \int e^{i x \cdot \xi -\mu it|\xi|} - \nabla_\xi\cdot \Big[\frac{\frac{x}{t} - \mu\frac{\xi}{|\xi|}}{t\big|\frac{x}{t} - \mu\frac{\xi}{|\xi|}\big|^2} \widehat{f}(\xi) \nabla_\xi \cdot  \big[\frac{\frac{x}{t} - \mu\frac{\xi}{|\xi|}}{t\big|\frac{x}{t} - \mu\frac{\xi}{|\xi|}\big|^2}    \frac{1}{1+|v|} \frac{m(\xi)\tilde{V}_i\cdot \xi}{|\xi|-\mu \hat{v}\cdot \xi}  \psi_{\geq 10}(t(|\xi|- \mu \hat{v}\cdot \xi))  \]
\[
\times \psi_k(\xi) \psi_{l_1}(\angle(\xi, \xi_0))\psi_{l_2}(\angle(\xi, \mu v))  \big] \Big]d \xi.
\]
 Therefore, from the estimate (\ref{noveqn503}) and the volume of support of ``$\xi$'', the following estimate holds 
\be\label{jan25eqn1}
|I_{l}^1| \lesssim 2^{-m-l+3k+2l}  \|m(\xi)\|_{\mathcal{S}^\infty_k} \|\nabla_\xi \widehat{f}(t, \xi) \psi_k(\xi)\|_{L^\infty_\xi},
\ee
\be\label{jan25eqn2} 
|I_{l_1, l_2}^1|\lesssim 2^{-m-l_1+3k+2l_2}\|m(\xi)\|_{\mathcal{S}^\infty_k}   \|\nabla_\xi \widehat{f}(t, \xi) \psi_k(\xi)\|_{L^\infty_\xi}, 
\ee
\be\label{jan25eqn3}
|I_{l}^2| \lesssim 2^{-2m-2l +3k+2l}\|m(\xi)\|_{\mathcal{S}^\infty_k} \big(2^{-2k-2l} \|\widehat{f}(t,\xi)\psi_k(\xi)\|_{L^\infty_\xi} +2^{-k-l} \|\nabla_\xi \widehat{f}(t, \xi) \psi_k(\xi)\|_{L^\infty_\xi} \big) ,
\ee

\be\label{jan25eqn4}
|I_{l_1, l_2}^2| \lesssim 2^{-2m-2l_1 +3k+2l_2}\|m(\xi)\|_{\mathcal{S}^\infty_k} \big(2^{-2k-2l_2} \|\widehat{f}(t,\xi)\psi_k(\xi)\|_{L^\infty_\xi} +2^{-k-l_2} \|\nabla_\xi \widehat{f}(t, \xi) \psi_k(\xi)\|_{L^\infty_\xi} \big) ,
\ee
Recall that $\bar{l}:=-m/2-k/2-10$. From  the estimates (\ref{jan25eqn1}), (\ref{jan25eqn2}), (\ref{jan25eqn3}), and (\ref{jan25eqn4}), we have 
\be\label{noveqn523}
\sum_{  \bar{l} < l \leq 2} |I_{l  }^1| + |I_{l  }^2|  +  \sum_{\begin{subarray}{c}
 \bar{l}< l_2 < l_1-10\\
 \bar{l}< l_1\leq 2, |l_1-l_{x,v}|\leq 10\\
  \end{subarray}}  | I_{l_1,l_2}^1| + | I_{l_1,l_2}^2| \lesssim   \sum_{\alpha\in \mathcal{B}, |\alpha|\leq 5}  2^{-m+k-4k_{+}} \|m(\xi)\|_{\mathcal{S}^\infty_k}  \big(\|f^\alpha\|_{X_0} +\| f^\alpha\|_{X_1} \big).
\ee

To sum up, recall the decompositions (\ref{jan25eqn11}) and (\ref{jan25eqn6}), our desired estimate  (\ref{noveqn500}) holds from the estimates (\ref{jan25eqn41}),  (\ref{noveqn524}), and (\ref{noveqn523}). With minor modifications,  all other desired estimates (\ref{jan25eqn16})  and (\ref{noveqn600}) hold very similarly, we omit details here.

\end{proof}

\begin{lemma}\label{auxillarylemmajan26}
For any $i, j\in \{1,2,3\}$, the following estimate holds for any fixed $x\in \R^3$,
\[
 |x|\Big|   \int_{\R^3} e^{i(x+t\hat{v})\cdot \xi - i\mu t |\xi|} \frac{m(\xi) \tilde{V}_j \cdot \xi}{|\xi|-\mu \hat{v}\cdot \xi}   \widehat{f}(\xi)\psi_{\geq 10}(t(|\xi|- \mu \hat{v}\cdot \xi)) \psi_k(\xi) d \xi  \Big|
\]
\be\label{noveqn532}
\lesssim\sum_{\alpha\in \mathcal{B}, |\alpha|\leq 5}   2^{ k-4k_{+}} \|m(\xi)\|_{\mathcal{S}^\infty_k}  \big( \|f^\alpha\|_{X_0}+ \|f^\alpha\|_{X_1}\big).
\ee 
\end{lemma}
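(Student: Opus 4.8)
The plan is to prove the weighted $L^\infty_x$ estimate \eqref{noveqn532} by essentially the same stationary-phase analysis already carried out in the proof of Lemma \ref{twisteddecaylemma3}, but keeping track of the extra factor $|x|$. First I would reduce to the case $|t|\geq 1$ and $t\in[2^{m-1},2^m]$ for some $m\in\mathbb{Z}_+$; when $|t|\leq 1$ or $|x|\lesssim 2^m$ the bound follows trivially from the volume of the $\xi$-support together with the lower bound \eqref{noveqn503} on $|\xi|-\mu\hat v\cdot\xi$, the support restriction $|\xi|-\mu\hat v\cdot\xi\gtrsim 2^{-m}$ forced by the cutoff $\psi_{\geq 10}(t(|\xi|-\mu\hat v\cdot\xi))$, and $\|\widehat{f}(\xi)\psi_k(\xi)\|_{L^\infty_\xi}\lesssim 2^{-k+4k_+}\sum_{|\alpha|\leq 5}\|f^\alpha\|_{X_0}$. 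So the real content is the regime $|x|\gg 2^m$, where the factor $|x|$ must be absorbed.

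The key point is that in that regime the phase $\psi(\xi)=(x+t\hat v)\cdot\xi-\mu t|\xi|$ is genuinely non-stationary: $\nabla_\xi\psi = x+t\hat v - \mu t\xi/|\xi|$ and $|\nabla_\xi\psi|\gtrsim |x|$ when $|x|\geq 2^{10}(|t|/|v|+|t|^{1/2})$-type largeness holds, or more crudely whenever $|x|\gg |t|+|t||\hat v|\sim|t|$. Hence each integration by parts in $\xi$ against $e^{i\psi(\xi)}$ produces a gain of $(|x|)^{-1}$ (up to the derivatives falling on the amplitude), which exactly compensates the prefactor $|x|$ after one integration by parts, at the cost of one $\xi$-derivative on $\widehat{f}$, i.e. one unit of $X_1$-norm, and of the usual losses $2^{-2k-2l}$ type from differentiating the angular cutoffs and the symbol $m(\xi)\tilde V_j\cdot\xi/(|\xi|-\mu\hat v\cdot\xi)$. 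Concretely I would insert the same partition of unity \eqref{partitionofunity} in the angles $\angle(\xi,\xi_0)$ and $\angle(\xi,\mu v)$ with $\xi_0=\mu x/|x|$ and threshold $\bar l=-m/2-k/2-10$, estimate the low-angle piece $I_{\bar l}$ directly (it is $O(|x|^{-1}\cdot 2^{-m+k-4k_+}\cdots)$ by volume, which suffices after multiplying by $|x|$), and on each off-diagonal piece $I_{l}$, $I_{l_1,l_2}$ integrate by parts once in the direction $\tfrac{x}{t}-\mu\tfrac{\xi}{|\xi|}$; since $|x|\gg|t|$ forces $|\tfrac{x}{t}-\mu\tfrac{\xi}{|\xi|}|\sim|x|/|t|$, the single integration by parts gains $|t|/|x|\cdot |x|^{-1}\cdot(\text{amplitude derivative})$, so that multiplying by $|x|$ leaves a net decay $\lesssim 2^{-m+k-4k_+}\|m\|_{\mathcal S^\infty_k}(\|f^\alpha\|_{X_0}+\|f^\alpha\|_{X_1})$ after summing the geometric series in $l,l_1,l_2$ down to $\bar l$, exactly as in \eqref{jan25eqn1}--\eqref{noveqn523}. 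The factor $2^k$ rather than $2^k(1+|t|)^{-1}$ in the claimed bound reflects that we have spent the $(1+|t|)^{-1}$ decay on absorbing $|x|$.

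I would organize the write-up by first recording the elementary case $|x|\lesssim 2^m$, then treating $|x|\gg 2^m$ via the partition of unity and a single (and, for the amplitude-derivative terms, a second) integration by parts, reusing verbatim the symbol and angular-cutoff bounds from the proof of Lemma \ref{twisteddecaylemma3}. The main obstacle I anticipate is purely bookkeeping: one must check that, when $|x|\gg 2^m$, the vector field $\tfrac{x}{t}-\mu\tfrac{\xi}{|\xi|}$ indeed has magnitude comparable to $|x|/|t|$ uniformly on the relevant $\xi$-support (so that the integration-by-parts denominator $t\,|\tfrac{x}{t}-\mu\tfrac{\xi}{|\xi|}|^2$ is $\sim |x|^2/t$), and that the derivatives hitting $m(\xi)\tilde V_j\cdot\xi/(|\xi|-\mu\hat v\cdot\xi)$ and the angular cutoffs produce only the harmless losses $2^{-2l}$, $2^{-k-l}$ already present in \eqref{jan25eqn3}--\eqref{jan25eqn4}, which are summable against the gain once we are below the threshold $\bar l$. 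No genuinely new idea is needed beyond the observation that largeness of $|x|$ is itself a source of non-stationarity; hence I expect the proof to be short modulo the citations to the already-established lemmas.
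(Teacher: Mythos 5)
There is a genuine gap, and it sits precisely in the case you dismiss as trivial. In the application of this lemma (via Lemma \ref{lineardecaylemma3} and the coefficients $e_{\rho}$ in (\ref{sepeq932})), the integral is evaluated on the support of $\psi_{\leq -10}(1-|x+\hat{v}t|/|t|)$, so $|x|\lesssim |t|$ always; your main regime $|x|\gg 2^m$ is essentially vacuous there, and the regime $|x|\lesssim 2^m$ carries all the content. In that regime the crude bounds do not close: the volume-of-support estimate gives $|I|\lesssim 2^{2k}\|\widehat{f}\psi_k\|_{L^\infty_\xi}$ at best, and even importing the sharp decay (\ref{noveqn500}) only gives $|I|\lesssim (1+|v|)2^{k}(1+|t|)^{-1}(\cdots)$ because the symbol $\tilde{V}_j\cdot\xi/(|\xi|-\mu\hat{v}\cdot\xi)$ is only bounded by $1+|v|$; multiplying by $|x|\sim t$ then loses a factor of $|x|2^{k}$ or $(1+|v|)|x|/t$ relative to the target $2^{k-4k_+}$. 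So the statement genuinely requires using the weight $|x|$ as a directional object, not just as a size.

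Even in the regime $|x|\gg t$ your bookkeeping does not close. After one integration by parts against $\nabla_\xi\big((x+t\hat v)\cdot\xi-\mu t|\xi|\big)$ the gain $|x|^{-1}$ is exactly cancelled by the prefactor $|x|$, and you are left with $\int|\nabla_\xi(\text{amplitude})|$. The dangerous term is the one where $\nabla_\xi$ falls on $(|\xi|-\mu\hat v\cdot\xi)^{-1}$: on the angular sector $\angle(\xi,\mu v)\sim 2^l$ it contributes $\sim 2^{-k-2l}$ against a volume $2^{3k+2l}$, i.e.\ $2^{2k}$ per sector with no decay in $l$, so the sum over the $O(m+|k|)$ sectors produces at least a $\log t$ loss (and a $(1+|v|)$ loss if one uses only the crude pointwise bound on the symbol). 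The paper avoids this by never integrating by parts in a generic direction: it decomposes $|x|=\frac{x}{|x|}\cdot\big(\tilde v(x\cdot\tilde v)+\sum_i\tilde V_i(x\cdot\tilde V_i)\big)$ as in (\ref{april6eqn1}), writes each directional weight as the corresponding component of the phase gradient plus an explicit correction ((\ref{april6eqn10})--(\ref{april6eqn13})), and integrates by parts once in the fixed directions $\tilde V_i$ and $\tilde v$. The point is two exact cancellations you are not using: $\tilde V_i\cdot\nabla_\xi(|\xi|-\mu\hat v\cdot\xi)=\tilde V_i\cdot\xi/|\xi|$ produces the double-null symbol $(\tilde V_i\cdot\xi)(\tilde V_j\cdot\xi)/\big(|\xi|(|\xi|-\mu\hat v\cdot\xi)^2\big)$, which is exactly what (\ref{noveqn600}) controls with a $(1+|t|)^{-1}$ gain offsetting the factor $t$ in $J_2$; and $\big|(\hat v-\mu\xi/|\xi|)\cdot\tilde v\big|\lesssim(|\xi|-\mu\hat v\cdot\xi)/|\xi|$ cancels the singular denominator outright in $K_2$ and $\tilde K_1$. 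Without these algebraic identities the argument does not recover the stated bound $2^{k-4k_+}$ uniformly in $x$, $v$, and $t$.
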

\begin{proof}
 Recall the first equality in  (\ref{octeqn456}). The following decomposition holds,
 \be\label{april6eqn1}
|x|= \frac{x}{|x|}\cdot\big(\tilde{v} x\cdot \tilde{v}+ \sum_{i=1,2,3} \tilde{V}_i x\cdot \tilde{V}_i \big).
 \ee
 Therefore, to prove the desired estimate (\ref{noveqn532}), it would be sufficient to control both the radial part and the rotational parts. Note that
    the following decomposition holds for any $i,j\in\{1,2,3\},$
\be\label{jan25eqn22}
(x\cdot \tilde{V}_i) \int_{\R^3} e^{i(x+t\hat{v})\cdot \xi - i\mu t |\xi|}  \frac{m(\xi) \tilde{V}_j \cdot \xi}{|\xi|-\mu \hat{v}\cdot \xi}   \widehat{f}(\xi)\psi_{\geq 10}(t(|\xi|- \mu \hat{v}\cdot \xi)) \psi_k(\xi) d \xi = J_1+J_2,
\ee
\be\label{april6eqn2}
(x\cdot \tilde{v} ) \int_{\R^3}  e^{i(x+t\hat{v})\cdot \xi - i\mu t |\xi|}  \frac{m(\xi) \tilde{V}_j \cdot \xi}{|\xi|-\mu \hat{v}\cdot \xi}   \widehat{f}(\xi)\psi_{\geq 10}(t(|\xi|- \mu \hat{v}\cdot \xi)) \psi_k(\xi) d \xi = K_1+K_2,
\ee
where
\be\label{april6eqn10}
J_1=  \int_{\R^3}  e^{i(x+t\hat{v})\cdot \xi - i\mu t |\xi|}  \big(  x -\frac{ \mu t\xi}{|\xi|} \big)\cdot\tilde{V}_i \frac{m(\xi) \tilde{V}_j \cdot \xi}{|\xi|-\mu \hat{v}\cdot \xi}   \widehat{f}(\xi)\psi_{\geq 10}(t(|\xi|- \mu \hat{v}\cdot \xi)) \psi_k(\xi) d \xi,
\ee 
\be\label{april6eqn11}
J_2=  \int_{\R^3} e^{i(x+t\hat{v})\cdot \xi - i\mu t |\xi|}   \frac{ \mu t\xi}{|\xi|}   \cdot  \tilde{V}_i \frac{m(\xi) \tilde{V}_j \cdot \xi}{|\xi|-\mu \hat{v}\cdot \xi}   \widehat{f}(\xi)\psi_{\geq 10}(t(|\xi|- \mu \hat{v}\cdot \xi)) \psi_k(\xi) d \xi,
\ee
\be\label{april6eqn12}
K_1=  \int_{\R^3}  e^{i(x+t\hat{v})\cdot \xi - i\mu t |\xi|}  \big(  x+t\hat{v} -\frac{ \mu t\xi}{|\xi|} \big)\cdot   \tilde{v}  \frac{m(\xi) \tilde{V}_j \cdot \xi}{|\xi|-\mu \hat{v}\cdot \xi}   \widehat{f}(\xi)\psi_{\geq 10}(t(|\xi|- \mu \hat{v}\cdot \xi)) \psi_k(\xi) d \xi,
\ee
\be\label{april6eqn13}
K_2=  \int_{\R^3}  e^{i(x+t\hat{v})\cdot \xi - i\mu t |\xi|}  -t\big( \hat{v} -\frac{ \mu \xi}{|\xi|}  \big) \cdot\tilde{v}   \frac{m(\xi) \tilde{V}_j \cdot \xi}{|\xi|-\mu \hat{v}\cdot \xi}   \widehat{f}(\xi)\psi_{\geq 10}(t(|\xi|- \mu \hat{v}\cdot \xi)) \psi_k(\xi) d \xi. 
\ee 
Note that
\[
 e^{i(x+t\hat{v})\cdot \xi - i\mu t |\xi|} \big(  x -\frac{ \mu t\xi}{|\xi|} \big)\cdot \tilde{V}_i= -i  \tilde{V}_i\cdot\nabla_\xi \big(e^{i(x+t\hat{v})\cdot \xi - i\mu t |\xi|}\big),
 e^{i(x+t\hat{v})\cdot \xi - i\mu t |\xi|} \big(  x+t\hat{v} -\frac{ \mu t\xi}{|\xi|} \big)\cdot   \tilde{v}= -i \tilde{v}\cdot \nabla_\xi \big(e^{i(x+t\hat{v})\cdot \xi - i\mu t |\xi|}\big).
\]
Therefore,  we do integration by parts in $\xi$ in the $\tilde{V}_i$ direction for $J_1$ and   do integration by parts in $\xi$ in the $\tilde{v}$ direction for $K_1$. As a result, we have
\be\label{jan25eqn24}
J_1= \tilde{J}_1+\tilde{J}_2,\quad K_1=\tilde{K}_1+\tilde{K}_2,
\ee
where
\be\label{april6eqn16}
\tilde{J}_1:= \int_{\R^3} e^{ix\cdot \xi - i\mu t |\xi|}  i \tilde{V}_i\cdot \nabla_\xi\big[ \frac{m(\xi) \tilde{V}_j \cdot \xi}{|\xi|-\mu \hat{v}\cdot \xi}   \psi_{\geq 10}(t(|\xi|- \mu \hat{v}\cdot \xi)) \psi_k(\xi)\big]\widehat{f}(\xi) d \xi,  
\ee
\be\label{april6eqn17}
\tilde{J}_2:= \int_{\R^3} e^{ix\cdot \xi - i\mu t |\xi|}  i \tilde{V}_i\cdot \nabla_\xi \widehat{f}(\xi)  \frac{m(\xi) \tilde{V}_j \cdot \xi}{|\xi|-\mu \hat{v}\cdot \xi}   \psi_{\geq 10}(t(|\xi|- \mu \hat{v}\cdot \xi)) \psi_k(\xi)  d \xi. 
\ee
\be\label{april6eqn18}
\tilde{K}_1:= \int_{\R^3} e^{ix\cdot \xi - i\mu t |\xi|}  i \tilde{v} \cdot \nabla_\xi\big[ \frac{m(\xi) \tilde{V}_j \cdot \xi}{|\xi|-\mu \hat{v}\cdot \xi}   \psi_{\geq 10}(t(|\xi|- \mu \hat{v}\cdot \xi)) \psi_k(\xi)\big]\widehat{f}(\xi) d \xi, 
\ee
\be\label{april6eqn19}
\tilde{K}_2:= \int_{\R^3} e^{ix\cdot \xi - i\mu t |\xi|}  i \tilde{v} \cdot \nabla_\xi \widehat{f}(\xi)  \frac{m(\xi) \tilde{V}_j \cdot \xi}{|\xi|-\mu \hat{v}\cdot \xi}   \psi_{\geq 10}(t(|\xi|- \mu \hat{v}\cdot \xi)) \psi_k(\xi)  d \xi. 
\ee
Recall (\ref{april6eqn11}) and (\ref{april6eqn16}). 
  From the estimates (\ref{jan25eqn16}) and (\ref{noveqn600}) in Lemma \ref{twisteddecaylemma3},    we have the following estimate, 
\be\label{noveqn551}
|J_2|+|\tilde{J}_1|\lesssim  \sum_{\alpha\in \mathcal{B}, |\alpha|\leq 5}   2^{ k-4k_{+}}  \|m(\xi)\|_{\mathcal{S}^\infty_k} \big( \|f^\alpha\|_{X_0}+ \|f^\alpha\|_{X_1}\big).
\ee
Now, we proceed to estimate $K_2$ and $\tilde{K}_1$. Recall (\ref{april6eqn13}) and (\ref{april6eqn18}). Note that
\[
\big|\big( \hat{v} -\frac{ \mu \xi}{|\xi|}  \big) \cdot\tilde{v} \big| \lesssim  \frac{|\xi|-\mu \hat{v}\cdot \xi }{|\xi|}, \quad \big|\tilde{v}\cdot \nabla_\xi\big(|\xi|-\mu \hat{v}\cdot \xi \big)\big|=\big|\frac{\tilde{v}\cdot \xi}{|\xi|}- \mu \tilde{v}\cdot \hat{v}\big|\lesssim \frac{|\xi|-\mu \hat{v}\cdot \xi }{|\xi|}.
\]
Therefore, from the above estimate and   the estimate  (\ref{noveqn600}) in Lemma \ref{twisteddecaylemma3} and the estimate (\ref{noveqn555}) in Lemma \ref{twistedlineardecay}, we have
\be\label{april6eqn20}
|K_2|+|\tilde{K}_1|\lesssim  \sum_{\alpha\in \mathcal{B}, |\alpha|\leq 5}   2^{ k-4k_{+}}  \|m(\xi)\|_{\mathcal{S}^\infty_k} \big( \|f^\alpha\|_{X_0}+ \|f^\alpha\|_{X_1}\big).
\ee
Lastly, from the volume of support of $\xi$, the following estimate holds for $\tilde{J}_2$ and $\tilde{K}_2$, 
\be\label{jan26eqn21}
|\tilde{J}_2| +|\tilde{K}_2|\lesssim \sum_{l\in \mathbb{Z}, l\leq 2} 2^{3k + l}  \|m(\xi)\|_{\mathcal{S}^\infty_k}\|\nabla_\xi\widehat{f}(t, \xi)\psi_k(\xi)\|_{L^\infty_\xi }\lesssim \sum_{\alpha\in \mathcal{B}, |\alpha|\leq 5} 2^{k-4k_{+}} \|m(\xi)\|_{\mathcal{S}^\infty_k}  \big( \|f^\alpha\|_{X_0}+ \|f^\alpha\|_{X_1}\big).
\ee

To sum up, recall  the decompositions (\ref{april6eqn1}),  (\ref{jan25eqn22}), (\ref{april6eqn2}), and (\ref{jan25eqn24}),    our desired estimate   (\ref{noveqn532}) holds from the estimates (\ref{noveqn551}), (\ref{april6eqn20}), and (\ref{jan26eqn21}),.
\end{proof}

\end{document}